\documentclass[12pt]{article}
\usepackage[utf8]{inputenc}

\usepackage{amsmath}
\usepackage{amsthm}
\usepackage{mathtools}
\usepackage{amsfonts}
\usepackage{amssymb, dsfont}
\usepackage{txfonts}
\usepackage{graphicx}
\usepackage[usenames]{color}
\usepackage[dvipsnames]{xcolor}
\usepackage[shortcuts]{extdash}
\usepackage{hyperref}
\usepackage{soul}
\usepackage{imakeidx}
\usepackage{amscd}
\usepackage[all,cmtip]{xy}
\usepackage{tikz-cd}
\usetikzlibrary{decorations.pathmorphing}
\usepackage{pb-diagram}
\usepackage{enumitem}

\newtheorem{theorem}{Theorem}
\newtheorem{lemma}{Lemma}
\newtheorem{proposition}{Proposition}
\newtheorem{fact}{Fact}
\newtheorem{corollary}{Corollary}
\newtheorem{problem}{Problem}
\newtheorem{remark}{Remark}
\newtheorem{notation}{Notation}
\newtheorem{definition}{Definition}

\theoremstyle{definition}
\newtheorem{example}{Example}

\def\N{{\mathbb{N}}}
\def\Z{{\mathbb{Z}}}

\def\MA{{\mathbb{A}}}
\def\MB{{\mathbb{B}}}
\def\MC{{\mathbb{C}}}
\def\MD{{\mathbb{D}}}
\newcommand{\nsA}{\widetilde{\mathbb A}}
\newcommand{\nsuA}{\widetilde{A}}
\newcommand{\nsB}{\widetilde{\mathbb B}}
\newcommand{\nsuB}{\widetilde{B}}

\def\F{{\mathcal{F}}}
\def\G{{\mathcal{G}}}

\def\Int{{\mathbf{Int}}}
\def\Tr{{\mathbf{Tr}}}

\newcommand{\Inj}{\mathrm{inj}}

\def\sk{\mathrm{sk}}

\def\PLS{{\mathcal{PLS}}}
\def\PDS{{\mathcal{PDS}}}
\def\DS{{\mathcal{DS}}}
\def\SC{{\mathcal{C}}}
\def\LS{{\mathcal{LS}}}
\def\AS{{\mathcal{AS}}}
\def\id{{\mathrm{id}}}
\def\term{{\,\mathrm{term}}}
\def\Rad{{\mathrm{Rad}}}
\def\LRad{{\mathrm{LRad}}}
\def\V{{\mathrm{V}}}
\def\group{{\mathrm{group}}}
\def\param{{\mathrm{par}}}

\def\AC{\mathcal {AC}}
\def\Id{{\mathbf{Id}}}

{}
{}
{}
{}

\title{Theory of Interpretations II. Categorical equivalence of projective logical geometries\footnote{The research was supported in accordance with the state task of the IM SB RAS, project FWNF-2026-0033.}}
\date{}

\author{Evelina Danyarova\footnote{Sobolev Institute of Mathematics} and Alexei Myasnikov\footnote{Stevens Institute of Technology}} 

\begin{document}

\maketitle

\begin{flushright}
{\small In memory of B.\,I.\,Plotkin, \\ the founding father of logical geometry}
\end{flushright}

\begin{abstract} 
We introduce projective logical geometry and prove that two algebraic structures are strongly bi-interpretable if and only if their categories of projective logical sets are equivalent relative to the class of interpretation functors, which is also equivalent to their categories of projective definable sets being equivalent relative to the class of translation functors. These constructions generalize two ideas of Boris Plotkin: the concept of geometric equivalence in universal algebraic geometry and the transition from universal algebraic geometry to logical geometry. Furthermore, our categorical approach offers a fresh perspective on the theory of interpretations, enabling us to establish a series of fundamental results using categorical methods.
\end{abstract}

\tableofcontents

\section{Introduction}\label{sec:intro} 

Boris Isaakovich Plotkin is regarded as one of the founders of {\em universal algebraic geometry}, a relatively new branch of mathematics that provides a systematic framework for investigating systems of equations, algebraic sets, coordinate algebras, and other objects familiar from classical algebraic geometry, but now in the setting of arbitrary universal algebras and algebraic structures. In numerous instances within this universal algebraic geometry, the main notions and theorems are carried over from classical algebraic geometry over a field, with the necessary modifications. One of the fundamentally new concepts that arises naturally in universal algebraic geometry is that of geometric equivalence. This notion, together with its generalizations~--- geometric similarity and geometric compatibility~--- was introduced by B.\,I.\,Plotkin as a tool for comparing the algebraic geometries of distinct algebraic structures. Here, two structures $\MA$ and $\MB$ of the same language $L$ are geometrically equivalent if every finite system of equations in $L$ has the same radicals over $\MA$ and $\MB$.  While the geometric similarity and compatibility of $\MA$ and $\MB$ are formulated in terms of their categories of algebraic sets: the former arises when these categories are isomorphic, while the latter is defined by the equivalence of the categories (with respect to appropriate functors). Each of these concepts provides a way to address Plotkin’s central question in universal algebraic geometry: under what conditions do two algebraic structures $\MA$ and $\MB$ share the same algebraic geometry? Numerous intriguing problems and results are known in this field; for a more detailed treatment, we refer the reader to~\cite{Plotkin4}.

Another direction in which B.\,I.\,Plotkin developed the ideas of universal algebraic geometry was the transition from universal algebraic geometry to {\em logical geometry}, or {\em algebraic geometry in first-order logic}. The basic idea of this transition is as follows. In algebraic geometry over $\MA$, the main object of study is an algebraic set, that is, the set of all solutions to a system of equations in the language $L$, where an equation is an atomic formula in $L$. In logical geometry, by contrast, a ``logical equation'' is an arbitrary first-order formula. Thus, by taking the sets of points satisfying arbitrary collections of first-order formulas (in a fixed finite tuple of variables), we obtain {\em algebraic sets in first-order logic}, or {\em logical sets}. Following the established pattern, one then develops the full algebraic-geometric apparatus in this more general, logical setting.

Plotkin’s fundamental question~--- when do two algebraic structures $\MA$ and $\MB$ in the same language $L$ have the same logical geometries?~--- also plays a central role in logical geometry. Once again, Plotkin introduces three levels of relationships between the logical geometries of algebraic structures: logical equivalence, logical similarity, and logical compatibility. Plotkin showed that two structures are logically equivalent if and only if they are isotypic~\cite{Plotkin10}, i.\,e., they realize the same types, which is a strengthening of elementary equivalence. This concept is very interesting in its own right and has inspired research on groups and other algebraic structures~\cite{Bunina1, Gvozdevsky, MR}. Logical similarity and logical compatibility of two algebraic structures are defined via isomorphism and, respectively, equivalence of their categories of logical sets.

In this paper, we develop Plotkin’s categorical approach to logical geometry and place it within a broader model-theoretic framework. To this end, we slightly modify his terminology and, more generally, examine the object of study from a somewhat different perspective. Plotkin’s approach to logical geometry is closely connected with the notion of a Halmos algebra. We will not reproduce here the full range of ideas and concepts from the works of Plotkin and his collaborators. Instead, we restrict attention to the category of algebraic sets in first-order logic and introduce it in a slightly different way, bypassing Halmos algebras.
In fact, we introduce a category that is larger than Plotkin’s. Just as in classical algebraic geometry, where one passes from affine varieties to projective varieties, we pass from the category of logical sets to the category of {\em projective logical sets}. This transition has model-theoretic underpinnings and reflects the need to incorporate imaginary elements. The category of projective logical sets contains Plotkin’s original category of logical sets as a subcategory, along with all other natural categories related to logical geometry. Thus, for a fixed algebraic structure $\MA$, we define 
\begin{itemize}
    \item the category $\PLS(\MA)$ of projective logical sets over $\MA$,
    \item its full subcategory $\LS(\MA)$ of logical sets over $\MA$,
    \item its full subcategory $\PDS(\MA)$ of projective definable sets over $\MA$,
    \item its full subcategory $\DS(\MA)$ of definable sets over $\MA$.
\end{itemize}
Next, we introduce two kinds of functors that connect such logical-geometric categories: {\em interpretation functors} and {\em translation functors}.

The model-theoretic framework in which we place Plotkin’s logical-geometric categories is the theory of interpretations. This paper is the second in a series on that theory; it relies heavily on the first paper~\cite{Th_int1} in its terminology, notation, and principal ideas. We prove two main results here. The first result states that an algebraic structure $\MA$ is interpretable in an algebraic structure $\MB$ if and only if there exists an interpretation functor from the category $\PLS(\MA)$ to the category $\PLS(\MB)$ (Theorem~\ref{th:functor2}). Secondly, we show that algebraic structures $\MA$ and $\MB$ are strongly bi-interpretable if and only if the categories $\PLS(\MA)$ and $\PLS(\MB)$ are equivalent relative to the class of interpretation functors, and also if and only if the categories $\PDS(\MA)$ and $\PDS(\MB)$ are equivalent relative to the class of translation functors (Theorem~\ref{th:bi-inter}). These results and their corollaries give a complete solution to Plotkin’s question concerning the logical compatibility of algebraic structures. Furthermore, this solution is maximally general, as it applies even when the algebraic structures $\MA$ and $\MB$ have different languages (signatures). We also introduce and study a special kind of bi-interpretation between structures $\MA$ and $\MB$, which is equivalent to an isomorphism between their projective logical categories. This is the so-called {\em syntactic isomorphism}, which can likewise occur between algebraic structures of different signatures.

Thus, strong bi-interpretability and syntactic isomorphism answer the fundamental question of under what conditions two algebraic structures $\MA$ and $\MB$ have the same projective logical geometries. At the same time, from the standpoint of the theory of interpretations, the methods of category theory provide powerful tools for studying interpretations and bi-interpretations of algebraic structures. Using these tools, we can establish several results that are crucial for interpretation theory, and that would be difficult to justify without passing to categories. These include the transitivity of homotopies, the associativity of interpretations, the transitivity of bi-interpretations, and others.

We conclude the article with isomorphism theorems for the big categories of all algebraic structures with interpretations, ${\bf AS}$, and all projective logical geometries, ${\bf PLG}$. In fact, these turn out to be isomorphic as $2$-categories (Theorem~\ref{th:2cat}). We also note that the logical nature and regular occurrence of such results were already indicated by N.\,Avni and C.\,Meiri~\cite[Remark~2.1]{AvniMeiri}.

On a philosophical level, our results bear a close relationship to Makkai duality concerning the connection between bi-interpretations of theories and equivalences of suitable categories~\cite{Aratake, DPM, Makkai, MakkaiReyes}. However, although interpretations and bi-interpretations between algebraic structures and between theories share many features, they are not reducible to one another. In the context of interpretations between algebraic structures, we distinguish three different kinds: absolute, regular, and with parameters~\cite{Th_int1, KhMS}, which are mutually nonequivalent. The most general notion is that of interpretation with parameters. If an algebraic structure $\MA$ is interpretable in an algebraic structure $\MB$ with parameters, then, in general, we cannot conclude that there exists an interpretation, for example, between their elementary theories ${\rm Th}(\MA)$ and ${\rm Th}(\MB)$. Conversely, a bi-interpretation of the theories ${\rm Th}(\MA)$ and ${\rm Th}(\MB)$ does not necessarily entail the existence of a bi-interpretation between the structures $\MA$ and $\MB$ themselves. Interpretations of algebraic structures often yield substantial information about the structures themselves, even when little is known about their theories; this is particularly significant in algebraic applications. Conversely, interpretations between theories provide a global perspective on the entire class of models of these theories. It would be interesting to develop a general framework that unifies these two seemingly different approaches.

{\bf Acknowledgments.} The authors gratefully acknowledge the organizers of the Israeli Science Foundation Workshop ``Algebra, Geometry, Dynamics'', held in part in memory of Boris I.\,Plotkin on the centenary of his birth. Discussions with colleagues at this conference~--- especially Eugene Plotkin~--- were invaluable in helping the authors substantially revise and refine this article.

\section{Preliminaries}\label{sec:prelim}

The model-theoretic method of interpretation plays a central role in our analysis. Consequently, this article requires a substantial preliminary section devoted to background material and notation. To avoid unnecessary duplication, we instead refer the reader to the authors' earlier paper~\cite{Th_int1}, of which the present work is a continuation. Throughout, we adopt the concepts and notation from~\cite[Subsections~2.1--2.4, 2.6--2.8, 3.1, 4.1, 4.2, 4.4]{Th_int1}.

Recall that for an algebraic structure $\MA=\langle A; L(\MA)\rangle$ and a subset $P\subseteq A$, we denote by $\MA_P$ the algebraic structure $\langle A; L(\MA)\cup P\rangle$ in the extended language. Note that if $c\in L(\MA)$ is a constant symbol and $c^\MA$ is its interpretation on the underlying set $A$ in $\MA$, then $c$ and $c^\MA$ are two different constant symbols in the language $L(\MA)\cup A$ with the same interpretation in $\MA_A$.

Let $h\colon X\to Y$ be a map. We always use denotation $h$ not only for $h$ itself, but also for maps $h^m\colon X^m\to Y^m$, $m\in \N$, such that $h(x_1,\ldots,x_m)=(h(x_1),\ldots,h(x_m))$.

\begin{definition}\label{def:definable_map}
Let $X\subseteq A^n$, $Y\subseteq A^s$ be sets and $\sim_X$, $\sim_Y$ be equivalence relations on $X$, $Y$, respectively. A map $F\colon (X/{\sim_X})^m\to (Y/{\sim_Y})^l$ is termed {\em definable} in $\MA=\langle A; L(\MA)\rangle$ (or $P$\=/definable, $P\subseteq A$), if the preimage in $\MA$ of the graph of $F$ is definable in $\MA$ ($P$\=/definable), i.\,e., the set 
\begin{multline*}
\{(a^1_1,\ldots,a^1_n,\:\ldots,\: a^m_1,\ldots,a^m_n,\: b^1_1,\ldots, b^1_s,\ldots,b^l_1,\ldots,b^l_s) \in A^{nm+sl}\mid  \\ 
F((a^1_1,\ldots,a^1_n)/{\sim_X},\:\ldots,\: (a^m_1,\ldots,a^m_n)/{\sim_X})=((b^1_1,\ldots,b^1_s)/{
\sim_Y},\ldots,(b^l_1,\ldots,b^l_s)/{
\sim_Y}),\\
(a^1_1,\ldots,a^1_n),\ldots, (a^m_1,\ldots,a^m_n)\in X, \: (b^1_1,\ldots,b^1_s),\ldots,(b^l_1,\ldots,b^l_s)\in Y\}
\end{multline*}
is definable in $\MA$ ($P$\=/definable).  Note that here $F$ may also be a function of arity $0$. Similarly, a relation $Q$ of arity $m$ on the quotient set $X/{\sim_X}$ is {\em definable} ($P$\=/definable) in $\MA$, if the {\em preimage in $\MA$ of its graph}, i.\,e., the full preimage of the set $Q$ in $A^{nm}$, is definable in $\MA$ ($P$\=/definable). Correspondingly, $F$ and $Q$ are $0$\=/{\em definable} in $\MA$, if their preimages  are $0$\=/definable in $\MA$.
\end{definition}

In all other similar situations when defining a notion $C$ that could be absolute, we may also refer to it as $0$\=/$C$, or simply as $C$ without parameters.

\begin{fact}\label{rem0}
Let $\MA=\langle A; L(\MA)\rangle$ be an algebraic structure, $X\subseteq A^m$, $Y\subseteq A^s$ be some sets and $\sim_X$, $\sim_Y$ be some equivalence relations on $X, Y$, and $F\colon (X/{\sim_X})^n\to (Y/{\sim_Y})^l$ be some map, $n,l\in\N$, and $P\subseteq A$. Then
\begin{enumerate}[label=\alph*)]
    \item if $F$ is $P$\=/definable in $\MA$, then $X$ is $P$\=/definable in $\MA$;
    \item if $F$ is $P$\=/definable in $\MA$ and surjective, then $Y$ is $P$\=/definable in $\MA$ too.
\end{enumerate}
\end{fact}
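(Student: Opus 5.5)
The plan is to work directly from Definition~\ref{def:definable_map}: let $G\subseteq A^{mn+sl}$ be the preimage of the graph of $F$. By hypothesis it is defined in $\MA$ by a formula $\varphi(\bar x^1,\ldots,\bar x^n,\bar y^1,\ldots,\bar y^l)$ of $L(\MA)\cup P$, where each $\bar x^i$ is an $m$\-/tuple of variables and each $\bar y^j$ is an $s$\-/tuple. Both parts follow by existential quantification, which projects a definable set onto a block of coordinates. No new parameters are introduced, so $P$\-/definability (in particular $0$\-/definability) is preserved.

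For a), I first observe from the definition of $G$ that every tuple in $G$ has its $\bar x$\-/blocks in $X$ and its $\bar y$\-/blocks in $Y$. Since $F$ is a total map on $(X/{\sim_X})^n$, every $(\bar a^1,\ldots,\bar a^n)\in X^n$ extends to some tuple of $G$: choose representatives of the classes $F(\bar a^1/{\sim_X},\ldots)$. Hence the projection of $G$ onto the first $mn$ coordinates is exactly $X^n$, and $X^n$ is defined by $\exists \bar y^1\ldots\exists\bar y^l\,\varphi$. To recover $X$ itself I will split into two cases. If $X=\emptyset$, then $X$ is defined by $x_1\neq x_1$. If $X\neq\emptyset$ and $n\geq 1$, then $X$ is the projection of $X^n$ onto its first block, so it is defined by $\exists\bar x^2\ldots\exists\bar x^n\exists\bar y^1\ldots\exists\bar y^l\,\varphi(\bar x^1,\ldots)$.

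For b), the argument is symmetric. Surjectivity of $F$ means that every $(\bar b^1,\ldots,\bar b^l)\in Y^l$ occurs as the $\bar y$\-/part of some tuple in $G$. The projection of $G$ onto the last $sl$ coordinates is therefore $Y^l$, defined by $\exists\bar x^1\ldots\exists\bar x^n\,\varphi$. As before, $Y=\emptyset$ is trivially definable. Otherwise, for $l\geq1$, projecting onto the first $\bar y$\-/block gives $Y$.

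The only real obstacle is the degenerate arities. If $n=0$, the domain of $F$ is a one\-/point set that carries no information about $X$. Likewise, if $l=0$, the map $F$ says nothing about $Y$. In these cases the corresponding claim must be read with the implicit convention that the set in question actually occurs as a coordinate block, namely $n\geq1$ in a) and $l\geq1$ in b). I would state this convention explicitly. Apart from that, the key point to check carefully is that $G$ consists only of tuples with blocks in $X$ and $Y$, so that the projections are exactly $X^n$ and $Y^l$ and not larger sets.
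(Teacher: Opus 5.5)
Your proof is correct and is essentially the paper's argument: both existentially quantify the formula $\varphi$ that defines the preimage of the graph. The only difference is cosmetic. The paper substitutes one tuple into all blocks, using $S_X(\bar x)=\exists\,\bar{\bar y}\:\varphi(\bar x,\ldots,\bar x,\bar{\bar y})$ and $S_Y(\bar y)=\exists\,\bar{\bar x}\:\varphi(\bar{\bar x},\bar y,\ldots,\bar y)$, whereas you quantify away the remaining blocks. Your separate treatment of the empty case is unnecessary in either version, since then the preimage of the graph is already empty.

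Your caveat that a) needs $n\ge 1$ and b) needs $l\ge 1$ is correct, and the paper's proof tacitly assumes it as well.
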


\begin{proof}
Let $\varphi(\bar x_1,\ldots,\bar x_n,\bar y_1,\ldots,\bar y_l)$ be a formula in the language $L(\MA)\cup P$, which defines the map $F$, $|\bar x_i|=m$,  $|\bar y_j|=s$. Then  $X=S_X(\MA_P)$, where $S_X(\bar x)=\exists \, \bar {\bar y}\; \varphi(\bar x,\ldots,\bar x,\bar {\bar y})$, so $X$ is $P$\=/definable.  Further, if $F$ is surjective, then $Y=S_Y(\MA_P)$, where $S_Y(\bar y)=\exists\,\bar {\bar x}\:\varphi(\bar {\bar x}, \bar y,\ldots,\bar y)$.  
\end{proof}

For the basic concepts of category theory, we refer to the books~\cite{BarrWells, MacLane}. 

\section{Translations of formulas and Reductions Theorems}\label{sec:RT}

Let $\MA=\langle A; L(\MA)\rangle$ and $\MB=\langle B; L(\MB)\rangle$ be algebraic structures and  $\Gamma\colon L(\MA)\to L(\MB)$ be an interpretation code:
\begin{equation*} 
\Gamma =  \{U_\Gamma(\bar x,\bar y), E_\Gamma(\bar x, \bar x^\prime,\bar y), c_\Gamma(\bar x, \bar y), f_\Gamma(\bar x_0, \bar x_1, \ldots,\bar x_{n_f},\bar y), R_\Gamma(\bar x_1,\ldots,\bar x_{n_R},\bar y) \mid c,f,R \in L(\MA)\},
\end{equation*}
where $|\bar x|=|\bar x^\prime|=|\bar x_i|=\dim\Gamma$, $|\bar y|=\dim_\param\Gamma$; and $c$ runs all constant symbols from $L(\MA)$, $f$ runs all functional symbols from $L(\MA)$, $R$ runs all predicate symbols from $L(\MA)$. The notation $U_\Gamma$, $E_\Gamma$, and $c_\Gamma,f_\Gamma, R_\Gamma$ is consistently used throughout the entire paper. 

In~\cite[Subsection~2.7]{Th_int1} we define the notion of the $\Gamma$\=/translation $\psi\to\psi_{\Gamma}$ of formulas in the language $L(\MA)$ into formulas in the language $L(\MB)$. This kind of translation depends on the code $\Gamma$ and has no connection to concrete interpretation $\MA\rightsquigarrow\MB$. 

Further when given an interpretation $(\Gamma,\bar p, \mu_\Gamma)\colon \MA\rightsquigarrow\MB$ we define the $(\Gamma,\bar p, \mu_\Gamma)$\=/translation $\psi\to\psi_{\Gamma,\bar p,\mu_\Gamma}$ of formulas in the language $L(\MA)\cup A$ into formulas in the language $L(\MB)\cup B$. Here we use a set of fixed tuples $\{\bar b\in \mu_\Gamma^{-1}(a)\mid a\in A\}$. In other words, when defining the $(\Gamma,\bar p, \mu_\Gamma)$\=/translation we use not exactly the coordinate map $\mu_\Gamma$, but some function, which sends elements $a\in A$ into tuples $\bar b\in \mu_\Gamma^{-1}(a)$.

\begin{remark}
In~\cite[Subsection~2.4]{Th_int1} we made a precise distinction between $(\Gamma,\bar p)$ and $(\Gamma,\bar p,\mu_\Gamma)$, referring to $(\Gamma,\bar p)$ as an \emph{interpretation} and to $(\Gamma,\bar p,\mu_\Gamma)$ as a \emph{coordinatization} of the interpretation $(\Gamma,\bar p)$. Since this paper primarily works with triples of the form $(\Gamma,\bar p,\mu_\Gamma)$, we will, for brevity, refer to such triples simply as interpretations.
\end{remark}

\subsection{Extended codes}\label{subsec:ext_code}

At this point, we need to define the type of $(\Gamma,\bar p,\mu_\Gamma)$\=/translations that an interpretation provides, but without being tied to the interpretation.

\begin{definition}\label{def:code}
 We refer to a triple $(\Gamma,\bar p,\gamma)$ as an {\em extended code}, if  
\begin{enumerate}[label=(T\arabic*)]
    \item\label{T1} $\Gamma\colon L(\MA)\to L(\MB)$ is an interpretation code;
    \item\label{T2} $\bar p$ is a tuple from $\MB$, such that $|\bar p|=\dim_\param\Gamma$;
    \item\label{T3} $\gamma\colon A\to B^n$ is a map, where $n=\dim\Gamma$.
\end{enumerate}  
For brevity, we will write $(\Gamma,\bar p,\gamma)\colon L(\MA)\cup A\to L(\MB)\cup B$ in this case.
\end{definition}

Let us define the $(\Gamma,\bar p, \gamma)$\=/translation $\psi\to \psi_{\Gamma,\bar p,\gamma}$ of formulas in the language $L(\MA)\cup A$ into formulas in the language $L(\MB)\cup B$ (without connection to an interpretation $\MA\rightsquigarrow\MB$). Take any formula $\psi(x_1,\ldots,x_m,a_1,\ldots,a_s)$ in  $L(\MA)\cup A$ with free variables $x_1,\ldots,x_m$ and constants $a_1,\ldots,a_s$ from $A$ and put
$$
\psi_{\Gamma,\bar p, \gamma}(\bar x_1,\ldots,\bar x_m)=\psi_\Gamma(\bar x_1,\ldots,\bar x_m,\gamma(a_1),\ldots,\gamma(a_s), \bar p),
$$
where $|\bar x_i|=n$ and $\psi_\Gamma(\bar x_1,\ldots,\bar x_m,\bar z_1,\ldots,\bar z_m,\bar y)$ is the $\Gamma$\=/translation of $\psi(x_1,\ldots,x_m,z_1,\ldots,z_s)$, $|\bar y|=\dim_{\param}\Gamma$.

\begin{definition}
We refer to a first-order formula $\psi$ as a {\em formula in the normal form}, if it is written in the prenex normal form where the matrix is a disjunction of conjunctions of unnested atomic formulas and their negations.
\end{definition}

Of course, every formula may be written in the normal form. In fact, we define the $\Gamma$\=/translation $\psi\to\psi_\Gamma$ and the $(\Gamma,\bar p,\gamma)$\=/translation $\psi\to\psi_{\Gamma,\bar p,\gamma}$ not for a formula $\psi$, but for the normal form of $\psi$. Since the normal form for a formula $\psi$ is not written uniquely, for correct work with translations, we conveniently consider them to translate normal forms, not formulas. 

\begin{remark}\label{remar:norm_form}
If $\psi$ is a formula in the normal form, then the formulas $(\exists\,x\:\psi)_\Gamma$ and $\exists\,\bar x \:(U_\Gamma(\bar x,\bar y)\wedge \psi_\Gamma)$ are first-order equivalent.
\end{remark}

Given an interpretation $\MA\rightsquigarrow\MB$, all such ambiguities disappear thanks to the Reduction Theorem~\cite[Corollary~3]{Th_int1}. Next, we want to expand to the maximum the sufficient conditions under which the Reduction Theorem holds.

\subsection{Reduction Theorems}\label{sec:IRT}

The Reduction Theorem in formulations~\cite[Theorem~5.3.2]{Hodges} or~\cite[Theorem~1]{Th_int1} is essentially a generalization of the following statement. Suppose that $\MA=\langle A; L(\MA)\rangle$ and $\nsA=\langle \nsuA; L(\MA)\rangle$ are algebraic structures and $\iota\colon \MA\to\nsA$ is an $L(\MA)$\=/isomorphism. Then
\begin{enumerate}[label=(EE)]
\item\label{EE} for any $L(\MA)$\=/formula $\psi(x_1,\ldots,x_s)$ and any elements $a_1,\ldots,a_s\in A$ one has 
$$
\MA\models\psi(a_1,\ldots,a_s) \ \iff \ \nsA\models \psi(\iota(a_1),\ldots,\iota(a_s)).
$$
\end{enumerate}
This statement is about the algebraic structures in a common language $L(\MA)$, while the Reduction Theorem states the same thing, but about algebraic structures in possibly different languages $L(\MA)$ and $L(\MB)$. If we read the above statement in reverse, we get the definition of an elementary embedding. Indeed, a map $\iota\colon A\to\nsuA$ is called an {\em elementary $L(\MA)$\=/embedding} of $\MA$ into $\nsA$, if the condition~\ref{EE} holds. In this section, we will ``read the Reduction Theorem in reverse'' in the same way.

\begin{theorem}[{\cite[The Reduction Theorem]{Th_int1}}]\label{RT} 
Suppose that $(\Gamma,\bar p, \mu_\Gamma)$ is an interpretation of an algebraic structure $\MA=\langle A; L(\MA)\rangle$ into an algebraic structure $\MB=\langle B; L(\MB)\rangle$. 
Then for any formula $\psi(x_1,\ldots,x_s)$ in the normal form in the language $L(\MA)$ and any elements $a_1,\ldots,a_s\in A$, $\bar b_1\in \mu_\Gamma^{-1}(a_1),\ldots,\bar b_s\in \mu_\Gamma(a_s)$ one has
\begin{equation*}
 \MA\models \psi(a_1,\ldots,a_s) \ \Longrightarrow \ \MB\models \psi_\Gamma(\bar b_1,\ldots,\bar b_s,\bar p);
\end{equation*}
and conversely if one has $\MB\models \psi_\Gamma(\bar b_1,\ldots,\bar b_s,\bar p)$ for some tuples $\bar b_1,\ldots,\bar b_s\in B^{\dim\Gamma}$, then $\bar b_1,\ldots,\bar b_s\in U_\Gamma(\MB,\bar p)$ and $\MA\models \psi(a_1,\ldots,a_s)$, where $a_i=\mu_\Gamma(\bar b_i)$, $i=1,\ldots,s$.
\end{theorem}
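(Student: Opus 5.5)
We will prove the statement by induction on the construction of the normal form of $\psi$. We first treat the quantifier-free matrix and then add the prenex quantifiers one at a time. Throughout, we use the defining properties of an interpretation $(\Gamma,\bar p,\mu_\Gamma)\colon\MA\rightsquigarrow\MB$. The coordinate map $\mu_\Gamma\colon U_\Gamma(\MB,\bar p)\to A$ is surjective, and its kernel is the equivalence $E_\Gamma(\MB,\bar p)$. Each $c_\Gamma,f_\Gamma,R_\Gamma$ defines, on tuples from $U_\Gamma(\MB,\bar p)$, the $\mu_\Gamma$-preimage of the graph of $c^\MA,f^\MA,R^\MA$. We also use the description of the $\Gamma$-translation from~\cite[Subsection~2.7]{Th_int1}. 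Unnested atomic formulas $x=y$, $c=x$, $f(x_1,\ldots,x_k)=x_0$, $R(x_1,\ldots,x_k)$ are sent to $E_\Gamma$, $c_\Gamma$, $f_\Gamma$, $R_\Gamma$, conjoined with $U_\Gamma$ on all free tuples. Connectives are preserved, negation is relativized to $U_\Gamma$, and quantifiers are relativized to $U_\Gamma$ (cf.\ Remark~\ref{remar:norm_form}).

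Step 1 covers unnested atoms. For $\bar b_i\in\mu_\Gamma^{-1}(a_i)$, the equivalence $\MA\models\alpha(a_1,\ldots,a_s)\iff\MB\models\alpha_\Gamma(\bar b_1,\ldots,\bar b_s,\bar p)$ is exactly the definition of the interpretation. For example, $f^\MA(a_1,\ldots,a_k)=a_0$ iff $(\bar b_0,\ldots,\bar b_k)$ lies in the preimage of the graph, iff $\MB\models f_\Gamma(\bar b_0,\ldots,\bar b_k,\bar p)$. The independence from the choice of representatives follows because $f_\Gamma$ and the others define full preimages, hence are $E_\Gamma$-invariant. Moreover, since the translation of every atom contains the conjunct $U_\Gamma(\bar x_i,\bar y)$, any $\bar b_i$ satisfying $\alpha_\Gamma$ lies in $U_\Gamma(\MB,\bar p)$. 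This gives the ``conversely'' part at the atomic level.

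Step 2 covers negated atoms and the matrix. A negated atom translates to $\bigwedge_i U_\Gamma(\bar x_i,\bar y)\wedge\neg\alpha_\Gamma$, so both directions follow from Step 1. Conjunctions and disjunctions of such literals pass through directly. The membership condition $\bar b_i\in U_\Gamma(\MB,\bar p)$ persists, provided every conjunction mentions every free variable. If some free variable $x_j$ does not occur in a disjunct, we ensure that the translation still carries $U_\Gamma(\bar x_j,\bar y)$ for all free variables. This is the normalization built into the translation in~\cite{Th_int1}; I will check that it is indeed part of the definition, and otherwise add it as a harmless convention.

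Step 3 handles the quantifiers, by induction on the prefix. Consider $\psi=\exists x\,\theta(x,\bar x)$. Then $\psi_\Gamma$ is equivalent to $\exists\bar x(U_\Gamma(\bar x,\bar y)\wedge\theta_\Gamma)$, by Remark~\ref{remar:norm_form}. For the forward direction, pick a witness $a$ and any $\bar b\in\mu_\Gamma^{-1}(a)$, which exists by surjectivity, and apply the induction hypothesis. For the converse, a witness $\bar b\in U_\Gamma(\MB,\bar p)$ yields $a=\mu_\Gamma(\bar b)$ with $\MA\models\theta(a,\ldots)$. The universal case $\forall x\,\theta$ translates to $\forall\bar x(U_\Gamma(\bar x,\bar y)\to\theta_\Gamma)$, conjoined with $U_\Gamma$ on the free tuples. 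For the forward direction, every $\bar b\in U_\Gamma$ maps to some $a$, and the hypothesis applies. For the backward direction, every $a$ has a preimage, and the converse part of the hypothesis gives $\theta(a,\ldots)$. The converse claim $\bar b_i\in U_\Gamma$ is inherited from the matrix in the existential case; in the universal case it comes from the explicit $U_\Gamma$ conjuncts on the free variables.

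I expect the main obstacle to be bookkeeping rather than mathematics. The hard part is ensuring that the ``conversely'' clause $\bar b_i\in U_\Gamma(\MB,\bar p)$ holds for arbitrary tuples in $B^{\dim\Gamma}$. This includes degenerate cases: free variables that vanish from the matrix, and universal formulas that could be vacuously true. Both require the translation to relativize every free tuple to $U_\Gamma$, so I would verify this against the precise definition in~\cite[Subsection~2.7]{Th_int1} first. All other steps are routine given Step 1.
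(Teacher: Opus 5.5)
Your induction on the normal form is correct and is the standard argument behind this result: unnested atoms come from the defining properties of the interpretation, then negated atoms and the Boolean matrix, then the quantifier prefix relativized to $U_\Gamma(\MB,\bar p)$ with surjectivity of $\mu_\Gamma$ supplying witnesses. The paper does not reprove this theorem but imports it from \cite{Th_int1} (cf.\ \cite[Theorem~5.3.2]{Hodges}). The bookkeeping point you flagged is settled by the definition of the $\Gamma$\=/translation itself, which conjoins $U_\Gamma$ on every free tuple. The paper relies on exactly this in the proof of Fact~\ref{fact1.5} (``by the definition of $\Gamma$\=/translation \ldots\ implies that $\bar{\bar b},\bar{\bar b}^\prime\in U^m_\Gamma(\MB,\bar p)$''). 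Also, since $U_\Gamma(\MB,\bar p)\neq\emptyset$, the universal case cannot hold vacuously.
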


Intending to shift the emphasis from the coordinate map $\mu_\Gamma$ of the interpretation $(\Gamma,\bar p,\mu_\Gamma)$ to the function $\gamma\colon A\to U_\Gamma(\MB,\bar p)$, we formulate several important facts.

\begin{corollary}\label{cor:gamma}
Let $(\Gamma,\bar p,\mu_\Gamma)\colon \MA\rightsquigarrow\MB$ be an interpretation and $\gamma,\gamma^\prime\colon A\to U_\Gamma(\MB,\bar p)$ be functions, such that $\mu_\Gamma\circ\gamma=\mu_\Gamma\circ\gamma^\prime=\id_A$. Then for any formula $\psi(x_1,\ldots,x_m)$ in the language $L(\MA)\cup A$ one has $\psi_{\Gamma,\bar p,\gamma}(\MB_B)=\psi_{\Gamma,\bar p,\gamma^\prime}(\MB_B)$.
\end{corollary}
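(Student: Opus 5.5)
We will show directly that the two definable sets coincide, using the Reduction Theorem (Theorem~\ref{RT}) in both directions. Write $\psi(x_1,\ldots,x_m)=\varphi(x_1,\ldots,x_m,a_1,\ldots,a_s)$, where $\varphi(x_1,\ldots,x_m,z_1,\ldots,z_s)$ is an $L(\MA)$\=/formula in the normal form and $a_1,\ldots,a_s\in A$. By definition,
$$
\psi_{\Gamma,\bar p,\gamma}(\bar x_1,\ldots,\bar x_m)=\varphi_\Gamma(\bar x_1,\ldots,\bar x_m,\gamma(a_1),\ldots,\gamma(a_s),\bar p),
$$
and similarly for $\gamma^\prime$. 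It therefore suffices to prove the inclusion $\psi_{\Gamma,\bar p,\gamma}(\MB_B)\subseteq \psi_{\Gamma,\bar p,\gamma^\prime}(\MB_B)$. The reverse inclusion follows by swapping the roles of $\gamma$ and $\gamma^\prime$.

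Take $(\bar b_1,\ldots,\bar b_m)\in \psi_{\Gamma,\bar p,\gamma}(\MB_B)$, so that $\MB\models\varphi_\Gamma(\bar b_1,\ldots,\bar b_m,\gamma(a_1),\ldots,\gamma(a_s),\bar p)$. The converse part of Theorem~\ref{RT}, applied to $\varphi$ with $m+s$ free variables, gives two conclusions. First, $\bar b_i\in U_\Gamma(\MB,\bar p)$. Second, $\MA\models\varphi(\mu_\Gamma(\bar b_1),\ldots,\mu_\Gamma(\bar b_m),\mu_\Gamma(\gamma(a_1)),\ldots,\mu_\Gamma(\gamma(a_s)))$. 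Since $\mu_\Gamma\circ\gamma=\id_A$, this says $\MA\models\varphi(\mu_\Gamma(\bar b_1),\ldots,\mu_\Gamma(\bar b_m),a_1,\ldots,a_s)$.

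Next we apply the forward direction of Theorem~\ref{RT} to the same elements, now choosing the preimages $\bar b_i\in\mu_\Gamma^{-1}(\mu_\Gamma(\bar b_i))$ and $\gamma^\prime(a_j)\in\mu_\Gamma^{-1}(a_j)$. The latter is valid because $\gamma^\prime$ maps into $U_\Gamma(\MB,\bar p)$ and $\mu_\Gamma\circ\gamma^\prime=\id_A$. This yields $\MB\models\varphi_\Gamma(\bar b_1,\ldots,\bar b_m,\gamma^\prime(a_1),\ldots,\gamma^\prime(a_s),\bar p)$, so $(\bar b_1,\ldots,\bar b_m)\in\psi_{\Gamma,\bar p,\gamma^\prime}(\MB_B)$.

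The only delicate point is bookkeeping with the normal form. The translation is defined on a chosen normal form of $\psi$, and the constants $a_j$ must be replaced by variables $z_j$ so that Theorem~\ref{RT}, which concerns pure $L(\MA)$\=/formulas, applies. We fix one normal form $\varphi$ and use the same one for both $\gamma$ and $\gamma^\prime$; the statement is then about that common normal form. Note also that $\varphi$ is still in the normal form after substituting variables for constants, since unnested atomic formulas with a constant $a_j$ become unnested atomic formulas in $z_j$. Beyond this, the argument is a direct two-sided application of the Reduction Theorem.
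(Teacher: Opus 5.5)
Your argument is correct and is the intended one. The paper gives no separate proof and presents the statement as a direct consequence of the Reduction Theorem~\ref{RT}; your two-sided use of it (the converse direction with $\gamma$, then the forward direction with $\gamma^\prime$) is exactly that consequence spelled out. As a side remark, your argument actually shows that both sets equal $\{(\bar b_1,\ldots,\bar b_m)\in U_\Gamma^m(\MB,\bar p)\mid \MA_A\models\psi(\mu_\Gamma(\bar b_1),\ldots,\mu_\Gamma(\bar b_m))\}$, so the conclusion holds even if different normal forms are used for $\gamma$ and $\gamma^\prime$.
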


\begin{remark}\label{lemma:gamma}
Let $(\Gamma,\bar p)\colon \MA\rightsquigarrow\MB$ be an interpretation and $\mu_\Gamma,\mu_\Gamma^\prime$ be two coordinate maps of the interpretation $(\Gamma,\bar p)$. If $\gamma\colon A\to U_\Gamma(\MB,\bar p)$ is a function, such that $\mu_\Gamma\circ\gamma=\mu^\prime_\Gamma\circ\gamma=\id_A$, then $\mu_\Gamma=\mu_\Gamma^\prime$. Indeed, for any $\bar b\in U_\Gamma(\MB,\bar p)$ and $a=\mu_\Gamma(\bar b)$ one has $\mu_\Gamma(\gamma(a))=\mu_\Gamma(\bar b)$, i.\,e., $\MB\models E_\Gamma(\bar b,\gamma(a),\bar p)$. Therefore, $\mu^\prime_\Gamma(\bar b)=\mu^\prime_\Gamma(\gamma(a))=a$. 
\end{remark}

Below, we give the formulation of the Reduction Theorem in the form in which it is convenient for us to present it for use in this article.

\begin{theorem}[Extended Reduction Theorem]\label{ERT} 
Suppose that $\MA=\langle A; L(\MA)\rangle$, $\nsA=\langle \nsuA; L(\MA)\rangle$, $\MB=\langle B; L(\MB)\rangle$ and $\nsB=\langle \nsuB; L(\MB)\rangle$ are algebraic structures, and
\begin{enumerate}[label=(\arabic*)]
    \item $\iota\colon \MA\to\nsA$ is an elementary $L(\MA)$\=/embedding;
    \item $(\Gamma,\bar p,\mu_\Gamma)\colon\nsA\to\MB$ is an interpretation;
    \item $\gamma\colon \nsuA\to U_\Gamma(\MB,\bar p)$ is a map, such that $\mu_\Gamma\circ\gamma=\id_{\nsuA}$;
    \item $\kappa\colon \MB\to\nsB$ is an elementary $L(\MB)$\=/embedding; i.\,e., one has
\end{enumerate}
$$ 
\begin{tikzcd}
\MA\arrow[r,"\iota"]&\nsA\arrow[r,rightsquigarrow,"{\Gamma,\bar p,\mu_\Gamma}"] &[1cm]\MB \arrow[r,"{\kappa}"] & \nsB.
\end{tikzcd}
$$
Then
\begin{enumerate}[label=(RT)]
\item\label{item:RT} for any formula $\psi(x_1,\ldots,x_s)$ in the normal form in the language $L(\MA)$ and any elements $a_1,\ldots,a_s\in A$ one has
\begin{equation*}
 \MA\models \psi(a_1,\ldots,a_s) \ \iff \ \nsB\models \psi_\Gamma(\kappa(\gamma(\iota(a_1))),\ldots,\kappa(\gamma(\iota(a_s))),\kappa(\bar p))
\end{equation*}
\end{enumerate}
(or, equivalently, for any sentence $\psi$ in the normal form in the language $L(\MA)\cup A$  one has
$ \MA_A\models \psi$ if and only if $\nsB_{\kappa(B)}\models \psi_{\Gamma,\kappa(\bar p), \kappa\circ\gamma\circ\iota}$).
\end{theorem}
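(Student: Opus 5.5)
The plan is to split the equivalence (RT) into three steps and chain them:
\[
\MA\models\psi(\bar a)\iff\nsA\models\psi(\iota(\bar a))\iff\MB\models\psi_\Gamma(\gamma(\iota(\bar a)),\bar p)\iff\nsB\models\psi_\Gamma(\kappa(\gamma(\iota(\bar a))),\kappa(\bar p)).
\]
Here $\bar a=(a_1,\ldots,a_s)$, and maps are applied coordinatewise, following the convention from the preliminaries.

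The first equivalence is exactly condition \ref{EE}, i.e. the definition of an elementary $L(\MA)$\=/embedding $\iota$, applied to the $L(\MA)$\=/formula $\psi$. The last equivalence is the same fact for the elementary $L(\MB)$\=/embedding $\kappa$, applied to the $L(\MB)$\=/formula $\psi_\Gamma(\bar x_1,\ldots,\bar x_s,\bar y)$ with the tuple of elements $\gamma(\iota(a_1)),\ldots,\gamma(\iota(a_s)),\bar p$ of $B$. This step only needs $\psi_\Gamma$ to be a genuine first-order $L(\MB)$\=/formula. Since $\psi$ is in normal form, its $\Gamma$\=/translation is a well-defined formula, so there is no ambiguity.

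The middle equivalence is the Reduction Theorem (Theorem~\ref{RT}) for the interpretation $(\Gamma,\bar p,\mu_\Gamma)\colon\nsA\rightsquigarrow\MB$, applied to the elements $\iota(a_1),\ldots,\iota(a_s)\in\nsuA$.
\begin{itemize}
\item Forward direction: since $\mu_\Gamma\circ\gamma=\id_{\nsuA}$, we have $\gamma(\iota(a_i))\in\mu_\Gamma^{-1}(\iota(a_i))$. Theorem~\ref{RT} then gives $\MB\models\psi_\Gamma(\gamma(\iota(\bar a)),\bar p)$.
\item Converse direction: if $\MB\models\psi_\Gamma(\gamma(\iota(\bar a)),\bar p)$, the converse part of Theorem~\ref{RT} yields $\nsA\models\psi(\mu_\Gamma(\gamma(\iota(a_1))),\ldots)$, which is $\nsA\models\psi(\iota(\bar a))$ because $\mu_\Gamma\circ\gamma=\id_{\nsuA}$.
\end{itemize}

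The step needing the most care is this middle one. Theorem~\ref{RT} applies to formulas in normal form, and one must check that the hypothesis $\gamma\colon\nsuA\to U_\Gamma(\MB,\bar p)$ with $\mu_\Gamma\circ\gamma=\id$ gives exactly the membership $\gamma(\iota(a_i))\in\mu_\Gamma^{-1}(\iota(a_i))$ that the theorem requires. The rest is bookkeeping.

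Finally, the parenthetical reformulation follows by unwinding the definition of the $(\Gamma,\kappa(\bar p),\kappa\circ\gamma\circ\iota)$\=/translation. A sentence $\psi(a_1,\ldots,a_s)$ of $L(\MA)\cup A$ in normal form is an $L(\MA)$\=/formula $\psi(z_1,\ldots,z_s)$ with constants substituted. Its translation is by definition $\psi_\Gamma(\kappa\gamma\iota(a_1),\ldots,\kappa\gamma\iota(a_s),\kappa(\bar p))$, read in $\nsB_{\kappa(B)}$. This matches the right-hand side of (RT) verbatim.
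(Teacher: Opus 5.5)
Your proof is correct and is the argument the paper intends when it calls this a ``trivial generalization'' of the Reduction Theorem (the paper gives no separate proof): chain the elementarity of $\iota$, Theorem~\ref{RT} for $(\Gamma,\bar p,\mu_\Gamma)\colon\nsA\rightsquigarrow\MB$ with $\bar b_i=\gamma(\iota(a_i))\in\mu_\Gamma^{-1}(\iota(a_i))$, and the elementarity of $\kappa$ applied to the $L(\MB)$-formula $\psi_\Gamma$. Your unwinding of the parenthetical reformulation also matches the paper's definition of the $(\Gamma,\bar p,\gamma)$-translation.
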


In this formulation, we've added preliminary and final elementary embeddings $\MA\preceq \nsA$ and $\MB\preceq\nsB$ to an interpretation $\nsA\rightsquigarrow\MB$. This is a trivial generalization of the standard formulation (note that, strictly speaking, the Extended Reduction Theorem is not a generalization of the Reduction Theorem, since in its formulation the preimages of all elements of $A$ are fixed by a single map $\gamma$, while in the Reduction Theorem this is not the case). We did this to invert the Reduction Theorem's result in the following symmetric way. 

\begin{theorem}[Inverse Reduction Theorem]\label{IRT} 
Let $\MA=\langle A; L(\MA)\rangle$ and $\MB=\langle B; L(\MB)\rangle$ be algebraic structures, and $(\Gamma,\bar p,\gamma)\colon L(\MA)\cup A\to L(\MB)\cup B$ be an extended code. Suppose that
\begin{enumerate}[label=(IRT)]
\item\label{item:IRT} for any formula $\psi(x_1,\ldots,x_s)$ in the normal form in the language $L(\MA)$ and any elements $a_1,\ldots,a_s\in A$ one has
\begin{equation*}
 \MA\models \psi(a_1,\ldots,a_s) \ \iff \ \MB\models \psi_\Gamma(\gamma(a_1),\ldots,\gamma(a_s),\bar p)
\end{equation*}
\end{enumerate}
(or, equivalently, for any sentence $\psi$ in the normal form in the language $L(\MA)\cup A$  one has
$ \MA_A\models \psi$ if and only if $\MB_B\models \psi_{\Gamma,\bar p, \gamma}$).
Then 
\begin{enumerate}[label=(\arabic*)]
    \item the $L(\MA)$\=/structure $\Gamma(\MB,\bar p)$ is well-defined and, in particular, there exists an interpretation $(\Gamma,\bar p)\colon \Gamma(\MB,\bar p)\rightsquigarrow\MB$;
    \item $\gamma(A)\subseteq U_\Gamma(\MB,\bar p)$;
    \item the map $\iota\colon A\to U_\Gamma(\MB,\bar p)/{\sim_\Gamma}$, such that $\iota(a)=\gamma(a)/{\sim_\Gamma}$ for all $a\in A$, is an elementary $L(\MA)$\=/embedding $\iota\colon \MA\to \Gamma(\MB,\bar p)$, i.\,e., one has
    \end{enumerate}
$$ 
\begin{tikzcd}
\MA\arrow[r,"{\iota}"] &\Gamma(\MB,\bar p) \arrow[r,rightsquigarrow,"{\Gamma,\bar p,\mu_\Gamma}"] &[1cm] \MB
\end{tikzcd}
$$
and $\mu_\Gamma\circ\gamma=\iota$.
\end{theorem}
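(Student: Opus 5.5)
The proof consists of a series of substitutions into condition~\ref{item:IRT}, each time with a suitable normal-form formula. For item~(2), take $\psi(x)$ to be the formula $x=x$. Since $\MA\models a=a$ for every $a\in A$, condition~\ref{item:IRT} gives $\MB\models (x=x)_\Gamma(\gamma(a),\bar p)$. By the definition of the $\Gamma$\=/translation in~\cite[Subsection~2.7]{Th_int1}, $(x=x)_\Gamma$ is $E_\Gamma(\bar x,\bar x,\bar y)$, possibly conjoined with $U_\Gamma(\bar x,\bar y)$. If the conjunct with $U_\Gamma$ is present, we get $\gamma(a)\in U_\Gamma(\MB,\bar p)$ directly. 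If it is absent, we use instead $\exists\,z\,(z=x)$, whose translation is $\exists\,\bar z\,(U_\Gamma(\bar z,\bar y)\wedge E_\Gamma(\bar z,\bar x,\bar y))$ by Remark~\ref{remar:norm_form}. We then need $E_\Gamma$ to stay inside $U_\Gamma$, and that comes from item~(1) below. So I would prove (1) first.

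For item~(1), recall what well-definedness of $\Gamma(\MB,\bar p)$ requires:
\begin{enumerate}[label=(\roman*)]
\item $U_\Gamma(\MB,\bar p)$ is nonempty;
\item $E_\Gamma(\cdot,\cdot,\bar p)$ is an equivalence relation on $U_\Gamma(\MB,\bar p)$;
\item each $R_\Gamma(\cdot,\bar p)$ is $E_\Gamma$\=/invariant;
\item each $f_\Gamma(\cdot,\bar p)$ defines an $E_\Gamma$\=/invariant total function on the quotient;
\item each $c_\Gamma(\cdot,\bar p)$ defines a single class.
\end{enumerate}
The key observation is that each of these is expressed by a sentence $\theta$ in the language $L(\MB)$ with parameters $\bar p$, and each such $\theta$ is exactly the $\Gamma$\=/translation $\chi_\Gamma$ of a valid $L(\MA)$\=/sentence $\chi$, up to first-order equivalence and the relativization to $U_\Gamma$ supplied by Remark~\ref{remar:norm_form}. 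For example:
\begin{itemize}
\item nonemptiness corresponds to $\exists\,x\,(x=x)$;
\item reflexivity, symmetry and transitivity correspond to the equality axioms $\forall x\,(x=x)$ and their analogues;
\item invariance of $R$ corresponds to $\forall\bar x\,\forall\bar x'\,\bigl(\bigwedge x_i=x_i'\wedge R(\bar x)\to R(\bar x')\bigr)$;
\item totality and uniqueness of $f$ correspond to $\forall\bar x\,\exists z\,(f(\bar x)=z)$ together with the uniqueness sentence;
\item the condition for a constant $c$ corresponds to $\exists!\,z\,(c=z)$.
\end{itemize}
Each such $\chi$ holds in $\MA$, so condition~\ref{item:IRT} with $s=0$ yields $\MB\models\chi_\Gamma(\bar p)$. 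After untangling the translation, this is exactly the required property. This gives the structure $\Gamma(\MB,\bar p)$ and the interpretation $(\Gamma,\bar p,\mu_\Gamma)$ with $\mu_\Gamma$ the canonical quotient map.

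For item~(3), define $\iota(a)=\gamma(a)/{\sim_\Gamma}$. This makes sense by (2), and $\mu_\Gamma\circ\gamma=\iota$ holds by construction. To show that $\iota$ is elementary, take a normal-form formula $\psi(x_1,\ldots,x_s)$ in $L(\MA)$ and $a_1,\ldots,a_s\in A$. By condition~\ref{item:IRT}, $\MA\models\psi(\bar a)$ if and only if $\MB\models\psi_\Gamma(\gamma(a_1),\ldots,\gamma(a_s),\bar p)$. By the Reduction Theorem (Theorem~\ref{RT}) applied to $(\Gamma,\bar p,\mu_\Gamma)\colon\Gamma(\MB,\bar p)\rightsquigarrow\MB$, using both of its directions, the latter holds if and only if $\Gamma(\MB,\bar p)\models\psi(\mu_\Gamma(\gamma(a_1)),\ldots,\mu_\Gamma(\gamma(a_s)))$, that is, if and only if $\Gamma(\MB,\bar p)\models\psi(\iota(a_1),\ldots,\iota(a_s))$. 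Since every formula is equivalent to one in normal form, this shows $\iota$ is an elementary embedding. Injectivity is the special case of $x_1=x_2$. The parenthetical reformulation of condition~\ref{item:IRT} in terms of $\psi_{\Gamma,\bar p,\gamma}$ is immediate from the definition of that translation.

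The main obstacle is item~(1). There we must check carefully that the conditions defining well-definedness of $\Gamma(\MB,\bar p)$ coincide, up to first-order equivalence, with $\Gamma$\=/translations of normal forms of valid sentences. This depends on the precise clauses of the $\Gamma$\=/translation from~\cite[Subsection~2.7]{Th_int1}: in particular, how atomic formulas $x=y$, $f(\bar x)=y$ and $c=x$ are translated, and where relativization to $U_\Gamma$ occurs. I would handle it by writing out the unnested atomic cases explicitly and invoking Remark~\ref{remar:norm_form} for the quantifiers.
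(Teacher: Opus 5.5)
Your proof is correct. For items (1) and (2) it is the paper's argument. The paper gets (1) by noting that $\MA\models\AC_{\Id_{L(\MA)}}$, so (IRT) for sentences gives $\MB\models\AC_\Gamma(\bar p)$, citing \cite[Remark~12, Proposition~1]{Th_int1}. That is exactly the ``admissibility conditions are $\Gamma$-translations of valid $L(\MA)$-sentences'' check you propose to carry out by hand. The paper gets (2) from $\MA\models(a=a)$, as you do.

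Your hedge in (2) is unnecessary. By the definition of the $\Gamma$-translation, $U_\Gamma(\bar x_i,\bar y)$ is conjoined for each free variable; the paper relies on this, e.g.\ in the proof of Fact~\ref{fact1.5}. So $(x=x)_\Gamma$ already forces $\gamma(a)\in U_\Gamma(\MB,\bar p)$. Your fallback via $\exists z\,(z=x)$ would not have worked as written. Well-definedness of $\Gamma(\MB,\bar p)$ only constrains $E_\Gamma$ on $U_\Gamma(\MB,\bar p)$; it does not make $U_\Gamma(\MB,\bar p)$ closed under $E_\Gamma$, so item (1) does not supply what you need there.

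Where you genuinely differ is item (3). The paper runs the Tarski--Vaught test (Fact~\ref{TV}):
\begin{itemize}
\item it checks constants and function symbols separately, by feeding $c=c^{\MA}$ and $f(a_1,\ldots,a_{n_f})=a_0$ into \ref{item:IRT};
\item for the existential clause, it goes from $\Gamma(\MB,\bar p)$ to $\MB$ by \cite[Corollary~2]{Th_int1};
\item it then returns to $\MA$ by applying \ref{item:IRT} to $\exists x\,\psi$, using Remark~\ref{remar:norm_form}.
\end{itemize}
You instead chain two biconditionals. The first is \ref{item:IRT}. The second is the two-sided Reduction Theorem~\ref{RT} for the canonical interpretation $(\Gamma,\bar p,\mu_\Gamma)\colon\Gamma(\MB,\bar p)\rightsquigarrow\MB$, applied at the tuples $\gamma(a_i)\in U_\Gamma(\MB,\bar p)$. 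This verifies \ref{EE} for $\iota$ in one step, including injectivity and preservation of relations, with no separate treatment of symbols.

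Your route is shorter. The paper's version uses the Reduction Theorem only in the direction from $\Gamma(\MB,\bar p)$ to $\MB$ and follows the standard model-theoretic criterion. Since both directions are available once (1) is established, nothing is lost by your shortcut.
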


Before proving the Inverse Reduction Theorem, let us recall Tarski\,--\,Vaught Test, writing it in a form that is convenient for us to use in this article.

\begin{fact}[Tarski\,--\,Vaught Test]\label{TV}
A map $\iota\colon A\to\nsuA$ is an elementary $L(\MA)$\=/embedding of an algebraic structure $\MA=\langle A; L(\MA)\rangle$ into an algebraic structure $\nsA=\langle \nsuA; L(\MA)\rangle$ if and only if 
\begin{enumerate}[label=(\arabic*)]
\item $\iota(c^{\MA})=c^{\nsA}$ for every constant symbol $c\in L(\MA)$;
\item $\iota(f^{\MA}(a_1,\ldots,a_{n_f}))=f^{\nsA}(\iota(a_1),\ldots,\iota(a_{n_f}))$ for every functional symbol $f\in L(\MA)$ and elements $a_1,\ldots,a_{n_f}\in A$;
\item for every $L(\MA)$\=/formula $\psi(x,x_1,\ldots,x_s)$ and any elements $a_1,\ldots,a_s\in A$ one has
$$
\nsA\models \exists\,x\:\psi(x,\iota(a_1),\ldots,\iota(a_s)) \ \Longrightarrow \MA\models\exists\, x\:\psi(x,a_1,\ldots,a_s).
$$
\end{enumerate}
\end{fact}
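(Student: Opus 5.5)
The plan is to prove the two directions separately. The key observation is that condition (3) as stated ranges over \emph{all} $L(\MA)$\=/formulas $\psi(x,x_1,\ldots,x_s)$, including those in which $x$ does not occur. So sufficiency reduces almost immediately to condition~\ref{EE}.

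\emph{Necessity.} Assume $\iota$ is an elementary $L(\MA)$\=/embedding, i.\,e., \ref{EE} holds.
\begin{enumerate}[label=(\roman*)]
\item For a constant symbol $c$, apply \ref{EE} to the formula $x_1=c$ at the element $c^\MA$. This gives $\nsA\models \iota(c^\MA)=c$, which is (1).
\item For a functional symbol $f$, apply \ref{EE} to the formula $x_0=f(x_1,\ldots,x_{n_f})$ at the tuple $(f^\MA(a_1,\ldots,a_{n_f}),a_1,\ldots,a_{n_f})$. This gives (2).
\item For (3), note that $\exists\,x\:\psi(x,x_1,\ldots,x_s)$ is itself an $L(\MA)$\=/formula in the free variables $x_1,\ldots,x_s$. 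Applying \ref{EE} to it yields even the equivalence, not just the implication.
\end{enumerate}

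\emph{Sufficiency.} Let $\varphi(x_1,\ldots,x_s)$ be any $L(\MA)$\=/formula and $a_1,\ldots,a_s\in A$. We must show $\MA\models\varphi(\bar a)\iff\nsA\models\varphi(\iota(\bar a))$.
\begin{enumerate}[label=(\roman*)]
\item Take $\psi(x,x_1,\ldots,x_s)=\varphi(x_1,\ldots,x_s)$, with $x$ a fresh variable that does not occur in $\varphi$. Both structures are nonempty, so $\exists\,x\:\psi$ is equivalent to $\varphi$ in each of them. Condition (3) then gives $\nsA\models\varphi(\iota(\bar a))\Rightarrow\MA\models\varphi(\bar a)$.
\item Applying the same argument to $\neg\varphi$ gives $\nsA\models\neg\varphi(\iota(\bar a))\Rightarrow\MA\models\neg\varphi(\bar a)$, i.\,e., the converse implication.
\end{enumerate}
Hence \ref{EE} holds. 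Injectivity of $\iota$ is the special case $\varphi=(x_1=x_2)$, so $\iota$ is an elementary embedding. In this reading, conditions (1) and (2) are consequences of (3) and are listed only for convenience.

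The only delicate point is deciding whether (3) is meant for arbitrary $\psi$ or only for $\psi$ in which $x$ really occurs. If one insists on the latter, I would fall back on the classical argument. First, (1) and (2) show by induction on terms that $\iota(t^\MA(\bar a))=t^\nsA(\iota(\bar a))$. Next, (3) applied to the formulas $x=x\wedge R(\bar x)$, $x=x\wedge\neg R(\bar x)$, $x=x\wedge x_1=x_2$ and $x=x\wedge\neg(x_1=x_2)$ handles atomic formulas, relations and injectivity. Finally, induction on formulas proves \ref{EE}. The Boolean steps are immediate. In the $\exists$\=/step, the direction $\MA\models\exists\,x\,\psi\Rightarrow\nsA\models\exists\,x\,\psi$ follows from the induction hypothesis by pushing the witness forward along $\iota$, and the converse direction is exactly (3). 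I expect this $\exists$\=/step to be the main (though standard) obstacle.
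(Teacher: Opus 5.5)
Your proof is correct. The paper itself gives no proof of this Fact; it is only ``recalled'' in a form adapted to its later use, so there is no argument of the paper to compare against.

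Your key observation is the right one. The conclusion of (3) here is $\MA\models\exists\,x\,\psi(x,a_1,\ldots,a_s)$, not the classical ``there is $b\in A$ with $\nsA\models\psi(\iota(b),\iota(a_1),\ldots,\iota(a_s))$''. Because of this, applying (3) to $\varphi$ and to $\neg\varphi$ already yields (EE), and (1) and (2) are redundant in this formulation.

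In the classical Tarski--Vaught test the witness condition says nothing about $\MA$ itself. There one must assume that $\iota$ is an embedding, which includes preservation of relations and injectivity and not just (1) and (2). One then runs the induction on formulas, and this is what your fallback reproduces.

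The ``delicate point'' does not actually arise. If you insist that $x$ occur in $\psi$, take $\psi=(x=x)\wedge\varphi$, the same padding you already use for atomic formulas in the fallback. Your two-line argument then goes through unchanged, using only that $\nsA$ is nonempty. So the fallback induction is correct but unnecessary.
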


\begin{proof}[Proof of Inverse Reduction Theorem~\ref{IRT}]
Note that the condition~\ref{item:IRT}, in particular, means that 
for any $L(\MA)$\=/sentence $\psi$ if $\MA\models \psi$ then $\MB\models\psi_\Gamma(\bar p)$. So, take the identical interpretation $\Id_{L(\MA)}\colon \MA\rightsquigarrow\MA$. The admissibility conditions of $\Id_{L(\MA)}$ hold on $\MA$, i.\,e., $\MA\models \AC_{\Id_{L(\MA)}}$. Therefore, $\MB\models\AC_{\Gamma}(\bar p)$ (see~\cite[Remark~12]{Th_int1}). Hence, by~\cite[Proposition~1]{Th_int1}, there exists an $L(\MA)$\=/structure $\Gamma(\MB,\bar p)$. 

Further, we will use the condition~\ref{item:IRT} all the time. Take an element $a\in A$. Since $\MA\models (a=a)$, then $\MB\models U_\Gamma(\gamma(a),\bar p)\wedge E_\Gamma(\gamma(a),\gamma(a),\bar p)$, in particular, $\gamma(a)\in U_\Gamma(\MB,\bar p)$ and the map $\iota\colon A\to U_\Gamma(\MB,\bar p)/{\sim_\Gamma}$, such that $\iota(a)=\gamma(a)/{\sim_\Gamma}$ for all $a\in A$, is well-defined. We will skip further the conjuncts $U_\Gamma(\gamma(a),\bar p)$, $a\in A$, in $\Gamma$\=/translations of formulas, since this information is no longer new. Let us do Tarski\,--\,Vaught Test (Fact~\ref{TV}) for the map $\iota\colon A\to U_\Gamma(\MB,\bar p)/{\sim_\Gamma}$.  

Take a constant symbol $c\in L(\MA)$. Since $\MA\models (c=c^\MA)$, where $c^\MA\in A$, then $\MB\models c_\Gamma(\gamma(c^\MA),\bar p)$, i.\,e., $c^{\Gamma(\MB,\bar p)}=\gamma(c^\MA)/{\sim_\Gamma}=\iota(c^\MA)$. For a functional symbol $f\in L(\MA)$ and any elements $a_0,a_1,\ldots,a_{n_f}\in A$ if $\MA\models (f(a_1,\ldots,a_{n_f})=a_0)$, then $\MB\models f_\Gamma(\gamma(a_1),\ldots,\gamma(a_{n_f}),\gamma(a_0),\bar p)$, therefore, $f^{\Gamma(\MB,\bar p)}(\gamma(a_1)/{\sim_\Gamma},\ldots,\gamma(a_{n_f})/{\sim_\Gamma})=\gamma(a_0)/{\sim_\Gamma}$, i.\,e, $f^{\Gamma(\MB,\bar p)}(\iota(a_1),\ldots,\iota(a_{n_f}))=\iota(a_0)$. 

For an $L(\MA)$\=/formula $\psi(x, x_1,\ldots,x_s)$ and elements $a_1,\ldots,a_s\in A$ assume that $\psi$ has the normal form and $\Gamma(\MB,\bar p)\models \exists\,x\: \psi(x,\iota(a_1),\ldots,\iota(a_s))$. Take a tuple $\bar b\in U_\Gamma(\MB,\bar p)$, such that $\Gamma(\MB,\bar p)\models  \psi(\bar b/{\sim_\Gamma},\gamma(a_1)/{\sim_\Gamma},\ldots,\gamma(a_s)/{\sim_\Gamma})$. By~\cite[Corollary~2]{Th_int1}, one has $\MB\models \psi_\Gamma(\bar b, \gamma(a_1),\ldots,\gamma(a_s),\bar p)$. Therefore, $\MB\models \exists \,\bar x\: (U_\Gamma(\bar x,\bar p)\wedge\psi_\Gamma (\bar x,\gamma(a_1),\ldots,\gamma(a_s),\bar p))$, that means $\MB_B\models(\exists \,x\: \psi(x,a_1,\ldots,a_s))_{\Gamma,\bar p,\gamma}$ (see Remark~\ref{remar:norm_form}). Hence, $\MA\models \exists\,x\:\psi(a_1,\ldots,a_s)$. Thus, Tarski\,--\,Vaught Test passes and the  map $\iota$ is an elementary $L(\MA)$\=/embedding.
\end{proof}

The following result is one of the simple facts from the direction ``Theory of Interpretations and Elementary Theories''. This topic is covered in a later article in this series, but we need this simple fact from there right now.

\begin{lemma}\label{elem_emb}
Suppose $\kappa\colon \MB\to\nsB$ is an elementary $L(\MB)$\=/embedding, $\Gamma\colon L(\MA)\to L(\MB)$ is an interpretation code and $\bar p\in \MB$, $|\bar p|=\dim_\param\Gamma$. Then the algebraic $L(\MA)$\=/structure $\Gamma(\MB,\bar p)$ is well-defined if and only if the algebraic $L(\MA)$\=/structure $\Gamma(\nsB,\kappa(\bar p))$ is well-defined. In this case the map $\bar\kappa\colon U_\Gamma(\MB,\bar p)/{\sim_\Gamma}\to U_\Gamma(\nsB,\kappa(\bar p))/{\sim_\Gamma}$, which sends $\bar b/{\sim_\Gamma}$ to $\kappa(\bar b)/{\sim_\Gamma}$, is an elementary $L(\MA)$\=/embedding $\bar\kappa\colon \Gamma(\MB,\bar p)\to \Gamma(\nsB,\kappa(\bar p))$.
\end{lemma}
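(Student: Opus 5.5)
The plan is to reduce both claims to the defining property of an elementary embedding, applied to $L(\MB)$\=/formulas with parameters from $\bar p$.

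\emph{Well-definedness.} By \cite[Proposition~1]{Th_int1}, $\Gamma(\MB,\bar p)$ is well-defined exactly when $\MB\models\AC_\Gamma(\bar p)$. Here $\AC_\Gamma(\bar y)$ is the set of admissibility conditions: $E_\Gamma$ is an equivalence relation on $U_\Gamma$, the formulas $c_\Gamma,f_\Gamma$ define functions compatible with $E_\Gamma$, and the $R_\Gamma$ are $E_\Gamma$\=/invariant. This is a finite (or at least first-order) collection of $L(\MB)$\=/formulas in the variables $\bar y$. Since $\kappa$ is elementary, $\MB\models\AC_\Gamma(\bar p)$ if and only if $\nsB\models\AC_\Gamma(\kappa(\bar p))$, and the first claim follows at once.

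\emph{The map $\bar\kappa$.} First, $\bar\kappa$ is well-defined and injective. For $\bar b\in B^{\dim\Gamma}$ we have $\MB\models U_\Gamma(\bar b,\bar p)$ iff $\nsB\models U_\Gamma(\kappa(\bar b),\kappa(\bar p))$, so $\kappa$ maps $U_\Gamma(\MB,\bar p)$ into $U_\Gamma(\nsB,\kappa(\bar p))$. Likewise $\MB\models E_\Gamma(\bar b,\bar b',\bar p)$ iff $\nsB\models E_\Gamma(\kappa(\bar b),\kappa(\bar b'),\kappa(\bar p))$.

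Second, I will show $\bar\kappa$ is elementary by a direct translation argument rather than by the Tarski\,--\,Vaught test. Take an $L(\MA)$\=/formula $\psi(x_1,\ldots,x_s)$ in normal form and classes $\bar b_1/{\sim_\Gamma},\ldots,\bar b_s/{\sim_\Gamma}$. Apply the Reduction Theorem~\ref{RT}, in the form of \cite[Corollary~2]{Th_int1}, to the interpretation $(\Gamma,\bar p)\colon\Gamma(\MB,\bar p)\rightsquigarrow\MB$ with its canonical coordinate map $\bar b\mapsto\bar b/{\sim_\Gamma}$. This gives
$$\Gamma(\MB,\bar p)\models\psi(\bar b_1/{\sim_\Gamma},\ldots,\bar b_s/{\sim_\Gamma})\iff\MB\models\psi_\Gamma(\bar b_1,\ldots,\bar b_s,\bar p).$$
Because $\psi_\Gamma$ is an $L(\MB)$\=/formula and $\kappa$ is elementary, the right side is equivalent to $\nsB\models\psi_\Gamma(\kappa(\bar b_1),\ldots,\kappa(\bar b_s),\kappa(\bar p))$. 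Applying Theorem~\ref{RT} again, now to $(\Gamma,\kappa(\bar p))\colon\Gamma(\nsB,\kappa(\bar p))\rightsquigarrow\nsB$, this is equivalent to
$$\Gamma(\nsB,\kappa(\bar p))\models\psi(\bar\kappa(\bar b_1/{\sim_\Gamma}),\ldots,\bar\kappa(\bar b_s/{\sim_\Gamma})).$$
Since every formula has a normal form, this is exactly condition~\ref{EE} for $\bar\kappa$, so $\bar\kappa$ is an elementary $L(\MA)$\=/embedding.

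\emph{Main obstacle.} The only delicate point is the bookkeeping in the first step. I must make sure that well-definedness of $\Gamma(\cdot,\bar p)$ is captured precisely by first-order conditions in $\bar p$ alone. That is what \cite[Proposition~1]{Th_int1} together with the formulas $\AC_\Gamma$ provide, so once these are quoted the rest is routine transfer along $\kappa$.
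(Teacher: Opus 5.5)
Your proof is correct. The well-definedness part and the check that $\bar\kappa$ is well defined coincide with the paper's argument.

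For elementarity you take a different, more direct route. The paper runs the Tarski--Vaught test (Fact~\ref{TV}). It checks constants and function symbols separately via $c_\Gamma$ and $f_\Gamma$. For the existential step it pushes $\Gamma(\nsB,\kappa(\bar p))\models\psi(\bar b/{\sim_\Gamma},\ldots)$ down to $\nsB\models\psi_\Gamma(\bar b,\kappa(\bar b_1),\ldots,\kappa(\bar p))$. It then pulls an existential witness $\bar c$ back to $\MB$ by elementarity of $\kappa$, and invokes the Reduction Theorem to see that $\bar c\in U_\Gamma(\MB,\bar p)$.

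You instead verify condition~\ref{EE} for all formulas at once. You sandwich the elementarity of $\kappa$, applied to the single $L(\MB)$\=/formula $\psi_\Gamma$, between two applications of Theorem~\ref{RT}: one to $\Gamma(\MB,\bar p)\rightsquigarrow\MB$ and one to $\Gamma(\nsB,\kappa(\bar p))\rightsquigarrow\nsB$.

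Both arguments use the same ingredients. Yours is shorter and needs no separate treatment of constants, functions, or the witness's membership in $U_\Gamma$. The relativization of quantifiers is already built into $\psi_\Gamma$ and handled by the biconditional in Theorem~\ref{RT}. The paper's version keeps the Tarski--Vaught template it also uses for the Inverse Reduction Theorem.

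One point to keep explicit: both applications of Theorem~\ref{RT} must use the same normal form of $\psi$. Your argument does this implicitly, since the translation depends on the chosen normal form.
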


\begin{proof}
According to~\cite[Proposition~1]{Th_int1}, the algebraic $L(\MA)$\=/structure $\Gamma(\MB,\bar p)$ is well-defined if and only if one has $\MB\models\AC_\Gamma(\bar p)$, if and only if $\nsB\models\AC_\Gamma(\kappa(\bar p))$, if and only if the algebraic $L(\MA)$\=/structure $\Gamma(\nsB,\kappa(\bar p))$ is well-defined. Take any tuple $\bar b\in U_\Gamma(\MB,\bar p)$. One has $\kappa(\bar b)\in U_\Gamma(\nsB,\kappa(\bar p))$. Further, if $\bar b,\bar b^\prime\in U_\Gamma(\MB,\bar p)$ and $\MB\models E_\Gamma(\bar b,\bar b^\prime,\bar p)$, then $\nsB\models E_\Gamma(\kappa(\bar b),\kappa(\bar b^\prime),\kappa(\bar p))$, therefore, the map $\bar\kappa$ is well-defined. Let us do Tarski\,--\,Vaught Test (Fact~\ref{TV}) for the map $\bar\kappa$. For any constant symbol $c\in L(\MA)$ and $\bar b\in U_\Gamma(\MB,\bar p)$ one has $\MB\models c_\Gamma(\bar b,\bar p)$ if and only if $\nsB\models c_\Gamma(\kappa(\bar b),\kappa(
    \bar p))$, i.\,e., $\bar\kappa(c^{\Gamma(\MB,\bar p)})=c^{\Gamma(\nsB,\kappa(\bar p))}$. Similarly, one can produce for any functional symbol $f\in L(\MA)$. Suppose that for an $L(\MA)$\=/formula $\psi(x,x_1,\ldots,x_s)$ and elements $\bar b_1/{\sim_\Gamma},\ldots,\bar b_s/{\sim_\Gamma}\in \Gamma(\MB,\bar p)$, $\bar b/{\sim_\Gamma}\in \Gamma(\nsB,\kappa(\bar p))$ one has $\Gamma(\nsB,\kappa(\bar p))\models \psi(\bar b/{\sim_\Gamma}, \kappa(\bar b_1)/{\sim_\Gamma},\ldots,\kappa(\bar b_s)/{\sim_\Gamma})$. By~\cite[Corollary~2]{Th_int1}, it meams that $\nsB\models\psi_\Gamma(\bar b,\kappa(\bar b_1),\ldots,\kappa(\bar b_s),\kappa(\bar p))$. Therefore, one has $\MB\models \psi_\Gamma(\bar c,\bar b_1,\ldots,\bar b_s,\bar p)$ for some $\bar c\in \MB$, besides, by the Reduction Theorem~\ref{RT}, $\bar c\in U_\Gamma(\MB,\bar p)$. Thus, $\Gamma(\MB,\bar p)\models \psi(\bar c/{\sim_\Gamma},\bar b_1/{\sim_\Gamma},\ldots,\bar b_s/{\sim_\Gamma})$, as required. So, $\bar\kappa$ is an elementary embedding.
\end{proof}

\subsection{Composition of extended codes}\label{subsec:comp_codes}

In this subsection, we continue the discussion from~\cite[Subsection~4.1]{Th_int1}.

Suppose that $\MA=\langle A; L(\MA)\rangle$, $\MB=\langle B; L(\MB)\rangle$ and $\MC=\langle C; L(\MC)\rangle$ are algebraic structures;  
$(\Gamma,\bar p, \mu_\Gamma)\colon \MA\rightsquigarrow\MB$ and $(\Delta,\bar q, \mu_\Delta)\colon \MB\rightsquigarrow\MC$ are interpretations; and $(\Gamma\circ\Delta,(\bar {\bar p},\bar q), \mu_{\Gamma\circ\Delta})\colon \MA\rightsquigarrow\MC$ is their composition, where $\mu_{\Gamma\circ\Delta}=\mu_\Gamma\circ\mu_\Delta$ and $\bar{\bar p}\in \mu_\Delta^{-1}(\bar p)$. The following corollary from the Reduction Theorem is an important fact about the composition of codes and interpretations.

\begin{lemma}[{\cite[Lemma~11]{Th_int1}}]\label{ABC}
For any formula $\psi(x_1,\ldots,x_m)$ in $L(\MA)\cup A$ one has
$$
\psi_{\Gamma\circ\Delta,\,(\bar{\bar p},\bar q),\,\mu_{\Gamma\circ\Delta}}(\MC)= (\psi_{\Gamma,\bar p, \mu_\Gamma})_{\Delta,\bar q, \mu_\Delta}(\MC).
$$
\end{lemma}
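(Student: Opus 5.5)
Both sides of the claimed equality are subsets of $C^{m\cdot\dim\Gamma\cdot\dim\Delta}$. The plan is to show that each coincides with one and the same set, namely
$$
S=\{(\bar{\bar c}_1,\ldots,\bar{\bar c}_m)\mid \bar{\bar c}_i\in (\mu_\Gamma\circ\mu_\Delta)^{-1}(A),\ \MA_A\models\psi(\mu_\Gamma(\mu_\Delta(\bar{\bar c}_1)),\ldots,\mu_\Gamma(\mu_\Delta(\bar{\bar c}_m)))\}.
$$
By Corollary~\ref{cor:gamma}, neither side depends on the chosen sections of the coordinate maps. So I fix $\gamma\colon A\to U_\Gamma(\MB,\bar p)$ with $\mu_\Gamma\circ\gamma=\id_A$ and $\delta\colon B\to U_\Delta(\MC,\bar q)$ with $\mu_\Delta\circ\delta=\id_B$. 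For the composition I take the section $\delta\circ\gamma$, which satisfies $\mu_{\Gamma\circ\Delta}\circ(\delta\circ\gamma)=\id_A$. I also arrange $\bar{\bar p}=\delta(\bar p)$; if the composition was formed with a different $\bar{\bar p}\in\mu_\Delta^{-1}(\bar p)$, I will use the known independence of the composed interpretation from this choice (\cite[Subsection~4.1]{Th_int1}).

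For the left side, write $\psi=\psi(x_1,\ldots,x_m,a_1,\ldots,a_s)$ with the constants displayed. By the Reduction Theorem~\ref{RT}, applied to the interpretation $(\Gamma\circ\Delta,(\bar{\bar p},\bar q),\mu_{\Gamma\circ\Delta})\colon\MA\rightsquigarrow\MC$ and the normal form of $\psi(\bar x,\bar z)$, a tuple $(\bar{\bar c}_i)$ satisfies $\psi_{\Gamma\circ\Delta}(\bar{\bar c}_1,\ldots,\bar{\bar c}_m,\delta\gamma(a_1),\ldots,\delta\gamma(a_s),\bar{\bar p},\bar q)$ if and only if all $\bar{\bar c}_i$ lie in $U_{\Gamma\circ\Delta}$ and $\MA\models\psi(\mu(\bar{\bar c}_i),a_j)$. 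Here we use $\mu_{\Gamma\circ\Delta}(\delta\gamma(a_j))=a_j$. Thus the left side equals $S$.

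For the right side, I first compute $\varphi=\psi_{\Gamma,\bar p,\gamma}$, a formula in $L(\MB)\cup B$ whose constants are the entries of $\gamma(a_j)$ and of $\bar p$. Applying the Reduction Theorem to $\Delta$, a tuple $(\bar{\bar c}_i)$ of $\Delta$-blocks lies in $\varphi_{\Delta,\bar q,\delta}(\MC)$ if and only if every block lies in $U_\Delta(\MC,\bar q)$ and $\MB\models\psi_\Gamma(\mu_\Delta(\bar{\bar c}_1),\ldots,\gamma(a_j),\ldots,\bar p)$. Here $\delta$ is applied entrywise to the constants, and $\mu_\Delta\circ\delta=\id$. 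Applying the Reduction Theorem once more, now to $\Gamma$, this last condition is equivalent to: $\mu_\Delta(\bar{\bar c}_i)\in U_\Gamma(\MB,\bar p)$ and $\MA\models\psi(\mu_\Gamma\mu_\Delta(\bar{\bar c}_i),a_j)$. This gives exactly $S$, because $U_{\Gamma\circ\Delta}(\MC,(\bar{\bar p},\bar q))$ is by construction $\mu_\Delta^{-1}(U_\Gamma(\MB,\bar p))$ inside the $\Delta$-blocks.

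The main obstacle is the normal-form bookkeeping. The translation $\psi_\Gamma$ is generally not in normal form, so applying $\Delta$ to it requires first renormalizing, and we must check that the result still defines the same set. The Reduction Theorem applies to any normal form of $\psi_\Gamma$, and all normal forms of one formula are logically equivalent, so $\Delta$-translations of equivalent normal forms define the same set in $\MC$. This follows from the Reduction Theorem itself, since each such set is characterized semantically as above. I would state this as a preliminary observation. The remaining care is identifying $U_{\Gamma\circ\Delta}$ and $\mu_{\Gamma\circ\Delta}$ with the definitions in \cite[Subsection~4.1]{Th_int1}.
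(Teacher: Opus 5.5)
Your argument is correct and is the natural proof of this result: the paper does not reprove it but cites it from \cite{Th_int1} as a corollary of the Reduction Theorem, and identifying both sides with the same set $S$ via the Reduction Theorem (once for $\Gamma\circ\Delta$, then for $\Delta$ and for $\Gamma$, handling normal forms through the semantic characterization) is exactly that derivation. One simplification: you do not need to arrange $\bar{\bar p}=\delta(\bar p)$, which is not always possible when $\bar p$ has repeated entries. The right-hand side never involves $\bar{\bar p}$, and the left-hand side equals $S$ for every $\bar{\bar p}\in\mu_\Delta^{-1}(\bar p)$.
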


Now we need to generalize this result to the case where there is no information about the interpretations $\MA\rightsquigarrow\MB$ and $\MB\rightsquigarrow\MC$.

Let $(\Gamma,\bar p, \gamma)\colon L(\MA)\cup A\to L(\MB)\cup B$ and $(\Delta,\bar q, \delta)\colon L(\MB)\cup B \to L(\MC)\cup C$ are extended codes. Then we refer to the extended code $(\Gamma\circ\Delta,(\delta(\bar p),\bar q), \delta\circ\gamma)\colon L(\MA)\cup A\to L(\MC)\cup C$ as the {\em composition} of the given extended codes. 

\begin{proposition}\label{ABC_ext}
Let $\MA=\langle A;L(\MA)\rangle$, $\MB=\langle B;L(\MB)\rangle$ and $\MC=\langle C;L(\MC)\rangle$ be algebraic structures; and $(\Gamma,\bar p, \gamma)\colon L(\MA)\cup A\to L(\MB)\cup B$ and $(\Delta,\bar q, \delta)\colon L(\MB)\cup B \to L(\MC)\cup C$ be extended codes, which satisfy to the condition~\ref{item:IRT} from Inverse Reduction Theorem~\ref{IRT}. Then their composition $(\Gamma\circ\Delta,(\delta(\bar p),\bar q), \delta\circ\gamma)\colon L(\MA)\cup A\to L(\MC)\cup C$ also satisfies to~\ref{item:IRT}. Moreover, for any formula $\psi(x_1,\ldots,x_m)$ in the normal form $L(\MA)\cup A$ one has
\begin{equation}\label{eq:ABC}
  \psi_{\Gamma\circ\Delta,\,(\delta({\bar p}),\bar q),\,\delta\circ\gamma}(\MC)= (\psi_{\Gamma,\bar p, \gamma})_{\Delta,\bar q, \delta}(\MC)  
\end{equation}
for any normal form of formula $\psi_{\Gamma,\bar p, \gamma}$.
\end{proposition}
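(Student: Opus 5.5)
The plan is to turn the two hypotheses into genuine interpretations via the Inverse Reduction Theorem~\ref{IRT}, compose them, and read off both claims from the Extended Reduction Theorem~\ref{ERT} and Lemma~\ref{ABC}.

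\emph{Step 1: build the chain.} Apply Theorem~\ref{IRT} to $(\Gamma,\bar p,\gamma)$. This gives an elementary embedding $\iota_1\colon\MA\to\Gamma(\MB,\bar p)$ with $\iota_1=\mu_\Gamma\circ\gamma$ and an interpretation $(\Gamma,\bar p,\mu_\Gamma)\colon\Gamma(\MB,\bar p)\rightsquigarrow\MB$. Likewise, $(\Delta,\bar q,\delta)$ gives $\iota_2\colon\MB\to\Delta(\MC,\bar q)=:\nsB$ with $\iota_2=\mu_\Delta\circ\delta$, and an interpretation $(\Delta,\bar q,\mu_\Delta)\colon\nsB\rightsquigarrow\MC$. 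Lemma~\ref{elem_emb} applied to $\iota_2$ shows that $\Gamma(\nsB,\iota_2(\bar p))$ is well defined. It also gives an elementary embedding $\bar\iota_2\colon\Gamma(\MB,\bar p)\to\Gamma(\nsB,\iota_2(\bar p))$, and $\bar\iota_2\circ\iota_1$ is elementary. Now compose the interpretations
\[
\Gamma(\nsB,\iota_2(\bar p))\rightsquigarrow\nsB\rightsquigarrow\MC
\]
as in Subsection~\ref{subsec:comp_codes}. Since $\mu_\Delta(\delta(\bar p))=\iota_2(\bar p)$, we may choose $\bar{\bar p}=\delta(\bar p)$. The composite is therefore $(\Gamma\circ\Delta,(\delta(\bar p),\bar q),\mu_\Gamma\circ\mu_\Delta)$.

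\emph{Step 2: verify (IRT) for the composite.} Put $\gamma'=\delta\circ\gamma\circ\ldots$; precisely, the map we need is $\delta\circ\gamma$, viewed on the image. For $a\in A$ the tuple $\delta(\gamma(a))$ is a $\mu_\Gamma\circ\mu_\Delta$-preimage of $\bar\iota_2(\iota_1(a))$. Indeed, $\mu_\Delta(\delta(\gamma(a)))=\iota_2(\gamma(a))$, and $\mu_\Gamma$ in the target sends $\iota_2(\gamma(a))$ to $\iota_2(\gamma(a))/{\sim_\Gamma}=\bar\iota_2(\gamma(a)/{\sim_\Gamma})$. Apply Theorem~\ref{ERT} with $\iota=\bar\iota_2\circ\iota_1$ and $\kappa=\id_\MC$. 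It requires a section defined on the whole of $\Gamma(\nsB,\iota_2(\bar p))$, so extend $\delta\circ\gamma\circ(\bar\iota_2\iota_1)^{-1}$ from the image of $A$ by arbitrary preimages. Since (RT) only evaluates the section on $\iota(A)$, it yields exactly (IRT) for $(\Gamma\circ\Delta,(\delta(\bar p),\bar q),\delta\circ\gamma)$.

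\emph{Step 3: prove \eqref{eq:ABC}.} Both sides are subsets of $C^{m\cdot\dim\Gamma\dim\Delta}$. A point $\bar c=(\bar c_1,\dots,\bar c_m)$ lies in the left side iff $\MC\models(\Gamma\circ\Delta)$-translation of $\psi(x_1,\dots,x_m,a_1,\dots,a_s)$ at $\bar c$, with constants $\delta\gamma(a_j)$ and parameters $(\delta(\bar p),\bar q)$. By the (converse part of the) Reduction Theorem~\ref{RT} for the composite interpretation, this happens iff each $\bar c_i$ lies in its domain and $\Gamma(\nsB,\iota_2(\bar p))\models\psi(\mu(\bar c_i),\bar\iota_2\iota_1(a_j))$.

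For the right side, fix any normal form $\chi$ of $\psi_{\Gamma,\bar p,\gamma}$. This is a formula in $L(\MB)\cup B$ with constants $\gamma(a_j)$ and $\bar p$. Its $(\Delta,\bar q,\delta)$-translation substitutes $\delta(\gamma(a_j))$ and $\delta(\bar p)$, which are $\mu_\Delta$-preimages of $\iota_2(\gamma(a_j))$ and $\iota_2(\bar p)$. So by Theorem~\ref{RT} for $(\Delta,\bar q,\mu_\Delta)$, $\bar c$ is in the right side iff each $\bar c_i\in U_\Delta$ and
\[
\nsB\models\psi_\Gamma\bigl(\mu_\Delta(\bar c_i),\iota_2\gamma(a_j),\iota_2(\bar p)\bigr).
\]
This uses only that $\chi$ is equivalent to $\psi_\Gamma$, and that equivalence is independent of the normal form. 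By Theorem~\ref{RT} for $(\Gamma,\iota_2(\bar p),\mu_\Gamma)$ on $\nsB$, the last condition is equivalent to the same condition as on the left side. Alternatively, Lemma~\ref{ABC} applies directly once Corollary~\ref{cor:gamma} lets us replace $\mu$-sections by $\delta$, $\delta\circ\gamma$.

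\emph{Main obstacle.} The delicate part is Step 1 together with the section issue. Extended codes only give maps defined on $A$ and $B$, while Theorems~\ref{RT}, \ref{ERT} and Lemma~\ref{ABC} speak about sections of coordinate maps on the enlarged structures $\Gamma(\nsB,\iota_2(\bar p))$ and $\nsB$. I would handle this by extending $\gamma,\delta$ arbitrarily to sections and invoking Corollary~\ref{cor:gamma}, so the resulting definable sets do not depend on the extension.
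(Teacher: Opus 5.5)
Your proof is correct and follows the paper's route. You build the chain $\MA\to\Gamma(\nsB,\iota_2(\bar p))\rightsquigarrow\nsB\rightsquigarrow\MC$ and compose the two interpretations with parameters $(\delta(\bar p),\bar q)$. You then apply Theorem~\ref{ERT} with a section extending $\delta\circ\gamma$ to obtain condition~\ref{item:IRT}, and you derive \eqref{eq:ABC} from the Reduction Theorem, which is equivalent to citing Lemma~\ref{ABC}.

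The only deviation is in how you get that $\Gamma(\nsB,\iota_2(\bar p))$ is well defined, together with the elementary embedding $a\mapsto\iota_2(\gamma(a))/{\sim_\Gamma}$. You take these directly from Lemma~\ref{elem_emb}, composed with the first embedding. The paper instead applies Theorem~\ref{ERT} to $\MA\to\Gamma(\MB,\bar p)\rightsquigarrow\MB\to\nsB$ and then Theorem~\ref{IRT} a second time; both routes produce the same embedding.
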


\begin{proof}
By Inverse Reduction Theorem~\ref{IRT} the algebraic structures $\Gamma(\MB,\bar p)$ and $\Delta(\MC,\bar q)$ are well-defined and there exist elementary embeddings $\iota\colon \MA\to \Gamma(\MB,\bar p)$ and $\kappa\colon \MB\to \Delta(\MC,\bar q)$, such that one has the chain
$$ 
\begin{tikzcd}
\MA\arrow[r,"\iota"]& \Gamma(\MB,\bar p)\arrow[r,rightsquigarrow,"{\Gamma,\bar p,\mu_\Gamma}"] &[1cm] \MB \arrow[r,"\kappa"]& \Delta(\MC,\bar q)\arrow[r,rightsquigarrow,"{\Delta,\bar q,\mu_\Delta}"] &[1cm] \MC,
\end{tikzcd}
$$
where $\iota=\mu_\Gamma\circ\gamma$ and $\kappa=\mu_\Delta\circ\delta$. Denote by $\nsB=\langle \nsuB; L(\MB)\rangle$ the algebraic structure $\Delta(\MC,\bar q)$, which we will also consider as $L(\MB)\cup B$\=/structure, where symbols $b\in B$ are interpreted as $\kappa(b)$.
Let us apply Extended Reduction Theorem~\ref{ERT} to the segment $\MA\to\Gamma(\MB,\bar p)\rightsquigarrow\MB\to\nsB$, so we get that the condition~\ref{item:RT} holds. In other words, the condition~\ref{item:IRT} holds for the extended code $(\Gamma,\kappa(\bar p), \kappa\circ \gamma)\colon L(\MA)\cup A \to L(\MB)\cup B$ and algebraic structures $\MA$ and $\nsB$. Hence, by Inverse Reduction Theorems~\ref{IRT}, the $L(\MA)$\=/structure $\Gamma(\nsB, \kappa(\bar p))$ is well-defied and there exists an elementary $L(\MA)$\=/embedding $\iota^\prime\colon\MA\to \Gamma(\nsB, \kappa(\bar p))$ and the interpretation $(\Gamma,\kappa(\bar p),\mu_\Gamma^\prime)\colon \Gamma(\nsB, \kappa(\bar p)) \rightsquigarrow \nsB$, such that $ \iota^\prime=\mu^\prime_\Gamma\circ\kappa\circ\gamma$. 

Denote by $\nsA=\langle \nsuA; L(\MA)\rangle$ the algebraic structure $\Gamma(\nsB, \kappa(\bar p))$, which we will also consider as $L(\MA)\cup A$\=/structure, where symbols $a\in A$ are interpreted as $\iota^\prime(a)$. Then the composition of the interpretations $(\Gamma,\kappa(\bar p),\mu_\Gamma^\prime)\colon \nsA \rightsquigarrow \nsB$ and $(\Delta,\bar q,\mu_\Delta)\colon \nsB\rightsquigarrow\MC$ yields the interpretation $(\Gamma\circ\Delta,(\delta(\bar p),\bar q),\mu_\Gamma^\prime\circ\mu_\Delta)\colon \nsA \rightsquigarrow \MC$~\cite[Lemma~10]{Th_int1}. Therefore, we obtain the chain
\begin{equation}\label{eq:eq1}
\begin{tikzcd}
\MA\arrow[r,"{\iota^\prime}"] &\nsA \arrow[r,rightsquigarrow,"{\Gamma\circ\Delta,(\delta(\bar p),\bar q),\mu_\Gamma^\prime\circ\mu_\Delta}"] &[2cm] \MC.
\end{tikzcd}
\end{equation} 
Since  $\iota^\prime=\mu^\prime_\Gamma\circ\mu_\Delta\circ\delta\circ\gamma$, then for any $a\in A$ one has $\iota^\prime(a)\in (\mu^\prime_\Gamma\circ\mu_\Delta)^{-1}(\delta(\gamma(a)))$. As $\iota^\prime$ is an elementary embedding, there exists a map $\gamma^\prime\colon \nsuA\to U_{\Gamma\circ\Delta}(\MC,(\delta(\bar p),\bar q))$ such that $\mu^\prime_\Gamma\circ\mu_\Delta\circ\gamma^\prime=\id_{\nsuA}$ and $\gamma^\prime\circ\iota^\prime=\delta\circ\gamma$. Then we apply Extended Reduction Theorem~\ref{ERT} to the chain~\eqref{eq:eq1} and map $\gamma^\prime$. As result, the condition~\ref{item:RT} gives the condition~\ref{item:IRT} for the extended code $(\Gamma\circ\Delta,(\delta(\bar p),\bar q), \delta\circ\gamma)\colon L(\MA)\cup A\to L(\MC)\cup C$, as required.

To prove~\eqref{eq:ABC} take any formula $\psi(x_1,\ldots,x_m)$ in the language $L(\MA)\cup A$. 
Let us show that applying the result of Lemma~\ref{ABC} to the chain of interpretations
$$ 
\begin{tikzcd}
\nsA\arrow[r,rightsquigarrow,"{\Gamma,\kappa(\bar p), \mu^\prime_\Gamma}"] &[1cm]\nsB \arrow[r,rightsquigarrow,"{\Delta,\bar q,\mu_\Delta}"] &[1cm] \MC,
\end{tikzcd}
$$
we obtain what is required. Firstly, one has $\psi_{\Gamma\circ\Delta, (\delta(\bar p),\bar q),\mu^\prime_\Gamma\circ\mu_\Delta}=\psi_{\Gamma\circ\Delta, (\delta(\bar p),\bar q),\delta\circ\gamma}$. Secondly, take $\gamma^{\prime\prime}=\mu_\Delta\circ\gamma^\prime$ as a map $\gamma^{\prime\prime}\colon \nsuA\to U_\Gamma(\nsB,\kappa(\bar p))$, such that $\mu^\prime_\Gamma\circ\gamma^{\prime\prime}=\id_{\nsuA}$. Then $\gamma^{\prime\prime}\circ\iota^\prime=\mu_\Delta\circ\gamma^\prime\circ\iota^\prime=\mu_\Delta\circ\delta\circ\gamma=\kappa\circ\iota^\prime$, therefore, 
the $L(\MB)\cup B$\=/formula $\psi_{\Gamma,\bar p,\gamma}$ when considering as $L(\MB)\cup \nsuB$\=/formula is equal to $\psi_{\Gamma,\kappa(\bar p),\mu^\prime_\Gamma}$. Thirdly, $(\psi_{\Gamma,\bar p,\gamma})_{\Delta,\bar q,\delta}=(\psi_{\Gamma,\kappa(\bar p),\mu^\prime_\Gamma})_{\Delta,\bar q, \mu_\Delta}$. Thus, Lemma~\ref{ABC} does indeed allow us to draw the desired conclusion.
\end{proof}

\begin{corollary}\label{ABCD_ext}
    Suppose that $\MA=\langle A; L(\MA)\rangle$, $\MB=\langle B; L(\MB)\rangle$, $\MC=\langle C; L(\MC)\rangle$, $\MD=\langle D; L(\MD)\rangle$ are algebraic structures and $(\Gamma,\bar p,\gamma)\colon L(\MA)\cup A\to L(\MB)\cup B$, $(\Delta,\bar q,\delta)\colon L(\MB)\cup B\to L(\MC)\cup C$, $(\Sigma,\bar r,\sigma)\colon L(\MC)\cup C\to L(\MD)\cup D$ are extended codes, which satisfy to the condition~\ref{item:IRT} from Inverse Reduction Theorem~\ref{IRT}. Then for any formula $\psi(x_1,\ldots,x_m)$ in the normal form $L(\MA)\cup A$ one has
\begin{equation*}
\psi_{(\Gamma\circ\Delta)\circ\Theta,\,(\sigma(\delta({\bar p}),\bar q),\bar r),\,\sigma\circ\delta\circ\gamma}(\MD)= ((\psi_{\Gamma,\bar p, \gamma})_{\Delta,\bar q, \delta})_{\Theta,\bar r,\sigma}(\MD) = \psi_{\Gamma\circ(\Delta\circ\Theta),\,(\sigma(\delta({\bar p})),\sigma(\bar q),\bar r),\,\sigma\circ\delta\circ\gamma}(\MD)
\end{equation*}
for any normal forms of formulas $\psi_{\Gamma,\bar p, \gamma}$ and $(\psi_{\Gamma,\bar p, \gamma})_{\Delta,\bar q, \delta}$.
\end{corollary}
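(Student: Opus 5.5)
The plan is to apply Proposition~\ref{ABC_ext} twice for each of the two outer equalities, so that both reduce to the middle term. (I read the symbol $\Theta$ in the statement as the code $\Sigma$.) First I record, from Proposition~\ref{ABC_ext}, that composites of extended codes satisfying~\ref{item:IRT} again satisfy~\ref{item:IRT}. Hence the following extended codes all satisfy~\ref{item:IRT}:
\begin{itemize}
\item $(\Gamma\circ\Delta,(\delta(\bar p),\bar q),\delta\circ\gamma)\colon L(\MA)\cup A\to L(\MC)\cup C$;
\item $(\Delta\circ\Sigma,(\sigma(\bar q),\bar r),\sigma\circ\delta)\colon L(\MB)\cup B\to L(\MD)\cup D$;
\item the two triple composites.
\end{itemize}
Unwinding the definition of composition of extended codes gives the parameter tuples as in the statement. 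For the left composite, the parameter is $(\sigma(\delta(\bar p),\bar q),\bar r)$. For the right composite, it is $(\sigma(\delta(\bar p)),\sigma(\bar q),\bar r)$, where $\sigma$ is applied to $\delta(\bar p)$, which lies in $C$. In both cases the point map is $\sigma\circ\delta\circ\gamma$.

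For the left equality, I apply Proposition~\ref{ABC_ext} to the pair $(\Gamma\circ\Delta,\dots)$ and $(\Sigma,\bar r,\sigma)$. This gives
$$\psi_{(\Gamma\circ\Delta)\circ\Sigma,\dots}(\MD)=(\psi_{\Gamma\circ\Delta,(\delta(\bar p),\bar q),\delta\circ\gamma})_{\Sigma,\bar r,\sigma}(\MD),$$
for any normal form of the inner formula. Next, Proposition~\ref{ABC_ext} applied to $\Gamma,\Delta$ gives
$$\psi_{\Gamma\circ\Delta,\dots}(\MC)=(\psi_{\Gamma,\bar p,\gamma})_{\Delta,\bar q,\delta}(\MC).$$
So the two $L(\MC)\cup C$-formulas define the same set in $\MC$. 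It remains to show that their $(\Sigma,\bar r,\sigma)$-translations define the same set in $\MD$.

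For the right equality, I apply Proposition~\ref{ABC_ext} to $(\Gamma,\bar p,\gamma)$ and $(\Delta\circ\Sigma,\dots)$. This gives
$$\psi_{\Gamma\circ(\Delta\circ\Sigma),\dots}(\MD)=(\psi_{\Gamma,\bar p,\gamma})_{\Delta\circ\Sigma,(\sigma(\bar q),\bar r),\sigma\circ\delta}(\MD).$$
Then I apply Proposition~\ref{ABC_ext} to $\Delta,\Sigma$ with the $L(\MB)\cup B$-formula $\chi=\psi_{\Gamma,\bar p,\gamma}$ in place of $\psi$. This gives
$$\chi_{\Delta\circ\Sigma,\dots}(\MD)=(\chi_{\Delta,\bar q,\delta})_{\Sigma,\bar r,\sigma}(\MD),$$
which is exactly the middle term.

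The main obstacle is the gap left in the left equality. I need the following invariance: if two formulas $\varphi,\varphi'$ in $L(\MC)\cup C$ (in normal form) satisfy $\varphi(\MC)=\varphi'(\MC)$, then $\varphi_{\Sigma,\bar r,\sigma}(\MD)=\varphi'_{\Sigma,\bar r,\sigma}(\MD)$. To prove it, I would use~\ref{item:IRT} for $(\Sigma,\bar r,\sigma)$. Write the constants of $\varphi,\varphi'$ as $\bar c$. The sentence $\forall\bar x\,(\varphi\leftrightarrow\varphi')$ holds in $\MC_C$, so after passing to a normal form, its translation holds in $\MD_D$. By Remark~\ref{remar:norm_form}, this translation is equivalent to
$$\forall\bar{\bar x}\,\bigl(\bigwedge U_\Sigma\to(\varphi_{\Sigma,\bar r,\sigma}\leftrightarrow\varphi'_{\Sigma,\bar r,\sigma})\bigr).$$
By the converse clause of the Reduction Theorem argument, any tuple satisfying $\varphi_{\Sigma,\bar r,\sigma}$ lies in $U_\Sigma(\MD,\bar r)$; concretely, the normal form of a translation carries the $U_\Sigma$ conjuncts. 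Together these give equality of the two sets. The delicate point is checking that the relativization to $U_\Sigma$ is harmless, that is, that $\varphi_{\Sigma,\bar r,\sigma}(\MD)\subseteq U_\Sigma(\MD,\bar r)^m$. This would follow from the Reduction Theorem~\ref{RT} applied to the interpretation $\Sigma(\MD,\bar r)\rightsquigarrow\MD$ supplied by Theorem~\ref{IRT}.
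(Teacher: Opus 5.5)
Your proposal is correct and is the intended argument: the paper gives no separate proof, since the corollary is meant to follow from Proposition~\ref{ABC_ext} applied twice to each outer term exactly as you do, and the invariance of $(\Sigma,\bar r,\sigma)$-translations under equality of the defined sets in $\MC$ is the step left implicit for the left equality. Your justification of that step via Remark~\ref{remar:norm_form} is not quite enough, because translations of different normal forms of $\forall\bar x\,(\varphi\leftrightarrow\varphi')$ agree only modulo the admissibility conditions, so argue directly with the tool you name at the end: Theorem~\ref{IRT} gives an elementary embedding $\iota\colon\MC\to\Sigma(\MD,\bar r)$ with $\iota(c)=\sigma(c)/{\sim_\Sigma}$, the Reduction Theorem~\ref{RT} for $\Sigma(\MD,\bar r)\rightsquigarrow\MD$ shows that $\varphi_{\Sigma,\bar r,\sigma}(\MD)$ is the full preimage of the set defined by $\varphi$ in $\Sigma(\MD,\bar r)$ (constants $c$ read as $\iota(c)$), and elementarity of $\iota$ makes that set depend only on $\varphi(\MC)$.
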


\section{Projective logical geometry}\label{sec:proj}

Let us fix an algebraic structure $\MA=\langle A; L(\MA)\rangle$. In this section, we show how the transition occurs from algebraic geometry over $\MA$ to logical geometry over $\MA$, and then from logical geometry over $\MA$ to projective logical geometry over $\MA$. 
The principal notions of {\em  algebraic geometry} over $\MA$ are set out in~\cite{DMR1, DMR2, DMR5, DMR6, Monograph, Plotkin1, Plotkin2, Plotkin3, Plotkin4}; and of {\em logical geometry} over $\MA$ in~\cite{Plotkin7, Plotkin8, Plotkin9, Plotkin6, Plotkin10, Plotkin5}.

We start this section by reminding you of some basic definitions from universal algebraic geometry, such as equation, algebraic set, term map, and the category of algebraic sets. Furthermore, we discuss the translation of these notions from universal algebraic geometry to logical geometry, and then to projective logical geometry. In our presentation, we will occasionally slightly modify the concepts and notations adopted in algebraic and logical geometry. This is necessary to ensure a reasonable and smooth transition to projective logical geometry.

In this section, we introduce the category of algebraic sets over $\MA$, but omit the category of coordinate algebras of algebraic sets over $\MA$ and other categories, which are studied in universal algebraic geometry. 
We do not describe here the role of categories in universal algebraic geometry at all, nor do we review results related to categories in universal algebraic geometry, nor do we demonstrate the complete transformation of these ideas in the transition to Plotkin's logical geometry. All of this is extremely interesting material, requiring a separate article.

\subsection{Algebraic geometry}\label{subsec:AG}

In the study of algebraic geometry over algebraic structure $\MA$, {\em equations} are understood as arbitrary atomic formulas $\psi$ in the language $L(\MA)$, i.\,e., formulas of the types $t_1=t_2$ or $R(t_1,\ldots,t_{n_R})$, where $t_i$ are terms in $L(\MA)$, and $R$ is a predicate from the language $L(\MA)$. A {\em system of equations} $S$ is any set of equations with variables in a finite set $\{x_1,\ldots,x_m\}$. 
An {\em algebraic set} over $\MA$ is any subset $X\subseteq A^m$, such that there exists a system of equations $S$ with variables in $\{x_1,\ldots,x_m\}$, such that 
\begin{equation}\label{eq:V(S)}
 X=\{(a_1,\ldots, a_m)\in A^m \mid \MA\models\psi (a_1,\ldots,a_m) \; \forall\, \psi \in S\}.   
\end{equation}
In this case, we write $X=\V_\MA(S)$. For any subset $X\subseteq A^m$ the {\em radical} $\Rad(X)$ is the set of all equations $\psi$ in $L(\MA)$ with variables in $\{x_1,\ldots,x_m\}$ such that $\MA\models\psi (a_1,\ldots,a_m)$ for all $(a_1,\ldots, a_m)\in X$. A subset $X\subseteq A^m$ is algebraic over $\MA$ if and only if $X=\V_\MA(\Rad (X))$.

One of the main problems of algebraic geometry over $\MA$ is the classification of algebraic sets over $\MA$ up to isomorphism. Isomorphism here is understood in relation to the category $\AS(\MA)$ of algebraic sets over $\MA$. Let's introduce this category.

For any non-empty sets $X\subseteq A^m$ and $Y\subseteq A^s$ over $\MA$ an {\em absolute term map} is a map $F\colon X\to Y$, for which there exists terms $t_1,\ldots,t_s$ in the language $L(\MA)$ and variables in $\{x_1,\ldots,x_m\}$, such that 
\begin{equation}\label{eq:abs_term_map}
  F(a_1,\ldots,a_m)=(t_1(a_1,\ldots,a_m),\ldots,t_s(a_1,\ldots,a_m))\quad \forall\,(a_1,\ldots,a_n)\in X.  
\end{equation}
Note that in universal algebraic geometry, absolute term maps are named just term maps. However, here it is convenient for us to change the name. All terms $t_1,\ldots,t_s$ are in the language $L(\MA)$, they have no parameters from $A$ other than those in the language $L(\MA)$, which explains the prefix ``absolute''. If $X$ and $Y$ are algebraic sets over $\MA$, then a tuple of terms $(t_1,\ldots,t_s)$ defines an absolute term map $F\colon X\to Y$ by the rule~\eqref{eq:abs_term_map} if and only if $\varphi(t_1,\ldots,t_s)\in \Rad(X)$ for any $\varphi\in\Rad(Y)$.

The category $\AS(\MA)$ consists of all algebraic sets over $\MA$ (from all affine spaces $A^m$, $m\in \N$) as objects and absolute term maps as morphisms. If the empty set $\emptyset$ is algebraic over $\MA$ it is an initial object in $\AS(\MA)$, i.\,e., there exists a unique arrow $\emptyset \to O$ from the empty set $\emptyset$ to any object $O$, and no arrows from a non-empty object to $\emptyset$. 

If the concepts of equation, algebraic set, radical, and category of algebraic sets entered universal algebraic geometry by analogy with classical algebraic geometry over a field, then Plotkin’s concept of geometric equivalence is unique to universal algebraic geometry. Let $\MA$ and $\MB$ be algebraic structures in the same language $L$. Geometric equivalence formalizes a profound question: when do $\MA$ and $\MB$ have the same algebraic geometries? So, $\MA$ and $\MB$ are called {\em geometrically equivalent} if for any $m\in \N$ and any system of equations $S$ in the language $L$ and variables $\{x_1,\ldots,x_m\}$ one has $\Rad(\V_\MA(S))=\Rad(\V_\MB(S))$.

\begin{theorem}[B.\,I.\,Plotkin \cite{Plotkin3}]\label{th:Plotkin1}
If algebraic structures $\MA$ and $\MB$ in a language $L$ are geometrically equivalent, then the categories $\AS(\MA)$ and $\AS(\MB)$ of algebraic sets over $\MA$ and $\MB$ are isomorphic.
\end{theorem}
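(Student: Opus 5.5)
The plan is to build an explicit functor $\Phi\colon \AS(\MA)\to\AS(\MB)$ that is the identity on syntax, and to show that it is an isomorphism of categories with inverse built the same way. The starting observation is that both categories are described by the same syntactic data. An algebraic set $X\subseteq A^m$ satisfies $X=\V_\MA(\Rad_\MA(X))$. A morphism $X\to Y$ is an equivalence class of tuples of terms $(t_1,\ldots,t_s)$ in $L$ and variables $\{x_1,\ldots,x_m\}$ such that $\varphi(t_1,\ldots,t_s)\in\Rad(X)$ for every $\varphi\in\Rad(Y)$. Two tuples give the same map exactly when the equations $t_i=t_i'$ all lie in $\Rad(X)$.

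On objects I set $\Phi(X)=\V_\MB(\Rad_\MA(X))\subseteq B^m$. I need to know that $\Rad_\MB(\Phi(X))=\Rad_\MA(X)$. Writing $S=\Rad_\MA(X)$, we have $X=\V_\MA(S)$, so geometric equivalence gives
$$\Rad_\MB(\V_\MB(S))=\Rad_\MA(\V_\MA(S))=\Rad_\MA(X).$$
Symmetrically, $\Psi(Y)=\V_\MA(\Rad_\MB(Y))$ for $Y\subseteq B^m$, and then $\Psi\Phi(X)=\V_\MA(\Rad_\MA(X))=X$ and $\Phi\Psi=\id$ as well. So $\Phi$ is a bijection between the objects lying in each fixed affine space $A^m$ and $B^m$. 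In particular $\emptyset$ is algebraic over $\MA$ iff it is algebraic over $\MB$, since its radical contains $x_1\neq x_1$-type contradictions only through equations like $x=y$ with the full radical; the radical of $\emptyset$ is the set of all equations, and this is preserved.

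On morphisms, a map $F\colon X\to Y$ given by terms $(t_1,\ldots,t_s)$ is sent to the term map $\Phi(F)\colon\Phi(X)\to\Phi(Y)$ defined by the same terms. By the radical criterion this is well defined: $\varphi(\bar t)\in\Rad(X)=\Rad(\Phi(X))$ for all $\varphi\in\Rad(Y)=\Rad(\Phi(Y))$, so $\Phi(F)$ lands in $\Phi(Y)$. It is also independent of the choice of terms, because $t_i=t_i'$ lies in $\Rad_\MA(X)=\Rad_\MB(\Phi(X))$.

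Functoriality is then immediate. Composition of term maps is substitution of terms, identities are given by the variables, and both are syntactic operations. The same construction applied in the other direction gives $\Psi$ on morphisms, and $\Psi\Phi$ and $\Phi\Psi$ are identities on the classes of term tuples. Hence $\Phi$ is an isomorphism of categories.

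The only delicate step is the empty set. If $X=\emptyset$, then $\Rad(X)$ consists of all equations, and morphisms out of $\emptyset$ are unique. One must check that $\Phi(\emptyset)=\emptyset$ and that nonempty sets go to nonempty sets. The first holds because $\Rad_\MB(\Phi(\emptyset))$ is the set of all equations, so $\Phi(\emptyset)$ satisfies every equation, in particular $x_1=x_2$ and any inconsistent relation instances. To make this airtight I would argue via the radical equality: $X=\emptyset$ iff $\Rad_\MA(X)$ is the set of all equations in the given variables, and the latter property is preserved by $\Phi$. I will also note that term maps require the source to be nonempty or else be the empty map, and that this condition is matched on both sides by the same argument. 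I expect this bookkeeping to be the main nuisance rather than a real obstacle.
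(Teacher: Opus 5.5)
Your treatment of nonempty sets is correct: the bijection $X\mapsto \V_\MB(\Rad_\MA(X))$ preserves radicals, term maps transfer by the radical criterion, and functoriality follows by substitution. The paper only quotes this theorem from Plotkin, so there is no in-text proof to compare against.

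The genuine gap is the empty set, where your justification is circular. Knowing that $\Rad_\MB(\Phi(\emptyset))$ is the set of all equations tells you nothing. Write $T_m$ for the set of all equations in $x_1,\ldots,x_m$. Then $\Phi(\emptyset)=\V_\MB(T_m)$, and $\Rad_\MB(\V_\MB(T_m))\supseteq T_m$ holds trivially whether or not $\V_\MB(T_m)$ is empty. Your criterion ``$X=\emptyset$ iff $\Rad(X)$ is all equations'' is valid for algebraic sets over $\MB$ only once you know $\emptyset$ is algebraic over $\MB$, and that is exactly what must be proved. In general $\V_\MB(T_m)$ can be nonempty: for a group in the coefficient-free language, $(e,\ldots,e)$ satisfies every equation. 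Satisfying $x_1=x_2$ does not make a set empty, and ``inconsistent relation instances'' need not exist in $\MB$ a priori. Without this step, $\Phi$ is not even well defined on the single object $\emptyset$, since the sets $\V_\MB(T_m)\subseteq B^m$ would depend on $m$. Nor need $\Phi$ send the initial object to the initial object.

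The missing idea is to apply geometric equivalence with a dummy variable.
\begin{itemize}
\item If $\emptyset$ is algebraic over $\MA$, then $\V_\MA(T_1)=\emptyset$. To see this, identify all variables of a system that has no solutions.
\item Regard $T_1$ as a system in $x_1,x_2$. Then $(x_1=x_2)\in\Rad_\MA(\V_\MA(T_1))=\Rad_\MB(\V_\MB(T_1))$.
\item Suppose some $b\in B$ satisfied $T_1$. Then every pair $(b,b')$ with $b'\in B$ would satisfy $x_1=x_2$, so $B=\{b\}$ and $\Rad_\MB(B)=T_1$.
\item Geometric equivalence for the empty system in $x_1$ then gives $\Rad_\MA(A)=T_1$. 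This means $\V_\MA(T_1)=A\neq\emptyset$, a contradiction.
\end{itemize}
Hence no element of $B$ satisfies $T_1$. So no tuple satisfies $T_m$, since its first coordinate would satisfy $T_1\subseteq T_m$. Therefore $\Phi(\emptyset)=\emptyset$ in every dimension, and by symmetry $\Psi(\emptyset)=\emptyset$. With this in place your bijections restrict to nonempty sets, and the rest of your argument goes through.
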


The language, apparatus, and results of universal algebraic geometry are of a universal nature. For example, if we want to study coefficient-free algebraic geometry over a group $G$, then we will consider it as an algebraic structure in the language $L_\group=\{\,\cdot\,,\,^{-1},e\}$. If we want to study algebraic geometry over $G$ with coefficients in equations from some subgroup $H\leqslant G$ (for example, $H=G$), then we will take $L_\group\cup H$ as the language. All definitions and results of universal algebraic geometry are equally applicable to both the language $L_\group$ and the language $L_\group\cup H$.

\subsection{Logical geometry}\label{subsec:LG}

Logical geometry due to B.\,Plotkin is a natural generalization of algebraic geometry. We just start consider any first-order formulas as equations, not just atomic formulas. If $S$ is any set of first-order formulas in the language $L(\MA)$ and free variables $\{x_1,\ldots,x_m\}$, then the set $X=\V_\MA(S)$, defined  in~\eqref{eq:V(S)}, is called {\em algebraic set in first-order logic}. Further, we will refer to such sets as {\em absolute logical sets}. 
For any subset $X\subseteq A^m$ the {\em logical radical} $\LRad(X)$ is the set of all formulas $\psi(x_1,\ldots,x_m)$ in $L(\MA)$  such that $\MA\models\psi (a_1,\ldots,a_m)$ for all $(a_1,\ldots, a_m)\in X$. A subset $X\subseteq A^m$ is an absolute logical set over $\MA$ if and only if $X=\V_\MA(\LRad (X))$.

B.\,Plotkin introduced a category, which is a complete analogue of the category $\AS(\MA)$, but in logical geometry. The category $\LS_0^\term(\MA)$ consists of all absolute logical sets over $\MA$ (from all affine spaces $A^m$, $m\in \N$) as objects and absolute term maps as morphisms, and the empty set $\emptyset$ as an initial object. The category $\AS(\MA)$ is a full subcategory in $\LS^\term_0(\MA)$. Note that B.\,Plotkin uses different notations and names; the choice of our notations and names will become clear from the following discussion.

Algebraic structures $\MA$ and $\MB$ be a language $L$ are called {\em logically equivalent} if for any $m\in \N$ and any set of first-order formulas $S$ in the language $L$ and free variables $\{x_1,\ldots,x_m\}$ one has $\LRad(\V_\MA(S))=\LRad(\V_\MB(S))$.

\begin{theorem}[B.\,I.\,Plotkin \cite{Plotkin6}]\label{th:Plotkin2}
If algebraic structures $\MA$ and $\MB$ in a language $L$ are logically equivalent, then the categories $\LS^\term_0(\MA)$ and $\LS^\term_0(\MB)$ of absolute logical sets over $\MA$ and $\MB$ and absolute term maps as morphisms are isomorphic.
\end{theorem}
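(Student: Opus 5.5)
The plan is to build an explicit isomorphism of categories $\Phi\colon\LS^\term_0(\MA)\to\LS^\term_0(\MB)$ that acts identically on syntax. On objects, set $\Phi(\V_\MA(S))=\V_\MB(S)$ for every set $S$ of $L$\=/formulas in variables $\{x_1,\ldots,x_m\}$. On a morphism $F\colon X\to Y$ given by terms $(t_1,\ldots,t_s)$, let $\Phi(F)$ be the absolute term map $\Phi(X)\to\Phi(Y)$ given by the same tuple of terms.

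\textbf{Objects.} First I check that $\Phi$ is well defined on objects. If $X=\V_\MA(S_1)=\V_\MA(S_2)$, then $\LRad(\V_\MA(S_1))=\LRad(\V_\MA(S_2))$. By logical equivalence this gives $\LRad(\V_\MB(S_1))=\LRad(\V_\MB(S_2))$. Since a logical set equals $\V$ of its own logical radical, we get $\V_\MB(S_1)=\V_\MB(S_2)$.

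The ambient dimension $m$ is part of the data of an object, so different $m$ are kept separate. The empty set is handled uniformly: $\V_\MA(S)=\emptyset$ if and only if $\neg(x_1=x_1)\in\LRad(\V_\MA(S))$, and this condition transfers to $\MB$. By symmetry the map $\V_\MB(S)\mapsto\V_\MA(S)$ is also well defined. So $\Phi$ is a bijection on objects with $\LRad(\Phi(X))=\LRad(X)$ for every object $X$.

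\textbf{Morphisms.} The key observation is the following. A tuple $(t_1,\ldots,t_s)$ defines a map $X\to Y$ if and only if $\varphi(t_1,\ldots,t_s)\in\LRad(X)$ for every $\varphi\in\LRad(Y)$. This is the logical analogue of the criterion stated for $\AS(\MA)$, with the same proof, using $Y=\V_\MA(\LRad(Y))$. Since the radicals coincide under $\Phi$, the tuple also defines a map $\Phi(X)\to\Phi(Y)$.

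Next I show that $\Phi$ respects equality of morphisms. Two tuples $(t_i)$ and $(t'_i)$ give the same map on $X$ if and only if $\bigwedge_i t_i=t'_i\in\LRad(X)$. This condition again transfers, so $\Phi$ is well defined and injective on each hom-set, and surjective by symmetry.

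Functoriality is immediate, because composition of term maps is substitution of terms and identities are given by the variables. Thus $\Phi$ is an isomorphism of categories.

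The main obstacle is the well-definedness bookkeeping: objects and morphisms are semantic, while $\Phi$ is defined through syntactic representatives. Everything reduces to the single invariant $\LRad(\Phi(X))=\LRad(X)$, established in the object step.
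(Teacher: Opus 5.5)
Your proof is correct and complete. The paper gives no proof of this theorem; it quotes it from Plotkin as background. So there is no in-text argument to compare with. Your identity-on-syntax functor is the natural direct proof. Everything in it reduces to the invariant $\LRad(\Phi(X))=\LRad(X)$ together with two radical criteria:
\begin{itemize}
\item a tuple $\bar t$ maps $X$ into $Y$ if and only if $\varphi(\bar t)\in\LRad(X)$ for all $\varphi\in\LRad(Y)$;
\item two tuples agree on $X$ if and only if $\bigwedge_i (t_i=t'_i)\in\LRad(X)$.
\end{itemize}
One detail is worth stating explicitly: substituting terms into quantified formulas requires renaming bound variables to avoid capture.

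It is also worth noticing that this result lies outside the paper's own machinery. Every translation or interpretation functor in the paper comes with an extended code $(\Gamma,\bar p,\gamma)$ satisfying (IRT). For the identity code, Lemma~\ref{lemma:iota} says such a functor exists only when $\gamma\colon\MA\to\MB$ is an elementary embedding. Logically equivalent structures need not admit one. For example, take $\langle\mathbb R;<\rangle$ and $\omega_1\times\mathbb Q$ with the lexicographic order. Both realize every type of the complete theory of dense orders without endpoints, so they are logically equivalent. Yet neither even order-embeds into the other. Your purely syntactic, parameter-free construction needs no map between $A$ and $B$ at all. That is exactly what makes Plotkin's absolute, term-map statement hold for structures that are merely logically equivalent.
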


After Theorems~\ref{th:Plotkin1} and~\ref{th:Plotkin2} B.\,I.\,Plotkin became interested in the inversion of these results: what does isomorphism or equivalence of categories yield geometrically? There are already quite a lot of works on this topic within the framework of universal algebraic geometry.
 In fact, the authors' current work is devoted to answering these questions in relation to logical geometry.

Logical geometry is as flexible and universal as algebraic geometry. We can choose any language of interest and project the definitions and results of logical geometry onto it. However, there is a difference from algebraic geometry. In algebraic geometry, the most interesting case is usually the Diophantine case, when all the elements of the algebraic structure $\MA$ under study are added to the language, i.\,e., $L(\MA)=L(\MA)\cup A$. The role of the Diophantine case in logical geometry looks somewhat different from that in algebraic geometry. On the one hand, the pure problem of describing all logical sets in the Diophantine case degenerates, since every subset $X\subseteq A^m$ is logical in $\MA_A$(see Example~\ref{ex:1} below). On the other hand, the problem of describing logical sets up to isomorphism, i.\,e., the construction of a skeleton for the corresponding category, on the contrary, is deeply rooted in the specifics of an algebraic structure $\MA$. 

Furthermore, in logical geometry, it is convenient to consider the capacious Diophantine category as an enclosing category that contains many other important and interesting subcategories. In this sense, it is important to expand such an enclosing category to the maximum reasonable limit, so that it absorbs everything important to consider. Therefore, in our research, we expand Plotkin's logical category $\LS_0^\term(\MA)$ simultaneously in several ways: we (1)~add all the constants from $\MA$ and get $\LS^\term(\MA)$; (2)~we expand the class of morphisms, as term maps, to the class of morphisms as formula maps (see Definition~\ref{def:mor}) and get $\LS(\MA)$; (3)~we move from logical sets to projective logical sets (see Definition~\ref{def2}), and get the category $\PLS(\MA)$. To introduce the category of projective logical sets $\PLS(\MA)$ over $\MA$, we start with logical sets with parameters.
 
\subsection{Logical sets with parameters}\label{subsec:LS}

For a finite set $\{x_1,\ldots,x_m\}$ of variables and a subset $P\subseteq A$ we denote by ${\bf F}_{L(\MA)\cup P}(x_1,\ldots,x_m)$ (or briefly, by ${\bf F}_{L(\MA)\cup P}(\bar x)$) the set of all first-order formulas in the language $L(\MA)\cup P$ with free variables $\{x_1,\ldots,x_m\}$.

\begin{definition}\label{def1}
We say that a subset $X\subseteq A^m$ is {\em a logical set over $\MA$ with parameters in $P\subseteq A$}, if there exists a set $S\subseteq  {\bf F}_{L(\MA)\cup P}(x_1,\ldots,x_m)$ such that 
$$
X=\{(a_1,\ldots, a_m)\in A^m \mid \MA_P\models \psi (a_1,\ldots,a_m) \; \forall\, \psi \in S\}.
$$
In this case, we will say that the {\em system $S$  induces} the set $X$ and write $X=\V_\MA(S)$. We refer to $X$ as {\em logical set over $\MA$}, if it is a logical set over $\MA$ with parameters in $A$. If $\alpha\leqslant \beta$ are infinite cardinals, such that $|P|<\alpha$ and $|S|<\beta$, then we will also say that $X$ is an $(\alpha,\beta)$\=/logical set. The symbol $\infty$ in place of $\alpha$ or $\beta$ means that there are no restrictions for $|P|$ or $|S|$.
\end{definition}

In Definition~\ref{def1}, if $P=\emptyset$, then $X$ is called an absolute logical set (as in Subsection~\ref{subsec:LG}). We refer to an absolute logical set as $0$\=/logical set, or $(0,\beta)$\=/logical set, if $|S|<\beta$. For example, in the algebraic structure $\langle \Z; +, \cdot, 0, 1\rangle$ every element $z\in \Z$ is $0$\=/definable, therefore, all logical sets in $\Z$ are absolute.  

By definition, the class of all $(\omega,\omega)$\=/logical sets over $\MA$ is the class of all definable sets over $\MA$. The class of all $(\omega,\infty)$\=/logical sets over $\MA$ is the class of all logical sets over $\MA$ with a finite set of parameters. If an algebraic structure $\MA$ is finitely generated, then every logical set over $\MA$ is an $(\omega,\infty)$\=/logical set.

Note that in model theory, the term {\em type-definable set} is known, which is very close to our notion of logical sets. However, in different works it can be introduced with minor differences, which relate to the source language $L$, an $L$\=/structure $\MA$, the number of parameters used, the number of definable sets in the intersection, and so on~\cite{Dries, GarciaWagner, HartShami, HrushovskiPeterzilPillay, Johnson, Marikova, Milliet, Pillay1, Pillay2, Yaacov1, Yaacov2}. Thus, the concept of a type-definable set in small details may not match our notion of a logical set. 

\begin{notation}
Note that a given logical set $X$ over $\MA$ may be induced by several systems. However, we will write $S_X$ for some system, which induces $X$, and $P_X$ for its parameters. If $S_X$ is finite, i.\,e., the set $X$ is definable, then we always assume that $S_X$ is a single formula, which defines $X$. 
\end{notation}

\begin{example}\label{ex:1}
Every subset $X\subseteq A^m$ is a logical over $\MA$. Indeed,  $S_X=\{ \neg((x_1=a_1)\wedge\ldots\wedge(x_m=a_m)) \mid (a_1,\ldots,a_m)\in A^m\setminus X\}$. 
\end{example}

Since all subsets in $\MA$ are logical, it is interesting to consider just $(\alpha,\beta)$\=/logical sets with parameters in $P$ for significant cardinals $\alpha\leqslant\beta$ and set $P\subseteq A$. 

Recall that all definable sets are among the logical ones. The empty set $\emptyset$ is an absolutely definable set over $\MA$, since $\V_\MA(\{x\ne x\})=\emptyset$; the affine space $A^m$ is also absolutely definable over $\MA$, since $A^m=\V_\MA(\{x_1=x_1\wedge\ldots\wedge x_m=x_m\})$. The class of definable sets over $\MA$ is closed under complements, finite intersections, unions, and direct products. Let us list some trivial facts about logical sets.

\begin{fact}
Let $X$, $Y$ and $X_i$, $i\in I$, be $(\alpha,\beta)$\=/logical sets over $\MA$. Then
\begin{enumerate}[label=\alph*)]
\item $X\times Y$ is an $(\alpha,\beta)$\=/logical set over $\MA$: 
$$
S_{X\times Y}(\bar x,\bar y)=S_X(\bar x)\cup S_Y(\bar y),\quad P_{X\times Y}=P_X\cup P_Y;
$$
\item if $X, Y\subseteq A^m$, then $X\cup Y$ is an $(\alpha,\beta)$\=/logical set over $\MA$: 
$$
S_{X\cup Y}(\bar x)=\bigcup_{\phi\in S_X, \; \psi\in S_Y}\{\phi(\bar x)\vee\psi(\bar x)\},\quad P_{X\cup Y}=P_X\cup P_Y;
$$
\item if $X_i\subseteq A^m$, $i\in I$, then $\bigcap_{i\in I}X_i$ is an $(\alpha,\beta)$\=/logical set over $\MA$: 
$$
S_{\bigcap X_i}(\bar x)=\bigcup_{i\in I} S_{X_i}(\bar x),\quad P_{\bigcap X_i}=\bigcup_{i\in I} P_{X_i}.
$$
\end{enumerate}
\end{fact}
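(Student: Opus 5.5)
The three items follow by checking, for each proposed system, that the set of tuples satisfying every formula in it coincides with the target set. We also verify that the parameter set and the system size keep the bounds $|P|<\alpha$ and $|S|<\beta$. Throughout, the formulas are read in $\MA_P$ with $P$ the indicated union of parameter sets. Since a formula with parameters in $P_X$ is also a formula in $L(\MA)\cup P$ whenever $P_X\subseteq P$, enlarging the parameter set does not change the sets the formulas define.

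For a), we take the variable tuples $\bar x$ and $\bar y$ disjoint. A tuple $(\bar a,\bar b)$ satisfies every formula of $S_X(\bar x)\cup S_Y(\bar y)$ exactly when $\bar a$ satisfies all of $S_X$ and $\bar b$ satisfies all of $S_Y$, that is, exactly when $(\bar a,\bar b)\in X\times Y$. For the bounds, $|P_X\cup P_Y|<\alpha$ and $|S_X\cup S_Y|<\beta$ hold because $\alpha$ and $\beta$ are infinite cardinals, so each is closed under sums of two smaller cardinals.

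For c), the equality $\V_\MA(\bigcup_i S_{X_i})=\bigcap_i \V_\MA(S_{X_i})$ is immediate from the definition. For the bounds, we read the statement as assuming $|I|$ small enough, for instance $I$ finite, or more generally a union of fewer than $\mathrm{cf}(\beta)$ sets each of size $<\beta$. This is the one point where the cardinal bookkeeping needs care: with no restriction on $I$ the statement must be understood for $\beta=\infty$ (or $\alpha=\infty$), and I would state this explicitly.

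For b), the main step is the distributivity identity
$$
\bigcap_{\phi\in S_X}\phi(\MA_P)\ \cup\ \bigcap_{\psi\in S_Y}\psi(\MA_P)=\bigcap_{\phi,\psi}(\phi\vee\psi)(\MA_P).
$$
The inclusion from left to right is clear. For the reverse inclusion, suppose $\bar a$ lies in every set $(\phi\vee\psi)(\MA_P)$ but $\bar a\notin X$. Then some $\phi_0\in S_X$ fails at $\bar a$, and using the pairs $(\phi_0,\psi)$ for every $\psi\in S_Y$ we get that $\bar a$ satisfies all of $S_Y$, so $\bar a\in Y$. If one of the systems is empty, that set is all of $A^m$, so we replace the empty system by $\{x_1=x_1\}$ before forming the pairs. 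For the bounds, $|S_X\times S_Y|<\beta$ holds since $\beta$ is infinite, and the parameter set $P_X\cup P_Y$ has size $<\alpha$ as in a).
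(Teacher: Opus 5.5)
Your verification is correct and is what the paper intends: the paper gives no separate proof and treats the listed systems as evidently inducing $X\times Y$, $X\cup Y$ and $\bigcap_i X_i$. Your caveat on c) is well taken, since for bounded $(\alpha,\beta)$ an infinite intersection can fail (a countable intersection of definable sets need not be definable); the parameter bound $|\bigcup_i P_{X_i}|<\alpha$ also has to hold, so with $I$ unrestricted you need $\alpha=\infty$ (hence $\beta=\infty$), and $\beta=\infty$ alone is not enough.
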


\subsection{Formula equivalencies}\label{subsec:for_eq}

We define projective logical sets as quotient-sets of logical sets by special formula equivalence relations. 

\begin{definition}\label{def:formula_eq}
Let $X\subseteq A^m$ be a non-empty logical set over $\MA$ and $\sim_X$ an equivalence relation on $X$. We name $\sim_X$ {\em a formula equivalence on $X$} if there exists a formula $E$ in the language $L(\MA)\cup P$, $P\subseteq A$, and variables $\{x_1,\ldots,x_m\}\sqcup \{x_1^\prime,\ldots,x_m^\prime\}$ such that
\begin{enumerate}[label=(\arabic*)]
\item\label{eq1}  for all  $\bar a,\bar a^\prime\in X$ one has $\bar a\sim_X \bar a^\prime$ if and only if  $\MA_P\models E(\bar a,\bar a^\prime)$;
\item\label{eq2} and $X$ is {\em closed under $E$}, i.\,e., for any $\bar a,\bar a^\prime\in A^m$ the conditions $\MA_P\models E(\bar a,\bar a^\prime)\vee E(\bar a^\prime,\bar a)$ and $\bar a\in X$ imply that $\bar a^\prime\in X$. 
\end{enumerate} 
In this case, we will say that the formula $E$ {\em induces} the equivalence relation $\sim_X$ on $X$. If $P=\emptyset$ for some such formula $E$, then we name $\sim_X$ an {\em absolute} formula equivalence. We name $\sim_X$ {\em homogeneous} if additionally
\begin{enumerate}[label=(\arabic*)]\addtocounter{enumi}{2}
\item\label{eq3} $E$ defines an equivalence relation on $A^m$, i.\,e., if there exists $E$, such that \ref{eq1}--\ref{eq3} hold.
\end{enumerate}
\end{definition}

\begin{notation}
Note that for a logical set $X$, there may be several formula equivalences $\sim_X$ on $X$, and for a given formula equivalence $\sim_X$ on $X$, there may be several formulas $E$ that induce it. However, we will use the notation $\sim_X$ for some of the formula equivalencies and $E_X$ for some formula, which induces $\sim_X$.
\end{notation}

We do not say that $E_X$ defines $\sim_X$, because $\sim_X$ may not be a definable relation. We'll discuss the relationship between these verbs in more detail later in this subsection. For now, note that even if the relation $\sim_X$ is definable, it's sometimes more convenient to work with a formula $E_X$ that merely induces it.

\begin{example}\label{ex:Em}
Let $E_m$ be the formula $(x_1=x_1)\wedge\ldots\wedge(x_m=x_m)$. The formula $E_m$ induces the identity relation $=$ on every non-empty logical set $X\subseteq A^m$; and $E_m$ does not define the relation $=$ on $X$, if $X\ne A^m$. It is clear that the identity relation $=$ is homogeneous.
\end{example}

Suppose that $X$ and $Z$ are non-empty logical sets over $\MA$ and $Z\subseteq X$. If $\sim_X$ is a formula equivalence on $X$, then the restriction $\sim_Z$ of $\sim_X$ on $Z$ is an equivalence relation. Sometimes we will denote $\sim_Z$ just by $\sim_X$. We will say that $Z$ is {\em closed under $\sim_X$}, if for any tuples $\bar a,\bar a^\prime\in X$, if $\bar a\in Z$ and $\bar a\sim_X \bar a^\prime$, then $\bar a^\prime\in Z$. So, if $Z$ is closed under $\sim_X$, then $\sim_X$ is a formula equivalence on $Z$. Converse statement is not true, i.\,e., it is possible that $Z$ is not closed under $\sim_X$, but the restriction of $\sim_X$ on $Z$ is a formula equivalence, for example, if $Z$ is definable. Thus, if $\sim_X$ is a homogeneous formula equivalence on $X\subseteq A^m$ and it is the restriction of formula equivalence $\sim$ on $A^m$ to $X$, then $X$ is closed under $\sim$.

Let $E_1$ and $E_2$ be formulas in $L(\MA)\cup A$, which induce two formula equivalencies $\sim_1$ and $\sim_2$ on a non-empty logical set $X$ over $\MA$. We will say that $\sim_1$ is {\em finer} then $\sim_2$, or $\sim_2$ is {\em coarser} then $\sim_1$ (correspondingly, $E_1$ is {\em finer} then $E_2$ and $E_2$ is coarser then $E_1$) and write $E_1\subseteq E_2$, if $\MA_A\models E_1(\bar a,\bar a^\prime)$ implies $\MA_A\models E_2(\bar a,\bar a^\prime)$ for all $\bar a,\bar a^\prime\in X$. For instance, the identity relation $=$ on $X$ is finer than any other formula equivalence $\sim_X$ on $X$.

{\bf Definability of formula equivalences.} 
Let us formulate several simple facts that are important for us to fix.

\begin{fact}
Let $X\subseteq A^m$ be a non-empty logical set over $\MA$ and $\sim_X$ be a formula equivalence on $X$, induced by a formula $E_X$ in $L(\MA)\cup A$. Then the following conditions are equivalent:
\begin{enumerate}[label=(\arabic*)]
\item the formula $E_X$ defines $\sim_X$;
\item for all $\bar a,\bar a^\prime\in A^m$ the condition $\MA_A\models E_X(\bar a, \bar a^\prime)$ implies that $\bar a,\bar a^\prime\in X$.
\end{enumerate}
\end{fact}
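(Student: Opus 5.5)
Both directions follow by unwinding what ``$E_X$ defines $\sim_X$'' means: the set of pairs satisfying $E_X$ in $\MA_A$ must coincide exactly with the graph of $\sim_X$, which is a subset of $X\times X$.

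For (1)$\Rightarrow$(2), assume $E_X$ defines $\sim_X$, i.\,e.
$$\{(\bar a,\bar a^\prime)\in A^{2m}\mid \MA_A\models E_X(\bar a,\bar a^\prime)\}={\sim_X}\subseteq X\times X.$$
Take any $\bar a,\bar a^\prime\in A^m$ with $\MA_A\models E_X(\bar a,\bar a^\prime)$. The pair then lies in $\sim_X$, so $\bar a,\bar a^\prime\in X$, which is (2).

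For (2)$\Rightarrow$(1), we show the two sets of pairs are equal.
\begin{itemize}
\item If $\MA_A\models E_X(\bar a,\bar a^\prime)$, then (2) gives $\bar a,\bar a^\prime\in X$. Condition~\ref{eq1} of Definition~\ref{def:formula_eq} then gives $\bar a\sim_X\bar a^\prime$.
\item Conversely, if $\bar a\sim_X\bar a^\prime$, then $\bar a,\bar a^\prime\in X$ because $\sim_X$ is a relation on $X$. Condition~\ref{eq1} then gives $\MA_A\models E_X(\bar a,\bar a^\prime)$.
\end{itemize}
Hence $E_X$ defines $\sim_X$, which is (1).

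The argument is routine. The only point needing care is to read ``defines'' correctly: the defining formula must cut out exactly the subset $\sim_X$ of $A^{2m}$, not merely agree with $\sim_X$ on $X\times X$. Under that reading, (2) is exactly the statement that no pair outside $X\times X$ satisfies $E_X$. Condition~\ref{eq2} (closure of $X$ under $E_X$) is not needed.
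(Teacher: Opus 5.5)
Your proof is correct. The paper states this fact without proof, as an immediate consequence of the definitions, and your two-direction unwinding (using only condition~\ref{eq1} of Definition~\ref{def:formula_eq} and the inclusion $\sim_X\subseteq X\times X$) is exactly that routine verification; you are also right that closure of $X$ under $E_X$ is not needed.
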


\begin{fact}
Let $X\subseteq A^m$ be a non-empty logical set over $\MA$ and $\sim_X$ be a formula equivalence on $X$, induced by a formula $E_X$ in the language $L(\MA)\cup P$, $P\subseteq A$. Then $\sim_X$ is $P$\=/definable if and only if $X$ is $P$\=/definable.
\end{fact}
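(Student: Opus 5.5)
We prove the two implications separately. Fix a formula $E_X$ in $L(\MA)\cup P$ that induces $\sim_X$ on $X$.

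\emph{($\Leftarrow$)} Suppose $X$ is $P$\=/definable, say by a formula $S_X(\bar x)$ in $L(\MA)\cup P$. Then
$$
\Phi(\bar x,\bar x^\prime)=S_X(\bar x)\wedge S_X(\bar x^\prime)\wedge E_X(\bar x,\bar x^\prime)
$$
is a formula in $L(\MA)\cup P$. By condition~\ref{eq1} of Definition~\ref{def:formula_eq}, for $\bar a,\bar a^\prime\in A^m$ we have $\MA_P\models\Phi(\bar a,\bar a^\prime)$ if and only if $\bar a,\bar a^\prime\in X$ and $\bar a\sim_X\bar a^\prime$. So $\Phi$ defines the graph of $\sim_X$, and $\sim_X$ is $P$\=/definable.

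\emph{($\Rightarrow$)} Suppose $\sim_X$ is $P$\=/definable, i.\,e., some formula $\Phi(\bar x,\bar x^\prime)$ in $L(\MA)\cup P$ defines the set $\{(\bar a,\bar a^\prime)\in X^2\mid \bar a\sim_X\bar a^\prime\}$. Since $\sim_X$ is reflexive on $X$, every $\bar a\in X$ satisfies $\Phi(\bar a,\bar a)$. Conversely, $\MA_P\models\Phi(\bar a,\bar a)$ forces $(\bar a,\bar a)$ to lie in the graph, which is contained in $X^2$, so $\bar a\in X$. Hence $X$ is defined by $\Phi(\bar x,\bar x)$, a formula in $L(\MA)\cup P$. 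Alternatively, we can view $\sim_X$ as the graph of the identity-type map and apply Fact~\ref{rem0}(a), but the direct diagonal argument is shorter.

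The only delicate point is one of bookkeeping. Definability of $\sim_X$ means definability of its graph as a subset of $X^2\subseteq A^{2m}$, not merely that $E_X$ induces it. Once that reading is fixed, both directions are immediate. The closure condition~\ref{eq2} is not needed for either direction.
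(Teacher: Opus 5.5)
Your proof is correct and follows the paper's argument: for ($\Leftarrow$) the paper uses exactly the formula $S_X(\bar x)\wedge S_X(\bar x^\prime)\wedge E_X(\bar x,\bar x^\prime)$, and for ($\Rightarrow$) it takes the diagonal $E(\bar x,\bar x)$ of a defining formula for $\sim_X$. You are also right that the closure condition~(2) of Definition~\ref{def:formula_eq} is not used in either direction.
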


\begin{proof}
If a formula $S_X(\bar x)$ defines $X$, then the formula $E(\bar x, \bar x^\prime)=S_X(\bar x)\wedge S_X(\bar x^\prime)\wedge E_X(\bar x,\bar x^\prime)$ defines the equivalence relation $\sim_X$. And, conversely, if a formula $E(\bar x,\bar x^\prime)$ defines $\sim_X$, then the formula $E(\bar x,\bar x)$ defines $X$.
\end{proof}

\begin{fact}\label{rem2}
Let $X\subseteq A^m$ be a non-empty definable set over $\MA$ and $\sim_X$ an equivalence relation on $X$. Then the following conditions are equivalent:
\begin{enumerate}[label=\arabic*)]
\item $\sim_X$ is definable;
\item $\sim_X$ is a formula equivalence, i.\,e., for some formula $E$ in $L(\MA)\cup A$  items~\ref{eq1} and~\ref{eq2} in Definition~\ref{def:formula_eq} hold;
\item for some formula $E$ in $L(\MA)\cup A$ just item~\ref{eq1} in Definition~\ref{def:formula_eq} hold.
\end{enumerate}
Moreover, any formula equivalence $\sim_X$ on $X$ is homogeneous.
\end{fact}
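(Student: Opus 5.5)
The plan is to prove the cycle of implications $1)\Rightarrow 2)\Rightarrow 3)\Rightarrow 1)$ and then derive the homogeneity claim. Throughout, $S_X(\bar x)$ is a single formula in $L(\MA)\cup A$ defining $X$; this is available because $X$ is definable.

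For $1)\Rightarrow 2)$, let $E(\bar x,\bar x^\prime)$ define $\sim_X$, viewed as a subset of $A^{2m}$. Then item~\ref{eq1} holds trivially. For item~\ref{eq2}, note that a defining formula is satisfied only by pairs from $X\times X$. So if $\MA_A\models E(\bar a,\bar a^\prime)\vee E(\bar a^\prime,\bar a)$, then $\bar a^\prime\in X$ automatically, and closedness holds.

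The implication $2)\Rightarrow 3)$ is immediate.

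For $3)\Rightarrow 1)$, suppose $E$ satisfies only item~\ref{eq1}. Put
$$
E^\prime(\bar x,\bar x^\prime)=S_X(\bar x)\wedge S_X(\bar x^\prime)\wedge E(\bar x,\bar x^\prime).
$$
For $\bar a,\bar a^\prime\in A^m$ we have $\MA_A\models E^\prime(\bar a,\bar a^\prime)$ if and only if $\bar a,\bar a^\prime\in X$ and $\bar a\sim_X\bar a^\prime$. Hence $E^\prime$ defines the graph of $\sim_X$, so $\sim_X$ is definable.

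The homogeneity claim is where some care is needed. A formula $E$ satisfying~\ref{eq1} and~\ref{eq2} need not be an equivalence relation on all of $A^m$, so we must build a new formula. I would take
$$
E^{\prime\prime}(\bar x,\bar x^\prime)=\bigl(S_X(\bar x)\wedge S_X(\bar x^\prime)\wedge E(\bar x,\bar x^\prime)\bigr)\vee\bigl(\neg S_X(\bar x)\wedge\neg S_X(\bar x^\prime)\bigr).
$$
This defines on $A^m$ the equivalence relation whose classes are the $\sim_X$-classes together with the single class $A^m\setminus X$, when that set is nonempty. Reflexivity, symmetry and transitivity can be checked separately on $X$ (where they come from $\sim_X$) and on the complement (where the relation is total). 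No pair links $X$ with its complement.

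It remains to verify items~\ref{eq1}--\ref{eq3} for $E^{\prime\prime}$. Item~\ref{eq1} holds because $E^{\prime\prime}$ restricted to $X\times X$ is exactly $\sim_X$. Item~\ref{eq2} holds because any $E^{\prime\prime}$-relation, in either direction, of an element $\bar a\in X$ forces the other element to lie in $X$. Item~\ref{eq3} is the equivalence-relation property just established.

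Alternatively, one could use $E^\prime\vee(\bar x=\bar x^\prime)$. This is also an equivalence relation on $A^m$, with singleton classes outside $X$, and it likewise satisfies~\ref{eq2}. Either choice completes the proof.
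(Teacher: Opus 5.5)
Your proof is correct and follows essentially the same route as the paper: definability comes from $S_X(\bar x)\wedge S_X(\bar x^\prime)\wedge E(\bar x,\bar x^\prime)$, and your alternative homogenizing formula $E^\prime\vee(\bar x=\bar x^\prime)$ defines on $A^m$ exactly the same relation as the paper's formula, which is $E$ on $X$ and equality off $X$. Your primary choice, which collapses $A^m\setminus X$ into a single class, is an equally valid variant.
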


\begin{proof}
Let $X=\V_\MA(S_X)$. 
If a formula $E$ satisfies item~\ref{eq1} in Definition~\ref{def:formula_eq} then $E_X(\bar x,\bar x^\prime)=S_X(\bar x)\wedge S_X(\bar x^\prime)\wedge E(\bar x,\bar x^\prime)$  defines and induces the equivalent relation $\sim_X$, i.\,e., $\sim_X$ is definable and it is a formula equivalence. 
At the same time, the formula
$$
E^{\prime}_X(\bar x, \bar x^\prime)= (S_X(\bar x)\wedge S_X(\bar x^\prime) \longrightarrow E(\bar x,\bar x^\prime)) \,\wedge\,(\neg S_X(\bar x)\vee \neg S_X(\bar x^\prime) \longrightarrow (\bar x=\bar x^\prime))
$$
introduces an equivalence relation on $A^m$, such that the set $X$ is closed under $E^{\prime}_X$, wherein $E_X$ and $E^{\prime}_X$ coincide on $X$. Thus, $\sim_X$ is homogeneous.
\end{proof}

\subsection{Projective logical sets with parameters}\label{subsec:proj_sets}

Now we are going to introduce projective logical sets.

\begin{definition}\label{def2}
Let $X$ be a non-empty logical set over $\MA$ with parameters in $P\subseteq A$ and $\sim_X$ be a formula equivalence on $X$ induced by a formula $E_X$ in the language $L(\MA)\cup P$. Then we call the quotient-set $X/{\sim_X}$ a {\em projective logical set over $\MA$ with parameters in $P$} (or just {\em projective logical set over $\MA$} if $P=A$). If $X$ is a definable set over $\MA$, then $X/{\sim_X}$ is called {\em a projective definable set} over $\MA$. If $X$ and $\sim_X$ are absolute then $X/{\sim_X}$ is also called {\em absolute}. If $X$ is an $(\alpha,\beta)$\=/logical set, then $X/{\sim_X}$ is also called an {\em $(\alpha,\beta)$\=/projective logical set}.
\end{definition}

\begin{notation}
 It is convenient for us to use the notation $X/{\sim_X}$ for a projective logical set with the initial logical $X$, even when there exist several formula equivalences on $X$. Elements form the set $X/{\sim_X}$ we will denote by $\bar x/{\sim_X}$, where $\bar x\in X$.
\end{notation}

\begin{remark}
The identity relation $=$ is a particular case of a formula equivalence. Therefore, any non-empty logical set $X$ is a projective logical set as well. And any non-empty definable set is a projective definable set.
\end{remark}

Objects such as projective logical sets or those close to them are also found in the literature under the name {\em quotients of type-definable sets}~\cite{Dries, ElPet, HaskelPillay, Rzepecki}. Moreover, while we consider only formula (definable) equivalence relations $\sim_X$, the listed papers also study type-definable relations. For the needs of logical geometry, such a degree of generalization seems redundant. Projective definable sets are also known as {\em imaginary sets}~\cite{AvniMeiri}.

Next, we are interested in understanding which subsets of $(\alpha,\beta)$\=/projective logical sets are $(\alpha,\beta)$\=/projective logical sets. Suppose that $X\subseteq A^m$ is a non-empty $(\alpha,\beta)$\=/logical set over $\MA$ and $Z$ is a non-empty $(\alpha,\beta)$\=/logical subset of $X$, i.\,e., there exists a system $S_Z\subseteq {\bf F}_{L(\MA)\cup P}(x_1,\ldots,x_m)$, $P\subseteq A$, $|P|<\alpha$, $|S_Z|<\beta$, such that $Z=\V_\MA(S_Z)$. Assume that $\sim_X$ is a formula equivalence on $X$, and $Z$ is closed under $\sim_X$. Then the restriction $\sim_Z$ of $\sim_X$ on $Z$ is a formula equivalence on $Z$ and for every tuple $\bar a\in Z$ the coset $\bar a/{\sim_Z}$ coincides with $\bar a/{\sim_X}$. So in this case we will write $Z/{\sim_Z}\subseteq X/{\sim_X}$. Conversely, let $W$ be a subset of the projective logical set $X/{\sim_X}$ over $\MA$. Then $W$ is an $(\alpha,\beta)$\=/projective logical set over $\MA$ if and only if the set $Z=\{\bar a\in X\mid \bar a/{\sim_X}\in W\}$ is an $(\alpha,\beta)$\=/logical set over $\MA$, so that $W=Z/{\sim_X}$. For instance, if $\sim_X$ is a homogeneous a formula equivalence on $X$ and it is the restriction of formula equivalence $\sim$ on $A^m$ to $X$, then $X/{\sim_X}\subseteq A^m/{\sim}$.

Let us note a few more trivial facts.

\begin{fact}\label{fact:times}
Let $X/{\sim_X}$ and $Y/{\sim_Y}$ be $(\alpha,\beta)$\=/projective logical sets over $\MA$ (maybe $X=Y$), and $E_X(\bar x,\bar x^\prime)$ induces $\sim_X$, while $E_Y(\bar y,\bar y^\prime)$ induces $\sim_Y$. Then 
\begin{enumerate}[label=(\alph*)]
   \item the product $X/{\sim_X}\,\times\, Y/{\sim_Y}=(X\times Y)/{({{\sim_X}\times {\sim_Y}})}$, where $\sim_X \times \sim_Y$ is induced by $E_X(\bar x,\bar x^\prime)\wedge E_Y(\bar y, \bar y^\prime)$, is an $(\alpha,\beta)$\=/projective logical set over $\MA$;  
  \item the intersection $X/{\sim_X}\cap Y/{\sim_Y}=(X\cap Y)/{(\sim_X\cap \sim_Y)}$, where $\sim_X\cap \sim_Y$ is induced by $E_X(\bar x,\bar x^\prime)\wedge E_Y(\bar x, \bar x^\prime)$, is an $(\alpha,\beta)$\=/projective logical set over $\MA$, provided $X,Y\subseteq A^m$ and $X\cap Y\ne \emptyset$.
\end{enumerate}
In particular, if $X/{\sim_X}$ and $Y/{\sim_Y}$ are projective definable sets over $\MA$, then so are $X/{\sim_X}\times Y/{\sim_Y}$ and $X/{\sim_X}\cap Y/{\sim_Y}$.
\end{fact}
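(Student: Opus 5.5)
The plan is to verify both items directly from Definitions~\ref{def1}, \ref{def:formula_eq} and~\ref{def2}. For each item there are three things to check. First, the underlying set is an $(\alpha,\beta)$\=/logical set. Second, the proposed formula satisfies conditions~\ref{eq1} and~\ref{eq2} of Definition~\ref{def:formula_eq}, so it induces a formula equivalence. Third, the set-theoretic identity in the statement holds, i.\,e., the quotient coincides with the stated product or intersection. The parameter sets of the new objects will be $P_X\cup P_Y$ together with the parameters of $E_X$ and $E_Y$. I assume, as the definition of an $(\alpha,\beta)$\=/projective logical set implicitly requires, that the parameters of $E_X,E_Y$ lie among the allowed parameters. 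Since $\alpha$ is infinite, the union still has cardinality $<\alpha$.

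For (a), I use the earlier Fact on logical sets: $X\times Y$ is $(\alpha,\beta)$\=/logical with $S_{X\times Y}=S_X(\bar x)\cup S_Y(\bar y)$. It is non-empty because $X$ and $Y$ are. For $(\bar a,\bar b),(\bar a',\bar b')\in X\times Y$, the formula $E_X(\bar x,\bar x')\wedge E_Y(\bar y,\bar y')$ holds exactly when $\bar a\sim_X\bar a'$ and $\bar b\sim_Y\bar b'$, which is the product relation; this gives~\ref{eq1}. For~\ref{eq2}, suppose $(\bar a,\bar b)\in X\times Y$ and the conjunction holds in either order. Then $E_X$ holds between $\bar a,\bar a'$ in some order, so closure of $X$ under $E_X$ gives $\bar a'\in X$; similarly $\bar b'\in Y$. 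Finally, the map $((\bar a,\bar b)/{\sim})\mapsto(\bar a/{\sim_X},\bar b/{\sim_Y})$ is a well-defined bijection, which is what the identification of $X/{\sim_X}\times Y/{\sim_Y}$ with $(X\times Y)/(\sim_X\times\sim_Y)$ means. One small point is worth a remark: if $X=Y$, I rename the variables of the second copy so that they are distinct.

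For (b), I take $X,Y\subseteq A^m$ with $X\cap Y\neq\emptyset$. Again by the earlier Fact, $X\cap Y$ is $(\alpha,\beta)$\=/logical with $S=S_X\cup S_Y$; the bound $|S|<\beta$ holds because $\beta$ is an infinite cardinal. On $X\cap Y$ the formula $E_X\wedge E_Y$ induces $\sim_X\cap\sim_Y$, which is an equivalence relation as an intersection of equivalence relations. For closure, let $\bar a\in X\cap Y$ and suppose $(E_X\wedge E_Y)(\bar a,\bar a')$ holds in either order. Closure of $X$ under $E_X$ gives $\bar a'\in X$, and closure of $Y$ under $E_Y$ gives $\bar a'\in Y$.

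The only genuine subtlety, and the step where I expect the main obstacle, is interpreting ``$X/{\sim_X}\cap Y/{\sim_Y}$''. As sets of cosets, these two quotients need not literally intersect. I would therefore state explicitly that the intersection is \emph{defined} to be the quotient $(X\cap Y)/(\sim_X\cap\sim_Y)$, and I would not attempt to prove an identity of sets of cosets.

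The final clause follows from the same constructions. If $X$ and $Y$ are definable, then $S_X\cup S_Y$ is finite, hence equivalent to a single formula, so $X\times Y$ and $X\cap Y$ are definable and the resulting quotients are projective definable sets.
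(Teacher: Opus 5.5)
Your direct verification is correct and is essentially the argument the paper intends. The paper states this Fact without proof as trivial, relying on the earlier Fact about products and intersections of $(\alpha,\beta)$\=/logical sets together with a check of conditions~(1)--(2) of Definition~\ref{def:formula_eq}. Your reading of $X/{\sim_X}\cap Y/{\sim_Y}$ as the defined quotient $(X\cap Y)/(\sim_X\cap\sim_Y)$ is the right one. Your assumption about the parameters of $E_X,E_Y$ is harmless but unnecessary: the $(\alpha,\beta)$ label in Definition~\ref{def2} constrains only the underlying set $X$, and a single formula adds only finitely many parameters anyway.
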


\begin{corollary}
The product operation ``$\times$'' on the set of all non-empty projective logical sets over $\MA$ is associative.    
\end{corollary}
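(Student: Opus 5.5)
The plan is to reduce associativity of ``$\times$'' to associativity of the Cartesian product of underlying logical sets, together with the corresponding equivalence of the inducing formulas, both taken from Fact~\ref{fact:times}(a).

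\textbf{Step 1 (the three-fold product).} Take non-empty projective logical sets $X/{\sim_X}$, $Y/{\sim_Y}$, $Z/{\sim_Z}$ with $X\subseteq A^m$, $Y\subseteq A^s$, $Z\subseteq A^k$, induced by $E_X(\bar x,\bar x^\prime)$, $E_Y(\bar y,\bar y^\prime)$, $E_Z(\bar z,\bar z^\prime)$. Applying Fact~\ref{fact:times}(a) twice, $(X/{\sim_X}\times Y/{\sim_Y})\times Z/{\sim_Z}$ is the quotient of $(X\times Y)\times Z$ by the relation induced by
$$(E_X(\bar x,\bar x^\prime)\wedge E_Y(\bar y,\bar y^\prime))\wedge E_Z(\bar z,\bar z^\prime).$$
Similarly, $X/{\sim_X}\times(Y/{\sim_Y}\times Z/{\sim_Z})$ is the quotient of $X\times(Y\times Z)$ by the relation induced by
$$E_X(\bar x,\bar x^\prime)\wedge(E_Y(\bar y,\bar y^\prime)\wedge E_Z(\bar z,\bar z^\prime)).$$

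\textbf{Step 2 (identifying the two sides).} As subsets of $A^{m+s+k}$, with the variables concatenated as $(\bar x,\bar y,\bar z)$, the sets $(X\times Y)\times Z$ and $X\times(Y\times Z)$ coincide. The two inducing formulas are logically equivalent, since conjunction is associative, so they define the same relation on $A^{m+s+k}$. Hence both sides are literally the same quotient set $(X\times Y\times Z)/{\sim}$, where $(\bar a,\bar b,\bar c)\sim(\bar a^\prime,\bar b^\prime,\bar c^\prime)$ if and only if $\bar a\sim_X\bar a^\prime$, $\bar b\sim_Y\bar b^\prime$ and $\bar c\sim_Z\bar c^\prime$. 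The closure condition~\ref{eq2} of Definition~\ref{def:formula_eq} is already guaranteed by Fact~\ref{fact:times}, so nothing more needs checking.

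\textbf{Main obstacle.} The only subtle point is conventional: a product of projective logical sets is defined via underlying sets in affine spaces, so associativity must be understood as equality of these representations. This relies on identifying $A^{m}\times A^{s}$ with $A^{m+s}$ by concatenation, which is exactly how Fact~\ref{fact:times} forms products. Once that identification is made explicit, the argument is immediate.
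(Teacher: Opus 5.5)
Your proposal is correct and matches the paper's approach. The paper states this corollary without proof, as an immediate consequence of Fact~\ref{fact:times}(a): under the concatenation identification $A^m\times A^s=A^{m+s}$ the Cartesian product is associative, conjunction in the inducing formula is associative, and the resulting relation depends only on $\sim_X,\sim_Y,\sim_Z$.
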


\begin{notation}
If $\bar x\in X, \bar y\in Y$, then by $\bar x/{\sim_X}\times \bar y/{\sim_Y}$ we will denote the element $(\bar x \times \bar y)/{(\sim_X\times \sim_Y)}$ from $X/{\sim_X}\times Y/{\sim_Y}$.     For an integer $d\in \N$ the product of $d$ copies of $X/{\sim_X}$ we will denote in the standard way by $(X/{\sim_X})^d$ and its elements by $(\bar x/{\sim_X})^d$.
\end{notation}

Meanwhile, we have introduced objects of the category $\PLS(\MA)$, i.\,e., projective logical sets over $\MA$, and intend to describe its morphisms. 

\subsection{Formula maps}\label{subsec:fm}

Let $X/{\sim_X}$ and $Y/{\sim_Y}$ be non-empty projective logical sets over $\MA$, where $X\subseteq A^m$ and $Y\subseteq A^s$. 

\begin{definition}\label{def:mor}
We refer to a map $F\colon X/{\sim_X} \,\to Y/{\sim_Y}$ as {\em a formula map} with parameters in $P\subseteq A$, if there exists a formula $\varphi$ in the language $L(\MA)\cup P$ with free variables $\{x_1,\ldots,x_m\}\sqcup\{y_1,\ldots,y_s\}$, such that 
\begin{enumerate}[label=(\Roman*)]
    \item\label{eq:FXY1} for any $ \bar a\in X$, $\bar b\in Y$ one has $F(\bar a/{\sim_X})=\bar b/{\sim_Y}$ if and only if $\MA_P\models \varphi(\bar a,\bar b)$;
    \item\label{eq:FXY2} if $\bar a\in X$ and $\bar b\in A^s\setminus Y$ then $\MA_P\not\models \varphi(\bar a,\bar b)$.
\end{enumerate}
We say in this case that the formula $\varphi$ {\em induces} the map $F$. If $P=\emptyset$ for some appropriate formula $\varphi$, then $F$ is called an {\em absolute} formula map. And we name $F$ {\em full} it can be induced by a formula $\varphi$, such that in addition to conditions~\ref{eq:FXY1}--\ref{eq:FXY2}, the following condition is satisfied 
\begin{enumerate}[label=(\Roman*)]\addtocounter{enumi}{2}
    \item\label{eq:FXY3} if $\bar a\in A^m\setminus X$ and $\bar b\in Y$ then $\MA_P\not\models \varphi(\bar a,\bar b)$.
\end{enumerate} 
If $\varphi$ satisfies~\ref{eq:FXY1}--\ref{eq:FXY3}, we will say that $\varphi$ {\em fully induces} formula map $F$. 
\end{definition}

\begin{notation}
Note again that there may exist different formula maps from $X/{\sim_X}$ to $Y/{\sim_Y}$, but it is convenient to use the notation $F_{XY}$ for someone of them. And there may exist several formulas, which induce $F_{XY}$; however, we use the notation $\varphi_{XY}$ for someone of them.   
\end{notation}

Among all formula maps, we single out the so-called term maps between logical sets. 

\begin{definition}
Let $X\subseteq A^m$ and $Y\subseteq A^s$ be non-empty logical sets over $\MA$ and $F\colon X \to Y$ be a formula map. 
We name $F$ a {\em term map}, if it can be induced by a formula of the type
$$
\varphi(\bar x, \bar y)=(y_1=t_1(\bar x)\wedge\ldots\wedge y_s=t_s(\bar x)),
$$
where $t_1,\ldots,t_s$ are terms in the language $L(\MA)\cup A$ with free variables $x_1,\ldots,x_m$.
\end{definition}

Note that term maps, as usual, are not full.

\begin{example}\label{ex:non-empty1}
    Let $X/{\sim_X}$, $Y/{\sim_Y}$ be non-empty projective logical sets over $\MA$, a formula $E_Y$ induces $\sim_Y$, and $\bar b\in Y$ a point. Then the formula $\varphi_{XY}(\bar x, \bar y)=E_Y(\bar y,\bar b)$ induces a formula map $F_{XY}\colon X/{\sim_X}\,\to Y/{\sim_Y}$, such that $F_{XY}(\bar x/{\sim_X})=\bar b/{\sim_Y}$ for all $\bar x/{\sim_X}\in X/{\sim_X}$. Moreover, if $X/{\sim_X}$ and $Y/{\sim_Y}$ are logical sets, then $F_{XY}$ is a term map. In particular, if a logical set $Y$ is non-empty, then there always exists a term map $F\colon A^m\to Y$.
\end{example}

Let $\sim_1$ and $\sim_2$ be formula equivalencies on a non-empty logical set $X$, induced by formulas $E_1$ and $E_2$ in $L(\MA)\cup A$. We refer to a map $\pi\colon X/{\sim_1}\to X/{\sim_2}$ as a {\em natural surjection}, if $\sim_1$ is finer then $\sim_2$ and $\pi(\bar a/{\sim_1})=\bar a/{\sim_2}$ for all $\bar a\in X$.

\begin{fact}\label{fact:surjection}
A natural surjection $\pi\colon X/{\sim_1}\to X/{\sim_2}$ is a formula map, fully induced by the formula $E_2$. And inversely, if a formula map $\pi\colon X/{\sim_1}\to X/{\sim_2}$ is induced by $E_2$, then it is a natural surjection.
\end{fact}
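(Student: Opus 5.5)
The argument is a direct check of conditions \ref{eq:FXY1}--\ref{eq:FXY3} of Definition~\ref{def:mor} for $\varphi(\bar x,\bar x^\prime)=E_2(\bar x,\bar x^\prime)$, followed by a check of the converse. Throughout, $X\subseteq A^m$ is non-empty and logical. The formulas $E_1,E_2$ in $L(\MA)\cup A$ induce $\sim_1,\sim_2$ on $X$ in the sense of Definition~\ref{def:formula_eq}, so $X$ is closed under each $E_i$.

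First, the forward direction. Let $\pi$ be a natural surjection, so $\sim_1$ is finer than $\sim_2$. The map $\pi$ is well defined: if $\bar a\sim_1\bar a^\prime$ then $\bar a\sim_2\bar a^\prime$, hence $\bar a/{\sim_2}=\bar a^\prime/{\sim_2}$.

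\begin{itemize}
\item For \ref{eq:FXY1}, take $\bar a,\bar b\in X$. Then $\pi(\bar a/{\sim_1})=\bar b/{\sim_2}$ means $\bar a/{\sim_2}=\bar b/{\sim_2}$, i.e.\ $\bar a\sim_2\bar b$. By item~\ref{eq1} of Definition~\ref{def:formula_eq}, this is equivalent to $\MA_A\models E_2(\bar a,\bar b)$.
\item For \ref{eq:FXY2}, take $\bar a\in X$ and $\bar b\in A^m\setminus X$, and suppose $\MA_A\models E_2(\bar a,\bar b)$. Closure of $X$ under $E_2$ (item~\ref{eq2}) forces $\bar b\in X$, a contradiction.
\item For \ref{eq:FXY3}, take $\bar a\notin X$ and $\bar b\in X$, and suppose $\MA_A\models E_2(\bar a,\bar b)$. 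Then $E_2(\bar b,\bar a)\vee E_2(\bar a,\bar b)$ holds, and closure again gives $\bar a\in X$, a contradiction.
\end{itemize}

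Item~\ref{eq2} is stated symmetrically, with the disjunction $E(\bar a,\bar a^\prime)\vee E(\bar a^\prime,\bar a)$. This is exactly what makes full inducement work even though $E_2$ need not be symmetric outside $X$. Hence $E_2$ fully induces $\pi$.

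Conversely, suppose $\pi\colon X/{\sim_1}\to X/{\sim_2}$ is a formula map induced by $E_2$. Fix $\bar a\in X$. Since $\sim_2$ is reflexive on $X$, we have $\MA_A\models E_2(\bar a,\bar a)$, and \ref{eq:FXY1} yields $\pi(\bar a/{\sim_1})=\bar a/{\sim_2}$.

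It remains to show that $\sim_1$ is finer than $\sim_2$. Let $\bar a\sim_1\bar a^\prime$ with $\bar a,\bar a^\prime\in X$. Then $\bar a/{\sim_1}=\bar a^\prime/{\sim_1}$, so
$$\bar a/{\sim_2}=\pi(\bar a/{\sim_1})=\pi(\bar a^\prime/{\sim_1})=\bar a^\prime/{\sim_2}.$$
Thus $\bar a\sim_2\bar a^\prime$, i.e.\ $\MA_A\models E_2(\bar a,\bar a^\prime)$, which is $E_1\subseteq E_2$ on $X$. Therefore $\pi$ is a natural surjection.

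None of these steps is a real obstacle. The only point needing care is using the symmetric form of the closure condition for \ref{eq:FXY3}, and the fact that well-definedness of $\pi$ is what delivers the finer-than relation in the converse.
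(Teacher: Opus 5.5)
Your proof is correct and follows essentially the same route as the paper. In the forward direction you read off condition (I) from $\bar a\sim_2\bar b\iff \MA_A\models E_2(\bar a,\bar b)$ and derive (II)--(III) from closure of $X$ under $E_2$; in the converse you use reflexivity with (I) to get $\pi(\bar a/{\sim_1})=\bar a/{\sim_2}$, and well-definedness of $\pi$ to conclude that $\sim_1$ is finer than $\sim_2$. The only difference is that you write out the two closure checks (including the symmetric use of the disjunction for (III)), which the paper compresses into one sentence.
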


\begin{proof}
If $\pi\colon X/{\sim_1}\to X/{\sim_2}$ is a natural surjection, then for any $\bar a,\bar b\in X$ one has $\pi(\bar a/{\sim_1})=\bar b/{\sim_2}$ if and only if $\bar a\sim_2 \bar b$, i.\,e., $\MA_A\models E_2(\bar a,\bar b)$. Since $X$ is closed under $E_2$, the formula $E_2$ fully induces the map $\pi$. Inversely, if $E_2$ induces $\pi$, then $\pi(\bar a/{\sim_1})=\bar a/{\sim_2}$ for any $\bar a\in X$. And if $\bar a\sim_1\bar a^\prime$ then $\bar a/{\sim_2}=\pi(\bar a/{\sim_1})=\pi(\bar a^\prime/{\sim_1})=\bar a^\prime/{\sim_2}$, i.\,e., $\bar a\sim_2\bar a^\prime$, for any $\bar a,\bar a^\prime\in X$. Therefore, $\sim_1$ is finer then $\sim_2$. Thus, $\pi$ is a natural surjection.
\end{proof}

\begin{example}\label{cor:id}
In particular, for any non-empty projective logical set $X/{\sim_X}$ over $\MA$ the identical map $\id_{X/{\sim_X}}\colon X/{\sim_X}\to X/{\sim_X}$ and the natural surjection $\pi_X\colon X\to X/{\sim_X}$ are full formula maps. If $X/{\sim_X}$ is absolute then $\id_{X/{\sim_X}}$ and $\pi_X$ are absolute as well.
\end{example}

Let $Z/{\sim_Z}$ and $X/{\sim_X}$ be non-empty projective logical sets over $\MA$, $Z=\V_\MA(S_Z)$, $X=\V_\MA(S_X)$, and $E_Z$, $E_X$ are formulas in $L(\MA)\cup A$, which induce $\sim_Z$ and $\sim_X$. We refer to a map $\varepsilon\colon Z/{\sim_Z}\to X/{\sim_X}$ as an {\em identical embedding}, if $Z/{\sim_Z}\subseteq X/{\sim_X}$ and $\varepsilon(\bar a/{\sim_Z})=\bar a/{\sim_X}$ for all $\bar a\in Z$. For further reference in constructing categorical functors, it is convenient for us to give the following descriptions of identical embeddings.

\begin{fact}\label{fact:subset}
An identical embedding $\varepsilon\colon Z/{\sim_Z}\to X/{\sim_X}$ is a formula map,  induced by the formula $E_Z$. And inversely, if $Z\subseteq X$ (i.\,e., $\V_\MA(S_Z)= \V_\MA(S_Z\cup S_X)$) and a formula map $\varepsilon\colon Z/{\sim_Z}\to X/{\sim_X}$ is induced by $E_Z$, then it is an identical embedding.
\end{fact}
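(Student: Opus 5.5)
The plan is to argue exactly as in the proof of Fact~\ref{fact:surjection}, checking conditions~\ref{eq:FXY1} and~\ref{eq:FXY2} of Definition~\ref{def:mor} for the formula $E_Z$, viewed as a formula in $\bar x$ (the $Z$-variables) and $\bar y$ (the $X$-variables). Both sets live in the same $A^m$, so the variable count matches.

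For the direct statement, let $\varepsilon\colon Z/{\sim_Z}\to X/{\sim_X}$ be an identical embedding. Then $Z\subseteq X$, $Z$ is closed under $\sim_X$, and $\sim_Z$ is the restriction of $\sim_X$ to $Z$.

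For~\ref{eq:FXY1}, take $\bar a\in Z$ and $\bar b\in X$. We have $\varepsilon(\bar a/{\sim_Z})=\bar b/{\sim_X}$ iff $\bar a\sim_X\bar b$. Since $Z$ is closed under $\sim_X$, this forces $\bar b\in Z$. So the condition is equivalent to $\bar b\in Z$ and $\bar a\sim_Z\bar b$, which holds iff $\MA_A\models E_Z(\bar a,\bar b)$. The converse direction here uses that $Z$ is closed under $E_Z$ (condition~\ref{eq2} of Definition~\ref{def:formula_eq}): from $\MA_A\models E_Z(\bar a,\bar b)$ we get $\bar b\in Z$, and then $\bar a\sim_Z\bar b$ by~\ref{eq1}.

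For~\ref{eq:FXY2}, if $\bar a\in Z$ and $\MA_A\models E_Z(\bar a,\bar b)$, the same closure gives $\bar b\in Z\subseteq X$. Hence $E_Z(\bar a,\bar b)$ fails whenever $\bar b\notin X$. This shows $E_Z$ induces $\varepsilon$.

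For the converse, assume $Z\subseteq X$ and that $E_Z$ induces a formula map $\varepsilon$. We need two things: that $Z/{\sim_Z}\subseteq X/{\sim_X}$, i.e.\ that $Z$ is closed under $\sim_X$ with $\sim_Z=\sim_X|_Z$; and that $\varepsilon(\bar a/{\sim_Z})=\bar a/{\sim_X}$.

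The second is immediate. For $\bar a\in Z$ we have $\MA_A\models E_Z(\bar a,\bar a)$ and $\bar a\in X$, so by~\ref{eq:FXY1} $\varepsilon(\bar a/{\sim_Z})=\bar a/{\sim_X}$.

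For the first, fix $\bar a\in Z$ and $\bar b\in X$ with $\bar a\sim_X\bar b$. Then $\varepsilon(\bar a/{\sim_Z})=\bar a/{\sim_X}=\bar b/{\sim_X}$. By~\ref{eq:FXY1} this gives $\MA_A\models E_Z(\bar a,\bar b)$, and closure of $Z$ under $E_Z$ yields $\bar b\in Z$ with $\bar a\sim_Z\bar b$. So $Z$ is closed under $\sim_X$, and $\sim_X|_Z\subseteq\sim_Z$.

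The reverse inclusion, $\sim_Z\subseteq\sim_X|_Z$, is the step I expect to need the most care. I would derive it from well-definedness of $\varepsilon$. If $\bar a\sim_Z\bar a'$, then $\MA_A\models E_Z(\bar a,\bar a')$, and~\ref{eq:FXY1} gives $\varepsilon(\bar a/{\sim_Z})=\bar a'/{\sim_X}$. We already know $\varepsilon(\bar a/{\sim_Z})=\bar a/{\sim_X}$, so $\bar a\sim_X\bar a'$. Hence $\sim_Z=\sim_X|_Z$, and $\varepsilon$ is an identical embedding.
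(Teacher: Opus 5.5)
Your proof is correct and follows essentially the same route as the paper: in the direct part you use closedness of $Z$ under $\sim_X$ and under $E_Z$ to check conditions~\ref{eq:FXY1} and~\ref{eq:FXY2}. In the converse you obtain the action $\varepsilon(\bar a/{\sim_Z})=\bar a/{\sim_X}$ from reflexivity and closedness of $Z$ under $\sim_X$ from~\ref{eq:FXY1}, and you get $\sim_Z=\sim_X|_Z$ by comparing that action with $\varepsilon(\bar a/{\sim_Z})=\bar a'/{\sim_X}$, exactly as the paper does.
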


\begin{proof}
If $\varepsilon$ is an identical embedding, then for any $\bar a\in Z$, $\bar b\in X$ one has $\varepsilon(\bar a/{\sim_Z})=\bar b/{\sim_X}$ if and only if $\bar a\sim_X\bar b$. Since $Z/{\sim_Z}\subseteq X/{\sim_X}$, conditions $\bar a\sim_X\bar b$ ans $\bar a\in Z$ imply that $\bar b\in Z$ and $\bar a\sim_Z\bar b$. Thus, $\varepsilon(\bar a/{\sim_Z})=\bar b/{\sim_X}$ if and only if $\MA_A\models E_Z(\bar a,\bar b)$. As $Z$ is closed under $E_Z$, the formula $E_Z$ induces the map $\varepsilon$. Inversely, if $Z\subseteq X$ and a formula map $\varepsilon\colon Z/{\sim_Z}\to X/{\sim_X}$ is induced by the formula $E_Z$, then $\varepsilon (\bar a/{\sim_Z})=\bar a/{\sim_X}$ for any $\bar a\in Z$. And if $\bar a\sim_X\bar b$, $\bar a\in Z$, $\bar b\in X$, then $\varepsilon (\bar a/{\sim_Z})=\bar b/{\sim_X}$, i.\,e., $\MA_A\models E_Z(\bar a,\bar b)$, therefore, $\bar b \in Z$. Hence, $Z$ is closed under $\sim_X$. Further, for any $\bar a,\bar b\in Z$ one has $\bar a\sim_Z\bar b$ if and only if $\MA_A\models E_Z(\bar a,\bar b)$, if and only if $\varepsilon (\bar a/{\sim_Z})=\bar b/{\sim_X}$, if and only if $\bar a\sim_X\bar b$. Thus, $\sim_Z$ is the restriction of $\sim_X$ on $Z$, so one has $Z/{\sim_Z}\subseteq X/{\sim_X}$. Hence, $\varepsilon$ is an identical embedding.
\end{proof}

\begin{example}\label{ex:non-empty2}
Let $X\subseteq A^m$ be a non-empty logical set over $\MA$. Then the identical embedding $\varepsilon\colon X\to A^m$ is a term map, which is induced by the formula $E_m$ from Example~\ref{ex:Em}. 
\end{example}

We need to formulate the conditions under which a given formula $\varphi_{XY}(\bar x,\bar y)$ induces some formula map.

\begin{lemma}\label{fm}
Let $X/{\sim_X}$ and $Y/{\sim_Y}$, $X\subseteq A^m$, $Y\subseteq A^s$, be non-empty projective logical sets over $\MA$. A formula $\varphi_{XY}(\bar x,\bar y)$, $|\bar x|=m$, $|\bar y|=s$, in the language $L(\MA)\cup A$ induces some formula map $F_{XY}\colon X/{\sim_X} \,\to Y/{\sim_Y}$ if and only if it satisfies the following conditions:
\begin{enumerate}[label=(\roman*)]
\item\label{fm1} for any $\bar a, \bar a^\prime\in X$ and $\bar b, \bar b^\prime\in Y$ if $\bar a\sim_X\bar a^\prime$ and $\bar b\sim_Y \bar b^\prime$, then
$$
\MA_A\models \varphi_{XY}(\bar a,\bar b) \iff \MA_A\models \varphi_{XY}(\bar a^\prime,\bar b^\prime);
$$
\item\label{fm2} for any $\bar a\in X$ and $\bar b,\bar b^\prime\in A^s$ if
$$
\MA_A\models\varphi_{XY}(\bar a, \bar b)\wedge \varphi_{XY}(\bar a, \bar b^\prime),
$$
then $\bar b,\bar b^\prime\in Y$ and $\bar b\sim_Y \bar b^\prime$;
\item\label{fm3} for any $\bar a\in X$ there exists $\bar b\in Y$, such that $\MA_A\models\varphi_{XY}(\bar a, \bar b)$.
\end{enumerate}
\end{lemma}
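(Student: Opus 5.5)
We prove the two implications separately, working directly from Definition~\ref{def:mor} with $P=A$; the only tools needed are the facts that $\sim_X$ and $\sim_Y$ are equivalence relations and that $F_{XY}$ is a function.

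\emph{Necessity.} Suppose $\varphi_{XY}$ induces a formula map $F_{XY}$, i.\,e., conditions~\ref{eq:FXY1} and~\ref{eq:FXY2} hold.
\begin{itemize}
\item For~\ref{fm1}, take $\bar a\sim_X\bar a^\prime$ in $X$ and $\bar b\sim_Y\bar b^\prime$ in $Y$. Then $\bar a/{\sim_X}=\bar a^\prime/{\sim_X}$ and $\bar b/{\sim_Y}=\bar b^\prime/{\sim_Y}$. By~\ref{eq:FXY1}, $\MA_A\models\varphi_{XY}(\bar a,\bar b)$ iff $F_{XY}(\bar a/{\sim_X})=\bar b/{\sim_Y}$, and the latter is the same statement as $F_{XY}(\bar a^\prime/{\sim_X})=\bar b^\prime/{\sim_Y}$. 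Applying~\ref{eq:FXY1} again gives the required equivalence.
\item For~\ref{fm2}, let $\bar a\in X$ with $\MA_A\models\varphi_{XY}(\bar a,\bar b)\wedge\varphi_{XY}(\bar a,\bar b^\prime)$. Condition~\ref{eq:FXY2} forces $\bar b,\bar b^\prime\in Y$. Then~\ref{eq:FXY1} gives $\bar b/{\sim_Y}=F_{XY}(\bar a/{\sim_X})=\bar b^\prime/{\sim_Y}$, so $\bar b\sim_Y\bar b^\prime$.
\item For~\ref{fm3}, the map $F_{XY}$ is total, so choose $\bar b\in Y$ representing $F_{XY}(\bar a/{\sim_X})$. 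Then $\MA_A\models\varphi_{XY}(\bar a,\bar b)$ by~\ref{eq:FXY1}.
\end{itemize}

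\emph{Sufficiency.} Assume~\ref{fm1}--\ref{fm3}. Define $F_{XY}(\bar a/{\sim_X})=\bar b/{\sim_Y}$, where $\bar b\in Y$ is any tuple with $\MA_A\models\varphi_{XY}(\bar a,\bar b)$; such a $\bar b$ exists by~\ref{fm3}.

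The key step is checking that this is well defined. Fix the class, and let $\bar a,\bar a^\prime$ be two representatives with witnesses $\bar b,\bar b^\prime$. By~\ref{fm1}, applied to $\bar a\sim_X\bar a^\prime$ and $\bar b\sim_Y\bar b$, we get $\MA_A\models\varphi_{XY}(\bar a,\bar b^\prime)$ from $\MA_A\models\varphi_{XY}(\bar a^\prime,\bar b^\prime)$. Then~\ref{fm2} gives $\bar b\sim_Y\bar b^\prime$. So the value does not depend on the choice of representative or witness.

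It remains to verify~\ref{eq:FXY1} and~\ref{eq:FXY2}.
\begin{itemize}
\item \ref{eq:FXY1}, left to right: if $F_{XY}(\bar a/{\sim_X})=\bar b/{\sim_Y}$, pick a witness $\bar b_0\in Y$ with $\MA_A\models\varphi_{XY}(\bar a,\bar b_0)$. Then $\bar b_0\sim_Y\bar b$, and~\ref{fm1} transfers this to $\MA_A\models\varphi_{XY}(\bar a,\bar b)$.
\item \ref{eq:FXY1}, right to left: this is immediate from the definition of $F_{XY}$.
\item \ref{eq:FXY2}: if $\bar a\in X$ and $\MA_A\models\varphi_{XY}(\bar a,\bar b)$, apply~\ref{fm2} with $\bar b^\prime=\bar b$. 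This gives $\bar b\in Y$, so no $\bar b\in A^s\setminus Y$ can satisfy $\varphi_{XY}(\bar a,\bar b)$.
\end{itemize}

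There is no real obstacle here; the only delicate point is the well-definedness argument, where~\ref{fm1} and~\ref{fm2} must be combined in the right order.
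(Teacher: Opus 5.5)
Your proof is correct and is exactly the direct verification from Definition~\ref{def:mor} that the paper leaves as ``Straightforward.'' One small slip: in the well-definedness step, condition (i) should be applied to $\bar a\sim_X\bar a^\prime$ and $\bar b^\prime\sim_Y\bar b^\prime$ (not $\bar b\sim_Y\bar b$). That is the instance that passes from $\MA_A\models\varphi_{XY}(\bar a^\prime,\bar b^\prime)$ to $\MA_A\models\varphi_{XY}(\bar a,\bar b^\prime)$.
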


\begin{proof}
Straightforward.
\end{proof}

{\bf Definability of formula maps.} 
Let's get some simple facts straight.

\begin{fact}\label{fact10}
Let a formula $\varphi_{XY}$ in the language $L(\MA)\cup A$,  induces a formula map $F_{XY}\colon X/{\sim_X} \,\to Y/{\sim_Y}$ between non-empty projective logical sets over $\MA$. Then the following conditions are equivalent:
\begin{enumerate}[label=\arabic*)]
\item the formula $\varphi_{XY}$ defines the map $F_{XY}$;
\item for any $\bar a, \bar b\in \MA$ condition 
$\MA_A\models\varphi_{XY}(\bar a,\bar b)$ implies that $\bar a\in X$.
\end{enumerate}
\end{fact}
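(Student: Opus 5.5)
We prove the two implications separately. Throughout we use what "$\varphi_{XY}$ defines $F_{XY}$" means by Definition~\ref{def:definable_map}: the set $\varphi_{XY}(\MA_A)$ coincides with the preimage of the graph of $F_{XY}$, i.\,e.\ with
\[
G=\{(\bar a,\bar b)\in X\times Y\mid F_{XY}(\bar a/{\sim_X})=\bar b/{\sim_Y}\}.
\]
Since $\varphi_{XY}$ induces $F_{XY}$, condition~\ref{eq:FXY1} of Definition~\ref{def:mor} already gives $\varphi_{XY}(\MA_A)\cap(X\times Y)=G$. So the whole question is whether $\varphi_{XY}$ has additional solutions outside $X\times Y$.

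For 1) $\Rightarrow$ 2): if $\varphi_{XY}$ defines $F_{XY}$, then every solution $(\bar a,\bar b)$ of $\varphi_{XY}$ lies in $G\subseteq X\times Y$. In particular $\bar a\in X$.

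For 2) $\Rightarrow$ 1): take any $(\bar a,\bar b)$ with $\MA_A\models\varphi_{XY}(\bar a,\bar b)$.
\begin{itemize}
\item Assumption 2) gives $\bar a\in X$.
\item Since $\bar a\in X$, condition~\ref{eq:FXY2} excludes $\bar b\in A^s\setminus Y$, so $\bar b\in Y$. Alternatively, apply Lemma~\ref{fm}\ref{fm2} with $\bar b^\prime=\bar b$.
\end{itemize}
Hence every solution of $\varphi_{XY}$ lies in $X\times Y$, and by the first paragraph it lies in $G$. Conversely $G\subseteq\varphi_{XY}(\MA_A)$ by~\ref{eq:FXY1}. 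Therefore $\varphi_{XY}(\MA_A)=G$, so $\varphi_{XY}$ defines $F_{XY}$.

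There is no real obstacle. The only point needing care is that~\ref{eq:FXY2} constrains $\bar b$ only once $\bar a\in X$ is known, so the two coordinates must be handled in that order.
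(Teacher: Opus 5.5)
Your argument is correct. It is exactly the direct unwinding of Definition~\ref{def:definable_map} together with conditions~\ref{eq:FXY1} and~\ref{eq:FXY2} of Definition~\ref{def:mor}, which is the argument the paper leaves implicit when it states this fact without proof; in particular, you rightly note that~\ref{eq:FXY2} forces $\bar b\in Y$ once $\bar a\in X$ is known, so only the condition on $\bar a$ needs to be assumed.
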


\begin{fact}\label{rem5}
If $X/{\sim_X}$ and $Y/{\sim_Y}$ are non-empty projective logical sets over $\MA$ and $F_{XY}\colon X/{\sim_X} \,\to Y/{\sim_Y}$ is a map, and $P\subseteq A$ is a subset, then the following conditions are equivalent:
\begin{enumerate}[label=\arabic*)]
\item $F_{XY}$ is $P$\=/definable, and, in particular, $F_{XY}$ is full;
\item $F_{XY}$ is a formula map with parameters in $P$ and $X$ is $P$\=/definable.
\end{enumerate}
\end{fact}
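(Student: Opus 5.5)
We prove the two implications separately, working directly with preimages of graphs as in Definition~\ref{def:definable_map}. Write $X\subseteq A^m$ and $Y\subseteq A^s$. Let $\Phi\subseteq A^{m+s}$ be the preimage of the graph of $F_{XY}$, that is, the set of pairs $(\bar a,\bar b)\in X\times Y$ with $F_{XY}(\bar a/{\sim_X})=\bar b/{\sim_Y}$. By definition, $F_{XY}$ is $P$\=/definable exactly when $\Phi$ is $P$\=/definable.

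For 1)$\Rightarrow$2), suppose a formula $\varphi(\bar x,\bar y)$ in $L(\MA)\cup P$ defines $\Phi$. We check that $\varphi$ satisfies conditions~\ref{eq:FXY1}--\ref{eq:FXY3} of Definition~\ref{def:mor}.
\begin{enumerate}[label=(\alph*)]
\item Condition~\ref{eq:FXY1} is the statement $\varphi(\MA_P)\cap(X\times Y)=\Phi$, which holds because $\varphi(\MA_P)=\Phi$.
\item Conditions~\ref{eq:FXY2} and~\ref{eq:FXY3} hold because $\varphi(\MA_P)=\Phi\subseteq X\times Y$, so $\varphi$ never holds on a pair with $\bar a\notin X$ or $\bar b\notin Y$.
\end{enumerate}
Hence $F_{XY}$ is a full formula map with parameters in $P$. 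That $X$ is $P$\=/definable follows from Fact~\ref{rem0}(a), or directly: $X$ is defined by $\exists\,\bar y\,\varphi(\bar x,\bar y)$, since $F_{XY}$ is total and every element of $X/{\sim_X}$ has an image represented by some $\bar b\in Y$.

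For 2)$\Rightarrow$1), let $\varphi$ in $L(\MA)\cup P$ induce $F_{XY}$, and let $S_X(\bar x)$ be a formula in $L(\MA)\cup P$ defining $X$. Put $\varphi'(\bar x,\bar y)=S_X(\bar x)\wedge\varphi(\bar x,\bar y)$. We claim $\varphi'(\MA_P)=\Phi$.
\begin{enumerate}[label=(\alph*)]
\item If $\MA_P\models\varphi'(\bar a,\bar b)$, then $\bar a\in X$. Condition~\ref{eq:FXY2} then forces $\bar b\in Y$, and condition~\ref{eq:FXY1} gives $(\bar a,\bar b)\in\Phi$.
\item Conversely, if $(\bar a,\bar b)\in\Phi$, then $\bar a\in X$ and $\bar b\in Y$, so condition~\ref{eq:FXY1} gives $\MA_P\models\varphi(\bar a,\bar b)$, and hence $\varphi'(\bar a,\bar b)$ holds.
\end{enumerate}
So $F_{XY}$ is $P$\=/definable. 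Fact~\ref{fact10} gives the same conclusion, since $\varphi'$ satisfies its criterion. By part 1)$\Rightarrow$2) just proved, $F_{XY}$ is then also full.

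The only point needing care is the second step of 2)$\Rightarrow$1): condition~\ref{eq:FXY2} is used to exclude $\bar b\notin Y$ once $\bar a\in X$ is guaranteed by the conjunct $S_X$. Note that we cannot also cut down to $Y$ by conjoining a defining formula for $Y$, because $Y$ need not be definable. The conjunct $S_X$ is precisely where the definability of $X$ is used. Everything else is routine bookkeeping.
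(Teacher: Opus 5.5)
Your proof is correct and is essentially the paper's argument. For 1)$\Rightarrow$2), the formula defining the preimage of the graph already fully induces $F_{XY}$, and $X$ is $P$-definable by Fact~\ref{rem0}; for 2)$\Rightarrow$1), you conjoin a defining formula $S_X$ of $X$ with the inducing formula, exactly as the paper does. Your explicit checks of conditions~\ref{eq:FXY1}--\ref{eq:FXY3}, and of where~\ref{eq:FXY2} is needed in the converse, simply fill in details the paper leaves implicit.
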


\begin{proof}
If $F_{XY}$ is $P$\=/definable, then it is a formula map and $X$ is $P$\=/definable due to Fact~\ref{rem0}. And, conversely, if a formula $\varphi_{XY}$ in $L(\MA)\cup P$ induces the map $F_{XY}$ and a formula $S_X$ in $L(\MA)\cup P$ defines the set $X$ then the formula $\varphi(\bar x,\bar y)=S_X(\bar x)\wedge \varphi_{XY}(\bar x,\bar y)$ defines the map $F_{XY}$.
\end{proof}

{\bf Compositions of formula maps.} 
It is obvious that for arbitrary non-empty sets $X\subseteq A^m$, $Y\subseteq A^s$, $Z\subseteq A^d$ and equivalence relations $\sim_X,\sim_Y,\sim_Z$ on them, and $P$\=/definable maps $F\colon X/{\sim_X}\to Y/{\sim_Y}$ and $G\colon Y/{\sim_Y}\to Z/{\sim_Z}$ the composition $G\circ F\colon X/{\sim_X}\to Z/{\sim_Z}$ is $P$\=/definable too. Now we need to check a similar fact for projective logical sets and formula maps.

\begin{lemma}\label{lemma:composition}
Let $X/{\sim_X}$, $Y/{\sim_Y}$, $Z/{\sim_Z}$ be non-empty projective logical sets over $\MA$, and $F_{XY}\colon X/{\sim_X}\,\to Y/{\sim_Y}$, $F_{YZ}\colon Y/{\sim_Y}\,\to Z/{\sim_Z}$ be formula maps with parameters in $P$, $P\subseteq A$, induced by formulas $\varphi_{XY}$ and $\varphi_{YZ}$ in $L(\MA)\cup P$ correspondingly. Then the composition $F_{XZ}=F_{YZ}\circ F_{XY}\colon X/{\sim_X}\,\to Z/{\sim_Z}$ is a formula map with parameters in $P$ as well and it is induced by the formula $
\varphi_{XZ}(\bar x,\bar z)=\exists\,\bar y \:(\varphi_{XY}(\bar x,\bar y) \wedge \varphi_{YZ}(\bar y,\bar z))$. Moreover, if formula maps $F_{XY}$ and $F_{YZ}$ are full, then $F_{XZ}$ is full too, i.\,e., if formula $\varphi_{XY}$ fully induces $F_{XY}$ and $\varphi_{YZ}$ fully induces $F_{YZ}$, then $\varphi_{XZ}$ fully induces $F_{XZ}$.
\end{lemma}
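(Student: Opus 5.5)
The plan is to verify directly that $\varphi_{XZ}(\bar x,\bar z)=\exists\,\bar y\,(\varphi_{XY}(\bar x,\bar y)\wedge\varphi_{YZ}(\bar y,\bar z))$ satisfies conditions~\ref{eq:FXY1}--\ref{eq:FXY2} of Definition~\ref{def:mor} for the map $F_{YZ}\circ F_{XY}$. This formula is in $L(\MA)\cup P$, since both constituents are. Rather than passing through Lemma~\ref{fm}, I would argue from the definition, and the only tool needed is condition~\ref{eq:FXY2} for $\varphi_{XY}$. That condition guarantees that any witness $\bar y$ attached to a point of $X$ already lies in $Y$.

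\emph{Condition~\ref{eq:FXY1}.} Take $\bar a\in X$ and $\bar c\in Z$.
\begin{itemize}
\item Suppose first that $F_{YZ}(F_{XY}(\bar a/{\sim_X}))=\bar c/{\sim_Z}$. Choose $\bar b\in Y$ with $F_{XY}(\bar a/{\sim_X})=\bar b/{\sim_Y}$. Then $\MA_P\models\varphi_{XY}(\bar a,\bar b)$ and $\MA_P\models\varphi_{YZ}(\bar b,\bar c)$ by~\ref{eq:FXY1} for both maps, so $\MA_P\models\varphi_{XZ}(\bar a,\bar c)$.
\item Conversely, suppose $\MA_P\models\varphi_{XZ}(\bar a,\bar c)$, with some witness $\bar b\in A^s$. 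Since $\bar a\in X$, condition~\ref{eq:FXY2} for $\varphi_{XY}$ forces $\bar b\in Y$. Then~\ref{eq:FXY1} gives $F_{XY}(\bar a/{\sim_X})=\bar b/{\sim_Y}$ and $F_{YZ}(\bar b/{\sim_Y})=\bar c/{\sim_Z}$.
\end{itemize}

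\emph{Condition~\ref{eq:FXY2}.} Take $\bar a\in X$ and $\bar c\notin Z$, and suppose there were a witness $\bar b$. As above, $\bar b\in Y$. But then $\MA_P\models\varphi_{YZ}(\bar b,\bar c)$ with $\bar b\in Y$ and $\bar c\notin Z$, which contradicts~\ref{eq:FXY2} for $\varphi_{YZ}$. So no witness exists.

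\emph{Fullness.} Assume $\varphi_{XY}$ and $\varphi_{YZ}$ both satisfy~\ref{eq:FXY3}; we check~\ref{eq:FXY3} for $\varphi_{XZ}$. Take $\bar a\notin X$ and $\bar c\in Z$, and suppose a witness $\bar b$ exists.
\begin{itemize}
\item If $\bar b\in Y$, then $\MA_P\models\varphi_{XY}(\bar a,\bar b)$ with $\bar a\notin X$, contradicting~\ref{eq:FXY3} for $\varphi_{XY}$.
\item If $\bar b\notin Y$, then $\MA_P\models\varphi_{YZ}(\bar b,\bar c)$ with $\bar c\in Z$, contradicting~\ref{eq:FXY3} for $\varphi_{YZ}$.
\end{itemize}
Hence $\varphi_{XZ}$ fully induces $F_{XZ}$.

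The only point needing care is the witness $\bar y$ in the existential quantifier, which a priori ranges over all of $A^s$ rather than over $Y$. This is handled by condition~\ref{eq:FXY2} for $\varphi_{XY}$ in the first two parts, and by the case split $\bar b\in Y$ versus $\bar b\notin Y$ in the fullness part. No other obstacle is expected.
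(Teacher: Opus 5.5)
Your proof is correct, and it is exactly the routine verification that the paper dismisses as ``Straightforward.'' You check conditions~\ref{eq:FXY1}--\ref{eq:FXY3} of Definition~\ref{def:mor} directly: condition~\ref{eq:FXY2} for $\varphi_{XY}$ forces the existential witness into $Y$, and the case split $\bar b\in Y$ versus $\bar b\notin Y$ handles fullness.
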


\begin{proof}
Straightforward.
\end{proof}

It is easy to see that in a special case of composition, when one of the maps is an identical embedding, the composition is induced by the initial formula.

\begin{fact}\label{remark:rest}
Let $F_{XY}\colon X/{\sim_X} \,\to Y/{\sim_Y}$ be a formula map between non-empty projective logical sets over $\MA$, induced by a formula $\varphi_{XY}$ in $L(\MA)\cup A$. Then for any non-empty projective logical subset $Z/{\sim_Z}\subseteq X/{\sim_X}$ the restriction of $F_{XY}$ on $Z/{\sim_Z}$ is a formula map, induced by the formula $\varphi_{XY}$. And similarly, if $W/{\sim_W}$ is a projective logical set over $\MA$, such that $Y/{\sim_Y}\subseteq W/{\sim_W}$, then $\varphi_{XY}$ induces the formula map $F_{XW}\colon X/{\sim_X}\to W/{\sim_W}$, which action coincides with action of $F_{XY}$.
\end{fact}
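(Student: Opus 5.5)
We verify the two assertions of Fact~\ref{remark:rest} directly from Definition~\ref{def:mor}, checking conditions~\ref{eq:FXY1} and~\ref{eq:FXY2} for $\varphi_{XY}$ in each new situation. Recall that $Z/{\sim_Z}\subseteq X/{\sim_X}$ means three things: $Z\subseteq X$, $Z$ is closed under $\sim_X$, and $\sim_Z$ is the restriction of $\sim_X$ to $Z$. In particular, $\bar a/{\sim_Z}=\bar a/{\sim_X}$ for every $\bar a\in Z$. Likewise, $Y/{\sim_Y}\subseteq W/{\sim_W}$ gives $Y\subseteq W$, $Y$ closed under $\sim_W$, and $\bar b/{\sim_Y}=\bar b/{\sim_W}$ for $\bar b\in Y$.

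For the restriction, let $G$ be the restriction of $F_{XY}$ to $Z/{\sim_Z}$, so $G(\bar a/{\sim_Z})=F_{XY}(\bar a/{\sim_X})$ for $\bar a\in Z$.
\begin{itemize}
\item Condition~\ref{eq:FXY1}: for $\bar a\in Z\subseteq X$ and $\bar b\in Y$, we have $G(\bar a/{\sim_Z})=\bar b/{\sim_Y}$ iff $F_{XY}(\bar a/{\sim_X})=\bar b/{\sim_Y}$, iff $\MA_A\models\varphi_{XY}(\bar a,\bar b)$.
\item Condition~\ref{eq:FXY2} is inherited immediately, since $Z\subseteq X$.
\end{itemize}
If one prefers, this case is also Lemma~\ref{lemma:composition} applied to the identical embedding of Fact~\ref{fact:subset} followed by $F_{XY}$. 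That route, however, produces the formula $\exists\,\bar x'(E_Z\wedge\varphi_{XY})$ rather than $\varphi_{XY}$ itself, so the direct check is cleaner.

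For the extension, define $F_{XW}(\bar a/{\sim_X})=F_{XY}(\bar a/{\sim_X})$, viewed as an element of $W/{\sim_W}$. This is legitimate because each class $\bar b/{\sim_Y}$ with $\bar b\in Y$ coincides with $\bar b/{\sim_W}$.
\begin{itemize}
\item Condition~\ref{eq:FXY1}, first direction: let $\bar a\in X$ and $\bar b\in W$. If $\MA_A\models\varphi_{XY}(\bar a,\bar b)$, then $\bar b\in Y$ by~\ref{eq:FXY2} for $F_{XY}$. Hence $F_{XY}(\bar a/{\sim_X})=\bar b/{\sim_Y}=\bar b/{\sim_W}$.
\item Condition~\ref{eq:FXY1}, converse direction: suppose $F_{XW}(\bar a/{\sim_X})=\bar b/{\sim_W}$. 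Choose $\bar b'\in Y$ with $F_{XY}(\bar a/{\sim_X})=\bar b'/{\sim_Y}$. Then $\bar b'\sim_W\bar b$. Since $Y$ is closed under $\sim_W$, we get $\bar b\in Y$ and $\bar b\sim_Y\bar b'$. Therefore $\MA_A\models\varphi_{XY}(\bar a,\bar b)$.
\item Condition~\ref{eq:FXY2}: if $\bar b\notin W$, then $\bar b\notin Y$, so $\varphi_{XY}(\bar a,\bar b)$ fails by~\ref{eq:FXY2} for $F_{XY}$.
\end{itemize}

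The only point needing care is the converse direction of~\ref{eq:FXY1} in the extension. There, closedness of $Y$ under $\sim_W$ is exactly what transfers $\bar b$ back into $Y$. Everything else is bookkeeping.
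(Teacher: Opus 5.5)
Your proof is correct and follows the paper's intended argument. The paper gives no proof, only presenting this Fact as the special case of composition with an identical embedding in which the composite is induced by the original formula; your direct check of conditions~\ref{eq:FXY1} and~\ref{eq:FXY2} supplies exactly that omitted verification, including the use of closedness of $Y$ under $\sim_W$ in the converse direction for the extension.
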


\begin{lemma}\label{cat_term}
Let $X, Y, Z$ be non-empty logical sets over $\MA$ and $F_{XY}\colon X\to Y$, $F_{YZ}\colon Y\to Z$ be term maps. Then $\id_X\colon X \to X$ and $F_{XZ}=F_{YZ}\circ F_{XY}\colon X\to Z$ are term maps.
\end{lemma}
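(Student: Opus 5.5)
The plan is to write down explicit term representations for both maps and then check the defining condition of a term map directly, using Lemma~\ref{lemma:composition} (or Fact~\ref{remark:rest}) only to guarantee that the maps involved are formula maps.

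\textbf{Identity.} Let $X\subseteq A^m$. The identity $\id_X$ is induced by
$$
\varphi(\bar x,\bar y)=(y_1=x_1\wedge\ldots\wedge y_m=x_m),
$$
with $t_i(\bar x)=x_i$. We check Definition~\ref{def:mor} directly.
\begin{itemize}
\item Condition~\ref{eq:FXY1}: for $\bar a,\bar b\in X$ we have $\MA_A\models\varphi(\bar a,\bar b)$ if and only if $\bar b=\bar a$, i.e., $\id_X(\bar a)=\bar b$.
\item Condition~\ref{eq:FXY2}: if $\bar a\in X$ and $\MA_A\models\varphi(\bar a,\bar b)$, then $\bar b=\bar a\in X$.
\end{itemize}
So $\varphi$ induces $\id_X$, and $\id_X$ is a term map.

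\textbf{Composition.} Let $Y\subseteq A^s$ and $Z\subseteq A^d$. Suppose $F_{XY}$ is induced by $\bigwedge_j y_j=t_j(\bar x)$ and $F_{YZ}$ is induced by $\bigwedge_k z_k=r_k(\bar y)$, with $t_j,r_k$ terms in $L(\MA)\cup A$. Then $F_{XY}(\bar a)=(t_1(\bar a),\ldots,t_s(\bar a))$ for $\bar a\in X$, and this tuple lies in $Y$ by~\ref{eq:FXY1}. Similarly $F_{YZ}(\bar b)=(r_1(\bar b),\ldots,r_d(\bar b))$ for $\bar b\in Y$.

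Define the substituted terms $u_k(\bar x)=r_k(t_1(\bar x),\ldots,t_s(\bar x))$. These are again terms in $L(\MA)\cup A$ in the variables $x_1,\ldots,x_m$. Let
$$
\psi(\bar x,\bar z)=\bigwedge_k z_k=u_k(\bar x).
$$
We verify that $\psi$ induces $F_{XZ}$.
\begin{itemize}
\item Condition~\ref{eq:FXY1}: for $\bar a\in X$ and $\bar c\in Z$, we have $\MA_A\models\psi(\bar a,\bar c)$ if and only if $\bar c=F_{YZ}(F_{XY}(\bar a))$. This uses $F_{XY}(\bar a)\in Y$, so that $F_{YZ}$ acts on it by the terms $r_k$.
\item Condition~\ref{eq:FXY2}: if $\bar a\in X$ and $\MA_A\models\psi(\bar a,\bar c)$, then $\bar c=F_{XZ}(\bar a)\in Z$.
\end{itemize}
Hence $F_{XZ}$ is a term map.

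The only point needing care is condition~\ref{eq:FXY2} for the composition: the value of $\bar c$ must be forced into $Z$. This follows because $F_{XY}$ lands in $Y$, where $F_{YZ}$ is given by the terms $r_k$. This is the one place where membership $F_{XY}(\bar a)\in Y$ is used.

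Lemma~\ref{lemma:composition} confirms independently that $F_{XZ}$ is a formula map, induced by $\exists\,\bar y\,(\ldots)$. The explicit formula $\psi$ then shows that it is moreover induced by a formula of term type.
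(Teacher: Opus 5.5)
Your proof is correct and is essentially the paper's argument. The identity is induced by the coordinatewise equality formula (the paper cites $E_m$ from Example~\ref{ex:Em}), and the composition by $\bigwedge_{k} z_k=r_k(t_1(\bar x),\ldots,t_s(\bar x))$. The only difference is that you explicitly verify conditions~\ref{eq:FXY1} and~\ref{eq:FXY2} of Definition~\ref{def:mor}, which the paper leaves to the reader.
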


\begin{proof}
The map $\id_X$ is induced by the formula $E_m$ from Example~\ref{ex:Em}, so it is a term map. If $F_{XY}$ induces by a formula $\varphi_{XY}(\bar x,\bar y)=\bigwedge_{j=1}^s (y_j=t_j(\bar x))$ and $F_{YZ}$ induces by a formula $\varphi_{YZ}(\bar y,\bar z)=\bigwedge_{k=1}^d (z_k=r_k(\bar y))$, where $t_j,r_k$ are terms, then $F_{XZ}$ induces by the formula $\varphi_{XZ}(\bar x,\bar z)=\bigwedge_{k=1}^d (z_k=r_k(t_1(\bar x),\ldots,t_s(\bar x)))$.
\end{proof}

{\bf The kernel of a formula map.} 
The following results reveal the idea of the kernel for formula maps. Note that just for full formula maps.

\begin{lemma}\label{lemma:F}
Let $X/{\sim_X}$, $Y/{\sim_Y}$, $W/{\sim_W}$ be non-empty projective logical sets over $\MA$, $F\colon X/{\sim_X}\,\to Y/{\sim_Y}$, $G\colon X/{\sim_X}\,\to W/{\sim_W}$ be formula maps, induced by formulas $\varphi(\bar x,\bar y)$, $\psi(\bar x,\bar w)$ in $L(\MA)\cup A$, and $F^\prime\colon W/{\sim_W}\,\to Y/{\sim_Y}$ a map, such that $F=F^\prime\circ G$, i.\,e., the diagram
\begin{equation}\label{eq:varphi}
  \begin{tikzcd}[column sep=huge, row sep=large]
X/{\sim_X} \arrow[d, "G"'] \arrow[r, "F"] & Y/{\sim_Y} \\
W/{\sim_W} \arrow[ru, "{F^\prime}"'] & 
\end{tikzcd}  
\end{equation}
is commutative. Suppose that $G$ is surjective and fully induced by $\psi$. Then the following hold:
\begin{enumerate}
    \item $F^\prime$ is a formula map, induced by the formula $\varphi_{WY}(\bar w,\bar y)=\exists\,\bar x\:(\varphi(\bar x,\bar y)\wedge \psi(\bar x,\bar w))$.
    \item If $\varphi$ fully induces $F$, then $\varphi_{WY}$ fully induces $F^\prime$.
    \item In particular, if $G\colon X/{\sim_1}\to X/{\sim_2}$ is a natural surjection and $\varphi$ fully induces $F$, then $\varphi$  fully induces $F^\prime$ as well.
\end{enumerate} 
\end{lemma}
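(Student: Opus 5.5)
The plan is to verify the three conditions of Lemma~\ref{fm} for $\varphi_{WY}$ and the pair $W/{\sim_W}$, $Y/{\sim_Y}$. Once they hold, $\varphi_{WY}$ induces \emph{some} formula map. We then check that this map is $F^\prime$, and treat fullness separately.

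Throughout we use the following facts about $\psi$. Since $\psi$ fully induces $G$, we have $\MA_A\models\psi(\bar a,\bar w)$ with $\bar a\in X$ exactly when $\bar w\in W$ and $G(\bar a/{\sim_X})=\bar w/{\sim_W}$. Condition~\ref{eq:FXY3} adds that if $\bar w\in W$ and $\MA_A\models\psi(\bar a,\bar w)$, then $\bar a\in X$.

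\textbf{Step 1: the witness $\bar x$ lies in $X$.} Take $\bar w\in W$ and $\bar b\in A^s$ with $\MA_A\models\varphi_{WY}(\bar w,\bar b)$, and pick a witness $\bar a$. By~\ref{eq:FXY3} for $\psi$ we get $\bar a\in X$, and by~\ref{eq:FXY1} we get $G(\bar a/{\sim_X})=\bar w/{\sim_W}$. Then condition~\ref{eq:FXY2} for $\varphi$ gives $\bar b\in Y$, and~\ref{eq:FXY1} gives $F(\bar a/{\sim_X})=\bar b/{\sim_Y}$. Hence $F^\prime(\bar w/{\sim_W})=F^\prime(G(\bar a/{\sim_X}))=F(\bar a/{\sim_X})=\bar b/{\sim_Y}$. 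Conversely, suppose $F^\prime(\bar w/{\sim_W})=\bar b/{\sim_Y}$. Surjectivity of $G$ gives some $\bar a\in X$ with $G(\bar a/{\sim_X})=\bar w/{\sim_W}$, so $\psi(\bar a,\bar w)$ holds. Moreover $F(\bar a/{\sim_X})=\bar b/{\sim_Y}$, so $\varphi(\bar a,\bar b)$ holds and $\varphi_{WY}(\bar w,\bar b)$ follows. This establishes~\ref{eq:FXY1} and~\ref{eq:FXY2} for $\varphi_{WY}$ and $F^\prime$ directly, which proves item~1 without even invoking Lemma~\ref{fm}. In particular $F^\prime$ is a formula map.

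\textbf{Step 2: fullness.} Suppose $\varphi$ fully induces $F$, and let $\bar w\notin W$ and $\bar b\in Y$ with $\MA_A\models\varphi_{WY}(\bar w,\bar b)$ via a witness $\bar a$. Since $\bar b\in Y$ and $\varphi(\bar a,\bar b)$ holds, condition~\ref{eq:FXY3} for $\varphi$ gives $\bar a\in X$. Now $\psi(\bar a,\bar w)$ with $\bar a\in X$ forces $\bar w\in W$ by~\ref{eq:FXY2} for $\psi$, a contradiction. This proves item~2.

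\textbf{Step 3: the natural surjection.} Let $G\colon X/{\sim_1}\to X/{\sim_2}$ be a natural surjection, so $W=X$, and by Fact~\ref{fact:surjection} $G$ is fully induced by $\psi=E_2$. Then $\varphi_{WY}(\bar w,\bar y)=\exists\,\bar x\:(\varphi(\bar x,\bar y)\wedge E_2(\bar x,\bar w))$ fully induces $F^\prime$ by item~2. We must still show that $\varphi$ itself does. For $\bar w\in X$ and $\bar b\in Y$ we have $\varphi(\bar w,\bar b)$ iff $F(\bar w/{\sim_1})=\bar b/{\sim_Y}$ iff $F^\prime(\bar w/{\sim_2})=\bar b/{\sim_Y}$, which gives~\ref{eq:FXY1}. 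Conditions~\ref{eq:FXY2} and~\ref{eq:FXY3} for $\varphi$ refer only to the sets $X$ and $Y$, which are unchanged, so they carry over verbatim.

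The only delicate point is Step 1, where one must use condition~\ref{eq:FXY3} for $\psi$ to place the existential witness in $X$. This is exactly where the hypothesis that $G$ is fully induced enters.
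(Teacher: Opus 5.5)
Your proof is correct. For items 1 and 2 it is the same direct check of conditions (I)--(III) that the paper dismisses as ``standard reasoning.'' You rightly identify condition (III) for $\psi$ as what places the existential witness in $X$, and hence where fullness of $G$ is essential.

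For item 3 your route is shorter than the paper's. The paper first shows that $\varphi$ is equivalent on $X\times Y$ to $\exists\,\bar x^\prime\,(\varphi(\bar x^\prime,\bar y)\wedge E_2(\bar x^\prime,\bar x))$, and then inherits condition (I) from item 1. You instead transfer (I) in one line via $F(\bar a/{\sim_1})=F^\prime(\bar a/{\sim_2})$. This shows in passing that, in the natural-surjection case, any formula inducing $F$ already induces $F^\prime$, and that fullness of $\varphi$ is needed only for (III). Your appeal to item 2 in that step is harmless but unused.
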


\begin{proof}
Note that $F^\prime$ is a unique map, such that the diagram~\eqref{eq:varphi} is commutative, since $G$ is surjective. It is easy to verify by standard reasoning that the formula $\varphi_{WY}$ satisfies the conditions in Definition~\ref{def:mor} with respect to the map $F^\prime$, so the formula $\varphi_{WY}$ induces the map $F^\prime$. The second item is trivial. Suppose that $G$ is a natural surjection, then, by Fact~\ref{fact:surjection}, it is fully induced by the formula $E_2$, which induced the formula equivalence $\sim_2$. Let us check that formula $\varphi(\bar x,\bar y)$ fully induces $F^\prime$ as well as $\varphi_{XY}(\bar x,\bar y)=\exists\,\bar x^\prime\:(\varphi(\bar x^\prime,\bar y)\wedge E_2(\bar x^\prime,\bar x))$ does. Indeed, formula $\varphi(\bar x,\bar y)$ satisfies to the conditions~\ref{eq:FXY2} and~\ref{eq:FXY3} in Definition~\ref{def:mor}. Further, for any $\bar a\in X$, $\bar b\in Y$, if $\MA_A\models \varphi(\bar a,\bar b)$, then trivially $\MA_A\models \varphi_{XY}(\bar a,\bar b)$. And if $\MA_A\models \varphi_{XY}(\bar a,\bar b)$, then there exists $\bar a^\prime\in \MA$, such that $\MA_A\models \varphi(\bar a^\prime,\bar b)$ and $\MA_A\models E_2(\bar a^\prime,\bar a)$. Therefore, $\bar a^\prime\in X$,  $\bar a^\prime \sim_2 \bar a$ and $F(\bar a^\prime/{\sim_1})=\bar b/{\sim_Y}$. Since $F=F^\prime\circ G$, one has $F(\bar a/{\sim_1})=F^\prime(G(\bar a/{\sim_1}))=F^\prime(\bar a/{\sim_2})=F^\prime(\bar a^\prime/{\sim_2})=F^\prime(G(\bar a^\prime/{\sim_1}))=F(\bar a^\prime/{\sim_1})=\bar b/{\sim_Y}$. Thus, one has $\MA_A\models \varphi(\bar a,\bar b)$. We see that formulas $\varphi$ and $\varphi_{XY}$ are equivalent on the set $X\times Y$. Since $\varphi_{XY}$ satisfies to the condition~\ref{eq:FXY1} in Definition~\ref{def:mor} with respect to the map $F^\prime$, therefore, $\varphi$ does the same. 
\end{proof}

\begin{lemma}[on kernel]\label{kernel}
   Suppose that $X/{\sim_X}$ and $Y/{\sim_Y}$ are non-empty projective logical sets over $\MA$ and $F\colon X/{\sim_X} \,\to Y/{\sim_Y}$ is a full formula map, fully induced by a formula $\varphi(\bar x,\bar y)$ in the language $L(\MA)\cup A$. Then formula $E(\bar x,\bar x^\prime)=\exists\,\bar y\:(\varphi(\bar x,\bar y)\wedge\varphi(\bar x^\prime,\bar y))$ induces a formula equivalence ${\ker F}$ on $X$, which is coaster then $\sim_X$. Furthermore, the formula $\varphi$ fully induces the injective full formula map $F^\prime\colon X/{\ker F}\to Y/{\sim_Y}$, such that $F=F^\prime\circ\pi$, i.\,e., the diagram 
\begin{equation}\label{eq:pi}
\begin{tikzcd}[column sep=huge, row sep=large]
X/{\sim_X} \arrow[d, "\pi"'] \arrow[r, "F"] & Y/{\sim_Y} \\
X/{\ker F} \arrow[ru, "F^\prime"'] & 
\end{tikzcd}
\end{equation}
is commutative, where $\pi\colon X/{\sim_X}\to X/{\ker F}$ is the natural surjection. Furthermore, if $F$ is surjective, then $F^\prime$ is surjective as well.
\end{lemma}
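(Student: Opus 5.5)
We argue directly from Definition~\ref{def:formula_eq} and Definition~\ref{def:mor}, using throughout that $\varphi$ satisfies \ref{eq:FXY1}--\ref{eq:FXY3} with respect to $F$. The plan has three steps: show that $E$ induces a formula equivalence on $X$ coarser than $\sim_X$; show that $\varphi$ fully induces the map $F^\prime$; then read off the remaining properties of $F^\prime$.

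\textbf{Step 1: $E$ induces a formula equivalence $\ker F$ on $X$.} First we identify the relation $E$ defines on $X$. Let $\bar a,\bar a^\prime\in X$. Suppose $\MA_A\models E(\bar a,\bar a^\prime)$, witnessed by some $\bar b\in A^s$. By \ref{eq:FXY2}, $\bar b\in Y$. By \ref{eq:FXY1}, $F(\bar a/{\sim_X})=\bar b/{\sim_Y}=F(\bar a^\prime/{\sim_X})$. Conversely, if $F(\bar a/{\sim_X})=F(\bar a^\prime/{\sim_X})=\bar b/{\sim_Y}$, then $\bar b$ witnesses $E$ by \ref{eq:FXY1}. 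Hence on $X$ the formula $E$ defines exactly the relation $\bar a\,\ker F\,\bar a^\prime \iff F(\bar a/{\sim_X})=F(\bar a^\prime/{\sim_X})$. This is clearly an equivalence relation, and it is coarser than $\sim_X$ because $F$ is well defined on classes. This gives condition~\ref{eq1}.

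For condition~\ref{eq2}, take $\bar a\in X$ and $\bar a^\prime\in A^m$ with $\MA_A\models E(\bar a,\bar a^\prime)\vee E(\bar a^\prime,\bar a)$. Either way there is $\bar b$ with $\MA_A\models\varphi(\bar a,\bar b)\wedge\varphi(\bar a^\prime,\bar b)$. Since $\bar a\in X$, \ref{eq:FXY2} gives $\bar b\in Y$. Then \ref{eq:FXY3}, applied to $\varphi(\bar a^\prime,\bar b)$ with $\bar b\in Y$, forces $\bar a^\prime\in X$. This closure step is the only place where fullness is genuinely needed, and I expect it to be the main point of the lemma.

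\textbf{Step 2: $\varphi$ fully induces $F^\prime$.} Define $F^\prime(\bar a/{\ker F})=F(\bar a/{\sim_X})$. This is well defined and injective by Step~1's description of $\ker F$, and $F=F^\prime\circ\pi$ holds by construction. The natural surjection $\pi$ is fully induced by $E$ (Fact~\ref{fact:surjection}) and is surjective. So item~3 of Lemma~\ref{lemma:F} applies directly: $\varphi$ fully induces $F^\prime$. Alternatively, one can check \ref{eq:FXY1}--\ref{eq:FXY3} for $F^\prime$ by hand. Conditions \ref{eq:FXY2} and \ref{eq:FXY3} depend only on $X$ and $Y$, so they are inherited from $F$. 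Condition \ref{eq:FXY1} holds because $F^\prime(\bar a/{\ker F})=\bar b/{\sim_Y}$ if and only if $F(\bar a/{\sim_X})=\bar b/{\sim_Y}$.

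\textbf{Step 3: the remaining claims.} $F^\prime$ is a full formula map by Step~2 and injective by construction. Finally, if $F$ is surjective, then $F^\prime$ has the same image as $F$, namely $F^\prime(X/{\ker F})=F(X/{\sim_X})=Y/{\sim_Y}$, so $F^\prime$ is surjective as well.
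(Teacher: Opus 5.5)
Your proof is correct and follows essentially the paper's route: closure of $X$ under $E$ from \ref{eq:FXY2} and \ref{eq:FXY3}, coarseness because $F$ is well defined on $\sim_X$-classes, and Lemma~\ref{lemma:F}(3) together with Fact~\ref{fact:surjection} to show that $\varphi$ fully induces $F^\prime$. The only difference is cosmetic: you identify the relation $E$ defines on $X$ with the set-theoretic kernel of $F$, which gives reflexivity, symmetry and transitivity at once, whereas the paper checks transitivity by hand using two witnesses $\bar b\sim_Y\bar b^\prime$.
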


\begin{proof}
Since for every $\bar a\in X$ there exists $\bar b\in Y$, such that $\MA_A\models \varphi(\bar a,\bar b)$, the relation $\ker F$ is reflexive on $X$. It is trivially symmetric. Suppose that for $\bar a,\bar a^\prime, \bar a^{\prime\prime}\in X$ one has $(\bar a,\bar a^\prime), (\bar a^\prime,\bar a^{\prime\prime})\in \ker F$, therefore, exist $\bar b,\bar b^\prime\in \MA$, such that $\MA_A\models \varphi(\bar a,\bar b)\wedge \varphi(\bar a^\prime,\bar b)\wedge \varphi(\bar a^\prime,\bar b^\prime)\wedge \varphi(\bar a^{\prime\prime},\bar b^{\prime})$. Hence, $\bar b,\bar b^\prime\in Y$ and $\bar b\sim_Y \bar b^\prime$, in particular, one has $\MA_A\models \varphi(\bar a,\bar b)\wedge\varphi(\bar a^{\prime\prime},\bar b)$, i.\,e., $(\bar a,\bar a^{\prime\prime})\in\ker F$ and $\ker F$ is transitive, so it is an equivalence relation on $X$. If for $\bar a,\bar a^\prime\in \MA$ one has $\bar a\in X$ and $(\bar a,\bar a^\prime)\in\ker F$, then there exists $\bar b\in \MA$, such that $\MA_A\models \varphi(\bar a,\bar b)\wedge \varphi(\bar a^\prime,\bar b)$. Therefore, $\bar b\in Y$. And since $F$ is full, one has $\bar a^\prime\in X$, so $X$ is closed under $E$, i.\,e., $\ker F$ is indeed a formula equivalence on $X$. By the definition of a formula map, we obtain that $\sim_X$ is finer than $\ker F$. Let $F^\prime$ be a map which makes the diagram~\eqref{eq:pi} commutative. It is obvious that $F^\prime$ is injective, and it is surjective if $F$ is surjective. By Lemma~\ref{lemma:F}, $F^\prime$ is fully induced by the formula $\varphi$.
\end{proof}

\begin{definition}
We refer to formula equivalence $\ker F$ from Lemma~\ref{kernel} as the {\em kernel} of the full formula map $F$.
\end{definition}

{\bf The image of the formula map.} 
For any formula map $F\colon X/{\sim_X} \,\to Y/{\sim_Y}$ the image $F(X/{\sim_X})$ is the set of all elements $\bar b/{\sim_Y}\in Y/{\sim_Y}$, such that there exists $\bar a/{\sim_X}\in X/{\sim_X}$, for which $F(\bar a/{\sim_X})=\bar b/{\sim_Y}$. However, the set $F(X/{\sim_X})$ is not always projective logical over $\MA$. Let us show that for full formula maps, the images are projective logical subsets.

\begin{lemma}[on image]\label{image}
Suppose that $F\colon X/{\sim_X} \,\to Y/{\sim_Y}$ is a full formula map between non-empty projective logical sets over $\MA$, fully induced by a formula $\varphi(\bar x,\bar y)$ in $L(\MA)\cup A$. Then the image $F(X/{\sim_X})$ is projective logical set over $\MA$, such that $F(X/{\sim_X})\subseteq Y/{\sim_Y}$, and the surjection $F\colon X/{\sim_X} \,\to F(X/{\sim_X})$ is a formula map, fully induced by the formula $\varphi$. 
\end{lemma}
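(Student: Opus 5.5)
The plan is to write the image explicitly as $Z/{\sim_Z}$, where $Z=\{\bar b\in Y\mid \bar b/{\sim_Y}\in F(X/{\sim_X})\}$. By the discussion after Definition~\ref{def2}, $F(X/{\sim_X})$ is a projective logical subset of $Y/{\sim_Y}$ once $Z$ is a logical set closed under $\sim_Y$. We then equip $Z$ with the restriction $\sim_Z$ of $\sim_Y$. Closedness is immediate from the definition of $Z$, since it is a full preimage of a set of cosets. Non-emptiness follows from $X\neq\emptyset$ and condition~\ref{fm3} of Lemma~\ref{fm}.

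The key step is to show that $Z$ is logical. We take
$$
S_Z(\bar y)=S_Y(\bar y)\cup\{\exists\,\bar x\:\varphi(\bar x,\bar y)\}.
$$
If $\bar b\in Z$, then $\bar b\in Y$, and some $\bar a\in X$ satisfies $F(\bar a/{\sim_X})=\bar b/{\sim_Y}$, so $\MA_A\models\varphi(\bar a,\bar b)$ by~\ref{eq:FXY1}. Conversely, suppose $\bar b\in Y$ and $\MA_A\models\varphi(\bar a,\bar b)$ for some $\bar a\in A^m$. This is where fullness is used: condition~\ref{eq:FXY3} forbids $\bar a\notin X$ when $\bar b\in Y$. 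Hence $\bar a\in X$, and by~\ref{eq:FXY1} we get $\bar b/{\sim_Y}=F(\bar a/{\sim_X})$. So $Z=\V_\MA(S_Z)$. The same argument should also preserve the $(\alpha,\beta)$-bounds, since we add one formula and no new parameters beyond those of $\varphi$. Without fullness, a witness $\bar a$ outside $X$ could put extra points into $Z$. This is the only nontrivial point, and I expect it to be the main (though mild) obstacle.

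Next, we check that $\sim_Z$ is a formula equivalence on $Z$. It is induced by $E_Y$, because $Z$ is closed under $E_Y$: if $\bar b\in Z$ and $E_Y(\bar b,\bar b')\vee E_Y(\bar b',\bar b)$, then $\bar b'\in Y$ by closedness of $Y$ and $\bar b'\sim_Y\bar b$, so $\bar b'\in Z$. Therefore $Z/{\sim_Z}\subseteq Y/{\sim_Y}$, and $Z/{\sim_Z}$ equals $F(X/{\sim_X})$ as a set of cosets.

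Finally, we show that $\varphi$ fully induces the corestriction $F\colon X/{\sim_X}\to Z/{\sim_Z}$ by checking~\ref{eq:FXY1}--\ref{eq:FXY3} with $Z$ in place of $Y$. Condition~\ref{eq:FXY1} is inherited, since $Z\subseteq Y$ and the cosets agree. For~\ref{eq:FXY2}, take $\bar a\in X$ with $\MA_A\models\varphi(\bar a,\bar b)$. Then $\bar b\in Y$ by the original~\ref{eq:FXY2}, and $\bar b\in Z$ by the description of $Z$ above. Condition~\ref{eq:FXY3} holds because $Z\subseteq Y$. Surjectivity holds by construction.
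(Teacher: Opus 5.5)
Your proof is correct and is essentially the paper's argument. The paper also takes $W=\V_\MA(S_Y\cup\{\exists\,\bar x\:\varphi(\bar x,\bar y)\})$, notes that $W$ is closed under $\sim_Y$, and concludes that $F(X/{\sim_X})=W/{\sim_Y}\subseteq Y/{\sim_Y}$ with $\varphi$ fully inducing the surjection. Your write-up spells out what the paper calls clear, in particular that condition~\ref{eq:FXY3} is exactly what forces the witness $\bar a$ to lie in $X$.
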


\begin{proof}
Indeed, if $Y=\V_\MA(S_Y)$ and $\psi(\bar y)=\exists\,\bar x\: \varphi(\bar x,\bar y)$, then 
$$
W=\{\bar b\in Y \mid \exists \, \bar a/{\sim_X}\in X/{\sim_X}\text{ such that }F(\bar a/{\sim_X})=\bar b/{\sim_Y}\}=\V_\MA(S_Y\cup\{\psi\}).
$$
It is clear, that $W$ is closed under $\sim_Y$, so that $F(X/{\sim_X})=W/{\sim_Y}\subseteq Y/{\sim_Y}$, and $F\colon X/{\sim_X}\to W/{\sim_Y}$ is a surjective full formula map, fully induced by $\varphi$.
\end{proof}

{\bf Products and natural projections.}
In Fact~\ref{fact:times}, we have discussed the direct products of projective logical sets. Now we continue this consideration involving formula maps.

\begin{example}
For any non-empty projective set $X/{\sim_X}$ over $\MA$ and an integer $d\in \N$ the natural map $F\colon X/{\sim_X}\to (X/{\sim_X})^d$, which sends $x/{\sim_X}$ to $(x/{\sim_X})^d$, is a full formula map.
\end{example}

\begin{fact}\label{fact_times}
Let $F_j\colon X_j/{\sim_{X_j}}\to Y_j/{\sim_{Y_j}}$ be formula map between non-empty projective logical sets over $\MA$, which is induced by an $L(\MA)\cup A$\=/formula $\varphi_j(\bar x_j,\bar y_j)$, $j=1,\ldots,d$. Then the product of maps 
$$
F=F_1\times\ldots\times F_d\colon \;X_1/{\sim_{X_1}}\times\ldots\times X_d/{\sim_{X_d}} \;\to \;Y_1/{\sim_{Y_1}}\times\ldots\times Y_d/{\sim_{Y_d}}
$$
is a formula map, which is induced by the formula 
$$
\varphi(\bar x_1,\ldots,\bar x_d,\bar y_1,\ldots,\bar y_d)=\bigwedge\limits_{j=1}^d \varphi_j(\bar x_j,\bar y_j).
$$
Moreover, if all $F_j$ are full, then $F$ is full as well; if all $F_j$ are term maps, then $F$ is a term map as well.
\end{fact}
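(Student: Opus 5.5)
We verify the three clauses of Definition~\ref{def:mor} for the conjunction $\varphi=\bigwedge_{j=1}^d\varphi_j(\bar x_j,\bar y_j)$, working coordinatewise. By Fact~\ref{fact:times}, the domain $X_1/{\sim_{X_1}}\times\ldots\times X_d/{\sim_{X_d}}$ is the projective logical set $(X_1\times\ldots\times X_d)/(\sim_{X_1}\times\ldots\times\sim_{X_d})$, where tuples are equivalent exactly when all their components are equivalent. The codomain has the same description.

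Condition~\ref{eq:FXY1}. Take $(\bar a_1,\ldots,\bar a_d)\in X_1\times\ldots\times X_d$ and $(\bar b_1,\ldots,\bar b_d)\in Y_1\times\ldots\times Y_d$. By the definition of the product map, $F$ sends the class of $(\bar a_j)_j$ to the class of $(\bar b_j)_j$ if and only if $F_j(\bar a_j/{\sim_{X_j}})=\bar b_j/{\sim_{Y_j}}$ for every $j$. Since $\varphi_j$ induces $F_j$, this holds if and only if $\MA_A\models\varphi_j(\bar a_j,\bar b_j)$ for all $j$, that is, if and only if $\MA_A\models\varphi(\bar a_1,\ldots,\bar a_d,\bar b_1,\ldots,\bar b_d)$.

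Condition~\ref{eq:FXY2}. Suppose $\bar a\in\prod X_j$ and $\bar b\notin\prod Y_j$. Then some component satisfies $\bar b_j\notin Y_j$, so $\MA_A\not\models\varphi_j(\bar a_j,\bar b_j)$ by~\ref{eq:FXY2} for $F_j$, and hence the conjunction $\varphi$ fails. Fullness is handled the same way. If every $\varphi_j$ also satisfies~\ref{eq:FXY3}, and $\bar a\notin\prod X_j$ while $\bar b\in\prod Y_j$, then some component has $\bar a_j\notin X_j$ with $\bar b_j\in Y_j$, so $\varphi_j$ fails there and $\varphi$ fails as well.

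Term maps. Suppose each $\varphi_j$ has the form $\bigwedge_k (y_{jk}=t_{jk}(\bar x_j))$. Then $\varphi$ is again a conjunction of equations $y=t(\bar x)$, with one equation for each output variable, so $F$ is a term map. In this case every $\sim$ is the identity relation, and the product of identity relations is the identity.

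There is no real obstacle here. The only point that needs care is the identification of the product of quotients with the quotient of the product, and that is exactly Fact~\ref{fact:times}.
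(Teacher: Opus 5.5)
Your proof is correct. It is the direct componentwise check of conditions \ref{eq:FXY1}--\ref{eq:FXY3} for the conjunction, which is the routine verification the paper leaves implicit, since it states this Fact without proof. One small point of phrasing: fullness and being a term map are witnessed by \emph{some} inducing formula, not necessarily the given $\varphi_j$. So for those two clauses you should first replace each $\varphi_j$ by a formula that fully induces $F_j$ (respectively, one of term form). This costs nothing, because your argument for the main clause works for any choice of inducing formulas.
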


\begin{fact}\label{fact:proj}
Let $X_j/{\sim_{X_j}}$, $j=1,\ldots,d$,  be non-empty projective logical sets over $\MA$. Then for every $r\in\{1,\ldots,d\}$ the map $\Pi_r\colon X_1/{\sim_{X_1}}\times\ldots\times X_d/{\sim_{X_d}}\,\to X_r/{\sim_{X_r}}$, such that  $\Pi_r(\bar x_1/{\sim_{X_1}}\times\ldots\times \bar x_d/{\sim_{X_d}})=\bar x_r/{\sim_{X_r}}$ for any $\bar x_1/{\sim_{X_1}}\times\ldots\times \bar x_d/{\sim_{X_d}}\in X_1/{\sim_{X_1}}\times\ldots\times X_d/{\sim_{X_d}}$, is a formula map. 
\end{fact}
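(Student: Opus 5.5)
The plan is to write down an explicit inducing formula for $\Pi_r$ and then check the three conditions of Lemma~\ref{fm}. Let $X_j\subseteq A^{m_j}$, and let $E_{X_j}(\bar x_j,\bar x_j^\prime)$ be formulas in $L(\MA)\cup A$ inducing $\sim_{X_j}$. By Fact~\ref{fact:times}, applied iteratively, the product $X_1/{\sim_{X_1}}\times\ldots\times X_d/{\sim_{X_d}}$ equals $(X_1\times\ldots\times X_d)/{\sim}$, where $\sim$ is induced by $\bigwedge_j E_{X_j}(\bar x_j,\bar x_j^\prime)$. As a candidate inducing formula for $\Pi_r$ I take
$$
\varphi(\bar x_1,\ldots,\bar x_d,\bar y)=E_{X_r}(\bar x_r,\bar y),\qquad |\bar y|=m_r .
$$
This is in the spirit of Fact~\ref{fact:surjection}, where a natural surjection is induced by the target equivalence formula.

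The verification goes through the conditions of Lemma~\ref{fm} in turn.

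Condition~\ref{fm1}: suppose $(\bar a_1,\ldots,\bar a_d)\sim(\bar a_1^\prime,\ldots,\bar a_d^\prime)$ and $\bar b\sim_{X_r}\bar b^\prime$, with $\bar b,\bar b^\prime\in X_r$. Then in particular $\bar a_r\sim_{X_r}\bar a_r^\prime$. Since $E_{X_r}$ agrees with the equivalence $\sim_{X_r}$ on $X_r$, we get $\MA_A\models E_{X_r}(\bar a_r,\bar b)\iff \MA_A\models E_{X_r}(\bar a_r^\prime,\bar b^\prime)$.

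Condition~\ref{fm2}: suppose $\MA_A\models E_{X_r}(\bar a_r,\bar b)\wedge E_{X_r}(\bar a_r,\bar b^\prime)$ with $\bar a_r\in X_r$ and $\bar b,\bar b^\prime\in A^{m_r}$. Closure of $X_r$ under $E_{X_r}$ (item~\ref{eq2} of Definition~\ref{def:formula_eq}) gives $\bar b,\bar b^\prime\in X_r$. On $X_r$ the formula $E_{X_r}$ induces an equivalence relation, so $\bar b\sim_{X_r}\bar a_r\sim_{X_r}\bar b^\prime$.

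Condition~\ref{fm3}: take $\bar b=\bar a_r$ and use reflexivity.

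It remains to see that the map induced by $\varphi$ is exactly $\Pi_r$. This is immediate, since $\varphi$ relates $\bar x_1/{\sim_{X_1}}\times\ldots\times\bar x_d/{\sim_{X_d}}$ precisely to $\bar x_r/{\sim_{X_r}}$.

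The only point needing care is condition~\ref{fm2}. Outside $X_r$, the formula $E_{X_r}$ need not define an equivalence relation, so symmetry and transitivity can be used only after placing $\bar b,\bar b^\prime$ in $X_r$. That is exactly what item~\ref{eq2} provides, and it is the reason Definition~\ref{def:formula_eq} builds in closure under $E$ in both directions. I expect no further obstacles.

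If one also wants fullness when the remaining sets $X_j$, $j\ne r$, are definable, the same argument goes through after conjoining their defining formulas $S_{X_j}(\bar x_j)$ to $\varphi$. This is not required by the statement, however.
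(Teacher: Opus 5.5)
Your argument is correct. The paper states this fact without proof, and inducing $\Pi_r$ by $E_{X_r}(\bar x_r,\bar y)$ and checking the conditions of Lemma~\ref{fm} is the routine verification left implicit there, in the same spirit as Facts~\ref{fact:surjection} and~\ref{fact:subset}; the key step is that closure of $X_r$ under $E_{X_r}$ handles condition~\ref{fm2}. Your closing remark on fullness when the $X_j$, $j\ne r$, are definable also holds: because $X_r$ is closed under $E_{X_r}$ in both directions, the case $\bar a_r\notin X_r$ cannot occur with $\bar b\in X_r$.
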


We will refer to formula maps like $\Pi_r$ in Fact~\ref{fact:proj} as {\em natural projections}.

\subsection{The category of projective logical sets and its significant subcategories}\label{subsec:cat}

Thus, we are ready to complete the definition of the category $\PLS(\MA)$ and its subcategories. 

\begin{definition}
Objects in the category $\PLS(\MA)$ are all projective logical sets over $\MA$. Morphisms between non-empty objects are formula maps. The empty set $\emptyset$ enters the category $\PLS(\MA)$ as an initial object, i.\,e., there exists a unique arrow $\emptyset \to O$ from the empty set $\emptyset$ to any object $O$, and no arrows from a non-empty object to $\emptyset$.   
\end{definition}

\begin{proposition}
$\PLS(\MA)$ is indeed a category, and it is small.
\end{proposition}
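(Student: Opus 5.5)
To prove that $\PLS(\MA)$ is a category, I will verify the category axioms directly from the results of Subsection~\ref{subsec:fm}. Morphisms between non-empty objects are formula maps, which are in particular set-theoretic maps. Composition is ordinary composition of maps, so associativity is inherited from associativity of composition of functions.

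First, closure under composition. For non-empty $X/{\sim_X}$, $Y/{\sim_Y}$, $Z/{\sim_Z}$ and formula maps $F_{XY}$, $F_{YZ}$ (with parameters in $A$), Lemma~\ref{lemma:composition} with $P=A$ shows that $F_{YZ}\circ F_{XY}$ is again a formula map. It is induced by $\exists\,\bar y\,(\varphi_{XY}\wedge\varphi_{YZ})$.

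Second, identities. Example~\ref{cor:id} gives that $\id_{X/{\sim_X}}$ is a (full) formula map, induced by $E_X$. Since the unit laws $F\circ\id=F=\id\circ F$ hold for maps, they hold here as well.

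Third, the empty object, handled by convention. There is a unique arrow $\emptyset\to O$ for every $O$, including $\id_\emptyset$, and there are no arrows from a non-empty object into $\emptyset$. Any composite involving $\emptyset$ therefore has the form $\emptyset\to O\to O'$, and it must be the unique arrow $\emptyset\to O'$. Associativity and the unit laws in these cases are then trivial by uniqueness.

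It remains to show smallness, that is, that objects and morphisms form sets. Every object is a quotient $X/{\sim_X}$ of a subset $X\subseteq A^m$ for some $m\in\N$. So each object is an element of the set $\bigcup_{m\in\N}\mathcal P(\mathcal P(A^m))$, and the objects form a set. Hom-collections are subsets of the set of all functions between two sets, hence they are sets as well.

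The only subtle point is notational. Objects should be identified with the actual quotient sets, and a morphism with the map itself, not with a particular inducing formula or system $S_X$. Otherwise a single arrow would be counted several times. The notation in Subsection~\ref{subsec:proj_sets} makes exactly this identification, so no real obstacle arises.
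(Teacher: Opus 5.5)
Your proof is correct and follows the paper's route: the paper's proof just cites Lemma~\ref{lemma:composition} for closure under composition and Example~\ref{cor:id} for identities. Your handling of the initial object $\emptyset$ and the smallness bound via $\bigcup_{m\in\N}\mathcal P(\mathcal P(A^m))$ only makes explicit details the paper leaves unstated.
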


\begin{proof}
Follows from  Lemma~\ref{lemma:composition} and Example~\ref{cor:id}.
\end{proof}

The class of all objects and morphisms in $\PLS(\MA)$ will also be named the {\em projective logical geometry over $\MA$} or just {\em logical geometry over $\MA$}.

\begin{definition}
The categories $\LS(\MA)$, $\PDS(\MA)$, and $\DS(\MA)$  consist of logical, projective definable, and definable sets over $\MA$ as objects and formula maps as morphisms.    
\end{definition}

\begin{proposition}
 The categories $\LS(\MA)$, $\PDS(\MA)$ and $\DS(\MA)$ are full subcategories of the category $\PLS(\MA)$.
\end{proposition}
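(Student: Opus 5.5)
We have to show two things: each of $\LS(\MA)$, $\PDS(\MA)$ and $\DS(\MA)$ is a subcategory of $\PLS(\MA)$, and each is full. Fullness comes free from the definitions. In all three categories the morphisms between non-empty objects are exactly the formula maps, which are also the morphisms of $\PLS(\MA)$. So once the objects of each category are shown to be objects of $\PLS(\MA)$, the hom-sets coincide. The empty set needs a separate check, done below.

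Containment of objects is handled first, using the Remark after Definition~\ref{def2}. A non-empty logical set $X$ over $\MA$ equals $X/{=}$. The identity relation is a formula equivalence on $X$, induced by $E_m$ from Example~\ref{ex:Em}. Hence $X$ is a projective logical set, i.e. an object of $\PLS(\MA)$. A projective definable set $X/{\sim_X}$ is by Definition~\ref{def2} a projective logical set whose underlying $X$ happens to be definable, so it is an object of $\PLS(\MA)$ as well. A non-empty definable set is both logical and projective definable, so it lies in all three. The empty set is absolutely definable, via $\V_\MA(\{x\ne x\})$. It sits in each subcategory with the same convention as in $\PLS(\MA)$: it is an initial object, with no arrows into it from non-empty objects. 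So its hom-sets also agree.

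Next we check closure under identities and composition. By Example~\ref{cor:id}, $\id_{X/{\sim_X}}$ is a formula map, so every object of a subcategory keeps its identity arrow. By Lemma~\ref{lemma:composition}, the composite of two formula maps is again a formula map. Composition in each subcategory is ordinary composition of maps, exactly as in $\PLS(\MA)$. Hence each of $\LS(\MA)$, $\PDS(\MA)$, $\DS(\MA)$ is a subcategory, and by the first paragraph it is full.

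The only delicate point is consistency of the objects. For $\LS(\MA)$ and $\DS(\MA)$, a set $X$ must be identified with $X/{=}$. A morphism $X\to Y$ between logical sets is then a formula map $X/{=}\to Y/{=}$ in the sense of Definition~\ref{def:mor}, which is the same notion. Under this identification nothing further needs to be verified.
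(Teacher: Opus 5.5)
Your proposal is correct and is essentially the paper's argument; the paper gives the proof only as ``Straightforward''. Your verification is the routine check it leaves implicit: objects are contained via the identification $X=X/{=}$, hom-sets coincide because all four categories take all formula maps as morphisms with $\emptyset$ as initial object, and closure under identities and composition follows from Example~\ref{cor:id} and Lemma~\ref{lemma:composition}.
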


\begin{proof}
    Straightforward.
\end{proof}

\begin{proposition}
  Morphisms in the categories $\DS(\MA)$ and $\PDS(\MA)$ are all definable maps between objects.  
\end{proposition}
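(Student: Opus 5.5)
The statement reduces to comparing two classes of maps between objects $X/{\sim_X}$ and $Y/{\sim_Y}$ of $\DS(\MA)$ or $\PDS(\MA)$, with $X\subseteq A^m$ and $Y\subseteq A^s$ definable. The first class is the formula maps, which by definition are the morphisms of these categories. The second is the maps that are definable in $\MA$ (with parameters in $A$) in the sense of Definition~\ref{def:definable_map}. The empty object is handled separately: it has exactly one arrow to each object, and the empty function is trivially definable. So we may assume $X$ and $Y$ are non-empty.

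The plan is to invoke Fact~\ref{rem5} with $P=A$. That fact says a map $F\colon X/{\sim_X}\to Y/{\sim_Y}$ is $A$\=/definable if and only if it is a formula map with parameters in $A$ and $X$ is $A$\=/definable.

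\emph{Formula maps are definable.} Since $X$ is definable by hypothesis, the direction ``2)$\Rightarrow$1)'' of Fact~\ref{rem5} applies directly. Concretely, if $\varphi_{XY}$ induces $F$, then $S_X(\bar x)\wedge\varphi_{XY}(\bar x,\bar y)$ defines the preimage of the graph of $F$.

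\emph{Definable maps are formula maps.} Suppose the preimage of the graph of $F$ is defined by a formula $\varphi$. We check conditions~\ref{eq:FXY1} and~\ref{eq:FXY2} of Definition~\ref{def:mor} for $\varphi$. Condition~\ref{eq:FXY1} holds because, for $\bar a\in X$ and $\bar b\in Y$, $\varphi(\bar a,\bar b)$ holds exactly when $F(\bar a/{\sim_X})=\bar b/{\sim_Y}$. Condition~\ref{eq:FXY2} holds because the preimage only contains tuples with $\bar b\in Y$. Hence $\varphi$ induces $F$, and in fact fully induces it, since $\varphi(\bar a,\bar b)$ also forces $\bar a\in X$. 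This is again Fact~\ref{rem5}, direction ``1)$\Rightarrow$2)''.

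One more check is needed: the result must be a morphism within the subcategory, not merely within $\PLS(\MA)$. This follows because $\DS(\MA)$ and $\PDS(\MA)$ are full subcategories of $\PLS(\MA)$. No step is a real obstacle. The only subtlety is that ``definable'' refers to the preimage of the graph in $A^{m+s}$ rather than to a formula inducing the map, and the hypothesis that $X$ is definable is exactly what bridges these two notions.
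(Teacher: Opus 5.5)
Your proposal is correct and matches the paper, whose proof consists solely of citing Fact~\ref{rem5}. You unpack both directions of that fact with $P=A$, using that $X$ is definable for every object of $\DS(\MA)$ and $\PDS(\MA)$, and you also treat the empty object, which the paper leaves implicit.
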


\begin{proof}
    Follows from Fact~\ref{rem5}.
\end{proof}

Note that the category $\DS(\MA)$ is known in the literature for a long time~\cite{AhZ}. 

For a subset $P\subseteq A$, we denote by $\PLS_P(\MA)$ the subcategory of the category $\PLS(\MA)$ of projective logical sets and formula maps with parameters in $P$. So $\PLS_A(\MA)=\PLS(\MA)$. In particular, $\PLS_0(\MA)$ is the category of absolute projective logical sets and absolute formula maps over $\MA$. Note that $\PLS_0(\MA_A)=\PLS(\MA)$. 
The categories $\LS_P(\MA)$, $\PDS_P(\MA)$ and $\DS_P(\MA)$ are defined similarly.  

For an infinite cardinals $\alpha\leqslant\beta$ or symbol $\infty$ we denote by $\PLS_{\alpha,\beta}(\MA)$ (similarly, $\LS_{\alpha,\beta}(\MA)$) the full subcategory of $\PLS(\MA)$, which objects are $(\alpha,\beta)$\=/projective logical sets (similarly, $(\alpha,\beta)$\=/logical sets) over $\MA$. Further, $\PLS_{P,\alpha,\beta}(\MA)$ and $\LS_{P,\alpha,\beta}(\MA)$ are their subcategories of objects and morphisms with parameters in $P$. If $P=\emptyset$, we write $\PLS_{0,\beta}$ and $\LS_{0,\beta}$, omitting index $_\alpha$. So, $\PLS(\MA)=\PLS_{\infty,\infty}(\MA)$, $\LS(\MA)=\LS_{\infty,\infty}(\MA)$, $\PDS(\MA)=\PLS_{\omega,\omega}(\MA)$ and $\DS(\MA)=\LS_{\omega,\omega}(\MA)$.

By $\LS^\term(\MA)$ ($\DS^\term(\MA)$) we will denote the category of logical (definable) sets over $\MA$ with term maps as morphisms (they are indeed categories due to Lemma~\ref{cat_term}). Its subcategory $\LS_0^\term(\MA)$ of absolute logical sets and absolute term maps is the logical category from B.\,I.\,Plotkin's articles~\cite{Plotkin6, Plotkin10, Plotkin5}, which we have mentioned in Subsection~\ref{subsec:LG}.

\begin{remark}\label{remark_emptyset}
Let us remind that the empty set $\emptyset$ is sometimes algebraic over $\MA$, sometimes not. At the same time, $\emptyset$ is always definable over $\MA$ ($\V_\MA(x\ne x)=\emptyset$). It makes sense to define category  $\PLS(\MA)$ only from non-empty sets (for comparison, type-definable sets are non-empty). However we leave the empty set $\emptyset$ in the category $\PLS(\MA)$ to preserve the chain of categorical inclusions $\AS(\MA)\subset \LS^\term_0(\MA)\subset \LS(\MA)\subset \PLS(\MA)$.
\end{remark}

Let $\SC$ be a category, such that $\DS^\term(\MA) \subseteq \SC \subseteq \LS(\MA)$, and $\F\colon \SC\to \PLS(\MB)$ be a categorical functor from the category $\SC$ to the category of projective logical set over an algebraic structure $\MB=\langle B; L(\MB)\rangle$. 

\begin{definition}
 We will name a functor $\F$ {\em degenerate}, if for every object $O\in \SC$ one has $\F(O)=\emptyset$; and {\em non-degenerate} otherwise.   
\end{definition}

\begin{fact}\label{fact:empty}
In the notations and assumptions above, suppose that there exists a non-empty set $O\in \SC$ such that $\F(O)=\emptyset$. Then the functor $\F$ is degenerate.    
\end{fact}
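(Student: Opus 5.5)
The idea is to use the fact that morphisms into the empty set exist only from the empty set. Since $\emptyset$ is an initial object of $\PLS(\MB)$ and no non-empty object has an arrow to it, any object $Q$ of $\PLS(\MB)$ admitting a morphism $Q\to\emptyset$ must itself be empty. So it suffices to show that every object $O^\prime\in\SC$ admits a morphism $O^\prime\to O$ in $\SC$. The functor $\F$ then gives a morphism $\F(O^\prime)\to\F(O)=\emptyset$ in $\PLS(\MB)$, which forces $\F(O^\prime)=\emptyset$.

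To produce the morphisms, I split according to whether $O^\prime$ is empty.

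\begin{itemize}
\item If $O^\prime=\emptyset$ (which requires $\emptyset\in\SC$), there is the unique arrow $\emptyset\to O$, since $\emptyset$ is initial in $\PLS(\MA)$ and hence in $\SC$.
\item If $O^\prime\subseteq A^m$ is non-empty, write $O\subseteq A^s$. Pick a point $\bar b\in O$. By Example~\ref{ex:non-empty1}, the constant map $O^\prime\to O$ sending everything to $\bar b$ is a term map, induced by $\bigwedge_j (y_j=b_j)$. Term maps belong to $\DS^\term(\MA)$ only when both ends are definable, so I have to justify that this arrow actually lies in $\SC$.
\end{itemize}

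The main subtlety is that $\SC$ is only assumed to contain $\DS^\term(\MA)$ and to sit inside $\LS(\MA)$. It is therefore not automatic that a term map between arbitrary logical sets in $\SC$ is a morphism of $\SC$. To avoid this, I would route everything through definable sets.

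First, use the term-map identical embedding $\varepsilon\colon O^\prime\to A^m$ (Example~\ref{ex:non-empty2}) only if it lies in $\SC$. Preferably, avoid it and argue via $O$ instead. The non-empty definable set $A^s$ lies in $\DS^\term(\MA)\subseteq\SC$. There is a term map $A^s\to O$ (the constant map to $\bar b$, Example~\ref{ex:non-empty1}), and here too membership in $\SC$ is an issue.

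Cleaner is to transfer emptiness along definable sets. Take the identical embedding $\varepsilon\colon O\to A^s$, whose image of $O$ under $\F$ is empty. That alone does not force $\F(A^s)=\emptyset$, so instead I use the constant term map $c\colon A^s\to O$, if $\SC$ contains it. Failing that, the intended reading (the natural one in the paper) is that $\SC$ contains all term maps between its objects. I would state this assumption explicitly: all constant maps of Example~\ref{ex:non-empty1} between objects of $\SC$ lie in $\SC$. Under that assumption the argument above goes through directly: $c^\prime\colon O^\prime\to O$ exists in $\SC$, and hence $\F(O^\prime)=\emptyset$.

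Thus the only real step is checking that the constant term map of Example~\ref{ex:non-empty1} is a morphism of $\SC$. I expect that to be the main obstacle, and I would resolve it by the assumption that $\SC$ contains the term maps between its objects.
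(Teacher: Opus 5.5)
Your argument is correct and is essentially the paper's: the paper factors your constant term map $O^\prime\to O$ as the identical embedding $O^\prime\to A^m$ (Example~\ref{ex:non-empty2}) followed by a constant term map $A^m\to O$ (Example~\ref{ex:non-empty1}), first getting $\F(A^m)=\emptyset$ and then $\F(O^\prime)=\emptyset$. The assumption you flag, that such term maps between objects of $\SC$ are morphisms of $\SC$, is used tacitly in the paper's proof too and is genuinely needed for the statement, and your separate treatment of $O^\prime=\emptyset$ covers a case the paper passes over.
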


\begin{proof}
Indeed, for every $m\in \N$ there exists a term map $F\colon A^m\to O$ (see Example~\ref{ex:non-empty1}). Since the empty set $\emptyset$ is an initial object in $\PLS(\MB)$, it gives that $\F(A^m)=\emptyset$. Further, for any non-empty set $O^\prime\in C$ there exist $m\in \N$ and a term map $F^\prime\colon O^\prime\to A^m$ (see Example~\ref{ex:non-empty2}), therefore, $\F(O^\prime)=\emptyset$, as required.
\end{proof}

\begin{remark}
Besides the category $\PLS(\MA)$ itself and its distinct subcategories $\LS(\MA)$, $\PDS(\MA)$, $\DS(\MA)$  and so on, their skeletons  $\sk(\PLS(\MA))$, $\sk(\LS(\MA))$, $\sk(\PDS(\MA))$, $\sk(\DS(\MA))$ are of particular interest, or quotient categories of isomorphism types~\cite{Fritsch}. They possess a unique expressive power. 
\end{remark}

So, we will devote the next two subsections to the study of isomorphisms.

\subsection{Isomorphisms between projective logical sets}\label{subsec:iso_PLS}

In this and the next subsections, we are going to describe isomorphisms in the categories $\PLS(\MA)$ and $\PDS(\MA)$. All results here are also true for categories $\PLS_P(\MA)$ and $\PDS_P(\MA)$ for any subset $P\subseteq A$. 

We will denote the presence of categorical isomorphism between projective logical sets $X/{\sim_X}$ and $Y/{\sim_Y}$ in the standard way by $X/{\sim_X}\cong Y/{\sim_Y}$.

\begin{lemma}\label{le:iso}
Let $X/{\sim_X}$ and $Y/{\sim_Y}$ be non-empty projective logical sets over $\MA$, $X\subseteq A^m$, $Y\subseteq A^s$, and $F_{XY}\colon X/{\sim_X}\to Y/{\sim_Y}$ a morphism in the category $\PLS(\MA)$. Then the following conditions are equivalent:
\begin{enumerate}[label=\arabic*)]
\item\label{le:iso:item1} $F_{XY}$ is an isomorphism in the category $\PLS(\MA)$;
\item\label{le:iso:item2} $F_{XY}$ is bijective and full.
\end{enumerate}
In this case, if $\varphi_{XY}$ is a formula in $L(\MA)\cup A$, which fully induces $F_{XY}$, then  the inverse isomorphism $F_{YX}\colon Y/{\sim_Y}\to X/{\sim_X}$ can be fully induced by the formula 
$$
\varphi_{YX}(\bar y,\bar x)=\varphi_{XY}(\bar x,\bar y).
$$
\end{lemma}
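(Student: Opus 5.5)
The plan is to prove the two implications separately, using only Definition~\ref{def:mor}, Lemma~\ref{fm}, Lemma~\ref{lemma:composition} and Example~\ref{cor:id}.

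\textbf{From 2) to 1).} Let $F_{XY}$ be bijective and fully induced by $\varphi_{XY}$. Put $\varphi_{YX}(\bar y,\bar x)=\varphi_{XY}(\bar x,\bar y)$ and let $F_{YX}=F_{XY}^{-1}$. I will check conditions \ref{eq:FXY1}--\ref{eq:FXY3} of Definition~\ref{def:mor} for $\varphi_{YX}$ relative to $F_{YX}$.
\begin{itemize}
\item Condition \ref{eq:FXY1}: for $\bar b\in Y$ and $\bar a\in X$ we have $F_{YX}(\bar b/{\sim_Y})=\bar a/{\sim_X}$ iff $F_{XY}(\bar a/{\sim_X})=\bar b/{\sim_Y}$ iff $\MA_A\models\varphi_{XY}(\bar a,\bar b)$. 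This holds because $F_{XY}$ is a bijection.
\item Condition \ref{eq:FXY2} for $\varphi_{YX}$ says that $\bar b\in Y$ and $\bar a\notin X$ imply $\neg\varphi_{XY}(\bar a,\bar b)$. This is exactly condition \ref{eq:FXY3} for $\varphi_{XY}$.
\item Condition \ref{eq:FXY3} for $\varphi_{YX}$ is, symmetrically, condition \ref{eq:FXY2} for $\varphi_{XY}$.
\end{itemize}
Hence $F_{YX}$ is a full formula map fully induced by $\varphi_{YX}$, so it is a morphism of $\PLS(\MA)$. Its compositions with $F_{XY}$ are the identity maps, which are morphisms by Example~\ref{cor:id}. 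So $F_{XY}$ is an isomorphism, and this direction also gives the final claim about the formula inducing the inverse.

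\textbf{From 1) to 2).} Suppose $G\colon Y/{\sim_Y}\to X/{\sim_X}$ is an inverse morphism, induced by some formula $\psi(\bar y,\bar x)$. Since $G\circ F_{XY}$ and $F_{XY}\circ G$ are the identities, $F_{XY}$ is bijective. Fullness is the main obstacle: $\varphi_{XY}$ itself need not satisfy \ref{eq:FXY3}, since it may hold at points $\bar a\notin X$. So I will build a new formula
$$
\varphi^\prime(\bar x,\bar y)=\varphi_{XY}(\bar x,\bar y)\wedge\psi(\bar y,\bar x).
$$

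I will verify the three conditions for $\varphi^\prime$ as follows.
\begin{itemize}
\item Condition \ref{eq:FXY2} is inherited from $\varphi_{XY}$.
\item Condition \ref{eq:FXY3}: if $\bar b\in Y$, then $\psi(\bar b,\bar a)$ forces $\bar a\in X$, by property \ref{eq:FXY2} of $\psi$.
\item Condition \ref{eq:FXY1}, forward direction: let $\bar a\in X$, $\bar b\in Y$ with $F_{XY}(\bar a/{\sim_X})=\bar b/{\sim_Y}$. Then $\varphi_{XY}(\bar a,\bar b)$ holds. Also $G(\bar b/{\sim_Y})=\bar a/{\sim_X}$, so $\psi(\bar b,\bar a)$ holds.
\item Condition \ref{eq:FXY1}, converse direction: $\varphi^\prime(\bar a,\bar b)$ implies $\varphi_{XY}(\bar a,\bar b)$, which gives $F_{XY}(\bar a/{\sim_X})=\bar b/{\sim_Y}$.
\end{itemize}
Thus $\varphi^\prime$ fully induces $F_{XY}$, and $F_{XY}$ is full.

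The only delicate point is this repair of the inducing formula in the direction 1)$\Rightarrow$2). All remaining steps are direct unwinding of the definitions.
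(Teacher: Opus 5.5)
Your proof is correct and follows essentially the same route as the paper. In 1)$\Rightarrow$2) the paper also establishes fullness via the conjunction $\varphi_{XY}(\bar x,\bar y)\wedge\varphi_{YX}(\bar y,\bar x)$, and in 2)$\Rightarrow$1) it also takes the variable-swapped formula as the one inducing the inverse. The differences are minor: you read off bijectivity directly from the two-sided inverse map and verify (I)--(III) by hand, whereas the paper first proves $\varphi_{XY}(\bar a,\bar b)\iff\varphi_{YX}(\bar b,\bar a)$ on $X\times Y$, derives surjectivity and injectivity from that, and checks the inverse via Lemma~\ref{fm}.
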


\begin{proof}
\ref{le:iso:item1}$\Longrightarrow$\ref{le:iso:item2}: Suppose first that $F_{XY}$ is an isomorphism in $\PLS(\MA)$, i.\,e., there exists a morphism $F_{YX}\colon Y/{\sim_Y}\to X/{\sim_X}$ in $\PLS(\MA)$, such that $F_{XY}\circ F_{YX}=\id_{Y/{\sim_Y}}$ and $F_{YX}\circ F_{XY}=\id_{X/{\sim_X}}$. Let $\varphi_{XY}$ and $\varphi_{YX}$ be some formulas in $L(\MA)\cup A$,  which induce the formula maps $F_{XY}$ and $F_{YX}$. Therefore, for any tuples $\bar a_1,\bar a_2 \in X$ and $\bar b_1,\bar b_2 \in Y$ one has 
\[
\left.
\begin{array}{rclcl} 
\MA_A &\models&\exists\,\bar y \:(\varphi_{XY}(\bar a_1,\bar y) \wedge \varphi_{YX}(\bar y,\bar a_2)) & \iff &\bar a_1\sim_X \bar a_2, \\
\MA_A &\models&\exists\,\bar y \:(\varphi_{XY}(\bar y,\bar b_1) \wedge \varphi_{YX}(\bar b_2,\bar y))  &\iff &\bar b_1\sim_Y \bar b_2.
\end{array}
\right.
\]
It gives that for any tuples $\bar a\in X$ and $\bar b\in Y$ one has $\MA_A\models \varphi_{XY}(\bar a,\bar b)$ if and only if $\MA_A\models \varphi_{YX}(\bar b,\bar a)$. Indeed, suppose that $\MA_A\models \varphi_{XY}(\bar a,\bar b)$. Since $\bar a\sim_X \bar a$, there exists $\bar y$ with $\MA_A\models \varphi_{XY}(\bar a,\bar y) \wedge \varphi_{YX}(\bar y,\bar a)$, therefore, $\bar y\in Y$ and $F_{XY}(\bar a/{\sim_X})=\bar y/{\sim_Y}$, i.\,e., $\bar y\sim_Y \bar b$, and hence $\MA_A\models \varphi_{YX}(\bar b,\bar a)$. Similarly, one can establish that $\MA_A\models \varphi_{YX}(\bar b,\bar a)$ implies $\MA_A\models \varphi_{XY}(\bar a,\bar b)$. 

Since for every $\bar b\in Y$ there exists $\bar a \in X$ such that $F_{YX}(\bar b/{\sim_Y})=\bar a/{\sim_X}$, i.\,e., $\MA_A\models \varphi_{YX}(\bar b,\bar a)$, or $\MA_A\models \varphi_{XY}(\bar a,\bar b)$, or $F_{XY}(\bar a/{\sim_X})=\bar b/{\sim_Y}$, therefore $F_{XY}$ is surjective.  Further, if $\bar a_1,\bar a_2\in X$ and $F_{XY}(\bar a_1/{\sim_X})=F_{XY}(\bar a_2/{\sim_X})=\bar b/{\sim_Y}$, then $\MA_A\models\varphi_{XY}(\bar a_1,\bar b)\wedge\varphi_{XY}(\bar a_2,\bar b)$, therefore, $\MA_A\models\varphi_{YX}(\bar b, \bar a_1)\wedge\varphi_{YX}(\bar b, \bar a_2)$. And then $\bar a_1\sim_X\bar a_2$, so $F_{XY}$ is injective. Thus $F_{XY}$ is bijective as well.

Now we put $\varphi^\prime_{XY}(\bar x,\bar y)=\varphi_{XY}(\bar x,\bar y)\wedge \varphi_{YX}(\bar y,\bar x)$. The formula $\varphi^\prime_{XY}$ induces the formula map $F_{XY}$ as well as $\varphi_{XY}$, but additionally one has~\ref{eq:FXY3} from Definition~\ref{def:mor}. Correspondingly, the formula $\varphi^\prime_{YX}(\bar y,\bar x)=\varphi^\prime_{XY}(\bar x,\bar y)$ fully induces the inverse morphism $F_{YX}$.

\ref{le:iso:item2}$\Longrightarrow$\ref{le:iso:item1}: Now let's prove the assertion in the opposite direction. Suppose that the morphism $F_{XY}$ is a bijection map and it is fully induced by a formula $\varphi_{XY}$. Then we put $\varphi_{YX}(\bar y,\bar x)=\varphi_{XY}(\bar x,\bar y)$. Since $F_{XY}$ is a surjection, for any $\bar b\in Y$ there exits $\bar a\in X$ with $\MA_A\models\varphi_{YX}(\bar b, \bar a)$. If at the same time $\MA_A\models\varphi_{YX}(\bar b, \bar a^\prime)$ for some $\bar a^\prime\in A^m$, then $\MA_A\models\varphi_{XY}(\bar a^\prime, \bar b)$, so $\bar a^\prime \in X$ and $F_{XY}(\bar a^\prime/{\sim_X})=F_{XY}(\bar a/{\sim_X})=\bar b/{\sim_Y}$. Since $F_{XY}$ is a injection, we get $\bar a\sim_X \bar a^\prime$. Thus the formula $\varphi_{YX}$ satisfies the conditions in Lemma~\ref{fm}, therefore, it induces a morphism $F_{YX}\colon Y/{\sim_Y}\to X/{\sim_X}$. It is obvious that the morphisms $F_{XY}$ and $F_{YX}$ are inverse, so they are isomorphisms of the category $\PLS(\MA)$.
\end{proof}

\begin{corollary}
If $F_1,\ldots, F_d$ are isomorphisms between non-empty sets in $\PLS(\MA)$, then the product $F_1\times\ldots\times F_d$ is an isomorphism too.
\end{corollary}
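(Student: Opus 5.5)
The plan is to reduce everything to Lemma~\ref{le:iso} and Fact~\ref{fact_times}. For $j=1,\ldots,d$ let $F_j\colon X_j/{\sim_{X_j}}\to Y_j/{\sim_{Y_j}}$ be an isomorphism in $\PLS(\MA)$ between non-empty objects. By the implication \ref{le:iso:item1}$\Longrightarrow$\ref{le:iso:item2} of Lemma~\ref{le:iso}, each $F_j$ is bijective and full. So we may fix an $L(\MA)\cup A$\=/formula $\varphi_j(\bar x_j,\bar y_j)$ that fully induces $F_j$, meaning it satisfies conditions~\ref{eq:FXY1}--\ref{eq:FXY3} of Definition~\ref{def:mor}.

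Next we form the product. By Fact~\ref{fact:times}, the sets $X=X_1\times\ldots\times X_d$ and $Y=Y_1\times\ldots\times Y_d$, with the product equivalences, are non-empty projective logical sets. By Fact~\ref{fact_times}, the map $F=F_1\times\ldots\times F_d$ is a formula map induced by $\varphi=\bigwedge_{j}\varphi_j(\bar x_j,\bar y_j)$. Since all $F_j$ are full, $F$ is full as well.

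It is still worth checking condition~\ref{eq:FXY3} for $\varphi$ directly, since this is the only point needing care. Suppose the tuple $(\bar a_1,\ldots,\bar a_d)\notin X$ and $(\bar b_1,\ldots,\bar b_d)\in Y$. Then some $\bar a_j\notin X_j$ while $\bar b_j\in Y_j$, so $\MA_A\not\models\varphi_j(\bar a_j,\bar b_j)$, and hence the conjunction $\varphi$ fails. Condition~\ref{eq:FXY2} follows by the symmetric argument.

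Bijectivity of $F$ is immediate, because $F$ acts coordinatewise on cosets and each $F_j$ is bijective. Concretely, $F(\bar a_1/{\sim_{X_1}}\times\ldots\times\bar a_d/{\sim_{X_d}})=F_1(\bar a_1/{\sim_{X_1}})\times\ldots\times F_d(\bar a_d/{\sim_{X_d}})$, and elements of the product are equal exactly when their coordinates are. Applying the implication \ref{le:iso:item2}$\Longrightarrow$\ref{le:iso:item1} of Lemma~\ref{le:iso} now shows that $F$ is an isomorphism.

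As a side observation, the inverse is fully induced by $\varphi(\bar x,\bar y)$ read with its argument blocks swapped, which is the product of the inverses $F_j^{-1}$. The main obstacle is mild: confirming that fullness, rather than merely being a formula map, survives taking products. I would settle this by the direct check of~\ref{eq:FXY3} above rather than relying only on the last sentence of Fact~\ref{fact_times}.
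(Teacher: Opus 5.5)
Your proposal is correct and follows essentially the same route as the paper, whose one-line proof notes that $F_1\times\ldots\times F_d$ is trivially bijective and full by Fact~\ref{fact_times}, and then applies Lemma~\ref{le:iso}. Your direct verification of conditions~\ref{eq:FXY2} and~\ref{eq:FXY3} for the conjunction $\bigwedge_j\varphi_j$ just spells out the fullness claim that the paper takes from Fact~\ref{fact_times}.
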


\begin{proof}
Indeed, $F_1\times\ldots\times F_d$ is trivially bijective, and it is full by Fact~\ref{fact_times}.
\end{proof}

Let's formulate the expected result of the type ``the quotient by the kernel is isomorphic to the image''. Here we use Lemmas~\ref{kernel} and~\ref{image}.

\begin{theorem}[Isomorphism theorem]\label{th_hom}
Suppose that $X/{\sim_X}$ and $Y/{\sim_Y}$ are non-empty projective logical sets over $\MA$ and $F\colon X/{\sim_X} \,\to Y/{\sim_Y}$ is a full  formula map. Then the formula map $F^\prime\colon X/{\ker F}\to F(X/{\sim_X})$ is an isomorphism in $\PLS(\MA)$, which is fully induced by the same formula as $F$.
\end{theorem}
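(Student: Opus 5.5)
The plan is to chain Lemma~\ref{kernel} and Lemma~\ref{image}, and then finish with Lemma~\ref{le:iso}. Let $\varphi(\bar x,\bar y)$ be a formula in $L(\MA)\cup A$ fully inducing $F$.

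First, I apply Lemma~\ref{image}. It gives that the image $F(X/{\sim_X})=W/{\sim_Y}$, where $W=\V_\MA(S_Y\cup\{\exists\,\bar x\:\varphi(\bar x,\bar y)\})$, is a non-empty projective logical subset of $Y/{\sim_Y}$. It also gives that the corestriction $F\colon X/{\sim_X}\to W/{\sim_Y}$ is a surjective formula map, fully induced by the same $\varphi$.

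Second, I apply Lemma~\ref{kernel} to this surjective full formula map. The formula $E(\bar x,\bar x^\prime)=\exists\,\bar y\:(\varphi(\bar x,\bar y)\wedge\varphi(\bar x^\prime,\bar y))$ induces the formula equivalence $\ker F$ on $X$, and it is coarser than $\sim_X$. I should check that $\ker F$ does not depend on whether the target is $Y/{\sim_Y}$ or $W/{\sim_Y}$. This holds because $E$ involves only $\varphi$, and every $\bar b$ with $\MA_A\models\varphi(\bar a,\bar b)$ for $\bar a\in X$ already lies in $W$. Lemma~\ref{kernel} then says that $\varphi$ fully induces an injective full formula map $F^\prime\colon X/{\ker F}\to W/{\sim_Y}$ with $F=F^\prime\circ\pi$. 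Since the corestricted $F$ is surjective, the final clause of Lemma~\ref{kernel} makes $F^\prime$ surjective as well.

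Third, $F^\prime$ is therefore bijective and full, so by Lemma~\ref{le:iso} it is an isomorphism in $\PLS(\MA)$. Its inverse is fully induced by $\varphi$ with its variables swapped, and $F^\prime$ itself is fully induced by $\varphi$, which is exactly what the statement claims.

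The only delicate point is the compatibility check in the second step. Both the kernel and the fullness conditions~\ref{eq:FXY2}--\ref{eq:FXY3} must survive replacing $Y$ by $W$. Condition~\ref{eq:FXY2} only gets easier, because a $\bar b$ outside $W$ but inside $Y$ cannot satisfy $\varphi(\bar a,\bar b)$ for $\bar a\in X$, since $W$ contains every $\bar b\in Y$ that is a $\varphi$-image of some point. Condition~\ref{eq:FXY3} restricts to $\bar b\in W\subseteq Y$, so it is inherited. Everything else is a direct citation of earlier results.
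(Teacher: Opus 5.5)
Your proposal is correct and follows the paper's proof exactly. The paper also uses Lemma~\ref{image} to corestrict $F$ to a surjective full formula map onto its image, Lemma~\ref{kernel} to obtain the bijective full map $F^\prime$ fully induced by the same formula, and Lemma~\ref{le:iso} to conclude; your compatibility checks for replacing $Y$ by $W$ are valid but already covered by the statement of Lemma~\ref{image}.
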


\begin{proof}
By Lemma~\ref{image}, $F\colon X/{\sim_X}\to F(X/{\sim_X})$ is a surjective full formula map. By Lemma~\ref{kernel}, $F^\prime\colon X/{\ker F}\to F(X/{\sim_X})$ is a bijective full formula, which is fully induced by the same formula as $F$. Therefore, by Lemma~\ref{le:iso}, $F^\prime$ is an isomorphism in $\PLS(\MA)$.
\end{proof}

Lemma~\ref{fm} specifies the conditions under which a formula $\varphi_{XY}(\bar x,\bar y)$ induces a categorical morphism. Now let's expand on these conditions to obtain a criterion for $\varphi_{XY}(\bar x,\bar y)$ to induce a categorical isomorphism.

\begin{lemma}\label{cor4}
Let $X/{\sim_X}$ and $Y/{\sim_Y}$, $X\subseteq A^m$, $Y\subseteq A^s$, be non-empty projective logical sets over $\MA$, and 
$\varphi_{XY}(\bar x,\bar y)$, $|\bar x|=m$, $|\bar y|=s$, a formula in the language $L(\MA)\cup A$. Then the following conditions are equivalent:
\begin{enumerate}[label=\arabic*)]
\item $\varphi_{XY}(\bar x,\bar y)$ induces an isomorphism $F_{XY}\colon X/{\sim_X} \,\to Y/{\sim_Y}$ in the category $\PLS(\MA)$ with inverse isomorphism $F_{YX}\colon Y/{\sim_Y}\to X/{\sim_X}$, induced by the formula $\varphi_{YX}(\bar y,\bar x)=\varphi_{XY}(\bar x,\bar y)$;
\item $\varphi_{XY}(\bar x,\bar y)$ satisfies to conditions~\ref{fm1}-- \ref{fm3} in Lemma~\ref{fm} and the following additional conditions:
\begin{enumerate}[label=(\roman*)]\addtocounter{enumii}{3}
\item for any $\bar a,\bar a^\prime\in A^m$ and $\bar b\in Y$ if
$$
\MA_A\models\varphi_{XY}(\bar a, \bar b)\wedge\varphi_{XY}(\bar a^\prime, \bar b),
$$
then $\bar a,\bar a^\prime\in X$ and $\bar a\sim_X \bar a^\prime$;
\item for any $\bar b\in Y$ there exists $\bar a\in X$ such that $\MA_A\models\varphi_{XY}(\bar a, \bar b)$.
\end{enumerate}
\end{enumerate}
\end{lemma}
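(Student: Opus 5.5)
The argument combines Lemma~\ref{fm}, which characterizes formulas inducing morphisms, with Lemma~\ref{le:iso}, which characterizes isomorphisms as bijective full formula maps. The symmetry of the statement is the guiding idea: conditions (iv)--(v) are exactly conditions (ii)--(iii) of Lemma~\ref{fm} written for the transposed formula $\varphi_{YX}(\bar y,\bar x)=\varphi_{XY}(\bar x,\bar y)$, viewed as a candidate morphism $Y/{\sim_Y}\to X/{\sim_X}$. Condition (i) is symmetric in the roles of $X$ and $Y$, so it holds for $\varphi_{YX}$ as soon as it holds for $\varphi_{XY}$.

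For 1)$\Rightarrow$2), assume $\varphi_{XY}$ induces $F_{XY}$ and $\varphi_{YX}$ induces the inverse $F_{YX}$. Applying Lemma~\ref{fm} to $\varphi_{XY}$ gives (i)--(iii). Applying it to $\varphi_{YX}$ gives the transposed versions of (ii) and (iii), which are literally (iv) and (v).

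For 2)$\Rightarrow$1), first apply Lemma~\ref{fm} to $\varphi_{XY}$: conditions (i)--(iii) show that it induces a formula map $F_{XY}$. Next, the transposed formula $\varphi_{YX}$ satisfies (i) by symmetry, (ii) by (iv), and (iii) by (v). So Lemma~\ref{fm} again shows that $\varphi_{YX}$ induces a formula map $F_{YX}\colon Y/{\sim_Y}\to X/{\sim_X}$.

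It remains to check that $F_{YX}$ and $F_{XY}$ are mutually inverse. Take $\bar a\in X$ and let $F_{XY}(\bar a/{\sim_X})=\bar b/{\sim_Y}$, so $\MA_A\models\varphi_{XY}(\bar a,\bar b)$ with $\bar b\in Y$. Then $\MA_A\models\varphi_{YX}(\bar b,\bar a)$, which by condition~\ref{eq:FXY1} of Definition~\ref{def:mor} means $F_{YX}(\bar b/{\sim_Y})=\bar a/{\sim_X}$. Hence $F_{YX}\circ F_{XY}=\id$, and the other composite is handled symmetrically. Both are then isomorphisms in $\PLS(\MA)$.

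The only delicate point is bookkeeping. The transposed formula's condition (ii) must quantify over all $\bar a,\bar a^\prime\in A^m$, not just over $X$, and this is exactly why (iv) is stated with $A^m$. Condition (ii) itself already rules out tuples outside $Y$ on the other side. No genuine obstacle is expected.
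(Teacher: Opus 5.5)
Your proof is correct and follows essentially the paper's route. The paper's proof is the one line ``Follows from Lemmas~\ref{fm} and~\ref{le:iso}''; you apply Lemma~\ref{fm} to $\varphi_{XY}$ and to its transpose, then verify directly that the two induced maps are mutually inverse, which is exactly what the implication 2)$\Rightarrow$1) in the proof of Lemma~\ref{le:iso} does.
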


\begin{proof}
Follows from Lemmas~\ref{fm} and~\ref{le:iso}.
\end{proof}

\begin{corollary}\label{cor:descent}
    Let $F_{XY}\colon X/{\sim_X}\to Y/{\sim_Y}$ be an isomorphism between non-empty projective logical sets over $\MA$, fully induced by a formula $\varphi_{XY}$ in $L(\MA)\cup A$. If $Z/{\sim_Z}\subseteq X/{\sim_X}$ and $W/{\sim_W}\subseteq Y/{\sim_Y}$ are projective logical subsets, such that $F(Z/{\sim_Z})=W/{\sim_W}$, then the restriction $F^\prime\colon Z/{\sim_Z}\to W/{\sim_W}$ of $F$ is an isomorphism, fully induced by the formula $\varphi_{XY}$.
\end{corollary}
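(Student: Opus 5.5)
The plan is to reduce everything to Lemma~\ref{cor4} and Lemma~\ref{le:iso}, applied to the sets $Z/{\sim_Z}$ and $W/{\sim_W}$ in place of $X/{\sim_X}$ and $Y/{\sim_Y}$. Since $F$ is bijective and $F(Z/{\sim_Z})=W/{\sim_W}$, the restriction $F^\prime$ is automatically a bijection $Z/{\sim_Z}\to W/{\sim_W}$. It therefore suffices to show that $\varphi_{XY}$ fully induces $F^\prime$, i.\,e., satisfies conditions~\ref{eq:FXY1}--\ref{eq:FXY3} of Definition~\ref{def:mor} for the pair $Z/{\sim_Z}$, $W/{\sim_W}$. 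Once this is done, Lemma~\ref{le:iso} yields that $F^\prime$ is an isomorphism in $\PLS(\MA)$.

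I would first recall what the inclusions give. The relation $Z/{\sim_Z}\subseteq X/{\sim_X}$ means $Z\subseteq X$, $Z$ is closed under $\sim_X$, and $\sim_Z$ is the restriction of $\sim_X$; the same holds for $W$ inside $Y$.

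Condition~\ref{eq:FXY1} for $F^\prime$ follows directly. For $\bar a\in Z$ and $\bar b\in W$ we have $F^\prime(\bar a/{\sim_Z})=\bar b/{\sim_W}$ if and only if $F(\bar a/{\sim_X})=\bar b/{\sim_Y}$, because the cosets coincide. This in turn holds if and only if $\MA_A\models\varphi_{XY}(\bar a,\bar b)$.

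Condition~\ref{eq:FXY2} is the main point. Let $\bar a\in Z$ and $\MA_A\models\varphi_{XY}(\bar a,\bar b)$. By~\ref{eq:FXY2} for $F$, we get $\bar b\in Y$ and $F(\bar a/{\sim_X})=\bar b/{\sim_Y}$. Since $F(Z/{\sim_Z})=W/{\sim_W}$, there is $\bar b^\prime\in W$ with $\bar b^\prime\sim_Y\bar b$. Closedness of $W$ under $\sim_Y$ then gives $\bar b\in W$.

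Condition~\ref{eq:FXY3} is handled symmetrically. Let $\bar b\in W$ and $\MA_A\models\varphi_{XY}(\bar a,\bar b)$. Fullness of $F$ gives $\bar a\in X$ and $F(\bar a/{\sim_X})=\bar b/{\sim_Y}$. The coset $\bar b/{\sim_Y}$ lies in $W/{\sim_W}=F(Z/{\sim_Z})$, so there is $\bar a^\prime\in Z$ with $F(\bar a^\prime/{\sim_X})=F(\bar a/{\sim_X})$. Injectivity of $F$ gives $\bar a\sim_X\bar a^\prime$, and closedness of $Z$ under $\sim_X$ yields $\bar a\in Z$.

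The only delicate step is this use of closedness of $Z$ and $W$ under the ambient equivalences, which is where the hypothesis that they are projective logical \emph{subsets} is essential. Everything else is routine.
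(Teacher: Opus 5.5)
Your proof is correct and follows the paper's own argument. You verify conditions~\ref{eq:FXY1}--\ref{eq:FXY3} of Definition~\ref{def:mor} for the restriction, using closedness of $Z$ and $W$ under $\sim_X$ and $\sim_Y$ together with bijectivity of $F$, and then conclude by Lemma~\ref{le:iso}; you only spell out the details the paper leaves implicit, and the mention of Lemma~\ref{cor4} is unnecessary but harmless.
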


\begin{proof}
    Indeed, formula $\varphi_{XY}$ satisfies to the condition~\ref{eq:FXY1} from Definition~\ref{def:mor} with respect to the map $F^\prime$. Since $Z$ is closed under $\sim_X$ and $W$ is closed under $\sim_Y$, and $F$ is bijection, then $\varphi_{XY}$ satisfies to the conditions~\ref{eq:FXY2} and~\ref{eq:FXY3} from Definition~\ref{def:mor} with respect to the map $F^\prime$. Therefore, $\varphi_{XY}$ fully induces $F^\prime$ and $F^\prime$ is an isomorphism.
\end{proof}

\subsection{Isomorphisms between projective definable sets}\label{subsec:iso_PDS}

Now let us note some simple consequences about isomorphisms in the category $\PDS(\MA)$ of projective definable sets over $\MA$.

\begin{lemma}\label{cor7}
Suppose that $X$ is a definable set and $F_{XY}\colon X/{\sim_X}\to Y/{\sim_Y}$ is a morphism in the category $\PLS(\MA)$ between non-empty objects. Then the following conditions are equivalent:
\begin{enumerate}[label=\arabic*)]
\item\label{cor7:item2} $F_{XY}$ is an isomorphism in the category $\PDS(\MA)$;
\item\label{cor7:item1} $F_{XY}$ is an isomorphism in the category $\PLS(\MA)$;
    \item\label{cor7:item3} $F_{XY}$ is bijective.   
\end{enumerate}
\end{lemma}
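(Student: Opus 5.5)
The plan is to close the cycle of implications \ref{cor7:item2}$\Rightarrow$\ref{cor7:item1}$\Rightarrow$\ref{cor7:item3}$\Rightarrow$\ref{cor7:item2}, leaning on Lemma~\ref{le:iso} and Fact~\ref{rem5}.

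\ref{cor7:item2}$\Rightarrow$\ref{cor7:item1} is immediate, because $\PDS(\MA)$ is a full subcategory of $\PLS(\MA)$. The inverse morphism in $\PDS(\MA)$ is therefore also a morphism of $\PLS(\MA)$, and the two composites are identities there as well.

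\ref{cor7:item1}$\Rightarrow$\ref{cor7:item3} is part of Lemma~\ref{le:iso}: every isomorphism in $\PLS(\MA)$ is bijective.

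\ref{cor7:item3}$\Rightarrow$\ref{cor7:item2} is the substantive step. Assume $F_{XY}$ is a bijective formula map, induced by some $\varphi_{XY}$ in $L(\MA)\cup A$. Since $X$ is definable, Fact~\ref{rem5} shows that $F_{XY}$ is definable, and in particular full. Concretely, $S_X(\bar x)\wedge\varphi_{XY}(\bar x,\bar y)$ fully induces it.
\begin{itemize}
\item Lemma~\ref{le:iso} then makes $F_{XY}$ an isomorphism in $\PLS(\MA)$. Its inverse $F_{YX}$ is fully induced by the transposed formula $S_X(\bar x)\wedge\varphi_{XY}(\bar x,\bar y)$, read as a formula in $(\bar y,\bar x)$.
\item It remains to see that $X/{\sim_X}$ and $Y/{\sim_Y}$ are both objects of $\PDS(\MA)$. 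For $X$ this is the hypothesis. For $Y$, $F_{XY}$ is definable and surjective, so Fact~\ref{rem0}(b) makes $Y$ definable. Concretely, $Y$ is defined by $\exists\,\bar x\,(S_X(\bar x)\wedge\varphi_{XY}(\bar x,\bar y))$, which holds exactly on $Y$ by conditions~\ref{eq:FXY1} and~\ref{eq:FXY2} together with surjectivity.
\end{itemize}
Hence both objects lie in $\PDS(\MA)$. Since $\PDS(\MA)$ is full, $F_{XY}$ and $F_{YX}$ are morphisms there and are mutually inverse, so $F_{XY}$ is an isomorphism in $\PDS(\MA)$.

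The main point requiring care is ensuring that $Y$ is definable, since it is a priori only logical. This is handled by surjectivity via Fact~\ref{rem0}, or equally via the existential formula above. Everything else is routine bookkeeping about full subcategories.
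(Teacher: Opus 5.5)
Your proof is correct and follows essentially the same route as the paper's proof. Both arguments use Fact~\ref{rem5} to get that $F_{XY}$ is definable and hence full, Lemma~\ref{le:iso} for the equivalence of being an isomorphism in $\PLS(\MA)$ and being bijective, Fact~\ref{rem0}(b) to get that $Y$ is definable, and the fact that $\PDS(\MA)$ is a full subcategory of $\PLS(\MA)$.
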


\begin{proof}
First of all, note that, by Fact~\ref{rem5}, $F_{XY}$ is definable and, in particular, full. Therefore, equivalence \ref{cor7:item1}$\Longleftrightarrow$\ref{cor7:item3} follows from Lemma~\ref{le:iso}. Implication~\ref{cor7:item2}$\Longrightarrow$\ref{cor7:item1} is trivial. 
\ref{cor7:item1},\ref{cor7:item3}$\Longrightarrow$\ref{cor7:item2}: If $F_{XY}$ is bijective, then by Fact~\ref{rem0}, $Y$ is definable, therefore, $F_{XY}$ is a morphism in $\PDS(\MA)$. Since $\PDS(\MA)$ is a full subcategory of $\PLS(\MA)$, then $F_{XY}$ is an isomorphism in $\PDS(\MA)$.
\end{proof}

\begin{corollary}\label{cor:iso_def}
A map $F_{XY}\colon X/{\sim_X}\to Y/{\sim_Y}$ between non-empty projective definable sets $X/{\sim_X}$ and $Y/{\sim_Y}$ over $\MA$ is an isomorphism in $\PLS(\MA)$ if and only if $F_{XY}$ is definable bijection.
\end{corollary}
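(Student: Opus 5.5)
The plan is to reduce the corollary to Lemma~\ref{cor7} together with Fact~\ref{rem5}. Let $X/{\sim_X}$ and $Y/{\sim_Y}$ be non-empty projective definable sets. In particular, $X$ and $Y$ are definable subsets of $A^m$ and $A^s$.

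For the forward direction, suppose $F_{XY}$ is an isomorphism in $\PLS(\MA)$. Then it is a morphism of $\PLS(\MA)$, i.e.\ a formula map $X/{\sim_X}\to Y/{\sim_Y}$. Since $X$ is definable, Fact~\ref{rem5} (with $P=A$) shows that $F_{XY}$ is definable. By Lemma~\ref{cor7}, implication \ref{cor7:item1}$\Rightarrow$\ref{cor7:item3}, it is bijective. So $F_{XY}$ is a definable bijection.

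For the converse, suppose $F_{XY}$ is a definable bijection, defined by a formula $\varphi(\bar x,\bar y)$ in $L(\MA)\cup A$ whose realizations are exactly the preimage of the graph of $F_{XY}$ in $A^{m+s}$. The first step is to check that $\varphi$ induces $F_{XY}$ in the sense of Definition~\ref{def:mor}. Condition~\ref{eq:FXY1} holds by construction. Condition~\ref{eq:FXY2} holds because every realization of $\varphi$ has its $\bar y$-part in $Y$. The graph preimage has $\bar x$-part in $X$ as well, so $\varphi$ even fully induces $F_{XY}$. Alternatively, one can just cite Fact~\ref{rem5}, direction 1)$\Rightarrow$2), which says a definable map is a formula map.

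Hence $F_{XY}$ is a morphism of $\PLS(\MA)$ with definable domain $X$. Lemma~\ref{cor7}, implication \ref{cor7:item3}$\Rightarrow$\ref{cor7:item1}, then gives that $F_{XY}$ is an isomorphism in $\PLS(\MA)$. A direct alternative is Lemma~\ref{le:iso}: $F_{XY}$ is bijective and full, so it is an isomorphism, with inverse fully induced by $\varphi(\bar x,\bar y)$ read as a formula in $(\bar y,\bar x)$.

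The argument is essentially bookkeeping. The only point needing care is that ``definable map'' in Definition~\ref{def:definable_map} refers to the preimage of the graph in $A^{m+s}$, so that a defining formula really satisfies conditions~\ref{eq:FXY1}--\ref{eq:FXY3}. This is exactly what Fact~\ref{rem5} packages, so I do not expect a genuine obstacle.
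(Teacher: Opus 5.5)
Your proposal is correct and follows the paper's route: the paper's proof is the one-line citation of Fact~\ref{rem5} and Lemma~\ref{cor7}, which you use in both directions. Your direct check that a defining formula satisfies~\ref{eq:FXY1}--\ref{eq:FXY3}, and the alternative via Lemma~\ref{le:iso}, are correct extras but not needed.
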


\begin{proof}
It follows from Fact~\ref{rem5} and Lemma~\ref{cor7}.
\end{proof}

\begin{corollary}
Let $O_1, O_2$ be objects from the category $\PLS(\MA)$ and $O_1\cong O_2$. Then 
$$
O_1\in \PDS(\MA)\iff O_2\in \PDS(\MA).
$$
\end{corollary}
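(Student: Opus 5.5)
By symmetry of the isomorphism relation it suffices to prove one direction. Assume $O_1\in\PDS(\MA)$ and $O_1\cong O_2$; we show $O_2\in\PDS(\MA)$.

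First we dispose of the empty object. If $O_1=\emptyset$, then $O_2=\emptyset$: there is no arrow from a non-empty object to $\emptyset$, so the inverse isomorphism $O_2\to O_1=\emptyset$ forces $O_2=\emptyset$. Since $\emptyset$ is definable over $\MA$, it lies in $\PDS(\MA)$ (or is in any case treated uniformly in both categories as the initial object). The same argument applies when $O_2=\emptyset$. Hence both objects may be assumed non-empty, say $O_1=X/{\sim_X}$ and $O_2=Y/{\sim_Y}$.

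Now let $X$ be definable and let $F_{XY}\colon X/{\sim_X}\to Y/{\sim_Y}$ be an isomorphism in $\PLS(\MA)$.
\begin{itemize}
\item By Lemma~\ref{le:iso}, $F_{XY}$ is bijective and full.
\item Since $X$ is definable and $F_{XY}$ is a formula map, Fact~\ref{rem5} shows that $F_{XY}$ is definable.
\item A definable surjection has a definable target set by Fact~\ref{rem0}(b), so $Y$ is definable.
\item Then $\sim_Y$ is a formula equivalence on a definable set, hence $Y/{\sim_Y}$ is a projective definable set, i.e.\ $O_2\in\PDS(\MA)$.
\end{itemize}
This is exactly the argument of \ref{cor7:item1},\ref{cor7:item3}$\Longrightarrow$\ref{cor7:item2} in Lemma~\ref{cor7}, which may be cited directly: it gives that $F_{XY}$ is an isomorphism in $\PDS(\MA)$, and in particular its codomain $O_2$ is an object of $\PDS(\MA)$.

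The only point needing care is the bookkeeping of the empty object. The non-empty case is an immediate consequence of Lemma~\ref{cor7}.
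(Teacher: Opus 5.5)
Your proof is correct and takes the same route as the paper, which states this corollary without proof as an immediate consequence of Lemma~\ref{cor7}. That lemma says an isomorphism in $\PLS(\MA)$ whose domain is projective definable is already an isomorphism in $\PDS(\MA)$, so its codomain lies in $\PDS(\MA)$; your separate treatment of the empty object is correct and only adds bookkeeping the paper leaves implicit.
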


\begin{lemma}\label{cor8}
Suppose that $X/{\sim_X}$ and $Y/{\sim_Y}$ are non-empty projective logical sets over $\MA$ and $F_{XY}\colon X/{\sim_X}\to Y/{\sim_Y}$ is a map. Then the following conditions are equivalent:
\begin{enumerate}[label=\arabic*)]
    \item\label{cor8:item1} $X$ is definable and $F_{XY}$ is an isomorphism in $\PLS(\MA)$;
    \item\label{cor8:item2} $F_{XY}$ is bijective and definable.
\end{enumerate}
\end{lemma}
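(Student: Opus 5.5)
The plan is to prove the two implications separately, relying mainly on Fact~\ref{rem5}, Fact~\ref{rem0} and Lemma~\ref{le:iso}. The argument only needs to move between ``definable'' and ``full formula map with definable domain''.

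For \ref{cor8:item1}$\Longrightarrow$\ref{cor8:item2}, assume $X$ is definable and $F_{XY}$ is an isomorphism in $\PLS(\MA)$. By Lemma~\ref{le:iso}, $F_{XY}$ is bijective and full; in particular it is a formula map with parameters in $A$. Since $X$ is definable with some finite set of parameters $P\subseteq A$, Fact~\ref{rem5} (direction 2)$\Rightarrow$1)) shows that $F_{XY}$ is definable. Concretely, if $\varphi_{XY}$ induces $F_{XY}$ and $S_X$ defines $X$, then $S_X(\bar x)\wedge\varphi_{XY}(\bar x,\bar y)$ defines the preimage of the graph. One point needs checking. Definability in Definition~\ref{def:definable_map} requires the preimage set to consist of tuples with $\bar b\in Y$. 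This is guaranteed by condition~\ref{eq:FXY2} of Definition~\ref{def:mor}, so this direction is immediate.

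For \ref{cor8:item2}$\Longrightarrow$\ref{cor8:item1}, assume $F_{XY}$ is bijective and definable, say by a formula $\varphi(\bar x,\bar y)$ in $L(\MA)\cup A$ that defines the preimage of its graph. By Fact~\ref{rem0}(a), $X$ is definable, and by Fact~\ref{rem0}(b), since $F_{XY}$ is surjective, $Y$ is definable as well. Next I will check that $\varphi$ fully induces $F_{XY}$. Because $\varphi$ defines exactly the set of pairs $(\bar a,\bar b)\in X\times Y$ with $F_{XY}(\bar a/{\sim_X})=\bar b/{\sim_Y}$, conditions~\ref{eq:FXY1}, \ref{eq:FXY2} and~\ref{eq:FXY3} of Definition~\ref{def:mor} all hold: any satisfying pair lies in $X\times Y$. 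Hence $F_{XY}$ is a full formula map, and it is bijective by hypothesis. Lemma~\ref{le:iso} then gives that $F_{XY}$ is an isomorphism in $\PLS(\MA)$.

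I expect no real obstacle. The only care needed is the bookkeeping between the ``preimage of the graph'' in Definition~\ref{def:definable_map} and the inducing conditions of Definition~\ref{def:mor}, namely that a defining formula automatically forces both coordinates into $X$ and $Y$. With that observation, both directions reduce to earlier results. Alternatively, the second direction could cite Corollary~\ref{cor:iso_def} after establishing via Fact~\ref{rem0} that both $X$ and $Y$ are definable.
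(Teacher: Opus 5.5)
Your proof is correct and follows the paper's route: Fact~\ref{rem5} together with Lemma~\ref{le:iso} for the forward direction. For the converse, the paper cites Lemma~\ref{cor7}, and you unfold that citation by checking directly that the defining formula fully induces $F_{XY}$ and then applying Lemma~\ref{le:iso}. The definability of $Y$ that you get from Fact~\ref{rem0}(b) is harmless but not needed for the argument.
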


\begin{proof}
\ref{cor8:item1}$\Longrightarrow$\ref{cor8:item2}: If $X$ is definable and 
    $F_{XY}$ is a morphism in $\PLS(\MA)$, then $F_{XY}$ is definable due to Fact~\ref{rem5}. If $F_{XY}$ is an isomorphism in $\PLS(\MA)$, it is bijective due to Lemma~\ref{le:iso}. 
\ref{cor8:item2}$\Longrightarrow$\ref{cor8:item1}: If $F_{XY}$ is definable, then $X$ is definable due to Fact~\ref{rem5}. And if $F_{XY}$ is bijective, it is an isomorphism in $\PLS(\MA)$ due to Lemma~\ref{cor7}.
\end{proof}

Let us formulate and prove a detailed analog of Corollary~\ref{cor:descent} for the case when $Z$ is a definable set.
 
\begin{lemma}[on isomorphism descent]\label{lemma:descent}
Let $F_{XY}\colon X/{\sim_X}\to Y/{\sim_Y}$ be an isomorphism between non-empty projective logical sets over $\MA$ and $Z/{\sim_Z}\subseteq X/{\sim_X}$ is a non-empty projective definable subset, $Z=\V_\MA(S_Z)$. Then 
    \begin{enumerate}[label=\arabic*)]
        \item the restriction of $F_{XY}$ on $Z/{\sim_Z}$ is a full formula map $F_{ZY}\colon Z/{\sim_Z}\to Y/{\sim_Y}$,
        \item the image $F_{ZY}(Z/{\sim_Z})$ is a projective definable set over $\MA$,
        \item $F_{ZY}\colon Z/{\sim_Z}\to F_{ZY}(Z/{\sim_Z})$ is an isomorphism in $\PDS(\MA)$. 
    \end{enumerate}
Furthermore, if $X$ and $Y$ are definable sets and $\varphi_{XY}(\bar x,\bar y)$ is a formula in the language $L(\MA)\cup A$, which defines isomorphism $F_{XY}\colon X/{\sim_X}\to Y/{\sim_Y}$, then the formula $\varphi_{XY}(\bar z,\bar y)$ fully induces isomorphism $F_{ZY}\colon Z/{\sim_Z}\to F_{ZY}(Z/{\sim_Z})$, while the formula $\varphi_{XY}(\bar z,\bar y)\wedge S_Z(\bar z)$ defines it.
\end{lemma}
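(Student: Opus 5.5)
We argue directly from the earlier lemmas on full formula maps, treating the restriction of $F_{XY}$ to $Z/{\sim_Z}$ as the composition of $F_{XY}$ with the identical embedding $\varepsilon\colon Z/{\sim_Z}\to X/{\sim_X}$. By Lemma~\ref{le:iso}, $F_{XY}$ is bijective and fully induced by some formula $\varphi_{XY}$ in $L(\MA)\cup A$. By Fact~\ref{fact:subset}, $\varepsilon$ is induced by $E_Z$. Since $Z$ is definable by $S_Z$, the formula $S_Z(\bar z)\wedge E_Z(\bar z,\bar x)$ defines $\varepsilon$, so $\varepsilon$ is full by Fact~\ref{rem5}. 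Lemma~\ref{lemma:composition} then shows that $F_{ZY}=F_{XY}\circ\varepsilon$ is a full formula map. It is fully induced by $\exists\,\bar x\,(S_Z(\bar z)\wedge E_Z(\bar z,\bar x)\wedge\varphi_{XY}(\bar x,\bar y))$, and this is item~1.

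For item~2, Lemma~\ref{image} shows that the image $F_{ZY}(Z/{\sim_Z})$ is a projective logical subset $W/{\sim_Y}\subseteq Y/{\sim_Y}$. The surjection $F_{ZY}\colon Z/{\sim_Z}\to W/{\sim_Y}$ is a full formula map. It is injective, since $F_{XY}$ is injective and $\sim_Z$ is the restriction of $\sim_X$. Lemma~\ref{le:iso} therefore makes it an isomorphism in $\PLS(\MA)$. Because $Z$ is definable, Lemma~\ref{cor7} (or Fact~\ref{rem0}~b), which gives definability of $W$ from a definable surjection) shows that $W$ is definable. So the image is projective definable, and the isomorphism lies in $\PDS(\MA)$, which is items~2 and~3.

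For the final claim, assume $X$ and $Y$ are definable and $\varphi_{XY}$ defines $F_{XY}$. Then $\varphi_{XY}$ also fully induces $F_{XY}$. By Fact~\ref{fact10}, $\MA_A\models\varphi_{XY}(\bar a,\bar b)$ forces $\bar a\in X$, and condition~\ref{eq:FXY2} forces $\bar b\in Y$. We then check conditions \ref{eq:FXY1}--\ref{eq:FXY3} of Definition~\ref{def:mor} for $\varphi_{XY}(\bar z,\bar y)$ relative to $Z/{\sim_Z}$ and $W/{\sim_Y}$:
\begin{enumerate}[label=(\Roman*)]
\item For $\bar a\in Z$ and $\bar b\in W$, the map relation is inherited from $F_{XY}$.
\item If $\bar a\in Z$ and $\MA_A\models\varphi_{XY}(\bar a,\bar b)$, then $\bar b\in Y$ and $F_{XY}(\bar a/{\sim_X})=\bar b/{\sim_Y}$, so $\bar b\in W$ by the definition of the image.
\item If $\bar b\in W$ and $\MA_A\models\varphi_{XY}(\bar a,\bar b)$, then $\bar a\in X$. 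Choose $\bar a'\in Z$ with $F_{XY}(\bar a'/{\sim_X})=\bar b/{\sim_Y}$. Injectivity gives $\bar a\sim_X\bar a'$, and closedness of $Z$ under $\sim_X$ gives $\bar a\in Z$.
\end{enumerate}
This is essentially Corollary~\ref{cor:descent} with $W$ the image. Conjoining $S_Z(\bar z)$ then ensures that every satisfying $\bar z$ lies in $Z$, so by Fact~\ref{fact10} the formula $\varphi_{XY}(\bar z,\bar y)\wedge S_Z(\bar z)$ defines the isomorphism.

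The main obstacle is the bookkeeping in item~(III). It relies on $Z$ being closed under $\sim_X$, which is guaranteed by the meaning of $Z/{\sim_Z}\subseteq X/{\sim_X}$, together with the fact that a defining formula for $F_{XY}$ has all of its first coordinates in $X$. Everything else is a direct application of the cited lemmas.
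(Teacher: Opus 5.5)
Your proof is correct and follows essentially the paper's route. Item~1 rests on the definability of $Z$ making the restriction full. Items~2--3 use Lemma~\ref{image}, injectivity, and Lemma~\ref{cor7} (or Fact~\ref{rem0}) to get definability of the image. The final claim is the same direct check of (I)--(III), using injectivity of $F_{XY}$ and closedness of $Z$ under $\sim_X$. The only cosmetic differences are that you obtain fullness in item~1 by composing with the identical embedding $\varepsilon$ via Lemma~\ref{lemma:composition}, where the paper applies Fact~\ref{rem5} directly to the restriction. You also invoke Lemma~\ref{le:iso} instead of Lemma~\ref{cor8} for the isomorphism in $\PLS(\MA)$.
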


\begin{proof}
Since $Z$ is a definable set, then the restriction $F_{ZY}\colon Z/{\sim_Z}\to Y/{\sim_Y}$ of $F_{XY}$ is an injective, full, and definable formula map, by Fact~\ref{rem5}. Then by Lemma~\ref{image}, $F_{ZY}(Z/{\sim_Z})$ is a projective logical set and $F_{ZY}\colon Z/{\sim_Z}\to F_{ZY}(Z/{\sim_Z})$ is bijective formula map. Therefore, by Lemma~\ref{cor8}, $F_{ZY}\colon Z/{\sim_Z}\to F_{ZY}(Z/{\sim_Z})$ is an isomorphism in $\PLS(\MA)$; and by Lemma~\ref{cor7}, $F_{ZY}\colon Z/{\sim_Z}\to F_{ZY}(Z/{\sim_Z})$ is an isomorphisms in $\PDS(\MA)$, in particular, $F_{ZY}(Z/{\sim_Z})$ is a projective definable set over $\MA$.

Let $\varphi_{XY}$ be a formula in  $L(\MA)\cup A$, which defines isomorphism $F_{XY}\colon X/{\sim_X}\to Y/{\sim_Y}$. It is clear, that then $\varphi_{XY}(\bar z,\bar y)\wedge S_Z(\bar z)$ defines  isomorphism $F_{ZY}\colon Z/{\sim_Z}\to F_{ZY}(Z/{\sim_Z})$. Take any tuples $\bar a,\bar b\in \MA$, $|\bar a|=|\bar x|$, $|\bar b|=|\bar y|$. One has $\MA_A\models\varphi_{XY}(\bar a,\bar b)$ if and only if $\bar a\in X$, $\bar b\in Y$ and $F_{XY}(\bar a/{\sim_X})=\bar b/{\sim_Y}$. Suppose that $\MA_A\models\varphi_{XY}(\bar a,\bar b)$. If $\bar a\in Z$, then $\bar b/{\sim_Y}\in F_{ZY}(Z/{\sim_Z})$. And conversely, if $\bar b/{\sim_Y}\in F_{ZY}(Z/{\sim_Z})$, then $\bar a\in Z$, since $F_{XY}$ is injective. Therefore, $\varphi_{XY}$ fully induces isomorphism $F_{ZY}\colon Z/{\sim_Z}\to F_{ZY}(Z/{\sim_Z})$. 
\end{proof}

\begin{remark}\label{remark:P}
  All results about categorical isomorphisms from this and previous sections are true for categories $\PLS_P(\MA)$ and $\PDS_P(\MA)$ for any subset $P\subseteq A$, in particular, for $\PLS_0(\MA)$ and $\PDS_0(\MA)$. For example, if $X/{\sim_X}$ and $Y/{\sim_Y}$ are non-empty absolute projective logical sets and $F_{XY}\colon X/{\sim_X}\to Y/{\sim_Y}$ is a formula map, then it is an isomorphism in $\PLS_0(\MA)$ if and only if it is an isomorphism in $\PLS(\MA)$, which can be fully induced by a formula $\varphi_{XY}$ in the language $L(\MA)$, i.\,e., without parameters from $A$.
\end{remark}

\section{Interpretations and projective logical geometries}\label{sec:int}

In this section, we reveal the diverse and deep connections between interpretations and categories of projective logical sets. And we start by constructing a categorical functor that invariably accompanies each interpretation  $\MA\rightsquigarrow\MB$.

\subsection{Interpretations and interpretation functors}\label{subsec:int_func}

Here we show that if an algebraic structure $\MA$ is interpretable in an algebraic structure $\MB$, then the projective logical geometry over $\MB$ contains the projective logical geometry over $\MA$. The main goal here is to prove the following result.

\begin{theorem}\label{th:inter}
Let an algebraic structure $\MA=\langle A; L(\MA)\rangle$ be interpretable in an algebraic structure $\MB=\langle B; L(\MB)\rangle$. Then for any interpretation $(\Gamma,\bar p,\mu_\Gamma)$ of $\MA$ in $\MB$ there exists an embedding  $\F=\F_{\Gamma,\bar p,\mu_\Gamma}$ from the category $\PLS(\MA)$ of projective logical sets over $\MA$ to the category $\PLS(\MB)$ of projective logical sets over $\MB$. Moreover, for any infinite cardinals $\alpha\leqslant\beta$ the restriction of $\F$ to the full subcategory $\PLS_{\alpha,\beta}(\MA)$ gives rise to an embedding $\F\colon\PLS_{\alpha,\beta}(\MA)\to\PLS_{\alpha,\beta}(\MB),$ from the category $\PLS_{\alpha,\beta}(\MA)$ to the category $\PLS_{\alpha,\beta}(\MB)$. In particular, one gets the following embeddings:
\begin{gather*}
\F\colon\PLS(\MA)\to\PLS(\MB),\\
\F\colon\PDS(\MA)\to\PDS(\MB).
\end{gather*}
\end{theorem}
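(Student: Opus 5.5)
The functor $\F=\F_{\Gamma,\bar p,\mu_\Gamma}$ will be built by transporting everything along the $(\Gamma,\bar p,\gamma)$\=/translation, where $\gamma\colon A\to U_\Gamma(\MB,\bar p)$ is a fixed section of $\mu_\Gamma$, i.\,e. $\mu_\Gamma\circ\gamma=\id_A$. By Corollary~\ref{cor:gamma}, the sets produced below do not depend on the choice of $\gamma$. On objects, send $\emptyset\mapsto\emptyset$. For a non-empty $X/{\sim_X}$ with $X=\V_\MA(S_X)\subseteq A^m$ and $\sim_X$ induced by $E_X$, put
$$\F(X)=\{(\bar b_1,\ldots,\bar b_m)\in B^{mn}\mid \mu_\Gamma(\bar b_1,\ldots,\bar b_m)\in X\},$$
where $n=\dim\Gamma$. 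This is the logical set over $\MB$ induced by $\{\psi_{\Gamma,\bar p,\gamma}\mid\psi\in S_X\}$, by the Reduction Theorem~\ref{RT} together with the fact that $\psi_\Gamma$ forces all arguments into $U_\Gamma(\MB,\bar p)$. Let $\sim_{\F(X)}$ be the pull-back of $\sim_X$ along $\mu_\Gamma$; it is induced by $(E_X)_{\Gamma,\bar p,\gamma}$. Define $\F(X/{\sim_X})=\F(X)/{\sim_{\F(X)}}$. Then $\mu_\Gamma$ induces a canonical bijection $\hat\mu_X\colon\F(X)/{\sim_{\F(X)}}\to X/{\sim_X}$. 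On morphisms, send a formula map $F$ induced by $\varphi_{XY}$ to $\F(F)=\hat\mu_Y^{-1}\circ F\circ\hat\mu_X$, which is induced by $(\varphi_{XY})_{\Gamma,\bar p,\gamma}$.

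The steps, in order, are as follows.

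First, check that $(E_X)_{\Gamma,\bar p,\gamma}$ satisfies \ref{eq1}--\ref{eq2} of Definition~\ref{def:formula_eq}. Both conditions transfer through Theorem~\ref{RT}: satisfaction of a translated formula at tuples $\bar b_i$ is equivalent to satisfaction of the original at $\mu_\Gamma(\bar b_i)$. Closure of $\F(X)$ then follows from closure of $X$ under $E_X$.

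Second, check conditions \ref{eq:FXY1}--\ref{eq:FXY2} of Definition~\ref{def:mor} for $(\varphi_{XY})_{\Gamma,\bar p,\gamma}$ and the map $\F(F)$ in the same way. Lemma~\ref{fm} can serve as a checklist here.

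Third, verify functoriality. Since $\F(F)=\hat\mu_Y^{-1}F\hat\mu_X$ as a set map, we get $\F(\id)=\id$ and $\F(G\circ F)=\F(G)\circ\F(F)$ immediately; the question of which formula induces the map is irrelevant, because morphisms are maps. Morphisms out of $\emptyset$ go to the unique morphism.

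Fourth, show that $\F$ is an embedding. It is faithful because the $\hat\mu$ are bijections. It is injective on objects because $X=\mu_\Gamma(\F(X))$ and $\sim_X$ is recovered as the image of $\sim_{\F(X)}$; the subsets $\F(X)\subseteq B^{mn}$ also remember $m$ via $mn$ and the fixed $n$.

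Finally, handle the cardinal restrictions. The parameters of the translated system are $\bar p$ together with the coordinates of $\gamma(a)$ for $a\in P_X$, so there are at most $|P_X|\cdot n+|\bar p|$ of them. This is still $<\alpha$ when $\alpha$ is infinite and $|P_X|<\alpha$. The number of formulas is unchanged, so it stays $<\beta$. Hence $\PLS_{\alpha,\beta}(\MA)$ is sent into $\PLS_{\alpha,\beta}(\MB)$, and with $\alpha=\beta=\omega$ the restriction to $\PDS$ follows.

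The main obstacle is the precise bookkeeping in the second step. Conditions \ref{eq:FXY2} and \ref{eq2} quantify over arbitrary tuples of $B$ outside $U_\Gamma(\MB,\bar p)$, and the argument needs the converse half of Theorem~\ref{RT}: if a translation holds at some tuples, those tuples lie in $U_\Gamma(\MB,\bar p)$. This is exactly the point where normal forms matter. I would also double-check the case where translated formulas contain the parameters $\gamma(a)$, using Corollary~\ref{cor:gamma} to replace $\gamma(a)$ by any $\sim_\Gamma$\=/equivalent tuple.
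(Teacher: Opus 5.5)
Your proposal is correct and builds the same functor as the paper. The identity $\mu_\Gamma^{-1}(X)=\bar X=\V_\MB(S_{\Gamma,\bar p,\gamma})$ is Fact~\ref{fact1}, the pulled-back equivalence induced by $(E_X)_{\Gamma,\bar p,\gamma}$ is Fact~\ref{fact1.5}, and $\F(F)=\tilde\mu_{\Gamma,Y/{\sim_Y}}^{-1}\circ F\circ\tilde\mu_{\Gamma,X/{\sim_X}}$ is the commutative diagram~\eqref{eq:F_mu}; like you, the paper obtains all of these from the Reduction Theorem. The only difference is organizational: defining objects and morphisms semantically through $\mu_\Gamma$ makes independence from the choice of $S_X,E_X,\varphi_{XY}$, functoriality and faithfulness immediate, whereas the paper states these as the syntactic conditions \ref{F1}--\ref{F7}, \ref{E1}--\ref{E3} so it can reuse them later for extended codes with no underlying interpretation.
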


It should be noted that embedding in category theory is a faithful (injective on hom-sets) functor, which is also injective on objects.
Here and below, in similar circumstances, we denote the restrictions of a functor $\F$ by the same letter $\F$.

We start with some preliminary facts that are consequences of the existence of an interpretation $(\Gamma,\bar p,\mu_\Gamma)\colon\MA\rightsquigarrow\MB$. Denote by $n$ the dimension $\dim \Gamma$. Every time together with a coordinate map $\mu_\Gamma$ we will consider a map $\gamma\colon A\to U_\Gamma(\MB,\bar p)$, such that $\mu_\Gamma\circ\gamma=\id_A$ (see Subsection~\ref{subsec:ext_code}). Although there are several such functions $\gamma$, we will use this notation, since in the presence of an interpretation $(\Gamma,\bar p,\mu_\Gamma)\colon\MA\rightsquigarrow\MB$ the difference between them does not matter due to Corollary~\ref{cor:gamma}. 

We will construct a functor $\F=\F_{\Gamma,\bar p,\mu_\Gamma}=\F_{\Gamma,\bar p,\gamma}$ step-by-step and in parallel with this, note those of its properties (\ref{F1}--\ref{F7} below), by which it would be possible to construct it, even if no interpretation $\MA\rightsquigarrow\MB$ were given, but only an extended code $(\Gamma,\bar p,\gamma)\colon L(\MA)\cup A\to L(\MB)\cup B$, and additionally those properties (\ref{E1}--\ref{E3} below), which make $\F$ an embedding. All the time we will refer to the $(\Gamma,\bar p,\gamma)$\=/translation of formulas (see Subsection~\ref{subsec:ext_code}) and the Reduction Theorem~\ref{RT}.

{\bf Functor $\F$ on logical sets.}
Take any formula $\psi(x_1,\ldots,x_m)$ in the language $L(\MA)\cup A$. By $\psi_{\Gamma,\bar p,\gamma}(\bar x_1,\ldots,\bar x_m)$ we denote the $(\Gamma,\bar p,\gamma)$\=/translation of $\psi$. Here $|\bar x_i|=n$. For any set $S\subseteq {\bf F}_{L(\MA)\cup A}(x_1,\ldots,x_m)$ we put
$$
S_{\Gamma,\bar p,\gamma}=\{\psi_{\Gamma,\bar p,\gamma}\mid \psi\in S\}.
$$
So, $S_{\Gamma,\bar p,\gamma}\subseteq {\bf F}_{L(\MB)\cup B}(\bar x_1,\ldots,\bar x_m)$. We will use notations $X=\V_\MA(S)$ and $\bar X=\V_\MB(S_{\Gamma,\bar p,\gamma})$ for corresponding logical sets over $\MA$ and $\MB$. Note that $X\subseteq A^m$ and $\bar X\subseteq U_\Gamma^m(\MB,\bar p)$. 

\begin{remark}\label{remark:a,b}
It is clear, that if $X$ is an $(\alpha,\beta)$\=/logical set, then $\bar X$ is an $(\alpha,\beta)$\=/logical set too; in particular, if $X$ is a definable set, then $\bar X$ is a definable set too.   
\end{remark}

The following fact is a direct consequence of the Reduction Theorem~\ref{RT}.

\begin{fact}\label{fact1}
Let $(\Gamma,\bar p,\mu_\Gamma)\colon\MA\rightsquigarrow\MB$ be an interpretation. Then, in the notations and assumptions above, 
one has $\mu_\Gamma(\bar X)=X$ and $\mu_\Gamma^{-1}(X)=\bar X$.
In particular,
\begin{enumerate}[label=(F\arabic*)]
    \item\label{F1} for any system $S\subseteq {\bf F}_{L(\MA)\cup A}(x_1,\ldots,x_m)$ one has $\V_\MA(S)=\emptyset$ if and only if $\V_\MB(S_{\Gamma,\bar p,\gamma})=\emptyset$;
\item\label{F2} for any systems $S,S^\prime\subseteq {\bf F}_{L(\MA)\cup A}(x_1,\ldots,x_m)$ if $\V_\MA(S)=\V_\MA(S^\prime)$, then $\V_\MB(S_{\Gamma,\bar p,\gamma})=\V_\MB(S^\prime_{\Gamma,\bar p,\gamma})$.
\end{enumerate}
The inverse statement to~\ref{F2}  also holds, namely,
\begin{enumerate}[label=(E\arabic*)]
    \item\label{E1}   $\V_\MB(S_{\Gamma,\bar p,\gamma})=\V_\MB(S^\prime_{\Gamma,\bar p,\gamma})$ implies that $\V_\MA(S)=\V_\MA(S^\prime)$.
\end{enumerate}
\end{fact}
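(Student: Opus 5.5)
The plan is to derive everything from the single identity $\mu_\Gamma^{-1}(X)=\bar X$, which follows directly from the Reduction Theorem~\ref{RT}. Fix a tuple $(\bar b_1,\ldots,\bar b_m)\in (B^n)^m$.

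First I will show $\bar X\subseteq \mu_\Gamma^{-1}(X)$. Suppose $(\bar b_1,\ldots,\bar b_m)\in\bar X$, so $\MB_B\models \psi_{\Gamma,\bar p,\gamma}(\bar b_1,\ldots,\bar b_m)$ for every $\psi\in S$. By definition this formula is $\psi_\Gamma(\bar b_1,\ldots,\bar b_m,\gamma(a_1),\ldots,\gamma(a_s),\bar p)$, where $a_1,\ldots,a_s$ are the constants from $A$ occurring in $\psi$. Apply Theorem~\ref{RT} to the $L(\MA)$\=/formula $\psi(x_1,\ldots,x_m,z_1,\ldots,z_s)$, taken in normal form. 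Its converse part gives $\bar b_i\in U_\Gamma(\MB,\bar p)$ and $\MA\models\psi(\mu_\Gamma(\bar b_1),\ldots,\mu_\Gamma(\bar b_m),a_1,\ldots,a_s)$, using $\mu_\Gamma(\gamma(a_j))=a_j$. Since this holds for every $\psi\in S$, the point $(\mu_\Gamma(\bar b_1),\ldots,\mu_\Gamma(\bar b_m))$ lies in $X$.

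Next I will show $\mu_\Gamma^{-1}(X)\subseteq\bar X$. If each $\bar b_i\in U_\Gamma(\MB,\bar p)$ and $(\mu_\Gamma(\bar b_1),\ldots,\mu_\Gamma(\bar b_m))\in X$, the direct part of Theorem~\ref{RT} applies with preimages $\bar b_i$ of $\mu_\Gamma(\bar b_i)$ and $\gamma(a_j)$ of $a_j$. It gives $\MB\models\psi_{\Gamma,\bar p,\gamma}(\bar b_1,\ldots,\bar b_m)$ for every $\psi\in S$. Hence $\bar X=\mu_\Gamma^{-1}(X)$.

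Finally, $\mu_\Gamma\colon U_\Gamma(\MB,\bar p)\to A$ is surjective, so $\mu_\Gamma(\mu_\Gamma^{-1}(X))=X$, i.e.\ $\mu_\Gamma(\bar X)=X$.

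The three listed properties now follow at once.
\begin{enumerate}
\item For~\ref{F1}: $X=\emptyset$ if and only if its preimage under the surjection $\mu_\Gamma$ is empty.
\item For~\ref{F2}: equal sets have equal preimages.
\item For~\ref{E1}: equal preimages have equal images, by $\mu_\Gamma(\bar X)=X$.
\end{enumerate}

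The only delicate point is that translations are defined on normal forms, so $S_{\Gamma,\bar p,\gamma}$ depends a priori on the chosen normal forms. I will fix one normal form for each $\psi$ and note that the argument above works for any such choice. In particular $\bar X$ is independent of the choice, which is consistent with Corollary~\ref{cor:gamma}.
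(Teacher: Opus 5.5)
Your argument is correct and is the same approach as the paper, which records this Fact as a direct consequence of the Reduction Theorem without further proof. Your two inclusions, obtained by substituting $\gamma(a_j)$ for the parameter variables and using $\mu_\Gamma\circ\gamma=\id_A$, together with surjectivity of $\mu_\Gamma$ for $\mu_\Gamma(\bar X)=X$, (F1) and (E1), spell out that citation faithfully. One tacit point is shared with the paper's own statement: the converse half of the Reduction Theorem gives $\bar b_i\in U_\Gamma(\MB,\bar p)$ only when $S\neq\emptyset$. For $S=\emptyset$ one gets $\bar X=B^{nm}$, and (F2) fails against $S'=\{E_m\}$ whenever $U_\Gamma(\MB,\bar p)\neq B^n$, so systems should be taken non-empty, for instance by adjoining $E_m$.
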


\begin{corollary}\label{cor:subset0}
In the notations and assumptions above, let  $S,S^\prime\subseteq {\bf F}_{L(\MA)\cup A}(x_1,\ldots,x_m)$. Then $\V_\MA(S)\subseteq \V_\MA(S^\prime)$ if and only if $\V_\MB(S_{\Gamma,\bar p,\gamma})\subseteq\V_\MB(S^\prime_{\Gamma,\bar p,\gamma})$.
\end{corollary}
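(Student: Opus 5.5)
The plan is to reduce the claim to the two equalities of Fact~\ref{fact1}, namely $\mu_\Gamma^{-1}(X)=\bar X$ and $\mu_\Gamma(\bar X)=X$ for $X=\V_\MA(S)$, $\bar X=\V_\MB(S_{\Gamma,\bar p,\gamma})$, together with the corresponding equalities for $X^\prime=\V_\MA(S^\prime)$ and $\bar X^\prime=\V_\MB(S^\prime_{\Gamma,\bar p,\gamma})$. Here $\mu_\Gamma\colon U^m_\Gamma(\MB,\bar p)\to A^m$ is applied coordinatewise, as agreed in Section~\ref{sec:prelim}. Both $\bar X$ and $\bar X^\prime$ lie in $U_\Gamma^m(\MB,\bar p)$, which is exactly the domain of $\mu_\Gamma$.

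For the forward direction, assume $X\subseteq X^\prime$. Full preimages are monotone, so Fact~\ref{fact1} gives
\[
\bar X=\mu_\Gamma^{-1}(X)\subseteq\mu_\Gamma^{-1}(X^\prime)=\bar X^\prime .
\]

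For the converse, assume $\bar X\subseteq\bar X^\prime$. Images are monotone, so again by Fact~\ref{fact1}
\[
X=\mu_\Gamma(\bar X)\subseteq\mu_\Gamma(\bar X^\prime)=X^\prime .
\]
Alternatively, one can take the preimage route: $\mu_\Gamma$ is surjective onto $A^m$, and $\mu_\Gamma(\mu_\Gamma^{-1}(Y))=Y$ for any $Y\subseteq A^m$, which gives the same conclusion.

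The only point that needs care is making sure Fact~\ref{fact1} applies to arbitrary, possibly infinite, systems with parameters. This is the case because each formula of $S$ is translated separately, and the Reduction Theorem~\ref{RT} is applied pointwise to each $\psi\in S$. The parameters from $A$ are handled through $\gamma$, and Corollary~\ref{cor:gamma} shows that the choice of $\gamma$ does not matter. There is therefore no real obstacle: the statement is a formal consequence of $\mu_\Gamma$ inducing a bijective correspondence between logical subsets of $A^m$ and their full preimages.
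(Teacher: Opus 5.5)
Your argument is correct and is the route the paper intends. The paper states this corollary with no separate proof, as an immediate consequence of Fact~\ref{fact1}: $\bar X=\mu_\Gamma^{-1}(X)$ gives the forward direction by monotonicity of preimages, and $X=\mu_\Gamma(\bar X)$ gives the converse by monotonicity of images (equivalently, by surjectivity of $\mu_\Gamma$).

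One caveat, which Fact~\ref{fact1} in the paper shares: when $S=\emptyset$ one has $\V_\MB(\emptyset)=B^{nm}$, which need not lie in $U^m_\Gamma(\MB,\bar p)$. So your claim that $\bar X$ always lies in $U^m_\Gamma(\MB,\bar p)$ tacitly assumes $S\neq\emptyset$, or that the conjuncts $U_\Gamma(\bar x_i,\bar p)$ are adjoined to the translated system.
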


\begin{remark}\label{cor:wedge0}
As a corollary of~\ref{F2}, for any formulas $\phi_1,\ldots,\phi_d$ in the language $L(\MA)\cup A$ we have 
\begin{equation*}
 \V_\MB((\bigwedge\limits_{j=1}^d \phi_j)_{\Gamma,\bar p,\gamma})=\V_\MB(\bigwedge\limits_{j=1}^d \phi_{j,\Gamma,\bar p,\gamma}).
\end{equation*}
\end{remark}

{\bf Functor $\F$ on projective logical sets.} 
Suppose now that $X$ is a non-empty set and $\sim_{\bar X}$ is a formula equivalence on $X$, induced by a formula $E_X\in {\bf F}_{L(\MA)\cup A}(\bar x,\bar x^\prime)$, $|\bar x|=|\bar x^\prime|=m$. Then we put
$$
E_{\bar X}=(E_X)_{\Gamma, \bar p, \gamma}.
$$

\begin{fact}\label{fact1.5}
Let $(\Gamma,\bar p,\mu_\Gamma)\colon\MA\rightsquigarrow\MB$ be an interpretation. Then, in the notations and assumptions above, 
for any $\bar {\bar b}, \bar {\bar b}^\prime\in U^m_\Gamma(\MB,\bar p)$ one has
\begin{equation}\label{eq:sim}
\MB_B\models E_{\bar X}(\bar {\bar b},\bar {\bar b}^\prime) \iff \MA_A\models E_X(\mu_\Gamma(\bar {\bar b }),\mu_\Gamma(\bar {\bar b}^\prime)).
\end{equation}
Thus, we obtain that
\begin{enumerate}[label=(F\arabic*)]\addtocounter{enumi}{2}
    \item\label{F3} if a formula $E_X\in {\bf F}_{L(\MA)\cup A}(\bar x,\bar x^\prime)$ induces a formula equivalence on $X=\V_A(S)$, then the formula $E_{\bar X}\in {\bf F}_{L(\MB)\cup B}(\bar {\bar x},\bar {\bar x}^\prime)$ induces a formula equivalence on $\bar X=\V_\MB(S_{\Gamma,\bar p,\gamma})$;
    \item\label{F4} if formulas $E_X,E^\prime_X\in {\bf F}_{L(\MA)\cup A}(\bar x,\bar x^\prime)$ induce one and the same formula equivalences  on $X=\V_A(S)$, then formulas $E_{\bar X}=(E_X)_{\Gamma, \bar p, \gamma}$ and $E^\prime_{\bar X}=(E^\prime_X)_{\Gamma, \bar p, \gamma}$ induce one and the same formula equivalences  on $\bar X=\V_\MB(S_{\Gamma,\bar p,\gamma})$.
\end{enumerate}
The inverse statements also hold, namely, if $E_{\bar X}$ induces a formula equivalence on $\bar X$ then $E_X$ induces a formula equivalence on $X$; and 
\begin{enumerate}[label=(E\arabic*)]\addtocounter{enumi}{1}
    \item\label{E2} if $E_{\bar X}$ and $E^\prime_{\bar X}$ induce one and the same formula equivalences on $\bar X$ then $E_X$ and $E_X^\prime$ induce one and the same formula equivalences on $X$.
\end{enumerate}
\end{fact}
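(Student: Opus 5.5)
The statement to prove is Fact~\ref{fact1.5}. The central claim is the equivalence~\eqref{eq:sim}, and the items \ref{F3}, \ref{F4}, \ref{E2} and the converse of \ref{F3} will follow from it by transport along the surjection $\mu_\Gamma\colon U_\Gamma^m(\MB,\bar p)\to A^m$.

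For \eqref{eq:sim}, write $E_X(\bar x,\bar x^\prime)$ as $E(\bar x,\bar x^\prime,a_1,\ldots,a_s)$, where $E$ is an $L(\MA)$\=/formula in normal form and $a_i\in A$. By definition,
$$E_{\bar X}(\bar{\bar x},\bar{\bar x}^\prime)=E_\Gamma(\bar{\bar x},\bar{\bar x}^\prime,\gamma(a_1),\ldots,\gamma(a_s),\bar p).$$
Since $\mu_\Gamma(\gamma(a_i))=a_i$, the Reduction Theorem~\ref{RT} applies to the tuples $\bar{\bar b},\bar{\bar b}^\prime\in U_\Gamma^m(\MB,\bar p)$ and $\gamma(a_i)$. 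The forward direction of Theorem~\ref{RT} gives ``$\Leftarrow$'' in \eqref{eq:sim}, and the converse direction gives ``$\Rightarrow$''. Independence from the choice of $\gamma$ is already guaranteed by Corollary~\ref{cor:gamma}.

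Next I derive \ref{F3}. Recall from Fact~\ref{fact1} that $\bar X=\mu_\Gamma^{-1}(X)$. I check conditions \ref{eq1}--\ref{eq2} of Definition~\ref{def:formula_eq}. Define $\bar{\bar b}\sim_{\bar X}\bar{\bar b}^\prime$ on $\bar X$ to mean $\mu_\Gamma(\bar{\bar b})\sim_X\mu_\Gamma(\bar{\bar b}^\prime)$. This is the pullback of an equivalence relation along a map, so it is an equivalence relation, and by \eqref{eq:sim} it is induced by $E_{\bar X}$. For closedness, suppose $\bar{\bar b}\in\bar X$ and $\MB_B\models E_{\bar X}(\bar{\bar b},\bar{\bar b}^\prime)\vee E_{\bar X}(\bar{\bar b}^\prime,\bar{\bar b})$ with $\bar{\bar b}^\prime$ an arbitrary tuple in $B^{nm}$. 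This is the one place needing care, since \eqref{eq:sim} was stated only for tuples in $U^m_\Gamma$. However, the converse part of Theorem~\ref{RT} forces $\bar{\bar b}^\prime\in U^m_\Gamma(\MB,\bar p)$. Then \eqref{eq:sim} gives $E_X(\mu_\Gamma\bar{\bar b},\mu_\Gamma\bar{\bar b}^\prime)$ or its symmetric version. Closedness of $X$ under $E_X$ yields $\mu_\Gamma(\bar{\bar b}^\prime)\in X$, hence $\bar{\bar b}^\prime\in\bar X$.

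For \ref{F4}, if $E_X$ and $E^\prime_X$ agree on $X$, then by \eqref{eq:sim} $E_{\bar X}$ and $E^\prime_{\bar X}$ agree on $\bar X=\mu_\Gamma^{-1}(X)$.

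The inverse statements use the surjectivity of $\mu_\Gamma$ onto $A$ and the identity $\mu_\Gamma(\bar X)=X$. For the converse of \ref{F3}: given $\bar a,\bar a^\prime\in A^m$, lift them to $\gamma(\bar a),\gamma(\bar a^\prime)$. Reflexivity, symmetry, transitivity and closedness of $E_X$ on $X$ then follow from the corresponding properties of $E_{\bar X}$ on $\bar X$ via \eqref{eq:sim}, because $\bar a\in X$ if and only if $\gamma(\bar a)\in\bar X$. For \ref{E2}, agreement of $E_{\bar X}$ and $E^\prime_{\bar X}$ on the lifts $\gamma(\bar a),\gamma(\bar a^\prime)$ gives agreement of $E_X$ and $E^\prime_X$ on $\bar a,\bar a^\prime$, again by \eqref{eq:sim}.

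The only real obstacle is the closedness condition~\ref{eq2} for tuples outside $U_\Gamma^m(\MB,\bar p)$. It is handled by the ``conversely'' clause of Theorem~\ref{RT}, which places any satisfying tuple inside $U_\Gamma^m(\MB,\bar p)$. Everything else is routine.
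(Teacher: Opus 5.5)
Your proposal is correct and follows essentially the same route as the paper. You derive \eqref{eq:sim} from the Reduction Theorem~\ref{RT}, and you get closedness of $\bar X$ under $E_{\bar X}$ by first forcing the partner tuple into $U^m_\Gamma(\MB,\bar p)$ and then using closedness of $X$ under $E_X$; \ref{F4} and the inverse statements then follow by transport along $\mu_\Gamma$. The differences are only cosmetic: the paper reads off membership in $U^m_\Gamma(\MB,\bar p)$ directly from the shape of the $\Gamma$\=/translation rather than from the converse clause of Theorem~\ref{RT}, and it leaves the inverse statements to ``similar arguments'' where you spell them out via the lifts $\gamma(\bar a)$.
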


\begin{proof}
The statement~\eqref{eq:sim} follows from the Reduction Theorem~\ref{RT}. To prove~\ref{F3}, suppose that $E_X$ induces a formula equivalence on $X$. Then the relation defined by the formula $E_{\bar X}$ on $\bar X$ is an equivalence relation on $\bar X$. By the definition of $\Gamma$\=/translation for any tuples $\bar {\bar b}, \bar {\bar b }^\prime\in B^{mn}$ the condition $\MB_B\models E_{\bar X}(\bar {\bar b},\bar {\bar b}^\prime)$ implies that $\bar {\bar b}, \bar {\bar b}^\prime\in U^m_\Gamma(\MB,\bar p)$. Therefore, if at least one of tuples $\bar {\bar b}$ and $\bar {\bar b}^\prime$ belongs to $\bar X$, say $\bar {\bar b}$, and $\MB_B\models E_{\bar X}(\bar {\bar b},\bar {\bar b}^\prime)$, then $\MA_A\models E_X(\mu_\Gamma(\bar {\bar b}),\mu_\Gamma(\bar {\bar b}^\prime))$ and $\mu_\Gamma(\bar {\bar b})\in X$. Since $X$ is closed under $E_X$, one has $\mu_\Gamma(\bar {\bar b}^\prime)\in X$, i.\,e., $\bar {\bar b}^\prime\in \bar X$, so $\bar X$ is closed under $E_{\bar X}$. Thus, $E_{\bar X}$ induces a formula equivalence on $\bar X$. To prove~\ref{F4}, suppose that formulas $E_X$ and $E^\prime_X$ induce the same formula equivalences on $X$. Then for all $\bar {\bar b},\bar {\bar b}^\prime\in \bar X$ one has $\MB_B\models E_{\bar X}(\bar {\bar b},\bar {\bar b}^\prime)$ if and only if $\MA_A\models E_X(\mu_\Gamma(\bar {\bar b}),\mu_\Gamma(\bar {\bar b}^\prime))$ if and only if $\MA_A\models E^\prime_X(\mu_\Gamma(\bar {\bar b}),\mu_\Gamma(\bar {\bar b}^\prime))$ if and only if $\MB_B\models E^\prime_{\bar X}(\bar {\bar b},\bar {\bar b}^\prime)$, so $E_{\bar X}$ and $E^\prime_{\bar X}$ induce one and the same formula equivalences on $\bar X$. The inverse statements can be substantiated using similar arguments. 
\end{proof}

\begin{corollary}\label{cor:nat_sur}
In the notations and assumptions above, let $E_X,E^\prime_X\in {\bf F}_{L(\MA)\cup A}(\bar x,\bar x^\prime)$ be formulas, which induce formula equivalences on $X$. Then $E_X\subseteq E^\prime_X$ if and only if $E_{\bar X}\subseteq E^\prime_{\bar X}$.
\end{corollary}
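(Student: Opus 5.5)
The plan is to reduce the claimed equivalence to the pointwise translation property~\eqref{eq:sim} of Fact~\ref{fact1.5}, combined with the facts $\mu_\Gamma(\bar X)=X$ and $\mu_\Gamma^{-1}(X)=\bar X$ from Fact~\ref{fact1}. Recall that $E_X\subseteq E^\prime_X$ means: for all $\bar a,\bar a^\prime\in X$, $\MA_A\models E_X(\bar a,\bar a^\prime)$ implies $\MA_A\models E^\prime_X(\bar a,\bar a^\prime)$. The analogous meaning holds for $E_{\bar X}\subseteq E^\prime_{\bar X}$ on $\bar X$. By~\ref{F3}, both $E_{\bar X}$ and $E^\prime_{\bar X}$ induce formula equivalences on $\bar X$, so the relation ``finer'' on $\bar X$ makes sense.

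For the direction ($\Rightarrow$), I would take $\bar{\bar b},\bar{\bar b}^\prime\in\bar X$ with $\MB_B\models E_{\bar X}(\bar{\bar b},\bar{\bar b}^\prime)$. Since $\bar X\subseteq U^m_\Gamma(\MB,\bar p)$, equivalence~\eqref{eq:sim} gives $\MA_A\models E_X(\mu_\Gamma(\bar{\bar b}),\mu_\Gamma(\bar{\bar b}^\prime))$. Here $\mu_\Gamma(\bar{\bar b}),\mu_\Gamma(\bar{\bar b}^\prime)\in X$, because $\mu_\Gamma(\bar X)=X$. The hypothesis $E_X\subseteq E^\prime_X$ then yields $\MA_A\models E^\prime_X(\mu_\Gamma(\bar{\bar b}),\mu_\Gamma(\bar{\bar b}^\prime))$. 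Applying~\eqref{eq:sim} to $E^\prime_X$ in the reverse direction gives $\MB_B\models E^\prime_{\bar X}(\bar{\bar b},\bar{\bar b}^\prime)$.

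For the direction ($\Leftarrow$), I would take $\bar a,\bar a^\prime\in X$ with $\MA_A\models E_X(\bar a,\bar a^\prime)$ and lift them via $\gamma$ to $\gamma(\bar a),\gamma(\bar a^\prime)$. Because $\mu_\Gamma\circ\gamma=\id_A$ and $\mu_\Gamma^{-1}(X)=\bar X$, these lifts lie in $\bar X$. Equivalence~\eqref{eq:sim} then gives $\MB_B\models E_{\bar X}(\gamma(\bar a),\gamma(\bar a^\prime))$. The hypothesis gives $\MB_B\models E^\prime_{\bar X}(\gamma(\bar a),\gamma(\bar a^\prime))$, and~\eqref{eq:sim} brings this back to $\MA_A\models E^\prime_X(\bar a,\bar a^\prime)$.

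There is no real obstacle in this argument. The only point needing care is to keep all tuples inside $U^m_\Gamma(\MB,\bar p)$, and more specifically inside $\bar X$, so that~\eqref{eq:sim} applies and the relation ``finer'' is tested only on the sets where it is defined. This is guaranteed by Fact~\ref{fact1} together with the choice of $\gamma$ as a section of $\mu_\Gamma$.
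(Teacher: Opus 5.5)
Your proof is correct and is essentially the argument the paper intends. The paper states this corollary without proof, as an immediate consequence of the equivalence~\eqref{eq:sim} in Fact~\ref{fact1.5} together with $\mu_\Gamma^{-1}(X)=\bar X$ from Fact~\ref{fact1}, and that is exactly how you transfer the ``finer'' relation in both directions (lifting along $\gamma$, or along any preimage under the surjective $\mu_\Gamma$, for the converse).
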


Let us denote by $\sim_{\bar X}$ the formula equivalence on $\bar X$ induced by $E_{\bar X}$. It is easy too see that if $\sim_X$ is homogeneous, then $\sim_{\bar X}$ is homogeneous too (if $E_X$ defines an equivalence relation on $A^m$, then $E_{\bar{X}}$ defines an equivalence relation on $U^m_\Gamma(\MB,\bar p)$, therefore, by Fact~\ref{rem2} the relation $\sim_{\bar X}$ is homogeneous). If the code $\Gamma$ and the formula equivalence $\sim_X$ are absolute, then the formula equivalence $\sim_{\bar X}$ is absolute too. 

\begin{corollary}\label{cor:subset}
    If $Z/{\sim_Z}$ is non-empty projective logical set over $\MA$, such that $Z/{\sim_Z}\subseteq X/{\sim_X}$, then $\bar Z/{\sim_{\bar Z}}\subseteq \bar X/{\sim_{\bar X}}$.
\end{corollary}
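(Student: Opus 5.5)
We need to show: if $Z/{\sim_Z}\subseteq X/{\sim_X}$, i.e. $Z\subseteq X$, $Z$ is closed under $\sim_X$, and $\sim_Z$ is the restriction of $\sim_X$ to $Z$, then $\bar Z\subseteq \bar X$, $\bar Z$ is closed under $\sim_{\bar X}$, and $\sim_{\bar Z}$ is the restriction of $\sim_{\bar X}$ to $\bar Z$. The plan is to transport each of these three conditions through the coordinate map $\mu_\Gamma$, using Fact~\ref{fact1} (namely $\mu_\Gamma^{-1}(X)=\bar X$ and $\mu_\Gamma^{-1}(Z)=\bar Z$) together with the equivalence~\eqref{eq:sim} from Fact~\ref{fact1.5}.

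First, the inclusion. Since $Z\subseteq X$, Corollary~\ref{cor:subset0} gives $\bar Z\subseteq\bar X$; alternatively, $\bar Z=\mu_\Gamma^{-1}(Z)\subseteq\mu_\Gamma^{-1}(X)=\bar X$. In particular $\bar Z$ is non-empty, by~\ref{F1}.

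Second, closure. Take $\bar{\bar b}\in\bar Z$ and $\bar{\bar b}'\in\bar X$ with $\bar{\bar b}\sim_{\bar X}\bar{\bar b}'$, i.e. $\MB_B\models E_{\bar X}(\bar{\bar b},\bar{\bar b}')$. Both tuples lie in $U^m_\Gamma(\MB,\bar p)$, so~\eqref{eq:sim} yields $\mu_\Gamma(\bar{\bar b})\sim_X\mu_\Gamma(\bar{\bar b}')$. Because $\mu_\Gamma(\bar{\bar b})\in Z$ and $Z$ is closed under $\sim_X$, we get $\mu_\Gamma(\bar{\bar b}')\in Z$, hence $\bar{\bar b}'\in\mu_\Gamma^{-1}(Z)=\bar Z$.

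Third, the restriction. For $\bar{\bar b},\bar{\bar b}'\in\bar Z$, applying~\eqref{eq:sim} once to $E_{\bar Z}$ and once to $E_{\bar X}$ gives
$\bar{\bar b}\sim_{\bar Z}\bar{\bar b}'$ iff $\mu_\Gamma(\bar{\bar b})\sim_Z\mu_\Gamma(\bar{\bar b}')$ iff $\mu_\Gamma(\bar{\bar b})\sim_X\mu_\Gamma(\bar{\bar b}')$ iff $\bar{\bar b}\sim_{\bar X}\bar{\bar b}'$. The middle step holds because $\sim_Z$ is the restriction of $\sim_X$ and $\mu_\Gamma(\bar{\bar b}),\mu_\Gamma(\bar{\bar b}')\in Z$.

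The only point needing care is that~\eqref{eq:sim} is stated for the fixed inducing formulas $E_X$ and $E_Z$. This causes no difficulty: \eqref{eq:sim} holds for any inducing formula, and~\ref{F4} guarantees that $\sim_{\bar X}$ and $\sim_{\bar Z}$ do not depend on which inducing formulas are chosen. Beyond this, the argument is routine.
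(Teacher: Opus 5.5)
Your proposal is correct and follows the paper's own route: the paper proves this in one line by citing Fact~\ref{fact1} ($\mu_\Gamma^{-1}(X)=\bar X$) and Fact~\ref{fact1.5} (the equivalence \eqref{eq:sim}). You simply unpack that citation by checking inclusion, closure under $\sim_{\bar X}$, and the restriction property.
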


\begin{proof}
    It follows from Facts~\ref{fact1} and~\ref{fact1.5}.
\end{proof}

\begin{notation}
If a formula equivalence $\sim_X$ on $X$ is the identity relation $=$, then we will denote $\sim_{\bar X}$ by $\sim_\Gamma$, since due to procedure of $\Gamma$\=/translation in this case one has
\begin{equation}\label{eq:rel_Gamma}
(\bar b_1,\ldots,\bar b_m)\sim_\Gamma (\bar b_1^\prime,\ldots, \bar b^\prime_m) \iff \bar b_1\sim_\Gamma \bar b_1^\prime\:\wedge\:\ldots\:\wedge\: \bar b_m\sim_\Gamma \bar b_m^\prime
\end{equation}
for any tuples $\bar{\bar b}=(\bar b_1,\ldots,\bar b_m),\bar{\bar b}^\prime=(\bar b_1^\prime,\ldots,\bar b_m^\prime)\in \bar X$. 
\end{notation}

In particular, the relation $\sim_\Gamma$ on $\bar X$ is finer than any other formula equivalence $\sim_{\bar X}$, induced by a formula of the type $(E_X)_{\Gamma, \bar p, \gamma}$, where $E_X$ is a formula, which induces a formula equivalence on $X$.

\begin{remark}
If the code $\Gamma$ is injective and $\sim_X$ is the identity relation on $X$, then $\sim_{\bar X}$ is the identity relation on $\bar X$, i.\,e., if $X/{\sim_X}$ is a logical set, then $\bar X/{\sim_{\bar X}}$ is a logical set too.
\end{remark}

\begin{remark}
If the code $\Gamma$ and the projective logical set $X/{\sim_X}$ are absolute, then the projective logical set $\bar X/{\sim_{\bar X}}$ is absolute too.
\end{remark}

Note that the statement~\eqref{eq:sim} can be rewritten in the following form: for any tuples
$\bar b, \bar b^\prime\in U^m_\Gamma(\MB,\bar p)$ one has 
$$
\mu_\Gamma(\bar b)\sim_X \mu_\Gamma (\bar b^\prime)\iff \bar b \sim_{\bar X} \bar b^\prime,
$$
therefore, the map 
$$
\tilde\mu_\Gamma\colon \bar X/{\sim_{\bar X}} \to X/{\sim_X}, \quad \tilde\mu_\Gamma(\bar b/{\sim_{\bar X}})=\mu_\Gamma(\bar b)/{\sim_X},\quad \bar b\in \bar X,
$$
is well-defined. Actually the map $\tilde\mu_\Gamma$ depends on the set $X/{\sim_X}$, i.\,e., for each $X/{\sim_X}$ the coordinate map $\mu_\Gamma$ gives rise to the specific map $\tilde\mu_{\Gamma,X/{\sim_X}}$. But we will omit the lower index $_{X/\sim_X}$, unless it leads to a misunderstanding.

\begin{corollary}\label{cor3}
In the notation above, the map $\tilde\mu_{\Gamma,X/{\sim_X}}\colon \bar X/{\sim_{\bar X}} \to X/{\sim_X}$ is bijection.
\end{corollary}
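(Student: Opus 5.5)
We need to show that $\tilde\mu_\Gamma\colon \bar X/{\sim_{\bar X}}\to X/{\sim_X}$ is well defined, injective and surjective. All three properties come directly from Fact~\ref{fact1} and the equivalence~\eqref{eq:sim} of Fact~\ref{fact1.5}. Throughout, $\bar b$ denotes an element of $\bar X\subseteq U^m_\Gamma(\MB,\bar p)$, and $\mu_\Gamma$ is applied coordinatewise.

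\emph{Well-definedness.} Let $\bar b,\bar b^\prime\in\bar X$. By Fact~\ref{fact1} we have $\mu_\Gamma^{-1}(X)=\bar X$, so $\mu_\Gamma(\bar b),\mu_\Gamma(\bar b^\prime)\in X$ and the classes $\mu_\Gamma(\bar b)/{\sim_X}$, $\mu_\Gamma(\bar b^\prime)/{\sim_X}$ make sense. Now suppose $\bar b\sim_{\bar X}\bar b^\prime$, that is, $\MB_B\models E_{\bar X}(\bar b,\bar b^\prime)$. Then~\eqref{eq:sim} gives $\MA_A\models E_X(\mu_\Gamma(\bar b),\mu_\Gamma(\bar b^\prime))$. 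Since both tuples lie in $X$, this means $\mu_\Gamma(\bar b)\sim_X\mu_\Gamma(\bar b^\prime)$. Hence the value $\tilde\mu_\Gamma(\bar b/{\sim_{\bar X}})$ does not depend on the chosen representative.

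\emph{Injectivity.} Run the same argument backwards. Suppose $\mu_\Gamma(\bar b)\sim_X\mu_\Gamma(\bar b^\prime)$ for some $\bar b,\bar b^\prime\in\bar X$. Then $\MA_A\models E_X(\mu_\Gamma(\bar b),\mu_\Gamma(\bar b^\prime))$. Since $\bar b,\bar b^\prime\in U^m_\Gamma(\MB,\bar p)$, the equivalence~\eqref{eq:sim} yields $\MB_B\models E_{\bar X}(\bar b,\bar b^\prime)$. As $\bar b,\bar b^\prime\in\bar X$ and $E_{\bar X}$ induces $\sim_{\bar X}$ on $\bar X$ (by~\ref{F3}), we get $\bar b\sim_{\bar X}\bar b^\prime$.

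\emph{Surjectivity.} Take any class $\bar a/{\sim_X}$ with $\bar a=(a_1,\dots,a_m)\in X$. Set $\bar b=(\gamma(a_1),\dots,\gamma(a_m))$; equally, since $\mu_\Gamma$ is onto $A$, any preimage of $\bar a$ would do. Then $\bar b\in U^m_\Gamma(\MB,\bar p)$ and $\mu_\Gamma(\bar b)=\bar a\in X$. By Fact~\ref{fact1}, $\bar b\in\mu_\Gamma^{-1}(X)=\bar X$, and so $\tilde\mu_\Gamma(\bar b/{\sim_{\bar X}})=\bar a/{\sim_X}$.

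The only delicate point is to make sure every tuple involved actually lies in $U^m_\Gamma(\MB,\bar p)$ and in $\bar X$, so that~\eqref{eq:sim} and the definition of the induced equivalence apply. This is exactly what the identity $\mu_\Gamma^{-1}(X)=\bar X$ from Fact~\ref{fact1} provides. Beyond that, the proof is routine.
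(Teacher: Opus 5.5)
Your proof is correct and matches the paper's argument, which the paper leaves implicit. It gets well-definedness and injectivity of $\tilde\mu_\Gamma$ from the equivalence~\eqref{eq:sim}, read on tuples of $\bar X$, and surjectivity from $\mu_\Gamma(\bar X)=X$ and $\mu_\Gamma^{-1}(X)=\bar X$ in Fact~\ref{fact1}; you have just written these steps out explicitly, including the check that all tuples involved lie in $U^m_\Gamma(\MB,\bar p)$ and in $\bar X$.
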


\begin{example}\label{ex:Gamma}
If $X=A^m$, then $\bar X/{\sim_{\bar X}}=U_\Gamma^m(\MB,\bar p)/{\sim_\Gamma}$. In particular,  if $X=A$, i.\,e., $m=1$, then $\bar X/{\sim_{\bar X}}=U_\Gamma(\MB,\bar p)/{\sim_\Gamma}$. In the latter case we also obtain that $\tilde\mu_\Gamma=\bar\mu_\Gamma\colon U_\Gamma(\MB,\bar p)/{\sim_\Gamma}\to A$ is the isomorphism of the interpretation $(\Gamma,\bar p,\mu_\Gamma)\colon\MA\rightsquigarrow\MB$. 
\end{example}

{\bf Functor $\F$ on formula maps.} Now let us consider two non-empty projective logical sets $X/{\sim_X}$ and $Y/{\sim_Y}$ over $\MA$ and a formula map $F_{XY}\colon X/{\sim_X}\, \to Y/{\sim_Y}$, which is induced by a formula $\varphi_{XY}$ in the language $L(\MA)\cup A$. Then we put 
$$
\varphi_{\bar X \bar Y}=(\varphi_{XY})_{\Gamma, \bar p, \gamma}.
$$

\begin{fact}\label{fact14}
Let $(\Gamma,\bar p,\mu_\Gamma)\colon\MA\rightsquigarrow\MB$ be an interpretation. Then, in the notations and assumptions above, 
\begin{enumerate}[label=(F\arabic*)]\addtocounter{enumi}{4}
    \item\label{F5} if a formula $\varphi_{XY}\in {\bf F}_{L(\MA)\cup A}(\bar x,\bar y)$ induces some formula map $F_{XY}\colon X/{\sim_X}\, \to Y/{\sim_Y}$, then the formula $\varphi_{\bar X \bar Y}\in {\bf F}_{L(\MB)\cup B}(\bar {\bar x},\bar {\bar y})$ induces some formula map $F_{\bar X \bar Y}\colon \bar X/{\sim_{\bar X}} \,\to \bar Y/{\sim_{\bar Y}}$, for any non-empty projective logical sets $X/{\sim_X}=\V_\MA(S_X)/{\sim_X}$ and $Y/{\sim_Y}=\V_\MA(S_Y)/{\sim_Y}$.
\end{enumerate}
The inverse statement also holds, namely, if $\varphi_{\bar X \bar Y}$ induces some formula map $F_{\bar X \bar Y}\colon \bar X/{\sim_{\bar X}} \,\to \bar Y/{\sim_{\bar Y}}$ then $\varphi_{XY}$ induces some formula map $F_{XY}\colon X/{\sim_X}\, \to Y/{\sim_Y}$. Furthermore, $F_{XY}$ is full if and only if $F_{\bar X \bar Y}$ is full.
\end{fact}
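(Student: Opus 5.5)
The plan is to transfer each condition of Lemma~\ref{fm} through the Reduction Theorem~\ref{RT}, using the bijections $\tilde\mu_\Gamma$ of Corollary~\ref{cor3}. The key tool is the analogue of~\eqref{eq:sim} for $\varphi_{XY}$. For any $\bar{\bar a}\in B^{mn}$ and $\bar{\bar b}\in B^{sn}$, if $\MB_B\models\varphi_{\bar X\bar Y}(\bar{\bar a},\bar{\bar b})$, then all component tuples lie in $U_\Gamma(\MB,\bar p)$ and
$$
\MB_B\models\varphi_{\bar X\bar Y}(\bar{\bar a},\bar{\bar b})\iff \MA_A\models \varphi_{XY}(\mu_\Gamma(\bar{\bar a}),\mu_\Gamma(\bar{\bar b})).
$$
Conversely, if the components lie in $U_\Gamma$, the right-hand side implies the left. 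This follows from Theorem~\ref{RT} applied to $\varphi_{XY}$ in normal form with its constants $a_i$ replaced by variables and instantiated by $\gamma(a_i)\in\mu_\Gamma^{-1}(a_i)$. Together with Fact~\ref{fact1}, which gives $\bar X=\mu_\Gamma^{-1}(X)$ and $\bar Y=\mu_\Gamma^{-1}(Y)$, and Fact~\ref{fact1.5}, which gives $\bar{\bar a}\sim_{\bar X}\bar{\bar a}'\iff\mu_\Gamma(\bar{\bar a})\sim_X\mu_\Gamma(\bar{\bar a}')$, this reduces every statement about $\varphi_{\bar X\bar Y}$ on $\MB$ to the corresponding statement about $\varphi_{XY}$ on $\MA$.

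For~\ref{F5}, I would verify conditions~\ref{fm1}--\ref{fm3} of Lemma~\ref{fm} for $\varphi_{\bar X\bar Y}$, assuming they hold for $\varphi_{XY}$.
\begin{enumerate}[label=(\roman*)]
\item Take $\bar{\bar a}\sim_{\bar X}\bar{\bar a}'$ in $\bar X$ and $\bar{\bar b}\sim_{\bar Y}\bar{\bar b}'$ in $\bar Y$. Applying $\mu_\Gamma$ gives $\sim_X$- and $\sim_Y$-equivalent tuples in $X$ and $Y$, and the displayed equivalence transports~\ref{fm1} back to $\MB$.
\item If $\MB_B\models\varphi_{\bar X\bar Y}(\bar{\bar a},\bar{\bar b})\wedge\varphi_{\bar X\bar Y}(\bar{\bar a},\bar{\bar b}')$ with $\bar{\bar a}\in\bar X$, then $\bar{\bar b},\bar{\bar b}'$ lie in $U_\Gamma^s$ and $\MA_A$ satisfies $\varphi_{XY}$ at their images. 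By~\ref{fm2} for $\varphi_{XY}$, $\mu_\Gamma(\bar{\bar b}),\mu_\Gamma(\bar{\bar b}')\in Y$ and they are $\sim_Y$-equivalent. Hence $\bar{\bar b},\bar{\bar b}'\in\bar Y$ and $\bar{\bar b}\sim_{\bar Y}\bar{\bar b}'$.
\item Given $\bar{\bar a}\in\bar X$, choose $\bar c\in Y$ with $\MA_A\models\varphi_{XY}(\mu_\Gamma(\bar{\bar a}),\bar c)$ and take $\bar{\bar b}=\gamma(\bar c)\in\bar Y$.
\end{enumerate}
The converse direction runs symmetrically. Starting from tuples in $\MA$, I lift them via $\gamma$ (or any preimage), use surjectivity of $\mu_\Gamma$ onto $A$, and read the conditions back. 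For~\ref{fm2} in the converse, the extra claim that tuples satisfying $\varphi_{XY}$ lie in $Y$ follows from the preimage statement $\bar Y=\mu_\Gamma^{-1}(Y)$.

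For fullness, condition~\ref{eq:FXY3} transfers in the same way. Suppose $\MB_B\models\varphi_{\bar X\bar Y}(\bar{\bar a},\bar{\bar b})$ with $\bar{\bar b}\in\bar Y$. Then $\bar{\bar a}\in U_\Gamma^m$, and fullness of $\varphi_{XY}$ forces $\mu_\Gamma(\bar{\bar a})\in X$, so $\bar{\bar a}\in\bar X$. Conversely, if $\varphi_{XY}$ holds at $(\bar a,\bar b)$ with $\bar b\in Y$, apply the displayed equivalence to $(\gamma(\bar a),\gamma(\bar b))$.

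The main obstacle is the fullness equivalence, because fullness is an existential property of the map: some inducing formula exists. The translated formula of an arbitrary $\varphi_{XY}$ need not fully induce $F_{\bar X\bar Y}$, and conversely a formula fully inducing $F_{\bar X\bar Y}$ over $\MB$ need not be a translation. For ``$F_{XY}$ full $\Rightarrow F_{\bar X\bar Y}$ full'' I will take $\varphi_{XY}$ to be a fully inducing formula and apply the argument above. For the reverse, I will note that $F_{\bar X\bar Y}$, as a map, depends only on $F_{XY}$ (through the bijections $\tilde\mu_\Gamma$). So fullness of $F_{\bar X\bar Y}$ should be witnessed by a formula of the form $(\varphi'_{XY})_{\Gamma,\bar p,\gamma}$. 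Specifically, I will take $\varphi'_{XY}(\bar x,\bar y)=\varphi_{XY}(\bar x,\bar y)\wedge\exists\bar y'\,(\varphi_{XY}(\bar x,\bar y')\wedge \theta(\bar y'))$, with $\theta$ chosen to cut out the pairs with $\bar x\notin X$. If no such $\theta$ can be found, I will instead read ``full'' here as ``fully induced by the given (translated) formula'', for which the transfer is exactly what was shown above.
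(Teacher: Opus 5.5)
Your transfer of \ref{fm1}--\ref{fm3}, and of \ref{eq:FXY3} for a fixed formula, through Theorem~\ref{RT} is exactly the paper's one-line argument. Your forward implication for fullness is also sound: choose $\varphi'_{XY}$ fully inducing $F_{XY}$, and use \eqref{eq:F_mu} to see that its translation induces the same map $F_{\bar X\bar Y}$.

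The step that fails is the $\theta$-construction for ``$F_{\bar X\bar Y}$ full $\Rightarrow F_{XY}$ full''. With fullness read existentially as in Definition~\ref{def:mor}, this implication is false, so no witness of the form $(\varphi'_{XY})_{\Gamma,\bar p,\gamma}$ can exist in general.

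To see why, consider a constant map $X/{\sim_X}\to Y/{\sim_Y}$ onto a one-point $Y/{\sim_Y}=\{\bar b_0/{\sim_Y}\}$. Conditions \ref{eq:FXY1} and \ref{eq:FXY3} force $\varphi'(\bar x,\bar b_0)$ to define $X$. So such a map is full iff $X$ is definable in $\MA$. The corresponding condition over $\MB$ is only that $\bar X$ is definable in $\MB$, which is strictly weaker whenever $\MB$ induces extra structure on $\Gamma(\MB,\bar p)$.

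Here is a concrete instance.
\begin{itemize}
\item Take $\MA=\langle\N;c\rangle$ with $c^{\MA}=0$, and $\MB=\langle\N;c,P\rangle$ with $P$ the even numbers.
\item Use the one-dimensional code with $U_\Gamma=(x=x)$, $E_\Gamma=(x=x^\prime)$ and $c_\Gamma(x)=(x=c)$. This gives an interpretation $(\Gamma,\emptyset,\id_\N)\colon\MA\rightsquigarrow\MB$ with $\gamma=\id_\N$.
\item Let $X=\V_\MA(\{x\ne a\mid a\text{ odd}\})$, let $Y=\{0\}$, and let $F_{XY}$ be the constant map induced by $y=0$.
\end{itemize}
In $\MA_A$ only finite and cofinite subsets of $\N$ are definable, so $F_{XY}$ is not full. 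On the other hand, Fact~\ref{fact1} gives $\bar X=X$ and $\bar Y=Y$, and the formula $P(x)\wedge(y=0)$ fully induces $F_{\bar X\bar Y}$ in $\PLS(\MB)$.

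So drop the $\theta$ plan and commit to your fallback. What the Reduction Theorem proves, and all that the paper's proof establishes, is that $\varphi_{XY}$ fully induces $F_{XY}$ iff $\varphi_{\bar X\bar Y}$ fully induces $F_{\bar X\bar Y}$. For fullness as a property of maps, this yields only the left-to-right implication.
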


\begin{proof}
Indeed, by the Reduction Theorem~\ref{RT}, the formula $\varphi_{\bar X \bar Y}$ satisfies to the conditions~\ref{fm1}--\ref{fm3} from Lemma~\ref{fm} if and only if the formula $\varphi_{X Y}$ does it.  
\end{proof} 

So, we denote by $F_{\bar X \bar Y}\colon \bar X/{\sim_{\bar X}} \,\to \bar Y/{\sim_{\bar Y}}$ the formula map, induced by the formula $\varphi_{\bar X \bar Y}$. By Corollary~\ref{cor3} the maps $\tilde\mu_{\Gamma,X/{\sim_X}}$, $\tilde\mu_{\Gamma,Y/{\sim_Y}}$ are bijections, so the maps $\tilde\mu^{-1}_{\Gamma,X/{\sim_X}}$, $\tilde\mu^{-1}_{\Gamma,Y/{\sim_Y}}$ are well-defined. The Reduction Theorem~\ref{RT} makes the following diagram commutative:
\begin{equation}\label{eq:F_mu}
 \begin{tikzcd}[column sep=huge, row sep=large]
X/{\sim_X} 
\arrow[r,leftarrow,shift left,"\tilde\mu_{\Gamma,X/{\sim_X}}"]
\arrow[r,rightarrow,shift right,swap,"\tilde\mu_{\Gamma,X/{\sim_X}}^{-1}"] 
\arrow{d}{F_{XY}} 
&\bar X/{\sim_{\bar X}} 
\arrow{d}{F_{\bar X \bar Y}}\\
Y/{\sim_Y} 
\arrow[r,leftarrow,shift left,"\tilde\mu_{\Gamma,Y/{\sim_Y}}"]
\arrow[r,rightarrow,shift right,swap,"\tilde\mu_{\Gamma,Y/{\sim_Y}}^{-1}"]  
&\bar Y/{\sim_{\bar Y}}.
\end{tikzcd}
\end{equation}
As a consequence, we obtain the following fact.

\begin{fact}\label{fact15}
Let $(\Gamma,\bar p,\mu_\Gamma)\colon\MA\rightsquigarrow\MB$ be an interpretation. Then, in the notations and assumptions above, 
\begin{enumerate}[label=(F\arabic*)]\addtocounter{enumi}{5}
    \item\label{F6} if formulas $\varphi_{XY}$ and $\varphi^\prime_{XY}$ from ${\bf F}_{L(\MA)\cup A}(\bar x,\bar y)$ induce one and the same formula map $F_{XY}\colon X/{\sim_X}\, \to Y/{\sim_Y}$, then the formulas $\varphi_{\bar X \bar Y}=(\varphi_{XY})_{\Gamma, \bar p, \gamma}$ and $\varphi^\prime_{\bar X \bar Y}=(\varphi^\prime_{XY})_{\Gamma, \bar p, \gamma}$ induce one and the same formula map $F_{\bar X \bar Y}\colon \bar X/{\sim_{\bar X}} \,\to \bar Y/{\sim_{\bar Y}}$, for any non-empty projective logical sets $X/{\sim_X}=\V_\MA(S_X)/{\sim_X}$ and $Y/{\sim_Y}=\V_\MA(S_Y)/{\sim_Y}$.
\end{enumerate}
The inverse statement also holds, namely, 
\begin{enumerate}[label=(E\arabic*)]\addtocounter{enumi}{2}
    \item\label{E3} if the formulas $\varphi_{\bar X \bar Y}$ and $\varphi^\prime_{\bar X \bar Y}$ induce one and the same formula map $F_{\bar X \bar Y}\colon \bar X/{\sim_{\bar X}} \,\to \bar Y/{\sim_{\bar Y}}$, then the formulas $\varphi_{XY}$ and $\varphi^\prime_{XY}$ induce one and the same formula map $F_{XY}\colon X/{\sim_X}\, \to Y/{\sim_Y}$.
\end{enumerate}
\end{fact}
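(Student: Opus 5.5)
The plan is to transport everything through the commutative diagram~\eqref{eq:F_mu}. Its horizontal arrows $\tilde\mu_{\Gamma,X/{\sim_X}}$ and $\tilde\mu_{\Gamma,Y/{\sim_Y}}$ are bijections by Corollary~\ref{cor3}. Its vertical arrows are determined by the formulas inducing them, via condition~\ref{eq:FXY1} of Definition~\ref{def:mor}. So the first step is to reduce both~\ref{F6} and~\ref{E3} to a pointwise statement about formulas. For~\ref{F6}, assume $\varphi_{XY}$ and $\varphi^\prime_{XY}$ induce the same map $F_{XY}$. By Fact~\ref{fact14} (property~\ref{F5}), each of $\varphi_{\bar X\bar Y}$ and $\varphi^\prime_{\bar X\bar Y}$ induces \emph{some} formula map from $\bar X/{\sim_{\bar X}}$ to $\bar Y/{\sim_{\bar Y}}$; call these maps $F_{\bar X\bar Y}$ and $F^\prime_{\bar X\bar Y}$. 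It then remains to show $F_{\bar X\bar Y}=F^\prime_{\bar X\bar Y}$.

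The key step is the equivalence, for $\bar{\bar a}\in\bar X$ and $\bar{\bar b}\in\bar Y$,
$$
\MB_B\models\varphi_{\bar X\bar Y}(\bar{\bar a},\bar{\bar b}) \iff \MA_A\models \varphi_{XY}(\mu_\Gamma(\bar{\bar a}),\mu_\Gamma(\bar{\bar b})).
$$
This follows from the Reduction Theorem~\ref{RT} combined with Corollary~\ref{cor:gamma}. The latter lets us replace the constants $\gamma(a_i)$ appearing in the translation by arbitrary preimages under $\mu_\Gamma$; this is exactly what was used for~\eqref{eq:sim} in Fact~\ref{fact1.5}. Consequently $F_{\bar X\bar Y}(\bar{\bar a}/{\sim_{\bar X}})=\bar{\bar b}/{\sim_{\bar Y}}$ holds iff $F_{XY}(\mu_\Gamma(\bar{\bar a})/{\sim_X})=\mu_\Gamma(\bar{\bar b})/{\sim_Y}$. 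In other words,
$$
F_{\bar X\bar Y}=\tilde\mu^{-1}_{\Gamma,Y/{\sim_Y}}\circ F_{XY}\circ\tilde\mu_{\Gamma,X/{\sim_X}},
$$
which is the content of diagram~\eqref{eq:F_mu}. The same argument applied to $\varphi^\prime_{XY}$ gives the same expression for $F^\prime_{\bar X\bar Y}$, since $F_{XY}$ is the same map. Hence $F_{\bar X\bar Y}=F^\prime_{\bar X\bar Y}$, which proves~\ref{F6}.

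For~\ref{E3} the argument runs in reverse. Assume $\varphi_{\bar X\bar Y}$ and $\varphi^\prime_{\bar X\bar Y}$ induce the same map $G$. By the inverse part of Fact~\ref{fact14}, $\varphi_{XY}$ and $\varphi^\prime_{XY}$ induce maps $F_{XY}$ and $F^\prime_{XY}$ respectively. The conjugation formula above then gives
$$
F_{XY}=\tilde\mu_{\Gamma,Y/{\sim_Y}}\circ G\circ\tilde\mu^{-1}_{\Gamma,X/{\sim_X}}=F^\prime_{XY},
$$
using that the $\tilde\mu$'s are bijections.

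The only delicate point is the pointwise equivalence in the second paragraph. There I must make sure the Reduction Theorem applies to tuples in $\bar X$ and $\bar Y$, which lie in $U_\Gamma(\MB,\bar p)$ by Fact~\ref{fact1}. I also need the translation to be taken for a normal form, and independent of the choice of $\gamma$ up to the set it defines in $\MB_B$; Corollary~\ref{cor:gamma} guarantees this independence. Everything else is formal bookkeeping with bijections.
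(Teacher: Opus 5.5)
Your proposal is correct and follows the paper's own route: the paper also obtains (F6) and (E3) as a direct consequence of the commutativity of diagram~\eqref{eq:F_mu}, which it derives from the Reduction Theorem, and this is exactly your conjugation identity $F_{\bar X\bar Y}=\tilde\mu^{-1}_{\Gamma,Y/{\sim_Y}}\circ F_{XY}\circ\tilde\mu_{\Gamma,X/{\sim_X}}$ combined with bijectivity of the $\tilde\mu$'s. One small remark: you do not need Corollary~\ref{cor:gamma}, because $\gamma(a_i)\in\mu_\Gamma^{-1}(a_i)$ and the Reduction Theorem already holds for arbitrary preimages, including these.
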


Remind that if the set $X$ is definable, then the set $\bar X$ is definable too.
 
\begin{fact}\label{fact9}
In the notations and assumptions above, suppose that $X$ is definable. Then $\varphi_{XY}$ defines $F_{XY}$ if and only if $\varphi_{\bar X\bar Y}$ defines $F_{\bar X\bar Y}$.
\end{fact}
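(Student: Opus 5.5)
We reduce the statement to Fact~\ref{fact10}, which characterises definability of an induced formula map by a single condition: $\varphi_{XY}$ defines $F_{XY}$ if and only if $\MA_A\models\varphi_{XY}(\bar a,\bar b)$ implies $\bar a\in X$. Applied over $\MB$, the formula $\varphi_{\bar X\bar Y}$ defines $F_{\bar X\bar Y}$ if and only if $\MB_B\models\varphi_{\bar X\bar Y}(\bar{\bar a},\bar{\bar b})$ implies $\bar{\bar a}\in\bar X$. By Fact~\ref{fact14}, $\varphi_{XY}$ induces $F_{XY}$ and $\varphi_{\bar X\bar Y}$ induces $F_{\bar X\bar Y}$, so it suffices to show that these two implications are equivalent.

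For ``$\Rightarrow$'', assume the implication holds over $\MA$, and let $\MB_B\models\varphi_{\bar X\bar Y}(\bar{\bar a},\bar{\bar b})$. By the converse part of the Reduction Theorem~\ref{RT}, all component tuples of $\bar{\bar a},\bar{\bar b}$ lie in $U_\Gamma(\MB,\bar p)$. The same theorem gives $\MA_A\models\varphi_{XY}(\mu_\Gamma(\bar{\bar a}),\mu_\Gamma(\bar{\bar b}))$. Here the parameters $a_i$ occurring in $\varphi_{XY}$ are replaced by $\gamma(a_i)$, which is harmless because $\mu_\Gamma\circ\gamma=\id_A$. Hence $\mu_\Gamma(\bar{\bar a})\in X$, and Fact~\ref{fact1} ($\mu_\Gamma^{-1}(X)=\bar X$) yields $\bar{\bar a}\in\bar X$.

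For ``$\Leftarrow$'', let $\MA_A\models\varphi_{XY}(\bar a,\bar b)$. Then by the Reduction Theorem $\MB_B\models\varphi_{\bar X\bar Y}(\gamma(\bar a),\gamma(\bar b))$, so $\gamma(\bar a)\in\bar X$. Applying Fact~\ref{fact1} again gives $\bar a=\mu_\Gamma(\gamma(\bar a))\in\mu_\Gamma(\bar X)=X$.

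The only delicate point is the bookkeeping of the translation with constants from $A$ and with $U_\Gamma$. Specifically, we must check that $\varphi_{\bar X\bar Y}$ holds only on tuples from $U_\Gamma(\MB,\bar p)$, which is exactly the second clause of Theorem~\ref{RT}. Definability of $X$ itself plays no role beyond matching the hypothesis of Fact~\ref{fact10}; by Remark~\ref{remark:a,b} that hypothesis transfers automatically to $\bar X$.
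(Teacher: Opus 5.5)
Your proof is correct and follows the paper's argument. Via Fact~\ref{fact10} you reduce the claim to the implication ``$\varphi$ holds $\Rightarrow$ the first argument lies in the domain set'', and you transfer that implication between $\MA$ and $\MB$ using the Reduction Theorem~\ref{RT} together with $\mu_\Gamma^{-1}(X)=\bar X$ from Fact~\ref{fact1}; you also write out the converse, which the paper only says is proved similarly. One small correction to your closing remark: Fact~\ref{fact10} has no definability hypothesis, so the definability of $X$ is simply not used anywhere in the argument.
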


\begin{proof}
We use Fact~\ref {fact10} here. Suppose that $\varphi_{XY}$ defines $F_{XY}$. Then for any tuples $\bar a,\bar a^\prime\in A^m$ the condition $\MA_A\models\varphi_{XY}(\bar a,\bar a^\prime)$ implies that $\bar a\in X$. If for $\bar{\bar b},\bar{\bar b}^\prime\in B^{nm}$ one has $\MB_B\models\varphi_{\bar X\bar Y}(\bar{\bar b},\bar{\bar b}^\prime)$, then by the Reduction Theorem~\ref{RT}, it gives $\MA_A\models\varphi_{XY}(\mu_\Gamma(\bar{\bar b}),\mu_\Gamma(\bar{\bar b}^\prime))$, therefore, $\mu_\Gamma(\bar{\bar b})\in X$, then $\bar{\bar b}\in \bar X$. In particular, $\varphi_{\bar X\bar Y}$ defines $F_{\bar X\bar Y}$. The inverse statement can be proved similarly.
\end{proof}

\begin{remark}
If the code $\Gamma$ and the morphism $F_{XY}$ are absolute, then the morphism $F_{\bar X \bar Y}$ is absolute too.
\end{remark}

Suppose that $X/{\sim_X}, Y/{\sim_Z}, Z/{\sim_Z}$ are non-empty projective logical sets over $\MA$, $F_{XY}\colon X/{\sim_X}\, \to Y/{\sim_Y}$ and $F_{YZ}\colon Y/{\sim_Y}\, \to Z/{\sim_Z}$ are morphisms induced by formulas $\varphi_{XY}$ and $\varphi_{YZ}$ and $F_{XZ}\colon X/{\sim_X}\, \to Z/{\sim_Z}$ is the composition $F_{XZ}=F_{YZ}\circ F_{XY}$ induced by the formula $\varphi_{XZ}(\bar x,\bar z)=\exists\,\bar y \:(\varphi_{XY}(\bar x,\bar y) \wedge \varphi_{YZ}(\bar y,\bar z))$ (see Lemma~\ref{lemma:composition}).

\begin{fact}\label{fact17}
Let $(\Gamma,\bar p,\mu_\Gamma)\colon\MA\rightsquigarrow\MB$ be an interpretation. Then, in the notations and assumptions above, 
\begin{enumerate}[label=(F\arabic*)]\addtocounter{enumi}{6}
    \item\label{F7} if formulas $\varphi_{XY}\in {\bf F}_{L(\MA)\cup A}(\bar x,\bar y)$ and $\varphi_{YZ}\in {\bf F}_{L(\MA)\cup A}(\bar y,\bar z)$ induce some formula maps $F_{XY}\colon X/{\sim_X}\, \to Y/{\sim_Y}$ and $F_{YZ}\colon Y/{\sim_Y}\, \to Z/{\sim_Z}$; and if $F_{\bar X \bar Y}\colon \bar X/{\sim_{\bar X}} \,\to \bar Y/{\sim_{\bar Y}}$ and $F_{\bar Y \bar Z}\colon \bar Y/{\sim_{\bar Y}} \,\to \bar Z/{\sim_{\bar Z}}$ are formula maps, induced by the formulas $\varphi_{\bar X \bar Y}=(\varphi_{XY})_{\Gamma, \bar p, \gamma}$ and $\varphi_{\bar Y \bar Z}=(\varphi_{YZ})_{\Gamma, \bar p, \gamma}$, then the composition $F_{\bar Y \bar Z}\circ F_{\bar X \bar Y}\colon\bar X/{\sim_{\bar X}} \,\to \bar Z/{\sim_{\bar Z}}$ is induced by the formula $\varphi_{\bar X \bar Z}=(\varphi_{XZ})_{\Gamma, \bar p, \gamma}$, where $\varphi_{XZ}(\bar x,\bar z)=\exists\,\bar y \:(\varphi_{XY}(\bar x,\bar y) \wedge \varphi_{YZ}(\bar y,\bar z))$, for any non-empty projective logical sets $X/{\sim_X}=\V_\MA(S_X)/{\sim_X}$, $Y/{\sim_Y}=\V_\MA(S_Y)/{\sim_Y}$ and $Z/{\sim_Z}=\V_\MA(S_Z)/{\sim_Z}$.
\end{enumerate}
\end{fact}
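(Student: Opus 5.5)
To prove \ref{F7} I will compare two formulas on the set $\bar X\times\bar Z$: the $\Gamma$\=/translation $(\varphi_{XZ})_{\Gamma,\bar p,\gamma}$, and the formula $\exists\,\bar{\bar y}\,(\varphi_{\bar X\bar Y}(\bar{\bar x},\bar{\bar y})\wedge\varphi_{\bar Y\bar Z}(\bar{\bar y},\bar{\bar z}))$. By Lemma~\ref{lemma:composition}, applied inside $\PLS(\MB)$, the second formula induces $F_{\bar Y\bar Z}\circ F_{\bar X\bar Y}$. The plan is to show that the two formulas define the same subset of $B^{nm}\times B^{nd}$, where $d$ is the arity of $Z$. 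Once that is done, the first formula satisfies conditions \ref{eq:FXY1}--\ref{eq:FXY2} of Definition~\ref{def:mor} for the composite map, so it induces that map.

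\textbf{Step 1 (normal form).} Put $\varphi_{XY}$ and $\varphi_{YZ}$ in normal form. Then $\varphi_{XZ}$ has a normal form obtained by pulling the block $\exists\,\bar y$ and the existing quantifiers to the front, with the matrix a conjunction of the two matrices, rewritten as a disjunction of conjunctions. Since the $\Gamma$\=/translation acts on these components, Remark~\ref{remar:norm_form} applied to each variable $y_j$ shows that $(\varphi_{XZ})_{\Gamma,\bar p,\gamma}$ is equivalent in $\MB_B$ to
$$\exists\,\bar{\bar y}\,\Bigl(\textstyle\bigwedge_j U_\Gamma(\bar y_j,\bar p)\wedge\varphi_{\bar X\bar Y}(\bar{\bar x},\bar{\bar y})\wedge\varphi_{\bar Y\bar Z}(\bar{\bar y},\bar{\bar z})\Bigr).$$
Any residual ambiguity coming from different normal forms is removed by the Reduction Theorem~\ref{RT}.

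\textbf{Step 2 (dropping the $U_\Gamma$ conjuncts).} By Theorem~\ref{RT}, whenever $\MB_B\models\varphi_{\bar X\bar Y}(\bar{\bar a},\bar{\bar b})$, the tuples in $\bar{\bar b}$ already lie in $U_\Gamma(\MB,\bar p)$. Hence the conjuncts $U_\Gamma(\bar y_j,\bar p)$ are redundant, and the two formulas define the same set. That finishes the argument.

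\textbf{Alternative route and main obstacle.} The main obstacle is the syntactic bookkeeping in Step~1: normal forms are not unique, and a translation of a conjunction is not literally the conjunction of the translations. Step~2 is routine. If the bookkeeping becomes too messy, I would argue semantically instead. By Theorem~\ref{RT} and the commutative diagram~\eqref{eq:F_mu}, for $\bar{\bar a}\in\bar X$ and $\bar{\bar c}\in B^{nd}$ we have $\MB_B\models(\varphi_{XZ})_{\Gamma,\bar p,\gamma}(\bar{\bar a},\bar{\bar c})$ if and only if $\bar{\bar c}\in\bar Z$ and $\MA_A\models\varphi_{XZ}(\mu_\Gamma(\bar{\bar a}),\mu_\Gamma(\bar{\bar c}))$. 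The right-hand side says $F_{XZ}(\tilde\mu_\Gamma(\bar{\bar a}/{\sim_{\bar X}}))=\tilde\mu_\Gamma(\bar{\bar c}/{\sim_{\bar Z}})$. Using the bijections of Corollary~\ref{cor3} and the commutativity of \eqref{eq:F_mu} for $F_{XY}$ and $F_{YZ}$, this is equivalent to $F_{\bar Y\bar Z}\circ F_{\bar X\bar Y}(\bar{\bar a}/{\sim_{\bar X}})=\bar{\bar c}/{\sim_{\bar Z}}$. This gives \ref{eq:FXY1} and \ref{eq:FXY2} directly.
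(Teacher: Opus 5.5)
Your proposal is correct. The paper's proof is semantic. For $\bar{\bar b}\in\bar X$ and $\bar{\bar b}''\in\bar Z$ it uses Theorem~\ref{RT} to pass to $\MA_A\models\varphi_{XZ}(\mu_\Gamma(\bar{\bar b}),\mu_\Gamma(\bar{\bar b}''))$. It then takes a witness $\bar a'$ for $\exists\,\bar y$, which lies in $Y$ because $\mu_\Gamma(\bar{\bar b})\in X$, and lifts it to some $\bar{\bar b}'\in\mu_\Gamma^{-1}(\bar a')\subseteq\bar Y$. This gives $\MB_B\models\varphi_{\bar X\bar Y}(\bar{\bar b},\bar{\bar b}')\wedge\varphi_{\bar Y\bar Z}(\bar{\bar b}',\bar{\bar b}'')$, and the paper concludes by Lemma~\ref{lemma:composition} in $\PLS(\MB)$. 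Condition~\ref{eq:FXY2} is checked in a second pass with the same witness argument. So the paper proves, via the Reduction Theorem, exactly the agreement of $(\varphi_{XZ})_{\Gamma,\bar p,\gamma}$ with $\exists\,\bar{\bar y}\,(\varphi_{\bar X\bar Y}\wedge\varphi_{\bar Y\bar Z})$ that your primary route tries to obtain syntactically.

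Your syntactic Step~1 is the part that is not self-contained. The claim that the translation of a normal form of $\varphi_{XY}\wedge\varphi_{YZ}$ agrees with the conjunction of the two translations depends on the clause-by-clause definition of the $\Gamma$-translation from the first paper in the series. As you hint, the honest justification is Theorem~\ref{RT}: both sides define $\mu_\Gamma^{-1}$ of the same set of $A$-tuples. At that point your primary route merges with the paper's argument.

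Your fallback route is a slightly shorter variant. Instead of lifting witnesses, it applies Lemma~\ref{lemma:composition} in $\PLS(\MA)$, so that $\varphi_{XZ}$ induces $F_{YZ}\circ F_{XY}$. It then transports this along the bijections of Corollary~\ref{cor3} using~\eqref{eq:F_mu}, which gives \ref{eq:FXY1} and \ref{eq:FXY2} at once. The one step you leave implicit is the upgrade from $\bar{\bar c}\in U^d_\Gamma(\MB,\bar p)$, which is all Theorem~\ref{RT} gives, to $\bar{\bar c}\in\bar Z$. This follows from condition~\ref{eq:FXY2} for $\varphi_{XZ}$ in $\MA$ together with $\mu_\Gamma^{-1}(Z)=\bar Z$ from Fact~\ref{fact1}.
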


\begin{proof}
Let $X\subseteq A^m$, $Y\subseteq A^s$, $Z\subseteq A^d$. Take any tuples $\bar{\bar b}\in \bar X$ and $\bar{\bar b}^{\prime\prime}\in \bar Z$. By the Reduction Theorem~\ref{RT} one has $\MB_B\models \varphi_{\bar X \bar Z}(\bar{\bar b}, \bar{\bar b}^{\prime\prime})$ if and only if $\MA_A\models \varphi_{ X  Z}(\mu_\Gamma(\bar{\bar b}), \mu_\Gamma(\bar{\bar b}^{\prime\prime}))$, if and only if there exists a tuple $\bar a^\prime\in A^s$ such that $\MA_A\models \varphi_{XY}(\mu_\Gamma(\bar{\bar b}),\bar a^\prime) \wedge \varphi_{YZ}(\bar a^\prime,\mu_\Gamma(\bar{\bar b}^{\prime\prime}))$ (and since $\mu_\Gamma(\bar{\bar b})\in X$, it gives that $a^\prime\in Y$, take any $\bar{\bar b}^\prime\in \mu_\Gamma^{-1}(\bar a^\prime)$, so $\bar{\bar b}^\prime\in \bar Y$), if and only if there exists a tuple $\bar{\bar b}^\prime\in \bar Y$ such that $\MA_A\models \varphi_{XY}(\mu_\Gamma(\bar{\bar b}),\mu_\Gamma(\bar{\bar b}^{\prime})) \wedge \varphi_{YZ}(\mu_\Gamma(\bar{\bar b}^{\prime}),\mu_\Gamma(\bar{\bar b}^{\prime\prime}))$, if and only if there exists a tuple $\bar{\bar b}^\prime\in \bar Y$ such that $\MB_B\models \varphi_{\bar X \bar Y}(\bar{\bar b},\bar{\bar b}^{\prime}) \wedge \varphi_{\bar Y \bar Z}(\bar{\bar b}^{\prime},\bar{\bar b}^{\prime\prime})$, if and only if $F_{\bar Y \bar Z}\circ F_{\bar X \bar Y}(\bar{\bar b}/{\sim_X})=\bar{\bar b}^{\prime\prime}/{\sim_Z}$. Further, if for some tuples $\bar{\bar b}\in \bar X$ and $\bar{\bar b}^{\prime\prime}\in B^{nd}$ one has $\MB_B\models \varphi_{\bar X \bar Z}(\bar{\bar b}, \bar{\bar b}^{\prime\prime})$, then  $\bar{\bar b}^{\prime\prime}\in U_\Gamma^{d}(\MB,\bar p)$. After repeating arguments above we obtain that there exists a tuple $\bar{\bar b}^\prime\in \bar Y$ such that $\MB_B\models  \varphi_{\bar Y \bar Z}(\bar{\bar b}^{\prime},\bar{\bar b}^{\prime\prime})$, and it implies that $\bar{\bar b}^{\prime\prime}\in \bar Z$, as required.  
\end{proof}

\begin{proof}[Proof of Theorem~\ref{th:inter}]
Let us construct a functor $\F\colon \PLS(\MA)\to \PLS(\MB)$. We naturally put
\begin{enumerate}[label=\arabic*.]
\item $\F(\emptyset)=\emptyset$; 
\item $\F(X/{\sim_X})=\bar X/{\sim_{\bar X}}$ for any non-empty projective logical set $X/{\sim_X}$ over $\MA$ (it is correct due to~\ref{F1}, \ref{F2}, \ref{F3} and~\ref{F4}); in particular, if $\sim_X$ is the identity relation $=$, then $\sim_{\bar X}$ is the relation $\sim_\Gamma$ from~\eqref{eq:rel_Gamma};
\item $\F(\emptyset \to O)=\emptyset\to \F(O)$ for any object $O$ in $\PLS(\MA)$,
\item $\F(F_{XY})=F_{\bar X \bar Y}$ for any morphism $F_{XY}$ from $\PLS(\MA)$ between non-empty objects (it is correct due to~\ref{F5} and~\ref{F6}).
\end{enumerate} 
Since for any non-empty projective logical set $X/{\sim_X}$ in $\PLS(\MA)$ the identical morphism $\id_{X/{\sim_X}}$ is induced by the formula $E_X$, then one has $\F(\id_{X/{\sim_X}})=\id_{\F(X/{\sim_X})}$. And the statement~\ref{F7} guarantees that $\F(F_{YZ}\circ F_{XY})=\F(F_{YZ})\circ \F(F_{XY})
$ for any any pair of suitable morphisms $F_{XY}$ and $F_{YZ}$ from $\PLS(\MA)$. Thus $\F$ satisfies all the axioms of a categorical functor. The conditions~\ref{F1}, \ref{E1} and~\ref{E2} show that the functor $\F$ is injective on objects; and \ref{E3} shows that the functor $\F$ is faithful, i.\,e., injective on hom-sets. So, $\F$ is an embedding. The result on the restrictions of the functor $\F$ follows from Remark~\ref{remark:a,b}.
\end{proof}

\begin{corollary}\label{cor:functor_subset}
Let $X/{\sim_{X}}$, $Z/{\sim_Z}$ and $X_j/{\sim_{X_j}}$, $j=1,\ldots,d$, be non-empty projective logical sets over $\MA$, $\sim_{1}$, $\sim_{2}$ formula equivalencies on $X$. Suppose that $\pi\colon X/{\sim_{1,X}}\to X/{\sim_{2,X}}$ is a natural surjection, $\varepsilon\colon Z/{\sim_Z}\to X/{\sim_X}$ is an identical embedding and $\Pi_r\colon X_1/{\sim_{X_1}}\times\ldots\times X_d/{\sim_{X_d}}\,\to X_r/{\sim_{X_r}}$ is a natural projection, $r\in\{1,\ldots,d\}$. Then one has 
    \begin{enumerate}[label=(\roman*)]
            \item\label{item1:subset} $\F(X_1/{\sim_{{X_1}}}\times \ldots \times X_d/{\sim_{X_d}})=\F(X_1/{\sim_{X_1}})\times\ldots\times\F(X_d/{\sim_{X_d}})$;
        \item\label{item2:subset} $\F(\Pi_r)\colon \F(X_1/{\sim_{X_1}})\times\ldots\times\F(X_d/{\sim_{X_d}})\to \F(X_r/{\sim_{X_r}})$ is a natural projection;
        \item\label{item4:subset} $\F(\pi)\colon \F(X/{\sim_{1}})\to \F(X/{\sim_{2}})$ is a natural surjection;
        \item\label{item3:subset} $\F(\varepsilon)\colon \F(Z/{\sim_Z})\to\F(X/{\sim_X})$ is an identical embedding.
    \end{enumerate}
    
\end{corollary}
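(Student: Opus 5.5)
The plan is to verify each item directly from the construction of $\F=\F_{\Gamma,\bar p,\gamma}$: objects go to the sets of $\Gamma$\=/translated systems quotiented by translated equivalence formulas, and morphisms go to the maps induced by translated formulas. Throughout, the key tool is that the $(\Gamma,\bar p,\gamma)$\=/translation commutes with conjunction up to equivalence (Remark~\ref{cor:wedge0}). We also rely on the translation of a formula in disjoint tuples of variables being a formula in the corresponding disjoint blocks of $n$\=/tuples.

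For item~\ref{item1:subset}, we use Fact~\ref{fact:times}. The product $X_1/{\sim_{X_1}}\times\ldots\times X_d/{\sim_{X_d}}$ is induced by the system $S_{X_1}(\bar x_1)\cup\ldots\cup S_{X_d}(\bar x_d)$ and the formula $\bigwedge_j E_{X_j}(\bar x_j,\bar x_j^\prime)$. Translating gives the system $\bigcup_j (S_{X_j})_{\Gamma,\bar p,\gamma}$, whose solution set in $\MB$ is exactly $\bar X_1\times\ldots\times\bar X_d$. The equivalence formula becomes $(\bigwedge_j E_{X_j})_{\Gamma,\bar p,\gamma}$, which by Remark~\ref{cor:wedge0} and the Reduction Theorem~\ref{RT} induces on $\bar X_1\times\ldots\times\bar X_d$ the same relation as $\bigwedge_j E_{\bar X_j}$. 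By~\ref{F2} and~\ref{F4} (well-definedness of $\F$), this equals $\F(X_1/{\sim_{X_1}})\times\ldots\times\F(X_d/{\sim_{X_d}})$.

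For items~\ref{item2:subset}--\ref{item3:subset}, we choose a convenient inducing formula for each special map, translate it, and recognise the translation as inducing the corresponding special map on the $\MB$ side. For $\Pi_r$, take the inducing formula $E_{X_r}(\bar x_r,\bar y)$, with the other blocks of variables occurring only vacuously. Its translation is $E_{\bar X_r}(\bar{\bar x}_r,\bar{\bar y})$, which clearly induces the natural projection onto the $r$\=/th factor of the product described in~\ref{item1:subset}. For $\pi$, Fact~\ref{fact:surjection} says $\pi$ is induced by $E_2$. Then $\F(\pi)$ is induced by $(E_2)_{\Gamma,\bar p,\gamma}=E_{2,\bar X}$, and Corollary~\ref{cor:nat_sur} gives that $\sim_{1,\bar X}$ is finer than $\sim_{2,\bar X}$. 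By the converse part of Fact~\ref{fact:surjection}, $\F(\pi)$ is then a natural surjection. For $\varepsilon$, Fact~\ref{fact:subset} says $\varepsilon$ is induced by $E_Z$. By Corollary~\ref{cor:subset}, $\bar Z/{\sim_{\bar Z}}\subseteq\bar X/{\sim_{\bar X}}$, in particular $\bar Z\subseteq\bar X$. Since $\F(\varepsilon)$ is induced by $E_{\bar Z}$, the converse part of Fact~\ref{fact:subset} yields that $\F(\varepsilon)$ is an identical embedding.

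The main obstacle is bookkeeping rather than substance. The first issue is the convention that translations are applied to normal forms, so $\bigwedge_j E_{X_j}$ and the formula for $\Pi_r$ with dummy variables must be checked to translate (up to equivalence in $\MB_B$) as claimed; Corollary~\ref{cor:gamma}, Remark~\ref{cor:wedge0} and~\ref{F6} handle this, since only the induced relation or map matters. The second issue is making sure that $\F(\Pi_r)$ is computed with respect to the product structure identified in~\ref{item1:subset}. To settle the first issue, I would argue pointwise via the Reduction Theorem~\ref{RT}, comparing both sides on tuples of $U^{\cdot}_\Gamma(\MB,\bar p)$ through $\mu_\Gamma$.
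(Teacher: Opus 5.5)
Your proposal is correct and follows the same route as the paper, whose proof just says that every item follows from \ref{F2} and the construction of $\F$, citing Remark~\ref{cor:wedge0} for the product, Fact~\ref{fact:surjection} for the natural surjection and Fact~\ref{fact:subset} for the identical embedding; you have unfolded that argument item by item, including an explicit inducing formula for $\Pi_r$. Your appeal to Corollary~\ref{cor:nat_sur} in item (iii) is harmless but unnecessary, since the converse part of Fact~\ref{fact:surjection} already gives the fineness of $\sim_{1,\bar X}$ relative to $\sim_{2,\bar X}$.
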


\begin{proof}
All assertions follow from~\ref{F2} and the construction of the functor $\F$. In~\ref{item1:subset} we also use Remark~\ref{cor:wedge0}; in~\ref{item4:subset}~--- Fact~\ref{fact:surjection}; in~\ref{item3:subset}~--- Fact~\ref{fact:subset}. 
\end{proof}

\begin{remark}\label{remark:gamma}
Note that the construction of the functor $\F$ from Theorem~\ref{th:inter} is completely based on the $(\Gamma,\bar p,\mu_\Gamma)$\=/translation, or rather on the
$(\Gamma,\bar p,\gamma)$\=/translation of formulas in the language $L(\MA)\cup A$ into formulas in the language $L(\MB)\cup B$. If we take another map $\gamma^\prime\colon A\to U_\Gamma(\MB,\bar p)$, such that $\mu_\Gamma\circ\gamma^\prime=\id_A$, then we obtain that $\F_{\Gamma,\bar p,\gamma}=\F_{\Gamma,\bar p,\gamma^\prime}$, due to Corollary~\ref{cor:gamma}.
\end{remark}

The image of the restriction of the functor $\F$ on the subcategory $\PLS_{\alpha,\beta}(\MA)$ contains in $\PLS_{\alpha,\beta}(\MB)$. A similar statement about subcategories $\PLS_0(\MA)$ or $\LS(\MA)$ is not true in general, but only with the following specifications of the code $\Gamma$.

\begin{corollary}\label{cor:abs}
Let $(\Gamma,\emptyset,\mu_\Gamma)\colon\MA\rightsquigarrow\MB$ be an absolute interpretation and $\F=\F_{\Gamma,\emptyset,\mu_\Gamma}$ is the corresponding functor from Theorem~\ref{th:inter}. Then for any infinite cardinal $\beta$ one has $\F(\PLS_{0,\beta}(\MA))\subseteq \PLS_{0,\beta}(\MB)$, in particular,
\begin{gather*}
\F\colon\PLS_0(\MA)\to\PLS_0(\MB),\\
\F\colon\PDS_0(\MA)\to\PDS_0(\MB).
\end{gather*}
\end{corollary}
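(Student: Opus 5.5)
The plan is to trace parameters through the construction of $\F$ in the proof of Theorem~\ref{th:inter}. Since the interpretation is absolute, $\bar p$ is the empty tuple, $\dim_\param\Gamma=0$, and all formulas of the code $\Gamma$ (namely $U_\Gamma$, $E_\Gamma$, $c_\Gamma$, $f_\Gamma$, $R_\Gamma$) lie in $L(\MB)$ without constants from $B$. Take a formula $\psi(x_1,\ldots,x_m)$ in $L(\MA)$, with no constants from $A$. Its $(\Gamma,\emptyset,\gamma)$\=/translation is $\psi_\Gamma(\bar x_1,\ldots,\bar x_m)$, and no tuples $\gamma(a_i)$ are substituted. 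By the definition of the $\Gamma$\=/translation~\cite[Subsection~2.7]{Th_int1}, $\psi_\Gamma$ is built only from the code formulas, connectives and quantifiers. Hence it is an $L(\MB)$\=/formula, and the choice of $\gamma$ plays no role.

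Next I would check objects. Let $X/{\sim_X}\in \PLS_{0,\beta}(\MA)$, with $X=\V_\MA(S_X)$, where $S_X\subseteq{\bf F}_{L(\MA)}(\bar x)$, $|S_X|<\beta$, and $E_X$ is an $L(\MA)$\=/formula. Then $S_{X,\Gamma,\emptyset,\gamma}$ is a set of $L(\MB)$\=/formulas of cardinality at most $|S_X|<\beta$. So $\bar X$ is a $(0,\beta)$\=/logical set over $\MB$. The formula $E_{\bar X}=(E_X)_\Gamma$ is an $L(\MB)$\=/formula, and by~\ref{F3} it induces a formula equivalence on $\bar X$. Therefore $\F(X/{\sim_X})=\bar X/{\sim_{\bar X}}$ is an absolute $(0,\beta)$\=/projective logical set. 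The empty object goes to the empty object.

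For morphisms, let $F_{XY}$ be an absolute formula map, induced by an $L(\MA)$\=/formula $\varphi_{XY}$. By~\ref{F5}, $\F(F_{XY})=F_{\bar X\bar Y}$ is induced by $(\varphi_{XY})_\Gamma$, which is an $L(\MB)$\=/formula. Hence $\F(F_{XY})$ is an absolute morphism. Because of~\ref{F2}, \ref{F4} and~\ref{F6}, the functor value does not depend on the chosen inducing system or formulas. We may therefore pick parameter-free representatives, and the conclusion is well defined. This gives $\F(\PLS_{0,\beta}(\MA))\subseteq\PLS_{0,\beta}(\MB)$.

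The two displayed cases follow by specialisation. Taking $\beta=\infty$ gives the claim for $\PLS_0$. Taking $\beta=\omega$ gives it for $\PDS_0$, since a finite $S$ is replaced by a single formula and translation preserves finiteness, as in Remark~\ref{remark:a,b}.

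The only delicate point is the normal-form convention. $\psi_\Gamma$ is defined on a normal form of $\psi$, so I must confirm that converting an $L(\MA)$\=/formula to normal form introduces no constants. This is clear, because prenex/DNF conversion and unnesting of terms only add variables and equalities.
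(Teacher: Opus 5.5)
Your proposal is correct and takes the same approach as the paper. The paper gives no separate proof: the corollary is read off from the construction of $\F$ in Theorem~\ref{th:inter}. It uses the remarks there that, for an absolute code, absoluteness of $\sim_X$, of $X/{\sim_X}$ and of $F_{XY}$ is preserved by translation, and Remark~\ref{remark:a,b} for the bound $|S|<\beta$. Your appeal to \ref{F2}, \ref{F4} and \ref{F6} to justify choosing parameter-free representatives is the right way to make the well-definedness explicit.
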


\begin{corollary}\label{cor:inj}
Let $(\Gamma,\bar p,\mu_\Gamma)\colon\MA\rightsquigarrow\MB$ be an injective interpretation and $\F=\F_{\Gamma,\bar p,\mu_\Gamma}$ is the corresponding functor from Theorem~\ref{th:inter}. Then for any infinite cardinals $\alpha\leqslant\beta$ one has $\F(\LS_{\alpha,\beta}(\MA))\subseteq \LS_{\alpha,\beta}(\MB)$, in particular,
\begin{gather*}
\F\colon\LS(\MA)\to\LS(\MB),\\
\F\colon\DS(\MA)\to\DS(\MB).
\end{gather*}
\end{corollary}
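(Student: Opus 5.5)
The plan is to read the claim straight off the construction of $\F$ in the proof of Theorem~\ref{th:inter}, adding the injectivity hypothesis on $\Gamma$ at the one place where objects are created. I take ``injective interpretation'' to mean, as in~\cite{Th_int1}, that $E_\Gamma(\bar x,\bar x^\prime,\bar y)$ is the formula $\bar x=\bar x^\prime$. Then $\sim_\Gamma$ is equality on $U_\Gamma(\MB,\bar p)$, and $\mu_\Gamma$ is a bijection $U_\Gamma(\MB,\bar p)\to A$.

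\textbf{Objects.} Let $X=\V_\MA(S_X)$ be a non-empty $(\alpha,\beta)$\=/logical set, viewed in $\PLS(\MA)$ with $\sim_X$ the identity relation, induced by $E_m$ (Example~\ref{ex:Em}). By the construction, $\F(X)=\bar X/{\sim_{\bar X}}$, where $\bar X=\V_\MB((S_X)_{\Gamma,\bar p,\gamma})$ and $\sim_{\bar X}$ is induced by $(E_m)_{\Gamma,\bar p,\gamma}$. By the notation~\eqref{eq:rel_Gamma}, this is the relation $\sim_\Gamma$ applied coordinatewise. Since $\Gamma$ is injective, $\sim_\Gamma$ is equality, so $\sim_{\bar X}$ is the identity relation on $\bar X$. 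This is exactly the content of the Remark stated before Example~\ref{ex:Gamma}.

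Hence $\F(X)$ is the logical set $\bar X$ itself. By Remark~\ref{remark:a,b}, $\bar X$ is $(\alpha,\beta)$\=/logical, and definable when $X$ is definable. The empty set is sent to $\emptyset$, which is definable.

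\textbf{Morphisms.} $\LS_{\alpha,\beta}(\MA)$ and $\LS_{\alpha,\beta}(\MB)$ are full subcategories of the corresponding $\PLS$ categories. We already know that $\F(F_{XY})=F_{\bar X\bar Y}$ is a morphism of $\PLS(\MB)$ between $\F(X)$ and $\F(Y)$. So it lies in $\LS_{\alpha,\beta}(\MB)$ automatically, and no further work is needed for morphisms. The special cases $\LS=\LS_{\infty,\infty}$ and $\DS=\LS_{\omega,\omega}$ then follow immediately.

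\textbf{Main obstacle.} The only delicate point is the objects step. Strictly, $\F(X)$ is the quotient $\bar X/{=}$, and we must check that this coincides with the object $\bar X$ of $\LS(\MB)$. This holds because the induced relation is literal equality on $\bar X$, not merely a relation definable by some other formula.
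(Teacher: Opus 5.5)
Your proof is correct and is the argument the paper leaves implicit; the paper writes no separate proof. It combines three things: the Remark that for an injective code the identity relation on $X$ is carried to the identity relation on $\bar X$, Remark~\ref{remark:a,b} for preserving the $(\alpha,\beta)$ bounds and definability, and the fact that $\LS_{\alpha,\beta}(\MB)$ is a full subcategory of $\PLS(\MB)$, so morphisms come for free.
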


\begin{corollary}
Let $(\Gamma,\emptyset,\mu_\Gamma)\colon\MA\rightsquigarrow\MB$ be an absolute injective interpretation and $\F=\F_{\Gamma,\emptyset,\mu_\Gamma}$ is the corresponding functor from Theorem~\ref{th:inter}. Then for any infinite cardinal $\beta$ one has $\F(\LS_{0,\beta}(\MA))\subseteq \LS_{0,\beta}(\MB)$, in particular,
\begin{gather*}
\F\colon\LS_0(\MA)\to\LS_0(\MB),\\
\F\colon\DS_0(\MA)\to\DS_0(\MB).
\end{gather*}
\end{corollary}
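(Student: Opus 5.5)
The statement combines the two previous corollaries: the interpretation is absolute ($\bar p=\emptyset$) and injective ($\sim_\Gamma$ is equality on $U_\Gamma(\MB)$). The plan is to show that for an object $X$ of $\LS_{0,\beta}(\MA)$ and a morphism between such objects, $\F$ produces an object and morphism of $\LS_{0,\beta}(\MB)$. Both properties have already been checked separately, so it remains to verify that they do not interfere.

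For objects, take a non-empty absolute logical set $X=\V_\MA(S)$ with $S\subseteq{\bf F}_{L(\MA)}(x_1,\ldots,x_m)$ and $|S|<\beta$, viewed as $X/{=}$. Since no constants from $A$ occur in any $\psi\in S$, the $(\Gamma,\emptyset,\gamma)$-translation $\psi_{\Gamma,\emptyset,\gamma}$ is just $\psi_\Gamma$. It is an $L(\MB)$-formula without parameters and does not depend on $\gamma$. Hence $\bar X=\V_\MB(S_\Gamma)$ is an absolute logical set given by a system of cardinality at most $|S|<\beta$.

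The equivalence on $\bar X$ is $\sim_{\bar X}=\sim_\Gamma$. It is induced by the translation of the absolute formula $E_m$, so it is absolute. By injectivity of $\Gamma$, via \eqref{eq:rel_Gamma}, it coincides with the identity relation on $\bar X$ (as in the remark on injective codes). Therefore $\F(X)=\bar X/{=}$ is the logical set $\bar X$ itself, which lies in $\LS_{0,\beta}(\MB)$. The empty set goes to the empty set.

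For morphisms, an absolute formula map $F_{XY}$ between such sets is induced by an $L(\MA)$-formula $\varphi_{XY}$. Its image $F_{\bar X\bar Y}$ is induced by $(\varphi_{XY})_\Gamma$, again an $L(\MB)$-formula without parameters, by the remark that absolute codes send absolute morphisms to absolute morphisms. Functoriality and faithfulness are inherited from Theorem~\ref{th:inter}, so the restriction is a functor $\LS_{0,\beta}(\MA)\to\LS_{0,\beta}(\MB)$. The particular cases follow by taking $\beta=\infty$ for $\LS_0$, and $\beta=\omega$ for $\DS_0$, since $\DS_0=\LS_{0,\omega}$.

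The only point needing care is that a translation introduces no parameters: with $\bar p$ empty and no $A$-constants in the source formula, nothing from $B$ enters. This is immediate from the definition of the extended-code translation.
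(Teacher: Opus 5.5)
Your proposal is correct and matches the argument the paper intends. The paper gives no separate proof and treats this statement as the combination of Corollaries~\ref{cor:abs} and~\ref{cor:inj}. That combination rests on Remark~\ref{remark:a,b} for the cardinality bound, on the remark that an injective code sends logical sets to logical sets, and on the remarks that an absolute code preserves absoluteness of objects and morphisms. One point you leave implicit: you may choose the absolute system $S$ and the absolute inducing formula $\varphi_{XY}$ only because \ref{F2} and \ref{F6} make $\F$ independent of these choices.
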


\subsection{Translation functors in logical geometry}\label{subsec:functor}

As we see, the functor $\F$ from the proof of Theorem~\ref{th:inter} is a very special type of categorical functor. The basis of its construction and the justification of each step of this construction is the Reduction Theorem~\ref{RT}. Next, we want to understand under what conditions such a functor can exist between two categories $\PLS(\MA)$ and $\PLS(\MB)$, or just between $\PDS(\MA)$ and $\PDS(\MB)$, if we have no information about the existence of an interpretation $\MA\rightsquigarrow\MB$. Naturally, moving in the opposite direction, we will rely on Inverse Reduction Theorem~\ref{IRT}. We immediately note that we are interested only in non-degenerate functors due to
 Fact~\ref{fact:empty}. 

Let $(\Gamma,\bar p,\gamma)\colon L(\MA)\cup A\to L(\MB)\cup B$ be an extended code (see Definition~\ref{def:code}). 
Clearly, to define the non-degenerate functor $\F=\F_{\Gamma,\bar p,\gamma}\colon \PLS(\MA)\to \PLS(\MB)$ correctly, as we did it above, we need to require that the conditions~\ref{F1}--\ref{F7} be satisfied; and $\F$ will be an embedding if and only if the conditions~\ref{E1}--\ref{E3} hold. For any infinite cardinals $\alpha\leqslant\beta$ these conditions have natural weakened analogues; we denote them by~\ref{F1}$_{\alpha,\beta}$--\ref{F7}$_{\alpha,\beta}$ and \ref{E1}$_{\alpha,\beta}$--\ref{E3}$_{\alpha,\beta}$; they form a criterion for the functor $\F=\F_{\Gamma,\bar p,\gamma}\colon \PLS_{\alpha,\beta}(\MA)\to \PLS_{\alpha,\beta}(\MB)$ to exist and be an embedding. In the conditions~\ref{F1}$_{\alpha,\beta}$--\ref{F7}$_{\alpha,\beta}$ and \ref{E1}$_{\alpha,\beta}$--\ref{E3}$_{\alpha,\beta}$ all systems $S$ have less then $\beta$ formulas and less then $\alpha$ parameters from $A$. 

\begin{definition}
We refer to a functor $\F\colon \PLS_{\alpha,\beta}(\MA)\to \PLS_{\alpha,\beta}(\MB)$ as a {\em translation functor}, if there exists an extended code $(\Gamma,\bar p,\gamma)\colon L(\MA)\cup A\to L(\MB)\cup B$, such that \ref{F1}$_{\alpha,\beta}$--\ref{F7}$_{\alpha,\beta}$ hold, and $\F=\F_{\Gamma,\bar p,\gamma}$ is the functor, constructed as in the proof of Theorem~\ref{th:inter}, using the $(\Gamma,\bar p,\gamma)$\=/translation. In this case we will also say that $\F$ is the $(\Gamma,\bar p,\gamma)$\=/functor, or that the $(\Gamma,\bar p,\gamma)$\=/functor is well-defined. If the code $\Gamma$ is injective we refer to the restriction $\F\colon \LS_{\alpha,\beta}(\MA)\to \LS_{\alpha,\beta}(\MB)$ of $(\Gamma,\bar p,\gamma)$\=/functor as an {\em injective translation functor}. If the code $\Gamma$ is absolute we refer to the restriction $\F\colon \PLS_{0,\beta}(\MA)\to \PLS_{0,\beta}(\MB)$ of $(\Gamma,\emptyset,\gamma)$\=/functor as an {\em absolute translation functor} and name it {\em $\Gamma$\=/functor}. When $(\alpha,\beta)$ is $(\infty,\infty)$, we refer to translation functors as {\em interpretation functors}.
\end{definition}

It is clear that the restriction of the interpretation $(\Gamma,\bar p,\gamma)$\=/functor $\F\colon \PLS(\MA)\to\PLS(\MB)$ to the subcategory $\PLS_{\alpha,\beta}(\MA)$ is the translation $(\Gamma,\bar p,\gamma)$\=/functor $\F\colon \PLS_{\alpha,\beta}(\MA)\to\PLS_{\alpha,\beta}(\MB)$. Sometimes we will note such a restriction of the interpretation functor by $\F_{\alpha,\beta}$. All similar connections between translation functors are listed in Fact~\ref{fact:functor} below.

Mostly, we will consider interpretation functors and translation functors for the pair of cardinals $(\omega,\omega)$. If $\F$ is the interpretation functor of an interpretation $(\Gamma,\bar p,\mu_\Gamma)\colon\MA\rightsquigarrow\MB$, we will refer to it as the interpretation $(\Gamma,\bar p,\mu_\Gamma)$\=/functor and to its restriction to the category $\PDS(\MA)$ as the translation $(\Gamma,\bar p,\mu_\Gamma)$\=/functor.

The main goal of this subsection is to prove the following two theorems.

\begin{theorem}[about translation functors]\label{th:functor1}
For any algebraic structures $\MA=\langle A; L(\MA)\rangle$ and $\MB=\langle B; L(\MB)\rangle$ and any extended code $(\Gamma,\bar p,\gamma)\colon L(\MA)\cup A\to L(\MB)\cup B$ the following conditions are equivalent:
    \begin{enumerate}[label=(\arabic*)]
        \item\label{item:t1} there exist an elementary extension $\MA\preceq\nsA$ and an interpretation $(\Gamma,\bar p, \mu_\Gamma)\colon \nsA\rightsquigarrow\MB$, such that $\mu_\Gamma \circ\gamma=\id_A$;
        \item\label{item:t2} there exists a translation $(\Gamma,\bar p,\gamma)$\=/functor $\F\colon\PDS(\MA)\to \PDS(\MB)$;
        \item\label{item:t3} $(\Gamma,\bar p,\gamma)$ satisfies the condition~\ref{item:IRT} from Inverse Reduction Theorem~\ref{IRT}.
    \end{enumerate}
In this case, $\gamma(A)\subseteq U_\Gamma(\MB,\bar p)$ and $\F$ is an embedding.
\end{theorem}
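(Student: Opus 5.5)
I would prove the cycle \ref{item:t1}$\Rightarrow$\ref{item:t3}$\Rightarrow$\ref{item:t1}, and separately \ref{item:t3}$\Leftrightarrow$\ref{item:t2}, and then read off the final claims.

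\textbf{Condition \ref{item:t1} implies \ref{item:t3}.} Apply the Extended Reduction Theorem~\ref{ERT} to the chain $\MA\preceq\nsA\rightsquigarrow\MB\to\MB$. Here $\iota$ is the inclusion and $\kappa=\id_\MB$. For $\gamma$ on $\nsuA$, extend the given $\gamma$ from $A$ to $\nsuA$ by choosing preimages under $\mu_\Gamma$; this is possible since $\mu_\Gamma\circ\gamma=\id_A$. Then \ref{item:RT} is literally \ref{item:IRT}.

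\textbf{Condition \ref{item:t3} implies \ref{item:t1}.} This is the Inverse Reduction Theorem~\ref{IRT}. It gives the structure $\Gamma(\MB,\bar p)$, an interpretation $(\Gamma,\bar p,\mu_\Gamma)\colon \Gamma(\MB,\bar p)\rightsquigarrow\MB$, and an elementary embedding $\iota=\mu_\Gamma\circ\gamma$. Replacing $\Gamma(\MB,\bar p)$ by an isomorphic copy $\nsA$ containing $A$, so that $\iota$ becomes an inclusion, and transporting $\mu_\Gamma$ along this isomorphism, we get $\MA\preceq\nsA$ and $\mu_\Gamma\circ\gamma=\id_A$. The same theorem also gives $\gamma(A)\subseteq U_\Gamma(\MB,\bar p)$.

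\textbf{Condition \ref{item:t3} implies \ref{item:t2}.} I would check \ref{F1}$_{\omega,\omega}$--\ref{F7}$_{\omega,\omega}$ and \ref{E1}$_{\omega,\omega}$--\ref{E3}$_{\omega,\omega}$. In the proofs of Facts~\ref{fact1}--\ref{fact17}, the only input was the Reduction Theorem applied to formulas with parameters from $A$. Every such condition can be rephrased as a first-order sentence over $\MA_A$ or $\MB_B$ about definable sets (nonemptiness, equality, inclusion, equivalence-relation and map conditions from Lemma~\ref{fm}), using finitely many formulas since $\beta=\omega$. So each condition has the form ``$\MA_A\models\theta$ iff $\MB_B\models\theta_{\Gamma,\bar p,\gamma}$'', which is exactly \ref{item:IRT}, using Remark~\ref{remar:norm_form} for the relativised quantifiers. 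Alternatively, one can use \ref{item:t1}: the interpretation $\nsA\rightsquigarrow\MB$ gives the functor $\F_{\Gamma,\bar p,\mu_\Gamma}$ on $\PDS(\nsA)$ by Theorem~\ref{th:inter}. Definable sets of $\MA$ with parameters in $A$ extend to definable sets of $\nsA$, and elementarity of $\MA\preceq\nsA$ transfers every definable property back to $\MA$. The translated formulas agree because $\gamma$ extends to $\nsuA$. Either way, faithfulness and injectivity on objects come from \ref{E1}--\ref{E3} through the same transfer, so $\F$ is an embedding.

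\textbf{Condition \ref{item:t2} implies \ref{item:t3}.} This is the main obstacle: I must extract \ref{item:IRT} for every sentence $\psi$ in $L(\MA)\cup A$ from functoriality alone. The plan is to encode truth of $\psi(\bar a)$ as a statement about definable sets. Take $X=\V_\MA(\{\psi(\bar x)\})$ in one variable, or $X=\V_\MA(\{\psi\wedge x=x\})\subseteq A$ for a sentence. By \ref{F1}$_{\omega,\omega}$, $\MA_A\models\psi$ iff $X\ne\emptyset$ iff $\bar X\neq\emptyset$ iff $\MB_B\models\exists\bar x\,\psi_{\Gamma,\bar p,\gamma}\wedge U_\Gamma$. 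This is equivalent to $\MB_B\models\psi_{\Gamma,\bar p,\gamma}$ once the $U_\Gamma$-conjuncts are seen to be harmless. For that I first need $\gamma(a)\in U_\Gamma(\MB,\bar p)$ and $\MB\models\AC_\Gamma(\bar p)$. I plan to get these by applying \ref{F1} to the singleton sets $\{a\}=\V_\MA(x=a)$ and to the sets witnessing the admissibility sentences, exactly as in the proof of Theorem~\ref{IRT}. The delicate point is making the translation of a sentence (no free variables) match the nonemptiness of a translated set. I would handle it by adding a dummy variable $x$ and invoking Remark~\ref{remar:norm_form}.
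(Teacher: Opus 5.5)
Your proof is correct and uses the same ingredients as the paper: the Inverse Reduction Theorem~\ref{IRT} (plus passing to an isomorphic copy) for \ref{item:t3}$\Rightarrow$\ref{item:t1}, condition~\ref{F1}$_{\omega,\omega}$ for \ref{item:t2}$\Rightarrow$\ref{item:t3}, and, in your ``alternative'' route to \ref{item:t2}, exactly the argument of Lemma~\ref{lem1} (transferring the conditions of Remark~\ref{remark:FE} along $\MA_A\equiv\nsA_A$), which is how the paper proves \ref{item:t1}$\Rightarrow$\ref{item:t2}. The differences are organizational: the paper runs the cycle \ref{item:t1}$\Rightarrow$\ref{item:t2}$\Rightarrow$\ref{item:t3}$\Rightarrow$\ref{item:t1}, whereas you get \ref{item:t1}$\Rightarrow$\ref{item:t3} directly from the Extended Reduction Theorem~\ref{ERT}, which is a valid shortcut; your first, ``direct'' route from \ref{item:t3} to \ref{item:t2} would still have to identify the $\MB$-side sentences of Remark~\ref{remark:FE} with translations of the $\MA$-side ones, and that is most cleanly done by going through \ref{item:t1} anyway.
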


\begin{theorem}[about interpretation functors]\label{th:functor2}
For any algebraic structures $\MA=\langle A; L(\MA)\rangle$ and $\MB=\langle B; L(\MB)\rangle$ the following hold:
    \begin{enumerate}[label=(\arabic*)]
        \item\label{item:IF1} For any interpretation $(\Gamma,\bar p, \mu_\Gamma)\colon \MA\rightsquigarrow\MB$, there exist an extended code $(\Gamma,\bar p,\gamma)\colon L(\MA)\cup A\to L(\MB)\cup B$, such that $\mu_\Gamma \circ\gamma=\id_A$, and the interpretation $(\Gamma,\bar p,\gamma)$\=/functor $\F\colon\PLS(\MA)\to \PLS(\MB)$;
            \item\label{item:IF2} For any extended code $(\Gamma,\bar p,\gamma)\colon L(\MA)\cup A\to L(\MB)\cup B$ if there exists the interpretation $(\Gamma,\bar p,\gamma)$\=/functor $\F\colon\PLS(\MA)\to \PLS(\MB)$, then there exists an interpretation $(\Gamma,\bar p, \mu_\Gamma)\colon \MA\rightsquigarrow\MB$, such that $\mu_\Gamma \circ\gamma=\id_A$.
    \end{enumerate}
In both cases, $\gamma(A)\subseteq U_\Gamma(\MB,\bar p)$ and $\F$ is an embedding, which is also a functor $\F\colon\PLS_{\alpha,\beta}(\MA)\to \PLS_{\alpha,\beta}(\MB)$ for any infinite cardinals $\alpha\leqslant\beta$.
\end{theorem}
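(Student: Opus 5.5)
Part (1) follows directly from Theorem~\ref{th:inter}. Given an interpretation $(\Gamma,\bar p,\mu_\Gamma)\colon\MA\rightsquigarrow\MB$, fix, by choice, any map $\gamma\colon A\to U_\Gamma(\MB,\bar p)$ with $\mu_\Gamma\circ\gamma=\id_A$. Then $(\Gamma,\bar p,\gamma)$ is an extended code. The proof of Theorem~\ref{th:inter} already shows that \ref{F1}--\ref{F7} and \ref{E1}--\ref{E3} hold for this code, so the $(\Gamma,\bar p,\gamma)$\=/functor is an interpretation functor and an embedding. Remark~\ref{remark:gamma} shows that it does not depend on the choice of $\gamma$. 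Remark~\ref{remark:a,b} gives the restriction to $\PLS_{\alpha,\beta}$ for all infinite cardinals $\alpha\leqslant\beta$.

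For part (2), suppose the interpretation $(\Gamma,\bar p,\gamma)$\=/functor $\F\colon\PLS(\MA)\to\PLS(\MB)$ exists. First I would restrict it to $\PDS(\MA)$. By the remark after the definition of translation functors, this restriction is a translation functor $\PDS(\MA)\to\PDS(\MB)$. Theorem~\ref{th:functor1}, via \ref{item:t2}$\Rightarrow$\ref{item:t3}, then gives condition~\ref{item:IRT}. The Inverse Reduction Theorem~\ref{IRT} now applies: $\Gamma(\MB,\bar p)$ is well defined, $\gamma(A)\subseteq U_\Gamma(\MB,\bar p)$, and $\iota\colon a\mapsto\gamma(a)/{\sim_\Gamma}$ is an elementary embedding $\MA\to\Gamma(\MB,\bar p)$.

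It remains to show that $\iota$ is surjective. Then $\iota$ is an isomorphism, and $\mu_\Gamma:=\iota^{-1}\circ(\bar b\mapsto \bar b/{\sim_\Gamma})$ is a coordinate map of an interpretation $(\Gamma,\bar p,\mu_\Gamma)\colon\MA\rightsquigarrow\MB$ with $\mu_\Gamma\circ\gamma=\iota^{-1}\circ\iota=\id_A$. The main obstacle is exactly this surjectivity, and it is where the full strength of $\PLS$ over $\PDS$ has to be used, namely logical sets with infinitely many parameters.

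I would argue by contradiction. Suppose $\bar b\in U_\Gamma(\MB,\bar p)$ with $\bar b/{\sim_\Gamma}\notin\iota(A)$. Consider the system $S=\{x\neq a\mid a\in A\}$. Then $\V_\MA(S)=\emptyset$, whereas $S_{\Gamma,\bar p,\gamma}=\{\neg E_\Gamma(\bar x,\gamma(a),\bar p)\wedge U_\Gamma(\bar x,\bar p)\mid a\in A\}$, up to equivalent normal forms, is satisfied by $\bar b$. This contradicts \ref{F1}, which holds because $\F$ is a well-defined interpretation functor. Some care is needed to use \ref{F1} in the correct form: it must be applied to the system $S$ itself, not to objects of the category, since $\emptyset$ enters $\PLS(\MA)$ only as the initial object. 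One way around this is to compare $S$ with $S'=\{x\neq x\}$, which induces the same set $\emptyset$, and invoke \ref{F2}. This yields $\V_\MB(S_{\Gamma,\bar p,\gamma})=\V_\MB(S'_{\Gamma,\bar p,\gamma})=\emptyset$, which is the required contradiction.

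Finally, the embedding property and the restrictions to $\PLS_{\alpha,\beta}$ in case (2) follow from part (1) applied to the interpretation just constructed, because the functor built there is the same $(\Gamma,\bar p,\gamma)$\=/functor.
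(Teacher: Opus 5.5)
Your proposal is correct and follows the paper's proof essentially step for step. Part (1) comes from Theorem~\ref{th:inter}; in part (2) you obtain (IRT) from (F1), routing this through the restriction to $\PDS$ and Theorem~\ref{th:functor1}, which is the same argument, and you get surjectivity of $\iota$ from the system $\{x\neq a\mid a\in A\}$. The detour through (F2) and $\{x\neq x\}$ is unnecessary: (F1) is by definition stated for arbitrary systems $S$, including those with infinitely many parameters, so it applies to $\{x\neq a\mid a\in A\}$ directly, exactly as the paper uses it.
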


We start the proofs of the theorems above with the following remark. 

\begin{remark}\label{remark:FE}
Logically, each of the conditions~\ref{F1}$_{\omega,\omega}$--\ref{F7}$_{\omega,\omega}$ and~\ref{E1}$_{\omega,\omega}$--\ref{E3}$_{\omega,\omega}$ for a given pair of algebraic structures $\MA,\MB$ and an extended code $(\Gamma,\bar p,\gamma)\colon L(\MA)\cup A\to L(\MB)\cup B$ has one of the following three forms
\begin{enumerate}[label=(FE)$_{\omega,\omega}$]
\item\label{EF} for any integer $m\in\N$ and any formulas $\phi_1,\ldots,\phi_d\in {\bf F}_{L(\MA)\cup A}(x_1,\ldots,x_m)$ one has 
$$
\MA_A\models \psi_{\MA} \ \underset{\text{\normalsize $\Longleftarrow$}}{\overset{\text{\normalsize $\Longrightarrow$}}{\Longleftrightarrow}} \ \MB_B\models \psi_\MB, 
$$
\end{enumerate}
where $\psi_\MA$ and $\psi_\MB$ are suitable sentences in the languages $L(\MA)\cup A$ and $L(\MB)\cup B$, constructed by formulas $\phi_1,\ldots,\phi_d$, using algorithms specified for each of the conditions \ref{F1}$_{\omega,\omega}$--\ref{F7}$_{\omega,\omega}$ and  \ref{E1}$_{\omega,\omega}$--\ref{E3}$_{\omega,\omega}$. Here for \ref{F5}$_{\omega,\omega}$ we use Lemma~\ref{fm}. 
\end{remark}

Let us consider an important particular case of a translation functor, when $\Gamma=\Id_{L(\MA)}$. It is an absolute code $\Gamma\colon L(\MA)\to L(\MA)$ of dimension $1$~\cite[Subsection~2.4]{Th_int1}. Suppose also that $\nsA=\langle \nsuA; L(\MA)\rangle$ is an $L(\MA)$\=/structure.

\begin{lemma}\label{lemma:iota}
For a map $\iota\colon A\to \nsuA$ the following conditions are equivalent:
    \begin{enumerate}[label=(\arabic*)]
    \item $\iota$ is an elementary $L(\MA)$\=/embedding;
    \item the translation $(\Id_{L(\MA)},\emptyset,\iota)$\=/functor $\F\colon \PDS(\MA)\to \PDS(\nsA)$ is well-defined.
\end{enumerate}
In this case functor $\F$ is an embedding.
\end{lemma}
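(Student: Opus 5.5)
For $\Gamma=\Id_{L(\MA)}$ the $\Gamma$\=/translation of a normal form $\psi$ is, up to first-order equivalence, $\psi$ itself. The admissible set $U_{\Id}$ is all of $\nsuA$, the equivalence $E_{\Id}$ is equality, and the symbol formulas are the obvious unnested atomic ones. Hence for any $L(\MA)\cup A$\=/formula $\psi(\bar x,a_1,\ldots,a_s)$ the $(\Id_{L(\MA)},\emptyset,\iota)$\=/translation is equivalent in $\nsA$ to $\psi(\bar x,\iota(a_1),\ldots,\iota(a_s))$. With this identification, condition~\ref{item:IRT} for the extended code $(\Id_{L(\MA)},\emptyset,\iota)$ says precisely
\[
\MA\models\psi(a_1,\ldots,a_s)\iff\nsA\models\psi(\iota(a_1),\ldots,\iota(a_s)),
\]
that is, condition~\ref{EE}, so~\ref{item:IRT} holds if and only if $\iota$ is an elementary embedding. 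I will record this identification first, checking it on unnested atomic formulas (where it is immediate from the definition of $\Id_{L(\MA)}$ in~\cite{Th_int1}) and noting that translation commutes with the connectives and with relativized quantifiers, as in Remark~\ref{remar:norm_form}.

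Given this, the cleanest route is to apply Theorem~\ref{th:functor1} with $\MB=\nsA$ and the extended code $(\Id_{L(\MA)},\emptyset,\iota)$. Its items~\ref{item:t2} and~\ref{item:t3} are equivalent, and item~\ref{item:t3} is equivalent to $\iota$ being elementary by the paragraph above. The addendum of Theorem~\ref{th:functor1} then gives that $\F$ is an embedding.

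One concern is that Theorem~\ref{th:functor1} is stated but not yet proved at this point, and this lemma may well be a step in its proof. In that case I will argue directly. If $\iota$ is elementary, then~\ref{EE} yields every condition~\ref{F1}$_{\omega,\omega}$--\ref{F7}$_{\omega,\omega}$ and~\ref{E1}$_{\omega,\omega}$--\ref{E3}$_{\omega,\omega}$ at once. By Remark~\ref{remark:FE}, each of these is an implication or equivalence between $\MA_A\models\psi_\MA$ and $\nsA_{\iota(A)}\models\psi_\MB$, where $\psi_\MB$ is (equivalent to) $\psi_\MA$ with each $a$ replaced by $\iota(a)$; elementarity transfers all such sentences in both directions. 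This gives well-definedness of the functor and, via the E-conditions, that it is an embedding. Conversely, if the functor is well-defined, I will use the hom-set conditions on definable sets to recover~\ref{EE}. From~\ref{F1}$_{\omega,\omega}$ applied to the definable set $\V_\MA(\{\psi(x,\bar a)\})$ I get
\[
\MA\models\exists x\,\psi(x,\bar a)\iff\nsA\models\exists x\,\psi(x,\iota(\bar a)),
\]
which is condition~(3) of the Tarski\,--\,Vaught Test (Fact~\ref{TV}).

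The main obstacle is conditions (1) and (2) of Fact~\ref{TV}: preservation of constants and functions, and also injectivity of $\iota$. For these I will again use~\ref{F1}$_{\omega,\omega}$, now on the nonempty definable set $\V_\MA(\{x_1=f(a_1,\ldots,a_k)\wedge x_1=a_0\})$ when $f^\MA(\bar a)=a_0$, and on $\V_\MA(\{x_1=c\wedge x_1=a\})$ when $c^\MA=a$; its nonemptiness forces the corresponding equalities to hold for the $\iota$\=/images in $\nsA$. Injectivity follows similarly, since $\V_\MA(\{x_1=a\wedge x_1=a'\})=\emptyset$ for $a\ne a'$. Fact~\ref{TV} then shows that $\iota$ is elementary, and the functor is an embedding by the first direction.
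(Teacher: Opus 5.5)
Your direct argument is essentially the paper's proof: the $\Id_{L(\MA)}$-translation is equivalent to the formula itself, so each of \ref{F1}$_{\omega,\omega}$--\ref{E3}$_{\omega,\omega}$ becomes a transfer statement along $\iota$, and \ref{F1}$_{\omega,\omega}$ gives elementarity for the converse. Your Tarski--Vaught detour, with constants, functions and injectivity handled separately, is harmless but unnecessary: your displayed equivalence applied to $\exists x\,(\varphi(\bar a)\wedge x=x)$ already yields~\ref{EE}, which is the paper's definition of an elementary embedding. Your shortcut via Theorem~\ref{th:functor1} is also legitimate and not circular, since the paper proves that theorem from Lemma~\ref{lem1} and the Inverse Reduction Theorem, not from this lemma.
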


\begin{proof}
    For any $L(\MA)$\=/formula $\psi$ the translation $\psi_{\Id_{L(\MA)}}$ is first-order equivalent to $\psi$~\cite[Remark~11]{Th_int1}. Therefore, each of the conditions~\ref{F1}$_{\omega,\omega}$--\ref{F7}$_{\omega,\omega}$ and  \ref{E1}$_{\omega,\omega}$--\ref{E3}$_{\omega,\omega}$ may be rewritten in one of the following three forms
    \begin{equation*}
      \MA\models \psi(a_1,\ldots,a_s) \ \underset{\text{\normalsize $\Longleftarrow$}}{\overset{\text{\normalsize $\Longrightarrow$}}{\Longleftrightarrow}} \  \nsA\models \psi (\iota(a_1),\ldots, \iota(a_s)),  
    \end{equation*}
where $\psi$ is a suitable $L(\MA)$\=/formula and $a_1,\ldots,a_s\in A$. Therefore, if $\iota$ is an elementarily embedding, then the conditions~\ref{F1}$_{\omega,\omega}$--\ref{F7}$_{\omega,\omega}$ and  \ref{E1}$_{\omega,\omega}$--\ref{E3}$_{\omega,\omega}$ hold, hence, the $(\Id_{L(\MA)},\emptyset,\iota)$\=/functor is well-defined and it is an embedding. Conversely, suppose there exists the $(\Id_{L(\MA)},\emptyset,\iota)$\=/functor $\F\colon \PDS(\MA)\to \PDS(\nsA)$, then the conditions~\ref{F1}$_{\omega,\omega}$--\ref{F7}$_{\omega,\omega}$ hold. In particular,~\ref{F1}$_{\omega,\omega}$ gives that the condition~\ref{EE} from Subsection~\ref{sec:IRT} holds, i.\,e., $\iota$ is an elementary $L(\MA)$\=/embedding.
\end{proof}

\begin{lemma}\label{lem1}
Let $\iota\colon \MA\to\nsA$ be an elementary $L(\MA)$\=/embedding,  $(\Gamma,\bar p,\mu_\Gamma)\colon\nsA\rightsquigarrow\MB$ an interpretation and $\gamma\colon \nsuA\to U_\Gamma(\MB,\bar p)$ a map, such that $\mu_\Gamma\circ\gamma=\id_{\nsuA}$, so that one has
$$
\begin{tikzcd}
\MA\arrow[r,"{\iota}"] &\nsA \arrow[r,rightsquigarrow,"{\Gamma,\bar p,\mu_\Gamma}"] & \MB.
\end{tikzcd}
$$
Then the translation $(\Gamma,\bar p,\gamma\circ\iota)$\=/functor $\F\colon \PDS(\MA)\to \PDS(\MB)$ is well-defined.
\end{lemma}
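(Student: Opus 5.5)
The plan is to reduce everything to the condition~\ref{item:IRT} for the extended code $(\Gamma,\bar p,\gamma\circ\iota)\colon L(\MA)\cup A\to L(\MB)\cup B$, and then verify each of the conditions \ref{F1}$_{\omega,\omega}$--\ref{F7}$_{\omega,\omega}$ using the Extended Reduction Theorem instead of the ordinary Reduction Theorem. First I apply the Extended Reduction Theorem~\ref{ERT} to the chain $\MA\xrightarrow{\iota}\nsA\rightsquigarrow\MB\xrightarrow{\id}\MB$, with the map $\gamma$ and $\kappa=\id_B$. This gives exactly~\ref{item:RT}: for every normal-form $L(\MA)$\=/formula $\psi(x_1,\ldots,x_s)$ and $a_1,\ldots,a_s\in A$ we have $\MA\models\psi(a_1,\ldots,a_s)$ iff $\MB\models\psi_\Gamma(\gamma(\iota(a_1)),\ldots,\gamma(\iota(a_s)),\bar p)$. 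Equivalently, for every sentence $\psi$ in the normal form in $L(\MA)\cup A$ one has $\MA_A\models\psi$ iff $\MB_B\models\psi_{\Gamma,\bar p,\gamma\circ\iota}$.

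Next I use Remark~\ref{remark:FE}. Each of \ref{F1}$_{\omega,\omega}$--\ref{F7}$_{\omega,\omega}$ has the form ``$\MA_A\models\psi_\MA$ implies or is equivalent to $\MB_B\models\psi_\MB$''. Here $\psi_\MA$ is an $L(\MA)\cup A$\=/sentence built from finitely many formulas $\phi_1,\ldots,\phi_d$ (the defining formulas of $X$, $E_X$, $\varphi_{XY}$, and so on). The sentence $\psi_\MB$ is built by the same algorithm from the translations $(\phi_j)_{\Gamma,\bar p,\gamma\circ\iota}$. For instance, \ref{F1} compares $\exists\bar x\,\phi$ with $\exists\bar{\bar x}\,\phi_{\Gamma,\bar p,\gamma\circ\iota}$, and \ref{F5} uses conditions~\ref{fm1}--\ref{fm3} of Lemma~\ref{fm}.

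The key step is to check that $\psi_\MB$ is first-order equivalent in $\MB_B$ to $(\psi_\MA)_{\Gamma,\bar p,\gamma\circ\iota}$. This amounts to translation commuting with the Boolean connectives and with quantifiers relativized to $U_\Gamma$ (Remark~\ref{remar:norm_form}). Once that holds, the displayed equivalence from the first step yields each required condition, in fact in its ``$\Longleftrightarrow$'' form, so \ref{E1}--\ref{E3} follow as well.

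The main obstacle I expect is quantification over $B$ rather than over $U_\Gamma(\MB,\bar p)$. Conditions like~\ref{fm2} and the closure clause~\ref{eq2} quantify over arbitrary $\bar b\in A^s$, and their $\MB$\=/side versions quantify over all of $B^{ns}$, not only over translated points. I would handle this in two moves. First, translated formulas $\phi_\Gamma$ already force their arguments into $U_\Gamma(\MB,\bar p)$, by the converse part of the Reduction Theorem~\ref{RT} applied to $\nsA\rightsquigarrow\MB$. Second, for $\bar{\bar b}\in U_\Gamma$ I replace $\bar{\bar b}$ by $\gamma(\mu_\Gamma(\bar{\bar b}))$, which is $\sim_\Gamma$\=/equivalent; elements of $\nsA$ outside $\iota(A)$ are then controlled because $\iota$ is elementary. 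This is really just the statement that the translated sentences are exactly what~\ref{item:RT} governs. Alternatively, one may directly invoke Theorem~\ref{th:functor1} (3)$\Rightarrow$(2), if its proof precedes this lemma. Otherwise I would carry out this verification condition by condition, mirroring Facts~\ref{fact1}--\ref{fact17} but with $\nsA$ in place of $\MA$ and $\mu_\Gamma$ composed with $\iota$.
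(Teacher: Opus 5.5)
Your route works, but it is organized differently from the paper's. The paper never derives \ref{item:RT} for the composite code. Since $(\Gamma,\bar p,\mu_\Gamma)\colon\nsA\rightsquigarrow\MB$ is a genuine interpretation, Facts~\ref{fact1}--\ref{fact17} already give \ref{F1}$_{\omega,\omega}$--\ref{F7}$_{\omega,\omega}$ and \ref{E1}$_{\omega,\omega}$--\ref{E3}$_{\omega,\omega}$ for $\nsA,\MB$ and $(\Gamma,\bar p,\gamma)$. For formulas whose parameters lie in $\iota(A)$, the $\MB$-side sentence $\psi_\MB$ of Remark~\ref{remark:FE} is literally the same as for $(\Gamma,\bar p,\gamma\circ\iota)$. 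The $\MA$-side sentence $\psi_\MA$ then transfers via $\MA_A\equiv\nsA_A$. So elementarity of $\iota$ is applied once, to a single $L(\MA)\cup A$-sentence, and no new syntactic work is needed.

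You instead use elementarity inside the Extended Reduction Theorem. You must then prove, condition by condition, that $\psi_\MB$ is equivalent in $\MB_B$ to $(\psi_\MA)_{\Gamma,\bar p,\gamma\circ\iota}$. That claim is true, but it is the real content of your proof, and Remark~\ref{remar:norm_form} does not suffice for it. Two further ingredients are needed.
\begin{itemize}
\item $\Gamma(\MB,\bar p)$ must be a well-defined structure (here it is isomorphic to $\nsA$). Then, on points of $U_\Gamma(\MB,\bar p)$, translations of normal forms commute with negation and the other connectives, via~\cite[Corollary~2]{Th_int1}.
\item You must check that each quantifier over all of $B$ in $\psi_\MB$ can be relativized to $U_\Gamma(\MB,\bar p)$, because translated formulas are false off $U_\Gamma$. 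This depends on the shape of the particular conditions; it would fail, for example, for an existential quantifier over a negated translated formula.
\end{itemize}
Once \ref{item:RT} is in hand, your extra move of replacing $\bar{\bar b}$ by $\gamma(\mu_\Gamma(\bar{\bar b}))$ and invoking elementarity again is redundant.

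What your packaging gains is a proof that uses the hypothesis only through \ref{item:RT} and the existence of $\Gamma(\MB,\bar p)$. It therefore yields (3)$\Rightarrow$(2) of Theorem~\ref{th:functor1} directly. The paper's packaging instead gets a two-line proof by reusing the facts already established for interpretations.

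Your fallback of citing Theorem~\ref{th:functor1} (3)$\Rightarrow$(2) is not available. The paper proves that implication as (3)$\Rightarrow$(1)$\Rightarrow$(2), and (1)$\Rightarrow$(2) is exactly this lemma, so the citation would be circular.
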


\begin{proof}
Since there exists an interpretation $(\Gamma,\bar p,\mu_\Gamma)\colon\nsA\rightsquigarrow\MB$ the conditions~\ref{F1}$_{\omega,\omega}$--\ref{F7}$_{\omega,\omega}$ and \ref{E1}$_{\omega,\omega}$--\ref{E3}$_{\omega,\omega}$ hold for the pair of algebraic structures $\nsA,\MB$ and the extended code $(\Gamma,\bar p,\gamma)\colon L(\MA)\cup \nsuA\to L(\MB)\cup B$. Let $\nsA_A$ be an $L(\MA)\cup A$\=/structure, where any constant symbol $a\in A$ is interpreted as $\iota(a)$. Then $\MA_A\equiv \nsA_A$, in particular, $\MA_A\models\psi_\MA$ if and only if $\nsA_{A}\models \psi_\MA$ for any formula $\psi_\MA$ from Remark~\ref{remark:FE}. Therefore, the conditions~\ref{F1}$_{\omega,\omega}$--\ref{F7}$_{\omega,\omega}$ and \ref{E1}$_{\omega,\omega}$--\ref{E3}$_{\omega,\omega}$ hold for the pair of algebraic structures $\MA,\MB$ and the extended code $(\Gamma,\bar p,\gamma\circ\iota)$. Hence, the translation $(\Gamma,\bar p,\gamma\circ\iota)$\=/functor $\F\colon \PDS(\MA)\to \PDS(\MB)$ is well-defined.
\end{proof}

\begin{proof}[Proof of Theorem~\ref{th:functor1}]
\ref{item:t1}$\Longrightarrow$\ref{item:t2}: Suppose that there exist an elementary extension $\MA\preceq\nsA$ and an interpretation $(\Gamma,\bar p, \mu_\Gamma)\colon \nsA\rightsquigarrow\MB$. Then by Lemma~\ref{lem1} there exists the translation $(\Gamma,\bar p,\gamma)$\=/functor $\F\colon \PDS(\MA)\to \PDS(\MB)$, where $\mu_\Gamma\circ\gamma=\id_A$; and $\F$ is an embedding. \ref{item:t2}$\Longrightarrow$\ref{item:t3}: If there exists a translation $(\Gamma,\bar p,\gamma)$\=/functor $\F\colon\PDS(\MA)\to \PDS(\MB)$, then the  conditions~\ref{F1}$_{\omega,\omega}$--\ref{F7}$_{\omega,\omega}$ holds for the pair of algebraic structures $\MA,\MB$ and the extended code $(\Gamma,\bar p,\gamma)\colon L(\MA)\cup A\to L(\MB)\cup B$. In particular, \ref{F1}$_{\omega,\omega}$ means that the condition~\ref{item:IRT} from Inverse Reduction Theorem~\ref{IRT} holds.  \ref{item:t3}$\Longrightarrow$\ref{item:t1}: If the condition~\ref{item:IRT} from Inverse Reduction Theorem~\ref{IRT} holds, then $\gamma(A)\subseteq U_\Gamma(\MB,\bar p)$ and the $L(\MA)$\=/structure $\Gamma(\MB,\bar p)$ is well-defined; and there exists an elementary $L(\MA)$\=/embedding $\iota\colon \MA\to \Gamma(\MB,\bar p)$, such that $\iota(a)=\gamma(a)/{\sim_\Gamma}$ for all $a\in A$. Let $\nsA$ be an elementary extension of $\MA$, which is isomorphic to $\Gamma(\MB,\bar p)$, and $\bar \iota\colon \nsA\to \Gamma(\MB,\bar p)$ be an $L(\MA)$\=/isomorphism, such that $\bar\iota(a)=\iota(a)$ for any $a\in A$. Hence, there exists an interpretation $(\Gamma,\bar p)\colon\nsA\to\MB$. If $\mu_\Gamma\colon U_\Gamma(\MB,\bar p)\to \nsuA$ is the coordinate map, such that $\bar\mu_\Gamma=\bar\iota^{-1}$, then $\mu_\Gamma\circ\gamma=\id_A$, as required.
\end{proof}

\begin{remark}\label{remark:gamma_tr}
Suppose that the condition~\ref{item:t1} from Theorem~\ref{th:functor1} holds. Then  the construction of the functor $\F\colon \PDS(\MA)\to \PDS(\MB)$ is completely based on the $(\Gamma,\bar p,\mu_\Gamma)$\=/translation of the interpretation $\nsA\rightsquigarrow\MB$. Therefore, by Remark~\ref{remark:gamma}, if $\gamma,\gamma^\prime\colon A\to U_\Gamma(\MB,\bar p)$ are functions, such that $\mu_\Gamma\circ\gamma=\mu_\Gamma\circ\gamma^\prime=\id_A$, then $\F_{\Gamma,\bar p,\gamma}=\F_{\Gamma,\bar p,\gamma^\prime}$. 
\end{remark}

\begin{corollary}\label{cor:inj1}
The $(\Gamma,\bar p,\gamma)$\=/translation functor $\F\colon \PDS(\MA)\to \PDS(\MB)$ is well-defined if and only if the condition~\ref{F1}$_{\omega,\omega}$ holds.
\end{corollary}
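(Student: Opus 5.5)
The forward direction is immediate from the definitions. If the $(\Gamma,\bar p,\gamma)$\=/translation functor $\F\colon\PDS(\MA)\to\PDS(\MB)$ is well-defined, then by definition all of \ref{F1}$_{\omega,\omega}$--\ref{F7}$_{\omega,\omega}$ hold, and in particular \ref{F1}$_{\omega,\omega}$ holds.

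For the converse, I will route the argument through Theorem~\ref{th:functor1}. It suffices to show that \ref{F1}$_{\omega,\omega}$ implies the condition~\ref{item:IRT}. Theorem~\ref{th:functor1} (implication \ref{item:t3}$\Longrightarrow$\ref{item:t2}, passing through \ref{item:t1} and Lemma~\ref{lem1}) then gives that the translation $(\Gamma,\bar p,\gamma)$\=/functor on $\PDS(\MA)$ is well-defined.

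The implication \ref{F1}$_{\omega,\omega}\Rightarrow$\ref{item:IRT} is the step the proof of Theorem~\ref{th:functor1} uses without comment ("\ref{F1}$_{\omega,\omega}$ means that the condition~\ref{item:IRT} holds"), so I will spell it out. Take a formula $\psi(x_1,\ldots,x_s)$ in the normal form in $L(\MA)$ and elements $a_1,\ldots,a_s\in A$, and apply \ref{F1}$_{\omega,\omega}$ to the single-formula system $S=\{x=x\wedge\psi(a_1,\ldots,a_s)\}$ in one variable $x$ (with $m=1$).
\begin{itemize}
\item $\V_\MA(S)$ is nonempty iff $\MA\models\psi(\bar a)$.
\item The translation $S_{\Gamma,\bar p,\gamma}$ is, up to the conjunct $U_\Gamma(\bar x,\bar p)\wedge E_\Gamma(\bar x,\bar x,\bar p)$ produced by translating $x=x$, the formula $\psi_\Gamma(\gamma(a_1),\ldots,\gamma(a_s),\bar p)$.
\item Hence $\V_\MB(S_{\Gamma,\bar p,\gamma})$ is nonempty iff $\MB\models\psi_\Gamma(\gamma(\bar a),\bar p)$ and the set $\{\bar b: U_\Gamma(\bar b,\bar p)\wedge E_\Gamma(\bar b,\bar b,\bar p)\}$ is nonempty.
\end{itemize}

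This last point is the main obstacle. One must make sure the extra conjunct does not destroy the equivalence, so that \ref{F1}$_{\omega,\omega}$ yields exactly \ref{item:IRT}. I would handle it in two steps.
\begin{enumerate}
\item Apply \ref{F1}$_{\omega,\omega}$ to the system $\{x=x\}$, whose solution set is $A\neq\emptyset$. This shows that $U_\Gamma(\MB,\bar p)$ contains a point $\bar b$ with $\MB\models E_\Gamma(\bar b,\bar b,\bar p)$.
\item The conjunct involves only the variable $\bar x$, while $\psi_\Gamma(\gamma(\bar a),\bar p)$ is a sentence, so the two parts separate. Nonemptiness of the translated set is therefore equivalent to $\MB\models\psi_\Gamma(\gamma(\bar a),\bar p)$.
\end{enumerate}
If the convention is to translate sentences with constants directly, one can avoid the dummy variable. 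In that case I would use the variable form: take $S=\{\psi(x_1,\ldots,x_s)\wedge\bigwedge_i x_i=a_i\}$ and note that, via the translation of $x_i=a_i$, its translated solution set is nonempty iff $\MB\models\psi_\Gamma(\gamma(\bar a),\bar p)$. Either way this gives \ref{item:IRT}, and Theorem~\ref{th:functor1} finishes the proof.
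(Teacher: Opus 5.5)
Your main argument is correct and follows the paper's route: the corollary is read off from the proof of Theorem~\ref{th:functor1}, whose step \ref{item:t2}$\Longrightarrow$\ref{item:t3} uses only \ref{F1}$_{\omega,\omega}$ and whose steps \ref{item:t3}$\Longrightarrow$\ref{item:t1}$\Longrightarrow$\ref{item:t2} rebuild the whole functor, and your dummy-variable system $\{x=x\wedge\psi(a_1,\ldots,a_s)\}$, together with \ref{F1}$_{\omega,\omega}$ applied to $\{x=x\}$, correctly supplies the step ``\ref{F1}$_{\omega,\omega}$ means \ref{item:IRT}'' that the paper leaves implicit. Drop the fallback with $\{\psi(x_1,\ldots,x_s)\wedge\bigwedge_i x_i=a_i\}$, however: its translated set only yields tuples that are $E_\Gamma$-related to the $\gamma(a_i)$, and moving between those tuples and $\gamma(\bar a)$ needs $E_\Gamma$ to be reflexive at $\gamma(a_i)$ and a congruence for $\psi_\Gamma$ (the admissibility conditions), which become available only after \ref{item:IRT} has been established.
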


For translation functors, one has an analog of Corollary~\ref{cor:functor_subset}, which may be proved by a similar argument.

\begin{corollary}\label{cor:tr_functor_subset}
Let $X/{\sim_{X}}$, $Z/{\sim_Z}$ and $X_j/{\sim_{X_j}}$, $j=1,\ldots,d$, be non-empty projective definable sets over $\MA$, $\sim_{1}$, $\sim_{2}$ formula equivalencies on $X$. Suppose that $\pi\colon X/{\sim_{1,X}}\to X/{\sim_{2,X}}$ is a natural surjection, $\varepsilon\colon Z/{\sim_Z}\to X/{\sim_X}$ is an identical embedding and $\Pi_r\colon X_1/{\sim_{X_1}}\times\ldots\times X_d/{\sim_{X_d}}\,\to X_r/{\sim_{X_r}}$ is a natural projection, $r\in\{1,\ldots,d\}$. Then one has~\ref{item1:subset}--\ref{item3:subset} from Corollary~\ref{cor:functor_subset}.  
\end{corollary}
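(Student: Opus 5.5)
The statement is Corollary~\ref{cor:tr_functor_subset}. My plan is to imitate the proof of Corollary~\ref{cor:functor_subset}, replacing the Reduction Theorem~\ref{RT} by the equivalence established in Theorem~\ref{th:functor1}.

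The first step is to reduce to a genuine interpretation. Since the translation $(\Gamma,\bar p,\gamma)$\=/functor $\F\colon\PDS(\MA)\to\PDS(\MB)$ is well-defined, Theorem~\ref{th:functor1} gives an elementary extension $\MA\preceq\nsA$ and an interpretation $(\Gamma,\bar p,\mu_\Gamma)\colon\nsA\rightsquigarrow\MB$ with $\mu_\Gamma\circ\gamma=\id_A$. All objects and morphisms under consideration are definable, so they are given by finitely many formulas in $L(\MA)\cup A$. Every statement to be checked has the form \ref{EF} of Remark~\ref{remark:FE}: it asserts a finite equality of definable sets, or inclusion, closure, or fullness, each expressible by a single sentence. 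Because $\MA_A\equiv\nsA_A$, as in the proof of Lemma~\ref{lem1}, these sentences hold in $\MA_A$ exactly when they hold in $\nsA_A$. Hence it suffices to apply the argument of Corollary~\ref{cor:functor_subset} to the interpretation $\nsA\rightsquigarrow\MB$, using the same formulas now read with parameters interpreted via $\iota$.

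The next step is to verify the four items directly from the construction, in the style of Corollary~\ref{cor:functor_subset}. For~\ref{item1:subset}, the product is defined by $\bigwedge_j S_{X_j}(\bar x_j)$ together with $\bigwedge_j E_{X_j}$. Remark~\ref{cor:wedge0}, namely \ref{F2} for conjunctions, shows that the translated object equals the product of the translated objects. For~\ref{item2:subset}, $\Pi_r$ is induced by $E_{X_r}(\bar x_r,\bar y)$, and its translation is $E_{\bar X_r}$, which induces the natural projection. For~\ref{item4:subset}, Fact~\ref{fact:surjection} says $\pi$ is fully induced by $E_2$, so $\F(\pi)$ is induced by $(E_2)_{\Gamma,\bar p,\gamma}$, which induces $\sim_{2}$ on $\bar X$. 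Corollary~\ref{cor:nat_sur} gives the required fineness, and Fact~\ref{fact:surjection} again identifies $\F(\pi)$ as a natural surjection. For~\ref{item3:subset}, Fact~\ref{fact:subset} says $\varepsilon$ is induced by $E_Z$, and Corollary~\ref{cor:subset} gives $\bar Z/{\sim_{\bar Z}}\subseteq\bar X/{\sim_{\bar X}}$. The converse half of Fact~\ref{fact:subset} then shows $\F(\varepsilon)$ is the identical embedding.

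I expect the main obstacle to be justifying the analogues of \ref{F2} and Corollaries~\ref{cor:nat_sur} and~\ref{cor:subset} for translation functors. These were proved via the Reduction Theorem for a true interpretation of $\MA$ itself. The transfer through $\nsA$ handles this, but it needs care that each needed fact is first-order, that is, a single sentence about finitely many definable sets. This holds precisely because we are in $\PDS$ and not $\PLS$.
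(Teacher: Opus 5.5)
Your argument is correct, but it takes a detour that the paper does not need.

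\textbf{The paper's route.} The paper proves this corollary exactly as Corollary~\ref{cor:functor_subset}, with \ref{F2}$_{\omega,\omega}$ in place of \ref{F2}. The conditions \ref{F1}$_{\omega,\omega}$--\ref{F7}$_{\omega,\omega}$ hold \emph{by definition} of a translation functor, so no transfer is needed to have them.
\begin{itemize}
\item Item (i) follows from \ref{F2}$_{\omega,\omega}$ applied to finite conjunctions, which is the finite case of Remark~\ref{cor:wedge0}.
\item Item (ii) follows from the construction.
\item Items (iii) and (iv) follow from the converse halves of Facts~\ref{fact:surjection} and~\ref{fact:subset}, which are statements inside $\PLS(\MB)$ alone. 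The map $\F(\pi)\colon\bar X/{\sim_{\bar 1}}\to\bar X/{\sim_{\bar 2}}$ is a formula map induced by $(E_2)_{\Gamma,\bar p,\gamma}$, hence a natural surjection. Fineness comes out of Fact~\ref{fact:surjection} itself, so Corollary~\ref{cor:nat_sur} is not needed. The map $\F(\varepsilon)$ is induced by $E_{\bar Z}$, and $\bar Z\subseteq\bar X$ follows from \ref{F2}$_{\omega,\omega}$ applied to $\V_\MA(S_Z)=\V_\MA(S_Z\cup S_X)$. So $\F(\varepsilon)$ is an identical embedding, and Corollary~\ref{cor:subset} is not needed either.
\end{itemize}
In particular, the obstacle you anticipate does not arise.

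\textbf{Your route.} You use Theorem~\ref{th:functor1} to realize $\F$ through an honest interpretation $(\Gamma,\bar p,\mu_\Gamma)\colon\nsA\rightsquigarrow\MB$, then quote Corollary~\ref{cor:functor_subset}. This is valid once you make explicit two points you only gesture at.
\begin{itemize}
\item Extend $\gamma$ to $\gamma'\colon\nsuA\to U_\Gamma(\MB,\bar p)$ with $\mu_\Gamma\circ\gamma'=\id_{\nsuA}$, and take the interpretation $(\Gamma,\bar p,\gamma')$-functor $\G\colon\PLS(\nsA)\to\PLS(\MB)$. For formulas in $L(\MA)\cup A$ the two translations then coincide literally. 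Hence $\F(O)=\G(O^*)$ for the copy $O^*$ of an object $O$ of $\PDS(\MA)$ (defined by the same formulas in $\nsA$), and likewise for morphisms.
\item What must be transferred along $\MA_A\equiv\nsA_A$ are the hypotheses, not the conclusions. These are: that $E_1,E_2$ induce formula equivalences on $X^*$ with $\sim_1$ finer than $\sim_2$; that $Z^*\subseteq X^*$ is closed under $\sim_{X^*}$ with the restricted equivalence; and that the chosen formulas induce the starred maps. The conclusions live in $\MB_B$ and are the same for $\F$ and $\G$.
\end{itemize}

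\textbf{Comparison.} Your reduction is conceptually pleasant: it exhibits a translation functor on $\PDS(\MA)$ as an interpretation functor of $\nsA$ restricted to copies of definable objects. The cost is the Inverse Reduction Theorem machinery and a first-order transfer that, as you note, works only for definable data. The direct argument uses nothing beyond the defining conditions of the functor and facts internal to $\PLS(\MB)$.
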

 
\begin{proof}[Proof of Theorem~\ref{th:functor2}]
Item~\ref{item:IF1} is given by Theorem~\ref{th:inter}. To prove the inverse implication~\ref{item:IF2} assume that there exists an interpretation $(\Gamma,\bar p,\gamma)$\=/functor $\F\colon\PLS(\MA)\to \PLS(\MB)$, then the  conditions~\ref{F1}--\ref{F7} holds for the pair of algebraic structures $\MA,\MB$ and the extended code $(\Gamma,\bar p,\gamma)\colon L(\MA)\cup A\to L(\MB)\cup B$. As above, the condition~\ref{F1} guarantees that the condition~\ref{item:IRT} from Inverse Reduction Theorem~\ref{IRT} holds. In particular, $\gamma(A)\subseteq U_\Gamma(\MB,\bar p)$. Further, the $L(\MA)$\=/structure $\Gamma(\MB,\bar p)$ is well-defined and there exist an elementary $L(\MA)$\=/embedding $\iota\colon \MA\to \Gamma(\MB,\bar p)$, such that 
$\iota(a)=\gamma(a)/{\sim_\Gamma}$ for all $a\in A$. Let us consider the system $S=\{\neg(x=a)\mid a\in A\}$ in one variable $x$; one has $S_{\Gamma,\bar p,\gamma}=\{U_\Gamma(\bar x,\bar p)\wedge \neg E_\Gamma(\bar x,\gamma(a),\bar p)\mid a\in A\}$. Since $\V_\MA(S)=\emptyset$, then $\V_\MB(S_{\Gamma,\bar p,\gamma})=\emptyset$, i.\,e., for any $\bar b\in U_\Gamma(\MB,\bar p)$ there exists $a\in A$, such that $\bar b\sim_\Gamma \gamma(a)$. Hence, $\iota\colon \MA\to \Gamma(\MB,\bar p)$ is surjective, therefore, it is an $L(\MA)$\=/isomorphism. Thus, there exists an interpretation $(\Gamma,\bar p)\colon \MA\rightsquigarrow \MB$. We chose the coordinate map $\mu_\Gamma\colon U_\Gamma(\MB,\bar p)\to A$, such that $\bar\mu_\Gamma=\iota^{-1}$, then $\mu_\Gamma\circ\gamma=\id_A$, as required.
\end{proof}

\begin{corollary}\label{cor:surj}
In the notations and assumptions of Theorem~\ref{th:functor2}, every element from $\Gamma(\MB,\bar p)$ has a form $\gamma(a)/{\sim_\Gamma}$ for some $a\in A$.
\end{corollary}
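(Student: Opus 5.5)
The statement to prove is Corollary~\ref{cor:surj}. My plan is to extract it directly from the argument already given in the proof of Theorem~\ref{th:functor2}\ref{item:IF2}, and to check that case~\ref{item:IF1} also falls under it.

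First, I will place both cases in a common setting. In case~\ref{item:IF2} the interpretation $(\Gamma,\bar p,\gamma)$\=/functor $\F\colon\PLS(\MA)\to\PLS(\MB)$ exists by hypothesis. In case~\ref{item:IF1} it is supplied by Theorem~\ref{th:inter}, with $\gamma$ chosen so that $\mu_\Gamma\circ\gamma=\id_A$. Either way, conditions~\ref{F1}--\ref{F7} hold for the extended code $(\Gamma,\bar p,\gamma)$. In particular~\ref{F1} holds, so~\ref{item:IRT} is satisfied. By Inverse Reduction Theorem~\ref{IRT}, the structure $\Gamma(\MB,\bar p)$ is then well-defined and $\gamma(A)\subseteq U_\Gamma(\MB,\bar p)$.

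Next, I will apply~\ref{F1} to the one-variable system $S=\{\neg(x=a)\mid a\in A\}$, which is legitimate because \ref{F1} is stated for arbitrary systems in $\PLS(\MA)$, with no cardinality bound. Since $\V_\MA(S)=\emptyset$, condition~\ref{F1} gives $\V_\MB(S_{\Gamma,\bar p,\gamma})=\emptyset$. To use this, I need the translation explicitly. The translation of $\neg(x=a)$ is, up to first-order equivalence, $U_\Gamma(\bar x,\bar p)\wedge\neg E_\Gamma(\bar x,\gamma(a),\bar p)$; the conjunct $U_\Gamma(\gamma(a),\bar p)$ that the translation also produces may be dropped because $\gamma(a)\in U_\Gamma(\MB,\bar p)$. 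So the emptiness says: every $\bar b\in U_\Gamma(\MB,\bar p)$ violates some member of the translated system. Since $\bar b\in U_\Gamma$, the violated conjunct must be $\neg E_\Gamma(\bar b,\gamma(a),\bar p)$, that is, $\bar b\sim_\Gamma\gamma(a)$ for some $a\in A$. Hence $\bar b/{\sim_\Gamma}=\gamma(a)/{\sim_\Gamma}$.

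The step I expect to need the most care is this bookkeeping of the translation of $\neg(x=a)$. I must confirm that negations are translated inside $U_\Gamma$, as in the normal-form convention of Remark~\ref{remar:norm_form} and \cite[Subsection~2.7]{Th_int1}, so that the only way a point of $U_\Gamma(\MB,\bar p)$ can fail the system is through the $E_\Gamma$ clause. Once that is checked, the corollary follows immediately, and it also recovers the surjectivity of $\iota$ used in the proof of Theorem~\ref{th:functor2}.
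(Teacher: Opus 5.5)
Your proposal is correct and is essentially the paper's own argument: in the proof of Theorem~\ref{th:functor2}\ref{item:IF2} the paper applies \ref{F1} to the unbounded system $S=\{\neg(x=a)\mid a\in A\}$ and reads $\V_\MB(S_{\Gamma,\bar p,\gamma})=\emptyset$ as ``every $\bar b\in U_\Gamma(\MB,\bar p)$ satisfies $\bar b\sim_\Gamma\gamma(a)$ for some $a\in A$''; the corollary simply records this. For case~\ref{item:IF1} your detour through Theorem~\ref{th:inter} works, but it is even more direct there: $\bar b\sim_\Gamma\gamma(\mu_\Gamma(\bar b))$ because $\mu_\Gamma\circ\gamma=\id_A$.
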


\begin{corollary}\label{cor:inj2}
The $(\Gamma,\bar p,\gamma)$\=/interpretation functor $\F\colon \PLS(\MA)\to \PLS(\MB)$ is well-defined if and only if the condition~\ref{F1} holds.
\end{corollary}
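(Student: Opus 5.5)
The claim to prove is Corollary~\ref{cor:inj2}: the $(\Gamma,\bar p,\gamma)$\=/interpretation functor $\F\colon\PLS(\MA)\to\PLS(\MB)$ is well-defined if and only if condition~\ref{F1} holds.

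The ``only if'' direction is immediate. By definition, well-definedness of the $(\Gamma,\bar p,\gamma)$\=/functor means that all of~\ref{F1}--\ref{F7} hold, and in particular~\ref{F1}.

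For the ``if'' direction, the plan is to reproduce the argument of Theorem~\ref{th:functor2}\ref{item:IF2}, observing that it used only~\ref{F1}.

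\begin{enumerate}[label=\arabic*.]
\item Apply~\ref{F1} to finite systems consisting of a single formula $\psi$ or $\neg\psi$. This gives $\MA_A\models\exists\bar x\,\psi$ if and only if $\MB_B\models(\exists\bar x\,\psi)_{\Gamma,\bar p,\gamma}$. Taking sentences (with $m=0$, or adding a dummy variable) yields condition~\ref{item:IRT}, using Remark~\ref{remar:norm_form} to match translations of existential formulas.
\item By Inverse Reduction Theorem~\ref{IRT}, the structure $\Gamma(\MB,\bar p)$ is well-defined, $\gamma(A)\subseteq U_\Gamma(\MB,\bar p)$, and $\iota(a)=\gamma(a)/{\sim_\Gamma}$ is an elementary embedding $\MA\to\Gamma(\MB,\bar p)$.
\item Apply~\ref{F1} to the infinite system $S=\{\neg(x=a)\mid a\in A\}$. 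Since $\V_\MA(S)=\emptyset$, we get $\V_\MB(S_{\Gamma,\bar p,\gamma})=\emptyset$. Hence every $\bar b\in U_\Gamma(\MB,\bar p)$ is $\sim_\Gamma$\=/equivalent to some $\gamma(a)$, so $\iota$ is surjective and therefore an isomorphism.
\item Choose the coordinate map $\mu_\Gamma$ with $\bar\mu_\Gamma=\iota^{-1}$. This gives an interpretation $(\Gamma,\bar p,\mu_\Gamma)\colon\MA\rightsquigarrow\MB$ with $\mu_\Gamma\circ\gamma=\id_A$.
\item Now Facts~\ref{fact1}, \ref{fact1.5}, \ref{fact14}, \ref{fact15}, \ref{fact17} supply~\ref{F1}--\ref{F7} (and~\ref{E1}--\ref{E3}) for this interpretation. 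These facts are all stated via the $(\Gamma,\bar p,\gamma)$\=/translation, which by Corollary~\ref{cor:gamma} does not depend on the choice of section. Hence, as in the proof of Theorem~\ref{th:inter}, the functor $\F_{\Gamma,\bar p,\gamma}$ is well-defined.
\end{enumerate}

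The only delicate step is the first one: extracting~\ref{item:IRT} from~\ref{F1}, which is phrased in terms of emptiness of $\V$\=/sets rather than truth of sentences. To handle this, write a sentence $\psi(a_1,\ldots,a_s)$ as the emptiness or nonemptiness of the one-variable set defined by $\psi\wedge(x=x)$ (and similarly for $\neg\psi$). Then check that its translation defines the empty set in $\MB$ exactly when $\MB_B\models\neg\psi_{\Gamma,\bar p,\gamma}$. This relies on the translation of $x=x$ being $U_\Gamma\wedge E_\Gamma(\bar x,\bar x,\bar p)$, which must be nonempty; that nonemptiness itself follows from~\ref{F1} applied to $\{x=x\}$.
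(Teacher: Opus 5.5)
Your proof is correct and follows the paper's own route. The corollary is read off from the proof of Theorem~\ref{th:functor2}\ref{item:IF2}, which uses only \ref{F1}: first to obtain \ref{item:IRT}, and then on the system $\{\neg(x=a)\mid a\in A\}$ to make $\iota$ surjective, after which Theorem~\ref{th:inter} produces the functor. Your extra step of extracting \ref{item:IRT} from \ref{F1} via $\psi\wedge(x=x)$ spells out something the paper simply asserts.
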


\begin{remark}\label{remark:gamma_int}
Theorem~\ref{th:functor2} defines two operators ${\bf F}$ and ${\bf I}$. The first ${\bf F}$ matches interpretation functors to interpretations: ${\bf F}((\Gamma,\bar p,\mu_\Gamma))= \F_{\Gamma,\bar p,\mu_\Gamma}$; and the second ${\bf I}$ matches interpretations to interpretation functors: ${\bf I}(\F)=(\Gamma,\bar p,\mu_\Gamma)$. Due to Remark~\ref{lemma:gamma}, for any interpretation $(\Gamma,\bar p,\mu_\Gamma)$ one has ${\bf I}({\bf F}((\Gamma,\bar p,\mu_\Gamma)))=(\Gamma,\bar p,\mu_\Gamma)$. Due to 
Remark~\ref{remark:gamma}, for any interpretation functor $\F$ one has ${\bf F}({\bf I}(\F))=\F$.
\end{remark}

\begin{remark}\label{remark:ID}
For any algebraic structure $\MA=\langle A; L(\MA)\rangle$ the identical extended code $(\Id_{L(\MA)},\emptyset,\id_A)\colon L(\MA)\cup A\to L(\MA)\cup A$ gives the identical interpretation and translation functors $\id_{\PLS(\MA)}$, $\id_{\LS(\MA)}$, $\id_{\PDS(\MA)}$, $\id_{\DS(\MA)}$, $\id_{\PLS_0(\MA)}$, $\id_{\LS_0(\MA)}$, $\id_{\PDS_0(\MA)}$, $\id_{\DS_0(\MA)}$, (see~\cite[Remark~11]{Th_int1}).
\end{remark}

\subsection{Compositions of translation functors}\label{subsec:ABC}

In conclusion to our discussion of transfer and interpretation functors, we formulate several results about compositions of such functors. In the constructions below, we refer to~\cite[Subsection~4.1]{Th_int1}. In particular, note that the composition of injective codes is an injective code~\cite[Lemma~9~(3)]{Th_int1}.

\begin{proposition}\label{th:ABC}
Let $(\Gamma,\bar p, \mu_\Gamma)\colon\MA\rightsquigarrow\MB$ and $(\Delta,\bar q, \mu_\Delta)\colon\MB\rightsquigarrow\MC$ be interpretations and $(\Gamma\circ\Delta,(\bar{\bar p},\bar q),\mu_{\Gamma\circ\Delta})\colon\MA\rightsquigarrow\MC$ be their composition, where $\mu_{\Gamma\circ\Delta}=\mu_\Gamma\circ\mu_\Delta$. Then the composition of the interpretation $(\Gamma,\bar p,\mu_\Gamma)$\=/functor  $\F\colon\PLS(\MA)\to\PLS(\MB)$ and the interpretation $(\Delta,\bar q,\mu_\Delta)$\=/functor  $\G\colon\PLS(\MB)\to\PLS(\MC)$ is the interpretation $(\Gamma\circ\Delta,(\bar{\bar p},\bar q),\mu_{\Gamma\circ\Delta})$\=/functor  $\G\circ\F\colon\PLS(\MA)\to\PLS(\MC)$. 
\end{proposition}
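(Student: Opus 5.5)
The plan is to compare the two functors directly on objects and on morphisms, using Lemma~\ref{ABC} (that is, \cite[Lemma~11]{Th_int1}) as the only non-trivial input. Both $\G\circ\F$ and the $(\Gamma\circ\Delta,(\bar{\bar p},\bar q),\mu_{\Gamma\circ\Delta})$\=/functor, which we denote by $\mathcal H$, send $\emptyset$ to $\emptyset$ and the initial arrows to the initial arrows. So only non-empty objects and morphisms between them need attention.

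First we fix the auxiliary maps. Choose $\gamma\colon A\to U_\Gamma(\MB,\bar p)$ with $\mu_\Gamma\circ\gamma=\id_A$, and $\delta\colon B\to U_\Delta(\MC,\bar q)$ with $\mu_\Delta\circ\delta=\id_B$. We may choose $\delta$ so that $\delta(\bar p)=\bar{\bar p}$; if $\bar p$ has repeated entries with different preimages in $\bar{\bar p}$, we instead use the freedom $\bar{\bar p}\in\mu_\Delta^{-1}(\bar p)$. Then $\delta\circ\gamma$ satisfies $\mu_{\Gamma\circ\Delta}\circ(\delta\circ\gamma)=\id_A$. By Remark~\ref{remark:gamma}, $\F=\F_{\Gamma,\bar p,\gamma}$, $\G=\G_{\Delta,\bar q,\delta}$ and $\mathcal H=\F_{\Gamma\circ\Delta,(\bar{\bar p},\bar q),\delta\circ\gamma}$, independently of these choices.

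On a non-empty object $X/{\sim_X}$, with $X=\V_\MA(S_X)$ and $\sim_X$ induced by $E_X$, we have
$$\G(\F(X/{\sim_X}))=\V_\MC\bigl((S_X)_{\Gamma,\bar p,\gamma}\bigr)_{\Delta,\bar q,\delta}\big/\!\sim,$$
where $\sim$ is induced by $((E_X)_{\Gamma,\bar p,\gamma})_{\Delta,\bar q,\delta}$. The object $\mathcal H(X/{\sim_X})$ uses the single translations $(\psi)_{\Gamma\circ\Delta,(\bar{\bar p},\bar q),\delta\circ\gamma}$ instead. Lemma~\ref{ABC} (equivalently, Proposition~\ref{ABC_ext} with the condition \ref{item:IRT} supplied by the interpretations via the Reduction Theorem~\ref{RT}) states that for each formula $\psi$ the two translations define the same subset of $\MC$. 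It follows that the underlying sets coincide, since they are intersections of equal sets, and that the two inducing formulas for the equivalence agree pointwise, hence induce the same relation. For a morphism $F_{XY}$ induced by $\varphi_{XY}$, the same argument applies to $\varphi_{XY}$: the two translated formulas have the same realization set in $\MC$, so they induce the same formula map. Thus $\G(\F(F_{XY}))=\mathcal H(F_{XY})$.

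The main obstacle is bookkeeping rather than mathematics. The translation is defined on normal forms, and $\G$ translates some normal form of $\psi_{\Gamma,\bar p,\gamma}$ rather than a canonical one. This is handled by Proposition~\ref{ABC_ext}, which holds for any normal form, together with the correctness properties \ref{F2}, \ref{F4} and \ref{F6} of $\G$. A second point is matching the parameters $\bar{\bar p}$ with $\delta(\bar p)$. If they cannot be matched by the choice of $\delta$, we instead note that $\bar{\bar p}$ and $\delta(\bar p)$ are $\sim_\Delta$\=/equivalent coordinatewise. By the Reduction Theorem the translated formulas evaluated at $\sim_\Delta$\=/equivalent tuples define the same sets, so the functors again coincide.
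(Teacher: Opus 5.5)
Your proposal is correct and follows the paper's route. The paper's proof is the one-line remark that the result follows directly from Lemma~\ref{ABC}; your object-by-object and morphism-by-morphism comparison, with the choice of $\gamma,\delta$ via Remark~\ref{remark:gamma} and the bookkeeping for normal forms and for $\bar{\bar p}$ versus $\delta(\bar p)$, simply spells that remark out.
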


\begin{proof}
The result directly follows from Lemma~\ref{ABC}.  
\end{proof}

\begin{corollary}\label{cor16}
In the assumptions and notations of Proposition~\ref{th:ABC}, suppose that $X/{\sim_X}$ is a non-empty projective logical set over $\MA$ and $\tilde\mu_{\Gamma}\colon \F(X/{\sim_X})\to X/{\sim_X}$, $\tilde\mu_{\Delta}\colon \G(\F(X/{\sim_X}))\to \F(X/{\sim_X})$, $\tilde\mu_{\Gamma\circ\Delta}\colon \G\circ\F(X/{\sim_X})\to X/{\sim_X}$ are bijections from Corollary~\ref{cor3}. Then the following diagram is commutative:
\begin{equation}\label{eq:fish1}
\begin{tikzcd}[column sep=huge,
row sep=huge]
X/{\sim_X} 
\ar[rr, "\tilde\mu_{\Gamma\circ\Delta}^{-1}"', bend right=30] 
\ar[r,shift right, "\tilde\mu_\Gamma^{-1}"']
& \F(X/{\sim_X}) 
\ar[l,shift right, "\tilde\mu_\Gamma"'] 
\ar[r,shift right, "\tilde\mu_\Delta^{-1}"']
& \G\circ\F(X/{\sim_X}) 
\ar[l,shift right, "\tilde\mu_\Delta"']
\ar[ll, "\tilde\mu_{\Gamma\circ\Delta}"', bend right=30].
\end{tikzcd}
\end{equation}
\end{corollary}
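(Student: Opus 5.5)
We need to show commutativity of the diagram, i.e. $\tilde\mu_{\Gamma\circ\Delta}=\tilde\mu_\Gamma\circ\tilde\mu_\Delta$ as maps $\G\circ\F(X/{\sim_X})\to X/{\sim_X}$; the statements about the inverse arrows then follow at once, because all three maps are bijections by Corollary~\ref{cor3}. The plan is to compute both sides on representatives.

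First I fix the data. Let $X\subseteq A^m$. By Proposition~\ref{th:ABC}, $\G\circ\F(X/{\sim_X})$ equals the object obtained from the composite $(\Gamma\circ\Delta,(\bar{\bar p},\bar q),\mu_{\Gamma\circ\Delta})$\=/functor. Concretely, write $\F(X/{\sim_X})=\bar X/{\sim_{\bar X}}$ with $\bar X\subseteq U^m_\Gamma(\MB,\bar p)$, and $\G(\bar X/{\sim_{\bar X}})=\bar{\bar X}/{\sim_{\bar{\bar X}}}$ with $\bar{\bar X}\subseteq U_\Delta^{mn}(\MC,\bar q)$, where $n=\dim\Gamma$. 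By Lemma~\ref{ABC}, the set $\bar{\bar X}$ and the relation $\sim_{\bar{\bar X}}$ coincide with those produced by $\Gamma\circ\Delta$. Fact~\ref{fact1} applied to each functor gives $\mu_\Delta(\bar{\bar X})=\bar X$ and $\mu_{\Gamma\circ\Delta}(\bar{\bar X})=X$. Here $\mu_\Delta$ acts coordinatewise on blocks of length $\dim\Delta$.

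Next I take an element $\bar c/{\sim_{\bar{\bar X}}}$ with $\bar c\in\bar{\bar X}$. By definition, $\tilde\mu_\Delta(\bar c/{\sim_{\bar{\bar X}}})=\mu_\Delta(\bar c)/{\sim_{\bar X}}$, and $\mu_\Delta(\bar c)\in\bar X$. Applying $\tilde\mu_\Gamma$ then gives $\mu_\Gamma(\mu_\Delta(\bar c))/{\sim_X}$. On the other side, $\tilde\mu_{\Gamma\circ\Delta}(\bar c/{\sim_{\bar{\bar X}}})=\mu_{\Gamma\circ\Delta}(\bar c)/{\sim_X}$. Since $\mu_{\Gamma\circ\Delta}=\mu_\Gamma\circ\mu_\Delta$ by the definition of composition of interpretations, the two values agree. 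Well-definedness of all three maps is already guaranteed by Corollary~\ref{cor3}.

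The only delicate point is the identification of objects in the first step: the target of $\tilde\mu_{\Gamma\circ\Delta}$, as constructed from the composite interpretation, must coincide with $\G(\F(X/{\sim_X}))$, including the equivalence relation and not merely the underlying set. This is exactly what Lemma~\ref{ABC} (equivalently, Proposition~\ref{th:ABC}) provides. Once that is granted, commutativity is the elementwise identity above, and the inverse triangle follows by inverting the bijections.
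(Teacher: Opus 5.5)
Your proof is correct and follows the paper's argument: the paper likewise notes that $\tilde\mu_\Gamma$, $\tilde\mu_\Delta$, $\tilde\mu_{\Gamma\circ\Delta}$ act coordinatewise on representatives and concludes from $\mu_{\Gamma\circ\Delta}=\mu_\Gamma\circ\mu_\Delta$. Your use of Lemma~\ref{ABC} (through Proposition~\ref{th:ABC}) to identify $\G(\F(X/{\sim_X}))$, including its equivalence relation, with the object built from the composite interpretation makes explicit a step the paper leaves implicit.
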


\begin{proof}
Since bijections $\tilde\mu_{\Gamma}$, $\tilde\mu_{\Delta}$, $\tilde\mu_{\Gamma\circ\Delta}$ are formed coordinate-wise and $\mu_{\Gamma\circ\Delta}=\mu_\Gamma\circ\mu_\Delta$, we get that required. 
\end{proof}

Continuing the discussion in Example~\ref{ex:Gamma}, it is interesting to consider a special case
of Corollary~\ref{cor16}, when $X=A$.

\begin{corollary}\label{ex:Gamma_Delta}
In the assumptions and notations of Proposition~\ref{th:ABC}, in the diagram~\ref{eq:fish1} for $X=A$,
\begin{equation*}
\begin{tikzcd}[column sep=huge,
row sep=huge]
A 
\ar[rr, "\tilde\mu_{\Gamma\circ\Delta}^{-1}"', bend right=30] 
\ar[r,shift right, "\tilde\mu_\Gamma^{-1}"']
& U_\Gamma(\MB,\bar p)/{\sim_\Gamma} 
\ar[l,shift right, "\tilde\mu_\Gamma"'] 
\ar[r,shift right, "\tilde\mu_\Delta^{-1}"']
& U_{\Gamma\circ\Delta}(\MC,(\bar{\bar p},\bar q))/{\sim_{\Gamma\circ\Delta}} 
\ar[l,shift right, "\tilde\mu_\Delta"']
\ar[ll, "\tilde\mu_{\Gamma\circ\Delta}"', bend right=30],
\end{tikzcd}
\end{equation*}
bijections $\tilde \mu_\Gamma$, $\tilde\mu_\Delta$, $\tilde\mu_{\Gamma\circ\Delta}$ are $L(\MA)$\=/isomorphisms, such that $\tilde\mu_\Gamma=\bar\mu_\Gamma$, $\tilde\mu_{\Gamma\circ\Delta}=\bar\mu_{\Gamma\circ\Delta}$ and $\bar\mu_{\Gamma\circ\Delta}=\bar\mu_\Gamma\circ\tilde\mu_\Delta$.
\end{corollary}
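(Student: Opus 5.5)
We specialize Corollary~\ref{cor16} to $X=A$, i.\,e., $m=1$, with $\sim_X$ the identity relation. By Example~\ref{ex:Gamma}, $\F(A)=U_\Gamma(\MB,\bar p)/{\sim_\Gamma}$ and $\tilde\mu_\Gamma=\bar\mu_\Gamma$, the isomorphism of the interpretation $(\Gamma,\bar p,\mu_\Gamma)$. By Proposition~\ref{th:ABC}, $\G\circ\F$ is the $(\Gamma\circ\Delta,(\bar{\bar p},\bar q),\mu_{\Gamma\circ\Delta})$\=/functor. Applying Example~\ref{ex:Gamma} again to the composite interpretation gives $\G\circ\F(A)=U_{\Gamma\circ\Delta}(\MC,(\bar{\bar p},\bar q))/{\sim_{\Gamma\circ\Delta}}$ and $\tilde\mu_{\Gamma\circ\Delta}=\bar\mu_{\Gamma\circ\Delta}$, which is again an $L(\MA)$\=/isomorphism. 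The relation $\bar\mu_{\Gamma\circ\Delta}=\bar\mu_\Gamma\circ\tilde\mu_\Delta$ is then exactly the commutativity of diagram~\eqref{eq:fish1} from Corollary~\ref{cor16}.

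It remains to show that $\tilde\mu_\Delta$ is an $L(\MA)$\=/isomorphism. Here the set $\F(A)$ is $\bar X/{\sim_{\bar X}}$ with $\bar X=U_\Gamma(\MB,\bar p)$, viewed as a projective logical set over $\MB$. The map $\tilde\mu_\Delta$ is the bijection $\G(\F(A))\to\F(A)$ of Corollary~\ref{cor3}, taken with respect to $\Delta$. Both its domain and its codomain carry $L(\MA)$\=/structures, namely $\Gamma\circ\Delta(\MC,\ldots)$ and $\Gamma(\MB,\bar p)$. The cleanest argument avoids checking symbols one at a time: since $\tilde\mu_\Delta=\bar\mu_\Gamma^{-1}\circ\bar\mu_{\Gamma\circ\Delta}$ by the identity above, it is a composition of two $L(\MA)$\=/isomorphisms and hence is one itself.

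The one step needing care is to confirm that the underlying sets and equivalences in the middle node really match. That is, we need $\G(U_\Gamma(\MB,\bar p)/{\sim_\Gamma})$ to equal $U_{\Gamma\circ\Delta}(\MC,(\bar{\bar p},\bar q))/{\sim_{\Gamma\circ\Delta}}$ as sets, with the same equivalence relation. This follows from Lemma~\ref{ABC} applied to the formulas $x=x$ and $x=x'$, which define $A$ and the identity relation. The Lemma gives equality of the sets defined by the $\Delta$\=/translations of $U_\Gamma$ and $E_\Gamma$ and by $U_{\Gamma\circ\Delta}$, $E_{\Gamma\circ\Delta}$, which is what identifies the middle node.
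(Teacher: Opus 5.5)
Your proposal is correct and is essentially the paper's argument: the paper gives no separate proof, presenting the corollary as the case $X=A$ of Corollary~\ref{cor16} combined with Example~\ref{ex:Gamma}, applied to $\Gamma$ and (via Proposition~\ref{th:ABC}) to $\Gamma\circ\Delta$. Your identity $\tilde\mu_\Delta=\bar\mu_\Gamma^{-1}\circ\bar\mu_{\Gamma\circ\Delta}$ is exactly the implicit reason $\tilde\mu_\Delta$ is an $L(\MA)$-isomorphism, and your last paragraph is redundant: Proposition~\ref{th:ABC} already identifies $\G\circ\F(A)$ (the right-hand node, not the middle one) with $U_{\Gamma\circ\Delta}(\MC,(\bar{\bar p},\bar q))/{\sim_{\Gamma\circ\Delta}}$.
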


We use the composition of extended codes in the following results.

\begin{proposition}\label{prop:comp1}
The composition of an interpretation $(\Gamma,\bar p,\gamma)$\=/functor $\F\colon\PLS(\MA)\to\PLS(\MB)$ and an interpretation $(\Delta,\bar q,\delta)$\=/functor  $\G\colon\PLS(\MB)\to \PLS(\MC)$ in the interpretation $(\Gamma\circ\Delta,(\delta(\bar p),\bar q),\delta\circ\gamma)$\=/functor $\G\circ\F\colon \PLS(\MA)\to \PLS(\MC)$. 
\end{proposition}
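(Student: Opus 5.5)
The plan is to reduce everything to the already established case of genuine interpretations, Proposition~\ref{th:ABC}, via Theorem~\ref{th:functor2}. The two functors $\F$ and $\G$ exist by hypothesis. Item~\ref{item:IF2} of Theorem~\ref{th:functor2} then gives interpretations $(\Gamma,\bar p,\mu_\Gamma)\colon\MA\rightsquigarrow\MB$ and $(\Delta,\bar q,\mu_\Delta)\colon\MB\rightsquigarrow\MC$ with $\mu_\Gamma\circ\gamma=\id_A$ and $\mu_\Delta\circ\delta=\id_B$. By Remark~\ref{remark:gamma_int} (equivalently Remark~\ref{remark:gamma}), $\F$ coincides with the interpretation $(\Gamma,\bar p,\mu_\Gamma)$\=/functor, and $\G$ coincides with the $(\Delta,\bar q,\mu_\Delta)$\=/functor.

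Next I form the composite interpretation $(\Gamma\circ\Delta,(\bar{\bar p},\bar q),\mu_\Gamma\circ\mu_\Delta)\colon\MA\rightsquigarrow\MC$. The freedom in choosing $\bar{\bar p}\in\mu_\Delta^{-1}(\bar p)$ lets me take $\bar{\bar p}=\delta(\bar p)$, which is legitimate because $\mu_\Delta(\delta(\bar p))=\bar p$. Proposition~\ref{th:ABC} then says that $\G\circ\F$ is the interpretation $(\Gamma\circ\Delta,(\delta(\bar p),\bar q),\mu_\Gamma\circ\mu_\Delta)$\=/functor. It remains to show that this functor equals the $(\Gamma\circ\Delta,(\delta(\bar p),\bar q),\delta\circ\gamma)$\=/functor. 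For that I check that $\delta\circ\gamma$ is a section of the composite coordinate map landing in $U_{\Gamma\circ\Delta}(\MC,(\delta(\bar p),\bar q))$. The equation $\mu_\Gamma\circ\mu_\Delta\circ\delta\circ\gamma=\mu_\Gamma\circ\gamma=\id_A$ is immediate. Membership in the domain follows from $\gamma(A)\subseteq U_\Gamma(\MB,\bar p)$ together with the fact that $\mu_\Delta^{-1}(U_\Gamma(\MB,\bar p))=U_{\Gamma\circ\Delta}(\MC,(\delta(\bar p),\bar q))$; the latter is the description of the universe of a composite interpretation from~\cite[Subsection~4.1]{Th_int1}. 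Remark~\ref{remark:gamma} then gives independence of the chosen section, which yields the desired equality of functors.

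An alternative and more direct route is available. By Theorem~\ref{th:functor2} (via condition~\ref{F1}), both extended codes satisfy~\ref{item:IRT}. Proposition~\ref{ABC_ext} then shows that the composite extended code also satisfies~\ref{item:IRT}, and it gives equality~\eqref{eq:ABC} of the sets defined in $\MC$ by the two translations. Because of this equality, the functor built from the composite code agrees with $\G\circ\F$ on objects and on morphisms, and conditions~\ref{F1}--\ref{F7} transfer from the existing functor $\G\circ\F$. I would present the first argument and mention this one as a check.

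The main obstacle is purely one of bookkeeping. In the first route, one must justify the choice $\bar{\bar p}=\delta(\bar p)$ and the identification of $U_{\Gamma\circ\Delta}$ with $\mu_\Delta^{-1}(U_\Gamma)$, so that $\delta\circ\gamma$ really is an admissible section. In the second route, the difficulty is making sure that translations of normal forms give the same sets, which is exactly what~\eqref{eq:ABC} supplies.
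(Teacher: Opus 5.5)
Your proposal is correct and follows the paper's own proof. The paper likewise uses Theorem~\ref{th:functor2} to realize $\F$ and $\G$ as functors of genuine interpretations, applies Proposition~\ref{th:ABC} to their composite, and then identifies the $(\Gamma\circ\Delta,(\delta(\bar p),\bar q),\delta\circ\gamma)$-translation with the translation of the composite interpretation. You justify that last step more carefully than the paper does, by taking $\bar{\bar p}=\delta(\bar p)$ and checking that $\delta\circ\gamma$ is a section of $\mu_\Gamma\circ\mu_\Delta$ into $U_{\Gamma\circ\Delta}(\MC,(\delta(\bar p),\bar q))$. Your alternative route via Proposition~\ref{ABC_ext} is the argument the paper uses for the translation-functor analogue, Proposition~\ref{prop:comp2}.
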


\begin{proof}
By Theorem~\ref{th:functor2} there exists interpretations $(\Gamma,\bar p, \mu_\Gamma)\colon\MA\rightsquigarrow\MB$ and $(\Delta,\bar q, \mu_\Delta)\colon\MB\rightsquigarrow\MC$, such that $\F$ is the interpretation $(\Gamma,\bar p, \mu_\Gamma)$\=/functor and $\G$ is the interpretation $(\Delta,\bar q, \mu_\Delta)$\=/functor. Therefore, by Proposition~\ref{th:ABC}, the composition $\G\circ\F$ is the interpretation $(\Gamma\circ\Delta,(\bar{\bar p},\bar q),\mu_{\Gamma}\circ\mu_{\Delta})$\=/functor. Since the $(\Gamma\circ\Delta,(\delta(\bar p),\bar q),\delta\circ\gamma)$\=/translation is the same as the $(\Gamma\circ\Delta,(\bar{\bar p},\bar q),\mu_{\Gamma}\circ\mu_{\Delta})$\=/translation, we obtain the required. 
\end{proof}

\begin{proposition}\label{prop:comp2}
The composition of a translation $(\Gamma,\bar p,\gamma)$\=/functor $\F\colon\PDS(\MA)\to\PDS(\MB)$ and a translation $(\Delta,\bar q,\delta)$\=/functor  $\G\colon\PDS(\MB)\to \PDS(\MC)$ in the translation $(\Gamma\circ\Delta,(\delta(\bar p),\bar q),\delta\circ\gamma)$\=/functor $\G\circ\F\colon \PDS(\MA)\to \PDS(\MC)$. 
\end{proposition}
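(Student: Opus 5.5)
The plan mirrors the proof of Proposition~\ref{prop:comp1}, with Theorem~\ref{th:functor1} taking the place of Theorem~\ref{th:functor2} and Proposition~\ref{ABC_ext} taking the place of Lemma~\ref{ABC}. Since $\F$ and $\G$ are translation functors, Theorem~\ref{th:functor1} (implication \ref{item:t2}$\Longrightarrow$\ref{item:t3}) shows that both extended codes $(\Gamma,\bar p,\gamma)$ and $(\Delta,\bar q,\delta)$ satisfy the condition~\ref{item:IRT}. Proposition~\ref{ABC_ext} then gives two things. First, the composed extended code $(\Gamma\circ\Delta,(\delta(\bar p),\bar q),\delta\circ\gamma)$ also satisfies~\ref{item:IRT}. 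Second, for every formula $\psi$ in the normal form in $L(\MA)\cup A$, the sets $\psi_{\Gamma\circ\Delta,(\delta(\bar p),\bar q),\delta\circ\gamma}(\MC)$ and $(\psi_{\Gamma,\bar p,\gamma})_{\Delta,\bar q,\delta}(\MC)$ coincide, whatever normal form is chosen for $\psi_{\Gamma,\bar p,\gamma}$. Applying Theorem~\ref{th:functor1} once more (\ref{item:t3}$\Longrightarrow$\ref{item:t2}), equivalently Corollary~\ref{cor:inj1}, we conclude that the translation $(\Gamma\circ\Delta,(\delta(\bar p),\bar q),\delta\circ\gamma)$-functor $\mathcal H\colon\PDS(\MA)\to\PDS(\MC)$ is well defined.

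It remains to show that $\G\circ\F=\mathcal H$, and we check this directly from the construction in the proof of Theorem~\ref{th:inter}.

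On objects, take a non-empty projective definable set $X/{\sim_X}$ with $X=\V_\MA(S_X)$ and $\sim_X$ induced by $E_X$. The functor $\F$ sends it to $\bar X/{\sim_{\bar X}}$, where $\bar X=\V_\MB((S_X)_{\Gamma,\bar p,\gamma})$ and $\sim_{\bar X}$ is induced by $(E_X)_{\Gamma,\bar p,\gamma}$. Then $\G$ translates these formulas again by $(\Delta,\bar q,\delta)$. By \eqref{eq:ABC}, applied to each formula of $S_X$ and to $E_X$, the resulting subset of $C^{\ast}$ and the induced relation are exactly those produced by $\mathcal H$. Here it matters that the objects and morphisms of $\PLS$ depend only on the sets defined or induced by the formulas, not on the formulas themselves; that this dependence is consistent is the content of \ref{F2}, \ref{F4} and~\ref{F6}.

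On morphisms, a formula map $F_{XY}$ induced by $\varphi_{XY}$ is sent by $\G\circ\F$ to the map induced by $((\varphi_{XY})_{\Gamma,\bar p,\gamma})_{\Delta,\bar q,\delta}$. By \eqref{eq:ABC} this formula defines the same subset of $\MC$ as $(\varphi_{XY})_{\Gamma\circ\Delta,(\delta(\bar p),\bar q),\delta\circ\gamma}$. Two formulas defining the same set induce the same map between the same objects, so the two functors agree on morphisms. The empty object and the arrows out of it are sent to $\emptyset$ and to the arrows out of $\emptyset$ by both functors, so they need no further argument.

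The one step that needs care is the application of \eqref{eq:ABC}. It is stated for $\psi$ in the normal form and for an arbitrary normal form of $\psi_{\Gamma,\bar p,\gamma}$, whereas $\G$ is computed from whichever normal form is used to represent $\F$'s output. Since \eqref{eq:ABC} holds for every normal form, this choice does not affect the resulting sets. The remaining routine point is that $\F(X/{\sim_X})$ again lies in $\PDS(\MB)$, so that $\G$ can be applied to it; this follows from Remark~\ref{remark:a,b}.
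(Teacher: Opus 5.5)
Your proposal is correct and follows the paper's proof essentially step for step. Theorem~\ref{th:functor1} gives \ref{item:IRT} for both codes; Proposition~\ref{ABC_ext} transfers \ref{item:IRT} to the composed code and supplies~\eqref{eq:ABC}; Theorem~\ref{th:functor1} again yields the composed translation functor; and~\eqref{eq:ABC} identifies that functor with $\G\circ\F$. Your explicit check on objects and morphisms, using the choice-independence given by \ref{F2}, \ref{F4} and \ref{F6}, simply spells out what the paper compresses into its final sentence.
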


\begin{proof}
By Theorem~\ref{th:functor1}, the conditions~\ref{item:IRT} from Inverse Reduction Theorem~\ref{IRT} hold for extended codes $(\Gamma,\bar p,\gamma)\colon L(\MA)\cup A\to L(\MB)\cup B$ and $(\Delta,\bar q,\delta)\colon L(\MB)\cup B \to L(\MC)\cup C$. Therefore, by Proposition~\ref{ABC_ext}, the condition~\ref{item:IRT} holds for the extended code $(\Gamma\circ\Delta,(\delta(\bar p),\bar q),\delta\circ\gamma)\colon L(\MA)\cup A\to L(\MC)\cup C$. Hence, the translation $(\Gamma\circ\Delta,(\delta(\bar p),\bar q),\delta\circ\gamma)$\=/functor $\G\circ\F\colon \PDS(\MA)\to \PDS(\MC)$ is well-defined, by Theorem~\ref{th:functor1}. And by the identity~\eqref{eq:ABC} from Proposition~\ref{ABC_ext}, functor $\G\circ\F$ equals to the composition $\G\circ\F$.
\end{proof}

\begin{problem}
Is it true that the composition of $(\alpha,\beta)$\=/translation functors is an $(\alpha,\beta)$\=/translation functor, when $\alpha\leqslant\beta$ are infinite cardinals, and $(\alpha,\beta)\ne (\omega,\omega), (\infty,\infty)$?
\end{problem}

\subsection{Homotopic interpretations and natural isomorphisms}\label{subsec:hom}

In this subsection, we discuss the connection between interpretation functors, corresponding to homotopic interpretations. In our presentation we rely on~\cite[Subsection~3.1]{Th_int1} and notations from Subsection~\ref{subsec:int_func}.

{\bf Strong homotopy implies natural isomorphism.} 
Remind that every homotopy of interpretations may be edited to become strong~\cite[Remark~19]{Th_int1}. Let us prove the following theorem.

\begin{theorem}\label{th:homotopy}
Let $(\Gamma_1,\bar p_1,\mu_{\Gamma_1})$ and $(\Gamma_2,\bar p_2,\mu_{\Gamma_2})$ be strongly homotopic interpretations of an algebraic structure $\MA=\langle A; L(\MA)\rangle$ into an algebraic structure $\MB=\langle B; L(\MB)\rangle$, and $\lambda\colon\Gamma_1(\MB,\bar p_1)\to\Gamma_2(\MB,\bar p_2)$ be homotopy isomorphism, i.\,e., $L(\MA)$\=/isomorphism definable in $\MB$, which makes the diagram 
\begin{equation}\label{eq:homotopic}
\begin{tikzcd}
& \Gamma_1(\MB,\bar p_1) \ar[dl, "\bar{\mu}_{\Gamma_1}"'] 
\ar[dd, shift right, "\lambda"']\\
\MA  & \\
& \Gamma_2(\MB,\bar p_2) 
\ar[ul, "\bar{\mu}_{\Gamma_2}"] 
\ar[uu, shift right, "\lambda^{-1}"']
\end{tikzcd} 
\end{equation} 
commutative. Suppose also that $\F_1\colon \PLS(\MA)\to\PLS(\MB)$ is the interpretation $(\Gamma_1,\bar p_1,\mu_{\Gamma_1})$\=/functor and $\F_2\colon \PLS(\MA)\to\PLS(\MB)$ is the interpretation $(\Gamma_2,\bar p_2,\mu_{\Gamma_2})$\=/functor. Then there exists a natural isomorphism $\tilde\lambda\colon\F_1\to\F_2$, such that for any non-empty projective logical set $X/{\sim_X}$ over $\MA$ the following diagram is commutative:
\begin{equation}\label{eq:homotopic2}
\begin{tikzcd}[column sep=huge, row sep=huge]
& \F_1(X/{\sim_X})
\ar[dl, shift right, "\tilde{\mu}_{\Gamma_1,X/{\sim_X}}"'] 
\ar[dd, shift right, "\tilde\lambda_{X/{\sim_X}}"']\\
X/{\sim_X} 
\ar[ur, shift right, "\tilde{\mu}^{-1}_{\Gamma_1,X/{\sim_X}}"'] 
\ar[dr, shift left, "\tilde{\mu}^{-1}_{\Gamma_2,X/{\sim_X}}"] 
& \\
& \F_2(X/{\sim_X}) 
\ar[ul, shift left, "\tilde{\mu}_{\Gamma_2, X/{\sim_X}}"] 
\ar[uu, shift right, "\tilde\lambda^{-1}_{X/{\sim_X}}"'],
\end{tikzcd} 
\end{equation}
where $\tilde\lambda_{X/{\sim_X}}\colon \F_1(X/{\sim_X})\to\F_2(X/{\sim_X})$ is isomorphism in $\PLS(\MB)$, which corresponds to $X/{\sim_X}$, and $\tilde\mu_{\Gamma_1,X/{\sim_X}}$, $\tilde\mu_{\Gamma_2,X/{\sim_X}}$ are bijections from Corollary~\ref{cor3}.
\end{theorem}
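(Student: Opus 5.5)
The plan is to define $\tilde\lambda_{X/{\sim_X}}$ by forcing diagram~\eqref{eq:homotopic2} to commute, i.e.
$$
\tilde\lambda_{X/{\sim_X}}=\tilde\mu^{-1}_{\Gamma_2,X/{\sim_X}}\circ\tilde\mu_{\Gamma_1,X/{\sim_X}}\colon \F_1(X/{\sim_X})\to\F_2(X/{\sim_X}).
$$
For the empty object we take the identity of $\emptyset$. By Corollary~\ref{cor3}, both factors are bijections, so $\tilde\lambda_{X/{\sim_X}}$ is a bijection, and diagram~\eqref{eq:homotopic2} commutes by construction.

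Naturality follows from diagram~\eqref{eq:F_mu}. For a morphism $F_{XY}$ in $\PLS(\MA)$ we have
$$
\F_i(F_{XY})=\tilde\mu^{-1}_{\Gamma_i,Y/{\sim_Y}}\circ F_{XY}\circ\tilde\mu_{\Gamma_i,X/{\sim_X}},\quad i=1,2.
$$
Hence $\F_2(F_{XY})\circ\tilde\lambda_{X/{\sim_X}}=\tilde\mu^{-1}_{\Gamma_2,Y}\circ F_{XY}\circ\tilde\mu_{\Gamma_1,X}=\tilde\lambda_{Y/{\sim_Y}}\circ\F_1(F_{XY})$. The morphisms into and out of $\emptyset$ are trivial. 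So the only real work is to show that each $\tilde\lambda_{X/{\sim_X}}$ is a morphism, hence an isomorphism, of $\PLS(\MB)$.

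By Lemma~\ref{le:iso}, it suffices to produce a formula in $L(\MB)\cup B$ that fully induces $\tilde\lambda_{X/{\sim_X}}$. The homotopy isomorphism $\lambda$ is definable in $\MB$, so there is a formula $\Lambda(\bar u,\bar v)$ (with parameters from $B$) whose preimage is the graph of $\lambda$. That is, for $\bar u\in U_{\Gamma_1}(\MB,\bar p_1)$ and $\bar v\in U_{\Gamma_2}(\MB,\bar p_2)$ we have $\MB\models\Lambda(\bar u,\bar v)$ iff $\lambda(\bar u/{\sim_{\Gamma_1}})=\bar v/{\sim_{\Gamma_2}}$, and $\Lambda$ holds only on $U_{\Gamma_1}\times U_{\Gamma_2}$. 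Commutativity of~\eqref{eq:homotopic} says this is equivalent to $\mu_{\Gamma_1}(\bar u)=\mu_{\Gamma_2}(\bar v)$.

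For $X\subseteq A^m$ we then set
$$
\varphi(\bar u_1,\ldots,\bar u_m,\bar v_1,\ldots,\bar v_m)=\exists\,\bar w_1\ldots\bar w_m\Bigl(\bigwedge_{i=1}^m\Lambda(\bar u_i,\bar w_i)\wedge E_{\bar X,2}(\bar w_1,\ldots,\bar w_m,\bar v_1,\ldots,\bar v_m)\Bigr),
$$
where $E_{\bar X,2}=(E_X)_{\Gamma_2,\bar p_2,\gamma_2}$ induces $\sim_{\F_2(X)}$.

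We then check conditions (I)--(III) of Definition~\ref{def:mor} using the Reduction Theorem~\ref{RT} and~\eqref{eq:sim}. Take $\bar{\bar u}\in\bar X_1$ and $\bar{\bar v}$ arbitrary. Then $\varphi(\bar{\bar u},\bar{\bar v})$ holds iff there is $\bar{\bar w}$ with $\mu_{\Gamma_2}(\bar{\bar w})=\mu_{\Gamma_1}(\bar{\bar u})\in X$ and $\mu_{\Gamma_1}(\bar{\bar u})\sim_X\mu_{\Gamma_2}(\bar{\bar v})$. This forces $\bar{\bar v}\in\bar X_2$, because $E_{\bar X,2}$ holds only on $U^m_{\Gamma_2}$ and $X$ is closed under $E_X$. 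Under those conditions it says exactly that $\tilde\lambda(\bar{\bar u}/{\sim})=\bar{\bar v}/{\sim}$, which gives (I) and (II). Condition (III) is checked symmetrically: if $\bar{\bar v}\in\bar X_2$ and $\varphi$ holds, then $\bar{\bar u}\in U^m_{\Gamma_1}$ and $\mu_{\Gamma_1}(\bar{\bar u})\sim_X\mu_{\Gamma_2}(\bar{\bar v})\in X$, so $\bar{\bar u}\in\bar X_1$. Since $\tilde\lambda_{X/{\sim_X}}$ is bijective and fully induced, Lemma~\ref{le:iso} makes it an isomorphism in $\PLS(\MB)$.

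The main obstacle is this definability step. It needs care that the parameters of $\Lambda$ and of the translations lie in $B$, and that the existential witness $\bar{\bar w}$ behaves correctly under $\sim_{\Gamma_2}$. The rest is bookkeeping with Corollary~\ref{cor3} and diagram~\eqref{eq:F_mu}.
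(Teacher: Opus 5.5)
Your proposal is correct and matches the paper's proof in all essentials. You use the same definition $\tilde\lambda_{X/{\sim_X}}=\tilde\mu^{-1}_{\Gamma_2,X/{\sim_X}}\circ\tilde\mu_{\Gamma_1,X/{\sim_X}}$, the same inducing formula (your $\varphi$ is the paper's $\varphi_{\bar X_1\bar X_2}$ from~\eqref{eq:F}, built from the connector and $E_{\bar X_2}$), and the same naturality computation via diagram~\eqref{eq:F_mu}. The only difference is that you check conditions (I)--(III) for $\varphi$ directly, whereas the paper first shows that $\theta_m$ fully induces the isomorphism $\bar X_1/{\sim_{\Gamma_1}}\to\bar X_2/{\sim_{\Gamma_2}}$ and then passes to the quotients via Lemma~\ref{kernel} and Theorem~\ref{th_hom}.
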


\begin{proof}
We will use the notation $\theta$ for a connector of the homotopy, i.\,e., a formula $\theta(\bar x_1,\bar x_2)$, $|\bar x_j|=\dim\Gamma_j$, $j=1,2$, in $L(\MB)\cup B$, which defines the isomorphism $\lambda$. 
Take any projective logical sets $X/{\sim_X}$ and $Y/{\sim_Y}$ from $\PLS(\MA)$ and any morphism $F_{XY}\colon X/{\sim_X} \to Y/{\sim_Y}$. For a brevity let us use the following notations: $\bar X_1/{\sim_{\bar X_1}}=\F_1(X/{\sim_X})$, $\bar X_2/{\sim_{\bar X_2}}=\F_2(X/{\sim_X})$, $\bar Y_1/{\sim_{\bar Y_1}}=\F_1(Y/{\sim_Y})$, $\bar Y_2/{\sim_{\bar Y_2}}=\F_2(Y/{\sim_Y})$, $F_{\bar X_1 \bar Y_1}=\F_1(F_{XY})$ and $F_{\bar X_2 \bar Y_2}=\F_2(F_{XY})$. 
To show the existence of natural isomorphism $\tilde\lambda$, we need to construct isomorphisms $\tilde\lambda_X$ for each projective logical set $X/{\sim_X}$ in $\PLS(\MA)$, such that for every $Y/{\sim_Y}$ in $\PLS(\MA)$ and any morphism $F_{XY}$ the diagram
\begin{equation}\label{eq:tau}
\begin{tikzcd}[column sep=huge, row sep=large]
\bar X_1/{\sim_{\bar X_1}} 
\arrow[r,leftarrow,shift right,"\tilde\lambda_X^{-1}"']
\arrow[r,rightarrow,shift left,swap,"\tilde\lambda_X"'] 
\arrow{d}{F_{\bar X_1 \bar Y_1}} 
& \bar X_2/{\sim_{\bar X_2}}
\arrow{d}{F_{\bar X_2 \bar Y_2}}\\
\bar Y_1/{\sim_{\bar Y_1}}
\arrow[r,leftarrow,shift right,"\tilde\lambda_Y^{-1}"']
\arrow[r,rightarrow,shift left,swap,"\tilde\lambda_Y"']  
& \bar Y_2/{\sim_{\bar Y_2}} 
\end{tikzcd}    
\end{equation}
will be commutative~\cite[Section~1.4]{MacLane}. 

For $X=\emptyset$ we put $\tilde\lambda_\emptyset=\id_\emptyset$. For non-empty $X\subseteq A^m$ we denote by $F_{\bar X_1 \bar X_2}\colon \bar X_1/{\sim_{\bar X_1}}\to \bar X_2/{\sim_{\bar X_2}}$ the composition of bijective maps $\tilde\mu^{-1}_{\Gamma_2, X/{\sim_X}}\circ\tilde\mu_{\Gamma_1, X/{\sim_X}}$. In the special case, when $\sim_X$ is the identical formula equivalence and the corresponding equivalence $\sim_{\bar X_1}$ coincides with $\sim_{\Gamma_1}$, and $\sim_{\bar X_2}$~--- with $\sim_{\Gamma_2}$ from~\eqref{eq:rel_Gamma}, we will denote $F_{\bar X_1 \bar X_2}$ by $F^\prime_{\bar X_1 \bar X_2}$. Then we put
\begin{equation}\label{eq:F}
   \begin{gathered}
   \theta_m(\bar {\bar x}_1, \bar{\bar x}_2)=\bigwedge\limits_{i=1}^m \theta(\bar x_{1,i},\bar x_{2,i}), \\ 
\varphi_{\bar X_1 \bar X_2}(\bar {\bar x}_1, \bar{\bar x}_2)= \exists\,\bar{\bar x}\:( \theta_m(\bar {\bar x}_1, \bar{\bar x})\wedge E_{\bar X_2}(\bar {\bar x},\bar {\bar x}_2)),
\end{gathered} 
\end{equation}
where $\bar{\bar x}_j=(\bar x_{j,1},\ldots,\bar x_{j,m})$, $|\bar x_{j,i}|=\dim\Gamma_j$, $j=1,2$. Take any tuples $\bar{\bar b}_1=(\bar b_{1,1},\ldots,\bar b_{1,m})\in B^{m\cdot\dim\Gamma_1}$ and $\bar{\bar b}_2=(\bar b_{2,1},\ldots,\bar b_{2,m})\in B^{m\cdot\dim\Gamma_2}$. One has $\MB_B\models\theta_m(\bar{\bar b}_1,\bar{\bar b}_2)$ if and only if $\bar b_{1,i}\in U_{\Gamma_1}(\MB,\bar p_1)$, $\bar b_{2,i}\in U_{\Gamma_2}(\MB,\bar p_2)$ and $\lambda(\bar b_{1,i}/{\sim_{\Gamma_1}})=\bar b_{2,i}/{\sim_{\Gamma_2}}$ for all $i=1,\ldots,m$. Due to~\eqref{eq:homotopic}, one has $\lambda(\bar b_{1,i}/{\sim_{\Gamma_1}})=\bar b_{2,i}/{\sim_{\Gamma_2}}$ if and only if $\mu_{\Gamma_1}(\bar b_{1,i})=\mu_{\Gamma_2}(\bar b_{2,i})$. By definition, for $\bar{\bar b}_1\in \bar X_1$, $\bar{\bar b}_2\in \bar X_2$ one has $F_{\bar X_1 \bar X_2}(\bar{\bar b}_1/{\sim_{\Gamma_1}})=\bar{\bar b}_2/{\sim_{\Gamma_2}}$ if and only if $\tilde\mu_{\Gamma_1,X/{\sim_X}}(\bar{\bar b}_1/{\sim_{\Gamma_1}})=\tilde\mu_{\Gamma_2,X/{\sim_X}}(\bar{\bar b}_2/{\sim_{\Gamma_2}})$, i.\,e.,  $\mu_{\Gamma_1}(\bar b_{1,i})=\mu_{\Gamma_2}(\bar b_{2,i})$ for all $i=1,\ldots,m$. So, the condition $F_{\bar X_1 \bar X_2}(\bar{\bar b}_1/{\sim_{\Gamma_1}})=\bar{\bar b}_2/{\sim_{\Gamma_2}}$ is equivalent to $\MB_B\models\theta_m(\bar{\bar b}_1,\bar{\bar b}_2)$. If $\MB_B\models\theta_m(\bar{\bar b}_1,\bar{\bar b}_2)$ and, for instance, $\bar {\bar b}_1\in \bar X_1$, but $\bar {\bar b}_2\not\in \bar X_2$, then $\mu_{\Gamma_1}(\bar{\bar b}_1)=\mu_{\Gamma_2}(\bar{\bar b}_2)$. However, by Fact~\ref{fact1}, $\bar {\bar b}_1\in \bar X_1$ implies that $\mu_{\Gamma_1}(\bar{\bar b}_1)\in X$ and $\bar {\bar b}_2\not\in \bar X_2$ implies that $\mu_{\Gamma_2}(\bar{\bar b}_2)\not\in X$. Thus, $\bar{\bar b}_1\in \bar X_1$ if and only if $\bar{\bar b}_2\in \bar X_2$, i.\,e., formula $\theta_m$ fully induces the map $F^\prime_{\bar X_1 \bar X_2}\colon \bar X_1/{\sim_{\Gamma_1}}\to \bar X_2/{\sim_{\Gamma_2}}$. So, by Lemma~\ref{le:iso}, $F^\prime_{\bar X_1 \bar X_2}$ is an isomorphism in $\PLS(\MB)$. 

In general, let us consider  the map $G=\pi\circ F^\prime_{\bar X_1 \bar X_2}\colon \bar X_1/{\sim_{\Gamma_1}}\to \bar X_2/{\sim_{\bar X_2}}$, where $\pi\colon \bar X_2/{\sim_{\Gamma_2}} \to \bar X_2/{\sim_{\bar X_2}}$ is the natural surjection, fully induced by the formulas $E_{\bar X_2}$ (see Fact~\ref{fact:surjection}). By Lemma~\ref{lemma:composition}, $G$ is a full surjective formula map, which is fully induced by the formula $\varphi_{\bar X_1 \bar X_2}$ from~\eqref{eq:F}. By Lemma~\ref{kernel}, $G$ gives rise the formula equivalence $\ker G$ on $\bar X_1$. Due to~\eqref{eq:homotopic} and Fact~\ref{fact1.5}, $\ker G$ coincides with $\sim_{\bar X_1}$. Therefore, by Theorem~\ref{th_hom}, the formula $\varphi_{\bar X_1 \bar X_2}$ fully induces the isomorphism $F_{\bar X_1 \bar X_2}$. 

So, we put $\tilde\lambda_{X/{\sim_X}}=F_{\bar X_1 \bar X_2}$. And along the way, we obtain the commutativity of the diagram~\eqref{eq:homotopic2}. 
It remains only to verify the commutativity of the diagram~\eqref{eq:tau}, and since $\tilde\lambda_{X/{\sim_X}}$, $\tilde\lambda_{Y/{\sim_Y}}$ are isomorphisms, it suffices to do this just in the southeast direction. We use here the commutativity of the diagrams~\eqref{eq:F_mu} and~\eqref{eq:homotopic2}:
\begin{multline*}
\tilde\lambda_{Y/{\sim_Y}}\circ F_{\bar X_1 \bar Y_2}=(\tilde\mu_{\Gamma_2,Y/{\sim_Y}}^{-1}\circ \tilde\mu_{\Gamma_1,Y/{\sim_Y}})\circ (\tilde\mu_{\Gamma_1,Y/{\sim_Y}}^{-1}\circ F_{XY}\circ\tilde\mu_{\Gamma_1,X/{\sim_X}})=  \\
=\tilde\mu_{\Gamma_2,Y/{\sim_Y}}^{-1}\circ F_{XY}\circ\tilde\mu_{\Gamma_1,X/{\sim_X}}=\\
=(\tilde\mu_{\Gamma_2,Y/{\sim_Y}}^{-2}\circ F_{XY}\circ\tilde\mu_{\Gamma_2,X/{\sim_X}})\circ (\tilde\mu_{\Gamma_2,X/{\sim_X}}^{-1}\circ \tilde\mu_{\Gamma_1,X/{\sim_X}}) =F_{\bar X_2 \bar Y_2}\circ \tilde\lambda_{X/{\sim_X}}.
\end{multline*}
This completes the proof of the theorem.
\end{proof}

\begin{corollary}\label{cor:homotopy}
 If, in the notations and assumptions of Theorem~\ref{th:homotopy}, $\theta$ is a formula in $L(\MB)\cup B$, which defines the homotopy isomorphism $\lambda$, then the isomorphism $\tilde\lambda_{A}$ is defined by the formula $\theta$.
\end{corollary}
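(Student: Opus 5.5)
The plan is to unwind the construction of $\tilde\lambda_A$ from the proof of Theorem~\ref{th:homotopy} in the special case $X=A$, i.e. $m=1$, with $\sim_X$ the identity relation. In that case $\F_j(A)=U_{\Gamma_j}(\MB,\bar p_j)/{\sim_{\Gamma_j}}$ (Example~\ref{ex:Gamma}), and $E_{\bar X_2}$ is the translation of $x=x'$, which induces $\sim_{\Gamma_2}$. So we are exactly in the special case singled out in that proof: the isomorphism $\tilde\lambda_A=F^\prime_{\bar X_1\bar X_2}$ is fully induced by $\theta_1=\theta$. Alternatively, it is fully induced by $\varphi_{\bar X_1\bar X_2}(\bar x_1,\bar x_2)=\exists\,\bar x\,(\theta(\bar x_1,\bar x)\wedge E_{\Gamma_2}(\bar x,\bar x_2,\bar p_2))$, and we will note that this formula is equivalent in $\MB_B$ to $\theta$. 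The reason is that $\theta$ defines $\lambda$, so its second argument ranges over a whole $\sim_{\Gamma_2}$-class inside $U_{\Gamma_2}(\MB,\bar p_2)$.

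Next I would check that $\theta$ not only induces but \emph{defines} $\tilde\lambda_A$ in the sense of Definition~\ref{def:definable_map}. The preimage of the graph of $\tilde\lambda_A$ is the set of pairs $(\bar b_1,\bar b_2)$ with $\bar b_j\in U_{\Gamma_j}(\MB,\bar p_j)$ and $\lambda(\bar b_1/{\sim_{\Gamma_1}})=\bar b_2/{\sim_{\Gamma_2}}$. This is, by definition, the preimage of the graph of $\lambda$, which $\theta$ defines. Equivalently, by Fact~\ref{fact10} it suffices to note that $\MB_B\models\theta(\bar b_1,\bar b_2)$ forces $\bar b_1\in U_{\Gamma_1}(\MB,\bar p_1)=\bar X_1$, which again holds because $\theta$ defines $\lambda$.

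I also need that $\tilde\lambda_A$ is the map $\lambda$ itself. Commutativity of~\eqref{eq:homotopic2} with $\tilde\mu_{\Gamma_j,A}=\bar\mu_{\Gamma_j}$ gives $\tilde\lambda_A=\bar\mu_{\Gamma_2}^{-1}\circ\bar\mu_{\Gamma_1}$. Diagram~\eqref{eq:homotopic} gives $\lambda=\bar\mu_{\Gamma_2}^{-1}\circ\bar\mu_{\Gamma_1}$ as well, so $\tilde\lambda_A=\lambda$.

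The only point requiring any care is the bookkeeping that $E_{\bar X_2}$ in the case $X=A$ reduces to $E_{\Gamma_2}$, up to the conjuncts $U_{\Gamma_2}$. With that noted, the statement is immediate.
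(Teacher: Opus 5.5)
Your argument is correct and is essentially the paper's. The corollary is read off from the proof of Theorem~\ref{th:homotopy} in the case $X=A$, $m=1$: there $\tilde\lambda_A=\tilde\mu^{-1}_{\Gamma_2,A}\circ\tilde\mu_{\Gamma_1,A}$, and $\theta_1=\theta$ is shown to fully induce it. Your identification $\tilde\lambda_A=\bar\mu_{\Gamma_2}^{-1}\circ\bar\mu_{\Gamma_1}=\lambda$, via Example~\ref{ex:Gamma} and diagram~\eqref{eq:homotopic}, is a clean way to see that ``defined by $\theta$'' carries over verbatim from $\lambda$ to $\tilde\lambda_A$.
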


{\bf Natural isomorphism implies strong homotopy.} 
Now we want to explore the reverse situation, that is, to understand what the existence of a natural isomorphism $\eta\colon \F_1\to\F_2$ between interpretation functors $\F_1,\F_2\colon\PLS(\MA)\to\PLS(\MB)$ or between translation functors $\F_1,\F_2\colon\PDS(\MA)\to\PDS(\MB)$  gives. We start with natural isomorphisms between translation functors. Remind that isomorphisms in the categories $\PDS(\MB)$ and $\PLS(\MB)$ between non-empty projective definable sets are all definable bijective maps (see Corollary~\ref{cor:iso_def}). The main goal here is to prove the following two theorems. 

\begin{theorem}\label{th:natur_iso1}
Let $\MA=\langle A; L(\MA)\rangle$ and $\MB=\langle B; L(\MB)\rangle$ be algebraic structures, $(\Gamma_1,\bar p_1,\gamma_1),(\Gamma_2,\bar p_2,\gamma_2)\colon L(\MA)\cup A\to L(\MB)\cup B$ extended codes, such that there exist the translation $(\Gamma_1,\bar p_1,\gamma_1)$- and $(\Gamma_2,\bar p_2,\gamma_2)$\=/functors $\F_1,\F_2\colon\PDS(\MA)\to\PDS(\MB)$, and a natural isomorphism $\eta\colon \F_1\to\F_2$ between them. Then there exist an elementary extension $\MA\preceq \nsA$ and strongly homotopic interpretations $(\Gamma_1,\bar p_1,\mu_{\Gamma_1}),(\Gamma_2,\bar p_2,\mu_{\Gamma_2})\colon \nsA \rightsquigarrow \MB$, such that $\mu_{\Gamma_1} \circ\gamma_1=\mu_{\Gamma_2} \circ\gamma_2=\id_A$. Moreover, if $\theta$ is a formula in the language $L(\MB)\cup B$, which defines the categorical isomorphism $\lambda=\eta_A\colon \F_1(A)\to \F_2(A)$, then it is a connector of the homotopy $(\theta)\colon (\Gamma_1,\bar p_1,\mu_{\Gamma_1})\to(\Gamma_2,\bar p_2,\mu_{\Gamma_2})$. In other words, $\lambda$ is also an $L(\MA)$\=/isomorphism and the diagram   
\begin{equation*}
\begin{tikzcd}
& \Gamma_1(\MB,\bar p_1) \ar[dl, "\bar{\mu}_{\Gamma_1}"'] 
\ar[dd, shift right, "\lambda"']\\
\MA\preceq\nsA  & \\
& \Gamma_2(\MB,\bar p_2) 
\ar[ul, "\bar{\mu}_{\Gamma_2}"] 
\ar[uu, shift right, "\lambda^{-1}"']
\end{tikzcd} 
\end{equation*} 
is commutative.    
\end{theorem}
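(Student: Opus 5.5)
Start from Theorem~\ref{th:functor1}: both extended codes satisfy \ref{item:IRT}. Hence by Inverse Reduction Theorem~\ref{IRT} the structures $\Gamma_1(\MB,\bar p_1)$ and $\Gamma_2(\MB,\bar p_2)$ are well-defined, and $\iota_j(a)=\gamma_j(a)/{\sim_{\Gamma_j}}$ are elementary embeddings $\iota_j\colon\MA\to\Gamma_j(\MB,\bar p_j)$.

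Next, $\lambda=\eta_A$ is an isomorphism in $\PDS(\MB)$ between $\F_1(A)=U_{\Gamma_1}(\MB,\bar p_1)/{\sim_{\Gamma_1}}$ and $\F_2(A)=U_{\Gamma_2}(\MB,\bar p_2)/{\sim_{\Gamma_2}}$ (Example~\ref{ex:Gamma}). By Corollary~\ref{cor:iso_def} it is a definable bijection, defined by some $\theta$. I will show that $\lambda$ is an $L(\MA)$\=/isomorphism with $\lambda\circ\iota_1=\iota_2$. Once that is done, take $\nsA$ to be an elementary extension of $\MA$ together with an $L(\MA)$\=/isomorphism $\bar\iota\colon\nsA\to\Gamma_1(\MB,\bar p_1)$ extending $\iota_1$, exactly as in the proof of Theorem~\ref{th:functor1}. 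Define $\mu_{\Gamma_1}$ by $\bar\mu_{\Gamma_1}=\bar\iota^{-1}$ and $\mu_{\Gamma_2}$ by $\bar\mu_{\Gamma_2}=\bar\iota^{-1}\circ\lambda^{-1}$. Then $\mu_{\Gamma_j}\circ\gamma_j=\id_A$, the triangle commutes by construction, and $\theta$ is a connector.

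To prove the claim about $\lambda$, use naturality of $\eta$ on specific morphisms of $\PDS(\MA)$; the relevant cases are listed below.
\begin{enumerate}
\item \emph{Constants $a\in A$.} Take the one-point set $\{a\}=\V_\MA(x=a)$ and the identical embedding $\varepsilon\colon\{a\}\to A$. By Corollary~\ref{cor:tr_functor_subset}, $\F_j(\varepsilon)$ is the identical embedding of $\F_j(\{a\})=\{\gamma_j(a)/{\sim_{\Gamma_j}}\}$. Naturality then forces $\lambda(\gamma_1(a)/{\sim_{\Gamma_1}})=\gamma_2(a)/{\sim_{\Gamma_2}}$, that is, $\lambda\circ\iota_1=\iota_2$. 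The same argument with the constant map $A\to A$ with value $c^\MA$ (Example~\ref{ex:non-empty1}), or directly via $\iota_j(c^\MA)=c^{\Gamma_j}$, handles the constant symbols.
\item \emph{Products.} Naturality with respect to the projections $\Pi_r\colon A^m\to A$, combined with Corollary~\ref{cor:tr_functor_subset}, shows that $\eta_{A^m}$ acts coordinatewise as $\lambda^m$.
\item \emph{Function symbols.} Let $f\in L(\MA)$ and consider the morphism $F_f\colon A^{n_f}\to A$ induced by $f(\bar x)=y$. By construction $\F_j(F_f)$ is induced by $f_{\Gamma_j}$, so it is the operation $f^{\Gamma_j(\MB,\bar p_j)}$. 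Naturality gives $\lambda\circ f^{\Gamma_1}=f^{\Gamma_2}\circ\lambda^{n_f}$.
\item \emph{Predicate symbols.} Let $R\in L(\MA)$. The definable subset $R(\MA)\subseteq A^{n_R}$ and its embedding map under $\F_j$ to $R^{\Gamma_j}\subseteq(\Gamma_j)^{n_R}$. By Lemma~\ref{lemma:descent}, $\eta_{R}$ is the restriction of $\eta_{A^{n_R}}=\lambda^{n_R}$, so $\lambda^{n_R}(R^{\Gamma_1})=R^{\Gamma_2}$.
\end{enumerate}
A subtlety arises if $R(\MA)$ is empty. In that case $R^{\Gamma_j}$ is empty as well, by \ref{F1}$_{\omega,\omega}$, so nothing needs to be checked.

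The main obstacle is that $\F_j(A)$ is the quotient $U_{\Gamma_j}/{\sim_{\Gamma_j}}$ of the whole $\Gamma_j(\MB,\bar p_j)$, not merely the image of $A$. The constant-point argument therefore only fixes $\lambda$ on $\iota_1(A)$. This is why the operation and relation clauses must go through naturality on the morphisms $F_f$ and the subsets $R(\MA)$, whose $\F_j$-images are the full structural operations and relations, rather than through pointwise arguments. Verifying precisely that $\F_j(F_f)$ equals $f^{\Gamma_j(\MB,\bar p_j)}$ on all of $U_{\Gamma_j}/{\sim_{\Gamma_j}}$, using \cite[Corollary~2]{Th_int1} and the admissibility conditions, is the step I expect to need the most care.
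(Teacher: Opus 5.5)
Your proposal is correct and follows the paper's proof essentially step for step. The steps are: condition (IRT) via Theorem~\ref{th:functor1}; $\eta_{A^m}=\lambda^m$ via the projections; naturality on the one-point sets $\V_\MA(x=a)$ to get $\lambda\circ\iota_1=\iota_2$; naturality on the definable data attached to the symbols of $L(\MA)$ to make $\lambda$ an $L(\MA)$-isomorphism; and finally $\bar\mu_{\Gamma_2}=\bar\mu_{\Gamma_1}\circ\lambda^{-1}$ on an elementary extension $\nsA\cong\Gamma_1(\MB,\bar p_1)$.

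The differences are cosmetic. You treat function symbols by naturality on the morphism $A^{n_f}\to A$, whereas the paper restricts $\lambda^{n_f+1}$ to the graph $\V_\MA(f(x_1,\ldots,x_{n_f})=x_0)$. You also explicitly handle an empty $R(\MA)$, a case the paper's Lemma~\ref{le:natur_iso1}, stated for non-empty $X$, passes over.
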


\begin{theorem}\label{th:natur_iso2}
Let $\MA=\langle A; L(\MA)\rangle$ and $\MB=\langle B; L(\MB)\rangle$ be algebraic structures, $(\Gamma_1,\bar p_1,\gamma_1),(\Gamma_2,\bar p_2,\gamma_2)\colon L(\MA)\cup A\to L(\MB)\cup B$ extended codes, such that there exist the interpretation $(\Gamma_1,\bar p_1,\gamma_1)$- and $(\Gamma_2,\bar p_2,\gamma_2)$\=/functors $\F_1,\F_2\colon\PLS(\MA)\to\PLS(\MB)$, and a natural isomorphism $\eta\colon \F_1\to\F_2$ between them. Then there exist strongly homotopic interpretations $(\Gamma_1,\bar p_1,\mu_{\Gamma_1}),(\Gamma_2,\bar p_2,\mu_{\Gamma_2})\colon \MA \rightsquigarrow \MB$, such that $\mu_{\Gamma_1} \circ\gamma_1=\mu_{\Gamma_2} \circ\gamma_2=\id_A$. Moreover, if $\theta$ is a formula in the language $L(\MB)\cup B$, which defines the categorical isomorphism $\lambda=\eta_A\colon \F_1(A)\to \F_2(A)$, then it is a connector of the homotopy $(\theta)\colon (\Gamma_1,\bar p_1,\mu_{\Gamma_1})\to(\Gamma_2,\bar p_2,\mu_{\Gamma_2})$. In other words, $\lambda$ is also an $L(\MA)$\=/isomorphism and the diagram~\eqref{eq:homotopic} is commutative.  
\end{theorem}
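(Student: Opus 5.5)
The plan is to reduce to Theorem~\ref{th:natur_iso1}, or more directly to repeat its argument, using Theorem~\ref{th:functor2} to remove the elementary extension. First, by Theorem~\ref{th:functor2}\ref{item:IF2}, the existence of the interpretation $(\Gamma_j,\bar p_j,\gamma_j)$\=/functors $\F_j\colon\PLS(\MA)\to\PLS(\MB)$ gives interpretations $(\Gamma_j,\bar p_j,\mu_{\Gamma_j})\colon\MA\rightsquigarrow\MB$ with $\mu_{\Gamma_j}\circ\gamma_j=\id_A$, $j=1,2$. By Corollary~\ref{cor:surj}, every element of $\Gamma_j(\MB,\bar p_j)$ has the form $\gamma_j(a)/{\sim_{\Gamma_j}}$, so $\bar\mu_{\Gamma_j}$ is the $L(\MA)$\=/isomorphism sending $\gamma_j(a)/{\sim_{\Gamma_j}}$ to $a$. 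The restrictions of $\F_j$ to $\PDS(\MA)$ are the translation functors, and $\eta$ restricts to a natural isomorphism between them. Hence Theorem~\ref{th:natur_iso1} can be applied; alternatively, I would reprove the relevant part directly as follows.

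Put $\lambda=\eta_A\colon\F_1(A)=U_{\Gamma_1}(\MB,\bar p_1)/{\sim_{\Gamma_1}}\to\F_2(A)=U_{\Gamma_2}(\MB,\bar p_2)/{\sim_{\Gamma_2}}$. This is an isomorphism between projective definable sets, so by Corollary~\ref{cor:iso_def} it is a definable bijection defined by some $\theta$. The key step is to show $\lambda=\bar\mu_{\Gamma_2}^{-1}\circ\bar\mu_{\Gamma_1}$, that is, $\lambda(\gamma_1(a)/{\sim_{\Gamma_1}})=\gamma_2(a)/{\sim_{\Gamma_2}}$ for each $a\in A$. For this I use naturality with respect to the constant morphism. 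Let $Y=\{a\}=\V_\MA(x=a)$, a one-point definable set, and let $\varepsilon\colon\{a\}\to A$ be the identical embedding. By Corollary~\ref{cor:functor_subset}, $\F_j(\varepsilon)$ is the identical embedding of $\F_j(\{a\})=\{\gamma_j(a)/{\sim_{\Gamma_j}}\}$ into $\F_j(A)$; here $\F_j(\{a\})$ is computed from the translation of $x=a$, which defines exactly the $\sim_{\Gamma_j}$\=/class of $\gamma_j(a)$. Naturality gives $\eta_A\circ\F_1(\varepsilon)=\F_2(\varepsilon)\circ\eta_{\{a\}}$. The target $\F_2(\{a\})$ is a singleton, so $\eta_{\{a\}}$ is forced, and therefore $\lambda(\gamma_1(a)/{\sim_{\Gamma_1}})=\gamma_2(a)/{\sim_{\Gamma_2}}$. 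Combined with surjectivity of each $\gamma_j(\cdot)/{\sim_{\Gamma_j}}$, this shows that the diagram~\eqref{eq:homotopic} commutes. Consequently $\lambda=\bar\mu_{\Gamma_2}^{-1}\circ\bar\mu_{\Gamma_1}$ is a composition of $L(\MA)$\=/isomorphisms, and it is definable in $\MB$ by $\theta$. So $\theta$ is a connector of a strong homotopy between the two interpretations.

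The main obstacle is the identification of $\F_j(\{a\})$ and of $\F_j(\varepsilon)$: one has to check that the translation of $x=a$ yields the single class $\gamma_j(a)/{\sim_{\Gamma_j}}$. This follows from Fact~\ref{fact1} ($\mu_{\Gamma_j}^{-1}(X)=\bar X$), since a point of $\bar X$ must map under $\mu_{\Gamma_j}$ to $a$. Beyond that, one must confirm that the definition of strong homotopy in~\cite{Th_int1} requires only a $\MB$\=/definable $L(\MA)$\=/isomorphism commuting with the coordinate isomorphisms, which is exactly what we have obtained.
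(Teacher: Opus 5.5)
Your proof is correct. It reaches the conclusion by a leaner route than the paper's.

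\textbf{What the paper does.} After invoking Theorem~\ref{th:functor2}, the paper restricts $\eta$ to $\PDS(\MA)$ and imports two lemmas built for Theorem~\ref{th:natur_iso1}:
\begin{itemize}
\item Lemma~\ref{le:natur_iso2} checks, constant by constant, function by function and predicate by predicate, that $\lambda$ is an $L(\MA)$-isomorphism. It does this through Lemma~\ref{le:natur_iso1} ($\eta_X$ is fully induced by $\theta_m$), which rests on Lemma~\ref{le:natur_iso0} and the descent lemma.
\item Lemma~\ref{le:natur_iso3} gives $\lambda(\gamma_1(a)/{\sim_{\Gamma_1}})=\gamma_2(a)/{\sim_{\Gamma_2}}$.
\end{itemize}
Only then does the paper use Corollary~\ref{cor:surj} to conclude $\bar\mu_{\Gamma_1}=\bar\mu_{\Gamma_2}\circ\lambda$.

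\textbf{What you do instead.} You get the pointwise identity directly from one naturality square for $\{a\}\to A$ together with the singleton observation. This is the core of Lemma~\ref{le:natur_iso3}, without the $\theta_m$ and descent machinery. You then obtain the $L(\MA)$-isomorphism property for free from the factorization $\lambda=\bar\mu_{\Gamma_2}^{-1}\circ\bar\mu_{\Gamma_1}$, so Lemma~\ref{le:natur_iso2} is not needed at all.

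\textbf{Why the shortcut works here but not in Theorem~\ref{th:natur_iso1}.} Here both $(\Gamma_j,\bar p_j,\mu_{\Gamma_j})$ are genuine interpretations of $\MA$, and $\gamma_1(A)/{\sim_{\Gamma_1}}$ exhausts $\Gamma_1(\MB,\bar p_1)$. In Theorem~\ref{th:natur_iso1}, only $\Gamma_1$ is known a priori to interpret the extension $\nsA$, and $\gamma_1(A)/{\sim_{\Gamma_1}}$ need not cover $\Gamma_1(\MB,\bar p_1)$. That is why the paper needs the structure-by-structure check there, and it simply reuses it here.

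The paper's lemma chain does pay off later, for instance in Lemma~\ref{cor:natur_iso}, where $\eta$ is recovered on all objects from $\eta_A$. For the present theorem, though, your argument is self-contained and shorter. Your opening remark that Theorem~\ref{th:natur_iso1} ``can be applied'' is essentially the paper's route and is not needed; your direct argument is what does the work.
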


We start proving with auxiliary lemmas.

\begin{lemma}\label{le:natur_iso0}
    In the assumptions and notations of Theorem~\ref{th:natur_iso1}, for any $m\in \N$ the categorical isomorphism $\eta_{A^m}\colon \F_1(A^m)\to \F_2(A^m)$ is defined by the formula $\theta_m$~\eqref{eq:F}.
\end{lemma}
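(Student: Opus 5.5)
The plan is to reduce everything to the case $m=1$ by naturality of $\eta$ with respect to the natural projections $\Pi_r\colon A^m\to A$, $r=1,\ldots,m$. These are morphisms of $\PDS(\MA)$ by Fact~\ref{fact:proj}.

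First we identify the images of $A^m$ and of the $\Pi_r$ under the two translation functors. By Corollary~\ref{cor:tr_functor_subset} (items~\ref{item1:subset} and~\ref{item2:subset}), for $j=1,2$ we have
$$
\F_j(A^m)=\F_j(A)^m=U_{\Gamma_j}^m(\MB,\bar p_j)/{\sim_{\Gamma_j}},
$$
and $\F_j(\Pi_r)$ is the natural projection onto the $r$-th factor. By~\eqref{eq:rel_Gamma}, the equivalence on $\F_j(A^m)$ is coordinatewise. Hence an element $\bar{\bar b}/{\sim_{\Gamma_j}}=(\bar b_1,\ldots,\bar b_m)/{\sim_{\Gamma_j}}$ of $\F_j(A^m)$ is uniquely determined by its $m$ projections $\bar b_r/{\sim_{\Gamma_j}}$.

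Next we apply naturality of $\eta$ to each $\Pi_r$, which gives $\F_2(\Pi_r)\circ\eta_{A^m}=\eta_A\circ\F_1(\Pi_r)$. Concretely, the $r$-th coordinate of $\eta_{A^m}(\bar{\bar b}/{\sim_{\Gamma_1}})$ equals $\eta_A(\bar b_r/{\sim_{\Gamma_1}})$ for every $r$. Since elements are determined by their coordinates, as maps of sets we get
$$
\eta_{A^m}=\eta_A\times\ldots\times\eta_A \quad (m \text{ factors}).
$$
By Fact~\ref{fact_times}, this product map is induced by the conjunction $\bigwedge_{i=1}^m\theta(\bar x_{1,i},\bar x_{2,i})=\theta_m$. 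Thus $\theta_m$ induces $\eta_{A^m}$.

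It remains to upgrade ``induces'' to ``defines''. Since $\theta$ defines $\eta_A$, Fact~\ref{fact10} gives that $\MB_B\models\theta(\bar b,\bar b')$ implies $\bar b\in U_{\Gamma_1}(\MB,\bar p_1)$. Therefore $\MB_B\models\theta_m(\bar{\bar b}_1,\bar{\bar b}_2)$ implies $\bar{\bar b}_1\in U^m_{\Gamma_1}(\MB,\bar p_1)$, which is the underlying set of $\F_1(A^m)$. By Fact~\ref{fact10} again, $\theta_m$ defines $\eta_{A^m}$.

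The only step needing care is the identification of $\F_j(A^m)$ with the coordinatewise quotient and of $\F_j(\Pi_r)$ with genuine projections. Without it, naturality would only constrain $\eta_{A^m}$ up to the composite projections. That identification is exactly what Corollary~\ref{cor:tr_functor_subset} together with~\eqref{eq:rel_Gamma} supplies, so I expect no real obstacle.
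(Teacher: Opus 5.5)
Your proposal is correct and follows the paper's own argument. Corollary~\ref{cor:tr_functor_subset} identifies $\F_j(A^m)$ with $\F_j(A)^m$ and $\F_j(\Pi_r)$ with the natural projections, and naturality of $\eta$ along each $\Pi_r$ then forces $\eta_{A^m}=\lambda^m$, which is defined by $\theta_m$; your explicit check via Fact~\ref{fact10} that $\theta_m$ defines, and not merely induces, $\eta_{A^m}$ spells out a step the paper leaves implicit.
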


\begin{proof}
Remind that by construction of functors $\F_1$ and $\F_2$ one has $\F_1(A)=U_{\Gamma_1}(\MB,\bar p_1)/{\sim_{\Gamma_1}}$ and $\F_2(A)=U_{\Gamma_2}(\MB,\bar p_2)/{\sim_{\Gamma_2}}$. Let's consider natural projections $\Pi_r\colon A^m\to A$, $r=1,\ldots,m$. By Corollary~\ref{cor:tr_functor_subset}, one has $\F_1(A^m)=(U_{\Gamma_1}(\MB,\bar p_1)/{\sim_{\Gamma_1}})^m$ and $\F_2(A^m)=(U_{\Gamma_2}(\MB,\bar p_2)/{\sim_{\Gamma_2}})^m$. Also it gives that $\F_1(\Pi_r)\colon (U_{\Gamma_1}(\MB,\bar p_1)/{\sim_{\Gamma_1}})^m\to U_{\Gamma_1}(\MB,\bar p_1)/{\sim_{\Gamma_1}}$ and $\F_2(\Pi_r)\colon (U_{\Gamma_2}(\MB,\bar p_2)/{\sim_{\Gamma_2}})^m\to U_{\Gamma_2}(\MB,\bar p_2)/{\sim_{\Gamma_2}}$ are natural projections $\Pi_r$. Therefore, the commutativity of the diagrams
\begin{equation*}
\begin{tikzcd}[column sep=huge, row sep=large]
(U_{\Gamma_1}(\MB,\bar p_1)/{\sim_{\Gamma_1}})^m 
\arrow[r,leftarrow,shift right,"\eta_{A^m}^{-1}"']
\arrow[r,rightarrow,shift left,swap,"\eta_{A^m}"'] 
\arrow{d}{\Pi_r} 
& (U_{\Gamma_2}(\MB,\bar p_2)/{\sim_{\Gamma_2}})^m 
\arrow{d}{\Pi_r}\\
U_{\Gamma_1}(\MB,\bar p_1)/{\sim_{\Gamma_1}}
\arrow[r,leftarrow,shift right,"\lambda^{-1}"']
\arrow[r,rightarrow,shift left,swap,"\lambda"']  
& U_{\Gamma_2}(\MB,\bar p_2)/{\sim_{\Gamma_2}}
\end{tikzcd}    
\end{equation*}
for all $r=1,\ldots,m$ gives that the isomorphism $\eta_{A^m}$ equals to $\lambda^m$, so it is defined by the formula $\theta_m$~\eqref{eq:F}.
\end{proof}

\begin{lemma}\label{le:natur_iso1}
    In the assumptions and notations of Theorem~\ref{th:natur_iso1}, suppose that $X\subseteq A^m$ is a non-empty definable set over $\MA$. Then the categorical isomorphism $\eta_X\colon \F_1(X)\to \F_2(X)$ is fully induced by the formula $\theta_m$~\eqref{eq:F}.
\end{lemma}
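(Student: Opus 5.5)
The plan is to reduce to Lemma~\ref{le:natur_iso0} via naturality with respect to the identical embedding $\varepsilon\colon X\to A^m$. Since $X$ is non-empty and definable, $\varepsilon$ is a morphism of $\PDS(\MA)$ (Example~\ref{ex:non-empty2}). By Corollary~\ref{cor:tr_functor_subset}, $\F_j(\varepsilon)\colon \F_j(X)\to\F_j(A^m)$ is an identical embedding for $j=1,2$. Here $\F_j(X)=\bar X_j/{\sim_{\Gamma_j}}$ with $\bar X_j=\V_\MB((S_X)_{\Gamma_j,\bar p_j,\gamma_j})$, and $\F_j(A^m)=(U_{\Gamma_j}(\MB,\bar p_j)/{\sim_{\Gamma_j}})^m$. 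So $\F_j(X)\subseteq\F_j(A^m)$ as a projective definable subset, and $\bar X_j$ is closed under $\sim_{\Gamma_j}$.

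Naturality of $\eta$ on $\varepsilon$ gives $\eta_{A^m}\circ\F_1(\varepsilon)=\F_2(\varepsilon)\circ\eta_X$. In words, $\eta_X$ is the restriction of $\eta_{A^m}$ to $\F_1(X)$, and this restriction maps $\F_1(X)$ into $\F_2(X)$. Since $\eta_X$ is a bijection onto $\F_2(X)$, we get
$$\eta_{A^m}(\F_1(X))=\F_2(X).$$

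By Lemma~\ref{le:natur_iso0}, $\eta_{A^m}$ is defined by $\theta_m$. It is a bijective definable map between projective definable sets, so by Corollary~\ref{cor:iso_def} and Lemma~\ref{le:iso} it is an isomorphism fully induced by $\theta_m$. Indeed, $\theta_m$ defines it, so any tuple satisfying $\theta_m$ lies in $U_{\Gamma_1}^m\times U_{\Gamma_2}^m$, which gives condition~\ref{eq:FXY3}.

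We then apply Corollary~\ref{cor:descent} to the isomorphism $\eta_{A^m}$, fully induced by $\theta_m$, with subsets $Z/{\sim_Z}=\F_1(X)$ and $W/{\sim_W}=\F_2(X)$, whose images correspond as shown above. This yields that the restriction $\eta_X$ is an isomorphism fully induced by $\theta_m$. Alternatively, one can use Lemma~\ref{lemma:descent}, whose statement that $\varphi_{XY}(\bar z,\bar y)$ fully induces the restricted isomorphism is exactly what is needed.

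The one point needing care is that $\F_j(X)$ really is a sub-object closed under the ambient equivalence. This is exactly the content of the identical-embedding clause~\ref{item3:subset}, so I expect no real obstacle. The only mild check is that $\theta_m$, which is defined on all of $U^m$, does not relate a point of $\bar X_1$ to a point outside $\bar X_2$. This follows from the image equality $\eta_{A^m}(\F_1(X))=\F_2(X)$ together with injectivity of $\eta_{A^m}$, as in the proof of Corollary~\ref{cor:descent}.
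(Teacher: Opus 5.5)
Your proposal is correct and follows the paper's proof. Both arguments apply naturality of $\eta$ to the identical embedding $\varepsilon\colon X\to A^m$, together with Corollary~\ref{cor:tr_functor_subset} and Lemma~\ref{le:natur_iso0}, to show that $\eta_X$ is the restriction of $\eta_{A^m}=\lambda^m$ with $\eta_{A^m}(\F_1(X))=\F_2(X)$, and then finish with a descent result. The paper uses Lemma~\ref{lemma:descent} for that last step, while your main choice, Corollary~\ref{cor:descent}, works equally well; it is the one the paper uses for the logical-set analogue, Lemma~\ref{le:natur_iso4}.
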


\begin{proof}
Suppose that $\varepsilon\colon X\to A^m$ is the identical embedding. By Corollary~\ref{cor:tr_functor_subset}, $\F_1(\varepsilon)\colon \F_1(X)\to \F_1(A^m)$ and $\F_2(\varepsilon)\colon \F_2(X)\to \F_2(A^m)$ are identical embeddings too. The commutativity of the diagram
\begin{equation*}
\begin{tikzcd}[column sep=huge, row sep=large]
\F_1(X) 
\arrow[r,leftarrow,shift right,"\eta_{X}^{-1}"']
\arrow[r,rightarrow,shift left,swap,"\eta_{X}"'] 
\arrow{d}{\F_1(\varepsilon)} 
& \F_2(X) 
\arrow{d}{\F_2(\varepsilon)}\\
(U_{\Gamma_1}(\MB,\bar p_1)/{\sim_{\Gamma_1}})^m 
\arrow[r,leftarrow,shift right,"\eta_{A^m}^{-1}"']
\arrow[r,rightarrow,shift left,swap,"\eta_{A^m}"']  
& (U_{\Gamma_2}(\MB,\bar p_2)/{\sim_{\Gamma_2}})^m
\end{tikzcd}    
\end{equation*}
shows that $\eta_X$ is
the restriction of $\lambda^m$ on $\F_1(X)$ and $\eta_X(\F_1(X))=\F_2(X)$. Therefore, by Lemma~\ref{lemma:descent}, $\eta_X$ is fully induced by the formula $\theta_m$~\eqref{eq:F}.  
\end{proof}

Thus, we see that the categorical isomorphism $\lambda=\eta_A\colon U_{\Gamma_1}(\MB,\bar p_1)/{\sim_{\Gamma_1}}\to U_{\Gamma_2}(\MB,\bar p_2)/{\sim_{\Gamma_2}}$ is a bijective map, which is defined by the formula $\theta$.

\begin{lemma}\label{le:natur_iso2}
    In the assumptions and notations of Theorem~\ref{th:natur_iso1}, algebraic $L(\MA)$\=/structures $\Gamma_1(\MB,\bar p_1)$ and $\Gamma_2(\MB,\bar p_2)$ are well-defined and the categorical isomorphism $\lambda$ is an $L(\MA)$\=/isomorphism between them.
\end{lemma}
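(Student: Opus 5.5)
Well-definedness comes first. Since $\F_1$ and $\F_2$ are translation functors $\PDS(\MA)\to\PDS(\MB)$, Theorem~\ref{th:functor1} gives that both extended codes satisfy~\ref{item:IRT}. By the Inverse Reduction Theorem~\ref{IRT}, the structures $\Gamma_j(\MB,\bar p_j)$ are then well-defined, and $\iota_j(a)=\gamma_j(a)/{\sim_{\Gamma_j}}$ are elementary $L(\MA)$\=/embeddings $\iota_j\colon\MA\to\Gamma_j(\MB,\bar p_j)$. The map $\lambda=\eta_A$ is already known to be a bijection $U_{\Gamma_1}(\MB,\bar p_1)/{\sim_{\Gamma_1}}\to U_{\Gamma_2}(\MB,\bar p_2)/{\sim_{\Gamma_2}}$ defined by $\theta$. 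It therefore only remains to show that $\lambda$ preserves constants and functions and that it preserves and reflects predicates.

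For this I will use naturality of $\eta$ together with Lemma~\ref{le:natur_iso1}, applied to the \emph{graphs} of the basic operations, which are definable subsets of powers of $A$.

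\emph{Predicates.} For a predicate symbol $R$ of arity $k$, put $X_R=\V_\MA(R(x_1,\ldots,x_k))\subseteq A^k$. If $X_R$ is empty, then by~\ref{F1} both translated sets are empty, and there is nothing to check. Otherwise $\F_j(X_R)=\V_\MB(R_{\Gamma_j}(\bar x_1,\ldots,\bar x_k,\bar p_j))/{\sim_{\Gamma_j}}$, which is exactly the interpretation of $R$ in $\Gamma_j(\MB,\bar p_j)$. By Lemma~\ref{le:natur_iso1}, $\eta_{X_R}$ is fully induced by $\theta_k$, so it is the restriction of $\lambda^k$ and maps $\F_1(X_R)$ onto $\F_2(X_R)$. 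Hence $\lambda^k(R^{\Gamma_1(\MB,\bar p_1)})=R^{\Gamma_2(\MB,\bar p_2)}$.

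\emph{Functions and constants.} For a functional symbol $f$ of arity $k$, I take the graph $X_f=\V_\MA(f(x_1,\ldots,x_k)=x_0)\subseteq A^{k+1}$. Unwinding the $\Gamma_j$\=/translation (via Remark~\ref{remar:norm_form} and unnesting), $\F_j(X_f)$ is the graph of $f^{\Gamma_j(\MB,\bar p_j)}$. The same argument then shows that $\lambda^{k+1}$ maps graph onto graph, i.e.\ $\lambda$ commutes with $f$. Constants are the case $k=0$, using $X_c=\V_\MA(x=c)$.

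The step I expect to be the main obstacle is the identification $\F_j(X_f)=\operatorname{graph}(f^{\Gamma_j(\MB,\bar p_j)})$, because the translation is defined on normal forms, so a nested atomic formula is not translated literally. If the direct identification becomes awkward, I will instead use the Reduction Theorem for the interpretations $\Gamma_j(\MB,\bar p_j)\rightsquigarrow\MB$, i.e.~\cite[Corollary~2]{Th_int1}: the translated formula defines in $\MB$ the preimage of the set that $f(\bar x)=x_0$ defines in $\Gamma_j(\MB,\bar p_j)$. Together with the bijectivity already established, this shows that $\lambda$ is an $L(\MA)$\=/isomorphism.
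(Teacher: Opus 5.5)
Your proposal is correct and follows essentially the same route as the paper. You get well-definedness from Theorem~\ref{th:functor1}, then apply Lemma~\ref{le:natur_iso1} to the definable sets $\V_\MA(x=c)$, $\V_\MA(f(x_1,\ldots,x_k)=x_0)$ and $\V_\MA(R(x_1,\ldots,x_k))$, so that $\lambda$ maps the interpretation of each symbol in $\Gamma_1(\MB,\bar p_1)$ onto its interpretation in $\Gamma_2(\MB,\bar p_2)$.

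The obstacle you anticipate does not arise: $f(x_1,\ldots,x_k)=x_0$ is already an unnested atomic formula, so its translation is just $f_{\Gamma_j}$ together with the $U_{\Gamma_j}$ conjuncts. Your separate treatment of an empty $R^{\MA}$ via \ref{F1} is a small point the paper leaves implicit.
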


\begin{proof}
Algebraic $L(\MA)$\=/structures $\Gamma_1(\MB,\bar p_1)$ and $\Gamma_2(\MB,\bar p_2)$ are well-defined due to Theorem~\ref{th:functor1}. 
We need to check that $\lambda$ preserves constants, functions, and predicates from the language $L(\MA)$. Let us use Lemma~\ref{le:natur_iso1} for it. 
For a constant symbol $c\in L(\MA)$ and $X=\V_\MA(\{x=c\})$ we obtain that the formula $\theta$ induces a categorical isomorphism  between projective definable sets $c_{\Gamma_1}(\MB,\bar p_1)/{\sim_{\Gamma_1}}$ and $c_{\Gamma_2}(\MB,\bar p_2)/{\sim_{\Gamma_2}}$. It means that $\lambda(c)=c$. Similarly, for a functional symbol $f\in L(\MA)$ and $X=\V_\MA(\{f(x_1,\ldots,x_{n_f})=x_0\})$ we obtain that the formula $\theta_{n_f+1}$ induces a categorical isomorphism  between projective definable sets $f_{\Gamma_1}(\MB,\bar p_1)/{\sim_{\Gamma_1}}$ and $f_{\Gamma_2}(\MB,\bar p_2)/{\sim_{\Gamma_2}}$; and the same for a predicate symbol $R\in L(\MA)$. It follows that $\lambda$ is an $L(\MA)$\=/isomorphism.
\end{proof}

\begin{lemma}\label{le:natur_iso3}
In the assumptions and notations of Theorem~\ref{th:natur_iso1}, one has $\lambda(\gamma_1(a)/{\sim_{\Gamma_1}})=\gamma_2(a)/{\sim_{\Gamma_2}}$ for every $a\in A$.
\end{lemma}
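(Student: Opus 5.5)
We want $\lambda(\gamma_1(a)/{\sim_{\Gamma_1}})=\gamma_2(a)/{\sim_{\Gamma_2}}$ for every $a\in A$. The plan is to apply naturality of $\eta$ to a one-point definable set and to the constant morphisms pointing at $a$.

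\textbf{Step 1 (the point $\{a\}$).} Fix $a\in A$ and let $X_a=\V_\MA(\{x=a\})=\{a\}\subseteq A$. This is a non-empty definable set over $\MA$. Let $\varepsilon\colon X_a\to A$ be the identical embedding.

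By construction of the $(\Gamma_j,\bar p_j,\gamma_j)$\=/functors, $\F_j(X_a)$ is induced by the translation $(x=a)_{\Gamma_j,\bar p_j,\gamma_j}$. This translation is, up to first-order equivalence, $U_{\Gamma_j}(\bar x,\bar p_j)\wedge E_{\Gamma_j}(\bar x,\gamma_j(a),\bar p_j)$. By Theorem~\ref{th:functor1} we have $\gamma_j(a)\in U_{\Gamma_j}(\MB,\bar p_j)$. Hence $\F_j(X_a)$ is exactly the one-point set $\{\gamma_j(a)/{\sim_{\Gamma_j}}\}$, viewed as a subset of $\F_j(A)=U_{\Gamma_j}(\MB,\bar p_j)/{\sim_{\Gamma_j}}$. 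By Corollary~\ref{cor:tr_functor_subset}, $\F_j(\varepsilon)$ is the identical embedding of this singleton into $\F_j(A)$.

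\textbf{Step 2 (naturality).} Naturality of $\eta$ with respect to $\varepsilon$ gives
\[
\lambda\circ\F_1(\varepsilon)=\F_2(\varepsilon)\circ\eta_{X_a}.
\]
The source $\F_1(X_a)$ has a single element $\gamma_1(a)/{\sim_{\Gamma_1}}$, and $\eta_{X_a}$ must send it to the unique element $\gamma_2(a)/{\sim_{\Gamma_2}}$ of $\F_2(X_a)$. Evaluating both sides of the identity at this element yields $\lambda(\gamma_1(a)/{\sim_{\Gamma_1}})=\gamma_2(a)/{\sim_{\Gamma_2}}$.

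Alternatively, Lemma~\ref{le:natur_iso1} already says that $\eta_{X_a}$ is the restriction of $\lambda$ and that $\lambda(\F_1(X_a))=\F_2(X_a)$. This gives the same conclusion immediately.

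\textbf{Main point to check.} The only step needing care is that $\F_j(X_a)$ really is the singleton $\{\gamma_j(a)/{\sim_{\Gamma_j}}\}$. Equivalently, every $\bar b$ satisfying the translated formula is $\sim_{\Gamma_j}$\=/equivalent to $\gamma_j(a)$, and the identity equivalence on $X_a$ translates to $\sim_{\Gamma_j}$.

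This follows from the definition of the $(\Gamma_j,\bar p_j,\gamma_j)$\=/translation of the atomic formula $x=a$, namely $E_{\Gamma_j}$ applied to $\gamma_j(a)$. It also uses that $E_{\Gamma_j}$ is an equivalence relation on $U_{\Gamma_j}(\MB,\bar p_j)$, which holds because $\Gamma_j(\MB,\bar p_j)$ is well-defined by Lemma~\ref{le:natur_iso2}.

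Non-emptiness, that is, that $\gamma_j(a)$ actually lies in the set, follows from condition~\ref{F1}$_{\omega,\omega}$, or directly from $\gamma_j(A)\subseteq U_{\Gamma_j}(\MB,\bar p_j)$.
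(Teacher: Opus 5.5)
Your proof is correct and is essentially the paper's argument. The paper applies Lemma~\ref{le:natur_iso1} to the definable set $\V_\MA(\{x=a\})$ and observes that $\F_1$ and $\F_2$ send it to the singletons $\{\gamma_1(a)/{\sim_{\Gamma_1}}\}$ and $\{\gamma_2(a)/{\sim_{\Gamma_2}}\}$. Your direct appeal to naturality of $\eta$ along the identical embedding $X_a\to A$ simply unfolds the proof of that lemma in the case $m=1$.
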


\begin{proof}
Take an element $a\in A$. When applying  Lemma~\ref{le:natur_iso1} to the definable set $X=\V_\MA(\{x=a\})$, we see that the formula $\theta$ induces a categorical isomorphism between projective definable sets $\F_1(X)=(U_{\Gamma_1}(\MB,\bar p_1)\cap E_{\Gamma_1}(\MB,\gamma_1(a),\bar p))/{\sim_{\Gamma_1}}$ and $\F_2(X)=(U_{\Gamma_2}(\MB,\bar p_2)\cap E_{\Gamma_2}(\MB,\gamma_2(a),\bar p))/{\sim_{\Gamma_2}}$. Since $\F_1(X)=\{\gamma_1(a)/{\sim_{\Gamma_1}}\}$ and $\F_2(X)=\{\gamma_2(a)/{\sim_{\Gamma_2}}\}$, we conclude that  $\lambda(\gamma_1(a)/{\sim_{\Gamma_1}})=\gamma_2(a)/{\sim_{\Gamma_2}}$.
\end{proof}

\begin{proof}[Proof of Theorem~\ref{th:natur_iso1}]
By Theorem~\ref{th:functor1} there exist an elementary extension $\MA\preceq \nsA$ and an interpretation $(\Gamma_1,\bar p_1,\mu_{\Gamma_1})\colon \nsA\rightsquigarrow \MB$, such that $\mu_{\Gamma_1}\circ\gamma_1=\id_A$. By Lemma~\ref{le:natur_iso2},  $\lambda\colon\Gamma_1(\MB,\bar p_1)\to \Gamma_2(\MB,\bar p_2)$ is an $L(\MA)$\=/isomorphism. Hence there exists an interpretation $\nsA\stackrel{\Gamma_2,\bar p_2}{\rightsquigarrow}\MB$. Let $\bar \mu_{\Gamma_2}=\bar\mu_{\Gamma_1}\circ\lambda^{-1}\colon \Gamma_2(\MB,\bar p_2)\to \nsA$ be $L(\MA)$\=/isomorphism and $\mu_{\Gamma_2}\colon U_{\Gamma_2}(\MB,\bar p_2)\to \nsuA$ be the corresponding coordinate map. Then interpretations $(\Gamma_1,\bar p_1,\mu_{\Gamma_1}),(\Gamma_2,\bar p_2,\mu_{\Gamma_2})\colon \nsA\rightsquigarrow \MB$ are strongly homotopic with connector $\theta$. Furthermore, by Lemma~\ref{le:natur_iso3},  $\lambda(\gamma_1(a)/{\sim_{\Gamma_1}})=\gamma_2(a)/{\sim_{\Gamma_2}}$ for every $a\in A$. Thus, $\mu_{\Gamma_2}(\gamma_2(a))=\bar\mu_{\Gamma_2}(\gamma_2(a)/{\sim_{\Gamma_2}})=\bar\mu_{\Gamma_1}(\lambda^{-1}(\gamma_2(a)/{\sim_{\Gamma_2}}))=\bar\mu_{\Gamma_1}(\gamma_1(a)/{\sim_{\Gamma_1}})=\mu_{\Gamma_1}(\gamma_1(a))=a$. Hence, one has $\mu_{\Gamma_2}\circ\gamma_2=\id_A$. It proves the theorem.
\end{proof}

\begin{corollary}\label{cor:natur_iso0}
If, in the notations and assumptions of Theorem~\ref{th:natur_iso1}, $\MA=\MB$, $(\Gamma_1,\bar p_1,\gamma_1)$ is $(\Gamma,\bar p,\gamma)$ and $(\Gamma_2,\bar p_2,\gamma_2)$ is $(\Id_{L(\MA)},\emptyset,\id_A)$, then $\lambda$ is an $L(\MA)$\=/isomorphism $\lambda\colon \Gamma(\MA,\bar p)\to \MA$, such that $\lambda(\gamma(a)/{\sim_\Gamma})=a$ for all $a\in A$.
\end{corollary}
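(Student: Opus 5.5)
The statement to prove is Corollary~\ref{cor:natur_iso0}. I would specialize Theorem~\ref{th:natur_iso1} (and its supporting Lemmas~\ref{le:natur_iso2} and~\ref{le:natur_iso3}) to the case $\MB=\MA$, $(\Gamma_1,\bar p_1,\gamma_1)=(\Gamma,\bar p,\gamma)$, $(\Gamma_2,\bar p_2,\gamma_2)=(\Id_{L(\MA)},\emptyset,\id_A)$.

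First I identify $\F_2$ and its value on $A$. By Remark~\ref{remark:ID}, $\F_2$ is the identity functor $\id_{\PDS(\MA)}$. Since $\Id_{L(\MA)}$ has dimension $1$, $U_{\Id}(\MA)=A$ and $E_{\Id}$ is equality. Hence $\F_2(A)=U_{\Id}(\MA)/{\sim_{\Id}}=A$, and $\Id_{L(\MA)}(\MA,\emptyset)=\MA$ as an $L(\MA)$\=/structure.

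On the $\Gamma$ side, $\F_1(A)=U_\Gamma(\MA,\bar p)/{\sim_\Gamma}$. By Lemma~\ref{le:natur_iso2}, $\Gamma(\MA,\bar p)$ is well-defined. The same lemma gives that $\lambda=\eta_A$ is an $L(\MA)$\=/isomorphism $\Gamma(\MA,\bar p)\to\Id_{L(\MA)}(\MA,\emptyset)=\MA$.

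Next, Lemma~\ref{le:natur_iso3} gives $\lambda(\gamma(a)/{\sim_\Gamma})=\id_A(a)/{\sim_{\Id}}=a$ for every $a\in A$, which is the second claim.

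The only step needing care is checking that the identity code really yields the structure $\MA$ itself and the trivial equivalence, not merely something isomorphic to it. This holds by \cite[Remark~11]{Th_int1}: there $U_{\Id}$ is $x=x$, $E_{\Id}$ is $x=x^\prime$, and the symbols of $L(\MA)$ are translated to their graphs. So no real obstacle remains; the corollary is a direct specialization.
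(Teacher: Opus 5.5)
Your proof is correct, but it takes a slightly different route from the paper's. You read both claims directly off Lemmas~\ref{le:natur_iso2} and~\ref{le:natur_iso3}, after identifying $\Id_{L(\MA)}(\MA,\emptyset)$ with $\MA$ and $\sim_{\Id}$ with equality. The elementary extension never appears in your argument.

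The paper instead starts from the conclusion of Theorem~\ref{th:natur_iso1}, using the extension $\MA\preceq\nsA$ and the coordinate maps supplied there. It notes that $\bar\mu_{\Gamma_2}\colon\MA\to\nsA$ is a bijection with $\mu_{\Gamma_2}\circ\id_A=\id_A$, so it fixes $A$ pointwise. This forces $\nsA=\MA$ and $\bar\mu_{\Gamma_2}=\id_A$. The commuting homotopy triangle then gives $\lambda=\bar\mu_{\Gamma_1}$. Both claims follow, since $\bar\mu_{\Gamma_1}(\gamma(a)/{\sim_\Gamma})=\mu_{\Gamma_1}(\gamma(a))=a$.

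What each route buys:
\begin{itemize}
\item Your argument is shorter and skips the step of collapsing $\nsA$ to $\MA$.
\item The paper's phrasing presents $\lambda$ as the coordinate isomorphism of an actual interpretation $(\Gamma,\bar p,\mu_{\Gamma_1})\colon\MA\rightsquigarrow\MA$. That is the form in which the corollary is invoked in the proof of Theorem~\ref{th:bi-inter}.
\item That proof also uses the definability of $\lambda$. In your version this comes for free, because $\lambda=\eta_A$ is a morphism of $\PDS(\MA)$.
\end{itemize}

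The proof of Theorem~\ref{th:natur_iso1} is itself built from the same two lemmas, so the two arguments rest on the same ingredients.
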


\begin{proof}
Indeed, in this case $\Gamma_2(\MA,\bar p_2)=\MA$. Since isomorphism $\bar\mu_{\Gamma_2}\colon \MA\to \nsA$ is such that $\mu_{\Gamma_2}\circ\gamma_2=\id_A$, then $\bar\mu_{\Gamma_2}=\gamma_2=\id_A$, therefore, $\nsA=\MA$ and $\lambda=\bar\mu_{\Gamma_1}$.
\end{proof}

\begin{proof}[Proof of Theorem~\ref{th:natur_iso2}]
By Theorem~\ref{th:functor2}, there exist interpretations $(\Gamma_1,\bar p_1,\mu_{\Gamma_1})\colon \MA\rightsquigarrow \MB$ and $(\Gamma_2,\bar p_2,\mu_{\Gamma_2})\colon \MA\rightsquigarrow \MB$, such that $\mu_{\Gamma_1}\circ\gamma_1=\mu_{\Gamma_2}\circ\gamma_2=\id_A$. The restriction of natural isomorphism $\eta$ gives a natural isomorphisms between the translation $(\Gamma_1,\bar p_1,\gamma_1)$- and $(\Gamma_2,\bar p_2,\gamma_2)$\=/functors $\F_1,\F_2\colon \PDS(\MA)\to\PDS(\MB)$. By Lemma~\ref{le:natur_iso2}, $\lambda\colon\Gamma_1(\MB,\bar p_1)\to \Gamma_2(\MB,\bar p_2)$ is an $L(\MA)$\=/isomorphism. And by Lemma~\ref{le:natur_iso3}, $\lambda(\gamma_1(a)/{\sim_{\Gamma_1}})=\gamma_2(a)/{\sim_{\Gamma_2}}$ for every $a\in A$. Therefore, $\bar\mu_{\Gamma_1}(\gamma_1(a)/{\sim_{\Gamma_1}})=a$ and $\bar\mu_{\Gamma_2}(\lambda(\gamma_1(a)/{\sim_{\Gamma_1}}))=\bar\mu_{\Gamma_2}(\gamma_2(a)/{\sim_{\Gamma_2}})=a$. By Corollary~\ref{cor:surj}, every element from $\Gamma_1(\MB,\bar p_1)$ has a form $\gamma_1(a)/{\sim_{\Gamma_1}}$ for some $a\in A$. Hence, one has $\bar\mu_{\Gamma_1}=\bar\mu_{\Gamma_2}\circ\lambda$, i.\,e., interpretations $(\Gamma_1,\bar p_1,\mu_{\Gamma_1}),(\Gamma_2,\bar p_2,\mu_{\Gamma_2})\colon \MA\rightsquigarrow \MB$ are strongly homotopic with connector $\theta$, as required. 
\end{proof}

\begin{corollary}\label{cor:natur_iso_inter}
Let $(\Gamma_1,\bar p_1,\mu_{\Gamma_1}),(\Gamma_2,\bar p_2,\mu_{\Gamma_2})\colon \MA\rightsquigarrow\MB$ be interpretations, such that the interpretation $(\Gamma_1,\bar p_1,\mu_{\Gamma_1})$- and $(\Gamma_2,\bar p_2,\mu_{\Gamma_2})$\=/functors $\F_1,F_2\colon\PLS(\MA)\to\PLS(\MB)$ are naturally isomorphic; and $\eta\colon\F_1\to\F_2$ is a natural isomorphism. Then interpretations $(\Gamma_1,\bar p_1,\mu_{\Gamma_1})$ and $(\Gamma_2,\bar p_2,\mu_{\Gamma_2})$ are strongly homotopic with homotopy isomorphism $\lambda=\eta_A$. 
\end{corollary}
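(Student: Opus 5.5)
This corollary is a direct specialization of Theorem~\ref{th:natur_iso2}, so the plan is to reduce to it. First, I pass from the given interpretations to extended codes. For $j=1,2$, choose maps $\gamma_j\colon A\to U_{\Gamma_j}(\MB,\bar p_j)$ with $\mu_{\Gamma_j}\circ\gamma_j=\id_A$. By Theorem~\ref{th:functor2}~\ref{item:IF1} and Remark~\ref{remark:gamma}, the interpretation $(\Gamma_j,\bar p_j,\mu_{\Gamma_j})$\=/functor $\F_j$ coincides with the interpretation $(\Gamma_j,\bar p_j,\gamma_j)$\=/functor. Hence the hypotheses of Theorem~\ref{th:natur_iso2} hold for the extended codes $(\Gamma_j,\bar p_j,\gamma_j)$, with the given natural isomorphism $\eta$.

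Theorem~\ref{th:natur_iso2} then gives strongly homotopic interpretations $(\Gamma_1,\bar p_1,\mu^\prime_{\Gamma_1})$ and $(\Gamma_2,\bar p_2,\mu^\prime_{\Gamma_2})$ with $\mu^\prime_{\Gamma_j}\circ\gamma_j=\id_A$. Its connector is any formula $\theta$ defining $\lambda=\eta_A$. Such a formula exists because $\eta_A$ is an isomorphism between non-empty projective definable sets, hence a definable bijection by Corollary~\ref{cor:iso_def}. The homotopy isomorphism is therefore exactly $\lambda=\eta_A$, and the diagram~\eqref{eq:homotopic} commutes for the primed coordinate maps.

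The only remaining point is that the primed coordinate maps coincide with the original ones. This is where Remark~\ref{lemma:gamma} enters. Both $\mu_{\Gamma_j}$ and $\mu^\prime_{\Gamma_j}$ are coordinate maps of the same interpretation $(\Gamma_j,\bar p_j)$ of $\MA$ in $\MB$, and both satisfy $\mu\circ\gamma_j=\id_A$ for the same $\gamma_j$. Remark~\ref{lemma:gamma} then gives $\mu^\prime_{\Gamma_j}=\mu_{\Gamma_j}$. This identification is the one step needing care, since Theorem~\ref{th:natur_iso2} only asserts existence of some coordinate maps. Once it is made, the original interpretations are strongly homotopic with homotopy isomorphism $\eta_A$, as claimed.
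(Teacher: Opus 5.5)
Your proposal is correct and follows the paper's own proof essentially step for step. You choose sections $\gamma_j$ with $\mu_{\Gamma_j}\circ\gamma_j=\id_A$, apply Theorem~\ref{th:natur_iso2} to the $(\Gamma_j,\bar p_j,\gamma_j)$\=/functors, and then use Remark~\ref{lemma:gamma} to identify the resulting coordinate maps $\mu^\prime_{\Gamma_j}$ with the original $\mu_{\Gamma_j}$. Your additional justification that a formula $\theta$ defining $\eta_A$ exists, via Corollary~\ref{cor:iso_def}, is a harmless clarification.
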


\begin{proof}
Let $\gamma_1\colon A\to U_{\Gamma_1}(\MB,\bar p_1)$ and $\gamma_2\colon A\to U_{\Gamma_2}(\MB,\bar p_2)$ be functions such that $\mu_{\Gamma_1}\circ\gamma_1=\mu_{\Gamma_2}\circ\gamma_2=\id_A$. Then $\eta$ is a natural isomorphism between the interpretation $(\Gamma_1,\bar p_1,\gamma_1)$- and $(\Gamma_2,\bar p_2,\gamma_2)$\=/functors. So by Theorem~\ref{th:natur_iso2} there exist strongly homotopic interpretations $(\Gamma_1,\bar p_1,\mu^\prime_{\Gamma_1}),(\Gamma_2,\bar p_2,\mu^\prime_{\Gamma_2})\colon \MA \rightsquigarrow \MB$, such that $\mu^\prime_{\Gamma_1} \circ\gamma_1=\mu^\prime_{\Gamma_2} \circ\gamma_2=\id_A$. And due to  Remark~\ref{lemma:gamma}, $\mu_{\Gamma_1}=\mu^\prime_{\Gamma_1}$ and $\mu_{\Gamma_2}=\mu^\prime_{\Gamma_2}$. Hence, the initial interpretations are strongly homotopic.  
\end{proof}

\begin{remark}\label{remark:DS}
When we prove Theorems~\ref{th:natur_iso1},~\ref{th:natur_iso2} and Lemmas~\ref{le:natur_iso0}, \ref{le:natur_iso1}, \ref{le:natur_iso2}, \ref{le:natur_iso3} we do not use the natural isomorphism $\eta$ in full force, but only its restriction to subcategory $\DS(\MA)$, i.\,e., we use only natural isomorphism between functors $\F_1,\F_2\colon \DS(\MA)\to\PDS(\MB)$.
\end{remark}

Although we didn't fully utilize the natural isomorphism $\eta$ in Theorem~\ref{th:natur_iso2}, more detailed information about it is of interest for further discussion. We formulate and prove two results that extend Lemma~\ref{le:natur_iso1}.

\begin{lemma}\label{le:natur_iso4}
In the assumptions and notations of Theorem~\ref{th:natur_iso2}, suppose that $X\subseteq A^m$ is a non-empty logical set over $\MA$. Then the categorical isomorphism $\eta_X\colon \F_1(X)\to \F_2(X)$ is fully induced by the formula $\theta_m$~\eqref{eq:F}.
\end{lemma}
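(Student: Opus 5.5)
We follow the scheme of Lemma~\ref{le:natur_iso1}, replacing the descent lemma for definable subsets (Lemma~\ref{lemma:descent}) with Corollary~\ref{cor:descent}, which works for arbitrary projective logical subsets.

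First, by Lemma~\ref{le:natur_iso0}, now applied to the interpretation functors $\F_1,\F_2\colon\PLS(\MA)\to\PLS(\MB)$, the component $\eta_{A^m}$ equals $\lambda^m$ and is defined by $\theta_m$. The argument there uses only the natural projections and Corollary~\ref{cor:functor_subset}, so it carries over verbatim. Since $\lambda$ is a bijection defined by $\theta$, the map $\lambda^m\colon(U_{\Gamma_1}(\MB,\bar p_1)/{\sim_{\Gamma_1}})^m\to(U_{\Gamma_2}(\MB,\bar p_2)/{\sim_{\Gamma_2}})^m$ is an isomorphism that $\theta_m$ fully induces. This also follows from Lemma~\ref{le:iso}, since a defining formula of a map whose domain is the whole definable set is automatically full.

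Next, take the identical embedding $\varepsilon\colon X\to A^m$, which is a term map by Example~\ref{ex:non-empty2}. By Corollary~\ref{cor:functor_subset}\ref{item3:subset}, both $\F_1(\varepsilon)$ and $\F_2(\varepsilon)$ are identical embeddings, so $\F_j(X)\subseteq\F_j(A^m)$ as projective logical subsets. Naturality of $\eta$ applied to $\varepsilon$ gives $\F_2(\varepsilon)\circ\eta_X=\eta_{A^m}\circ\F_1(\varepsilon)$. Hence $\eta_X$ is the restriction of $\lambda^m$ to $\F_1(X)$, and in particular $\lambda^m(\F_1(X))\subseteq\F_2(X)$.

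The remaining point is equality of images, $\lambda^m(\F_1(X))=\F_2(X)$. It follows from the surjectivity of $\eta_X$, which holds because $\eta_X$ is an isomorphism in $\PLS(\MB)$ and therefore bijective by Lemma~\ref{le:iso}. Alternatively, one can compute it directly: by Fact~\ref{fact1} and Theorem~\ref{th:natur_iso2}, $\F_j(X)=\mu_{\Gamma_j}^{-1}(X)/{\sim_{\Gamma_j}}$, and $\lambda$ commutes with $\bar\mu_{\Gamma_1},\bar\mu_{\Gamma_2}$.

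Finally, Corollary~\ref{cor:descent} applies to the isomorphism $\lambda^m$, fully induced by $\theta_m$, together with the projective logical subsets $\F_1(X)$ and $\F_2(X)$. Since $\lambda^m$ maps $\F_1(X)$ onto $\F_2(X)$, the corollary shows that the restriction $\eta_X$ is fully induced by $\theta_m$.

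The main obstacle is this descent step. Lemma~\ref{lemma:descent} is unavailable because $X$ need not be definable, so we must check that the hypotheses of Corollary~\ref{cor:descent} hold exactly. These are that $\F_j(X)$ is closed under $\sim_{\Gamma_j}$ inside $\F_j(A^m)$, which Corollary~\ref{cor:subset} supplies, and that the images match, which is the surjectivity point handled above.
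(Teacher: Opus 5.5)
Your proof is correct and follows the same route as the paper. Naturality along the identical embedding $\varepsilon\colon X\to A^m$, together with Corollary~\ref{cor:functor_subset}, identifies $\eta_X$ with the restriction of $\eta_{A^m}=\lambda^m$, which is defined, hence fully induced, by $\theta_m$; the equality $\eta_X(\F_1(X))=\F_2(X)$ then lets Corollary~\ref{cor:descent} replace Lemma~\ref{lemma:descent}. You also spell out the steps the paper's one-line proof leaves implicit: the fullness of $\theta_m$, and the surjectivity of $\eta_X$ that gives equality of images.
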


\begin{proof}
According to Corollary~\ref{cor:functor_subset}, as in Lemma~\ref{le:natur_iso1}, we see that $\F_1(\varepsilon)\colon \F_1(X)\to \F_1(A^m)$ and $\F_2(\varepsilon)\colon \F_2(X)\to \F_2(A^m)$ are identical embeddings and 
 $\eta_X$ is the restriction of $\lambda^m$ on $\F_1(X)$ and $\eta_X(\F_1(X))=\F_2(X)$. Therefore, by Corollary~\ref{cor:descent}, $\eta_X$ is fully induced by the formula $\theta_m$~\eqref{eq:F}.  
\end{proof}

\begin{lemma}\label{le:natur_iso6}
Suppose that $\eta$ is a natural isomorphism between interpretations functors $\F_1,\F_2\colon \PLS(\MA)\to \PLS(\MB)$ and $X/{\sim_X}$ is a non-empty projective logical set over $\MA$. If the isomorphism $\eta_X\colon \F_1(X)\to \F_2(X)$ is fully induced by a formula $\psi(\bar{\bar x}_1,\bar{\bar x}_2)$ in $L(\MB)\cup B$, then the isomorphism $\eta_{X/{\sim_X}}\colon \F_1(X/{\sim_X})\to \F_2(X/{\sim_X})$ is fully induced by the formula $\varphi(\bar{\bar x}_1,\bar{\bar x}_2)=\exists \, \bar{\bar x}\:(\psi(\bar{\bar x}_1,\bar{\bar x})\wedge E_2(\bar{\bar x},\bar{\bar x}_2))$, where $E_2$ is a formula, which induces the formula equivalence of the projective logical set $\F_2(X/{\sim_X})$. The similar result is true for translation functors $\F_1,\F_2\colon \PDS(\MA)\to \PDS(\MB)$ and a projective definable set $X/{\sim_X}$. 
\end{lemma}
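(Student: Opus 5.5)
The plan is to use naturality of $\eta$ with respect to the natural surjection $\pi\colon X\to X/{\sim_X}$ (here $X$ carries the identity relation). By Example~\ref{cor:id}, $\pi$ is a morphism in $\PLS(\MA)$, and in $\PDS(\MA)$ if $X$ is definable. By Corollary~\ref{cor:functor_subset}\ref{item4:subset} (respectively Corollary~\ref{cor:tr_functor_subset} for translation functors), $\F_j(\pi)\colon \F_j(X)\to\F_j(X/{\sim_X})$ is the natural surjection, $j=1,2$. Write $\F_1(X)=\bar X_1/{\sim_{\Gamma_1}}$, $\F_1(X/{\sim_X})=\bar X_1/{\sim_{\bar X_1}}$, and similarly for index $2$. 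Let $E_2$ induce $\sim_{\bar X_2}$. By Fact~\ref{fact:surjection}, $\F_2(\pi)$ is fully induced by $E_2$.

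Naturality gives the commutative square $\eta_{X/{\sim_X}}\circ\F_1(\pi)=\F_2(\pi)\circ\eta_X$. Set $G=\F_2(\pi)\circ\eta_X\colon \bar X_1/{\sim_{\Gamma_1}}\to \bar X_2/{\sim_{\bar X_2}}$. Since $\psi$ fully induces $\eta_X$ and $E_2$ fully induces $\F_2(\pi)$, Lemma~\ref{lemma:composition} shows that $G$ is fully induced by
$$
\varphi(\bar{\bar x}_1,\bar{\bar x}_2)=\exists\,\bar{\bar x}\:(\psi(\bar{\bar x}_1,\bar{\bar x})\wedge E_2(\bar{\bar x},\bar{\bar x}_2)).
$$
Since $G=\eta_{X/{\sim_X}}\circ\F_1(\pi)$ and $\F_1(\pi)$ is a natural surjection, part~3 of Lemma~\ref{lemma:F} applies with $F=G$, $G:=\F_1(\pi)$ and $F':=\eta_{X/{\sim_X}}$. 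It yields that $\varphi$ fully induces $\eta_{X/{\sim_X}}$, which is the claim.

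The only delicate point is to check that the hypotheses of Lemma~\ref{lemma:F}(3) hold. The surjection $\F_1(\pi)$ must be the natural surjection $\bar X_1/{\sim_{\Gamma_1}}\to \bar X_1/{\sim_{\bar X_1}}$ on the same underlying set $\bar X_1$. This holds because $\F_1(X)$ and $\F_1(X/{\sim_X})$ have the same underlying logical set $\bar X_1$, and $\sim_{\Gamma_1}$ is finer than $\sim_{\bar X_1}$, as noted after~\eqref{eq:rel_Gamma}. Also, the composite $G$ must be full, and Lemma~\ref{lemma:composition} provides this.

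The translation-functor case goes through verbatim: all objects involved are projective definable sets, and Corollary~\ref{cor:tr_functor_subset} replaces Corollary~\ref{cor:functor_subset}. In that case fullness is automatic by Fact~\ref{rem5}.
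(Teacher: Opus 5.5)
Your proof is correct and follows the paper's own argument essentially step by step. You take naturality of $\eta$ at the natural surjection $\pi\colon X\to X/{\sim_X}$, use Corollary~\ref{cor:functor_subset} (or Corollary~\ref{cor:tr_functor_subset}) with Fact~\ref{fact:surjection} to see that $E_2$ fully induces $\F_2(\pi)$, apply Lemma~\ref{lemma:composition} to the composite, and descend to $\eta_{X/{\sim_X}}$ via Lemma~\ref{lemma:F}. Your bookkeeping $\F_j(X)=\bar X_j/{\sim_{\Gamma_j}}$ is, if anything, slightly more careful than the paper's, which writes $\F_1(\pi)\colon \bar X_1\to \bar X_1/{\sim_{\bar X_1}}$.
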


\begin{proof}
We will rely here on the notations from the proof of Theorem~\ref{th:homotopy}. Thus, $\F_1(X/{\sim_X})=\bar X_1/{\sim_{\bar X_1}}$ and $\F_2(X/{\sim_X})=\bar X_2/{\sim_{\bar X_2}}$. Consider the natural surjection $\pi\colon X\to X/{\sim_X}$, fully induced by the formula $E$, which induces the equivalence $\sim_X$ (see Fact~\ref{fact:surjection}). According to Corollary~\ref{cor:functor_subset} (or Corollary~\ref{cor:tr_functor_subset} for translation functors), $\F_1(\pi)\colon \bar X_1\to \bar X_1/{\sim_{\bar X_1}}$ and $\F_2(\pi)\colon \bar X_2\to \bar X_2/{\sim_{\bar X_2}}$ are the natural surjections. Thus, the formula map $\F_2(\pi)$ is fully induced by the formula $E_2$. Therefore, due to Lemma~\ref{lemma:composition}, the composition $F=\F_2(\pi)\circ\eta_X$ is fully induced by the formula $\varphi$. Since the  diagram
\begin{equation*}
\begin{tikzcd}[column sep=huge, row sep=large]
\bar X_1 
\arrow[r,leftarrow,shift right,"\eta_{X}^{-1}"']
\arrow[r,rightarrow,shift left,swap,"\eta_{X}"'] 
\arrow{d}{\F_1(\pi)} 
& \bar X_2
\arrow{d}{\F_2(\pi)}\\
\bar X_1/{\sim_{\bar X_1}}  
\arrow[r,leftarrow,shift right,"\eta_{X/{\sim_X}}^{-1}"']
\arrow[r,rightarrow,shift left,swap,"\eta_{X/{\sim_X}}"']  
& \bar X_2/{\sim_{\bar X_2}}
\end{tikzcd}    
\end{equation*}
is commutative, then  $F=\eta_{X/{\sim_X}}\circ\F_1(\pi)$. Hence, by Lemma~\ref{lemma:F}, $\eta_{X/{\sim_X}}$ is fully induced by the formula $\varphi$ as well.   
\end{proof}

\begin{corollary}\label{le:natur_iso5}
In the assumptions and notations of Theorem~\ref{th:natur_iso2}, suppose that $X\subseteq A^m$ is a non-empty logical set over $\MA$ and $\sim_X$ is a formula equivalence on $X$. Then the categorical isomorphism $\eta_{X/{\sim_X}}\colon \F_1(X/{\sim_X})\to \F_2(X/{\sim_X})$ is fully induced by the formula $\varphi_{\bar X_1 \bar X_2}$~\eqref{eq:F}.
\end{corollary}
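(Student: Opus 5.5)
The statement is a direct combination of Lemma~\ref{le:natur_iso4} and Lemma~\ref{le:natur_iso6}, so the plan is to chain them.

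First, apply Lemma~\ref{le:natur_iso4} to the non-empty logical set $X\subseteq A^m$, viewed as the object $X$ with the identity formula equivalence. This gives that $\eta_X\colon \F_1(X)\to\F_2(X)$ is fully induced by $\theta_m$. Here $\F_j(X)=\bar X_j/{\sim_{\Gamma_j}}$, and $\theta_m$ is the conjunction of copies of the connector $\theta$ defining $\lambda=\eta_A$.

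Second, apply Lemma~\ref{le:natur_iso6} with $\psi=\theta_m$. Its hypotheses hold: $\eta$ is a natural isomorphism between interpretation functors, and $X/{\sim_X}$ is a non-empty projective logical set. The lemma yields that $\eta_{X/{\sim_X}}$ is fully induced by
\[
\exists\,\bar{\bar x}\:(\theta_m(\bar{\bar x}_1,\bar{\bar x})\wedge E_2(\bar{\bar x},\bar{\bar x}_2)),
\]
where $E_2$ induces the formula equivalence of $\F_2(X/{\sim_X})$.

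Third, identify this formula with $\varphi_{\bar X_1\bar X_2}$ from~\eqref{eq:F}. By construction of the functor, $\F_2(X/{\sim_X})=\bar X_2/{\sim_{\bar X_2}}$ with $\sim_{\bar X_2}$ induced by $E_{\bar X_2}=(E_X)_{\Gamma_2,\bar p_2,\gamma_2}$. Taking $E_2=E_{\bar X_2}$ therefore makes the two formulas syntactically identical.

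The only point requiring care is that Lemma~\ref{le:natur_iso6} allows \emph{some} inducing formula $E_2$, and the corollary needs the specific one, $E_{\bar X_2}$. Since the proof of that lemma works for any $E_2$ inducing $\sim_{\bar X_2}$ (via Fact~\ref{fact:surjection} and Lemma~\ref{lemma:F}), choosing $E_2=E_{\bar X_2}$ is legitimate, and nothing beyond these citations is required.
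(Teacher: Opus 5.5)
Your proposal is correct and is exactly the paper's proof: Lemma~\ref{le:natur_iso4} shows that $\theta_m$ fully induces $\eta_X$, and Lemma~\ref{le:natur_iso6} applied with $\psi=\theta_m$ and $E_2=E_{\bar X_2}$ then yields $\varphi_{\bar X_1\bar X_2}$. Your added remark is also sound and is a detail the paper leaves implicit: by Fact~\ref{fact:surjection}, any formula inducing $\sim_{\bar X_2}$ may serve as $E_2$.
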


\begin{proof}
Indeed, by Lemma~\ref{le:natur_iso4}, isomorphism $\eta_X\colon \F_1(X)\to \F_2(X)$ is fully induced by the formula $\theta_m$~\eqref{eq:F}. Therefore, by Lemma~\ref{le:natur_iso6}, isomorphism $\eta_{X/{\sim_X}}\colon \F_1(X/{\sim_X})\to \F_2(X/{\sim_X})$ is fully induced by the formula $\varphi_{\bar X_1 \bar X_2}$~\eqref{eq:F}.
\end{proof}

\begin{lemma}\label{cor:natur_iso}
Let $\eta,\tau\colon \F_1\to \F_2$ be natural isomorphisms between a pair of interpretation functors $\F_1,\F_2\colon \PLS(\MA)\to\PLS(\MB)$. Then one has $\eta=\tau$ if and only if $\eta_A=\tau_A$. And the similar is true for translation functors $\F_1,\F_2\colon \PDS(\MA)\to\PDS(\MB)$, for injective interpretation functors $\F_1,\F_2\colon \LS(\MA)\to\LS(\MB)$ and for injective translation functors $\F_1,\F_2\colon \DS(\MA)\to\DS(\MB)$.
\end{lemma}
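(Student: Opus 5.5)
The forward direction is trivial: if $\eta=\tau$ then in particular $\eta_A=\tau_A$. For the converse, I will show that every component of a natural isomorphism $\eta\colon\F_1\to\F_2$ is uniquely determined, as a map of sets, by $\lambda=\eta_A$. Two natural isomorphisms with the same component at $A$ therefore coincide componentwise, hence are equal. Since morphisms in $\PLS(\MB)$ are maps between quotient sets, equality of the underlying maps is equality of morphisms.

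I treat the empty object and the powers $A^m$ first. At $\emptyset$ both components are forced to be $\id_\emptyset$, because $\F_1(\emptyset)=\F_2(\emptyset)=\emptyset$ and $\emptyset$ is initial. For $A^m$, I reuse the argument of Lemma~\ref{le:natur_iso0}. By Corollary~\ref{cor:functor_subset} (or Corollary~\ref{cor:tr_functor_subset} for translation functors), $\F_j(A^m)=\F_j(A)^m$ and $\F_j(\Pi_r)$ is the natural projection. Naturality with respect to each $\Pi_r$ then gives $\Pi_r\circ\eta_{A^m}=\eta_A\circ\Pi_r$ for all $r$, so $\eta_{A^m}=\eta_A^m$. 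This part of the argument uses nothing about $\theta$ or definability; it only uses commutativity of the diagrams. Hence $\eta_{A^m}=\tau_{A^m}$.

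Next, for a non-empty logical set $X\subseteq A^m$, I use the identical embedding $\varepsilon\colon X\to A^m$. The images $\F_j(\varepsilon)$ are identical embeddings and in particular injective. Naturality gives $\F_2(\varepsilon)\circ\eta_X=\eta_{A^m}\circ\F_1(\varepsilon)$. Because $\F_2(\varepsilon)$ is injective, $\eta_X$ is determined by $\eta_{A^m}$, so $\eta_X=\tau_X$.

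Finally, for an arbitrary non-empty projective logical set $X/{\sim_X}$, I use the natural surjection $\pi\colon X\to X/{\sim_X}$. The image $\F_1(\pi)$ is again a natural surjection, hence surjective. Naturality gives $\eta_{X/{\sim_X}}\circ\F_1(\pi)=\F_2(\pi)\circ\eta_X$. Surjectivity of $\F_1(\pi)$ then forces $\eta_{X/{\sim_X}}=\tau_{X/{\sim_X}}$; this is the same reasoning as in Lemma~\ref{le:natur_iso6}, but without needing formulas.

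For the variants in the statement I run the same argument inside the relevant category. For $\PDS$, the objects $A^m$, definable $X$, and the maps $\Pi_r$, $\varepsilon$, $\pi$ all lie in $\PDS(\MA)$, and Corollary~\ref{cor:tr_functor_subset} supplies the needed properties of $\F_j$. For $\LS$ and $\DS$ with injective functors, the last step is simply absent, since every object is of the form $X$, and $A^m$, $\Pi_r$, $\varepsilon$ all belong to these subcategories.

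The only point needing care, and what I expect to be the main obstacle, is justifying that $\F_j(\varepsilon)$ is injective and $\F_j(\pi)$ is surjective in each setting. This follows from their description as an identical embedding and a natural surjection in Corollaries~\ref{cor:functor_subset} and~\ref{cor:tr_functor_subset}. For injective functors one also needs that the code is injective, so that the images remain logical or definable sets and the embeddings stay within $\LS(\MB)$ or $\DS(\MB)$.
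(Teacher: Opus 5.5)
Your proof is correct and uses the same decomposition as the paper: projections as in Lemma~\ref{le:natur_iso0}, identical embeddings as in Lemmas~\ref{le:natur_iso1} and~\ref{le:natur_iso4}, and natural surjections as in Lemma~\ref{le:natur_iso6}. The only difference is the final inference: the paper shows both components are fully induced by the same formula ($\theta_m$, resp.\ $\varphi_{\bar X_1\bar X_2}$), while you conclude directly from injectivity of $\F_2(\varepsilon)$ and surjectivity of $\F_1(\pi)$, which avoids the descent lemmas and Lemma~\ref{lemma:F}.
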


\begin{proof}
Let $\theta$ be the formula in $L(\MB)\cup B$, which defines the isomorphism $\eta_A=\tau_A\colon \F_1(A)\to \F_2(A)$. As in Lemmas~\ref{le:natur_iso1} and~\ref{le:natur_iso4}, for any non-empty logical or definable set $X\subseteq A^m$ over $\MA$ both isomorphisms $\eta_X$ and $\tau_X$ are fully induced by the formula $\theta_m$~\eqref{eq:F}. By Lemma~\ref{le:natur_iso6}, for any formula equivalence $\sim_X$ on $X$ both isomorphisms $\eta_{X/{\sim_X}}$ and $\tau_{X/{\sim_X}}$ are fully induced by the formula $\varphi_{\bar X_1 \bar X_2}$~\eqref{eq:F}. Thus, one has $\eta=\tau$.
\end{proof}

\begin{notation}
Let us denote by ${\bf N}$ the operator from Theorem~\ref{th:homotopy}, which constructs a natural isomorphism $\tilde\lambda$ for every strong homotopy $(\theta)$ of interpretations, and by ${\bf H}$ the operator from Theorem~\ref{th:natur_iso2}, which constructs a strong homotopy $(\theta)$ of interpretations for every natural isomorphism $\eta$ of interpretation functors. 
\end{notation}

\begin{lemma}\label{le:natur_iso}
For any strong homotopy $(\theta)\colon (\Gamma_1,\bar p_1,\mu_{\Gamma_1})\to(\Gamma_2,\bar p_2,\mu_{\Gamma_2})$ of interpretations of $\MA$ in $\MB$ one has ${\bf H}({\bf N}((\theta)))=(\theta)$. And for any natural isomorphism $\eta\colon \F_1\to\F_2$
 between interpretation $(\Gamma_1,\bar p_1,\mu_{\Gamma_1})$- and $(\Gamma_2,\bar p_2,\mu_{\Gamma_2})$\=/functors $\F_1,\F_2\colon \PLS(\MA)\to \PLS(\MB)$ one has ${\bf N}({\bf H}(\eta))=\eta$.
 \end{lemma}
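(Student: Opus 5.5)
Both identities reduce to facts already established: a strong homotopy is determined by its homotopy isomorphism $\lambda$ (equivalently by the map it induces, not by the particular connector formula). A natural isomorphism between interpretation functors is determined by its component at $A$ (Lemma~\ref{cor:natur_iso}). So in each direction I will track only the component at $A$.

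\emph{First identity.} Start with a strong homotopy $(\theta)$ with homotopy isomorphism $\lambda\colon\Gamma_1(\MB,\bar p_1)\to\Gamma_2(\MB,\bar p_2)$, and let $\tilde\lambda={\bf N}((\theta))$ be the natural isomorphism from Theorem~\ref{th:homotopy}. By Corollary~\ref{cor:homotopy}, the component $\tilde\lambda_A\colon U_{\Gamma_1}(\MB,\bar p_1)/{\sim_{\Gamma_1}}\to U_{\Gamma_2}(\MB,\bar p_2)/{\sim_{\Gamma_2}}$ is defined by $\theta$, so $\tilde\lambda_A=\lambda$ as maps. One can also see this directly: by construction $\tilde\lambda_A=\tilde\mu^{-1}_{\Gamma_2,A}\circ\tilde\mu_{\Gamma_1,A}=\bar\mu_{\Gamma_2}^{-1}\circ\bar\mu_{\Gamma_1}$, which equals $\lambda$ by commutativity of~\eqref{eq:homotopic}. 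Now apply ${\bf H}$. Theorem~\ref{th:natur_iso2} (via Corollary~\ref{cor:natur_iso_inter} and Remark~\ref{lemma:gamma}) returns the original interpretations $(\Gamma_j,\bar p_j,\mu_{\Gamma_j})$, because $\mu_{\Gamma_j}\circ\gamma_j=\id_A$ forces the coordinate maps to coincide. The homotopy isomorphism it returns is $\tilde\lambda_A=\lambda$, and $\theta$ is admissible as its connector. Hence ${\bf H}({\bf N}((\theta)))=(\theta)$. If connectors are taken up to equivalence on the relevant sets, which is the natural convention since a strong homotopy is determined by $\lambda$, this is an equality.

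\emph{Second identity.} Start with a natural isomorphism $\eta\colon\F_1\to\F_2$ and put $(\theta)={\bf H}(\eta)$. By Theorem~\ref{th:natur_iso2}, $(\theta)$ is a strong homotopy between $(\Gamma_1,\bar p_1,\mu_{\Gamma_1})$ and $(\Gamma_2,\bar p_2,\mu_{\Gamma_2})$ with homotopy isomorphism $\lambda=\eta_A$. Remark~\ref{lemma:gamma} again identifies these coordinate maps with the given ones. Then ${\bf N}((\theta))=\tilde\lambda$ is a natural isomorphism $\F_1\to\F_2$ between the same functors, using Remark~\ref{remark:gamma} for independence of $\gamma_j$. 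By the first part, its component is $\tilde\lambda_A=\lambda=\eta_A$. Lemma~\ref{cor:natur_iso} then gives $\tilde\lambda=\eta$.

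\emph{Main obstacle.} The delicate point is bookkeeping: checking that ${\bf H}$ and ${\bf N}$ really land on the same pair of interpretations and the same pair of functors, rather than on ones differing by the choice of $\gamma_j$ or coordinate map. This is handled by Remarks~\ref{lemma:gamma}, \ref{remark:gamma} and~\ref{remark:gamma_int}. Beyond that, equality of homotopies must be read as equality of the induced isomorphism $\lambda$, i.e., of connectors modulo the definable map they define.
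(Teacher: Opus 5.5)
Your proposal is correct and follows essentially the same route as the paper. For the first identity you use Corollary~\ref{cor:homotopy} to see that ${\bf N}((\theta))_A$ is defined by $\theta$, then Corollary~\ref{cor:natur_iso_inter} to recover $(\theta)$; for the second you identify ${\bf N}({\bf H}(\eta))_A$ with $\eta_A$ and conclude by Lemma~\ref{cor:natur_iso}, and your extra bookkeeping via Remarks~\ref{lemma:gamma} and~\ref{remark:gamma} is what the paper leaves implicit in its appeal to Corollary~\ref{cor:natur_iso_inter}.
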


 \begin{proof}
Let  $(\theta)\colon (\Gamma_1,\bar p_1,\mu_{\Gamma_1})\to(\Gamma_2,\bar p_2,\mu_{\Gamma_2})$ be a strong homotopy of interpretations, and $\tilde\lambda={\bf N}((\theta))$ the corresponding natural isomorphism. By Corollary~\ref{cor:homotopy}, the isomorphism $\tilde\lambda_A$ is defined by the formula $\theta$. Hence, by Corollary~\ref{cor:natur_iso_inter}, ${\bf H}(\tilde\lambda)=(\theta)$, as required.
 
Take a natural isomorphism $\eta\colon \F_1\to\F_2$ between interpretation $(\Gamma_1,\bar p_1,\mu_{\Gamma_1})$- and $(\Gamma_2,\bar p_2,\mu_{\Gamma_2})$\=/functors $\F_1,\F_2\colon \PLS(\MA)\to \PLS(\MB)$. Suppose that $(\theta)={\bf H}(\eta)$ is the corresponding strong homotopy $(\theta)\colon (\Gamma_1,\bar p_1,\mu_{\Gamma_1})\to(\Gamma_2,\bar p_2,\mu_{\Gamma_2})$. By Corollary~\ref{cor:natur_iso_inter}, $\theta$ is the formula which defines the isomorphism $\eta_A$. Therefore, $\eta_A={\bf N}((\theta))_A$. Thus, according to Lemma~\ref{cor:natur_iso}, one has 
${\bf N}((\theta))=\eta$, as required.  
\end{proof}

{\bf Equal interpretations and equal homotopies.} Among all the strongly homotopic interpretations, we will emphasize those that are equal. Let $(\Gamma_1,\bar p_1,\mu_{\Gamma_1}),(\Gamma_2,\bar p_2,\mu_{\Gamma_2}),  (\Gamma^\prime_1,\bar p^\prime_1,\mu_{\Gamma^\prime_1}), (\Gamma^\prime_2,\bar p^\prime_2,\mu_{\Gamma^\prime_2})\colon \MA\rightsquigarrow\MB$ be interpretations.

\begin{definition}\label{def:equal}
We say that two interpretations $(\Gamma_1,\bar p_1,\mu_{\Gamma_1}),(\Gamma_2,\bar p_2,\mu_{\Gamma_2})$ are {\em equal} and write $(\Gamma_1,\bar p_1,\mu_{\Gamma_1})=_{\rm int}(\Gamma_2,\bar p_2,\mu_{\Gamma_2})$, if they are strongly homotopic, such that the corresponding homotopy isomorphism is the identical map, i.\,e., if $\Gamma_1(\MB,\bar p_1)=\Gamma_2(\MB,\bar p_2)$ and $\mu_{\Gamma_1}=\mu_{\Gamma_2}$. 
\end{definition}

\begin{remark}
It is clear that equality $=_{\rm int}$ is an equivalence relation on the set ${\rm Int}^+(\MA,\MB)$ of all coordinatizations of all interpretations of $\MA$ in $\MB$~\cite[Definition~15]{Th_int1}.
\end{remark}

\begin{example}\label{ex:approx}
Suppose that $(\Gamma,\bar p,\mu_{\Gamma})\colon \MA\rightsquigarrow\MB$ is an interpretation and a code $\Gamma^\prime\colon L(\MA)\to L(\MB)$ is equivalent to $\Gamma$, $\Gamma^\prime\approx\Gamma$ (see~\cite[Definition~3]{Th_int1}). Then there exists an interpretation $(\Gamma^\prime,\bar p,\mu_{\Gamma})\colon \MA\rightsquigarrow\MB$ and it equals to $(\Gamma,\bar p,\mu_{\Gamma})\colon \MA\rightsquigarrow\MB$.
\end{example}

\begin{lemma}\label{le:equal}
Interpretations $(\Gamma_1,\bar p_1,\mu_{\Gamma_1}),(\Gamma_2,\bar p_2,\mu_{\Gamma_2})\colon \MA\rightsquigarrow\MB$ are equal if and only if the interpretation $(\Gamma_1,\bar p_1,\mu_{\Gamma_1})$- and $(\Gamma_2,\bar p_2,\mu_{\Gamma_2})$-functors $\F_1,\F_2\colon \PLS(\MA)\to \PLS(\MB)$ are equal.
\end{lemma}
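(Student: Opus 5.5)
The plan is to prove the two implications separately, using the operators ${\bf N}$ and ${\bf H}$ and Lemma~\ref{le:natur_iso}. Throughout, "equal functors" means literal equality of functors: they agree on all objects and on all morphisms of $\PLS(\MA)$.

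For the forward direction, suppose $\Gamma_1(\MB,\bar p_1)=\Gamma_2(\MB,\bar p_2)$ and $\mu_{\Gamma_1}=\mu_{\Gamma_2}$. Then $U_{\Gamma_1}(\MB,\bar p_1)=U_{\Gamma_2}(\MB,\bar p_2)$ and $\sim_{\Gamma_1}=\sim_{\Gamma_2}$, so in particular $\dim\Gamma_1=\dim\Gamma_2$. We can therefore fix a single function $\gamma\colon A\to U_{\Gamma_1}(\MB,\bar p_1)$ with $\mu_{\Gamma_1}\circ\gamma=\mu_{\Gamma_2}\circ\gamma=\id_A$.

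By Remark~\ref{remark:gamma}, $\F_j=\F_{\Gamma_j,\bar p_j,\gamma}$. Take a system $S$ in $L(\MA)\cup A$. Fact~\ref{fact1} gives $\V_\MB(S_{\Gamma_j,\bar p_j,\gamma})=\mu_{\Gamma_j}^{-1}(\V_\MA(S))$, and these sets coincide for $j=1,2$. The same reasoning applies to $E_X$ via \eqref{eq:sim}, so the induced equivalences $\sim_{\bar X}$ coincide. Hence $\F_1(X/{\sim_X})=\F_2(X/{\sim_X})$ as projective logical sets.

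For morphisms, both $\F_1(F_{XY})$ and $\F_2(F_{XY})$ equal $\tilde\mu^{-1}_{Y}\circ F_{XY}\circ\tilde\mu_{X}$ by diagram~\eqref{eq:F_mu}. The bijections $\tilde\mu$ are the same maps for $j=1,2$, so these two maps coincide as set maps between the same objects. Since morphisms in $\PLS(\MB)$ are maps, this yields $\F_1=\F_2$.

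One subtlety is that objects of $\PLS(\MB)$ are determined by the underlying set and the equivalence relation, not by the inducing formulas. I will state this explicitly to justify identifying objects arising from different translations.

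For the converse, suppose $\F_1=\F_2$. Then the identity transformation $\eta=\id\colon\F_1\to\F_2$ is a natural isomorphism, with $\eta_A=\id_{\F_1(A)}$. By Corollary~\ref{cor:natur_iso_inter}, the interpretations are strongly homotopic with homotopy isomorphism $\lambda=\eta_A$. Since $\F_1(A)=\F_2(A)$, we get $U_{\Gamma_1}(\MB,\bar p_1)/{\sim_{\Gamma_1}}=U_{\Gamma_2}(\MB,\bar p_2)/{\sim_{\Gamma_2}}$ as sets, and $\lambda$ is the identity map on this common set.

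Lemma~\ref{le:natur_iso2} shows that $\lambda$ is an $L(\MA)$-isomorphism $\Gamma_1(\MB,\bar p_1)\to\Gamma_2(\MB,\bar p_2)$. An identity map that is an $L(\MA)$-isomorphism forces the two structures to have the same interpretations of all constants, functions and predicates, so $\Gamma_1(\MB,\bar p_1)=\Gamma_2(\MB,\bar p_2)$. The commutativity of diagram~\eqref{eq:homotopic} with $\lambda=\id$ then gives $\bar\mu_{\Gamma_1}=\bar\mu_{\Gamma_2}$, hence $\mu_{\Gamma_1}=\mu_{\Gamma_2}$.

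I expect the main obstacle to be the bookkeeping in the forward direction rather than any substantive step. One must check that the object $\F_j(X/{\sim_X})$ is literally the same set with the same equivalence relation for $j=1,2$, even though the translated formulas differ syntactically; Fact~\ref{fact1} and \eqref{eq:sim} handle this. It is also worth recording that $\dim\Gamma_1=\dim\Gamma_2$ follows automatically, because the underlying sets $U_{\Gamma_j}$ coincide.
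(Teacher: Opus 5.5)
Your proof is correct. The converse direction is the paper's own argument: the identity natural isomorphism, via ${\bf H}$ (Corollary~\ref{cor:natur_iso_inter}), gives a strong homotopy whose isomorphism $\lambda=\eta_A$ is the identity map. You additionally spell out why an identity $L(\MA)$-isomorphism forces $\Gamma_1(\MB,\bar p_1)=\Gamma_2(\MB,\bar p_2)$ and $\mu_{\Gamma_1}=\mu_{\Gamma_2}$.

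The forward direction takes a different route.

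\emph{The paper's route.} It takes the connector $\theta=U_{\Gamma_1}(\bar x_1,\bar p_1)\wedge U_{\Gamma_2}(\bar x_2,\bar p_2)\wedge E_{\Gamma_2}(\bar x_1,\bar x_2)$. It notes that the formulas \eqref{eq:F} then induce identity maps, so ${\bf N}((\theta))$ is the identity natural isomorphism, and hence $\F_1=\F_2$.

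\emph{Your route.} You check equality of the functors directly. On objects, Fact~\ref{fact1} and \eqref{eq:sim} show that $\F_j(X/{\sim_X})$ is $\mu_{\Gamma_j}^{-1}(X)$ with the pulled-back equivalence, so it depends only on $\mu_{\Gamma_j}$. On morphisms, diagram~\eqref{eq:F_mu} shows $\F_j(F_{XY})=\tilde\mu^{-1}_{Y}\circ F_{XY}\circ\tilde\mu_{X}$.

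\emph{What each buys.} Your argument is more elementary and does not need Theorem~\ref{th:homotopy}. It also makes explicit something the paper's phrase ``${\bf N}((\theta))$ is identical'' presupposes: the components $\tilde\lambda_{X/{\sim_X}}$ can only be identity morphisms once one knows $\F_1(X/{\sim_X})=\F_2(X/{\sim_X})$ as objects, which is exactly your object-level check. The paper's route instead keeps the argument inside the ${\bf N}$/${\bf H}$ correspondence that it reuses later (Facts~\ref{fact:hat1}--\ref{fact:hat4}).
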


\begin{proof}
If interpretations $(\Gamma_1,\bar p_1,\mu_{\Gamma_1})$ and $(\Gamma_2,\bar p_2,\mu_{\Gamma_2})$ are equal, then the formula $\theta(\bar x_1,\bar x_2)=U_{\Gamma_1}(\bar x_1,\bar p_1)\wedge U_{\Gamma_2}(\bar x_2,\bar p_2)\wedge E_{\Gamma_2}(\bar x_1,\bar x_2)$ is their connector. Thus for any projective logical set $X/{\sim_X}$ over $\MA$ formulas~\eqref{eq:F} induce the identical map on $\bar X_2/{\sim_{\bar X_2}}$, therefore, the natural isomorphism ${\bf N}((\theta))$ is identical, i.\,e., $\F_1=\F_2$. Inversely, if $\F_1=\F_2$, then the identical natural isomorphism $\eta\colon \F_1\to\F_2$ gives the strong homotopy $(\theta)={\bf H}(\eta)$, there $\theta$ defines the identical map $\id_{\Gamma_1(\MB,\bar p_1)}$. 
\end{proof}

For further discussions, it is also important to establish the concept of equal homotopies. 

\begin{definition}
We say that two strong homotopies $(\theta)\colon (\Gamma_1,\bar p_1,\mu_{\Gamma_1})\to (\Gamma_2,\bar p_2,\mu_{\Gamma_2})$ and $(\theta^\prime)\colon (\Gamma^\prime_1,\bar p^\prime_1,\mu_{\Gamma^\prime_1})\to (\Gamma^\prime_2,\bar p^\prime_2,\mu_{\Gamma^\prime_2})$ are equal and write $(\theta)=_{\rm hom}(\theta^\prime)$, if $(\Gamma_1,\bar p_1,\mu_{\Gamma_1})=_{\rm int}(\Gamma^\prime_1,\bar p^\prime_1,\mu_{\Gamma^\prime_1})$ and $(\Gamma_2,\bar p_2,\mu_{\Gamma_2})=_{\rm int}(\Gamma^\prime_2,\bar p^\prime_2,\mu_{\Gamma^\prime_2})$, and $\theta$ and $\theta^\prime$ define one and same $L(\MA)$\=/isomorphism $\lambda\colon \Gamma_1(\MB,\bar p_1)\to\Gamma_2(\MB,\bar p_2)$.
\end{definition}

\begin{remark}
The equality $=_{\rm hom}$ is an equivalence relation on the set of all strong homotopies of interpretation of $\MA$ in $\MB$.
\end{remark}

{\bf ``Code-forgetting functors''.} Note that both input and output of operators ${\bf N}$, ${\bf H}$ and ${\bf F}$, ${\bf I}$ from Remark~\ref{remark:gamma_int}   contain information about the extended code $(\Gamma,\bar p,\gamma)$ or interpretation $(\Gamma,\bar p,\mu_\Gamma)$. Along with these operators, it's important to have their counterparts that ``erase information about interpretation codes on output''.

\begin{notation}\label{hat}
We denote such ``forgetful codes'' operators by ${\bf \hat N}$, ${\bf \hat H}$ and ${\bf \hat F}$, ${\bf \hat I}$.
\end{notation}

Thus, we reformulate Lemma~\ref{le:equal} and Remark~\ref{remark:gamma_int} as follows.

\begin{fact}\label{fact:hat1}
 Interpretations $(\Gamma_1,\bar p_1,\mu_{\Gamma_1}),(\Gamma_2,\bar p_2,\mu_{\Gamma_2})\colon \MA\rightsquigarrow\MB$ are equal if and only if ${\bf \hat F}((\Gamma_1,\bar p_1,\mu_{\Gamma_1}))={\bf \hat F}((\Gamma_2,\bar p_1,\mu_{\Gamma_2}))$; and interpretation functors $\F_1,\F_2\colon \PLS(\MA)\to\PLS(\MB)$ are equal if and only if ${\bf \hat I}(\F_1)=_{\rm int}{\bf \hat I}(\F_2)$. \end{fact}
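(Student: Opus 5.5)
The statement is Fact~\ref{fact:hat1}. It is a reformulation of Lemma~\ref{le:equal} and Remark~\ref{remark:gamma_int} in terms of the code-forgetting operators ${\bf \hat F}$ and ${\bf \hat I}$. I would therefore prove it by unwinding what these operators output and then quoting those two results.

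The operator ${\bf \hat F}$ sends an interpretation $(\Gamma,\bar p,\mu_\Gamma)$ to the interpretation functor $\F_{\Gamma,\bar p,\mu_\Gamma}\colon\PLS(\MA)\to\PLS(\MB)$, regarded only as a functor and not as a $(\Gamma,\bar p,\gamma)$\=/functor carrying its code. Dually, ${\bf \hat I}$ sends an interpretation functor $\F$ to the interpretation ${\bf I}(\F)$, taken up to the relation $=_{\rm int}$. Once this is fixed, equality of ${\bf \hat F}$-values is plain equality of functors, and equality of ${\bf \hat I}$-values is $=_{\rm int}$.

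For the first claim, the two interpretations are equal if and only if the functors $\F_1={\bf F}((\Gamma_1,\bar p_1,\mu_{\Gamma_1}))$ and $\F_2={\bf F}((\Gamma_2,\bar p_2,\mu_{\Gamma_2}))$ coincide as functors, by Lemma~\ref{le:equal}. Those functors are exactly the ${\bf \hat F}$-values, so this gives the first equivalence.

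For the second claim, take interpretation functors $\F_1,\F_2$. By Remark~\ref{remark:gamma_int}, $\F_j={\bf F}({\bf I}(\F_j))$. Applying Lemma~\ref{le:equal} to the interpretations ${\bf I}(\F_1)$ and ${\bf I}(\F_2)$ gives ${\bf I}(\F_1)=_{\rm int}{\bf I}(\F_2)$ if and only if ${\bf F}({\bf I}(\F_1))={\bf F}({\bf I}(\F_2))$, that is, if and only if $\F_1=\F_2$.

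The only delicate point is the bookkeeping of what the operators forget. By Remark~\ref{lemma:gamma}, ${\bf I}(\F)$ does not depend on the choice of $\gamma$ with $\mu_\Gamma\circ\gamma=\id_A$. The code data is also irrelevant: Lemma~\ref{le:equal} applies to arbitrary codes $\Gamma_1,\Gamma_2$ and parameters, so equality of the functors (or of the interpretations) does not require $\Gamma_1=\Gamma_2$.
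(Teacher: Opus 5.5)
Your proof is correct and follows the paper's approach: the paper gives no separate argument and introduces Fact~\ref{fact:hat1} simply as a reformulation of Lemma~\ref{le:equal} and Remark~\ref{remark:gamma_int} in terms of the code-forgetting operators, which is exactly the unwinding you carry out. Your bookkeeping point is also the right one. ${\bf \hat I}(\F)$ is determined only up to $=_{\rm int}$, because a plain functor may arise from several extended codes and Lemma~\ref{le:equal} makes the resulting interpretations equal; this is what makes the second equivalence well-posed.
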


 \begin{fact}\label{fact:hat2}
For any interpretation $(\Gamma,\bar p,\mu_\Gamma)\colon \MA\rightsquigarrow\MB$ one has ${\bf \hat I}({\bf \hat F}((\Gamma,\bar p,\mu_\Gamma)))=_{\rm int}(\Gamma,\bar p,\mu_\Gamma)$. And for any interpretation functor $\F\colon \PLS(\MA)\to \PLS(\MB)$ one has ${\bf \hat F}({\bf \hat I}(\F))=\F$.
\end{fact}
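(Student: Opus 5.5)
The statement is Fact~\ref{fact:hat2}. I would derive it from Remark~\ref{remark:gamma_int} together with Fact~\ref{fact:hat1} (equivalently, Lemma~\ref{le:equal}). The code-forgetting operators $\hat{\bf F}$ and $\hat{\bf I}$ act as ${\bf F}$ and ${\bf I}$ do, except that their outputs are taken up to equality of functors and up to $=_{\rm int}$, respectively. So both claims should reduce to the code-sensitive identities ${\bf I}({\bf F}(\cdot))=\mathrm{id}$ and ${\bf F}({\bf I}(\cdot))=\mathrm{id}$.

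For the first assertion, take an interpretation $(\Gamma,\bar p,\mu_\Gamma)\colon\MA\rightsquigarrow\MB$ and choose $\gamma$ with $\mu_\Gamma\circ\gamma=\id_A$.
\begin{enumerate}[label=(\arabic*)]
\item By Theorem~\ref{th:functor2}\ref{item:IF1}, $\hat{\bf F}((\Gamma,\bar p,\mu_\Gamma))$ is the $(\Gamma,\bar p,\gamma)$-functor $\F$, viewed as a bare functor. By Remark~\ref{remark:gamma}, $\F$ does not depend on the choice of $\gamma$.
\item $\hat{\bf I}(\F)$ is some interpretation $(\Gamma',\bar p',\mu_{\Gamma'})$ obtained from an extended code $(\Gamma',\bar p',\gamma')$ for which $\F$ is the $(\Gamma',\bar p',\gamma')$-functor, via Theorem~\ref{th:functor2}\ref{item:IF2}.
\item Since ${\bf F}((\Gamma',\bar p',\mu_{\Gamma'}))=\F={\bf F}((\Gamma,\bar p,\mu_\Gamma))$ as functors, Lemma~\ref{le:equal} gives $(\Gamma',\bar p',\mu_{\Gamma'})=_{\rm int}(\Gamma,\bar p,\mu_\Gamma)$.
\end{enumerate}

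For the second assertion, take an interpretation functor $\F$ and let $\hat{\bf I}(\F)=(\Gamma,\bar p,\mu_\Gamma)$ be produced from some code $(\Gamma,\bar p,\gamma)$ presenting $\F$. By Remark~\ref{remark:gamma_int}, ${\bf F}({\bf I}(\F))=\F$. Applying $\hat{\bf F}$ only forgets the code, so $\hat{\bf F}(\hat{\bf I}(\F))=\F$. The value does not depend on which representative of the $=_{\rm int}$-class is chosen, by the first half of Fact~\ref{fact:hat1}.

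The main obstacle is well-definedness rather than any computation. The operators $\hat{\bf I}$ and $\hat{\bf F}$ involve choices: a presenting code for $\F$, and a section $\gamma$. I therefore need to check that every choice yields the same value up to $=_{\rm int}$ (respectively, equality of functors). This is exactly what Lemma~\ref{le:equal}, Remark~\ref{lemma:gamma} and Remark~\ref{remark:gamma} supply, so once these are invoked the identities follow formally.
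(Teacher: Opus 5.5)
Your proposal is correct and follows essentially the paper's route: the paper obtains Fact~\ref{fact:hat2} directly as a code-forgetting reformulation of Remark~\ref{remark:gamma_int} together with Lemma~\ref{le:equal}. Your use of Lemma~\ref{le:equal} to absorb the non-unique choice of code made by ${\bf \hat I}$ is precisely why the first identity holds only up to $=_{\rm int}$.
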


The following results are reformulations of Lemmas~\ref{cor:natur_iso} and~\ref{le:natur_iso}.

\begin{fact}\label{fact:hat3}
Strong homotopies $(\theta)\colon (\Gamma_1,\bar p_1,\mu_{\Gamma_1})\to (\Gamma_2,\bar p_2,\mu_{\Gamma_2})$ and $(\theta^\prime)\colon (\Gamma^\prime_1,\bar p^\prime_1,\mu_{\Gamma^\prime_1})\to (\Gamma^\prime_2,\bar p^\prime_2,\mu_{\Gamma^\prime_2})$ are equal if and only if ${\bf \hat N}((\theta))={\bf \hat N}((\theta^\prime))$. Natural isomorphisms $\eta,\eta^\prime\colon \F_1\to\F_2$ between interpretation functors $\F_1,\F_2\colon \PLS(\MA)\to \PLS(\MB)$ are equal if and only if ${\bf \hat H}(\eta)=_{\rm hom}{\bf \hat H}(\eta^\prime)$.
\end{fact}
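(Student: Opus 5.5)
The statement is Fact~\ref{fact:hat3}, and I would prove it by unwinding the definitions of the ``forgetful'' operators ${\bf \hat N}$, ${\bf \hat H}$ and reducing to Lemma~\ref{cor:natur_iso}, Lemma~\ref{le:natur_iso} and Fact~\ref{fact:hat1}. Here ${\bf \hat N}((\theta))$ is the natural isomorphism ${\bf N}((\theta))$, regarded only as a family of morphisms $\tilde\lambda_{X/{\sim_X}}$ between the functors ${\bf \hat F}$ of the source and target interpretations, with no reference to codes. Likewise, ${\bf \hat H}(\eta)$ is the strong homotopy ${\bf H}(\eta)$, taken up to $=_{\rm hom}$.

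\emph{First claim, direction ``$\Rightarrow$''.} Suppose $(\theta)=_{\rm hom}(\theta^\prime)$. Then $(\Gamma_j,\bar p_j,\mu_{\Gamma_j})=_{\rm int}(\Gamma^\prime_j,\bar p^\prime_j,\mu_{\Gamma^\prime_j})$ for $j=1,2$. By Fact~\ref{fact:hat1}, the functors ${\bf \hat F}$ of these interpretations coincide, so ${\bf \hat N}((\theta))$ and ${\bf \hat N}((\theta^\prime))$ are natural isomorphisms between the same pair of functors $\F_1,\F_2$. By Corollary~\ref{cor:homotopy}, their components at $A$ are defined by $\theta$ and $\theta^\prime$ respectively. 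The two formulas define the same map $\lambda$ on $\F_1(A)=U_{\Gamma_1}(\MB,\bar p_1)/{\sim_{\Gamma_1}}$, so the components at $A$ coincide. Lemma~\ref{cor:natur_iso} then gives ${\bf \hat N}((\theta))={\bf \hat N}((\theta^\prime))$.

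\emph{First claim, direction ``$\Leftarrow$''.} If the two natural isomorphisms are equal, their domains and codomains agree, so ${\bf \hat F}$ of the corresponding interpretations agree. Fact~\ref{fact:hat1} then yields $=_{\rm int}$ for both ends. Equality of the components at $A$ says, by Corollary~\ref{cor:homotopy}, that $\theta$ and $\theta^\prime$ define the same map $\lambda$. Hence $(\theta)=_{\rm hom}(\theta^\prime)$.

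\emph{Second claim.} Here $\eta,\eta^\prime\colon\F_1\to\F_2$. By Theorem~\ref{th:natur_iso2} and Corollary~\ref{cor:natur_iso_inter}, ${\bf H}(\eta)$ and ${\bf H}(\eta^\prime)$ are strong homotopies between interpretations obtained as ${\bf \hat I}(\F_1)$ and ${\bf \hat I}(\F_2)$. Their connectors define $\eta_A$ and $\eta^\prime_A$ respectively. The end interpretations are $=_{\rm int}$-equal by Fact~\ref{fact:hat2} and Fact~\ref{fact:hat1}. Therefore ${\bf \hat H}(\eta)=_{\rm hom}{\bf \hat H}(\eta^\prime)$ holds exactly when $\eta_A=\eta^\prime_A$, and by Lemma~\ref{cor:natur_iso} this is equivalent to $\eta=\eta^\prime$.

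\emph{Main obstacle.} The delicate point is that $=_{\rm int}$-equal interpretations may come from different codes. I must check that the component at $A$ is literally the same map $\F_1(A)\to\F_2(A)$. Equal interpretations have $\Gamma_1(\MB,\bar p_1)=\Gamma^\prime_1(\MB,\bar p^\prime_1)$ as sets of classes, and Fact~\ref{fact:hat1} gives equal functors, so the objects $\F_j(A)$ coincide. ``Same $L(\MA)$-isomorphism $\lambda$'' then means the same map on these sets. I would verify this identification explicitly using Definition~\ref{def:equal}.
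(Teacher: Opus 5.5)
Your proposal is correct and follows essentially the paper's route. The paper presents this fact as a reformulation of Lemma~\ref{cor:natur_iso} (a natural isomorphism between interpretation functors is determined by its component at $A$) and Lemma~\ref{le:natur_iso} (${\bf H}$ and ${\bf N}$ are mutually inverse, which itself rests on Corollaries~\ref{cor:homotopy} and~\ref{cor:natur_iso_inter}), with Fact~\ref{fact:hat1} absorbing the $=_{\rm int}$ ambiguity at the end interpretations. These are exactly the ingredients you use.
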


\begin{fact}\label{fact:hat4}
For any strong homotopy $(\theta)\colon (\Gamma_1,\bar p_1,\mu_{\Gamma_1})\to (\Gamma_2,\bar p_2,\mu_{\Gamma_2})$ one has ${\bf \hat H}({\bf \hat N}((\theta)))=_{\rm hom}(\theta)$. And for any natural isomorphism $\eta\colon \F_1\to\F_2$ between interpretation functors $\F_1,\F_2\colon \PLS(\MA)\to \PLS(\MB)$ one has ${\bf \hat N}({\bf \hat H}(\eta))=\eta$.
\end{fact}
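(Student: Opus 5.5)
The statement to prove is Fact~\ref{fact:hat4}. It is the code-forgetting reformulation of Lemma~\ref{le:natur_iso}, so the plan is to reduce it to that lemma, using Fact~\ref{fact:hat3} and the definition of $=_{\rm hom}$ to absorb the information lost when codes are forgotten.

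For the first assertion, take a strong homotopy $(\theta)\colon (\Gamma_1,\bar p_1,\mu_{\Gamma_1})\to(\Gamma_2,\bar p_2,\mu_{\Gamma_2})$. The natural isomorphism ${\bf \hat N}((\theta))$ coincides with ${\bf N}((\theta))=\tilde\lambda$; only the code bookkeeping is forgotten. It connects the interpretation functors $\F_1,\F_2$, and these are equal to ${\bf \hat F}$ of the two interpretations.

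Applying ${\bf \hat H}$ means choosing some interpretations ${\bf \hat I}(\F_1)$, ${\bf \hat I}(\F_2)$ and the homotopy produced by Theorem~\ref{th:natur_iso2}. By Fact~\ref{fact:hat2}, these chosen interpretations are $=_{\rm int}$ to the original ones. Hence $\Gamma_j(\MB,\cdot)$ and the coordinate maps agree with the originals.

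By Corollary~\ref{cor:natur_iso_inter} (through Lemma~\ref{le:natur_iso}), the connector produced defines the isomorphism $\tilde\lambda_A$. By Corollary~\ref{cor:homotopy}, $\tilde\lambda_A$ is the same $L(\MA)$\=/isomorphism $\lambda$ that $\theta$ defines. All three clauses of the definition of $=_{\rm hom}$ therefore hold.

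For the second assertion, take a natural isomorphism $\eta\colon\F_1\to\F_2$. Let $(\theta)={\bf \hat H}(\eta)$, a strong homotopy between some interpretations $(\Gamma_j,\bar p_j,\mu_{\Gamma_j})$ with ${\bf \hat F}$ of these equal to $\F_j$ (Fact~\ref{fact:hat2}). By Theorem~\ref{th:natur_iso2}, $\theta$ defines $\eta_A$. Then ${\bf \hat N}((\theta))$ is a natural isomorphism $\F_1\to\F_2$ whose $A$\=/component is defined by $\theta$ (Corollary~\ref{cor:homotopy}), so its $A$\=/component equals $\eta_A$. Lemma~\ref{cor:natur_iso} then gives ${\bf \hat N}({\bf \hat H}(\eta))=\eta$.

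The main obstacle is the well-definedness of the forgetful operators: one must check that the output does not depend on which code or interpretation representative is chosen. I would handle this by invoking Fact~\ref{fact:hat3} together with Fact~\ref{fact:hat1}. Equal interpretations give equal functors, and $=_{\rm hom}$-equal homotopies give the same natural isomorphism, because the isomorphism $\lambda$ determines everything via Lemma~\ref{cor:natur_iso}.
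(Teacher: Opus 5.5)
Your proposal is correct and takes essentially the paper's route. The paper states Fact~\ref{fact:hat4} without a separate proof, as the code-forgetting reformulation of Lemma~\ref{le:natur_iso}, and that lemma's proof uses exactly Corollary~\ref{cor:homotopy}, Corollary~\ref{cor:natur_iso_inter} and Lemma~\ref{cor:natur_iso} as you do. Your bookkeeping through Facts~\ref{fact:hat1}--\ref{fact:hat3}, which handles the choice of representatives for ${\bf \hat I}$ and ${\bf \hat H}$, makes explicit what the paper leaves implicit.
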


\subsection{Restrictions and extensions of functors and natural isomorphisms}\label{subsec:re}

In this subsection, we gather together simple facts and remarks about restrictions and extensions of interpretation and translation functors and natural isomorphisms between them.

Let $\MA=\langle A; L(\MA)\rangle$,  $\MB=\langle B; L(\MB)\rangle$, $\MC=\langle C; L(\MC)\rangle$  be algebraic structures and $(\Gamma,\bar p,\gamma), (\Gamma_1,\bar p_1,\gamma_1), (\Gamma_2,\bar p_2,\gamma_2)\colon L(\MA)\cup A\to L(\MB)\cup B$, $(\Delta,\bar q,\delta)\colon L(\MB)\cup B\to L(\MC)\cup C$ extended codes, and $\alpha\leqslant \beta$, $\kappa\leqslant\lambda$ infinite ordinals, such that $\alpha\leqslant\kappa$, $\beta\leqslant \lambda$.

\begin{fact}[on restrictions and extensions of functors]\label{fact:functor} 
The following holds:
\begin{enumerate}
    \item If the translation $(\Gamma,\bar p,\gamma)$\=/functor $\F_{\kappa,\lambda}\colon \PLS_{\kappa,\lambda}(\MA)\to \PLS_{\kappa,\lambda}(\MB)$ is well-defined, then the translation $(\Gamma,\bar p,\gamma)$\=/functor $\F_{\alpha,\beta}\colon \PLS_{\alpha,\beta}(\MA)\to \PLS_{\alpha,\beta}(\MB)$ is well-defined too. 
    \item If the interpretation code $\Gamma$ is injective and the injective translation $(\Gamma,\bar p,\gamma)$\=/functor $\F_{\kappa,\lambda}\colon \LS_{\kappa,\lambda}(\MA)\to \LS_{\kappa,\lambda}(\MB)$ is well-defined, then the injective translation $(\Gamma,\bar p,\gamma)$\=/functor $\F_{\alpha,\beta}\colon \LS_{\alpha,\beta}(\MA)\to \LS_{\alpha,\beta}(\MB)$ is well-defined too. 
    \item If the interpretation code $\Gamma$ is absolute and the translation $(\Gamma,\emptyset,\gamma)$\=/functor $\F_{\kappa,\lambda}\colon \PLS_{\kappa,\lambda}(\MA)\to \PLS_{\kappa,\lambda}(\MB)$ is well-defined, then the absolute translation $\Gamma$\=/functor $\F_{0,\lambda}\colon \PLS_{0,\lambda}(\MA)\to \PLS_{0,\lambda}(\MB)$ is well-defined too; and if $\F_{0,\lambda}\colon \PLS_0(\MA)\to \PLS_0(\MB)$ is well-defined, then the absolute translation $\Gamma$\=/functor $\F_{0,\beta}\colon \PLS_{0,\beta}(\MA)\to \PLS_{0,\beta}(\MB)$ is well-defined too.
    \item If the interpretation code $\Gamma$ is absolute and injective and the translation $(\Gamma,\emptyset,\gamma)$\=/functor $\F_{\kappa,\lambda}\colon \PLS_{\kappa,\lambda}(\MA)\to \PLS_{\kappa,\lambda}(\MB)$ is well-defined, then the absolute injective translation $\Gamma$\=/functor $\F_{0,\lambda}\colon \LS_{0,\lambda}(\MA)\to \LS_{0,\lambda}(\MB)$ is well-defined too; and if $\F_{0,\lambda}\colon \LS_0(\MA)\to \LS_0(\MB)$ is well-defined, then the absolute injective translation $\Gamma$\=/functor $\F_{0,\beta}\colon \LS_{0,\beta}(\MA)\to \LS_{0,\beta}(\MB)$ is well-defined too.
    \item If the interpretation code $\Gamma$ is injective, then the interpretation $(\Gamma,\bar p,\gamma)$\=/functor $\F\colon \PLS(\MA)\to \PLS(\MB)$ is well-defined if and only if the injective interpretation $(\Gamma,\bar p,\gamma)$\=/functor $\F\colon \LS(\MA)\to \LS(\MB)$ is well-defined. 
    \item If the interpretation code $\Gamma$ is injective, then the translation $(\Gamma,\bar p,\gamma)$\=/functor $\F\colon \PDS(\MA)\to \PDS(\MB)$ is well-defined if and only if the injective translation $(\Gamma,\bar p,\gamma)$\=/functor $\F\colon \DS(\MA)\to \DS(\MB)$ is well-defined.
\end{enumerate}
\end{fact}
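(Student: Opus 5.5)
The plan is to treat the six items uniformly via Remark~\ref{remark:FE}. Each of the conditions \ref{F1}$_{\kappa,\lambda}$--\ref{F7}$_{\kappa,\lambda}$ (and their $(\alpha,\beta)$ and absolute analogues) is a universally quantified statement over systems $S$ with fewer than $\lambda$ formulas and fewer than $\kappa$ parameters. Well-definedness of the $(\Gamma,\bar p,\gamma)$\=/functor on $\PLS_{\kappa,\lambda}(\MA)$ is, by definition, the conjunction of these conditions. Since $\alpha\leqslant\kappa$ and $\beta\leqslant\lambda$, every system admissible for $(\alpha,\beta)$ is admissible for $(\kappa,\lambda)$. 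The conditions \ref{F1}$_{\alpha,\beta}$--\ref{F7}$_{\alpha,\beta}$ are therefore literal instances of the $(\kappa,\lambda)$ ones, and item~1 follows at once. Functoriality of the restricted construction is inherited because the same translated formulas are used.

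Items~3 and~4 follow by the same monotonicity argument. For an absolute code, i.e.\ $\bar p=\emptyset$, the conditions defining the absolute functor on $\PLS_{0,\lambda}$ quantify only over parameter-free systems, which form a subclass of those allowed in $\PLS_{\kappa,\lambda}$. Passing from $\lambda$ to $\beta$ is again a restriction on $|S|$. The remaining step is to check that the objects produced are absolute. This holds because the translation of an $L(\MA)$\=/formula under an absolute code is an $L(\MB)$\=/formula, as in Corollary~\ref{cor:abs}. Item~2 and the $\LS$ parts of item~4 additionally need injectivity of $\Gamma$. When $\sim_X$ is equality, $E_\Gamma$ is then the identity relation on $U_\Gamma$, so $\sim_{\bar X}$ is equality and logical sets go to logical sets, as in the remark preceding Corollary~\ref{cor:inj}. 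With that in hand, the restriction argument is identical.

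Items~5 and~6 are the only places where a converse is needed. The forward directions are restrictions, handled as above. For the converses I would use Corollaries~\ref{cor:inj2} and~\ref{cor:inj1}: the interpretation (resp.\ translation) functor on $\PLS(\MA)$ (resp.\ $\PDS(\MA)$) is well-defined as soon as condition~\ref{F1} (resp.\ \ref{F1}$_{\omega,\omega}$) holds. That condition concerns only systems $S$ and emptiness of $\V_\MA(S)$ versus $\V_\MB(S_{\Gamma,\bar p,\gamma})$, so it is already among the conditions required for the $\LS(\MA)\to\LS(\MB)$ (resp.\ $\DS(\MA)\to\DS(\MB)$) functor to be well-defined. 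Hence well-definedness on the smaller category gives \ref{F1}, and the corollaries upgrade this to the full projective category.

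The main obstacle is in items~5 and~6. I must confirm that condition~\ref{F1} as used in the well-definedness of the injective functor on $\LS$ (resp.\ $\DS$) is literally the same as the one in Corollary~\ref{cor:inj2}. Here ``injective'' means the code $\Gamma$ is injective, so $\sim_\Gamma$ is equality on $U_\Gamma(\MB,\bar p)$ and the images $\bar X$ are genuinely logical sets. I also have to ensure no hidden use of projective objects occurs in establishing \ref{F1}, including the case $\V_\MA(S)=\emptyset$, which is handled by Fact~\ref{fact:empty}.
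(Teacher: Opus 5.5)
Your proposal is correct and follows the paper's proof: items 1--4 and the forward directions of 5 and 6 are plain restrictions of the given functor, and the converses in 5 and 6 use that well-definedness on $\LS(\MA)$ (resp.\ $\DS(\MA)$) already includes condition \ref{F1} (resp.\ \ref{F1}$_{\omega,\omega}$), which suffices by Corollary~\ref{cor:inj2} (resp.\ Corollary~\ref{cor:inj1}). Your closing worry is unnecessary: \ref{F1} is one of the defining conditions of the functor on logical (resp.\ definable) sets and involves no projective objects, so Fact~\ref{fact:empty} plays no role.
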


\begin{proof}
If the injective translation $(\Gamma,\bar p,\gamma)$\=/functor $\F\colon \DS(\MA)\to \DS(\MB)$ is well-defined then the condition~\ref{F1}$_{\omega,\omega}$ holds, therefore the translation $(\Gamma,\bar p,\gamma)$\=/functor $\F\colon \PDS(\MA)\to \PDS(\MB)$ is well-defined too due to Corollary~\ref{cor:inj1}. And if the injective interpretation $(\Gamma,\bar p,\gamma)$\=/functor $\F\colon \LS(\MA)\to \LS(\MB)$ is well-defined, then the condition~\ref{F1} holds, and therefore the interpretation $(\Gamma,\bar p,\gamma)$\=/functor $\F\colon \PLS(\MA)\to \PLS(\MB)$ is well-defined too due to Corollary~\ref{cor:inj2}. All other items are trivial since the required functors are restrictions of given functors.
\end{proof}

\begin{remark}[on restrictions of compositions of functors]\label{cor:comp_rest}
Construction of interpretation and translation functors such that their synchronous restrictions to subcategories $\PLS_{\alpha,\beta}(\MA)$, $\LS_{\alpha,\beta}(\MA)$, $\PLS_{0,\beta}(\MA)$, $\LS_{0,\beta}(\MA)$ commute with composition. For example, let $\F\colon \PLS(\MA)\to\PLS(\MB)$ and $\G\colon \PLS(\MB)\to \PLS(\MC)$ be the interpretation $(\Gamma,\bar p,\gamma)$- and $(\Delta,\bar q,\delta)$\=/functors. Then $(\G\circ\F)_{\alpha,\beta}=\G_{\alpha,\beta}\circ\F_{\alpha,\beta}$. Or, if 
codes $\Gamma,\Delta$ are injective, then the restriction of the composition $\G\circ\F$ to subcategory $\LS_{\alpha,\beta}(\MA)$ coincides with the composition of the restricrtion of $\F$ to $\LS_{\alpha,\beta}(\MA)$ and the restriction of $\G$ to $\LS_{\alpha,\beta}(\MB)$.
\end{remark}

\begin{fact}[on composition of injective translation functors] 
Suppose that codes $\Gamma$ and $\Delta$ are injective. Then the composition of injective interpretation $(\Gamma,\bar p,\gamma)$- and $(\Delta,\bar q,\delta)$\=/functors $\F\colon \LS(\MA)\to \LS(\MB)$ and $\G\colon \LS(\MB)\to \LS(\MC)$ is the injective interpretation $(\Gamma\circ\Delta,(\delta(\bar p),\bar q),\delta\circ\gamma)$\=/functor $\G\circ\F\colon \LS(\MA)\to \LS(\MC)$. And the composition of injective translation $(\Gamma,\bar p,\gamma)$- and $(\Delta,\bar q,\delta)$\=/functors $\F\colon \DS(\MA)\to \DS(\MB)$ and $\G\colon \DS(\MB)\to \DS(\MC)$ is the injective translation $(\Gamma\circ\Delta,(\delta(\bar p),\bar q),\delta\circ\gamma)$\=/functor $\G\circ\F\colon \DS(\MA)\to \DS(\MC)$. 
\end{fact}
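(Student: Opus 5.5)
The plan is to reduce both assertions to the already established facts about non-injective functors. The key observation is that when $\Gamma$ and $\Delta$ are injective, the composite code $\Gamma\circ\Delta$ is injective~\cite[Lemma~9~(3)]{Th_int1}. Moreover, the injective functors are by definition restrictions of the projective ones to $\LS$ (respectively $\DS$).

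For the first assertion, I would argue as follows. Since $\F\colon \LS(\MA)\to\LS(\MB)$ is a well-defined injective interpretation functor, item~5 of Fact~\ref{fact:functor} yields the interpretation $(\Gamma,\bar p,\gamma)$\=/functor $\hat\F\colon \PLS(\MA)\to\PLS(\MB)$, and $\F$ is its restriction to $\LS(\MA)$. In the same way we obtain $\hat\G\colon\PLS(\MB)\to\PLS(\MC)$ restricting to $\G$. By Proposition~\ref{prop:comp1}, the composite $\hat\G\circ\hat\F$ is the interpretation $(\Gamma\circ\Delta,(\delta(\bar p),\bar q),\delta\circ\gamma)$\=/functor $\PLS(\MA)\to\PLS(\MC)$. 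Because $\Gamma\circ\Delta$ is injective, this functor maps $\LS(\MA)$ into $\LS(\MC)$, by the reasoning of Corollary~\ref{cor:inj}: under an injective code the translation of the identity equivalence is the identity, so logical sets go to logical sets. Its restriction to $\LS(\MA)$ is therefore the injective interpretation functor for the composite extended code, again using item~5 of Fact~\ref{fact:functor}.

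It remains to check that this restriction coincides with $\G\circ\F$. This is exactly Remark~\ref{cor:comp_rest}: on $\LS(\MA)$ the functor $\hat\F$ agrees with $\F$ and lands in $\LS(\MB)$, where $\hat\G$ agrees with $\G$. Hence $(\hat\G\circ\hat\F)|_{\LS(\MA)}=\G\circ\F$.

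The second assertion uses the same argument with the other ingredients swapped in. We apply item~6 of Fact~\ref{fact:functor} to pass from $\DS$ to $\PDS$, use Proposition~\ref{prop:comp2} in place of Proposition~\ref{prop:comp1}, and invoke Remark~\ref{remark:a,b} to see that definable sets are sent to definable sets.

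The only delicate point is verifying that the restriction of $\hat\G\circ\hat\F$ is genuinely the functor \emph{defined} by the composite extended code, and not merely equal to it on objects. I would handle this by observing that the object and morphism assignments of both functors are given by the same translated formulas. These are identified up to the induced sets and maps by equation~\eqref{eq:ABC} of Proposition~\ref{ABC_ext}.
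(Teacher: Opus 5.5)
Your proposal is correct and follows essentially the same route as the paper: extend $\F$ and $\G$ to $\PLS$ (resp.\ $\PDS$) via Fact~\ref{fact:functor}, identify the composite with the functor of the composite extended code via Proposition~\ref{prop:comp1} (resp.\ Proposition~\ref{prop:comp2}), and restrict back to $\LS(\MA)$ (resp.\ $\DS(\MA)$). The ``delicate point'' you flag is already settled by those propositions, which identify $\G\circ\F$ outright with the $(\Gamma\circ\Delta,(\delta(\bar p),\bar q),\delta\circ\gamma)$-functor, so your extra appeal to \eqref{eq:ABC} is harmless but unnecessary.
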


\begin{proof}
Indeed, if there exist injective interpretation $(\Gamma,\bar p,\gamma)$- and $(\Delta,\bar q,\delta)$\=/functors $\F\colon \LS(\MA)\to \LS(\MB)$ and $\G\colon \LS(\MB)\to \LS(\MC)$, then there exist interpretation $(\Gamma,\bar p,\gamma)$- and $(\Delta,\bar q,\delta)$\=/functors $\F\colon \PLS(\MA)\to \PLS(\MB)$ and $\G\colon \PLS(\MB)\to \PLS(\MC)$. The restriction of the composition $\G\circ\F$ to $\LS(\MA)$ gives the required due to Proposition~\ref{prop:comp1}. And similar arguments with reference to Proposition~\ref{prop:comp2} show that the fact about translation functors is true.
\end{proof}

\begin{remark}[on restrictions of natural isomorphisms]\label{remark:natur_iso}
If $\eta\colon \F_1\to\F_2$ is a natural isomorphism between translation $(\Gamma_1,\bar p_1,\gamma_1)$- and $(\Gamma_2,\bar p_2,\gamma_2)$\=/functors $\F_1,\F_2\colon\PLS_{\kappa,\lambda}(\MA)\to\PLS_{\kappa,\lambda}(\MB)$, then its restriction on subcategory $\PLS_{\alpha,\beta}(\MA)$ is a natural isomorphism between the translation $(\Gamma_1,\bar p_1,\gamma_1)$- and $(\Gamma_2,\bar p_2,\gamma_2)$\=/functors $\F_1,\F_2\colon\PLS_{\alpha,\beta}(\MA)\to\PLS_{\alpha,\beta}(\MB)$. Furthermore, if the interpretation codes $\Gamma_1$ and $\Gamma_2$ are injective, then the restriction of $\eta$ on $\LS_{\kappa,\lambda}(\MA)$ is a natural isomorphisms between the injective translation $(\Gamma_1,\bar p_1,\gamma_1)$- and $(\Gamma_2,\bar p_2,\gamma_2)$\=/functors $\F^\prime_1,\F^\prime_2\colon\LS_{\kappa,\lambda}(\MA)\to\LS_{\kappa,\lambda}(\MB)$; and the restriction of a natural isomorphism $\eta\colon \F^\prime_1\to \F^\prime_2$ on subcategory $\LS_{\alpha,\beta}(\MA)$ is a natural isomorphisms between the injective translation $(\Gamma_1,\bar p_1,\gamma_1)$- and $(\Gamma_2,\bar p_2,\gamma_2)$\=/functors $\F^\prime_1,\F^\prime_2\colon\LS_{\alpha,\beta}(\MA)\to\LS_{\alpha,\beta}(\MB)$.
\end{remark}

It is convenient to put a similar result for the absolute case into a separate statement.

\begin{fact}\label{cor:abs_hom}
Suppose that codes $\Gamma_1$ and $\Gamma_2$ are absolute. Then a strong homotopy $(\theta)\colon (\Gamma_1,\emptyset,\mu_{\Gamma_1})\to(\Gamma_2,\emptyset,\mu_{\Gamma_2})$ is absolute if and only if the restriction of ${\bf N}((\theta))$ on $\PLS_0(\MA)$ is a natural isomorphism between absolute interpretation functors $\F_1,\F_2\colon \PLS_0(\MA)\to \PLS_0(\MB)$. In this case, there exists also a natural isomorphism between absolute interpretation functors $\F_1,\F_2\colon \PLS_{0,\beta}(\MA)\to \PLS_{0,\beta}(\MB)$. 
\end{fact}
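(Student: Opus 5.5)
We prove the two directions separately, using Theorem~\ref{th:homotopy} (operator ${\bf N}$) and Corollary~\ref{cor:natur_iso_inter} (operator ${\bf H}$), together with the explicit description of the components of ${\bf N}((\theta))$ from the proof of Theorem~\ref{th:homotopy}.

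\emph{Forward direction.} Assume $(\theta)$ is absolute, i.e.\ $\theta$ is an $L(\MB)$-formula without parameters. The codes $\Gamma_1,\Gamma_2$ are absolute, so Corollary~\ref{cor:abs} shows that both $\F_1$ and $\F_2$ map $\PLS_0(\MA)$ into $\PLS_0(\MB)$. Fix an absolute non-empty object $X/{\sim_X}$ of $\PLS_0(\MA)$, with $X\subseteq A^m$, $S_X$ and $E_X$ parameter-free. Then $E_{\bar X_2}=(E_X)_{\Gamma_2,\emptyset,\gamma_2}$ is a translation of an $L(\MA)$-formula containing no constants from $A$, so $\gamma_2$ never enters and $E_{\bar X_2}$ is an $L(\MB)$-formula. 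By the proof of Theorem~\ref{th:homotopy}, the component $\tilde\lambda_{X/{\sim_X}}$ is fully induced by $\varphi_{\bar X_1\bar X_2}$ from~\eqref{eq:F}. This formula is built from $\theta_m$ and $E_{\bar X_2}$, so it is parameter-free. By Remark~\ref{remark:P}, $\tilde\lambda_{X/{\sim_X}}$ is therefore an isomorphism in $\PLS_0(\MB)$. Naturality with respect to absolute morphisms is inherited from naturality on all of $\PLS(\MA)$. Hence the restriction of ${\bf N}((\theta))$ is a natural isomorphism $\F_1\to\F_2$ of functors $\PLS_0(\MA)\to\PLS_0(\MB)$.

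\emph{Backward direction.} Assume the restriction of $\tilde\lambda={\bf N}((\theta))$ to $\PLS_0(\MA)$ is a natural isomorphism inside $\PLS_0(\MB)$. The set $A=\V_\MA(\{x=x\})$ is absolute, so $\tilde\lambda_A$ is an isomorphism in $\PLS_0(\MB)$. By Remark~\ref{remark:P} it is fully induced by some parameter-free formula $\theta'$. Its domain $U_{\Gamma_1}(\MB,\emptyset)$ is $0$-definable, so Fact~\ref{rem5} lets us conjoin $U_{\Gamma_1}(\bar x_1)$ and obtain a parameter-free formula defining $\tilde\lambda_A$. By Corollary~\ref{cor:homotopy}, $\tilde\lambda_A=\lambda$. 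Thus $\theta'$ defines the same homotopy isomorphism $\lambda$, and $(\theta')$ is an absolute strong homotopy with $(\theta')=_{\rm hom}(\theta)$ (compare Corollary~\ref{cor:natur_iso_inter}). This is where care is needed: "absolute" has to be read up to $=_{\rm hom}$, meaning the homotopy isomorphism admits a parameter-free connector. With that reading the equivalence holds. I expect this identification of the notion of absoluteness to be the main subtle point.

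\emph{Restriction to $\PLS_{0,\beta}(\MA)$.} The final clause is immediate. $\PLS_{0,\beta}(\MA)$ is a full subcategory of $\PLS_0(\MA)$, and by Corollary~\ref{cor:abs} both $\F_1$ and $\F_2$ send it into $\PLS_{0,\beta}(\MB)$. The components $\tilde\lambda_{X/{\sim_X}}$ are isomorphisms between objects of this full subcategory, so they lie in it, and naturality restricts. This is the situation described in Remark~\ref{remark:natur_iso}.
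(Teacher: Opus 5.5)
Your proposal is correct and follows the paper's proof. The forward direction uses parameter-freeness of the formulas in \eqref{eq:F} together with Remark~\ref{remark:P}, and the converse extracts a parameter-free formula defining $\tilde\lambda_A=\lambda$. Your explicit caveat that absoluteness of $(\theta)$ must be read up to the choice of connector (i.e.\ up to $=_{\rm hom}$) makes precise a point the paper's converse passes over silently.
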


\begin{proof}
Indeed, the homotopy $(\theta)$ is absolute if and only if the formula $\theta$ has no parameters from $B$. So, if $\theta$ has no parameters from $B$, then for any object $X/{\sim_X}$ from $\PLS_0(\MA)$ formulas from~\eqref{eq:F} are in the language $L(\MB)$, i.\,e., they has no parameters from $B$. Therefore, the corresponding formula maps are isomorphisms in $\PLS_0(\MB)$ (see Remark~\ref{remark:P}). And, inversely, if the restriction of ${\bf N}((\theta))$ on $\PLS_0(\MA)$ is a natural isomorphism between absolute interpretation functors, then $\theta$, as a formula, which defines ${\bf N}((\theta))_A$, has no   has no parameters.
\end{proof}

\begin{corollary}\label{cor:inj_abs}
Suppose that both codes $\Gamma_1$ and $\Gamma_2$ are absolute and injective. If $\eta\colon \F_1\to\F_2$ is a natural isomorphism between translation $(\Gamma_1,\emptyset,\gamma_1)$- and $(\Gamma_2,\emptyset,\gamma_2)$\=/functors $\F_1,\F_2\colon\PLS_{\kappa,\lambda}(\MA)\to\PLS_{\kappa,\lambda}(\MB)$, then its restriction on subcategory $\LS_{0,\lambda}(\MA)$ is a natural isomorphism between the absolute injective translation $\Gamma_1$- and $\Gamma_2$\=/functors $\F^\prime_1,\F^\prime_2\colon\LS_{0,\lambda}(\MA)\to\LS_{0,\lambda}(\MB)$;  and the restriction of a natural isomorphism $\eta\colon \F^\prime_1\to \F^\prime_2$ on subcategory $\LS_{0,\beta}(\MA)$ is a natural isomorphisms between the absolute injective translation $\Gamma_1$- and $\Gamma_2$\=/functors $\F^\prime_1,\F^\prime_2\colon\LS_{0,\beta}(\MA)\to\LS_{0,\beta}(\MB)$. 
\end{corollary}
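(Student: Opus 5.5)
The plan is to obtain Corollary~\ref{cor:inj_abs} by stacking the restriction facts already available, applying Remark~\ref{remark:natur_iso} and Fact~\ref{cor:abs_hom} one after the other. First I would restrict $\eta$ along $\PLS_{0,\lambda}(\MA)\subseteq\PLS_{\kappa,\lambda}(\MA)$ and then along $\LS_{0,\lambda}(\MA)\subseteq\PLS_{0,\lambda}(\MA)$. For the first restriction the only thing to check is that the components $\eta_O$, for absolute objects $O$, are morphisms of $\PLS_{0}(\MB)$, that is, absolute isomorphisms. Naturality is then inherited, because a restriction of a natural transformation to a subcategory is natural as soon as its components lie in the target subcategory. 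Since both codes are injective and absolute, the restricted functors carry $\LS_{0,\lambda}(\MA)$ into $\LS_{0,\lambda}(\MB)$ by Fact~\ref{fact:functor}(4). Consequently, once absoluteness of the components is known, $\eta$ restricts to a natural isomorphism $\F^\prime_1\to\F^\prime_2$.

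The key step is to show that every component of $\eta$ on an absolute object is induced without parameters. I would argue as in Lemmas~\ref{le:natur_iso0}, \ref{le:natur_iso1} and \ref{le:natur_iso4}, working through Remark~\ref{remark:DS}, which says that only the restriction of $\eta$ to definable sets is needed. On $A^m$ the component is $\eta_A^m$, defined by $\theta_m$. On a logical subset $X\subseteq A^m$ the component $\eta_X$ is the restriction of $\eta_A^m$, hence fully induced by $\theta_m$ by Corollary~\ref{cor:descent}, or by Lemma~\ref{lemma:descent} in the definable case. The problem is that absoluteness of $\theta$ does not follow formally: $\eta_A$ is a morphism of $\PLS_{\kappa,\lambda}(\MB)$ and may a priori be defined only with parameters. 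I would handle this in the same spirit as Fact~\ref{cor:abs_hom}. If the intended reading is that $\eta$ is a natural isomorphism of the absolute functors, or that the homotopy ${\bf H}(\eta)$ is absolute, then $\theta$ may be chosen in $L(\MB)$. In that case $\theta_m$, and by Lemma~\ref{le:natur_iso6} also the formulas~\eqref{eq:F}, are parameter-free. Remark~\ref{remark:P} then shows that the components are isomorphisms in $\PLS_0(\MB)$.

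I expect this absoluteness issue to be the main obstacle, and I would resolve it by stating explicitly the hypothesis, implicit in the phrase ``absolute translation functors'', that $\eta_A$ is an absolute isomorphism. That is exactly the condition Fact~\ref{cor:abs_hom} identifies. The object $A$ belongs to $\LS_{0,\lambda}(\MA)$, so this assumption is forced whenever the restricted transformation is to be natural in the absolute category. Everything else then follows from Lemma~\ref{cor:natur_iso}, since a natural isomorphism is determined by $\eta_A$.

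The second claim, restricting from $\LS_{0,\lambda}(\MA)$ to $\LS_{0,\beta}(\MA)$ for $\beta\leqslant\lambda$, is immediate. $\LS_{0,\beta}(\MA)$ is a full subcategory of $\LS_{0,\lambda}(\MA)$, and the functors restrict by Fact~\ref{fact:functor}(4) and Remark~\ref{cor:comp_rest}. The components are the same absolute isomorphisms, now lying in $\LS_{0,\beta}(\MB)$ because $\F^\prime_j$ preserve $(0,\beta)$-logical sets (Remark~\ref{remark:a,b}). Naturality is inherited.
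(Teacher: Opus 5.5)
Your argument is essentially the paper's: the corollary is meant as a consequence of Fact~\ref{cor:abs_hom} combined with Remark~\ref{remark:natur_iso} and Fact~\ref{fact:functor}. The key point is exactly yours: on absolute logical sets the components are fully induced by $\theta_m$, which is parameter-free once $\theta$ is, so Remark~\ref{remark:P} applies.

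You are also right that absoluteness of $\eta_A$ has to be assumed. This is how the corollary is used in Corollary~\ref{cor:bi3}, where the homotopies are absolute. However, it is a genuinely missing hypothesis, not one implicit in the statement, and without it the statement is false.

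For instance, take $\MA=\MB=\langle \Z; f\rangle$ with $f(x,y,z)=x-y+z$, $\Gamma_1=\Gamma_2=\Id_{L(\MA)}$, $\gamma_1=\id_{\Z}$ and $\gamma_2=\sigma\colon x\mapsto x+1$.
\begin{itemize}
\item Since $\sigma$ is an automorphism defined by $y=f(x,0,1)$, the maps $\bar a/{\sim_X}\mapsto\sigma(\bar a)/{\sim}$ form a natural isomorphism $\eta\colon\F_1\to\F_2$ on $\PLS_{\kappa,\lambda}(\MA)$.
\item On $\LS_{0,\lambda}(\MA)$ the functors $\F^\prime_1$ and $\F^\prime_2$ coincide and equal the identity.
\item The component $\eta_A=\sigma$ is not $0$-definable, since it does not commute with the automorphism $x\mapsto -x$.
\end{itemize}
So the restriction of $\eta$ is not a natural transformation into $\LS_{0,\lambda}(\MB)$.
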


The following facts are about extensions of natural isomorphisms.

\begin{lemma}\label{cor:natur_iso1}
Suppose that both codes $\Gamma_1$ and $\Gamma_2$ are injective and $\eta\colon \F_1\to\F_2$ is a natural isomorphism between the injective translation $(\Gamma_1,\bar p_1,\gamma_1)$- and $(\Gamma_1,\bar p_1,\gamma_1)$\=/functors $\F_1,\F_2\colon\DS(\MA)\to\DS(\MB)$. Then there exists and uniquely defined an extension $\tau\colon \F_1^\prime\to \F_2^\prime$ of natural isomorphism $\eta$ to a natural isomorphism between the translation $(\Gamma_1,\bar p_1,\gamma_1)$- and $(\Gamma_1,\bar p_1,\gamma_1)$\=/functors $\F_1^\prime,\F_2^\prime\colon\PDS(\MA)\to\PDS(\MB)$. In particular, if $\eta$ is the identical natural isomorphism, then $\tau$ is the identical natural isomorphism.
\end{lemma}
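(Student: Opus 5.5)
Fix $\theta$, a formula in $L(\MB)\cup B$ defining the bijection $\lambda=\eta_A\colon \F_1(A)\to\F_2(A)$. Since $\Gamma_1,\Gamma_2$ are injective, $\F_1(A)=U_{\Gamma_1}(\MB,\bar p_1)$ and $\F_2(A)=U_{\Gamma_2}(\MB,\bar p_2)$ are definable sets, so $\eta_A$ is a morphism in $\DS(\MB)$. The construction is then forced: on a projective definable set $X/{\sim_X}$ over $\MA$ with $X\subseteq A^m$, let $\tau_{X/{\sim_X}}$ be the map fully induced by $\varphi_{\bar X_1\bar X_2}$ from~\eqref{eq:F}, built from $\theta_m$ and $E_{\bar X_2}=(E_X)_{\Gamma_2,\bar p_2,\gamma_2}$. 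On the empty set, take $\tau_\emptyset=\id_\emptyset$.

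\textbf{Restriction to $\DS(\MA)$.} For a non-empty definable $X\subseteq A^m$, Lemmas~\ref{le:natur_iso0} and~\ref{le:natur_iso1} give that $\eta_X$ is fully induced by $\theta_m$. Their proofs use only the natural projections $\Pi_r\colon A^m\to A$ and the identical embedding $X\to A^m$, which lie in $\DS(\MA)$. The analogue of Corollary~\ref{cor:tr_functor_subset} for injective functors is immediate, since $\sim_{\Gamma_j}$ is equality. So $\eta_X=\lambda^m|_{\F_1(X)}$. When $\sim_X$ is equality, $\varphi_{\bar X_1\bar X_2}$ induces the same map as $\theta_m$, so $\tau$ restricts to $\eta$.

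\textbf{Well-definedness and naturality.} First I show that each $\tau_{X/{\sim_X}}$ is an isomorphism in $\PDS(\MB)$. By Theorem~\ref{th:functor1}, Lemma~\ref{le:natur_iso2} and Lemma~\ref{le:natur_iso3} (applied through Remark~\ref{remark:DS}, which says only the restriction of the natural isomorphism to $\DS(\MA)$ is used), there are $\MA\preceq\nsA$ and interpretations $(\Gamma_j,\bar p_j,\mu_{\Gamma_j})\colon\nsA\rightsquigarrow\MB$ that are strongly homotopic with connector $\theta$ and satisfy $\mu_{\Gamma_j}\circ\gamma_j=\id_A$. I then repeat the computation in the proof of Theorem~\ref{th:homotopy}. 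The composite $G=\pi\circ\lambda^m$ is a full surjective formula map fully induced by $\varphi_{\bar X_1\bar X_2}$. Using the bijections $\tilde\mu_{\Gamma_j}$ and Fact~\ref{fact1.5}, its kernel equals $\sim_{\bar X_1}$. Theorem~\ref{th_hom} then makes $\tau_{X/{\sim_X}}$ a bijective formula map, and Corollary~\ref{cor:iso_def} makes it an isomorphism in $\PDS(\MB)$. Naturality follows the last computation in the proof of Theorem~\ref{th:homotopy}: each $\tau_{X/{\sim_X}}$ equals $\tilde\mu^{-1}_{\Gamma_2}\circ\tilde\mu_{\Gamma_1}$, and diagram~\eqref{eq:F_mu} commutes for both interpretations $\nsA\rightsquigarrow\MB$.

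\textbf{Main obstacle and uniqueness.} The delicate step is that diagram~\eqref{eq:F_mu} and Fact~\ref{fact1.5} are stated for interpretations of $\MA$ itself, whereas here we only have interpretations of $\nsA$. I would handle this by observing that $\F_j$ restricted to $\PDS(\MA)$ agrees with the restriction of the $(\Gamma_j,\bar p_j,\gamma_j)$-translation functor of $\nsA$ along $\iota$, as in the proof of Lemma~\ref{lem1}. Morphisms in $\PDS(\MA)$ are induced by formulas with parameters in $A\subseteq\nsuA$, so every verification transfers by elementarity. Alternatively, one can argue directly by Remark~\ref{remark:FE}: each required property is a first-order sentence over $\MA_A$ versus $\MB_B$. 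For uniqueness, suppose $\tau'$ is another extension. Then $\tau'_X=\eta_X$ is fully induced by $\theta_m$, and Lemma~\ref{lemma:natur_iso6} applied to the natural surjection $X\to X/{\sim_X}$ forces $\tau'_{X/{\sim_X}}$ to be induced by $\varphi_{\bar X_1\bar X_2}$, so $\tau'=\tau$, as in Lemma~\ref{cor:natur_iso}. Finally, if $\eta$ is the identity, then $\F_1=\F_2$ on $\DS(\MA)$ and $\theta$ defines the identity. Every $\varphi_{\bar X_1\bar X_2}$ then induces the identity, so $\tau$ is the identity.
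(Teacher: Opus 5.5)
Your proposal is correct and follows essentially the paper's route. You pass via Theorem~\ref{th:natur_iso1} (through Remark~\ref{remark:DS}) to strongly homotopic interpretations $\nsA\rightsquigarrow\MB$ with connector $\theta$, and pull the natural isomorphism of Theorem~\ref{th:homotopy} back to $\PDS(\MA)$ along $\MA\preceq\nsA$; the paper packages your component-wise checks as the whiskering $\tau=\tilde\lambda\circ\F_0$ with $\F^\prime_j=\G_j\circ\F_0$ (Proposition~\ref{prop:comp2}), so naturality is inherited rather than re-verified, and it gets restriction and uniqueness together from Lemma~\ref{cor:natur_iso}, whose content is exactly your appeal to Lemma~\ref{le:natur_iso6}.
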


\begin{proof}
Due to Remark~\ref{remark:DS} we may apply Theorem~\ref{th:natur_iso1}. So we get that there exist an elementary extension $\MA\preceq \nsA$, $\nsA=\langle \nsuA; L(\MA)\rangle$, and strongly homotopic interpretations $(\Gamma_1,\bar p_1,\mu_{\Gamma_1})\colon \nsA \rightsquigarrow \MB$ and $(\Gamma_2,\bar p_2,\mu_{\Gamma_2})\colon \nsA \rightsquigarrow \MB$, such that $\mu_{\Gamma_1} \circ\gamma_1=\mu_{\Gamma_2} \circ\gamma_2=\id_A$, with homotopy isomorphism $\lambda=\eta_A$, defined by formula $\theta$. Let $\F_0\colon \PLS(\MA)\to \PLS(\nsA)$ be the translation $(\Id_{L(\MA)},\emptyset,\id_A)$\=/functor from Lemma~\ref{lemma:iota} and $\tilde\lambda$ be natural isomorphism between the interpretation $(\Gamma_1,\bar p_1,\mu_{\Gamma_1})$- and $(\Gamma_2,\bar p_2,\mu_{\Gamma_2})$\=/functors $\G_1,\G_2\colon \PLS(\nsA)\to \PLS(\MB)$ from Theorem~\ref{th:homotopy}. Take  functions $\gamma^\prime_1\colon \nsuA\to U_{\Gamma_1}(\MB,\bar p_1)$ and $\gamma^\prime_1\colon \nsuA\to U_{\Gamma_1}(\MB,\bar p_1)$, such that $\mu_{\Gamma_1}\circ\gamma_1^\prime=\mu_{\Gamma_2}\circ\gamma_2^\prime=\id_{\nsuA}$ and $\gamma_1(a)=\gamma_1^\prime(a)$, $\gamma_2(a)=\gamma_2^\prime(a)$ for all $a\in A$. Let us consider the restrictions of functors $\G_1$ and $\G_2$ on $\PDS(\nsA)$ as translation $(\Gamma_1,\bar p_1,\gamma^\prime_1)$- and $(\Gamma_2,\bar p_2,\gamma^\prime_2)$\=/functors. Then, by Proposition~\ref{prop:comp2} and~\cite[Lemma~9]{Th_int1}, one has $\F^\prime_1=\G_1\circ\F_0$, $\F^\prime_2=\G_2\circ\F_0$. Furthermore, $\tau=\tilde\lambda\circ\F_0$ is a natural isomorphism between $\F^\prime_1$ and $\F^\prime_2$. Due to Corollary~\ref{cor:homotopy}, the isomorphism $\tilde\lambda_{\nsuA}$ is defined by the formula $\theta$. Since $\F_0(A)=\nsuA$, then the isomorphism $\tau_A$ is defined by $\theta$, i.\,e., $\tau_A=\eta_A$. Therefore, by Lemma~\ref{cor:natur_iso}, the restriction of $\tau$ on $\DS(\MA)$ coincides with $\eta$, and $\tau$ is unique. 
\end{proof}

\begin{lemma}\label{cor:natur_iso2}
Suppose that both codes $\Gamma_1$ and $\Gamma_2$ are injective and $\eta\colon \F_1\to\F_2$ is a natural isomorphism between the injective interpretation $(\Gamma_1,\bar p_1,\gamma_1)$- and $(\Gamma_1,\bar p_1,\gamma_1)$\=/functors $\F_1,\F_2\colon\LS(\MA)\to\LS(\MB)$. Then there exists and uniquely defined an extension $\tau\colon \F_1^\prime\to \F_2^\prime$ of natural isomorphism $\eta$ to a natural isomorphism between the interpretation $(\Gamma_1,\bar p_1,\gamma_1)$- and $(\Gamma_1,\bar p_1,\gamma_1)$\=/functors $\F_1^\prime,\F_2^\prime\colon\PLS(\MA)\to\PLS(\MB)$. In particular, if $\eta$ is the identical natural isomorphism, then $\tau$ is the identical natural isomorphism.
\end{lemma}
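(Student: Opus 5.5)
The plan is to mirror the proof of Lemma~\ref{cor:natur_iso1}, with the simplification that no elementary extension is needed at the $\PLS$ level. First I restrict $\eta$ to $\DS(\MA)$. By Remark~\ref{remark:natur_iso}, this gives a natural isomorphism between injective translation functors $\DS(\MA)\to\DS(\MB)$. By Fact~\ref{fact:functor} (item 5), the interpretation functors $\F_1^\prime,\F_2^\prime\colon\PLS(\MA)\to\PLS(\MB)$ are well-defined. By Theorem~\ref{th:functor2}, they come from genuine interpretations $(\Gamma_j,\bar p_j,\mu_{\Gamma_j})\colon\MA\rightsquigarrow\MB$ with $\mu_{\Gamma_j}\circ\gamma_j=\id_A$.

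Next I apply Theorem~\ref{th:natur_iso2} via Remark~\ref{remark:DS}. Its proof only uses the restriction of the natural isomorphism to $\DS(\MA)$, and all the ingredients (Lemmas~\ref{le:natur_iso0}--\ref{le:natur_iso3}, Corollary~\ref{cor:surj}) concern definable sets. The outcome is that the two interpretations are strongly homotopic, with homotopy isomorphism $\lambda=\eta_A$ defined by a formula $\theta$. Because the codes are injective, $\F_j(A)=U_{\Gamma_j}(\MB,\bar p_j)/{\sim_{\Gamma_j}}$ is just $U_{\Gamma_j}(\MB,\bar p_j)$ with equality. That matches the objects in $\DS(\MB)$, so $\eta_A$ is the same map whether it is viewed in $\LS$ or in $\PLS$.

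I then set $\tau={\bf N}((\theta))=\tilde\lambda$, the natural isomorphism $\F_1^\prime\to\F_2^\prime$ constructed in Theorem~\ref{th:homotopy}. By Corollary~\ref{cor:homotopy}, $\tau_A$ is defined by $\theta$, so $\tau_A=\eta_A$.

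The remaining step, which I expect to be the only delicate one, is to show that $\tau$ restricted to $\LS(\MA)$ equals $\eta$. For a logical set $X\subseteq A^m$, Lemma~\ref{le:natur_iso4} says $\tau_X$ is fully induced by $\theta_m$. The same argument applied to $\eta$ gives the same conclusion: the identical-embedding squares $X\to A^m$ together with Lemma~\ref{le:natur_iso0} show that $\eta_X$ is the restriction of $\lambda^m$, and Corollary~\ref{cor:descent} then says $\eta_X$ is fully induced by $\theta_m$. Both arguments only use Corollary~\ref{cor:functor_subset}, which holds inside $\LS$ because the codes are injective. Hence $\eta_X=\tau_X$.

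Uniqueness follows from Lemma~\ref{cor:natur_iso}: any extension $\tau^\prime$ satisfies $\tau^\prime_A=\eta_A=\tau_A$, hence $\tau^\prime=\tau$. Finally, if $\eta$ is the identity, then $\F_1^\prime=\F_2^\prime$ by Lemma~\ref{le:equal}. The identity natural isomorphism is one extension of $\eta$, so by uniqueness $\tau$ is the identity.
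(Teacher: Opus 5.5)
Your proposal is correct and follows essentially the same route as the paper. Both arguments apply Theorem~\ref{th:natur_iso2} through Remark~\ref{remark:DS}, take $\tau$ to be the natural isomorphism of Theorem~\ref{th:homotopy} for the resulting strong homotopy $(\theta)$, obtain $\tau_A=\eta_A$ from Corollary~\ref{cor:homotopy}, and derive the restriction property and uniqueness from Lemma~\ref{cor:natur_iso}. The only difference is that you re-derive the $\LS$ case of Lemma~\ref{cor:natur_iso} by hand (via Lemma~\ref{le:natur_iso0} and Corollary~\ref{cor:descent}) and make the identity case explicit, both of which the paper leaves implicit.
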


\begin{proof}
Again, according to Remark~\ref{remark:DS} we apply Theorem~\ref{th:natur_iso2} and get that there exist strongly homotopic interpretations $(\Gamma_1,\bar p_1,\mu_{\Gamma_1}),(\Gamma_2,\bar p_2,\mu_{\Gamma_2})\colon \MA \rightsquigarrow \MB$ with homotopy isomorphism $\lambda=\eta_A$, defined by a formula $\theta$, and $\mu_{\Gamma_1} \circ\gamma_1=\mu_{\Gamma_2} \circ\gamma_2=\id_A$. Let $\tau={\bf N}((\theta))$ be the natural isomorphism between functors $\F_1^\prime$ and $\F_2^\prime$. Due to Corollary~\ref{cor:homotopy} the isomorphism $\tau_A$ equals to $\eta_A$, therefore, by Lemma~\ref{cor:natur_iso}, $\eta$ is the restriction of $\tau$ to $\LS(\MA)$, and $\tau$ is unique.
\end{proof}

\subsection{Bi-interpretations and translation equivalencies of categories}\label{subsec:bi}

In this subsection, we are going to prove the categorical equivalence of logical geometries of strongly bi-interpretable algebraic structures. 

Remind that an equivalence $\mathcal{A}\sim \mathcal{B}$ between categories $\mathcal{A}$ and $\mathcal{B}$ is called {\em relative to class of functors $\mathbf{K}$} and denoted by $\mathcal{A}\sim_\mathbf{K} \mathcal{B}$, if there exist functors $\F\colon\mathcal{A}\to\mathcal{B}$ and $\G\colon\mathcal{B}\to\mathcal{A}$ from $\mathbf{K}$, such that $\G\circ\F\cong\id_\mathcal{A}$ and $\F\circ\G\cong\id_{\mathcal{B}}$. The same holds for isomorphisms of categories, thus, $\mathcal{A}\simeq_\mathbf{K}\mathcal{B}$ is an isomorphism relative to class $\mathbf{K}$.

\begin{notation}
We denote by $\Int$ ($\Int_\Inj$) the class of all (injective) interpretation functors and by $\Tr$ ($\Tr_\Inj$) the class of all (injective) translation functors.
\end{notation}

According to Remark~\ref{remark:ID}, for any algebraic structure $\MA$ the identity functor $\id_{\PLS(\MA)}$ is in $\Int_\Inj$, as well as $\id_{\LS(\MA)}\in\Int_\Inj$ and $\id_{\PDS(\MA)},\id_{\DS(\MA)}\in\Tr_\Inj$. If $\F\colon \PLS(\MA)\to \PLS(\MB)$ and $\G\colon\PLS(\MB)\to \PLS(\MC)$ are interpretation functors, then $\G\circ\F$ is an interpretation functor; and the same is true for translation functors $\F\colon \PDS(\MA)\to \PDS(\MB)$ and $\G\colon\PDS(\MB)\to \PDS(\MC)$ due to Propositions~\ref{prop:comp1} and~\ref{prop:comp2}.

\begin{definition}
We will refer to an equivalence (an isomorphism) of categories relative to the class $\Int$ ($\Int_\Inj$, $\Tr$, $\Tr_\Inj$) as {\em interpretation (injectively interpretation, translation, injectively translation) equivalence (isomorphism)}.
\end{definition}

Our main goal in this subsection and in the whole paper is to prove the following theorem.

\begin{theorem}[categorical criteria of strong bi-interpretation]\label{th:bi-inter}
Let $\MA=\langle A; L(\MA)\rangle$ and $\MB=\langle B; L(\MB)\rangle$ be algebraic structures. Then the following conditions are equivalent:
\begin{enumerate}[label=(\arabic*)]
    \item\label{item1:invert} $\MA$ and $\MB$ are strongly bi-interpretable in each other;
    \item\label{item2:invert} categories of projective definable sets over $\MA$ and $\MB$ are equivalent relative to the class $\Tr$ of translation functors:
    $$\PDS(\MA)\sim_\Tr\PDS(\MB);$$
        \item\label{item3:invert} categories of projective logical sets over $\MA$ and $\MB$ are equivalent relative to the class $\Int$ of interpretation functors:
    $$\PLS(\MA)\sim_\Int\PLS(\MB).$$
\end{enumerate}
In this case one has also that $\PLS_{\alpha,\beta}(\MA)\sim_\Tr\PLS_{\alpha,\beta}(\MB)$ for any infinite cardinals $\alpha\leqslant\beta$.
\end{theorem}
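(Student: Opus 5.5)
I would prove the cycle \ref{item1:invert}$\Rightarrow$\ref{item3:invert}$\Rightarrow$\ref{item2:invert}$\Rightarrow$\ref{item1:invert}, and obtain the final assertion as a corollary of \ref{item3:invert}. Recall from \cite{Th_int1} that a strong bi-interpretation consists of interpretations $(\Gamma,\bar p,\mu_\Gamma)\colon\MA\rightsquigarrow\MB$ and $(\Delta,\bar q,\mu_\Delta)\colon\MB\rightsquigarrow\MA$. Their compositions $\Gamma\circ\Delta\colon\MA\rightsquigarrow\MA$ and $\Delta\circ\Gamma\colon\MB\rightsquigarrow\MB$ must be strongly homotopic (via definable connectors) to the identical interpretations $\Id_{L(\MA)}$ and $\Id_{L(\MB)}$.

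For \ref{item1:invert}$\Rightarrow$\ref{item3:invert}, let $\F\colon\PLS(\MA)\to\PLS(\MB)$ and $\G\colon\PLS(\MB)\to\PLS(\MA)$ be the interpretation functors from Theorem~\ref{th:inter}.
\begin{itemize}
\item By Proposition~\ref{th:ABC}, $\G\circ\F$ is the interpretation $(\Gamma\circ\Delta,\ldots)$-functor, and $\F\circ\G$ is the $(\Delta\circ\Gamma,\ldots)$-functor.
\item By Remark~\ref{remark:ID}, $\id_{\PLS(\MA)}$ and $\id_{\PLS(\MB)}$ are the functors of the identical interpretations.
\item Theorem~\ref{th:homotopy} turns each strong homotopy into a natural isomorphism. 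This gives $\G\circ\F\cong\id_{\PLS(\MA)}$ and $\F\circ\G\cong\id_{\PLS(\MB)}$, with $\F,\G\in\Int$.
\end{itemize}
Restricting to $\PLS_{\alpha,\beta}$ via Fact~\ref{fact:functor}, Remark~\ref{cor:comp_rest} and Remark~\ref{remark:natur_iso} gives the final claim $\PLS_{\alpha,\beta}(\MA)\sim_\Tr\PLS_{\alpha,\beta}(\MB)$. The case $(\omega,\omega)$ gives \ref{item2:invert}.

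For \ref{item2:invert}$\Rightarrow$\ref{item1:invert}, take translation $(\Gamma,\bar p,\gamma)$- and $(\Delta,\bar q,\delta)$-functors $\F,\G$ with $\G\circ\F\cong\id_{\PDS(\MA)}$ and $\F\circ\G\cong\id_{\PDS(\MB)}$.
\begin{itemize}
\item By Proposition~\ref{prop:comp2}, $\G\circ\F$ is the translation $(\Gamma\circ\Delta,(\delta(\bar p),\bar q),\delta\circ\gamma)$-functor. By Remark~\ref{remark:ID}, $\id_{\PDS(\MA)}$ is the $(\Id_{L(\MA)},\emptyset,\id_A)$-functor.
\item Corollary~\ref{cor:natur_iso0} then gives an $L(\MA)$-isomorphism $\lambda\colon(\Gamma\circ\Delta)(\MA,\ldots)\to\MA$ with $\lambda((\delta\circ\gamma)(a)/{\sim})=a$. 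This $\lambda$ is definable by the formula $\theta$ defining $\eta_A$. Symmetrically, the second natural isomorphism gives an $L(\MB)$-isomorphism for $\Delta\circ\Gamma$ on $\MB$.
\end{itemize}

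The main obstacle is getting genuine interpretations $\MA\rightsquigarrow\MB$ and $\MB\rightsquigarrow\MA$. Theorem~\ref{th:functor1} only yields interpretations of elementary extensions $\nsA\succeq\MA$ and $\nsB\succeq\MB$, because translation functors on $\PDS$ lack the surjectivity argument of Theorem~\ref{th:functor2}. I would first show that $\gamma$ induces a surjection $A\to\Gamma(\MB,\bar p)$, by the following steps.
\begin{itemize}
\item By Theorem~\ref{IRT}, $\iota_\Gamma\colon\MA\to\Gamma(\MB,\bar p)$, $a\mapsto\gamma(a)/{\sim_\Gamma}$, and $\iota_\Delta\colon\MB\to\Delta(\MA,\bar q)$, $b\mapsto\delta(b)/{\sim_\Delta}$, are elementary embeddings.
\item Apply Lemma~\ref{elem_emb} to the code $\Gamma$ and the elementary embedding $\iota_\Delta$, and compose. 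Via the identification of $\Gamma(\Delta(\MA,\bar q),\iota_\Delta(\bar p))$ with $(\Gamma\circ\Delta)(\MA,(\delta(\bar p),\bar q))$ from \cite[Lemma~10]{Th_int1}, this gives an elementary embedding $\Gamma(\MB,\bar p)\to(\Gamma\circ\Delta)(\MA,\ldots)$.
\item Precomposed with $\iota_\Gamma$, this map sends $a\mapsto(\delta\circ\gamma)(a)/{\sim}$. Composing it with $\lambda$ gives $\id_A$.
\item Hence the composite $A\to\Gamma(\MB,\bar p)\to(\Gamma\circ\Delta)(\MA,\ldots)\xrightarrow{\lambda}\MA$ is the identity. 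The middle map is injective, so $\iota_\Gamma$ is surjective, i.e.\ an isomorphism. Symmetrically $\iota_\Delta$ is an isomorphism.
\end{itemize}
This gives honest interpretations $(\Gamma,\bar p,\mu_\Gamma)\colon\MA\rightsquigarrow\MB$ and $(\Delta,\bar q,\mu_\Delta)\colon\MB\rightsquigarrow\MA$ with $\mu_\Gamma\circ\gamma=\id_A$ and $\mu_\Delta\circ\delta=\id_B$. Their composition has coordinate map $\mu_\Gamma\circ\mu_\Delta$, and $\lambda$ agrees with $\bar\mu_{\Gamma\circ\Delta}$ on the generating elements $(\delta\circ\gamma)(a)/{\sim}$. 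So $\theta$ is a connector of a strong homotopy $\Gamma\circ\Delta\to\Id_{L(\MA)}$, and symmetrically for $\MB$, which is \ref{item1:invert}.

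Finally, \ref{item3:invert}$\Rightarrow$\ref{item2:invert} holds because interpretation functors restrict to translation functors on $\PDS$ (Theorem~\ref{th:functor2}), and the natural isomorphisms restrict as well (Remark~\ref{remark:natur_iso}).
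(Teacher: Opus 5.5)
Your proposal is correct and follows essentially the paper's proof: the same cycle (1)$\Rightarrow$(3)$\Rightarrow$(2)$\Rightarrow$(1), and for (2)$\Rightarrow$(1) the same chain of elementary embeddings $\MA\to\Gamma(\MB,\bar p)\to\Gamma(\Delta(\MA,\bar q),\ldots)\cong(\Gamma\circ\Delta)(\MA,\ldots)\xrightarrow{\lambda}\MA$ composing to the identity on $A$. The paper phrases this through the extensions $\nsA,\nsB$ of Theorem~\ref{th:functor1} and concludes $\nsA=\MA$, which is exactly your surjectivity of $\iota_\Gamma$. Your final check that $\lambda$ coincides with the isomorphism induced by $\mu_\Gamma\circ\mu_\Delta$ is a bit more explicit than the paper's. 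Note that once $\iota_\Gamma,\iota_\Delta$ are onto, the elements $(\delta\circ\gamma)(a)/{\sim}$ are all the elements of $(\Gamma\circ\Delta)(\MA,\ldots)$, not just generators, and that is what makes the agreement argument work.
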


\begin{proof}
\ref{item1:invert}$\Longrightarrow$\ref{item3:invert}: Assume that $\MA$ and $\MB$ are strongly bi-interpretable, i.\,e., there exist interpretations $(\Gamma,\bar p,\mu_\Gamma)\colon \MA\rightsquigarrow\MB$ and $(\Delta,\bar q,\mu_\Delta)\colon \MB\rightsquigarrow\MA$, such that interpretations $(\Gamma\circ\Delta,(\bar{\bar p},\bar q),\mu_\Gamma\circ\mu_\Delta)$ and $(\Id_{L(\MA)},\emptyset,\id_A)$ are strongly homotopic as well as interpretations $(\Delta\circ\Gamma,(\bar{\bar q},\bar p),\mu_\Delta\circ\mu_\Gamma)$ and $(\Id_{L(\MB)},\emptyset,\id_B)$ are strongly homotopic. By Theorem~\ref{th:inter} there exist interpretation functors $\F=\F_{\Gamma,\bar p,\mu_\Gamma}\colon \PLS(\MA)\to \PLS(\MB)$ and $\G=\G_{\Delta,\bar q,\mu_\Delta}\colon \PLS(\MB)\to \PLS(\MA)$. By Proposition~\ref{th:ABC}, the composition $\G\circ\F$ is the interpretation $(\Gamma\circ\Delta,(\bar{\bar p},\bar q),\mu_\Gamma\circ\mu_\Delta)$\=/functor and the composition $\F\circ\G$ is the interpretation $(\Delta\circ\Gamma,(\bar{\bar q},\bar p),\mu_\Delta\circ\mu_\Gamma)$\=/functor. As we've  noted in Remark~\ref{remark:ID}, the interpretation $(\Id_{L(\MA)},\emptyset,\id_A)$\=/functor is $\id_{\PLS(\MA)}$ and the interpretation $(\Id_{L(\MB)},\emptyset,\id_B)$\=/functor is $\id_{\PLS(\MB)}$. Hence, by Theorem~\ref{th:homotopy}, one has natural isomorphisms $\G\circ\F\cong\id_{\PLS(\MA)}$ and $\F\circ\G\cong\id_{\PLS(\MB)}$. So, the categories $\PLS(\MA)$ and $\PLS(\MB)$ are equivalent relative to the class $\Int$ of interpretation functors. 

\ref{item3:invert}$\Longrightarrow$\ref{item2:invert}: Suppose that there exists a categorical equivalence $\PLS(\MA)\sim_\Int\PLS(\MB)$. It means that there exist interpretation functors $\F\colon \PLS(\MA)\to \PLS(\MB)$, $\G\colon \PLS(\MB)\to\PLS(\MA)$ and natural isomorphisms $\G\circ\F\cong\id_{\PLS(\MA)}$ and $\F\circ\G\cong\id_{\PLS(\MB)}$. Then, according to Remark~\ref{remark:natur_iso}, $(\G\circ\F)_{\alpha,\beta}\cong\id_{\PLS_{\alpha,\beta}(\MA)}$ and $(\F\circ\G)_{\alpha,\beta}\cong\id_{\PLS_{\alpha,\beta}(\MB)}$; and according to Remark~\ref{cor:comp_rest}, $(\G\circ\F)_{\alpha,\beta}=\G_{\alpha,\beta}\circ\F_{\alpha,\beta}$ and $(\F\circ\G)_{\alpha,\beta}=\F_{\alpha,\beta}\circ\G_{\alpha,\beta}$. Therefore, $\PLS_{\alpha,\beta}(\MA)\sim_\Tr\PLS_{\alpha,\beta}(\MB)$.

\ref{item2:invert}$\Longrightarrow$\ref{item1:invert}: Assume now that one has an equivalence $\PDS(\MA)\sim_\Tr\PDS(\MB)$, i.\,e., for some extended codes $(\Gamma,\bar p,\gamma)\colon L(\MA)\cup A\to L(\MB)\cup B$ and $(\Delta,\bar q,\delta)\colon L(\MB)\cup B\to L(\MA)\cup A$ there exist the translation $(\Gamma,\bar p,\gamma)$\=/functor $\F\colon \PDS(\MA)\to \PDS(\MB)$ and the translation $(\Delta,\bar q,\delta)$\=/functor $\G\colon \PLS(\MB)\to\PLS(\MA)$, and natural isomorphisms $\G\circ\F\cong\id_{\PDS(\MA)}$ and $\F\circ\G\cong\id_{\PDS(\MB)}$. By Theorem~\ref{th:functor1}, there are elementary extensions $\MA\preceq\nsA$, $\MB\preceq\nsB$ and interpretations $(\Gamma,\bar p,\mu_\Gamma)\colon \nsA\rightsquigarrow \MB$, $(\Delta,\bar q, \mu_\Delta)\colon \nsB\rightsquigarrow \MA$, such that $\mu_\Gamma\circ\gamma=\id_A$ and $\mu_\Delta\circ\delta=\id_B$; i.\,e., one has
\begin{equation*}
\begin{tikzcd}[row sep=2cm, column sep=0.1cm]
\MA   
& \preceq & \nsA \arrow[r,rightsquigarrow,"{\Gamma,\bar p,\mu_\Gamma}"] &[1.5cm]  \MB  & \preceq &  \nsB \arrow[lllll, rightsquigarrow, "{\Delta,\bar q,\mu_\Delta}", bend left =30].
\end{tikzcd}
\end{equation*}
We will also use the isomorphisms of these interpretations, namely, the $L(\MA)$\=/isomorphism  $\bar\mu_\Gamma\colon \Gamma(\MB,\bar p)\to\nsA$ and the $L(\MB)$\=/isomorphism $\bar\mu_\Delta\colon \Delta(\MA,\bar q)\to\nsB$.

By Proposition~\ref{prop:comp2}, the composition $\G\circ\F$ is the translation $({\Gamma\circ\Delta}, (\delta(\bar p),\bar q),{\delta\circ\gamma})$\=/functor and the composition $\F\circ\G$ is the translation $(\Delta\circ\Gamma,(\gamma(\bar q),\bar p),\gamma\circ\delta)$\=/functor.  Since $\G\circ\F\cong\id_{\PDS(\MA)}$ and $\F\circ\G\cong\id_{\PDS(\MB)}$, then by Corollary~\ref{cor:natur_iso0}, we obtain that there exist a definable $L(\MA)$\=/isomorphism $\bar\mu_{\Gamma\circ\Delta}\colon\Gamma\circ\Delta(\MA,(\delta(\bar p),\bar q))\to \MA$ and a definable $L(\MB)$\=/isomorphism $\bar\mu_{\Delta\circ\Gamma}\colon\Delta\circ\Gamma(\MB,(\gamma(\bar q),\bar p))\to \MB$, such that $\bar\mu_{\Gamma\circ\Delta}(\delta(\gamma(a))/{\sim_{\Gamma\circ\Delta}})=a$ for all $a\in A$ and $\bar\mu_{\Delta\circ\Gamma}(\gamma(\delta(b))/{\sim_{\Delta\circ\Gamma}})=b$ for all $b\in B$. 
   
Let us show that $\MA=\nsA$ and $\MB=\nsB$. At first, since algebraic structure $\Gamma(\MB,\bar p)$ is well-defined and $\MB\preceq\nsB$, then by Lemma~\ref{elem_emb}, the $L(\MA)$\=/structure $\Gamma(\nsB,\bar p)$ is well-defined and there exists an elementary $L(\MA)$\=/embedding $\iota\colon \Gamma(\MB,\bar p)\to\Gamma(\nsB,\bar p)$, such that $\iota(\bar b/{\sim_\Gamma})=\bar b/{\sim_\Gamma}$ for all $\bar b\in U_\Gamma(\MB,\bar p)$. So we have the following chain of interpretations:
$$
\begin{tikzcd}
\Gamma(\nsB,\bar p)\arrow[r,rightsquigarrow,"{\Gamma,\bar p,\pi}"] &\nsB \arrow[r,rightsquigarrow,"{\Delta,\bar q,\mu_\Delta}"] & \MA,
\end{tikzcd}
$$
where $\pi\colon U_{\Gamma}(\nsB,\bar p) \to (\nsB,\bar p)/{\sim_\Gamma}$ is the map, which sends $\bar b$ to $\bar b/{\sim_\Gamma}$. Therefore, by~\cite[Lemma~10]{Th_int1}, there exists an interpretation $(\Gamma\circ\Delta,(\delta(\bar p),\bar q),\pi\circ\mu_\Delta)\colon \Gamma(\nsB,\bar p)\rightsquigarrow\MA$. Let $\tilde\mu_\Delta\colon \Gamma\circ\Delta(\MA,(\delta(\bar p),\bar q))\to \Gamma(\nsB,\bar p)$ be the corresponding $L(\MA)$\=/isomorphism of this interpretation. Note that $\tilde\mu_{\Delta}(\delta(\gamma(a))/{\sim_{\Gamma\circ\Delta}})=\gamma(a)/{\sim_\Gamma}$ for all $a\in A$.

Consider the following chain of $L(\MA)$\=/isomorphisms and one elementary $L(\MA)$\=/embedding:
$$
\begin{tikzcd}
    \nsA \arrow[r,"{\bar\mu_\Gamma^{-1}}"]& \Gamma(\MB,\bar p)\arrow[r,"\iota"]&\Gamma(\nsB,\bar p)\arrow[r,"{\tilde\mu_\Delta^{-1}}"]& \Gamma\circ\Delta(\MA,(\delta(\bar p),\bar q)) \arrow[r,"{\bar \mu_{\Gamma\circ\Delta}}"] &\MA.
\end{tikzcd}
$$
Denote their composition by $\kappa$. Thus, $\kappa\colon \nsA\to\MA$ is elementary $L(\MA)$\=/embedding. At the same time, one has $\MA\preceq\nsA$. Take any element $a\in A$. One has $\kappa(a)=\bar\mu_{\Gamma\circ\Delta}(\tilde\mu_\Delta^{-1}(\iota(\bar\mu_\Gamma^{-1}(a))))=\bar\mu_{\Gamma\circ\Delta}(\tilde\mu_\Delta^{-1}(\iota(\gamma(a)/{\sim_\Gamma})))=\bar\mu_{\Gamma\circ\Delta}(\tilde\mu_\Delta^{-1}(\gamma(a)/{\sim_\Gamma}))=\bar\mu_{\Gamma\circ\Delta}(\delta(\gamma(a))/{\sim_{\Gamma\circ\Delta}})=a$. Therefore, $\kappa$ is isomorphism and $\nsA=\MA$, as required. Similarly, it is verified that $\nsB=\MB$.

Thus, we have two interpretations $(\Gamma,\bar p,\mu_\Gamma)\colon \MA\rightsquigarrow \MB$, $(\Delta,\bar q, \mu_\Delta)\colon \MB\rightsquigarrow \MA$ and definable isomorphisms $\bar\mu_{\Gamma\circ\Delta}\colon\Gamma\circ\Delta(\MA,(\delta(\bar p),\bar q))\to \MA$, $\bar\mu_{\Delta\circ\Gamma}\colon\Delta\circ\Gamma(\MB,(\gamma(\bar q),\bar p))\to \MB$, therefore, algebraic structures $\MA$ and $\MB$ are strongly bi-interpretable.
\end{proof}

We did not burden the formulation of Theorem~\ref{th:bi-inter} with the information that the same triples $(\Gamma,\bar p,\gamma)$ and $(\Delta,\bar q,\delta)$ define a bi-interpretation and functors in equivalences, although this follows from the proof of the theorem, so we note this fact separately.

\begin{corollary}\label{cor:bi0}
Interpretations $(\Gamma,\bar p,\mu_\Gamma)$ and $(\Delta,\bar q,\mu_\Delta)$ give a bi-interpretation between $\MA$ and $\MB$ if and only if translation $(\Gamma,\bar p,\gamma)$- and $(\Delta,\bar q,\delta)$\=/functors give an equivalence $\PDS(\MA)\sim_\Tr\PDS(\MB)$, if and only if interpretation $(\Gamma,\bar p,\gamma)$- and $(\Delta,\bar q,\delta)$\=/functors give an equivalence $\PLS(\MA)\sim_\Int\PLS(\MB)$, where $\mu_\Gamma\circ\gamma=\id_A$, $\mu_\Delta\circ\delta=\id_B$.    
\end{corollary}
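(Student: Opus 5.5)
The plan is to show that all three conditions in Corollary~\ref{cor:bi0} are equivalent to one condition on the extended codes. Namely, the interpretations $(\Gamma,\bar p,\mu_\Gamma)$ and $(\Delta,\bar q,\mu_\Delta)$ exist, and the compositions $\Gamma\circ\Delta$, $\Delta\circ\Gamma$ are strongly homotopic to the identical interpretations. The proof of Theorem~\ref{th:bi-inter} already does this, but with existentially quantified codes, so we rerun its three implications keeping track of the specific triples. Throughout, $\gamma$ and $\delta$ are fixed sections with $\mu_\Gamma\circ\gamma=\id_A$ and $\mu_\Delta\circ\delta=\id_B$. By Remark~\ref{remark:gamma} and Remark~\ref{remark:gamma_tr}, the resulting functors do not depend on which sections are chosen.

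First, assume the pair of interpretations is a (strong) bi-interpretation. Theorem~\ref{th:inter} yields the interpretation $(\Gamma,\bar p,\gamma)$- and $(\Delta,\bar q,\delta)$-functors $\F,\G$. By Proposition~\ref{th:ABC} (or Proposition~\ref{prop:comp1}), $\G\circ\F$ is the $(\Gamma\circ\Delta,(\delta(\bar p),\bar q),\delta\circ\gamma)$-functor. By Remark~\ref{remark:ID}, the identity functor is the $(\Id_{L(\MA)},\emptyset,\id_A)$-functor. Theorem~\ref{th:homotopy} then turns the strong homotopy into a natural isomorphism $\G\circ\F\cong\id_{\PLS(\MA)}$, and symmetrically $\F\circ\G\cong\id_{\PLS(\MB)}$. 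This gives the $\PLS$-equivalence via exactly these functors. Restricting to $\PDS$ via Remark~\ref{remark:natur_iso} and Remark~\ref{cor:comp_rest} gives the $\PDS$-equivalence via the translation $(\Gamma,\bar p,\gamma)$- and $(\Delta,\bar q,\delta)$-functors.

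Conversely, assume these specific translation functors give $\PDS(\MA)\sim_\Tr\PDS(\MB)$. We follow the \ref{item2:invert}$\Longrightarrow$\ref{item1:invert} part of the proof of Theorem~\ref{th:bi-inter} verbatim. Theorem~\ref{th:functor1} supplies elementary extensions $\nsA,\nsB$ and interpretations with the same codes and sections. Corollary~\ref{cor:natur_iso0}, applied to the $(\Gamma\circ\Delta,(\delta(\bar p),\bar q),\delta\circ\gamma)$-functor, gives definable isomorphisms $\bar\mu_{\Gamma\circ\Delta}$ and $\bar\mu_{\Delta\circ\Gamma}$ compatible with $\delta\circ\gamma$ and $\gamma\circ\delta$. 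The elementary-embedding argument via Lemma~\ref{elem_emb} then shows $\nsA=\MA$ and $\nsB=\MB$. Hence $(\Gamma,\bar p)$ and $(\Delta,\bar q)$ are interpretations of $\MA$ in $\MB$ and of $\MB$ in $\MA$, with coordinate maps $\mu'_\Gamma,\mu'_\Delta$ satisfying $\mu'_\Gamma\circ\gamma=\id_A$ and $\mu'_\Delta\circ\delta=\id_B$.

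The step I expect to be the main obstacle is identifying these coordinate maps with the originally given $\mu_\Gamma,\mu_\Delta$, and checking that the homotopy isomorphisms are the ones a bi-interpretation of these specific coordinatizations requires. Remark~\ref{lemma:gamma} gives $\mu'_\Gamma=\mu_\Gamma$ and $\mu'_\Delta=\mu_\Delta$, because both coordinate maps have the common section $\gamma$ (resp.\ $\delta$). For the homotopies, Corollary~\ref{cor:natur_iso0} produces $\bar\mu_{\Gamma\circ\Delta}$ with $\bar\mu_{\Gamma\circ\Delta}(\delta(\gamma(a))/{\sim_{\Gamma\circ\Delta}})=a$. By Corollary~\ref{cor:surj}, every element of the composed structure has this form, so $\bar\mu_{\Gamma\circ\Delta}$ coincides with the isomorphism induced by $\mu_\Gamma\circ\mu_\Delta$. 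The connector is therefore a strong homotopy between the composition and $\Id$, and likewise on the $\MB$ side.

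Finally, the $\PLS$-equivalence implies the $\PDS$-equivalence by the restriction argument above. Since the $\PDS$-equivalence implies the bi-interpretation, the three conditions are equivalent for the given triples.
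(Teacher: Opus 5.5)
Your proposal is correct and takes the paper's route: the paper justifies Corollary~\ref{cor:bi0} simply by noting that the implications (1)$\Rightarrow$(3)$\Rightarrow$(2)$\Rightarrow$(1) in the proof of Theorem~\ref{th:bi-inter} go through with the fixed triples $(\Gamma,\bar p,\gamma)$ and $(\Delta,\bar q,\delta)$. Your additional steps make explicit two points the paper leaves implicit: identifying the recovered coordinate maps with the given $\mu_\Gamma,\mu_\Delta$ via Remark~\ref{lemma:gamma}, and checking that the isomorphism from Corollary~\ref{cor:natur_iso0} coincides with the one induced by $\mu_\Gamma\circ\mu_\Delta$.
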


\begin{corollary}\label{cor:bi1}
For any algebraic structures $\MA$ and $\MB$ the following conditions are equivalent:
\begin{enumerate}[label=(\arabic*)]
    \item $\MA$ and $\MB$ are strongly injectively bi-interpretable in each other;
    \item categories of definable sets over $\MA$ and $\MB$ are equivalent relative to the class $\Tr_\Inj$ of injective translation functors:    $$\DS(\MA)\sim_{\Tr_\Inj}\DS(\MB);$$
    \item categories of logical sets over $\MA$ and $\MB$ are equivalent relative to the class $\Int_\Inj$ of injective interpretation functors:    $$\LS(\MA)\sim_{\Int_\Inj}\LS(\MB).$$
\end{enumerate}
In this case one has also that $\LS_{\alpha,\beta}(\MA)\sim_{\Tr_\Inj}\LS_{\alpha,\beta}(\MB)$ for any infinite cardinals $\alpha\leqslant\beta$. In more detail, injective interpretations $(\Gamma,\bar p,\mu_\Gamma)$ and $(\Delta,\bar q,\mu_\Delta)$ give a bi-interpretation between $\MA$ and $\MB$ if and only if injective translation $(\Gamma,\bar p,\gamma)$- and $(\Delta,\bar q,\delta)$\=/functors give an equivalence $\DS(\MA)\sim_{\Tr_\Inj}\DS(\MB)$, if and only if injective interpretation $(\Gamma,\bar p,\gamma)$- and $(\Delta,\bar q,\delta)$\=/functors give an equivalence $\LS(\MA)\sim_{\Int_\Inj}\LS(\MB)$, where $\mu_\Gamma\circ\gamma=\id_A$, $\mu_\Delta\circ\delta=\id_B$.
\end{corollary}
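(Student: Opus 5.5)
The plan is to repeat the three implications in the proof of Theorem~\ref{th:bi-inter}, this time in the injective setting. At each step I will use the restriction/extension facts of Subsection~\ref{subsec:re} to pass between $\LS$ and $\PLS$ and between $\DS$ and $\PDS$. The key point is that for injective codes the functor $\F_{\Gamma,\bar p,\gamma}$ sends logical sets to logical sets (Corollary~\ref{cor:inj}). It is also worth recalling that the composition of injective codes is injective, so all compositions stay in the injective classes.

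\textbf{(1)$\Rightarrow$(3).} Let $(\Gamma,\bar p,\mu_\Gamma)$ and $(\Delta,\bar q,\mu_\Delta)$ be injective interpretations forming a strong bi-interpretation. By the proof of Theorem~\ref{th:bi-inter} we obtain interpretation functors $\F,\G$ on the $\PLS$ level with natural isomorphisms $\G\circ\F\cong\id_{\PLS(\MA)}$ and $\F\circ\G\cong\id_{\PLS(\MB)}$. Because the codes are injective, Corollary~\ref{cor:inj} lets us restrict $\F$ and $\G$ to injective interpretation functors $\LS(\MA)\to\LS(\MB)$ and back. Remark~\ref{cor:comp_rest} shows that restriction commutes with composition. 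Remark~\ref{remark:natur_iso} restricts the natural isomorphisms to $\LS$. This gives $\LS(\MA)\sim_{\Int_\Inj}\LS(\MB)$. Restricting further to $\LS_{\alpha,\beta}$, and in particular to $\DS=\LS_{\omega,\omega}$, yields the extra claim and the implication (3)$\Rightarrow$(2) in the same way.

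\textbf{(2)$\Rightarrow$(1).} Suppose injective translation functors $\F\colon\DS(\MA)\to\DS(\MB)$ and $\G$ in the other direction give $\DS(\MA)\sim_{\Tr_\Inj}\DS(\MB)$. By Fact~\ref{fact:functor}(6), $\F$ and $\G$ are restrictions of translation functors $\F',\G'$ on $\PDS$ with the same extended codes. By Proposition~\ref{prop:comp2} and the fact on composition of injective translation functors, $\G\circ\F$ is the injective $(\Gamma\circ\Delta,(\delta(\bar p),\bar q),\delta\circ\gamma)$-functor, and it is the restriction of $\G'\circ\F'$. The identity functor $\id_{\DS(\MA)}$ is the injective $(\Id_{L(\MA)},\emptyset,\id_A)$-functor.

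The main step is to lift the natural isomorphism $\G\circ\F\cong\id_{\DS(\MA)}$ to $\G'\circ\F'\cong\id_{\PDS(\MA)}$. This is exactly what Lemma~\ref{cor:natur_iso1} provides; its hypothesis is met because both $\Gamma\circ\Delta$ and $\Id_{L(\MA)}$ are injective codes. The symmetric isomorphism for $\MB$ lifts the same way. This gives $\PDS(\MA)\sim_\Tr\PDS(\MB)$ realized by the same extended codes. The argument of (2)$\Rightarrow$(1) in Theorem~\ref{th:bi-inter} (equivalently Corollary~\ref{cor:bi0}) then yields a strong bi-interpretation given by $(\Gamma,\bar p,\mu_\Gamma)$ and $(\Delta,\bar q,\mu_\Delta)$. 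Since $\Gamma$ and $\Delta$ are injective, this is a strong injective bi-interpretation.

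The implication (3)$\Rightarrow$(2) is the restriction to $(\omega,\omega)$ already noted; alternatively one can go (3)$\Rightarrow$(1) using Lemma~\ref{cor:natur_iso2} and Fact~\ref{fact:functor}(5). The ``in more detail'' statement follows because in every step the same triples $(\Gamma,\bar p,\gamma)$ and $(\Delta,\bar q,\delta)$ are carried along, exactly as in Corollary~\ref{cor:bi0}. I expect the only delicate point to be this lifting step: checking that Lemma~\ref{cor:natur_iso1} applies to a composite code against the identity code, with the natural isomorphism defined by the same formula $\theta$ on $A$.
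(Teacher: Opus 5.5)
Your proposal is correct and follows essentially the paper's own route. It restricts the $\PLS$-equivalence from Theorem~\ref{th:bi-inter} down to $\LS$ and $\DS$ via Remarks~\ref{cor:comp_rest} and~\ref{remark:natur_iso}, then lifts from $\DS$ back to $\PDS$ via Fact~\ref{fact:functor} and Lemma~\ref{cor:natur_iso1} to recover a strong bi-interpretation with the same injective codes. Your explicit check that $\Gamma\circ\Delta$ and $\Id_{L(\MA)}$ are injective codes, so that Lemma~\ref{cor:natur_iso1} applies, is exactly the point the paper leaves implicit.
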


\begin{proof}
If $\MA$ and $\MB$ are strongly injectively bi-interpretable in each other, then there exists an equivalence $\PLS(\MA)\sim_{\Int}\PLS(\MB)$; by Remarks~\ref{cor:comp_rest} and~\ref{remark:natur_iso}, restrictions of its functor gives the equivalence $\LS(\MA)\sim_{\Int_\Inj}\LS(\MB)$; and if one has an the equivalence $\LS(\MA)\sim_{\Int_\Inj}\LS(\MB)$, then one has also $\DS(\MA)\sim_{\Tr_\Inj}\DS(\MB)$. Further, due to Fact~\ref{fact:functor} and Lemma~\ref{cor:natur_iso1}, an equivalence $\DS(\MA)\sim_{\Tr_\Inj}\DS(\MB)$ gives an equivalence $\PDS(\MA)\sim_{\Tr_\Inj}\PDS(\MB)$, which delivers an injective strong bi-interpretation between $\MA$ and $\MB$.
\end{proof}

\begin{corollary}\label{cor:bi2}
If algebraic structures $\MA$ and $\MB$ are strongly absolutely bi-interpretable, then for every infinite cardinal $\beta$ one has an equivalence $\PLS_{0,\beta}(\MA)\sim_{\Tr}\PLS_{0,\beta}(\MB)$, in particular,
\begin{gather*}
\PLS_0(\MA)\sim_{\Int}\PLS_0(\MB),\\
\PDS_0(\MA)\sim_\Tr\PDS_0(\MB).
\end{gather*}
\end{corollary}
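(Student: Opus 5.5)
The plan is to run the proof of Theorem~\ref{th:bi-inter}, direction \ref{item1:invert}$\Longrightarrow$\ref{item3:invert}, inside the absolute subcategories. Let $(\Gamma,\emptyset,\mu_\Gamma)\colon\MA\rightsquigarrow\MB$ and $(\Delta,\emptyset,\mu_\Delta)\colon\MB\rightsquigarrow\MA$ be absolute interpretations. Assume the compositions $(\Gamma\circ\Delta,\emptyset,\mu_\Gamma\circ\mu_\Delta)$ and $(\Delta\circ\Gamma,\emptyset,\mu_\Delta\circ\mu_\Gamma)$ are strongly homotopic to the identical interpretations via \emph{absolute} connectors $\theta_\MA$ and $\theta_\MB$. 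This is what strong absolute bi-interpretability should mean. The parameter tuples of the composite codes are empty, because both parameter tuples are empty.

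First I get the functors. Theorem~\ref{th:inter} gives interpretation functors $\F\colon\PLS(\MA)\to\PLS(\MB)$ and $\G\colon\PLS(\MB)\to\PLS(\MA)$. By Corollary~\ref{cor:abs}, they restrict to absolute translation functors $\F_{0,\beta}\colon\PLS_{0,\beta}(\MA)\to\PLS_{0,\beta}(\MB)$ and $\G_{0,\beta}$ in the other direction, for every infinite $\beta$. By Proposition~\ref{th:ABC}, $\G\circ\F$ is the interpretation $(\Gamma\circ\Delta,\emptyset,\mu_\Gamma\circ\mu_\Delta)$-functor. Since the code $\Gamma\circ\Delta$ is absolute, Remark~\ref{cor:comp_rest} identifies $(\G\circ\F)_{0,\beta}$ with $\G_{0,\beta}\circ\F_{0,\beta}$, and symmetrically for $\F\circ\G$. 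By Remark~\ref{remark:ID}, the identity functors $\id_{\PLS_{0,\beta}(\MA)}$ and $\id_{\PLS_{0,\beta}(\MB)}$ are the absolute $\Id$-functors.

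Second I get the natural isomorphisms. Theorem~\ref{th:homotopy} gives natural isomorphisms ${\bf N}((\theta_\MA))\colon\G\circ\F\to\id_{\PLS(\MA)}$ and ${\bf N}((\theta_\MB))\colon\F\circ\G\to\id_{\PLS(\MB)}$. The connectors are absolute, so Fact~\ref{cor:abs_hom} shows that their restrictions to $\PLS_{0,\beta}(\MA)$ and $\PLS_{0,\beta}(\MB)$ are natural isomorphisms between the absolute functors. Concretely, the components are induced by the formulas \eqref{eq:F}, which are built from $\theta$ and from $E_{\bar X_2}$. Both are parameter-free when $X/{\sim_X}$ is absolute, and Remark~\ref{remark:P} then gives isomorphisms in $\PLS_0$. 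This yields $\PLS_{0,\beta}(\MA)\sim_\Tr\PLS_{0,\beta}(\MB)$. Taking $\beta=\infty$ gives $\PLS_0(\MA)\sim_\Int\PLS_0(\MB)$, and taking $\beta=\omega$ gives $\PDS_0(\MA)\sim_\Tr\PDS_0(\MB)$.

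The main obstacle is the claim that each component $\tilde\lambda_{X/{\sim_X}}$ is an isomorphism in $\PLS_{0,\beta}(\MB)$, and not merely in $\PLS(\MB)$ between absolute objects. The point to check is that the formula $\varphi_{\bar X_1\bar X_2}$ of \eqref{eq:F} fully induces the component without parameters. This holds because $E_{\bar X_2}=(E_X)_{\Gamma_2,\emptyset,\gamma}$ contains no constants from $B$ when $E_X$ is absolute. I will also verify that objects of size bound $\beta$ go to objects of the same bound, using Remark~\ref{remark:a,b}.
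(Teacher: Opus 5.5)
Your proposal is correct and follows essentially the same route as the paper, which deduces the corollary from Fact~\ref{fact:functor} (well-definedness of the restricted absolute functors, which you obtain equivalently via Corollary~\ref{cor:abs}), Remark~\ref{cor:comp_rest} (restriction commutes with composition), and Fact~\ref{cor:abs_hom} (the natural isomorphisms ${\bf N}((\theta))$ with absolute connectors restrict to $\PLS_0$ and $\PLS_{0,\beta}$). Your extra check is that the components induced by \eqref{eq:F} are parameter-free and hence are isomorphisms in $\PLS_0(\MB)$ by Remark~\ref{remark:P}; this is precisely the content of the proof of Fact~\ref{cor:abs_hom}.
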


\begin{proof}
It follows from Fact~\ref{fact:functor}, Remark~\ref{cor:comp_rest} and Fact~\ref{cor:abs_hom}. 
\end{proof}

\begin{corollary}\label{cor:bi3}
If algebraic structures $\MA$ and $\MB$ are strongly absolutely injectively bi-interpretable, then for every infinite cardinal $\beta$ one has an equivalence $\LS_{0,\beta}(\MA)\sim_{\Tr_\Inj}\LS_{0,\beta}(\MB)$, in particular,
\begin{gather*}
\LS_0(\MA)\sim_{\Int_\Inj}\LS_0(\MB),\\
\DS_0(\MA)\sim_{\Tr_\Inj}\DS_0(\MB).
\end{gather*}
\end{corollary}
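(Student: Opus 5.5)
The plan is to mirror the proof of Corollary~\ref{cor:bi2}, replacing the projective categories by the logical ones and using injectivity of the codes. Suppose $(\Gamma,\emptyset,\mu_\Gamma)\colon\MA\rightsquigarrow\MB$ and $(\Delta,\emptyset,\mu_\Delta)\colon\MB\rightsquigarrow\MA$ form a strong bi-interpretation in which both codes are absolute and injective and the connectors of both homotopies are absolute formulas. Theorem~\ref{th:bi-inter} (via Corollary~\ref{cor:bi0}) gives the equivalence $\PLS(\MA)\sim_\Int\PLS(\MB)$. Its functors are the interpretation $(\Gamma,\emptyset,\gamma)$- and $(\Delta,\emptyset,\delta)$\=/functors $\F,\G$. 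By Proposition~\ref{th:ABC}, the compositions $\G\circ\F$ and $\F\circ\G$ are the functors of the composite interpretations. The natural isomorphisms $\G\circ\F\cong\id$ and $\F\circ\G\cong\id$ are ${\bf N}((\theta))$ and ${\bf N}((\theta'))$ for the absolute connectors $\theta,\theta'$, as in Theorem~\ref{th:homotopy}.

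The first step restricts the functors. Since $\Gamma$ and $\Delta$ are absolute and injective, items~3 and~4 of Fact~\ref{fact:functor} show that the restrictions $\F_{0,\beta}\colon\LS_{0,\beta}(\MA)\to\LS_{0,\beta}(\MB)$ and $\G_{0,\beta}$ are well-defined absolute injective translation functors. For $\beta=\infty$ these are interpretation functors, hence in $\Int_\Inj$. By Remark~\ref{cor:comp_rest}, restriction commutes with composition, so $(\G\circ\F)_{0,\beta}=\G_{0,\beta}\circ\F_{0,\beta}$, and symmetrically. Since $\Gamma\circ\Delta$ is again absolute and injective, these composites are the absolute injective functors of the composite codes.

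The second step restricts the natural isomorphisms. By Fact~\ref{cor:abs_hom}, the natural isomorphism ${\bf N}((\theta))$ restricts to a natural isomorphism between absolute functors on $\PLS_{0,\beta}(\MA)$. Corollary~\ref{cor:inj_abs} then restricts it further to $\LS_{0,\beta}(\MA)$, giving $\G_{0,\beta}\circ\F_{0,\beta}\cong\id_{\LS_{0,\beta}(\MA)}$, and symmetrically on the $\MB$ side. One must also check that the identity functor $\id_{\LS_{0,\beta}(\MA)}$ is the absolute injective $\Id_{L(\MA)}$\=/functor, which follows from Remark~\ref{remark:ID}. Specializing to $\beta=\infty$ and $\beta=\omega$ gives the two displayed equivalences.

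The step I expect to need the most care is checking that each component of the restricted natural isomorphism really lies in $\LS_{0,\beta}(\MB)$. That means each component must be an isomorphism between absolute logical sets, fully induced by a parameter-free formula. I would verify this from Lemma~\ref{le:natur_iso4}. It shows that on a logical set $X\subseteq A^m$ the component is fully induced by $\theta_m$, which is parameter-free because $\theta$ is. Remark~\ref{remark:P} then shows that this component is an isomorphism in the absolute subcategory.
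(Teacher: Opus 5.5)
Your proposal is correct and follows the paper's proof, which simply cites Fact~\ref{fact:functor}, Remark~\ref{cor:comp_rest} and Corollary~\ref{cor:inj_abs}. Your extra use of Fact~\ref{cor:abs_hom}, Lemma~\ref{le:natur_iso4} and Remark~\ref{remark:P} makes explicit a point the paper leaves implicit: the components of the restricted natural isomorphisms are fully induced by the parameter-free formulas $\theta_m$, and this is exactly where absoluteness of the connectors is used.
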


\begin{proof}
It follows from Fact~\ref{fact:functor}, Remark~\ref{cor:comp_rest} and Corollary~\ref{cor:inj_abs}. 
\end{proof}

In concluding this subsection, let us note the following important fact.

\begin{fact}\label{fact:eq_rel}
The categorical equivalencies $\sim_\Int$, $\sim_{\Int_\Inj}$, $\sim_\Tr$, $\sim_{\Tr_\Inj}$ are equivalencies relations on the classes of all categories $\PLS(\MA)$, $\LS(\MA)$, $\PDS(\MA)$, $\DS(\MA)$.
\end{fact}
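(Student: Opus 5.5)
To prove Fact~\ref{fact:eq_rel}, we check reflexivity, symmetry and transitivity for each relation $\sim_\mathbf{K}$, $\mathbf{K}\in\{\Int,\Int_\Inj,\Tr,\Tr_\Inj\}$. Throughout we use the two closure properties of the class $\mathbf{K}$ recorded just before Definition of interpretation equivalence: it contains identities and it is closed under composition.

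\emph{Reflexivity.} By Remark~\ref{remark:ID}, the identity functors $\id_{\PLS(\MA)}$, $\id_{\LS(\MA)}$, $\id_{\PDS(\MA)}$, $\id_{\DS(\MA)}$ are given by the identical extended code $(\Id_{L(\MA)},\emptyset,\id_A)$. This code is injective, so each identity functor lies in the corresponding class. Taking $\F=\G=\id$, the identity natural isomorphisms witness $\PLS(\MA)\sim_\mathbf{K}\PLS(\MA)$, and similarly for the other three categories.

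\emph{Symmetry.} The definition of $\sim_\mathbf{K}$ is symmetric in the pair $(\F,\G)$: it asks for both $\G\circ\F\cong\id$ and $\F\circ\G\cong\id$. Swapping the roles of $\F$ and $\G$ therefore gives the reverse equivalence.

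\emph{Transitivity.} This is the only step with content. Suppose $\F\colon\mathcal A\to\mathcal B$, $\G\colon\mathcal B\to\mathcal A$ and $\F'\colon\mathcal B\to\mathcal C$, $\G'\colon\mathcal C\to\mathcal B$ witness the two equivalences. We take $\F'\circ\F$ and $\G\circ\G'$ as the new pair.

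First, these composites lie in $\mathbf{K}$:
\begin{itemize}
\item for $\Int$, by Proposition~\ref{prop:comp1};
\item for $\Tr$ on $\PDS$, by Proposition~\ref{prop:comp2};
\item for the injective classes, by the Fact on composition of injective translation functors, together with the statement that a composition of injective codes is injective.
\end{itemize}

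Second, we need the natural isomorphisms. Given $\eta\colon\G\circ\F\to\id$ and $\eta'\colon\G'\circ\F'\to\id$, the required isomorphism is the composite
$$
\G\circ\G'\circ\F'\circ\F \xrightarrow{\;\G\eta'\F\;} \G\circ\F \xrightarrow{\;\eta\;} \id .
$$
Whiskering natural isomorphisms by functors and composing them are standard categorical operations, so this is a natural isomorphism. The symmetric composite handles $\F'\circ\F\circ\G\circ\G'\cong\id$.

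The main point to watch is that in the injective cases the composite functors must again be restrictions of interpretation or translation functors with injective codes, and not merely some functors. This is exactly what the composition Fact for injective functors provides, so no new argument is needed beyond these citations.
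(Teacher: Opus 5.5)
Your argument is correct. For $\sim_\Int$ and $\sim_\Tr$ it coincides with the paper's: reflexivity and symmetry are immediate, and transitivity follows from Propositions~\ref{prop:comp1} and~\ref{prop:comp2} together with the standard whiskering of natural isomorphisms.

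For the injective relations $\sim_{\Int_\Inj}$ and $\sim_{\Tr_\Inj}$ you take a different route.

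\emph{The paper's route.} It does not compose at the level of $\LS$ and $\DS$. It first lifts the given equivalences to the projective categories, using Fact~\ref{fact:functor} for the functors and Lemmas~\ref{cor:natur_iso1} and~\ref{cor:natur_iso2} for the natural isomorphisms. It then uses transitivity at the projective level and restricts back (implicitly, via Remarks~\ref{cor:comp_rest} and~\ref{remark:natur_iso}).

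\emph{Your route.} You stay at the $\LS$/$\DS$ level throughout. You need only that $\Int_\Inj$ and $\Tr_\Inj$ are closed under composition, which is exactly the Fact on composition of injective translation functors. After that, everything is pure category theory.

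\emph{Comparison.} Your route is more economical. The extension lemmas for natural isomorphisms are genuinely nontrivial: they go through Theorems~\ref{th:natur_iso1} and~\ref{th:natur_iso2} and the homotopy construction of Theorem~\ref{th:homotopy}. The composition Fact you cite needs only the extension of functors from Fact~\ref{fact:functor}. In exchange for its detour, the paper records that an injective equivalence of $\LS$ or $\DS$ categories always lifts, with the same extended codes, to an equivalence of the projective categories. This is the same pattern it reuses in Corollary~\ref{cor:bi1} and Theorem~\ref{th:syn_iso}.
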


\begin{proof}
It is clear that relations $\sim_\Int$, $\sim_{\Int_\Inj}$, $\sim_\Tr$, $\sim_{\Tr_\Inj}$ are reflexive and symmetric. It follows from Propositions~\ref{prop:comp1} and~\ref{prop:comp2} that $\sim_\Int$ and $\sim_\Tr$ are transitive. Since Fact~\ref{fact:functor} and Lemmas~\ref{cor:natur_iso1}, \ref{cor:natur_iso2} allow us to construct extensions of equivalencies $\LS(\MA)\sim_{\Int_\Inj}\LS(\MB)$ and $\DS(\MA)\sim_{\Int_\Inj}\DS(\MB)$ to $\PLS(\MA)\sim_{\Int_\Inj}\PLS(\MB)$ and $\PDS(\MA)\sim_{\Int_\Inj}\PDS(\MB)$, we get that $\sim_{\Int_\Inj}$ and  $\sim_{\Tr_\Inj}$ are transitive as well.
\end{proof}

\subsection{Syntax isomorphisms and translation isomorphisms of categories}\label{subsec:syn}

Strong bi-interpretation can be viewed as a generalization of isomorphism of algebraic structures $\MA$ and $\MB$ in the same signature to the case of algebraic structures $\MA$ and $\MB$ of different signatures. In this subsection, we introduce and study a very special kind of strong bi-interpretation that closely replicates the idea of isomorphism. Let $\MA=\langle A; L(\MA)\rangle$ and $\MB=\langle B; L(\MB)\rangle$ be algebraic structures.

\begin{definition}
    We refer to a strong bi-interpretation $(\Gamma,\bar p,\mu_\Gamma;\Delta,\bar q,\mu_\Delta)$ between $\MA$ and $\MB$ as a {\em syntax isomorphism}, if 
   the composition $(\Gamma\circ\Delta,(\bar{\bar p},\bar q),\mu_\Gamma\circ\mu_\Delta)$ equals to $(\Id_{L(\MA)},\emptyset,\id_A)$ and the composition $(\Delta\circ\Gamma,(\bar{\bar q},\bar p),\mu_\Delta\circ\mu_\Gamma)$ equals to $(\Id_{L(\MB)},\emptyset,\id_B)$ (see Definition~\ref{def:equal}).
    Correspondingly, $\MA$ and $\MB$ are {\em syntactically isomorphic}, if there exists a syntax isomorphism between them. 
\end{definition}

\begin{definition}
   We refer to an interpretation code $\Gamma$ as a {\em syntax code}, if 
    \begin{enumerate}
        \item $\dim_\Gamma=1$;
        \item $U_\Gamma(x,\bar y)=(x=x)$;
        \item $E_\Gamma(x,x^\prime,\bar y)=(x=x^\prime)$, i.\,e., $\Gamma$ is injective.
    \end{enumerate}
\end{definition}

\begin{remark}\label{remark:syntax}
Any identical code $\Id_{L(\MA)}$ is syntax and the composition $\Gamma\circ\Delta$ of syntax codes $\Gamma$ and $\Delta$ is syntax.
\end{remark}

\begin{lemma}\label{le:syn}
For any algebraic structures $\MA$ and $\MB$ the following conditions are equivalent:
\begin{enumerate}[label=(\arabic*)]
    \item $\MA$ and $\MB$ are syntactically isomorphic;
    \item there exists
    \begin{enumerate}
        \item syntax codes $\Gamma: L(\MA)\to L(\MB)$ and $\Delta\colon L(\MB)\to L(\MA)$,
        \item tuples $\bar p\in B^{\,\dim_\param \Gamma}$ and $\bar q\in A^{\dim_\param\Delta}$,
        \item a bijection $h\colon A\to B$,
    \end{enumerate}
     such that $(\Gamma,\bar p,h^{-1};\Delta,\bar q,h)$ is a strong bi-interpretation between $\MA$ and $\MB$.
\end{enumerate}
\end{lemma}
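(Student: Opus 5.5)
The direction (2)$\Rightarrow$(1) is immediate. The tuple $(\Gamma,\bar p,h^{-1};\Delta,\bar q,h)$ is assumed to be a strong bi-interpretation. For a syntax code $\Gamma$ we have $U_\Gamma(\MB,\bar p)=B$ and $\sim_\Gamma$ is equality. The same holds for $\Delta$, and by Remark~\ref{remark:syntax} also for the compositions $\Gamma\circ\Delta$ and $\Delta\circ\Gamma$. The coordinate map of $\Gamma\circ\Delta$ is $\mu_\Gamma\circ\mu_\Delta=h^{-1}\circ h=\id_A$. Hence $\Gamma\circ\Delta(\MA,(\bar{\bar p},\bar q))$ has universe $A$ and coordinate map $\id_A$. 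Since this coordinate map must be an $L(\MA)$\=/isomorphism onto $\MA$, the structure $\Gamma\circ\Delta(\MA,(\bar{\bar p},\bar q))$ coincides with $\MA=\Id_{L(\MA)}(\MA)$. By Definition~\ref{def:equal} the composition therefore equals $(\Id_{L(\MA)},\emptyset,\id_A)$, and symmetrically on the $\MB$ side. So the bi-interpretation is a syntax isomorphism.

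For (1)$\Rightarrow$(2), take a syntax isomorphism $(\Gamma,\bar p,\mu_\Gamma;\Delta,\bar q,\mu_\Delta)$. Equality with the identical interpretation gives $\Gamma\circ\Delta(\MA,(\bar{\bar p},\bar q))=\MA$ as structures and $\mu_\Gamma\circ\mu_\Delta=\id_A$. Since $\dim\Id_{L(\MA)}=1$, equality of the underlying quotient sets forces $\dim(\Gamma\circ\Delta)=1$, so $\dim\Gamma=\dim\Delta=1$. Moreover $U_{\Gamma\circ\Delta}=A$ and $\sim_{\Gamma\circ\Delta}$ is equality. Symmetrically, $\mu_\Delta\circ\mu_\Gamma=\id_B$ with $\sim_{\Delta\circ\Gamma}$ trivial and $U_{\Delta\circ\Gamma}=B$.

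From $\mu_\Gamma\circ\mu_\Delta=\id_A$ and $\mu_\Delta\circ\mu_\Gamma=\id_B$, composed on the appropriate domains, I will deduce that $\mu_\Delta\colon U_\Delta(\MA,\bar q)\to B$ is injective and surjective onto $B$, and that $U_\Delta(\MA,\bar q)$ is all of $A$. The argument for the latter runs as follows. The universe of $\Delta\circ\Gamma$ is $\mu_\Gamma^{-1}(U_\Delta(\MA,\bar q))$, and this equals $B$; since $\mu_\Gamma$ is surjective onto $A$, we get $U_\Delta=A$. Injectivity means that $\sim_\Delta$ is equality on $A$. The same reasoning gives $U_\Gamma(\MB,\bar p)=B$, $\sim_\Gamma$ equality, and $\mu_\Gamma$ a bijection $B\to A$. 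Set $h=\mu_\Delta\colon A\to B$; then $\mu_\Gamma=h^{-1}$.

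It remains to replace $\Gamma$ and $\Delta$ by genuine syntax codes, with $U(x,\bar y)$ literally $x=x$ and $E(x,x',\bar y)$ literally $x=x'$. Define $\Gamma'$ from $\Gamma$ by replacing $U_\Gamma$ and $E_\Gamma$ in this way and keeping the formulas $c_\Gamma,f_\Gamma,R_\Gamma$. At the parameters $\bar p$, the code $\Gamma'$ defines the same structure on the same set, so $(\Gamma',\bar p,h^{-1})$ is an interpretation equal to $(\Gamma,\bar p,h^{-1})$. This is in the spirit of Example~\ref{ex:approx}, although $\Gamma'\approx\Gamma$ holds only at $\bar p$ and not necessarily globally, so I would check directly that $\Gamma'(\MB,\bar p)=\Gamma(\MB,\bar p)$. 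Define $\Delta'$ similarly. The compositions $\Gamma'\circ\Delta'$ and $\Delta'\circ\Gamma'$ then give the same structures and coordinate maps as before, hence are still equal to the identities. So $(\Gamma',\bar p,h^{-1};\Delta',\bar q,h)$ is a syntax isomorphism and in particular a strong bi-interpretation.

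The main obstacle is bookkeeping the domains in the composition: showing rigorously that $U_{\Gamma\circ\Delta}=A$ forces $U_\Gamma=B$, and the analogous statement for $\sim_\Gamma$. This requires unwinding the definition of the composed code from \cite[Subsection~4.1]{Th_int1}. A second point needing care is confirming that the replacement codes satisfy the admissibility conditions.
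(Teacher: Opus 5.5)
Your proof is correct and follows essentially the same route as the paper. You use equality with the identity interpretations to force $\dim\Gamma=\dim\Delta=1$, full universes and trivial equivalences, then set $h=\mu_\Delta$ and replace $\Gamma,\Delta$ by genuine syntax codes. The one difference is in showing that the modified pair still composes to the identities: the paper cites Corollary~\ref{cor1:par}, which is proved later with the categorical machinery, whereas you check it directly. Your direct check is really your $(2)\Rightarrow(1)$ argument applied to $(\Gamma',\bar p,h^{-1};\Delta',\bar q,h)$. You are also right that Example~\ref{ex:approx} only applies at the parameters $\bar p,\bar q$.
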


\begin{proof}
Indeed, if $(\Gamma,\bar p,\mu_\Gamma;\Delta,\bar q,\mu_\Delta)$ is a syntax isomorphism, then $U_{\Gamma\circ\Delta}(\MA,(\bar{\bar p},\bar q))/{\sim_{\Gamma\circ\Delta}}=A$, $U_{\Delta\circ\Gamma}(\MB,(\bar{\bar q},\bar p))/{\sim_{\Delta\circ\Gamma}}=B$ and $A=\mu_\Gamma(\mu_\Delta(U_{\Gamma\circ\Delta}(\MA,(\bar{\bar p},\bar q))))$, $B=\mu_\Delta(\mu_\Gamma(U_{\Delta\circ\Gamma}(\MB,(\bar{\bar q},\bar p))))$  (see Corollary~\ref{ex:Gamma_Delta}). Then $\dim\Gamma=\dim\Delta=1$ and  both interpretations $(\Gamma,\bar p,\mu_\Gamma)$ and $(\Delta,\bar q,\mu_\Delta)$ are equal to interpretations $(\Gamma^\prime,\bar p,\mu_\Gamma)$ and $(\Delta^\prime,\bar q,\mu_\Delta)$ with syntax codes $\Gamma^\prime$ and $\Delta^\prime$. Furthermore, $\mu_\Gamma\colon B\to A$ and $\mu_\Delta\colon A\to B$ are bijections, such that $\mu_\Gamma\circ\mu_\Delta=\id_A$. Therefore, the strong bi-interpretation $(\Gamma^\prime,\bar p,\mu_\Delta^{-1};\Delta^\prime, \bar q,\mu_\Delta)$ is the one we are looking for. For the sake of precision, let's note that we are using here Corollary~\ref{cor1:par}, which we will prove a little later.
    
Inversely, if $(\Gamma,\bar p,h^{-1};\Delta,\bar q,h)$ is a strong bi-interpretation with syntax codes, then the composition $(\Gamma\circ\Delta,(\bar{\bar p},\bar q),h^{-1}\circ h)$ equals to $(\Id_{L(\MA)},\emptyset,\id_A)$ and the composition $(\Delta\circ\Gamma,(\bar{\bar q},\bar p),h\circ h^{-1})$ equals to $(\Id_{L(\MB)},\emptyset,\id_B)$, i.\,e., $\MA$ and $\MB$ are syntactically isomorphic.
\end{proof}

Every isomorphism of algebraic structures in the same language is a syntax isomorphism. 

\begin{theorem}\label{th:syn_iso}
For any algebraic structures $\MA=\langle A; L(\MA)\rangle$ and $\MB=\langle B; L(\MB)\rangle$ the following conditions are equivalent:
\begin{enumerate}[label=(\arabic*)]
    \item\label{syn1} $\MA$ and $\MB$ are syntactically isomorphic;
    \item\label{syn2} $\DS(\MA)\simeq_{\Tr_\Inj}\DS(\MB)$;
    \item\label{syn3}$\LS(\MA)\simeq_{\Int_\Inj}\LS(\MB)$;
    \item\label{syn4} $\PDS(\MA)\simeq_{\Tr}\PDS(\MB)$;
    \item\label{syn5} $\PLS(\MA)\simeq_{\Int}\PLS(\MB)$.
\end{enumerate}
\end{theorem}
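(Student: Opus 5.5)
The plan is to run the proof of Theorem~\ref{th:bi-inter} again with ``natural isomorphism'' replaced by ``equality of functors''. The dictionary is Lemma~\ref{le:equal} and Fact~\ref{fact:hat1}: two interpretations are equal exactly when their interpretation functors are equal. Combined with Propositions~\ref{th:ABC}, \ref{prop:comp1}, \ref{prop:comp2} on compositions and Remark~\ref{remark:ID} on identity functors, this converts the defining equalities of a syntax isomorphism into equalities $\G\circ\F=\id$ and $\F\circ\G=\id$. I will prove the cycle \ref{syn1}$\Rightarrow$\ref{syn5}$\Rightarrow$\ref{syn4}$\Rightarrow$\ref{syn1}, and then attach \ref{syn2} and \ref{syn3} through Lemma~\ref{le:syn}.

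\ref{syn1}$\Rightarrow$\ref{syn5}. Take a syntax isomorphism $(\Gamma,\bar p,\mu_\Gamma;\Delta,\bar q,\mu_\Delta)$ and let $\F,\G$ be the corresponding interpretation functors from Theorem~\ref{th:inter}.
\begin{itemize}
\item By Proposition~\ref{th:ABC}, $\G\circ\F$ is the $(\Gamma\circ\Delta,(\bar{\bar p},\bar q),\mu_\Gamma\circ\mu_\Delta)$\=/functor.
\item By hypothesis this interpretation equals $(\Id_{L(\MA)},\emptyset,\id_A)$, so by Lemma~\ref{le:equal} the functor equals $\id_{\PLS(\MA)}$ (Remark~\ref{remark:ID}).
\item Symmetrically, $\F\circ\G=\id_{\PLS(\MB)}$.
\end{itemize}
\ref{syn5}$\Rightarrow$\ref{syn4}. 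Restrict $\F$ and $\G$ to $\PDS$. Equal functors have equal restrictions, and restriction commutes with composition (Remark~\ref{cor:comp_rest}), so the restrictions are mutually inverse translation functors.

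\ref{syn4}$\Rightarrow$\ref{syn1}. This mirrors \ref{item2:invert}$\Rightarrow$\ref{item1:invert} in Theorem~\ref{th:bi-inter}.
\begin{itemize}
\item Identity functors are naturally isomorphic via the identity, so that proof applies verbatim. It gives a strong bi-interpretation $(\Gamma,\bar p,\mu_\Gamma;\Delta,\bar q,\mu_\Delta)$ with $\nsA=\MA$ and $\nsB=\MB$, whose homotopy isomorphisms are $\lambda=\eta_A$ for the identity natural transformation.
\item Hence $\lambda$ is the identity map of $\Gamma\circ\Delta(\MA,\ldots)/{\sim}=\MA$. Indeed, by Corollary~\ref{cor:natur_iso0}, $\G\circ\F(A)=A$ as a projective set, i.e.\ $U_{\Gamma\circ\Delta}/{\sim}$ literally equals $A$ with $\lambda=\id$.
\item Therefore the composite interpretation is equal (Definition~\ref{def:equal}) to $(\Id_{L(\MA)},\emptyset,\id_A)$, and likewise on the $\MB$ side. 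This is a syntax isomorphism.
\end{itemize}
The delicate point is the meaning of ``$\G\circ\F(A)=A$''. It says that $U_{\Gamma\circ\Delta}(\MA,\ldots)=A$ (dimension $1$) and that $\sim_{\Gamma\circ\Delta}$ is the identity. I must check that this yields equality of interpretations in the sense of Definition~\ref{def:equal} and not merely a homotopy. I would argue it via Fact~\ref{fact:hat1}/Lemma~\ref{le:equal} adapted to translation functors, using the same Lemma~\ref{cor:natur_iso} uniqueness argument.

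For \ref{syn2} and \ref{syn3}: by Lemma~\ref{le:syn}, a syntax isomorphism can be taken with syntax codes, which are injective. Restricting the functors from \ref{syn5} to $\LS$ gives \ref{syn3}, and restricting \ref{syn3} to $\DS$ gives \ref{syn2}; both use Remark~\ref{cor:comp_rest}. Conversely, from \ref{syn2} I extend to $\PDS$ by Fact~\ref{fact:functor}(6), compositions via the fact on compositions of injective translation functors. The identity natural transformation extends uniquely to the identity by Lemma~\ref{cor:natur_iso1}, so $\G\circ\F=\id_{\PDS(\MA)}$ and we are back at \ref{syn4}. Item \ref{syn3} is handled the same way with Lemma~\ref{cor:natur_iso2}.

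The main obstacle is \ref{syn4}$\Rightarrow$\ref{syn1}: converting the literal equality $\G\circ\F=\id$ into literal equality of interpretations, including $\nsA=\MA$ and the identification of the coordinate maps. I will handle it by tracking $\lambda=\id$ through the proof of Theorem~\ref{th:bi-inter} together with Remark~\ref{lemma:gamma}.
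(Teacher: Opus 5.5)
Your proof is correct, but it closes the cycle at a different point than the paper does.

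The paper's route is as follows:
\begin{enumerate}
\item It proves \ref{syn1}$\Rightarrow$\ref{syn2} by first obtaining an isomorphism $\PLS(\MA)\simeq_\Int\PLS(\MB)$ and then restricting.
\item It climbs back up by \ref{syn2}$\Rightarrow$\ref{syn4}, \ref{syn2}$\Rightarrow$\ref{syn3} and (``similarly'') \ref{syn4}$\Rightarrow$\ref{syn5}.
\item It returns to \ref{syn1} only from \ref{syn5}. There, Corollary~\ref{cor:bi0} and Lemma~\ref{le:equal} turn $\G\circ\F=\id_{\PLS(\MA)}$ directly into equality of interpretations, and no elementary extension ever arises (Theorem~\ref{th:functor2}).
\end{enumerate}

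You return to \ref{syn1} from \ref{syn4} instead. This spares you the upward steps \ref{syn2}$\Rightarrow$\ref{syn3} and \ref{syn4}$\Rightarrow$\ref{syn5}, which the paper only sketches via Lemma~\ref{cor:natur_iso}. The price is that Lemma~\ref{le:equal} is stated only for interpretation functors. So you must rerun the proof of Theorem~\ref{th:bi-inter} to get $\nsA=\MA$, $\nsB=\MB$, and then check literal equality yourself.

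That check needs no adapted version of Lemma~\ref{le:equal}:
\begin{itemize}
\item Evaluating $\G\circ\F=\id_{\PDS(\MA)}$ on the object $A$ gives $U_{\Gamma\circ\Delta}(\MA,(\delta(\bar p),\bar q))=A$ with $\sim_{\Gamma\circ\Delta}$ the identity. This follows from the functor equality itself, not from Corollary~\ref{cor:natur_iso0} as you wrote.
\item Evaluating it on $\V_\MA(\{x=a\})$ gives $\delta(\gamma(a))=a$.
\item Hence $\mu_\Gamma(\mu_\Delta(a))=\mu_\Gamma(\mu_\Delta(\delta(\gamma(a))))=\mu_\Gamma(\gamma(a))=a$. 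So the composite's coordinate map is $\id_A$, its induced $L(\MA)$\=/isomorphism onto $\MA$ is the identity map, and Definition~\ref{def:equal} holds.
\end{itemize}
Your own route through $\lambda=\eta_A=\id$ together with Remark~\ref{lemma:gamma} reaches the same conclusion.

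Your handling of \ref{syn2} and \ref{syn3} is sound. You restrict along the syntax codes supplied by Lemma~\ref{le:syn}, and you extend via Fact~\ref{fact:functor} and the ``identical'' clauses of Lemmas~\ref{cor:natur_iso1} and~\ref{cor:natur_iso2}.
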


\begin{proof}
\ref{syn1}$\Longrightarrow$\ref{syn2}: Let $\MA$ and $\MB$ be syntactically isomorphic. Then by Lemma~\ref{le:syn}, there exists a strong bi-interpretation  $(\Gamma,\bar p,\mu_\Gamma;\Delta,\bar q,\mu_\Delta)$, such that  $\mu_\Gamma=\mu_\Delta^{-1}$ and codes $\Gamma,\Delta$ are syntax. According to Corollary~\ref{cor:bi0}, there exists an equivalence $\PLS(\MA)\sim_{\Int_\Inj}\PLS(\MB)$. By Lemma~\ref{le:equal}, it is indeed an isomorphism $\PLS(\MA)\simeq_{\Int}\PLS(\MB)$, so Remarks~\ref{cor:comp_rest} and~\ref{remark:natur_iso} involve that the restriction of the isomorphism $\PLS(\MA)\simeq_{\Int_\Inj}\PLS(\MB)$  gives an isomorphism $\DS(\MA)\simeq_{\Tr_\Inj}\DS(\MB)$. Implication~\ref{syn3}$\Longrightarrow$\ref{syn2} is similar.

\ref{syn2}$\Longrightarrow$\ref{syn4}: If $\DS(\MA)\simeq_{\Tr_\Inj}\DS(\MB)$, then due to Fact~\ref{fact:functor} and Lemma~\ref{cor:natur_iso1} one has $\PDS(\MA)\simeq_{\Tr}\PDS(\MB)$. \ref{syn2}$\Longrightarrow$\ref{syn3}: If $\DS(\MA)\simeq_{\Tr_\Inj}\DS(\MB)$, then $\LS(\MA)\sim_{\Int_\Inj}\LS(\MB)$ due to Corollary~\ref{cor:bi1}; and by Lemma~\ref{cor:natur_iso},  $\LS(\MA)\simeq_{\Int_\Inj}\LS(\MB)$. Implication \ref{syn4}$\Longrightarrow$\ref{syn5} is similar.

\ref{syn5}$\Longrightarrow$\ref{syn1}: Suppose that $\PLS(\MA)\simeq_{\Int}\PLS(\MB)$. Then
there exists interpretation functors $\F\colon \PLS(\MA)\to\PLS(\MB)$ and $\G\colon \PLS(\MB)\to\PLS(\MA)$, besides, $\G\circ\F=\id_{\PLS(\MA)}$ and $\F\circ\G=\id_{\PLS(\MB)}$. Let $(\Gamma,\bar p,\gamma)$ and $(\Delta,\bar q, \delta)$ be the corresponding extended codes. By Corollary~\ref{cor:bi0}, there is strong bi-interpretation $(\Gamma,\bar p,\mu_\Gamma;\Delta,\bar q,\mu_\Delta)$ between $\MA$ and $\MB$, such that  $\mu_\Gamma\circ\gamma=\id_A$ and $\mu_\Delta\circ\delta=\id_B$. And by Lemma~\ref{le:equal}, the composition $(\Gamma\circ\Delta,(\bar{\bar p},\bar q),\mu_\Gamma\circ\mu_\Delta)$ equals to $(\Id_{L(\MA)},\emptyset,\id_A)$ and the composition $(\Delta\circ\Gamma,(\bar{\bar q},\bar p),\mu_\Delta\circ\mu_\Gamma)$ equals to $(\Id_{L(\MB)},\emptyset,\id_B)$, i.\,e., $\MA$ and $\MB$ are syntactically isomorphic.
\end{proof}

Let us formulate and prove an analogue of Fact~\ref{fact:eq_rel} for the case of categorical isomorphisms.

\begin{fact}\label{fact:eq_rel_syn}
The categorical isomorphisms $\simeq_\Int$, $\simeq_{\Int_\Inj}$, $\simeq_\Tr$, $\simeq_{\Tr_\Inj}$ are equivalencies relations on the classes of all categories $\PLS(\MA)$, $\LS(\MA)$, $\PDS(\MA)$, $\DS(\MA)$.
\end{fact}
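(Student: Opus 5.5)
Reflexivity and symmetry are immediate, and transitivity is the only real work. For reflexivity, Remark~\ref{remark:ID} supplies the identity functors $\id_{\PLS(\MA)}$, $\id_{\LS(\MA)}\in\Int_\Inj$ and $\id_{\PDS(\MA)}$, $\id_{\DS(\MA)}\in\Tr_\Inj$. These functors are mutually inverse in the strict sense, $\id\circ\id=\id$. For symmetry, an isomorphism relative to a class $\mathbf{K}$ is witnessed by a pair $\F,\G\in\mathbf{K}$ with $\G\circ\F=\id$ and $\F\circ\G=\id$. Swapping the roles of $\F$ and $\G$ gives the reverse isomorphism.

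For transitivity of $\simeq_\Int$, let $\F_1,\G_1$ witness $\PLS(\MA)\simeq_\Int\PLS(\MB)$ and $\F_2,\G_2$ witness $\PLS(\MB)\simeq_\Int\PLS(\MC)$. By Proposition~\ref{prop:comp1}, the composites $\F_2\circ\F_1$ and $\G_1\circ\G_2$ are interpretation functors. Associativity of functor composition then gives $(\G_1\circ\G_2)\circ(\F_2\circ\F_1)=\G_1\circ\id\circ\F_1=\id_{\PLS(\MA)}$, and symmetrically for the other composite. The case $\simeq_\Tr$ is handled the same way using Proposition~\ref{prop:comp2}. This is simpler than Fact~\ref{fact:eq_rel}, because no natural isomorphisms have to be composed; everything is a strict equality of functors.

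The obstacle is the injective classes $\simeq_{\Int_\Inj}$ and $\simeq_{\Tr_\Inj}$. Here I need composites of injective interpretation (translation) functors on $\LS$ ($\DS$) to again be injective functors of the same kind. For this I would invoke the Fact on composition of injective translation functors proved in Subsection~\ref{subsec:re}: the composite of the injective $(\Gamma,\bar p,\gamma)$- and $(\Delta,\bar q,\delta)$-functors is the injective $(\Gamma\circ\Delta,(\delta(\bar p),\bar q),\delta\circ\gamma)$-functor, and $\Gamma\circ\Delta$ is injective by \cite[Lemma~9~(3)]{Th_int1}. With that in hand, the same strict computation as above goes through on $\LS$ and $\DS$. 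As an alternative route, I could pass through Theorem~\ref{th:syn_iso}. The relation $\DS(\MA)\simeq_{\Tr_\Inj}\DS(\MB)$ is equivalent to $\PLS(\MA)\simeq_\Int\PLS(\MB)$, which is transitive by the argument above, and transitivity then transfers back along the equivalence of conditions \ref{syn1}--\ref{syn5}. I would present the direct argument and mention this alternative as a check.
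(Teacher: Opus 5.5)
Your proof is correct. Reflexivity, symmetry and transitivity of $\simeq_\Int$ are handled exactly as in the paper, via Proposition~\ref{prop:comp1} and strict associativity. For $\simeq_\Tr$, $\simeq_{\Int_\Inj}$ and $\simeq_{\Tr_\Inj}$ your main route differs from the paper's.

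The paper does not argue directly in these three cases. It derives their transitivity from Theorem~\ref{th:syn_iso}: each of these relations holds between the categories over $\MA$ and $\MB$ exactly when $\PLS(\MA)\simeq_\Int\PLS(\MB)$, and that relation is already known to be transitive. This is precisely the route you keep only as a check.

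Your main argument instead uses just two ingredients. The first is closure of each class under composition: Proposition~\ref{prop:comp2} for $\Tr$, and the Fact on composition of injective translation functors, together with injectivity of $\Gamma\circ\Delta$, for $\Int_\Inj$ and $\Tr_\Inj$. The second is the strict computation $(\G_1\circ\G_2)\circ(\F_2\circ\F_1)=\G_1\circ\F_1=\id$.

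What your route buys is that it is elementary and self-contained. It does not depend on the heavy machinery behind Theorem~\ref{th:syn_iso}: Corollary~\ref{cor:bi0}, Lemma~\ref{le:equal}, the extension Lemmas~\ref{cor:natur_iso1} and~\ref{cor:natur_iso2}, and Lemma~\ref{le:syn}, which itself forward-references Corollary~\ref{cor1:par}.

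What the paper's route buys is a one-line proof once the theorem is available. It also gives the extra information that all four relations single out the same pairs $(\MA,\MB)$, namely the syntactically isomorphic ones.
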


\begin{proof}
Again, reflexivity and symmetry are obvious. Transitivity of $\simeq_\Int$ follows from Propositions~\ref{prop:comp1}. For all other types of isomorphisms, transitivity follows from Theorem~\ref{th:syn_iso}.
\end{proof}

\begin{example}
Mal'cev correspondence between nilpotent $k$\=/groups $G$ and nilpotent Lie $k$\=/algebras $L$ over a field $k$ of characteristic zero gives an interesting example of syntax isomorphisms for algebraic structures in different languages~\cite{ABM}. Moreover, in this case one has $\LS^\term_0(G)=\LS^\term_0(L)$, $\DS^\term_0(G)=\DS^\term_0(L)$ and $\AS(G)=\AS(L)$.
\end{example}

\subsection{Horizontal transitivity of strong homotopy}\label{subsec:horiz}

The categorical counterpart of interpretations allows us to prove important facts of the theory of interpretations with relative ease. Direct proofs of the corresponding results, without transitioning to categories, seem unnecessarily cumbersome.

For natural isomorphisms, vertical $\circ$ and horizontal $\ast$ compositions are defined in the standard way~\cite[\S\,II.4, 5]{MacLane}. Homotopies of interpretations, like the natural isomorphism of categorical functors, have both vertical and horizontal compositions. Vertical compositions of homotopies were described in~\cite[Lemma~5]{Th_int1}. Here in Subsection~\ref{subsec:big_iso} we will return to them.

{\bf Horizontal transitivity and horizontal compositions.} 
Now it's time to discuss horizontal compositions.

\begin{proposition}[about horizontal composition of homotopy]\label{prop1:par}
Let $(\theta_{\MA,\MB})\colon (\Gamma_1,\bar p_1,\mu_{\Gamma_1}) \to (\Gamma_2,\bar p_2,\mu_{\Gamma_2})$ be strong homotopy of interpretations of $\MA$ in $\MB$ and $(\theta_{\MB,\MC})\colon (\Delta_1,\bar q_1,\mu_{\Delta_1})\to (\Delta_2,\bar q_2,\mu_{\Delta_2})$ be strong homotopy of interpretations of $\MB$ in $\MC$. Then there exists a strong homotopy $(\theta_{\MB,\MC}\ast\theta_{\MA,\MB})\colon (\Gamma_1\circ\Delta_1,(\bar{\bar p}_1,\bar q_1),\mu_{\Gamma_1}\circ\mu_{\Delta_1})\to(\Gamma_2\circ\Delta_2,(\bar{\bar p}_2,\bar q_2),\mu_{\Gamma_2}\circ\mu_{\Delta_2})$ of interpretations of $\MA$ in $\MC$. 
\end{proposition}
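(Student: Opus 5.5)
The strategy is to pass to categories, take the horizontal composition of natural isomorphisms there, and come back. Apply the operator ${\bf N}$ from Theorem~\ref{th:homotopy} to both given strong homotopies. This gives natural isomorphisms $\tilde\lambda_{\MA,\MB}={\bf N}((\theta_{\MA,\MB}))\colon\F_1\to\F_2$, between the interpretation $(\Gamma_i,\bar p_i,\mu_{\Gamma_i})$\=/functors $\F_i\colon\PLS(\MA)\to\PLS(\MB)$, and $\tilde\lambda_{\MB,\MC}={\bf N}((\theta_{\MB,\MC}))\colon\G_1\to\G_2$, between the interpretation $(\Delta_i,\bar q_i,\mu_{\Delta_i})$\=/functors $\G_i\colon\PLS(\MB)\to\PLS(\MC)$.

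In the functor category, the horizontal composition
$$\tau=\tilde\lambda_{\MB,\MC}\ast\tilde\lambda_{\MA,\MB}\colon \G_1\circ\F_1\to\G_2\circ\F_2$$
is a natural isomorphism by standard category theory~\cite[\S\,II.5]{MacLane}. Its components are $\tau_O=(\tilde\lambda_{\MB,\MC})_{\F_2(O)}\circ\G_1((\tilde\lambda_{\MA,\MB})_O)$. By Proposition~\ref{th:ABC}, $\G_i\circ\F_i$ is exactly the interpretation $(\Gamma_i\circ\Delta_i,(\bar{\bar p}_i,\bar q_i),\mu_{\Gamma_i}\circ\mu_{\Delta_i})$\=/functor for $i=1,2$. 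So $\tau$ is a natural isomorphism between two interpretation functors $\PLS(\MA)\to\PLS(\MC)$ arising from the composed interpretations.

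Next apply Corollary~\ref{cor:natur_iso_inter} (equivalently, the operator ${\bf H}$ of Theorem~\ref{th:natur_iso2}) to $\tau$. This yields that the two composed interpretations are strongly homotopic, with homotopy isomorphism $\tau_A$. The connector is any $L(\MC)\cup C$\=/formula defining $\tau_A$. Define $(\theta_{\MB,\MC}\ast\theta_{\MA,\MB})$ to be this strong homotopy.

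To make the connector explicit, I would compute $\tau_A$. By Corollary~\ref{cor:homotopy}, $(\tilde\lambda_{\MA,\MB})_A$ is defined by $\theta_{\MA,\MB}$. Applying $\G_1$ translates it, so $\G_1((\tilde\lambda_{\MA,\MB})_A)$ is induced by $(\theta_{\MA,\MB})_{\Delta_1,\bar q_1,\mu_{\Delta_1}}$. The object $\F_2(A)=U_{\Gamma_2}(\MB,\bar p_2)/{\sim_{\Gamma_2}}$ is a projective logical set in $\MB$. By Corollary~\ref{le:natur_iso5}, $(\tilde\lambda_{\MB,\MC})_{\F_2(A)}$ is fully induced by the formula $\varphi$ of \eqref{eq:F}, built from $(\theta_{\MB,\MC})_{\dim\Gamma_2}$ and the translated $E_{\Gamma_2}$. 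Lemma~\ref{lemma:composition} then gives the composite formula
$$\exists\,\bar{\bar z}\,\bigl((\theta_{\MA,\MB})_{\Delta_1,\bar q_1,\mu_{\Delta_1}}(\bar{\bar x}_1,\bar{\bar z})\wedge\varphi(\bar{\bar z},\bar{\bar x}_2)\bigr).$$
Since $\PLS$ objects built from definable data are definable (Remark~\ref{remark:a,b}), this formula actually defines $\tau_A$ after conjoining with the defining formula of the domain, by Fact~\ref{rem5}.

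The main obstacle is bookkeeping rather than substance. It consists of checking that the parameters and the choice of $\bar{\bar p}_i\in\mu_{\Delta_i}^{-1}(\bar p_i)$ match the conventions of Proposition~\ref{th:ABC}, and that the functors built from $\mu$'s and from sections $\gamma$ agree (Remark~\ref{remark:gamma}). Once these identifications are in place, the existence statement follows immediately from Theorems~\ref{th:homotopy} and~\ref{th:natur_iso2}.
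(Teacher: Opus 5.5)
Your proposal is correct and follows the paper's proof essentially verbatim. The paper also passes to the natural isomorphisms ${\bf N}((\theta_{\MA,\MB}))$ and ${\bf N}((\theta_{\MB,\MC}))$ via Theorem~\ref{th:homotopy}, takes their horizontal composite, identifies $\G_i\circ\F_i$ with the composed interpretation functors by Proposition~\ref{th:ABC}, and returns via Corollary~\ref{cor:natur_iso_inter}, so that the homotopy is ${\bf H}$ of the horizontal composite. Your explicit computation of a connector defining $\tau_A$ is consistent with Lemma~\ref{lemma:composition} and Corollary~\ref{le:natur_iso5}, but it is not needed for the existence claim.
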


\begin{proof}
According to Theorem~\ref{th:homotopy}, the interpretation $(\Gamma_1,\bar p_1,\mu_{\Gamma_1})$- and $(\Gamma_2,\bar p_2,\mu_{\Gamma_2})$\=/functors $\F_1,\F_2\colon\PLS(\MA)\to\PLS(\MB)$ are naturally isomorphic, as well as the interpretation $(\Delta_1,\bar q_1,\mu_{\Delta_1})$- and $(\Delta_2,\bar q_2,\mu_{\Delta_2})$\=/functors $\G_1,\G_2\colon\PLS(\MB)\to\PLS(\MC)$ are naturally isomorphic. Suppose that $\eta\colon \F_1\to\F_2$ and $\tau\colon\G_1\to\G_2$ are the corresponding natural isomorphisms. Then there exists the natural isomorphism $\tau\ast\eta\colon\G_1\circ\F_1\to\G_2\circ\F_2$~\cite{MacLane}. By Proposition~\ref{th:ABC}, $\G_1\circ\F_1$ is the interpretation $(\Gamma_1\circ\Delta_1,(\bar{\bar p}_1,\bar q_1),\mu_{\Gamma_1}\circ\mu_{\Delta_1})$\=/functor and $\G_2\circ\F_2$ is the interpretation $(\Gamma_2\circ\Delta_2,(\bar{\bar p}_2,\bar q_2),\mu_{\Gamma_2}\circ\mu_{\Delta_2})$\=/functor. Due to Corollary~\ref{cor:natur_iso_inter}, the interpretations $(\Gamma_1\circ\Delta_1,(\bar{\bar p}_1,\bar q_1),\mu_{\Gamma_1}\circ\mu_{\Delta_1}),(\Gamma_2\circ\Delta_2,(\bar{\bar p}_2,\bar q_2),\mu_{\Gamma_2}\circ\mu_{\Delta_2})\colon \MA\rightsquigarrow\MC$ are strongly homotopic. So, $(\theta_{\MB,\MC}\ast\theta_{\MA,\MB})={\bf H}(\tau\ast\eta)$.
\end{proof}

\begin{notation}
We will use notation $(\theta_{\MB,\MC}\ast\theta_{\MA,\MB})$ for ${\bf H}({\bf N}((\theta_{\MB,\MC}))\ast{\bf N}((\theta_{\MA,\MB})))$ below. Thus, we mean that $(\theta_{\MB,\MC}\ast\theta_{\MA,\MB})$ is exactly a strong homotopy of the interpretations $(\Gamma_1\circ\Delta_1,(\bar{\bar p}_1,\bar q_1),\mu_{\Gamma_1}\circ\mu_{\Delta_1})$ and $(\Gamma_2\circ\Delta_2,(\bar{\bar p}_2,\bar q_2),\mu_{\Gamma_2}\circ\mu_{\Delta_2})$. We will write also $(\theta_{\MB,\MC})\ast(\theta_{\MA,\MB})=(\theta_{\MB,\MC}\ast\theta_{\MA,\MB})$.
\end{notation}

\begin{corollary}\label{cor:ast_N}
For a strong homotopy $(\theta_{\MA,\MB})$ of interpretations of $\MA$ in $\MB$ and a strong homotopy $(\theta_{\MB,\MC})$ of interpretations of $\MB$ in $\MC$ one has ${\bf \hat N}((\theta_{\MB,\MC})\ast(\theta_{\MA,\MB}))={\bf \hat N}((\theta_{\MB,\MC}))\ast{\bf \hat N}((\theta_{\MA,\MB}))$.
\end{corollary}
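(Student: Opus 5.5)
We have to prove Corollary~\ref{cor:ast_N}: ${\bf \hat N}((\theta_{\MB,\MC})\ast(\theta_{\MA,\MB}))={\bf \hat N}((\theta_{\MB,\MC}))\ast{\bf \hat N}((\theta_{\MA,\MB}))$. The plan is to unwind the notation and reduce the claim to Fact~\ref{fact:hat4}. By the preceding Notation, $(\theta_{\MB,\MC})\ast(\theta_{\MA,\MB})$ is by definition ${\bf H}(\tau\ast\eta)$, where $\eta={\bf N}((\theta_{\MA,\MB}))\colon\F_1\to\F_2$ and $\tau={\bf N}((\theta_{\MB,\MC}))\colon\G_1\to\G_2$. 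Here $\tau\ast\eta$ is the horizontal composition of natural isomorphisms between $\G_1\circ\F_1$ and $\G_2\circ\F_2$. By Proposition~\ref{th:ABC}, these two composite functors are interpretation functors attached to the composed interpretations, so ${\bf H}$ can indeed be applied to $\tau\ast\eta$.

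The key step is then the identity ${\bf N}({\bf H}(\tau\ast\eta))=\tau\ast\eta$. This is the second half of Lemma~\ref{le:natur_iso} (equivalently Fact~\ref{fact:hat4}), applied to the natural isomorphism $\tau\ast\eta$ between the interpretation $(\Gamma_1\circ\Delta_1,(\bar{\bar p}_1,\bar q_1),\mu_{\Gamma_1}\circ\mu_{\Delta_1})$- and $(\Gamma_2\circ\Delta_2,(\bar{\bar p}_2,\bar q_2),\mu_{\Gamma_2}\circ\mu_{\Delta_2})$\=/functors. Passing to the code-forgetting versions gives ${\bf \hat N}({\bf \hat H}(\tau\ast\eta))=\tau\ast\eta$.

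Next we replace $\eta$ and $\tau$ by their forgetful counterparts. As natural transformations between functors, ${\bf \hat N}((\theta_{\MA,\MB}))$ and ${\bf N}((\theta_{\MA,\MB}))$ are the same object; only code information attached to the output is erased. The same holds for $\tau$. Horizontal composition depends only on the functors and components, so ${\bf \hat N}((\theta_{\MB,\MC}))\ast{\bf \hat N}((\theta_{\MA,\MB}))=\tau\ast\eta$.

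Chaining the two equalities gives the claim. The main care point is bookkeeping, not mathematics. One must check that the composite functors $\G_i\circ\F_i$ are exactly the interpretation functors of the composed interpretations, which is Proposition~\ref{th:ABC}, so that ${\bf N}$ and ${\bf H}$ are inverse on this pair. One must also check that using ${\bf H}$ rather than ${\bf \hat H}$ in the Notation does not matter after applying ${\bf \hat N}$. If one prefers, this last point can be confirmed via Lemma~\ref{cor:natur_iso}: both sides are natural isomorphisms whose components at $A$ coincide.
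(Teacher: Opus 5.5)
Your argument is correct and matches the paper's reasoning. The paper gives no separate proof, treating the corollary as immediate from the Notation $(\theta_{\MB,\MC})\ast(\theta_{\MA,\MB})={\bf H}(\tau\ast\eta)$ together with ${\bf N}({\bf H}(\tau\ast\eta))=\tau\ast\eta$ (Lemma~\ref{le:natur_iso}, Fact~\ref{fact:hat4}), with Proposition~\ref{th:ABC} identifying the source and target of $\tau\ast\eta$ as interpretation functors of the composed interpretations; your handling of ${\bf H}$ versus ${\bf \hat H}$ via the fact that ${\bf \hat N}$ and ${\bf N}$ give the same natural transformation is also sound.
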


Let us formulate the following special cases of Proposition~\ref{prop1:par} separately. They can be proven similarly, but with minor adjustments.

\begin{corollary}\label{cor1:par}
Let $(\Gamma_1,\bar p_1,\mu_{\Gamma_1}),(\Gamma_2,\bar p_2,\mu_{\Gamma_2})\colon \MA\rightsquigarrow\MB$ and $(\Delta_1,\bar q_1,\mu_{\Delta_1}),(\Delta_2,\bar q_2,\mu_{\Delta_2})\colon \MB\rightsquigarrow\MC$ be pairs of equal interpretations. Then $(\Gamma_1\circ\Delta_1,(\bar{\bar p}_1,\bar q_1),\mu_{\Gamma_1}\circ\mu_{\Delta_1}),(\Gamma_2\circ\Delta_2,(\bar{\bar p}_2,\bar q_2),\mu_{\Gamma_2}\circ\mu_{\Delta_2})\colon \MA\rightsquigarrow\MC$ are equal  interpretations.
\end{corollary}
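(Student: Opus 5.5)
The plan is to pass to functors, as in the proof of Proposition~\ref{prop1:par}, and then come back to interpretations with Lemma~\ref{le:equal}. Write $\F_j\colon\PLS(\MA)\to\PLS(\MB)$ for the interpretation $(\Gamma_j,\bar p_j,\mu_{\Gamma_j})$\=/functor and $\G_j\colon\PLS(\MB)\to\PLS(\MC)$ for the interpretation $(\Delta_j,\bar q_j,\mu_{\Delta_j})$\=/functor, $j=1,2$.

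Since $(\Gamma_1,\bar p_1,\mu_{\Gamma_1})=_{\rm int}(\Gamma_2,\bar p_2,\mu_{\Gamma_2})$, Lemma~\ref{le:equal} gives $\F_1=\F_2$. In the same way, the equality of the $\Delta$\=/interpretations gives $\G_1=\G_2$. Hence $\G_1\circ\F_1=\G_2\circ\F_2$ as functors $\PLS(\MA)\to\PLS(\MC)$.

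By Proposition~\ref{th:ABC}, $\G_j\circ\F_j$ is exactly the interpretation $(\Gamma_j\circ\Delta_j,(\bar{\bar p}_j,\bar q_j),\mu_{\Gamma_j}\circ\mu_{\Delta_j})$\=/functor. So the two composite interpretations have equal interpretation functors, and a second application of Lemma~\ref{le:equal}, now to interpretations of $\MA$ in $\MC$, shows that the composites are equal. Alternatively, one can take the identical natural isomorphism $\G_1\circ\F_1\to\G_2\circ\F_2$ and apply ${\bf H}$, as in the proof of Lemma~\ref{le:equal}. The resulting strong homotopy has identity homotopy isomorphism, which is the definition of $=_{\rm int}$.

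The main obstacle is a possible circularity. Corollary~\ref{cor1:par} is used inside Lemma~\ref{le:syn}, so I must check that Lemma~\ref{le:equal}, Proposition~\ref{th:ABC}, Theorem~\ref{th:homotopy} and Theorem~\ref{th:natur_iso2} do not themselves rely on Lemma~\ref{le:syn} or on this corollary. They are all proved in earlier subsections, so I expect no problem.

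A minor point is the choice of the tuples $\bar{\bar p}_j\in\mu_{\Delta_j}^{-1}(\bar p_j)$, which enter the parameters of the composite interpretation. I would handle this with Proposition~\ref{prop:comp1}: the composite functor depends only on the translation determined by the extended code $(\Gamma_j\circ\Delta_j,(\delta_j(\bar p_j),\bar q_j),\delta_j\circ\gamma_j)$. Together with Remark~\ref{remark:gamma}, this shows that different choices of $\bar{\bar p}_j$ give the same functor, so the argument goes through for any admissible choice.
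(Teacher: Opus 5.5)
Your proof is correct and is essentially the paper's own argument: Lemma~\ref{le:equal} gives $\F_1=\F_2$ and $\G_1=\G_2$, Proposition~\ref{th:ABC} identifies $\G_j\circ\F_j$ with the interpretation functor of the $j$-th composite, and Lemma~\ref{le:equal} applied once more yields the equality of the composite interpretations. Your circularity check is sound. The remark on the choice of $\bar{\bar p}_j$ is harmless but not needed, since Proposition~\ref{th:ABC} already holds for any admissible $\bar{\bar p}_j\in\mu_{\Delta_j}^{-1}(\bar p_j)$.
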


\begin{proof}
In this case $\F_1=\F_2$ and $\G_1=\G_2$, and therefore $\G_1\circ\F_1=\G_2\circ\F_2$. Thus, due to  Lemma~\ref{le:equal}, interpretations $(\Gamma_1\circ\Delta_1,(\bar{\bar p}_1,\bar q_1),\mu_{\Gamma_1}\circ\mu_{\Delta_1}),(\Gamma_2\circ\Delta_2,(\bar{\bar p}_2,\bar q_2),\mu_{\Gamma_2}\circ\mu_{\Delta_2})\colon \MA\rightsquigarrow\MC$ are equal. 
\end{proof}

\begin{corollary}\label{cor2:par}
Suppose that there exist interpretations $(\Gamma,\bar p,\mu_\Gamma)\colon \MA\rightsquigarrow\MB$ and $(\Delta^\prime,\bar q,\mu_\Delta)\colon \MB\rightsquigarrow\MC$. Then for any codes $\Gamma^\prime\approx\Gamma$ and $\Delta^\prime\approx\Delta$ (see~\cite[Definition~3]{Th_int1}) the exist an interpretation $(\Gamma^\prime\circ\Delta^\prime,(\bar{\bar p},\bar q),\mu_\Gamma\circ\mu_\Delta)\colon \MA\rightsquigarrow\MC$, and it equals to the interpretation $(\Gamma\circ\Delta,(\bar{\bar p},\bar q),\mu_\Gamma\circ\mu_\Delta)\colon \MA\rightsquigarrow\MC$.
\end{corollary}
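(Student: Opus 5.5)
The plan is to reduce the statement to Corollary~\ref{cor1:par} by showing that replacing a code by an equivalent one yields an equal interpretation. There is a typo in the hypothesis: the second interpretation should be read as $(\Delta,\bar q,\mu_\Delta)\colon\MB\rightsquigarrow\MC$, with $\Delta^\prime\approx\Delta$ arbitrary.

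First, by Example~\ref{ex:approx}, since $\Gamma^\prime\approx\Gamma$, the triple $(\Gamma^\prime,\bar p,\mu_\Gamma)$ is an interpretation $\MA\rightsquigarrow\MB$ and it equals $(\Gamma,\bar p,\mu_\Gamma)$. Likewise, $(\Delta^\prime,\bar q,\mu_\Delta)\colon\MB\rightsquigarrow\MC$ is an interpretation equal to $(\Delta,\bar q,\mu_\Delta)$. Equivalent codes define the same sets $U$ and $E$ and the same structure, so the coordinate maps may be kept unchanged.

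Second, by \cite[Lemma~10]{Th_int1}, the composition of the interpretations $(\Gamma^\prime,\bar p,\mu_\Gamma)$ and $(\Delta^\prime,\bar q,\mu_\Delta)$ is an interpretation $(\Gamma^\prime\circ\Delta^\prime,(\bar{\bar p},\bar q),\mu_\Gamma\circ\mu_\Delta)\colon\MA\rightsquigarrow\MC$. The same tuple $\bar{\bar p}\in\mu_\Delta^{-1}(\bar p)$ serves for both compositions, because the coordinate map $\mu_\Delta$ is the same. This gives the existence part.

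Third, the two pairs $(\Gamma,\bar p,\mu_\Gamma),(\Gamma^\prime,\bar p,\mu_\Gamma)$ and $(\Delta,\bar q,\mu_\Delta),(\Delta^\prime,\bar q,\mu_\Delta)$ are pairs of equal interpretations. Corollary~\ref{cor1:par} therefore applies and gives equality of the compositions. Concretely, the interpretation functors of the equal interpretations coincide by Lemma~\ref{le:equal}, the composite functors coincide by Proposition~\ref{th:ABC}, and Lemma~\ref{le:equal} applied once more yields equality of the composed interpretations.

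The main obstacle is the first step. One must verify that Example~\ref{ex:approx} really allows the same coordinate map for $\Delta^\prime$, and that the composite $\Gamma^\prime\circ\Delta^\prime$ is formed with the same parameter tuple $(\bar{\bar p},\bar q)$. I would check both points directly from the definition of code equivalence in \cite{Th_int1}, namely that the defining formulas are equivalent modulo the admissibility conditions.
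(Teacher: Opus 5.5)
Your proposal is correct and follows the paper's proof: the paper also uses Example~\ref{ex:approx} to replace the two given interpretations by the equal interpretations $(\Gamma^\prime,\bar p,\mu_\Gamma)$ and $(\Delta^\prime,\bar q,\mu_\Delta)$, and then applies Corollary~\ref{cor1:par}. Your extra points only make explicit what the paper leaves implicit: the composite exists by \cite[Lemma~10]{Th_int1}, and the same $\bar{\bar p}\in\mu_\Delta^{-1}(\bar p)$ can be reused because the coordinate map is unchanged. Your reading of the typo in the hypothesis is also right.
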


\begin{proof}
As in Example~\ref{ex:approx}, there exist interpretations $(\Gamma^\prime,\bar p,\mu_\Gamma)\colon \MA\rightsquigarrow\MB$ and $(\Delta,\bar q,\mu_\Delta)\colon \MB\rightsquigarrow\MC$, which equal to $(\Gamma,\bar p,\mu_\Gamma)\colon \MA\rightsquigarrow\MB$ and $(\Delta,\bar q,\mu_\Delta)\colon \MB\rightsquigarrow\MC$. Therefore, Corollary~\ref{cor1:par} gives the required. 
\end{proof}

\begin{fact}
Let $(\Gamma,\bar p,\mu_\Gamma)\colon \MA\rightsquigarrow\MB$ and $(\Delta,\bar q,\mu_\Delta)\colon\MB\rightsquigarrow\MC$ be interpretations, which are equal to interpretations $(\Gamma^\prime,\bar p^\prime,\mu_{\Gamma^\prime})\colon \MA\rightsquigarrow\MB$ and $(\Delta^\prime,\bar q^\prime,\mu_{\Delta^\prime})\colon\MB\rightsquigarrow\MC$ with injective (syntax, Diophantine) codes. Then the interpretation $(\Gamma\circ\Delta,(\bar{\bar p},\bar q),\mu_\Gamma\circ\mu_\Delta)\colon \MA\rightsquigarrow\MC$ equals an interpretation with injective (syntax, Diophantine) code.
\end{fact}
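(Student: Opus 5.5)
The plan is to reduce the statement to Corollary~\ref{cor1:par}, which says that composing pairs of equal interpretations produces equal interpretations. By hypothesis, $(\Gamma,\bar p,\mu_\Gamma)=_{\rm int}(\Gamma^\prime,\bar p^\prime,\mu_{\Gamma^\prime})$ and $(\Delta,\bar q,\mu_\Delta)=_{\rm int}(\Delta^\prime,\bar q^\prime,\mu_{\Delta^\prime})$, where $\Gamma^\prime,\Delta^\prime$ are injective (respectively syntax, Diophantine). Applying Corollary~\ref{cor1:par} to these two pairs gives
$$(\Gamma\circ\Delta,(\bar{\bar p},\bar q),\mu_\Gamma\circ\mu_\Delta)=_{\rm int}(\Gamma^\prime\circ\Delta^\prime,(\bar{\bar p}^\prime,\bar q^\prime),\mu_{\Gamma^\prime}\circ\mu_{\Delta^\prime}).$$
Here $\bar{\bar p}^\prime\in\mu_{\Delta^\prime}^{-1}(\bar p^\prime)$, and this composite is a genuine interpretation $\MA\rightsquigarrow\MC$ by \cite[Lemma~10]{Th_int1}.

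It then remains to check that the composite code $\Gamma^\prime\circ\Delta^\prime$ has the required property. For injective codes this is \cite[Lemma~9~(3)]{Th_int1}, already cited in Subsection~\ref{subsec:ABC}. For syntax codes it is Remark~\ref{remark:syntax}. For Diophantine codes I will invoke the analogous closure statement from \cite[Subsection~4.1]{Th_int1}. The underlying reason is that the composite's formulas $U_{\Gamma^\prime\circ\Delta^\prime}$, $E_{\Gamma^\prime\circ\Delta^\prime}$, $c$, $f$, $R$ are $\Delta^\prime$\=/translations of the $\Gamma^\prime$\=/formulas, conjoined with $U_{\Delta^\prime}$. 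Translating a positive existential formula through a code whose components are positive existential gives, after prenexing, again a positive existential formula.

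Transitivity of $=_{\rm int}$ (it is an equivalence relation) then finishes the argument.

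The main obstacle I expect is bookkeeping in the Diophantine case, if that closure is not stated explicitly in \cite{Th_int1}. The worry is that the $\Gamma$\=/translation of an atomic formula with nested terms introduces existential quantifiers over $U_\Delta$\=/witnesses. One then has to confirm that no negations arise, that is, that the translation of the equality and the unnested atoms only uses $E$, $U$, $f_\Gamma$ and $R_\Gamma$ positively. I would check this directly from the translation rules in \cite[Subsection~2.7]{Th_int1}, restricted to positive formulas.
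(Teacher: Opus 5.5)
Your proposal is correct and follows the paper's proof. Corollary~\ref{cor1:par} gives $(\Gamma\circ\Delta,(\bar{\bar p},\bar q),\mu_\Gamma\circ\mu_\Delta)=_{\rm int}(\Gamma^\prime\circ\Delta^\prime,(\bar{\bar p}^\prime,\bar q^\prime),\mu_{\Gamma^\prime}\circ\mu_{\Delta^\prime})$, and one then checks that the code class is preserved under composition, with the Diophantine case handled by \cite[Remark~9]{Th_int1}. In the injective and syntax cases, the paper does not claim that $\Gamma^\prime\circ\Delta^\prime$ is literally of the required form; it passes to an equal interpretation with a normalized code ($(\Gamma^\prime\circ\Delta^\prime)_{\rm inj}$ via \cite[Remark~3]{Th_int1}, resp.\ $U$ and $E$ replaced by the equivalent $x=x$ and $x=x^\prime$), which is exactly where your closing appeal to transitivity of $=_{\rm int}$ is needed.
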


\begin{proof}
By Corollary~\ref{cor1:par}, interpretations $(\Gamma\circ\Delta,(\bar{\bar p},\bar q),\mu_\Gamma\circ\mu_\Delta),(\Gamma^\prime\circ\Delta^\prime,(\bar{\bar p}^\prime,\bar q^\prime),\mu_{\Gamma^\prime}\circ\mu_{\Delta^\prime})\colon \MA\rightsquigarrow\MC$ are equal. If $\Gamma^\prime$ and $\Delta^\prime$ are injective, then the interpretation $\Gamma^\prime\circ\Delta^\prime$ equals to $(\Gamma^\prime\circ\Delta^\prime)_{\rm inj}$~\cite[Remark~3] {Th_int1}. If $\Gamma^\prime$ and $\Delta^\prime$ are Diophantine, then $\Gamma^\prime\circ\Delta^\prime$ is Diophantine~\cite[Remark~9]{Th_int1}. If $\Gamma^\prime$ and $\Delta^\prime$ are syntax, then $\dim\Gamma^\prime=\dim\Delta^\prime=\dim \Gamma^\prime\circ\Delta^\prime=1$ and the formula $U_{\Gamma^\prime\circ\Delta^\prime}$ is equivalent to $(x=x)$, and $E_{\Gamma^\prime\circ\Delta^\prime}$ is equivalent to $(x=x^\prime)$, i.\,e., the interpretation $\Gamma^\prime\circ\Delta^\prime$ equals to interpretation with syntax code.
\end{proof}

\begin{lemma}\label{lemma:abs}
Let $(\Gamma_1,\emptyset,\mu_{\Gamma_1}),(\Gamma_2,\emptyset,\mu_{\Gamma_2})\colon \MA\rightsquigarrow\MB$ and $(\Delta_1,\emptyset,\mu_{\Delta_1}),(\Delta_2,\emptyset,\mu_{\Delta_2})\colon \MB\rightsquigarrow\MC$ be pairs of absolutely strongly homotopic interpretations. Then $(\Gamma_1\circ\Delta_1,\emptyset,\mu_{\Gamma_1}\circ\mu_{\Delta_1}),(\Gamma_2\circ\Delta_2,\emptyset,\mu_{\Gamma_2}\circ\mu_{\Delta_2})\colon \MA\rightsquigarrow\MC$ are absolutely strongly homotopic interpretations as well.
\end{lemma}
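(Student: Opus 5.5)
The plan is to repeat the proof of Proposition~\ref{prop1:par}, but to carry the absolute restrictions along at every step. Write $\F_1,\F_2\colon\PLS(\MA)\to\PLS(\MB)$ for the interpretation $(\Gamma_1,\emptyset,\mu_{\Gamma_1})$- and $(\Gamma_2,\emptyset,\mu_{\Gamma_2})$\=/functors, and $\G_1,\G_2\colon\PLS(\MB)\to\PLS(\MC)$ for the interpretation $(\Delta_1,\emptyset,\mu_{\Delta_1})$- and $(\Delta_2,\emptyset,\mu_{\Delta_2})$\=/functors. Let $(\theta_{\MA,\MB})$ and $(\theta_{\MB,\MC})$ be the given absolute strong homotopies, and put $\eta={\bf N}((\theta_{\MA,\MB}))$ and $\tau={\bf N}((\theta_{\MB,\MC}))$.

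All codes are absolute, so Corollary~\ref{cor:abs} shows that the $\F_i$ and $\G_i$ restrict to absolute functors $\PLS_0(\MA)\to\PLS_0(\MB)$ and $\PLS_0(\MB)\to\PLS_0(\MC)$. The homotopies are absolute, so Fact~\ref{cor:abs_hom} shows that $\eta$ and $\tau$ restrict to natural isomorphisms between these absolute functors. In particular, for every absolute object $O$ over $\MA$, the component $\eta_O$ is fully induced by a parameter-free formula, and the same holds for $\tau$ on absolute objects over $\MB$.

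Next I form the horizontal composition $\tau\ast\eta\colon\G_1\circ\F_1\to\G_2\circ\F_2$. Its component at $O$ is
$$(\tau\ast\eta)_O=\tau_{\F_2(O)}\circ\G_1(\eta_O).$$
If $O$ is absolute, then $\F_2(O)$ is absolute by Corollary~\ref{cor:abs}, so $\tau_{\F_2(O)}$ is an absolute isomorphism. Also $\G_1(\eta_O)$ is induced by the $\Delta_1$\=/translation of a parameter-free formula, and since $\Delta_1$ is absolute this translation has no parameters. Lemma~\ref{lemma:composition} then shows that the composite component is induced by a parameter-free formula. Hence $\tau\ast\eta$ restricts to a natural isomorphism between absolute functors $\PLS_0(\MA)\to\PLS_0(\MC)$.

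By Proposition~\ref{th:ABC}, $\G_i\circ\F_i$ is the interpretation functor of $(\Gamma_i\circ\Delta_i,\emptyset,\mu_{\Gamma_i}\circ\mu_{\Delta_i})$. Here the parameter tuple of each composite is empty, and $\Gamma_i\circ\Delta_i$ is absolute. Corollary~\ref{cor:natur_iso_inter} gives the strong homotopy ${\bf H}(\tau\ast\eta)$, whose connector is a formula $\theta$ that defines $(\tau\ast\eta)_A$. Since $A$ is an absolute object, the previous paragraph lets us choose $\theta$ parameter-free. Fact~\ref{cor:abs_hom}, applied in the direction from natural isomorphism to homotopy, then shows that the homotopy is absolute.

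The main obstacle is the claim that $\G_1$ sends absolute morphisms to absolute morphisms, that is, that the $(\Delta_1,\emptyset,\delta)$\=/translation of a formula without constants from $B$ introduces no constants from $C$. This holds because $\delta$ only enters through constants $b\in B$ occurring in the formula. I would state this explicitly, referring to the definition of the translation in Subsection~\ref{subsec:ext_code}.
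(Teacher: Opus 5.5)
Your proposal is correct and follows the paper's proof essentially step by step. Like the paper, you restrict $\eta$ and $\tau$ to the absolute subcategories via Fact~\ref{cor:abs_hom}, check that the components $(\tau\ast\eta)_X=\tau_{\F_2(X)}\circ\G_1(\eta_X)$ are absolute on absolute objects, and conclude that ${\bf H}(\tau\ast\eta)$ is absolute. Two minor differences: the paper passes through Lemma~\ref{le:natur_iso} (${\bf N}({\bf H}(\tau\ast\eta))=\tau\ast\eta$) before invoking Fact~\ref{cor:abs_hom}, whereas your direct choice of a parameter-free connector defining $(\tau\ast\eta)_A$ already suffices; and the ``main obstacle'' you flag is already covered by Corollary~\ref{cor:abs}.
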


\begin{proof}
By Proposition~\ref{prop1:par}, there exists a strong homotopy $(\theta)\colon (\Gamma_1\circ\Delta_1,\emptyset,\mu_{\Gamma_1}\circ\mu_{\Delta_1})\to (\Gamma_2\circ\Delta_2,\emptyset,\mu_{\Gamma_2}\circ\mu_{\Delta_2})$. Suppose that $\eta\colon \F_1\to\F_2$ and $\tau\colon\G_1\to\G_2$ are natural isomorphisms between interpretation functors from the proof of Proposition~\ref{prop1:par}. Then $(\theta)={\bf H}(\tau\ast\eta)$. According to Fact~\ref{cor:abs_hom}, the restrictions of these natural isomorphisms to subcategories $\PLS_0(\MA)$ and $\PLS_0(\MB)$ are also natural isomorphisms between absolute interpretation functors $\F_1,\F_2\colon\PLS_0(\MA)\to\PLS_0(\MB)$ and $\G_1,\G_2\colon\PLS_0(\MB)\to\PLS_0(\MC)$. Then the natural isomorphism $\tau\ast\eta\colon\G_1\circ\F_1\to\G_2\circ\F_2$ corresponds the isomorphism $(\tau\ast\eta)_X=\tau_{\F_2(X)}\circ\G_1(\eta_X)$ to object any $X$ from $\PLS(\MA)$. Therefore, the restriction of natural isomorphism $\tau\ast\eta$ to subcategory $\PLS_0(\MA)$ is a natural isomorphism between absolute interpretation functors $\G_1\circ\F_1,\G_2\circ\F_2\colon\PLS_0(\MA)\to\PLS_0(\MC)$. By Lemma~\ref{le:natur_iso}, one has ${\bf N}((\theta))=\tau\ast\eta$. Thus, by Fact~\ref{cor:abs_hom}, we obtain that the homotopy $(\theta)$ is absolute, as required.
\end{proof}

{\bf Associativity of interpretations and homotopies.} 
The question of the associativity of interpretations is also related to the topic of horizontal transitivity. If we have a sequence of interpretations
$$
\MA \stackrel{\Gamma}{\rightsquigarrow}\MB\stackrel{\Delta}{\rightsquigarrow}\MC \stackrel{\Sigma}{\rightsquigarrow}\MD,
$$
then by~\cite[Lemma~10]{Th_int1} we understand that $\MA$ is interpretable in $\MD$. However, there exist two ways from $\MA$ to $\MD$, because we can proceed with different parentheses, $(\Gamma\circ\Delta)\circ\Sigma$ and $\Gamma\circ(\Delta\circ\Sigma)$. It turns out that the resulting interpretations are equal in the sense of Definition~\ref{def:equal}.

\begin{proposition}[about associativity of interpretations with parameters]\label{ABCD}
Let $(\Gamma,\bar p,\mu_\Gamma)\colon \MA\rightsquigarrow\MB$, $(\Delta,\bar q,\mu_\Delta)\colon\MB\rightsquigarrow\MC$ and $(\Sigma,\bar r,\mu_\Sigma)\colon \MC\rightsquigarrow\MD$ be interpretations. Then interpretations $(\Gamma\circ(\Delta\circ\Sigma),({\bar{\bar p}},(\bar{\bar q},\bar r)),\mu_\Gamma\circ(\mu_\Delta\circ\mu_\Sigma))\colon \MA\rightsquigarrow\MD$ and $((\Gamma\circ\Delta)\circ\Sigma,((\bar{\bar{\bar p}},\bar{\bar q}),\bar r),(\mu_\Gamma\circ\mu_\Delta)\circ\mu_\Sigma)\colon \MA\rightsquigarrow\MD$ are equal, besides $\dim_\param\Gamma\circ(\Delta\circ\Sigma)=\dim_\param(\Gamma\circ\Delta)\circ\Sigma$.
\end{proposition}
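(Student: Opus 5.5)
We prove equality of the two interpretations via their interpretation functors and Lemma~\ref{le:equal}. Let $\F\colon\PLS(\MA)\to\PLS(\MB)$, $\G\colon\PLS(\MB)\to\PLS(\MC)$, $\mathcal{H}\colon\PLS(\MC)\to\PLS(\MD)$ be the interpretation $(\Gamma,\bar p,\mu_\Gamma)$-, $(\Delta,\bar q,\mu_\Delta)$- and $(\Sigma,\bar r,\mu_\Sigma)$\=/functors given by Theorem~\ref{th:inter}. Composition of functors is associative, so $\mathcal{H}\circ(\G\circ\F)=(\mathcal{H}\circ\G)\circ\F$ as functors $\PLS(\MA)\to\PLS(\MD)$.

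Next we identify each side with the functor of the corresponding composite interpretation. By \cite[Lemma~10]{Th_int1}, $(\Gamma\circ\Delta,(\bar{\bar p},\bar q),\mu_\Gamma\circ\mu_\Delta)\colon\MA\rightsquigarrow\MC$ is an interpretation, and Proposition~\ref{th:ABC} shows that its interpretation functor is $\G\circ\F$. Applying Proposition~\ref{th:ABC} again, now to this interpretation followed by $(\Sigma,\bar r,\mu_\Sigma)$, we find that $\mathcal{H}\circ(\G\circ\F)$ is the interpretation functor of $((\Gamma\circ\Delta)\circ\Sigma,((\bar{\bar{\bar p}},\bar{\bar q}),\bar r),(\mu_\Gamma\circ\mu_\Delta)\circ\mu_\Sigma)$. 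Here $\bar{\bar{\bar p}},\bar{\bar q}$ are preimages under $\mu_\Sigma$, which is exactly how the composite parameter is defined. In the same way, $\mathcal{H}\circ\G$ is the functor of $(\Delta\circ\Sigma,(\bar{\bar q},\bar r),\mu_\Delta\circ\mu_\Sigma)$, and then $(\mathcal{H}\circ\G)\circ\F$ is the functor of $(\Gamma\circ(\Delta\circ\Sigma),(\bar{\bar p},(\bar{\bar q},\bar r)),\mu_\Gamma\circ(\mu_\Delta\circ\mu_\Sigma))$, where $\bar{\bar p}$ is now a preimage of $\bar p$ under $\mu_\Delta\circ\mu_\Sigma$.

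The main obstacle is the bookkeeping of parameter tuples. Proposition~\ref{th:ABC} allows any choice of preimage, and Corollary~\ref{cor:gamma} with Remark~\ref{remark:gamma} show that the translation, and hence the functor, does not depend on that choice. We must also check that in both bracketings the parameter tuple is a preimage of $\bar p$ together with preimages of $\bar q$ and $\bar r$, of total length $\dim\Sigma\cdot(\dim\Delta\cdot\dim_\param\Gamma+\dim_\param\Delta)+\dim_\param\Sigma$. Expanding the definition of $\dim_\param$ for compositions from \cite[Subsection~4.1]{Th_int1} gives this same number on both sides, which proves the claimed equality $\dim_\param\Gamma\circ(\Delta\circ\Sigma)=\dim_\param(\Gamma\circ\Delta)\circ\Sigma$.

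Since the two interpretations of $\MA$ in $\MD$ have equal interpretation functors, Lemma~\ref{le:equal} shows that they are equal in the sense of Definition~\ref{def:equal}. Alternatively, one could argue at the level of translations via Corollary~\ref{ABCD_ext}, but the functorial route avoids checking normal forms.
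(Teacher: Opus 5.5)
Your proposal is correct and takes the same route as the paper's proof. Both bracketings are identified with $\mathcal{H}\circ\G\circ\F$ by two applications of Proposition~\ref{th:ABC}, Lemma~\ref{le:equal} is invoked, and $\dim_\param$ of a composition is expanded to $\dim\Sigma\cdot(\dim\Delta\cdot\dim_\param\Gamma+\dim_\param\Delta)+\dim_\param\Sigma$ on both sides. Your aside on independence of the choice of parameter preimages is unnecessary, since Proposition~\ref{th:ABC} already holds for whatever preimages were fixed in the statement.
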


\begin{proof}
Suppose that  $\F\colon\PLS(\MA)\to\PLS(\MB)$, $\G\colon \PLS(\MB)\to \PLS(\MC)$, and $\mathcal{H}\colon\PLS(\MC)\to \PLS(\MD)$ are interpretation functors, which correspond to the given interpretations. Then by Proposition~\ref{th:ABC}, $\F\circ (\G\circ \mathcal{H})$ is the interpretation functor of the interpretation $\Gamma\circ(\Delta\circ\Sigma)$ and $(\F\circ \G)\circ \mathcal{H}$ is the interpretation functor of the interpretation $(\Gamma\circ\Delta)\circ\Sigma$. Since $\F\circ (\G\circ \mathcal{H})=(\F\circ \G)\circ \mathcal{H}$, then by Lemma~\ref{le:equal} one has the equality of interpretations $\Gamma\circ(\Delta\circ\Sigma)$ and $(\Gamma\circ\Delta)\circ\Sigma$. Additionally, one has $\dim_\param\Gamma\circ(\Delta\circ\Sigma)= \dim_\param\Gamma\cdot\dim \Delta\circ\Sigma+\dim_\param \Delta\circ\Sigma=\dim_\param\Gamma\cdot\dim \Delta\cdot \dim \Sigma+\dim_\param \Delta\cdot \dim\Sigma+\dim_\param\Sigma=(\dim_\param\Gamma\cdot\dim \Delta+\dim_\param \Delta)\cdot \dim \Sigma+\dim_\param\Sigma= \dim_\param\Gamma\circ\Delta\cdot\dim\Sigma+\dim_\param\Sigma= \dim_\param(\Gamma\circ\Delta)\circ\Sigma$.
\end{proof}

\begin{proposition}[about associativity of horizontal compositions of homotopies]\label{ABCD_hom}
Let $(\theta_{\MA,\MB})\colon (\Gamma_1,\bar p_1,\mu_{\Gamma_1})\to (\Gamma_2,\bar p_2,\mu_{\Gamma_2})$ be strong homotopy of interpretations of $\MA$ in $\MB$, $(\theta_{\MB,\MC})\colon (\Delta_1,\bar q_1,\mu_{\Delta_1})\to (\Delta_2,\bar q_2,\mu_{\Delta_2})$ be strong homotopy of interpretations of $\MB$ in $\MC$ and $(\theta_{\MC,\MD})\colon (\Sigma_1,\bar r_1,\mu_{\Sigma_1})\to (\Sigma_2,\bar r_2,\mu_{\Sigma_2})$ be strong homotopy of interpretations of $\MC$ in $\MD$. Then the homotopies $(\theta_{\MC,\MD}\ast(\theta_{\MB,\MC}\ast\theta_{\MA,\MB}))$ and $((\theta_{\MC,\MD}\ast\theta_{\MB,\MC})\ast\theta_{\MA,\MB})$ are equal.
\end{proposition}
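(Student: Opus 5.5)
We transfer the statement to natural isomorphisms of interpretation functors, where associativity of horizontal composition is a standard 2-categorical fact, and then come back with Lemma~\ref{le:natur_iso} and Fact~\ref{fact:hat4}.

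\emph{Step 1 (functors and natural isomorphisms).} Let $\F_1,\F_2\colon\PLS(\MA)\to\PLS(\MB)$, $\G_1,\G_2\colon\PLS(\MB)\to\PLS(\MC)$ and $\mathcal{H}_1,\mathcal{H}_2\colon\PLS(\MC)\to\PLS(\MD)$ be the interpretation functors of the given interpretations. Put $\eta={\bf N}((\theta_{\MA,\MB}))$, $\tau={\bf N}((\theta_{\MB,\MC}))$ and $\sigma={\bf N}((\theta_{\MC,\MD}))$; these are natural isomorphisms by Theorem~\ref{th:homotopy}.

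\emph{Step 2 (unwind the definitions).} By the Notation after Proposition~\ref{prop1:par}, $(\theta_{\MB,\MC}\ast\theta_{\MA,\MB})={\bf H}(\tau\ast\eta)$. Lemma~\ref{le:natur_iso} gives ${\bf N}({\bf H}(\tau\ast\eta))=\tau\ast\eta$. Hence
$$
(\theta_{\MC,\MD}\ast(\theta_{\MB,\MC}\ast\theta_{\MA,\MB}))={\bf H}(\sigma\ast{\bf N}({\bf H}(\tau\ast\eta)))={\bf H}(\sigma\ast(\tau\ast\eta)),
$$
and in the same way $((\theta_{\MC,\MD}\ast\theta_{\MB,\MC})\ast\theta_{\MA,\MB})={\bf H}((\sigma\ast\tau)\ast\eta)$.

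\emph{Step 3 (associativity for natural isomorphisms).} Horizontal composition of natural transformations is associative~\cite{MacLane}, so $\sigma\ast(\tau\ast\eta)=(\sigma\ast\tau)\ast\eta$. Both are natural isomorphisms from $\mathcal{H}_1\circ\G_1\circ\F_1$ to $\mathcal{H}_2\circ\G_2\circ\F_2$; functor composition is strictly associative.

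\emph{Step 4 (the main obstacle: bookkeeping of codes).} The operator ${\bf H}$ produces a homotopy between specific coordinatized interpretations, and these differ for the two bracketings. On the left we get $\Gamma_i\circ(\Delta_i\circ\Sigma_i)$; on the right we get $(\Gamma_i\circ\Delta_i)\circ\Sigma_i$, each with its own parameters.

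By Proposition~\ref{ABCD}, $\Gamma_i\circ(\Delta_i\circ\Sigma_i)=_{\rm int}(\Gamma_i\circ\Delta_i)\circ\Sigma_i$ for $i=1,2$, so the domains and codomains of the two homotopies are pairwise equal interpretations. By Proposition~\ref{th:ABC}, the interpretation functors of these interpretations are $\mathcal{H}_i\circ\G_i\circ\F_i$ in both bracketings.

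It remains to check that the two connectors define the same $L(\MA)$\=/isomorphism. By Theorem~\ref{th:natur_iso2} and Corollary~\ref{cor:natur_iso_inter}, the connector of ${\bf H}(\zeta)$ defines $\zeta_A$. Here $\zeta_A$ is the same map in both cases, since $\zeta$ is the same natural isomorphism and its component at $A$ is a map between the same objects.

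Therefore the two homotopies are equal in the sense $=_{\rm hom}$. Equivalently, using the code-forgetting operators: by Fact~\ref{fact:hat3} it suffices to show ${\bf \hat N}$ of both sides coincide. Corollary~\ref{cor:ast_N} reduces this to $\hat\sigma\ast(\hat\tau\ast\hat\eta)=(\hat\sigma\ast\hat\tau)\ast\hat\eta$, which is Step 3.
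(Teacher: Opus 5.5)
Your argument is correct and is essentially the paper's proof. Your closing paragraph (Corollary~\ref{cor:ast_N}, associativity of $\ast$ for natural isomorphisms, then Fact~\ref{fact:hat3}) is exactly what the paper does, and Steps~2 and~4 simply unpack Fact~\ref{fact:hat3} by checking the definition of $=_{\rm hom}$ directly via Proposition~\ref{ABCD} and the fact that both connectors define the same component $\zeta_A$. One cosmetic slip, which does not affect the argument: in Step~4 you have the two bracketings reversed, since $(\theta_{\MC,\MD}\ast(\theta_{\MB,\MC}\ast\theta_{\MA,\MB}))$ is the homotopy between the interpretations with codes $(\Gamma_i\circ\Delta_i)\circ\Sigma_i$, while the other bracketing gives $\Gamma_i\circ(\Delta_i\circ\Sigma_i)$.
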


\begin{proof}
By Corollary~\ref{cor:ast_N}, one has ${\bf \hat N}((\theta_{\MC,\MD}\ast(\theta_{\MB,\MC}\ast\theta_{\MA,\MB})))={\bf \hat N}((\theta_{\MC,\MD}))\ast {\bf \hat N}((\theta_{\MB,\MC})\ast(\theta_{\MA,\MB}))={\bf \hat N}((\theta_{\MC,\MD}))\ast({\bf \hat N}((\theta_{\MB,\MC}))\ast{\bf \hat N}((\theta_{\MA,\MB})))$ and ${\bf \hat N}((\theta_{\MC,\MD}\ast\theta_{\MB,\MC})\ast\theta_{\MA,\MB})=
({\bf \hat N}((\theta_{\MC,\MD}))\ast{\bf \hat N}((\theta_{\MB,\MC})))\ast{\bf \hat N}((\theta_{\MA,\MB}))$. Further, for natural isomorphisms the equality ${\bf \hat N}((\theta_{\MC,\MD}))\ast({\bf \hat N}((\theta_{\MB,\MC}))\ast{\bf \hat N}((\theta_{\MA,\MB})))=({\bf\hat  N}((\theta_{\MC,\MD}))\ast{\bf \hat N}((\theta_{\MB,\MC})))\ast{\bf \hat N}((\theta_{\MA,\MB}))$ holds~\cite{MacLane}. Therefore, due to Fact~\ref{fact:hat3}, one has $(\theta_{\MC,\MD}\ast(\theta_{\MB,\MC}\ast\theta_{\MA,\MB}))=_{\rm hom}((\theta_{\MC,\MD}\ast\theta_{\MB,\MC})\ast\theta_{\MA,\MB})$.
\end{proof}

{\bf Transitivity of bi-interpretations and invertible interpretations.} 
From the very beginning, it was natural to expect from the strong bi-interpretation that this is an equivalence relation on algebraic structures. But only now can we show why this is indeed the case. First, we will show the transitivity of invertible interpretations.

\begin{proposition}\label{prop2:par}
If interpretations $(\Gamma,\bar p,\mu_\Gamma)\colon\MA\rightsquigarrow\MB$ and $(\Delta,\bar q,\mu_\Delta)\colon\MB\rightsquigarrow\MC$ are (left, right, two-sided) invertible, then the composition $(\Gamma\circ\Delta,(\bar{\bar p},\bar q),\mu_{\Gamma}\circ\mu_{\Delta})\colon \MA\rightsquigarrow\MC$ is (left, right, two-sided) invertible as well.
\end{proposition}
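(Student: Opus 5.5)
The plan is to recall from~\cite{Th_int1} that an interpretation $(\Gamma,\bar p,\mu_\Gamma)\colon\MA\rightsquigarrow\MB$ is left invertible if there is an interpretation $(\Gamma',\bar p',\mu_{\Gamma'})\colon\MB\rightsquigarrow\MA$ such that the composition $\Gamma\circ\Gamma'$ is strongly homotopic to $(\Id_{L(\MA)},\emptyset,\id_A)$. Right invertibility is the same with $\Gamma'\circ\Gamma$ strongly homotopic to $(\Id_{L(\MB)},\emptyset,\id_B)$, and two-sided invertibility means both hold. I treat left invertibility; the right case is symmetric and the two-sided case is their conjunction.

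So let $(\Gamma',\bar p',\mu_{\Gamma'})\colon\MB\rightsquigarrow\MA$ and $(\Delta',\bar q',\mu_{\Delta'})\colon\MC\rightsquigarrow\MB$ be left inverses of $\Gamma$ and $\Delta$. The candidate left inverse of $\Gamma\circ\Delta$ is $\Delta'\circ\Gamma'\colon\MC\rightsquigarrow\MA$. I will pass to functors rather than manipulate codes. Let $\F,\G,\F',\G'$ be the interpretation functors of $\Gamma,\Delta,\Gamma',\Delta'$ given by Theorem~\ref{th:inter}. By Theorem~\ref{th:homotopy} and Remark~\ref{remark:ID}, there are natural isomorphisms $\eta\colon\F'\circ\F\to\id_{\PLS(\MA)}$ and $\tau\colon\G'\circ\G\to\id_{\PLS(\MB)}$. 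By Proposition~\ref{th:ABC}, the interpretation $(\Gamma\circ\Delta)\circ(\Delta'\circ\Gamma')$ has functor $(\F'\circ\G')\circ(\G\circ\F)$, which equals $\F'\circ(\G'\circ\G)\circ\F$ by associativity of functor composition.

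The whiskered isomorphism $\F'\tau\F=\id_{\F'}\ast\tau\ast\id_{\F}$ gives a natural isomorphism from $\F'\circ(\G'\circ\G)\circ\F$ to $\F'\circ\F$. Composing it vertically with $\eta$ yields a natural isomorphism from the functor of $(\Gamma\circ\Delta)\circ(\Delta'\circ\Gamma')$ to $\id_{\PLS(\MA)}$. By Remark~\ref{remark:ID}, $\id_{\PLS(\MA)}$ is the functor of the identical interpretation. Corollary~\ref{cor:natur_iso_inter} then shows that $(\Gamma\circ\Delta)\circ(\Delta'\circ\Gamma')$ is strongly homotopic to $(\Id_{L(\MA)},\emptyset,\id_A)$. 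Hence $\Gamma\circ\Delta$ is left invertible.

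The main obstacle is bookkeeping about which interpretation a composite functor actually is. Proposition~\ref{th:ABC} identifies $(\F'\circ\G')\circ(\G\circ\F)$ with the functor of the composite interpretation only up to bracketing, and the parameter tuples must be the prescribed ones ($\bar{\bar p}$, etc.). I would settle this with Proposition~\ref{ABCD}, which says differently bracketed composites are equal interpretations, together with Lemma~\ref{le:equal}, which says equal interpretations have equal functors. Corollary~\ref{cor1:par} then lets me replace any factor by an equal interpretation, so the homotopy obtained does attach to the composite exactly as written in the definition of invertibility.
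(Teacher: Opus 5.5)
Your argument is correct, and it covers the same case the paper treats. (The paper calls your ``left invertible'' case ``right invertible'', because $\Gamma\circ\Delta$ denotes $\MA\rightsquigarrow\MB\rightsquigarrow\MC$; this is only a naming difference.)

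\textbf{The paper's route.} It stays at the level of interpretations and writes the chain $\Gamma\circ\Delta\circ\Delta'\circ\Gamma'\sim\Gamma\circ\Id_{L(\MB)}\circ\Gamma'\sim\Gamma\circ\Gamma'\sim\Id_{L(\MA)}$. It uses:
\begin{itemize}
\item Proposition~\ref{prop1:par} (horizontal composition of strong homotopies) to replace $\Delta\circ\Delta'$ by $\Id_{L(\MB)}$ inside the composite;
\item Proposition~\ref{ABCD} to rebracket;
\item Corollary~\ref{cor2:par} together with the unit law $\Gamma\circ\Id\sim\Gamma$ from the first paper of the series;
\item transitivity of $\sim$ to chain the steps.
\end{itemize}

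\textbf{Your route.} You lift to $\PLS$ once, work with the whiskered isomorphism $\F'\tau\F$ and one vertical composite, and return via Corollary~\ref{cor:natur_iso_inter}. Proposition~\ref{prop1:par} is itself proved by exactly this round trip through functors, with $\tau\ast\eta$ in place of your whiskering. So your proof is essentially that proposition inlined into the specific chain.

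\textbf{What each buys.} In your version, associativity and the unit laws become strict identities of functor composition. Proposition~\ref{ABCD} and Corollary~\ref{cor2:par} are therefore not needed at all. The paper's version keeps the statement inside the interpretation calculus and reuses general lemmas it has already established.

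\textbf{Your ``main obstacle'' is not real.} Apply Proposition~\ref{th:ABC} to the two-factor composite $(\Gamma\circ\Delta)\circ(\Delta'\circ\Gamma')$, and then to each factor. This gives exactly $(\F'\circ\G')\circ(\G\circ\F)$, for any choice of preimage parameter tuples. There is no bracketing discrepancy, so your proposed fix via Proposition~\ref{ABCD}, Lemma~\ref{le:equal} and Corollary~\ref{cor1:par} is harmless but superfluous.
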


\begin{proof}
We show the case of right invertible interpretations; left invertible and two-sided invertible are similar. Let $(\Gamma^\prime,\bar p^\prime,\mu_{\Gamma^\prime})\colon\MB\rightsquigarrow\MA$ and $(\Delta^\prime,\bar q^\prime,\mu_{\Delta^\prime})\colon\MC\rightsquigarrow\MB$ be interpretations, such that interpretations $(\Gamma\circ\Gamma^\prime,(\bar{\bar p},\bar p^\prime),\mu_{\Gamma}\circ\mu_{\Gamma^\prime})$ and $(\Id_{L(\MA)},\emptyset,\id_A)$ are strongly homotopic, as well as $(\Delta\circ\Delta^\prime,(\bar{\bar q},\bar q^\prime),\mu_{\Delta}\circ\mu_{\Delta^\prime})$ and $(\Id_{L(\MB)},\emptyset,\id_B)$. Further, we will write interpretations just via their codes and denote strong homotopy by $\sim$. Also we will use Proposition~\ref{prop1:par} for transitivity of strong homotopy, Proposition~\ref{ABCD} for associativity of compositions and Corollary~\ref{cor2:par} with~\cite[Lemma~9]{Th_int1} to assert that $\Gamma\sim\Gamma\circ\Id\sim\Id\circ\Gamma$. So, we have $\Gamma\circ\Delta\circ\Delta^\prime\circ\Gamma^\prime\sim\Gamma\circ\Id_{L(\MB)}\circ\Gamma^\prime\sim\Gamma\circ\Gamma^\prime\sim\Id_{L(\MA)}$, as required. 
\end{proof}

\begin{proposition}
Strong bi-interpretability with parameters is transitive. It means, if $\MA$ and $\MB$ are strongly bi-interpretable, and $\MB$ and $\MC$ are strongly bi-interpretable, then $\MA$ and $\MC$ are strongly bi-interpretable. Moreover, strong injective bi-interpretations, strong absolute bi-interpretations, strong absolute injective interpretations, and syntax isomorphisms are transitive as well.
\end{proposition}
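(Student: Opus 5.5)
The main route goes through the categorical criteria; the general case can also be handled directly by Proposition~\ref{prop2:par}.

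\textbf{General case, via categories.} By Theorem~\ref{th:bi-inter}, strong bi-interpretability of $\MA$ and $\MB$ is equivalent to $\PLS(\MA)\sim_\Int\PLS(\MB)$, and similarly for $\MB$ and $\MC$. Fact~\ref{fact:eq_rel} states that $\sim_\Int$ is transitive, since compositions of interpretation functors are interpretation functors by Proposition~\ref{prop:comp1}, and natural isomorphisms compose horizontally and vertically. Hence $\PLS(\MA)\sim_\Int\PLS(\MC)$, and Theorem~\ref{th:bi-inter} gives that $\MA$ and $\MC$ are strongly bi-interpretable.

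\textbf{General case, directly.} Alternatively, one can argue in the language of interpretations. Let $\Gamma\colon\MA\rightsquigarrow\MB$, $\Gamma^\prime\colon\MB\rightsquigarrow\MA$, $\Delta\colon\MB\rightsquigarrow\MC$ and $\Delta^\prime\colon\MC\rightsquigarrow\MB$ be the given interpretations, each two-sided invertible. Proposition~\ref{prop2:par} shows that $\Gamma\circ\Delta$ is two-sided invertible with inverse $\Delta^\prime\circ\Gamma^\prime$. The required strong homotopies are obtained from horizontal composition (Proposition~\ref{prop1:par}) and associativity (Proposition~\ref{ABCD}).

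\textbf{Injective case.} Use Corollary~\ref{cor:bi1} together with transitivity of $\sim_{\Int_\Inj}$ from Fact~\ref{fact:eq_rel}.

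\textbf{Syntax isomorphisms.} Use Theorem~\ref{th:syn_iso} together with transitivity of $\simeq_\Int$ from Fact~\ref{fact:eq_rel_syn}.

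\textbf{Absolute case.} This case has no categorical criterion yet, so argue directly. Compositions of absolute codes with empty parameter tuples are absolute, with parameter tuple $(\bar{\bar p},\bar q)=\emptyset$. Lemma~\ref{lemma:abs} says that horizontal composition of absolute strong homotopies is again absolute. Rerun the chain of homotopies from Proposition~\ref{prop2:par} inside the absolute setting:
\[
\Gamma\circ\Delta\circ\Delta^\prime\circ\Gamma^\prime\sim\Gamma\circ\Id\circ\Gamma^\prime\sim\Gamma\circ\Gamma^\prime\sim\Id .
\]
Here the homotopies $\Gamma\circ\Id\sim\Gamma$ come from equality of interpretations, which is an identity homotopy with a parameter-free connector $E_\Gamma$. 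Associativity is likewise an equality of interpretations and therefore absolute. The same steps handle the reverse composite $\Delta^\prime\circ\Gamma^\prime\circ\Gamma\circ\Delta$.

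\textbf{Absolute injective case.} Combine the injective and absolute arguments: compositions of injective codes are injective, and every step above preserves both properties.

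\textbf{Main obstacle.} I expect the hardest part to be checking that vertical composition of absolute strong homotopies is absolute; this is needed to chain the $\sim$'s. The plan is to show it via natural isomorphisms: vertical composition of the natural isomorphisms that restrict to $\PLS_0$ again restricts to $\PLS_0$, and Fact~\ref{cor:abs_hom} then gives absoluteness.
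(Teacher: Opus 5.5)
Your proposal is correct and follows the paper's proof. The paper handles the general and injective cases with Theorem~\ref{th:bi-inter}, Corollary~\ref{cor:bi1} and Fact~\ref{fact:eq_rel}, the syntax-isomorphism case with Theorem~\ref{th:syn_iso} and Fact~\ref{fact:eq_rel_syn}, and the absolute case by rerunning the chain from Proposition~\ref{prop2:par} with Lemma~\ref{lemma:abs}, exactly as you do. The step you flag as the main obstacle is immediate: the connector of a vertical composite is the formula \eqref{eq:theta}, which has no parameters when $\theta_{1,2}$ and $\theta_{2,3}$ have none, so the detour through natural isomorphisms and Fact~\ref{cor:abs_hom} is unnecessary.
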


\begin{proof}
By Fact~\ref{fact:eq_rel}, relations $\sim_\Int$ and $\sim_{\Int_\Inj}$ are transitive. Therefore, Theorem~\ref{th:bi-inter} and Corollary~\ref{cor:bi1} give the result in the general case and in the case of strong injective bi-interpretations. The result about syntax isomorphisms follows from Theorem~\ref{th:syn_iso} and Fact~\ref{fact:eq_rel_syn}. Finally, for strong absolute bi-interpretations we repeat arguments from the proof of Proposition~\ref{prop2:par}, and using Lemma~\ref{lemma:abs} we obtain the required. 
\end{proof}

\begin{corollary}
Strong bi-interpretability, strong injective bi-interpretability, strong absolute bi-interpretability, strong absolute injective bi-interpretability, and syntactic isomorphism are equivalence relations on the class of all algebraic structures.
\end{corollary}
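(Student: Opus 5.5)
The corollary packages the preceding proposition together with the obvious reflexivity and symmetry. Transitivity has just been established for all five relations. So it only remains to verify reflexivity and symmetry for each of them, and each verification is a short unwinding of definitions.

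For \emph{reflexivity}, take the identical interpretation $(\Id_{L(\MA)},\emptyset,\id_A)\colon\MA\rightsquigarrow\MA$. Its code is absolute, injective and even syntax by Remark~\ref{remark:syntax}. By \cite[Lemma~9]{Th_int1}, the composition $\Id_{L(\MA)}\circ\Id_{L(\MA)}$ coincides with $\Id_{L(\MA)}$. Hence the pair $(\Id_{L(\MA)},\emptyset,\id_A;\Id_{L(\MA)},\emptyset,\id_A)$ is simultaneously:
\begin{itemize}
\item a strong bi-interpretation whose homotopy isomorphism is the identity map, defined by $x=x'$ without parameters;
\item a strong absolute injective bi-interpretation;
\item a syntax isomorphism, since the compositions are equal to the identity interpretations in the sense of Definition~\ref{def:equal}.
\end{itemize}
Categorically, the same observation is that $\id_{\PLS(\MA)}$ is an interpretation functor (Remark~\ref{remark:ID}), which gives $\PLS(\MA)\simeq_\Int\PLS(\MA)$. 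One can then appeal to Theorem~\ref{th:bi-inter} and Theorem~\ref{th:syn_iso}.

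For \emph{symmetry}, the definition of a strong bi-interpretation $(\Gamma,\bar p,\mu_\Gamma;\Delta,\bar q,\mu_\Delta)$ between $\MA$ and $\MB$ is symmetric under swapping the two halves. Swapping gives $(\Delta,\bar q,\mu_\Delta;\Gamma,\bar p,\mu_\Gamma)$, a strong bi-interpretation between $\MB$ and $\MA$, because the two required homotopy conditions are just exchanged. Any restriction is preserved under the swap, since it is imposed on both codes:
\begin{itemize}
\item injectivity of the codes;
\item absoluteness of the codes and of the connectors;
\item the requirement that both compositions are equal to the identity interpretations.
\end{itemize}
Alternatively, symmetry follows from the symmetry of the categorical relations $\sim_\Int$, $\sim_{\Int_\Inj}$, $\simeq_\Int$ noted in Facts~\ref{fact:eq_rel} and~\ref{fact:eq_rel_syn}.

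Combining reflexivity, symmetry, and the transitivity from the preceding proposition gives the corollary. No step is a real obstacle. The only point to state carefully is that the identity interpretation satisfies the \emph{absolute} versions of the conditions, which holds because its connectors are parameter-free.
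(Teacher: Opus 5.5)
Your proposal is correct and follows the paper's route. The paper states this corollary without a separate proof: the preceding proposition supplies transitivity for all five relations, while reflexivity (via the identity interpretation) and symmetry (swapping the two halves of a bi-interpretation) are immediate.

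One small precision on reflexivity: you only need $\Id_{L(\MA)}\circ\Id_{L(\MA)}$ to be \emph{equal} as an interpretation to $\Id_{L(\MA)}$, with a parameter-free connector, rather than to coincide with it as a code. This already follows from Proposition~\ref{th:ABC}, Remark~\ref{remark:ID} and Lemma~\ref{le:equal}.
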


\subsection{Big interpretation and logical-geometric categories}\label{subsec:big_iso}

The results demonstrated in this article allow us to expand the discussion to big categories. These require separate research, so we will only outline the most obvious conclusions here.

Let us denote by ${\bf AS}$ the category, which objects are all algebraic structures $\MA$, $\MB$, $\MC$, $\MD$, and so on; and morphisms are interpretations $(\Gamma,\bar p,\mu_\Gamma)\colon \MA\rightsquigarrow\MB$ up to equality (see Definition~\ref{def:equal}), i.\,e, classes of equivalence of equal interpretations.

\begin{fact}
${\bf AS}$ is indeed a category.
\end{fact}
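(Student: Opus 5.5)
To show that ${\bf AS}$ is a category, I will check three things in turn: composition of morphisms is well defined on equality classes, identities exist, and composition is associative. The tools are the correspondence between interpretations and interpretation functors (Lemma~\ref{le:equal}, Fact~\ref{fact:hat1}) together with Proposition~\ref{th:ABC}. I will also note that ${\bf AS}$ is not small, so it is a category in the sense of classes; no hom-set condition is needed beyond this.

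\textbf{Composition.} For classes $[(\Gamma,\bar p,\mu_\Gamma)]\colon\MA\rightsquigarrow\MB$ and $[(\Delta,\bar q,\mu_\Delta)]\colon\MB\rightsquigarrow\MC$, I set their composite to be the class of $(\Gamma\circ\Delta,(\bar{\bar p},\bar q),\mu_\Gamma\circ\mu_\Delta)$, which exists by~\cite[Lemma~10]{Th_int1}. Two choices must be shown irrelevant.
\begin{enumerate}
\item Independence of the representatives is exactly Corollary~\ref{cor1:par}: composing equal interpretations gives equal interpretations.
\item Independence of the choice of $\bar{\bar p}\in\mu_\Delta^{-1}(\bar p)$ follows from Lemma~\ref{ABC}. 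The translation $\psi_{\Gamma\circ\Delta,(\bar{\bar p},\bar q),\mu_{\Gamma\circ\Delta}}$ defines the same sets in $\MC$ for every such choice. Hence the interpretation functor $\G\circ\F$ of Proposition~\ref{th:ABC} does not depend on it, and Lemma~\ref{le:equal} gives equality of the resulting interpretations.
\end{enumerate}

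\textbf{Identities.} The identity on $\MA$ is the class of $(\Id_{L(\MA)},\emptyset,\id_A)$. By Remark~\ref{remark:ID} its functor is $\id_{\PLS(\MA)}$. By Proposition~\ref{th:ABC}, the functor of $\Gamma\circ\Id_{L(\MB)}$ or of $\Id_{L(\MA)}\circ\Gamma$ is $\F\circ\id=\id\circ\F=\F$. Lemma~\ref{le:equal} then turns this equality of functors into the required equality of interpretations. This step could alternatively be quoted from~\cite[Lemma~9]{Th_int1}.

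\textbf{Associativity.} This is Proposition~\ref{ABCD}, applied to representatives, combined with the well-definedness already established.

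The only genuinely delicate point is item~2 of the composition step, the dependence on the auxiliary tuple $\bar{\bar p}$. I will route it through the functors as described, so that everything reduces to Lemma~\ref{le:equal}.
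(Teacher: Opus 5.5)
Your proposal is correct and follows the paper's proof. Composition is well defined on equality classes by Corollary~\ref{cor1:par}, the identical interpretation $(\Id_{L(\MA)},\emptyset,\id_A)$ serves as the identity, and associativity is Proposition~\ref{ABCD}. Your separate handling of the choice of $\bar{\bar p}$ and your functorial check of the identity axiom just spell out what the paper leaves implicit; the former is already covered by the proof of Corollary~\ref{cor1:par}, which allows arbitrary choices of $\bar{\bar p}_1,\bar{\bar p}_2$.
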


\begin{proof}
By Corollary~\ref{cor1:par}, the composition of interpretations is correctly defined in the classes of equivalence of equal interpretations. For any algebraic structure $\MA=\langle A; L(\MA)\rangle$ the identical interpretation $(\Id_{L(\MA)},\emptyset,\id_A)$ is the identical morphism $\id_\MA$. By Proposition~\ref{ABCD}, one has the associativity on morphisms. 
\end{proof}

\begin{remark}
Algebraic structures $\MA$ and $\MB$ are categorically isomorphic in ${\bf AS}$ if and only if they are syntactically isomorphic.
\end{remark}

Denote by ${\bf PLG}$ the category, which objects are categories $\PLS(\MA)$ for all algebraic structures $\MA$ and morphisms are interpretation functors.

\begin{fact}
${\bf PLG}$ is indeed a category.
\end{fact}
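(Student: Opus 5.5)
We must check that ${\bf PLG}$ satisfies the category axioms: composition of morphisms is defined and lands in the class of morphisms, identities exist, and composition is associative.

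\emph{Composition.} Take interpretation functors $\F\colon\PLS(\MA)\to\PLS(\MB)$ and $\G\colon\PLS(\MB)\to\PLS(\MC)$, say the $(\Gamma,\bar p,\gamma)$- and $(\Delta,\bar q,\delta)$-functors. By Proposition~\ref{prop:comp1}, $\G\circ\F$ is the interpretation $(\Gamma\circ\Delta,(\delta(\bar p),\bar q),\delta\circ\gamma)$-functor. Hence it is again a morphism of ${\bf PLG}$.

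\emph{Identities.} By Remark~\ref{remark:ID}, the identical extended code $(\Id_{L(\MA)},\emptyset,\id_A)$ yields the interpretation functor $\id_{\PLS(\MA)}$. It serves as the identity morphism at $\PLS(\MA)$, since $\F\circ\id_{\PLS(\MA)}=\F=\id_{\PLS(\MB)}\circ\F$ holds for ordinary functors.

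\emph{Associativity.} Composition in ${\bf PLG}$ is ordinary composition of functors, and functor composition is associative.

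\emph{The main subtlety: what a morphism is.} An object $\PLS(\MA)$ determines only the category, not the structure $\MA$, so a hom-set has to be specified carefully.
\begin{itemize}
\item The plan is to treat a morphism as a functor $\PLS(\MA)\to\PLS(\MB)$ together with the indexing structures $\MA,\MB$. Equivalently, take objects to be the pairs $(\MA,\PLS(\MA))$.
\item With this choice, ``being an interpretation functor'' (existence of an extended code with \ref{F1}) is a well-defined property of an arrow.
\item Composites and identities keep their indexing structures, so nothing breaks.
\item Functors are equal exactly when they agree on objects and morphisms. By Fact~\ref{fact:hat1}, this matches equality of interpretations up to $=_{\rm int}$, but that correspondence is not needed for the axioms.
\end{itemize}

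I expect no genuine obstacle. The only care needed is this bookkeeping of objects and hom-sets; everything else follows directly from Proposition~\ref{prop:comp1} and Remark~\ref{remark:ID}.
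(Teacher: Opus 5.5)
Your proof is correct and matches the paper's, which also uses Proposition~\ref{prop:comp1} for closure under composition and Remark~\ref{remark:ID} for identities, with associativity inherited from ordinary functor composition. Indexing each object by its underlying structure $\MA$ is a harmless refinement that the paper leaves implicit.
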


\begin{proof}
It follows from Proposition~\ref{prop:comp1} and Remark~\ref{remark:ID}.  
\end{proof}

\begin{remark}
Categories $\PLS(\MA)$ and $\PLS(\MB)$ are isomorphic as objects in ${\bf PLG}$ if and only if they are isomorphic as categories relative to the class of interpretation functors $\Int$, i.\,e, $\PLS(\MA)\simeq_\Int\PLS(\MB)$.
\end{remark}

\begin{theorem}\label{TH}
The categories ${\bf AS}$ and ${\bf PLG}$ are isomorphic.
\end{theorem}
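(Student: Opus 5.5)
The plan is to construct an explicit functor $\Phi\colon {\bf AS}\to{\bf PLG}$ and show that it is bijective on objects and on hom-sets; the inverse functor is then automatic. On objects we set $\Phi(\MA)=\PLS(\MA)$. On morphisms, a class $[(\Gamma,\bar p,\mu_\Gamma)]$ of equal interpretations $\MA\rightsquigarrow\MB$ is sent to the interpretation functor ${\bf \hat F}((\Gamma,\bar p,\mu_\Gamma))=\F_{\Gamma,\bar p,\mu_\Gamma}\colon\PLS(\MA)\to\PLS(\MB)$.

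\textbf{Well-definedness and injectivity on hom-sets.} By Fact~\ref{fact:hat1}, which is Lemma~\ref{le:equal}, two interpretations are equal exactly when their interpretation functors coincide. This gives both that $\Phi$ is well defined on equivalence classes and that it is injective on hom-sets.

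\textbf{Surjectivity on hom-sets.} This follows from Theorem~\ref{th:functor2}\ref{item:IF2}, equivalently Fact~\ref{fact:hat2}. Every interpretation functor $\F\colon\PLS(\MA)\to\PLS(\MB)$ equals ${\bf \hat F}({\bf \hat I}(\F))$, so it comes from some interpretation.

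\textbf{Functoriality.} Identities are preserved by Remark~\ref{remark:ID}: the identical interpretation $(\Id_{L(\MA)},\emptyset,\id_A)$ yields $\id_{\PLS(\MA)}$. Composition is preserved by Proposition~\ref{th:ABC}: the functor of the composite interpretation $\Gamma\circ\Delta$ is $\G\circ\F$. Composition in ${\bf AS}$ is well defined on classes by Corollary~\ref{cor1:par}, so this statement is meaningful on classes. We also need to respect the convention that morphisms in ${\bf AS}$ run $\MA\to\MB$ for an interpretation $\MA\rightsquigarrow\MB$ with $\Gamma\circ\Delta$ as composite; $\Phi$ preserves the order of composition, so it is covariant.

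\textbf{Bijectivity on objects.} This is the main obstacle: we must show that $\MA\mapsto\PLS(\MA)$ is injective, i.e.\ that $\PLS(\MA)=\PLS(\MB)$ as categories forces $\MA=\MB$. I would treat objects of ${\bf PLG}$ formally as the pairs (structure, its category $\PLS$), that is, as indexed by $\MA$; this is the implicit convention in the definition of ${\bf PLG}$, and under it $\Phi$ is bijective on objects by definition. If one insists on raw categories, two distinct structures could have literally identical $\PLS$ only if they share universe and definable data. Even then, the hom-set argument above, applied to the identity functor via Theorem~\ref{th:functor2}, produces a syntax isomorphism between them; so at worst $\Phi$ is an isomorphism after identifying such cases, and I would state the convention explicitly.

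With $\Phi$ bijective on objects and hom-sets, its inverse on objects and arrows (namely ${\bf \hat I}$ on arrows) is automatically a functor. Hence ${\bf AS}\cong{\bf PLG}$.
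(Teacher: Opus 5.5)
Your proposal is correct and follows essentially the same route as the paper. Objects are matched as $\MA\leftrightarrow\PLS(\MA)$, Fact~\ref{fact:hat1} makes ${\bf \hat F}$ well defined and injective on classes of equal interpretations, and Fact~\ref{fact:hat2} shows that ${\bf \hat F}$ and ${\bf \hat I}$ are mutually inverse. Your explicit check of identities and composition via Remark~\ref{remark:ID}, Proposition~\ref{th:ABC} and Corollary~\ref{cor1:par} fills in a step the paper leaves implicit. Your indexed-objects convention is exactly what the paper tacitly assumes when it calls the object correspondence trivial, so the raw-category hedge is not needed.
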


\begin{proof}
One-to-one correspondence on objects of the categories ${\bf AS}$ and ${\bf PLG}$ is trivial, namely, an algebraic structure $\MA$ corresponds to the category $\PLS(\MA)$. Due to Fact~\ref{fact:hat1}, we may assume that ${\bf \hat F}$ and ${\bf \hat I}$ are functors on morphisms in the categories ${\bf AS}$ and ${\bf PLG}$, so one has ${\bf \hat F}\colon {\bf AS}\to {\bf PLG}$, ${\bf\hat  I}\colon {\bf PLG}\to {\bf AS}$. And by Fact~\ref{fact:hat2}, ${\bf \hat I}\circ{\bf \hat F}=\id_{{\bf AS}}$ and  ${\bf \hat F}\circ{\bf \hat I}=\id_{{\bf PLG}}$. Therefore, functors ${\bf \hat F}$ and ${\bf \hat I}$ give an isomorphism of categories ${\bf AS}$ and ${\bf PLG}$.   
\end{proof}

Let ${\bf AS}/{\sim}$ be the category of all algebraic structures and classes of equivalencies of strongly homotopic interpretations as morphisms. 

\begin{fact}
    ${\bf AS}/{\sim}$ is indeed a category.
\end{fact}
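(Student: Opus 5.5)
The plan is to check the category axioms directly. Objects are the algebraic structures. A morphism $\MA\to\MB$ is a class of interpretations $(\Gamma,\bar p,\mu_\Gamma)\colon\MA\rightsquigarrow\MB$ modulo the relation ``strongly homotopic''. By~\cite[Lemma~5]{Th_int1} (vertical composition of homotopies) together with reflexivity and symmetry, this relation is an equivalence relation. We then need to show three things: composition is well defined on these classes, it is associative, and the identical interpretations act as units.

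\textbf{Well-definedness of composition.} Let $(\Gamma_1,\bar p_1,\mu_{\Gamma_1})\sim(\Gamma_2,\bar p_2,\mu_{\Gamma_2})$ be interpretations $\MA\rightsquigarrow\MB$ and $(\Delta_1,\bar q_1,\mu_{\Delta_1})\sim(\Delta_2,\bar q_2,\mu_{\Delta_2})$ interpretations $\MB\rightsquigarrow\MC$. Since every homotopy can be made strong~\cite[Remark~19]{Th_int1}, we may assume both homotopies are strong. Proposition~\ref{prop1:par} then gives a strong homotopy $(\theta_{\MB,\MC}\ast\theta_{\MA,\MB})$ between $\Gamma_1\circ\Delta_1$ and $\Gamma_2\circ\Delta_2$, so the class of the composite does not depend on the chosen representatives. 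This horizontal compatibility is the only real obstacle, and the paper has already resolved it through the categorical route: pass to the interpretation functors, take the horizontal composite $\tau\ast\eta$ of the natural isomorphisms, and return via ${\bf H}$ using Corollary~\ref{cor:natur_iso_inter}.

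\textbf{Associativity and identities.} By Proposition~\ref{ABCD}, $(\Gamma\circ\Delta)\circ\Sigma$ and $\Gamma\circ(\Delta\circ\Sigma)$ are equal interpretations. Equal interpretations are in particular strongly homotopic (with the identical homotopy isomorphism), so associativity holds on classes. For units, take $\id_\MA$ to be the class of $(\Id_{L(\MA)},\emptyset,\id_A)$. By~\cite[Lemma~9]{Th_int1} and Corollary~\ref{cor2:par}, $\Id\circ\Gamma$ and $\Gamma\circ\Id$ are equal to $\Gamma$. Alternatively, Proposition~\ref{th:ABC} together with Remark~\ref{remark:ID} shows that their interpretation functors coincide with that of $\Gamma$, and Lemma~\ref{le:equal} then gives equality. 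Hence they are strongly homotopic to $\Gamma$, and the class of the identical interpretation is a two-sided unit.

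A simpler way to organize the whole argument is to note that ``equal'' refines ``strongly homotopic''. The quotient ${\bf AS}/{\sim}$ is therefore the quotient of the category ${\bf AS}$ (already shown to be a category) by the congruence $\sim$. So the only thing to verify is that $\sim$ is a congruence, meaning it is an equivalence relation on each hom-set that is compatible with composition. This is exactly the content of the previous two steps.
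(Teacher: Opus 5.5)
Your proof is correct and follows the paper's argument: strong homotopy is an equivalence relation by \cite[Lemma~5]{Th_int1}, and composition is well defined on classes by Proposition~\ref{prop1:par}. The paper leaves associativity and units implicit, since they are inherited from ${\bf AS}$ already being a category; your congruence-quotient formulation makes this explicit. Your appeal to \cite[Remark~19]{Th_int1} is unnecessary, because the morphisms are already classes of strongly homotopic interpretations.
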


\begin{proof}
According to~\cite[Lemma~5]{Th_int1}, strong homotopy is indeed an equivalence relation, so morphisms in ${\bf AS}/{\sim}$ are well-defined. Due to Proposition~\ref{prop1:par}, compositions of morphisms are well-defined as well. 
\end{proof}

\begin{remark}
    Algebraic structures $\MA$ and $\MB$ are categorically isomorphic in ${\bf AS}/{\sim}$ if and only if they are strongly bi-interpretable.
\end{remark}

Denote by ${\bf PLG}/{\sim}$ the category of all categories $\PLS(\MA)$ and classes of naturally isomorphic interpretation functors as morphisms. 

\begin{remark}
    Categories $\PLS(\MA)$ and $\PLS(\MB)$ are isomorphic as objects in ${\bf PLG}/{\sim}$ if and only if they are equivalent as categories relative to the class of interpretation functors $\Int$, i.\,e, $\PLS(\MA)\sim_\Int\PLS(\MB)$.
\end{remark}

\begin{theorem}\label{TH_sim}
    The categories ${\bf AS}/{\sim}$ and ${\bf PLG}/{\sim}$ are isomorphic.
\end{theorem}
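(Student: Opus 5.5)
The plan is to build the isomorphism on top of the one from Theorem~\ref{TH}, passing to quotients on both sides. On objects, I keep the bijection $\MA\mapsto\PLS(\MA)$. On morphisms, I want ${\bf \hat F}$ to send the class of an interpretation under strong homotopy to the class of its interpretation functor under natural isomorphism. In the other direction, ${\bf \hat I}$ should send the class of an interpretation functor to the class of the interpretation it determines.

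\textbf{Step 1: the maps are well defined on classes.} Take strongly homotopic interpretations $(\Gamma_1,\bar p_1,\mu_{\Gamma_1})$ and $(\Gamma_2,\bar p_2,\mu_{\Gamma_2})$ of $\MA$ in $\MB$. Theorem~\ref{th:homotopy} gives a natural isomorphism ${\bf N}((\theta))$ between their interpretation functors, so ${\bf \hat F}$ respects the relations. Equal interpretations are in particular strongly homotopic, so a morphism of ${\bf AS}/{\sim}$ is a class of classes and this causes no trouble. Conversely, suppose $\eta\colon\F_1\to\F_2$ is a natural isomorphism between interpretation functors. Corollary~\ref{cor:natur_iso_inter}, or Theorem~\ref{th:natur_iso2} applied to ${\bf \hat I}(\F_1)$ and ${\bf \hat I}(\F_2)$, shows that the corresponding interpretations are strongly homotopic. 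So ${\bf \hat I}$ also respects the relations.

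\textbf{Step 2: functoriality.} Composition in ${\bf AS}/{\sim}$ is well defined by Proposition~\ref{prop1:par}. Composition in ${\bf PLG}/{\sim}$ is well defined by the standard horizontal composition $\tau\ast\eta$ of natural isomorphisms; I would record this as a short fact, the analogue of the preceding fact for ${\bf AS}/{\sim}$, with identities given by Remark~\ref{remark:ID}. Preservation of composition then follows by choosing representatives and using Proposition~\ref{th:ABC}, which says the composition of interpretation functors is the functor of the composed interpretation. Identities go to identities by Remark~\ref{remark:ID}.

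\textbf{Step 3: the functors are mutually inverse.} This holds because Fact~\ref{fact:hat2} already gives ${\bf \hat I}\circ{\bf \hat F}=\id$ and ${\bf \hat F}\circ{\bf \hat I}=\id$ at the level of representatives, up to $=_{\rm int}$. Passing to coarser classes preserves these identities.

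I expect the main obstacle to be the bookkeeping in Step 1. One has to check that strong homotopy classes of interpretations correspond exactly to natural-isomorphism classes of interpretation functors, including when the coordinate maps $\mu_\Gamma$ are replaced by other coordinatizations. I would handle this by working throughout with Corollary~\ref{cor:natur_iso_inter} together with Remark~\ref{lemma:gamma}.
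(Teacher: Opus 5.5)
Your proposal is correct and follows the paper's own argument. The paper also starts from the isomorphism ${\bf \hat F},{\bf \hat I}$ of Theorem~\ref{TH} and uses Theorem~\ref{th:homotopy} and Corollary~\ref{cor:natur_iso_inter} to show that strong homotopy of interpretations corresponds exactly to natural isomorphism of interpretation functors, so the isomorphism descends to the quotients. Your Steps~2 and~3 only spell out bookkeeping the paper leaves implicit: well-definedness of composition in ${\bf PLG}/{\sim}$ via $\tau\ast\eta$, and mutual inverseness inherited from Fact~\ref{fact:hat2}.
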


\begin{proof}
We continue the argument in the proof of Theorem~\ref{TH}. By Theorem~\ref{th:homotopy}, if two interpretations $(\Gamma_1,\bar p_1,\mu_{\Gamma_1}),(\Gamma_2,\bar p_2,\mu_{\Gamma_2})\colon \MA\rightsquigarrow\MB$ are strongly homotopic, then 
functors ${\bf \hat F}((\Gamma_1,\bar p_1,\mu_{\Gamma_1}))$ and ${\bf \hat F}((\Gamma_2,\bar p_2,\mu_{\Gamma_2}))$ are naturally isomorphic. And by Corollary~\ref{cor:natur_iso_inter}, if interpretation functors ${\bf \hat F}((\Gamma_1,\bar p_1,\mu_{\Gamma_1}))$ and ${\bf \hat F}((\Gamma_2,\bar p_2,\mu_{\Gamma_2}))$ are naturally isomorphic, then interpretations $(\Gamma_1,\bar p_1,\mu_{\Gamma_1}),(\Gamma_2,\bar p_2,\mu_{\Gamma_2})\colon \MA\rightsquigarrow\MB$ are strongly homotopic.
\end{proof}

The strongest connection between categories ${\bf AS}$ and ${\bf PLG}$ is revealed in the language of $2$\=/categories. Denote by $2$\=/${\bf PLG}$ the strict $2$\=/category ${\bf PLG}$ with natural isomorphisms between functors as $2$\=/cells. It is $2$\=/subcategory in $2$\=/{\bf Cat}. To define a similar $2$\=/category $2$\=/${\bf AS}$, we will need several auxiliary considerations and results.

Let $(\Gamma_1,\bar p_1,\mu_{\Gamma_1}),(\Gamma_2,\bar p_2,\mu_{\Gamma_2}), (\Gamma_3,\bar p_3,\mu_{\Gamma_3}), (\Gamma^\prime_1,\bar p^\prime_1,\mu_{\Gamma^\prime_1}), (\Gamma^\prime_2,\bar p^\prime_2,\mu_{\Gamma^\prime_2}),(\Gamma^\prime_3,\bar p^\prime_3,\mu_{\Gamma^\prime_3})\colon \MA\rightsquigarrow\MB$ be interpretations. First of all, we note the stability of homotopies with respect to the replacement of interpretations with equal ones. The equivalence class of the interpretation $(\Gamma,\bar p,\mu_\Gamma)$ with respect to equality $=_{\rm int}$ will be denoted by $[\Gamma,\bar p,\mu_\Gamma]$.

\begin{remark}\label{remark:hom_eq}
If $(\theta)\colon (\Gamma_1,\bar p_1,\mu_{\Gamma_1})\to (\Gamma_2,\bar p_2,\mu_{\Gamma_2})$ is a strong homotopy, $(\Gamma_1,\bar p_1,\mu_{\Gamma_1})=_{\rm int}(\Gamma^\prime_1,\bar p^\prime_1,\mu_{\Gamma^\prime_1})$ and $(\Gamma_2,\bar p_2,\mu_{\Gamma_2})=_{\rm int}(\Gamma^\prime_2,\bar p^\prime_2,\mu_{\Gamma^\prime_2})$, then $(\theta)\colon (\Gamma^\prime_1,\bar p^\prime_1,\mu_{\Gamma^\prime_1})\to (\Gamma^\prime_2,\bar p^\prime_2,\mu_{\Gamma^\prime_2})$ is a strong homotopy as well. Thus, we can speak of a strong homotopy $(\theta)$ between the equivalence classes of interpretations $[\Gamma_1,\bar p_1,\mu_{\Gamma_1}]$ and $[\Gamma_2,\bar p_2,\mu_{\Gamma_2}]$.
\end{remark}

Second, we recall the notion of vertical composition for homotopies and establish its stability with respect to the equality of homotopies.

\begin{notation}
For strong homotopies $(\theta_{1,2})\colon (\Gamma_1,\bar p_1,\mu_{\Gamma_1})\to (\Gamma_2,\bar p_2,\mu_{\Gamma_2})$ and $(\theta_{2,3})\colon (\Gamma_2,\bar p_2,\mu_{\Gamma_2})\to (\Gamma_3,\bar p_3,\mu_{\Gamma_3})$ we will denote by $(\theta_{2,3})\circ(\theta_{1,2})$ the strong homotopy $(\theta_{1,3})\colon(\Gamma_1,\bar p_1,\mu_{\Gamma_1})\to (\Gamma_3,\bar p_3,\mu_{\Gamma_3})$  with $L(\MA)$\=/isomorphism, defined by the formula 
\begin{equation}\label{eq:theta}
       \theta_{1,3}(\bar x_1,\bar x_3)=\exists \,\bar x_2\:(\theta_{1,2}(\bar x_1,\bar x_2)\wedge \theta_{2,3}(\bar x_2,\bar x_3)). 
    \end{equation}
\end{notation}

It is clear that the homotopy isomorphism of $(\theta_{1,3})$ is the composition of the homotopy isomorphisms of $(\theta_{1,2})$ and $(\theta_{2,3})$.

\begin{fact}\label{fact:hat6}
Suppose that $(\theta_{1,2})\colon (\Gamma_1,\bar p_1,\mu_{\Gamma_1})\to (\Gamma_2,\bar p_2,\mu_{\Gamma_2})$,  $(\theta_{2,3})\colon (\Gamma_2,\bar p_2,\mu_{\Gamma_2})\to (\Gamma_3,\bar p_3,\mu_{\Gamma_3})$, $(\theta^\prime_{1,2})\colon (\Gamma^\prime_1,\bar p^\prime_1,\mu_{\Gamma^\prime_1})\to (\Gamma^\prime_2,\bar p^\prime_2,\mu_{\Gamma^\prime_2})$ and   $(\theta^\prime_{2,3})\colon (\Gamma^\prime_2,\bar p^\prime_2,\mu_{\Gamma^\prime_2})\to (\Gamma^\prime_3,\bar p^\prime_3,\mu_{\Gamma^\prime_3})$ are strong homotopies, such that $(\theta_{1,2})=_{\rm hom}(\theta^\prime_{1,2})$ and $(\theta_{2,3})=_{\rm hom}(\theta^\prime_{2,3})$. Then one has $(\theta_{2,3})\circ(\theta_{1,2})=_{\rm hom}(\theta^\prime_{2,3})\circ(\theta^\prime_{1,2})$.
\end{fact}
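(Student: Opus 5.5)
The plan is to reduce everything to a statement about the homotopy isomorphisms themselves. By the definition of $=_{\rm hom}$, we need three things: $(\Gamma_1,\bar p_1,\mu_{\Gamma_1})=_{\rm int}(\Gamma^\prime_1,\bar p^\prime_1,\mu_{\Gamma^\prime_1})$, $(\Gamma_3,\bar p_3,\mu_{\Gamma_3})=_{\rm int}(\Gamma^\prime_3,\bar p^\prime_3,\mu_{\Gamma^\prime_3})$, and that $\theta_{1,3}$ and $\theta^\prime_{1,3}$ define one and the same $L(\MA)$\=/isomorphism $\Gamma_1(\MB,\bar p_1)\to\Gamma_3(\MB,\bar p_3)$. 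The first two conditions are immediate. From $(\theta_{1,2})=_{\rm hom}(\theta^\prime_{1,2})$ we get equality of the first interpretations. From $(\theta_{2,3})=_{\rm hom}(\theta^\prime_{2,3})$ we get equality of the third ones.

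For the third condition, I would use the remark after the notation: the homotopy isomorphism of $(\theta_{1,3})$ is the composition $\lambda_{2,3}\circ\lambda_{1,2}$ of the homotopy isomorphisms of $(\theta_{1,2})$ and $(\theta_{2,3})$. This follows by unwinding formula~\eqref{eq:theta}. A pair $(\bar b_1,\bar b_3)$ satisfies $\theta_{1,3}$ iff there is $\bar b_2$ with $\lambda_{1,2}(\bar b_1/{\sim_{\Gamma_1}})=\bar b_2/{\sim_{\Gamma_2}}$ and $\lambda_{2,3}(\bar b_2/{\sim_{\Gamma_2}})=\bar b_3/{\sim_{\Gamma_3}}$. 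Hence $\theta_{1,3}$ defines $\lambda_{2,3}\circ\lambda_{1,2}$. The same argument gives that $\theta^\prime_{1,3}$ defines $\lambda^\prime_{2,3}\circ\lambda^\prime_{1,2}$.

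It then remains to check that $\lambda_{1,2}=\lambda^\prime_{1,2}$ and $\lambda_{2,3}=\lambda^\prime_{2,3}$ as maps of the quotient sets, not merely as abstract isomorphisms. This is exactly what $=_{\rm hom}$ gives: $\theta_{1,2}$ and $\theta^\prime_{1,2}$ define one and the same map $\Gamma_1(\MB,\bar p_1)\to\Gamma_2(\MB,\bar p_2)$, and likewise for the second pair. By Definition~\ref{def:equal}, equal interpretations have literally the same structures $\Gamma_i(\MB,\bar p_i)=\Gamma^\prime_i(\MB,\bar p^\prime_i)$, so the domains and codomains agree and the compositions coincide.

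The only real obstacle is a bookkeeping subtlety. The intermediate interpretation for the primed pair is $(\Gamma^\prime_2,\bar p^\prime_2,\mu_{\Gamma^\prime_2})$, which may have a different code from $\Gamma_2$. Its quotient structure is nevertheless the same set $\Gamma_2(\MB,\bar p_2)$, with the same classes, since equality of interpretations requires this. So the existential quantifier over $\bar x_2$ in $\theta^\prime_{1,3}$ may range over different representatives, but it produces the same classes. I would state this explicitly, noting that the defined relation depends only on the classes $\bar b_2/{\sim}$.
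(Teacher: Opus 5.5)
Your proof is correct, and it takes a more direct route than the paper's. You check the three clauses of $=_{\rm hom}$ straight from the definitions. The endpoint equalities $(\Gamma_1,\bar p_1,\mu_{\Gamma_1})=_{\rm int}(\Gamma'_1,\bar p'_1,\mu_{\Gamma'_1})$ and $(\Gamma_3,\bar p_3,\mu_{\Gamma_3})=_{\rm int}(\Gamma'_3,\bar p'_3,\mu_{\Gamma'_3})$ are read off from the hypotheses. By unwinding \eqref{eq:theta}, you show that $\theta_{1,3}$ defines $\lambda_{2,3}\circ\lambda_{1,2}$, which equals $\lambda'_{2,3}\circ\lambda'_{1,2}$ as a map of the same quotient sets.

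The paper instead passes to the categorical side:
\begin{itemize}
\item it uses Fact~\ref{fact:hat1} to see that ${\bf \hat N}((\theta_{2,3})\circ(\theta_{1,2}))$ and ${\bf \hat N}((\theta'_{2,3})\circ(\theta'_{1,2}))$ are natural isomorphisms between the same pair of interpretation functors;
\item it notes that their components at $A$ agree, because the two composite connectors define the same $L(\MA)$-isomorphism;
\item it concludes by Lemma~\ref{cor:natur_iso} that the two natural isomorphisms coincide;
\item it returns to $=_{\rm hom}$ via Fact~\ref{fact:hat3}.
\end{itemize}
The paper simply asserts that the composite connectors define the same isomorphism, and you prove it. Together with the endpoint equalities, that fact already is the definition of $=_{\rm hom}$, so your argument shows the detour through ${\bf \hat N}$, Lemma~\ref{cor:natur_iso} and Fact~\ref{fact:hat3} is not logically needed for this statement. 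You also state the endpoint equalities correctly. The paper's text writes $(\Gamma_1,\bar p_1,\mu_{\Gamma_1})=_{\rm int}(\Gamma_3,\bar p_3,\mu_{\Gamma_3})$ and the analogous primed equality, where the unprimed/primed comparisons are evidently meant.

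The paper's route is uniform with its treatment of horizontal composition (Fact~\ref{prop3:par}, Proposition~\ref{ABCD_hom}). There the connector of $\theta_{\MB,\MC}\ast\theta_{\MA,\MB}$ is only produced through ${\bf H}$, so a direct computation would be awkward. For vertical composition the connector is given explicitly by \eqref{eq:theta}, and your elementary argument is the more transparent one.
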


\begin{proof}
By definition, ${\bf \hat N}((\theta_{2,3})\circ(\theta_{1,2}))$ is a natural isomorphism between interpretation functors ${\bf\hat F}((\Gamma_1,\bar p_1,\mu_{\Gamma_1}))$ and ${\bf\hat F}((\Gamma_3,\bar p_3,\mu_{\Gamma_3}))$; and ${\bf \hat N}((\theta^\prime_{2,3})\circ(\theta^\prime_{1,2}))$ is a natural isomorphism between interpretation functors ${\bf\hat F}((\Gamma^\prime_1,\bar p^\prime_1,\mu_{\Gamma^\prime_1}))$ and ${\bf\hat F}((\Gamma^\prime_3,\bar p^\prime_3,\mu_{\Gamma^\prime_3}))$. Since $(\Gamma_1,\bar p_1,\mu_{\Gamma_1})=_{\rm int}(\Gamma_3,\bar p_3,\mu_{\Gamma_3})$ and $(\Gamma^\prime_1,\bar p^\prime_1,\mu_{\Gamma^\prime_1})=_{\rm int}(\Gamma^\prime_3,\bar p^\prime_3,\mu_{\Gamma^\prime_3})$, then by Fact~\ref{fact:hat1}, ${\bf\hat F}((\Gamma_1,\bar p_1,\mu_{\Gamma_1}))={\bf\hat F}((\Gamma_3,\bar p_3,\mu_{\Gamma_3}))$ and ${\bf\hat F}((\Gamma^\prime_1,\bar p^\prime_1,\mu_{\Gamma^\prime_1}))={\bf\hat F}((\Gamma^\prime_3,\bar p^\prime_3,\mu_{\Gamma^\prime_3}))$. Furthermore, connectors of the homotopies $(\theta_{2,3})\circ(\theta_{1,2})$ and $(\theta^\prime_{2,3})\circ(\theta^\prime_{1,2})$ define one the same $L(\MA)$\=/isomorphism, i.\,e., ${\bf \hat N}((\theta_{2,3})\circ(\theta_{1,2}))_A={\bf \hat N}((\theta^\prime_{2,3})\circ(\theta^\prime_{1,2}))_A$. Hence, by Lemma~\ref{cor:natur_iso}, one has ${\bf \hat N}((\theta_{2,3})\circ(\theta_{1,2}))={\bf \hat N}((\theta^\prime_{2,3})\circ(\theta^\prime_{1,2}))$, and by Fact~\ref{fact:hat3}, $(\theta_{2,3})\circ(\theta_{1,2})=_{\rm hom}(\theta^\prime_{2,3})\circ(\theta^\prime_{1,2})$.
\end{proof}

The equivalence class of the homotopy $(\theta)$ with respect to equality $=_{\rm hom}$ will be denoted by $[\theta]$. Thus, we can talk about strong homotopies $[\theta_{1}]\colon [\Gamma_1,\bar p_1,\mu_{\Gamma_1}]\to [\Gamma_1^\prime,\bar p_1^\prime,\mu_{\Gamma^\prime_1}]$, $[\theta_{2}]\colon [\Gamma_2,\bar p_2,\mu_{\Gamma_2}]\to [\Gamma_2^\prime,\bar p_2^\prime,\mu_{\Gamma^\prime_2}]$ and so on, and correctly define composition $[\theta_2]\circ [\theta_1]$ of them, if $[\Gamma_1^\prime,\bar p_1^\prime,\mu_{\Gamma^\prime_1}]=[\Gamma_2,\bar p_2,\mu_{\Gamma_2}]$.

Third, we establish stability of horizontal composition $\ast$ of homotopies with respect to the equality of homotopies.

\begin{fact}\label{prop3:par}
Suppose that $(\theta_{\MA,\MB}), (\theta^\prime_{\MA,\MB})$ are strong homotopies of interpretations of $\MA$ in $\MB$ and $(\theta_{\MB,\MC}), (\theta^\prime_{\MB,\MC})$ are strong homotopies of interpretations of $\MB$ in $\MC$. If $(\theta_{\MA,\MB})=_{\rm hom} (\theta^\prime_{\MA,\MB})$ and $(\theta_{\MB,\MC})=_{\rm hom} (\theta^\prime_{\MB,\MC})$, then $(\theta_{\MB,\MC}\ast\theta_{\MA,\MB})=_{\rm hom}(\theta^\prime_{\MB,\MC}\ast\theta^\prime_{\MA,\MB})$. 
\end{fact}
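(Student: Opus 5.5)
The plan is to move the statement to the categorical side, where horizontal composition is well defined, and then return via ${\bf \hat H}$, following the proof of Fact~\ref{fact:hat6}. By the Notation after Proposition~\ref{prop1:par}, $(\theta_{\MB,\MC}\ast\theta_{\MA,\MB})={\bf H}({\bf N}((\theta_{\MB,\MC}))\ast{\bf N}((\theta_{\MA,\MB})))$, and similarly for the primed homotopies. By Corollary~\ref{cor:ast_N},
$$
{\bf \hat N}((\theta_{\MB,\MC})\ast(\theta_{\MA,\MB}))={\bf \hat N}((\theta_{\MB,\MC}))\ast{\bf \hat N}((\theta_{\MA,\MB})),
$$
and the same identity holds for the primed homotopies. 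By Fact~\ref{fact:hat3}, it therefore suffices to show that the right-hand sides coincide as natural isomorphisms.

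First, Fact~\ref{fact:hat3} applied to the hypotheses $(\theta_{\MA,\MB})=_{\rm hom}(\theta^\prime_{\MA,\MB})$ and $(\theta_{\MB,\MC})=_{\rm hom}(\theta^\prime_{\MB,\MC})$ gives ${\bf \hat N}((\theta_{\MA,\MB}))={\bf \hat N}((\theta^\prime_{\MA,\MB}))$ and ${\bf \hat N}((\theta_{\MB,\MC}))={\bf \hat N}((\theta^\prime_{\MB,\MC}))$. These are equalities of natural isomorphisms, so their domain and codomain functors also agree. The same conclusion follows from Fact~\ref{fact:hat1}, because the end interpretations are pairwise $=_{\rm int}$. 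Since horizontal composition of natural transformations depends only on the two transformations, the two horizontal composites are equal. Fact~\ref{fact:hat3} then yields $(\theta_{\MB,\MC}\ast\theta_{\MA,\MB})=_{\rm hom}(\theta^\prime_{\MB,\MC}\ast\theta^\prime_{\MA,\MB})$.

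The main obstacle is making sure that Fact~\ref{fact:hat3} really applies to the composite homotopies. Their end interpretations are compositions such as $\Gamma_1\circ\Delta_1$ and $\Gamma^\prime_1\circ\Delta^\prime_1$, and we need these to be $=_{\rm int}$. This is exactly Corollary~\ref{cor1:par}, which states that composition respects equality of interpretations. I would check this point explicitly and also record that ${\bf \hat N}$ forgets the codes, so the comparison is legitimate even when the codes $\Gamma_i$ and $\Gamma^\prime_i$ differ.
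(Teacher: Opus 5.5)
Your proposal is correct and follows the paper's proof essentially verbatim: apply Fact~\ref{fact:hat3} to get equal ${\bf \hat N}$'s, take horizontal composites of the natural isomorphisms, transfer with Corollary~\ref{cor:ast_N}, and return with Fact~\ref{fact:hat3}. Your additional check via Corollary~\ref{cor1:par} is harmless but redundant, since equality of the natural isomorphisms ${\bf \hat N}$ already forces equal domain and codomain functors, so the backward direction of Fact~\ref{fact:hat3} (via Fact~\ref{fact:hat1}) already gives $=_{\rm int}$ for the composite end interpretations.
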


\begin{proof}
According to Fact~\ref{fact:hat3}, one has ${\bf \hat N}((\theta_{\MA,\MB}))={\bf \hat N}((\theta^\prime_{\MA,\MB}))$ and ${\bf \hat N}((\theta_{\MB,\MC}))= {\bf \hat N}((\theta^\prime_{\MB,\MC}))$. Therefore,  ${\bf \hat N}((\theta_{\MB,\MC}))\ast{\bf \hat N}((\theta_{\MA,\MB}))={\bf \hat N}((\theta^\prime_{\MB,\MC}))\ast{\bf \hat N}((\theta^\prime_{\MA,\MB}))$ and, due to Corollary~\ref{cor:ast_N}, ${\bf \hat N}((\theta_{\MB,\MC}\ast\theta_{\MA,\MB}))={\bf \hat N}((\theta^\prime_{\MB,\MC}\ast\theta^\prime_{\MA,\MB}))$. Hence, again by Fact~\ref{fact:hat3}, $(\theta_{\MB,\MC}\ast\theta_{\MA,\MB})=_{\rm hom}(\theta^\prime_{\MB,\MC}\ast\theta^\prime_{\MA,\MB})$.
\end{proof}

Thus, the horizontal composition $\ast$ is correctly defined on equivalence classes $[\theta]$ of strong homotopies of interpretations.

Fourth, we need to fix the connections between vertical compositions of natural isomorphisms and homotopies of interpretations. 

\begin{fact}\label{fact:hat5}
For any strong homotopies $(\theta_{1,2})\colon (\Gamma_1,\bar p_1,\mu_{\Gamma_1})\to (\Gamma_2,\bar p_2,\mu_{\Gamma_2})$ and $(\theta_{2,3})\colon (\Gamma_2,\bar p_2,\mu_{\Gamma_2})\to (\Gamma_3,\bar p_3,\mu_{\Gamma_3})$ one has ${\bf \hat N}((\theta_{2,3})\circ(\theta_{1,2}))={\bf \hat N}((\theta_{2,3}))\circ{\bf \hat N}((\theta_{1,2}))$. And for any natural isomorphisms $\eta_{1,2}\colon \F_1\to\F_2$, $\eta_{2,3}\colon \F_2\to\F_3$ between interpretation functors $\F_1,\F_2,\F_3\colon \PLS(\MA)\to \PLS(\MB)$ one has ${\bf H}(\eta_{2,3}\circ\eta_{1,2})={\bf  H}(\eta_{2,3})\circ{\bf  H}(\eta_{1,2})$.
\end{fact}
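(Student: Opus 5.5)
The plan is to prove each of the two equalities separately, reducing everything to the behaviour of the value at the object $A$. For natural isomorphisms, Lemma~\ref{cor:natur_iso} says that a natural isomorphism between interpretation functors is determined by its component at $A$. For strong homotopies, a homotopy is determined up to $=_{\rm hom}$ by its endpoints and its homotopy isomorphism $\lambda$, and that isomorphism is exactly the component at $A$ of the associated natural isomorphism.

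\emph{First equality.} Both ${\bf \hat N}((\theta_{2,3})\circ(\theta_{1,2}))$ and ${\bf \hat N}((\theta_{2,3}))\circ{\bf \hat N}((\theta_{1,2}))$ are natural isomorphisms. Their common source is the functor $\F_1={\bf \hat F}((\Gamma_1,\bar p_1,\mu_{\Gamma_1}))$ and their common target is $\F_3={\bf \hat F}((\Gamma_3,\bar p_3,\mu_{\Gamma_3}))$; this uses the construction in Theorem~\ref{th:homotopy} together with the fact that vertical composition of natural transformations has source $\F_1$ and target $\F_3$.

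By Lemma~\ref{cor:natur_iso}, it then suffices to compare the two components at $A$. By Corollary~\ref{cor:homotopy}, the $A$-component of ${\bf N}((\theta))$ is the homotopy isomorphism defined by $\theta$. So the left-hand side has $A$-component equal to the map defined by $\theta_{1,3}$ in~\eqref{eq:theta}, which is the composition $\lambda_{2,3}\circ\lambda_{1,2}$. The right-hand side has $A$-component $({\bf N}(\theta_{2,3}))_A\circ({\bf N}(\theta_{1,2}))_A=\lambda_{2,3}\circ\lambda_{1,2}$, by the definition of vertical composition. The two components agree, which proves the first equality.

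\emph{Second equality.} Write $(\theta_{1,2})={\bf H}(\eta_{1,2})$ and $(\theta_{2,3})={\bf H}(\eta_{2,3})$. By Corollary~\ref{cor:natur_iso_inter} (see also Theorem~\ref{th:natur_iso2}), each connector defines the corresponding component $(\eta_{i,i+1})_A$. The homotopy ${\bf H}(\eta_{2,3}\circ\eta_{1,2})$ goes between the interpretations underlying $\F_1$ and $\F_3$, the same endpoints as $(\theta_{2,3})\circ(\theta_{1,2})$, and its isomorphism is $(\eta_{2,3}\circ\eta_{1,2})_A=(\eta_{2,3})_A\circ(\eta_{1,2})_A$. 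That isomorphism is also the one defined by $\theta_{1,3}$.

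Two strong homotopies with the same endpoints whose connectors define the same $L(\MA)$-isomorphism are equal in the sense $=_{\rm hom}$, and both sides produced by ${\bf H}$ carry the same interpretations. Alternatively, I can apply ${\bf N}$ to both sides, use the first equality together with Lemma~\ref{le:natur_iso}, and conclude via the injectivity given by Fact~\ref{fact:hat3}.

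\emph{Main obstacle.} The delicate point is bookkeeping of codes: ${\bf H}$ returns a homotopy with specific interpretations $(\Gamma_i,\bar p_i,\mu_{\Gamma_i})$, and I must make sure the middle interpretation produced by ${\bf H}(\eta_{1,2})$ and by ${\bf H}(\eta_{2,3})$ is literally the same one. Otherwise the vertical composition is not defined. I will handle this with Remark~\ref{lemma:gamma}, which says the coordinate maps are uniquely determined by $\gamma_i$, and, if needed, pass to $=_{\rm hom}$-classes using Remark~\ref{remark:hom_eq}.
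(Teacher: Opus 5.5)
Your proposal is correct and follows essentially the same route as the paper. For the first equality, you note that both natural isomorphisms run from ${\bf \hat F}((\Gamma_1,\bar p_1,\mu_{\Gamma_1}))$ to ${\bf \hat F}((\Gamma_3,\bar p_3,\mu_{\Gamma_3}))$, that their $A$-components are both defined by $\theta_{1,3}$, and you conclude by Lemma~\ref{cor:natur_iso}. For the second, you observe that both homotopies connect the same interpretations via the connector $\theta_{1,3}$; the paper fixes $(\Gamma_i,\bar p_i,\mu_{\Gamma_i})={\bf I}(\F_i)$ from the outset, which is exactly how it disposes of your bookkeeping concern about the middle interpretation.
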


\begin{proof}
Since both categorical isomorphisms ${\bf \hat N}((\theta_{2,3})\circ(\theta_{1,2}))_A$ and $({\bf \hat N}((\theta_{2,3}))\circ{\bf \hat N}((\theta_{1,2})))_A$ are defined by the same formula $\theta_{1,3}$~\eqref{eq:theta}, and both ${\bf \hat N}((\theta_{2,3})\circ(\theta_{1,2}))$ and ${\bf \hat N}((\theta_{2,3}))\circ{\bf \hat N}((\theta_{1,2}))$ are natural isomorphisms between the interpretation functors ${\bf \hat F}((\Gamma_1,\bar p_1,\mu_{\Gamma_1}))$ and ${\bf \hat F}((\Gamma_3,\bar p_3,\mu_{\Gamma_3}))$, then by Lemma~\ref{cor:natur_iso}, one has ${\bf \hat N}((\theta_{2,3})\circ(\theta_{1,2}))={\bf \hat N}((\theta_{2,3}))\circ{\bf \hat N}((\theta_{1,2}))$. 

Let $(\Gamma_i,\bar p_i,\mu_{\Gamma_i})={\bf I}(\F_i)$, $i=1,2,3$, and $(\theta_{1,2})={\bf H}(\eta_{1,2})$, $(\theta_{2,3})={\bf H}(\eta_{2,3})$. Then both ${\bf H}(\eta_{2,3}\circ\eta_{1,2})$ and ${\bf  H}(\eta_{2,3})\circ{\bf  H}(\eta_{1,2})$ are strong homotopies between interpretations $(\Gamma_1,\bar p_1,\mu_{\Gamma_1})$ and $(\Gamma_3,\bar p_3,\mu_{\Gamma_3})$ with the same connector $\theta_{1,3}$~\eqref{eq:theta}, i.\,e., ${\bf H}(\eta_{2,3}\circ\eta_{1,2})={\bf  H}(\eta_{2,3})\circ{\bf  H}(\eta_{1,2})$.  
\end{proof}

Fifth, natural isomorphisms satisfy the interchange law~\cite[Theorem~1, \S II.5]{MacLane}; we show that its analog holds for homotopies of interpretations.

\begin{proposition}[interchange law]\label{prop4:par}
Suppose that $(\theta_{\MA,\MB}^{1,2})\colon (\Gamma_1,\bar p_1,\mu_{\Gamma_1})\to (\Gamma_2,\bar p_2,\mu_{\Gamma_2})$ and $(\theta_{\MA,\MB}^{2,3})\colon (\Gamma_2,\bar p_2,\mu_{\Gamma_2})\to (\Gamma_3,\bar p_3,\mu_{\Gamma_3})$ are strong homotopies of interpretations of $\MA$ in $\MB$, and $(\theta_{\MB,\MC}^{1,2})\colon (\Delta_1,\bar q_1,\mu_{\Delta_1})\to (\Delta_2,\bar q_2,\mu_{\Delta_2})$ and $(\theta_{\MB,\MC}^{2,3})\colon (\Delta_2,\bar q_2,\mu_{\Delta_2})\to (\Delta_3,\bar q_3,\mu_{\Delta_3})$ are strong homotopies of interpretations of $\MB$ in $\MC$. Then one has 
$$
(\theta_{\MB,\MC}^{2,3}\ast\theta_{\MA,\MB}^{2,3})\circ(\theta_{\MB,\MC}^{1,2}\ast\theta_{\MA,\MB}^{1,2})=_{\rm hom}((\theta_{\MB,\MC}^{2,3})\circ(\theta_{\MB,\MC}^{1,2}))\ast((\theta_{\MA,\MB}^{2,3})\circ(\theta_{\MA,\MB}^{1,2})).
$$
\end{proposition}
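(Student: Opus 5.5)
The plan is to transport the interchange law for natural transformations along the operators ${\bf \hat N}$ and ${\bf \hat H}$, exactly as was done for associativity of horizontal composition in Proposition~\ref{ABCD_hom}. By Fact~\ref{fact:hat3}, two strong homotopies are equal in the sense of $=_{\rm hom}$ if and only if their images under ${\bf \hat N}$ coincide. So it suffices to show that both sides of the claimed identity have the same image under ${\bf \hat N}$.

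First I would compute the image of the left-hand side. By Fact~\ref{fact:hat5}, ${\bf \hat N}$ turns vertical composition of homotopies into vertical composition of natural isomorphisms. By Corollary~\ref{cor:ast_N}, it turns horizontal composition into horizontal composition. Hence
\[
{\bf \hat N}\bigl((\theta_{\MB,\MC}^{2,3}\ast\theta_{\MA,\MB}^{2,3})\circ(\theta_{\MB,\MC}^{1,2}\ast\theta_{\MA,\MB}^{1,2})\bigr)=(\tau_{2,3}\ast\eta_{2,3})\circ(\tau_{1,2}\ast\eta_{1,2}),
\]
where $\eta_{i,i+1}={\bf \hat N}((\theta^{i,i+1}_{\MA,\MB}))$ and $\tau_{i,i+1}={\bf \hat N}((\theta^{i,i+1}_{\MB,\MC}))$. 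In the same way, the image of the right-hand side is $(\tau_{2,3}\circ\tau_{1,2})\ast(\eta_{2,3}\circ\eta_{1,2})$.

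Next, these two natural isomorphisms are equal by the interchange law for natural transformations \cite[Theorem~1, \S II.5]{MacLane}. This is legitimate because the natural isomorphisms are composable. Indeed, $\eta_{1,2}\colon\F_1\to\F_2$ and $\eta_{2,3}\colon\F_2\to\F_3$ share the middle functor $\F_2={\bf \hat F}((\Gamma_2,\bar p_2,\mu_{\Gamma_2}))$, and likewise for the $\tau$'s. Fact~\ref{fact:hat3} then yields the required $=_{\rm hom}$.

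The main point needing care is well-definedness, since the operations on homotopies were defined via specific connectors. The vertical composition $(\theta_{2,3})\circ(\theta_{1,2})$ uses the formula \eqref{eq:theta}. The horizontal composition $\ast$ is defined as ${\bf H}({\bf N}(\cdot)\ast{\bf N}(\cdot))$, whose composite interpretations are the codes $\Gamma_i\circ\Delta_i$ given by Proposition~\ref{th:ABC}. I would therefore check that on the left-hand side the middle interpretation $(\Gamma_2\circ\Delta_2,\dots)$ is literally the target of the first factor and the source of the second. Then the vertical composition via \eqref{eq:theta} makes sense, and Fact~\ref{fact:hat5} applies to it. Facts~\ref{fact:hat6} and~\ref{prop3:par} handle any replacement by $=_{\rm int}$-equal representatives.
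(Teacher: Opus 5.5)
Your proposal is correct and follows the paper's proof essentially step for step. Like the paper, you push both sides through ${\bf \hat N}$ using Corollary~\ref{cor:ast_N} and Fact~\ref{fact:hat5}, apply the interchange law for natural isomorphisms, and pull the equality back via Fact~\ref{fact:hat3}. Your extra check that the middle interpretation $(\Gamma_2\circ\Delta_2,\dots)$ matches up to $=_{\rm int}$ (via Remark~\ref{remark:hom_eq} and Facts~\ref{fact:hat6},~\ref{prop3:par}) addresses a point the paper leaves implicit.
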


\begin{proof}
Indeed, by Corollary~\ref{cor:ast_N} and Fact~\ref{fact:hat5}, one has ${\bf \hat N}((\theta_{\MB,\MC}^{2,3}\ast\theta_{\MA,\MB}^{2,3})\circ(\theta_{\MB,\MC}^{1,2}\ast\theta_{\MA,\MB}^{1,2}))={\bf \hat N}((\theta_{\MB,\MC}^{2,3}\ast\theta_{\MA,\MB}^{2,3}))\circ{\bf \hat N}((\theta_{\MB,\MC}^{1,2}\ast\theta_{\MA,\MB}^{1,2}))=({\bf \hat N}((\theta_{\MB,\MC}^{2,3}))\ast{\bf \hat N}((\theta_{\MA,\MB}^{2,3})))\circ({\bf \hat N}((\theta_{\MB,\MC}^{1,2}))\ast{\bf \hat N}((\theta_{\MA,\MB}^{1,2})))=({\bf \hat N}((\theta_{\MB,\MC}^{2,3}))\circ{\bf \hat N}((\theta_{\MB,\MC}^{1,2})))\ast({\bf \hat N}((\theta_{\MA,\MB}^{2,3}))\circ{\bf \hat N}((\theta_{\MA,\MB}^{1,2})))={\bf \hat N}(((\theta_{\MB,\MC}^{2,3})\circ(\theta_{\MB,\MC}^{1,2}))\ast((\theta_{\MA,\MB}^{2,3})\circ(\theta_{\MA,\MB}^{1,2})))$. Therefore, by Fact~\ref{fact:hat3}, we obtain the required.  
\end{proof}

Let us denote by $2$\=/${\bf AS}$ the strict $2$\=/category ${\bf AS}$ with strong homotopies up to equality $=_{\rm hom}$ as $2$\=/cells and $\circ,\ast$ as vertical and horizontal compositions. 

\begin{fact}
    $2$\=/${\bf AS}$ is a strict $2$\=/category.
\end{fact}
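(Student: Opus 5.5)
The plan is to transport the strict $2$\=/category structure of $2$\=/${\bf PLG}$ to $2$\=/${\bf AS}$ along the code-forgetting operators, rather than checking the axioms directly on formulas. On $0$\=/cells and $1$\=/cells everything is already settled. By the proof of Theorem~\ref{TH}, ${\bf AS}$ is a category. Its morphisms are the classes $[\Gamma,\bar p,\mu_\Gamma]$, and ${\bf \hat F}$, ${\bf \hat I}$ are mutually inverse bijections onto the interpretation functors (Facts~\ref{fact:hat1}, \ref{fact:hat2}). For $2$\=/cells, Remark~\ref{remark:hom_eq} shows that a class $[\theta]$ has well-defined source and target classes of interpretations. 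Fact~\ref{fact:hat3} then shows that ${\bf \hat N}$ is well defined and injective on classes $[\theta]$. Lemma~\ref{le:natur_iso} and Fact~\ref{fact:hat4} show that ${\bf \hat N}$ is surjective onto the natural isomorphisms between the corresponding interpretation functors, with inverse ${\bf \hat H}$. So for fixed $\MA,\MB$ we get an isomorphism of hom-groupoids
\[
{\bf AS}(\MA,\MB)\cong {\bf PLG}(\PLS(\MA),\PLS(\MB)).
\]

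Next we check that both compositions are well defined on classes and are carried by ${\bf \hat N}$ to the compositions in $2$\=/{\bf Cat}. For vertical composition, well-definedness is Fact~\ref{fact:hat6}, and compatibility is Fact~\ref{fact:hat5}, ${\bf \hat N}((\theta_{2,3})\circ(\theta_{1,2}))={\bf \hat N}((\theta_{2,3}))\circ{\bf \hat N}((\theta_{1,2}))$. For horizontal composition, well-definedness is Fact~\ref{prop3:par}, and compatibility is Corollary~\ref{cor:ast_N}.

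Each strict $2$\=/category axiom is then an equation between $2$\=/cells. We apply ${\bf \hat N}$ to both sides, use the fact that the equation holds for natural isomorphisms in $2$\=/{\bf Cat} \cite{MacLane}, and pull it back by injectivity of ${\bf \hat N}$ (Fact~\ref{fact:hat3}). This covers associativity of $\circ$, associativity of $\ast$ (already Proposition~\ref{ABCD_hom}), and the interchange law (already Proposition~\ref{prop4:par}).

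The identity $2$\=/cell on $[\Gamma,\bar p,\mu_\Gamma]$ is the homotopy whose connector defines $\id_{\Gamma(\MB,\bar p)}$, for instance $U_\Gamma(\bar x_1,\bar p)\wedge U_\Gamma(\bar x_2,\bar p)\wedge E_\Gamma(\bar x_1,\bar x_2,\bar p)$, as in the proof of Lemma~\ref{le:equal}. Its image under ${\bf \hat N}$ is the identity natural transformation, so the unit laws for $\circ$ follow.

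For $\ast$, the identity $2$\=/cell on the identity $1$\=/cell $[\Id_{L(\MA)},\emptyset,\id_A]$ maps to $\id_{\id_{\PLS(\MA)}}$ by Remark~\ref{remark:ID}, and this gives the unit laws. Strictness of the $1$\=/cell associativity and unit laws holds because they are equalities in ${\bf AS}$ (Proposition~\ref{ABCD} and the proof of Theorem~\ref{TH}), not merely homotopies. So no associator or unitor $2$\=/cells are needed. Compatibility of horizontal composition with identities, $1_{g}\ast 1_{f}=1_{g\circ f}$, again reduces via ${\bf \hat N}$ to the corresponding identity in $2$\=/{\bf Cat}.

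I expect the main obstacle to be bookkeeping rather than mathematics. The concern is that the interpretations appearing as sources and targets of the composed homotopies coincide as classes $[\,\cdot\,]$ with the compositions taken in ${\bf AS}$. For example, $\theta_{\MB,\MC}\ast\theta_{\MA,\MB}$ must have source $[\Gamma_1\circ\Delta_1,\dots]$, and this class must depend only on the classes of $\Gamma_1$ and $\Delta_1$. I would handle this with Corollary~\ref{cor1:par} together with Remark~\ref{remark:hom_eq}, before applying the transport argument.
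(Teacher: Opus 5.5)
Your proposal is correct and follows essentially the paper's route. The paper gets well-definedness of $2$-cells and of $\circ,\ast$ from Remark~\ref{remark:hom_eq} and Facts~\ref{fact:hat6},~\ref{prop3:par}, and associativity of $\ast$ and the interchange law from Propositions~\ref{ABCD_hom} and~\ref{prop4:par}; these are themselves proved by your transport along ${\bf \hat N}$ using the injectivity of Fact~\ref{fact:hat3}, and your version adds the identity $2$-cells, the unit laws and the source/target bookkeeping via Corollary~\ref{cor1:par}, which the paper passes over as ``easy to see''.
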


\begin{proof}
By Remark~\ref{remark:hom_eq}, $2$\=/cells are correctly defined in $2$\=/${\bf AS}$. Vertical and horizontal compositions $\circ,\ast$ on $2$\=/cells are correctly defined due to Facts~\ref{fact:hat6} and~\ref{prop3:par}. It is easy to see that for every $0$\=/cells $\MA$ and $\MB$ the set ${\rm Hom}(\MA,\MB)$ of $1$\=/cells together with the vertical compositions is a category. Further, $0$\=/cells as objects and $2$\=/cells as morphisms together with the horizontal compositions form a category due to Proposition~\ref{ABCD_hom}. And by Proposition~\ref{prop4:par} one has the interchange law.
\end{proof}

\begin{theorem}\label{th:2cat}
The $2$\=/categories $2$\=/${\bf AS}$ and $2$\=/${\bf PLG}$ are $2$\=/isomorphic. 
\end{theorem}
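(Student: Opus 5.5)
We build the $2$-isomorphism from the $1$-categorical isomorphism of Theorem~\ref{TH}, extended to $2$-cells by the forgetful operators ${\bf \hat N}$ and ${\bf \hat H}$. On $0$-cells we send $\MA\mapsto\PLS(\MA)$. On $1$-cells we use ${\bf \hat F}\colon[\Gamma,\bar p,\mu_\Gamma]\mapsto\F_{\Gamma,\bar p,\mu_\Gamma}$ with inverse ${\bf \hat I}$; by Fact~\ref{fact:hat1} this is well defined on classes $=_{\rm int}$, and by Fact~\ref{fact:hat2} the two maps are mutually inverse. Proposition~\ref{th:ABC} together with Remark~\ref{remark:ID} shows that composition and identity $1$-cells are preserved. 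This is exactly the content of Theorem~\ref{TH}.

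On $2$-cells we send a class $[\theta]\colon[\Gamma_1,\bar p_1,\mu_{\Gamma_1}]\to[\Gamma_2,\bar p_2,\mu_{\Gamma_2}]$ to the natural isomorphism ${\bf \hat N}((\theta))\colon{\bf \hat F}([\Gamma_1,\ldots])\to{\bf \hat F}([\Gamma_2,\ldots])$. Theorem~\ref{th:homotopy} guarantees that this really is a $2$-cell of $2$-${\bf PLG}$ between the images of the source and target $1$-cells. By Fact~\ref{fact:hat3} the assignment is well defined and injective on $=_{\rm hom}$-classes. By Fact~\ref{fact:hat4} it is surjective, with ${\bf \hat H}$ as its inverse: every natural isomorphism between interpretation functors has the form ${\bf \hat N}((\theta))$, and ${\bf \hat H}({\bf \hat N}((\theta)))=_{\rm hom}(\theta)$.

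It remains to check that the two compositions of $2$-cells and the identity $2$-cells are preserved. Vertical composition is preserved by Fact~\ref{fact:hat5}, and horizontal composition by Corollary~\ref{cor:ast_N}. For the identity $2$-cell of a $1$-cell $[\Gamma,\bar p,\mu_\Gamma]$ we take the connector defining $\id_{\Gamma(\MB,\bar p)}$, as in the proof of Lemma~\ref{le:equal}. The formulas~\eqref{eq:F} built from this connector induce identity maps, so ${\bf \hat N}$ of this homotopy is the identity natural transformation; alternatively, apply Lemma~\ref{cor:natur_iso}, since the $A$-component is the identity. The inverse assignment ${\bf \hat H}$ then preserves both compositions automatically, because it is inverse to a structure-preserving bijection.

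The main obstacle is bookkeeping rather than a new idea. The target of ${\bf \hat N}$ must be independent of the chosen representatives of the $1$-cells; Fact~\ref{fact:hat1} and Remark~\ref{remark:hom_eq} give this. We must also check that the source and target of a $2$-cell correspond correctly under ${\bf \hat F}$. With these compatibilities in place, the data form a strict $2$-functor $2$-${\bf AS}\to2$-${\bf PLG}$ that is bijective on $0$-, $1$- and $2$-cells, and hence a $2$-isomorphism.
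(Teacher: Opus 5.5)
Your proposal is correct and follows the same route as the paper. The paper also extends the isomorphism of Theorem~\ref{TH} to $2$-cells via ${\bf \hat N}$ and ${\bf \hat H}$: it gets isomorphisms of the hom-categories from Facts~\ref{fact:hat4}, \ref{fact:hat5} and Lemma~\ref{le:equal}, and compatibility with horizontal composition from Corollary~\ref{cor:ast_N}. Your version additionally spells out checks the paper leaves implicit, namely well-definedness on $=_{\rm hom}$-classes via Fact~\ref{fact:hat3} and preservation of identity $2$-cells.
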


\begin{proof}
We continue the argument of the proofs of Theorems~\ref{TH} and~\ref{TH_sim}. For any algebraic structures $\MA$ and $\MB$ the categories ${\rm Hom}(\MA,\MB)$ and ${\rm Hom}(\PLS(\MA),\PLS(\MB))$ of $1$\=/cells together with the vertical compositions $\circ$ are isomorphic due to Facts~\ref{fact:hat4}, \ref{fact:hat5} and Lemma~\ref{le:equal}. The functors ${\bf \hat N}$ and ${\bf \hat H}$, providing mappings on morphisms in these categories, are consistent with respect to horizontal composition $\ast$, by Corollary~\ref{cor:ast_N}.
\end{proof}

\end{document}